\def\nred{}
\def\ov#1{{\overline{#1}}}
\def\wt#1{{\widetilde{#1}}}
\newcommand{\aff}{{}}
\newcommand{\e}{{\operatorname{e}}}
\newcommand{\m}{{\operatorname{m}}}
\newcommand{\Ann}{{\operatorname{Ann}}}
\newcommand{\Res}{{\operatorname{Res}}}
\newcommand{\irr}{{\operatorname{irr}}}
\newcommand{\ord}{{\operatorname{ord}}}
\newcommand{\Spec}{{\operatorname{Spec}}}
\newcommand{\Supp}{{\operatorname{supp}}}
\newcommand{\Elim}{{\operatorname{Elim}}}
\renewcommand{\sup}{{\operatorname{sup}}}
\renewcommand{\max}{{\operatorname{max}}}
\renewcommand{\div}{{\operatorname{div}}}
\newcommand{\Div}{{\operatorname{Div}}}
\renewcommand{\Im}{{\operatorname{Im}}}
\renewcommand{\char}{\mbox{char}}
\newcommand{\Conv}{{\operatorname{conv}}}
\newcommand{\Stab}{{\operatorname{stab}}}
\newcommand{\newton}{{\operatorname{N}}}
\newcommand{\Id}{{\operatorname{Id}}}
\newcommand{\init}{{\operatorname{init}}}
\newcommand{\h}{{\operatorname{h}}}
\newcommand{\ph}{{\operatorname{ph}}}
\newcommand{\wh}{{\operatorname{\widehat{h}}}}
\newcommand{\mult}{{\operatorname{mult}}}
\newcommand{\coeff}{{\operatorname{coeff}}}
\newcommand{\Bl}{{\operatorname{Bl}}}
\newcommand{\cz}{{]_{_\Z}}}
\newcommand{\ck}{{]_{_{k[\bft]}}}}
\newcommand{\pr}{{\operatorname{pr}}}
\newcommand{\ppr}{{\operatorname{ppr}}}
\newcommand{\bfdeg}{{\operatorname{\bf deg}}}
\newcommand{\hooklongrightarrow}{\lhook\joinrel\longrightarrow}
\newcommand{\A}{{\mathbb{A}}}
\newcommand{\C}{{\mathbb{C}}}
\newcommand{\D}{{\mathbb{D}}}
\newcommand{\F}{{\mathbb{F}}}
\newcommand{\N}{{\mathbb{N}}}
\renewcommand{\P}{{\mathbb{P}}}
\newcommand{\Q}{{\mathbb{Q}}}
\newcommand{\R}{{\mathbb{R}}}
\newcommand{\Z}{{\mathbb{Z}}}
\newcommand{\cI}{{\mathcal{I}}}
\newcommand{\cJ}{{\mathcal{J}}}
\newcommand{\cM}{{\mathcal{M}}}
\newcommand{\cV}{{\mathcal{V}}}
\newcommand{\cW}{{\mathcal{W}}}
\newcommand{\cX}{{\mathcal{X}}}
\newcommand{\cY}{{\mathcal{Y}}}
\newcommand{\bfa}{{\boldsymbol{a}}}
\newcommand{\bfb}{{\boldsymbol{b}}}
\newcommand{\bfc}{{\boldsymbol{c}}}
\newcommand{\bfd}{{\boldsymbol{d}}}
\newcommand{\bfe}{{\boldsymbol{e}}}
\newcommand{\bfh}{{\boldsymbol{h}}}
\newcommand{\bfl}{{\boldsymbol{l}}}
\newcommand{\bfell}{{\boldsymbol{\ell}}}
\newcommand{\bfn}{{\boldsymbol{n}}}
\newcommand{\bfp}{{\boldsymbol{p}}}
\newcommand{\bfq}{{\boldsymbol{q}}}
\newcommand{\bfs}{{\boldsymbol{s}}}
\newcommand{\bft}{{\boldsymbol{t}}}
\newcommand{\bfu}{{\boldsymbol{u}}}
\newcommand{\bfv}{{\boldsymbol{v}}}
\newcommand{\bfw}{{\boldsymbol{w}}}
\newcommand{\bfx}{{\boldsymbol{x}}}
\newcommand{\bfy}{{\boldsymbol{y}}}
\newcommand{\bfL}{{\boldsymbol{L}}}
\newcommand{\bfN}{{\boldsymbol{N}}}
\newcommand{\bfU}{{\boldsymbol{u}}}
\newcommand{\bfgamma}{{\boldsymbol{\gamma}}}
\newcommand{\bfdelta}{{\boldsymbol{\delta}}}
\newcommand{\bftheta}{{\boldsymbol{\theta}}}
\newcommand{\bftau}{{\boldsymbol{\tau}}}
\newcommand{\bfxi}{{\boldsymbol{\xi}}}
\newcommand{\bfpsi}{{\boldsymbol{\psi}}}
\newcommand{\bfomega}{{\boldsymbol{\omega}}}
\newcommand{\Qbarra}{\ov{\Q}}
\newcommand{\bfzero}{{\boldsymbol{0}}}
\newcommand{\bfone}{{\boldsymbol{1}}}
\numberwithin{equation}{section}
\theoremstyle{definition}
\newtheorem{definition}[equation]{Definition}
\newtheorem{remark}[equation]{Remark}
\newtheorem{example}[equation]{Example}
\theoremstyle{plain}
\newtheorem{lemma}[equation]{Lemma}
\newtheorem{proposition}[equation]{Proposition}
\newtheorem{definitionproposition}[equation]{Proposition-Definition}
\newtheorem{theorem}[equation]{Theorem}
\newtheorem{corollary}[equation]{Corollary}
\begin{document}
\title[Heights of varieties and arithmetic Nullstellens\"atze]{Heights
  of varieties in multiprojective spaces \\[1mm]
and arithmetic Nullstellens\"atze}

\author[Carlos D'Andrea]{Carlos D'Andrea}
\address{Departament d'{\`A}lgebra i Geometria, Universitat de Barcelona.
Gran Via 585, 08007 Barcelona, Spain}
\email{cdandrea@ub.edu}
\urladdr{\url{http://atlas.mat.ub.es/personals/dandrea/}}

\author[Teresa Krick]{Teresa Krick}
\address{Departamento de Matem\'atica, Facultad de Ciencias Exactas y
Naturales, Universidad de Buenos Aires and IMAS, CONICET. Ciudad
Universitaria, 1428 Buenos Aires, Argentina  }
\email{krick@dm.uba.ar} \urladdr{\url{http://mate.dm.uba.ar/~krick/}}

\author[Mart{\'\i}n~Sombra]{Mart{\'\i}n~Sombra}
\address{ICREA and
Departament d'{\`A}lgebra i Geometria, Universitat de Barcelona.
Gran Via~585, 08007 Barcelona, Spain}
\email{sombra@ub.edu}
\urladdr{\url{http://atlas.mat.ub.es/personals/sombra/}}

\date{\today}
\subjclass[2010]{Primary 11G50; Secondary 14Q20,13P15.}
\keywords{Multiprojective spaces, mixed heights, resultants,
  implicitization, arithmetic Nullstellensatz.}

\thanks{D'Andrea was partially supported
by the research project MTM2010-20279 (Spain). Krick was partially
supported by the research projects ANPCyT 3671/05, UBACyT X-113
2008-2010 and CONICET PIP 2010-2012   (Argentina). Sombra was
partially supported by the research project MTM2009-14163-C02-01
(Spain) and by a MinCyT Milstein fellowship (Argentina).}

\begin{abstract}
    We present bounds for the degree and the height of the polynomials
  arising in some problems in effective algebraic geometry
  including the implicitization of rational maps and the effective
  Nullstellensatz over a variety.  Our treatment is based on
  arithmetic intersection theory in products of projective spaces and
  extends to the arithmetic setting constructions and results due to
  Jelonek.  A~key role is played by the notion of {canonical mixed
    height} of a multiprojective. We study this notion from
  the point of view of resultant theory and establish some of its
  basic properties, including its behavior with respect to
  intersections, projections and products. We obtain analogous results
  for the function field case, including a parametric Nullstellensatz.
\end{abstract}
\maketitle

\setcounter{tocdepth}{3}
\typeout{Contents}
\tableofcontents

\vfill\pagebreak
\section*{Introduction}
\label{Introduction and statement of results}
\setcounter{section}{0}

In 1983, Serge Lang wrote in the preface to his book \cite{Lang83}:

\medskip
\begin{quotation}
{\it     It is legitimate, and to many people an interesting point of view,
  to ask that the theorems of algebraic geometry from the Hilbert
  Nullstellensatz to the more advanced results should carry with them
  estimates on the coefficients occurring in these theorems. Although
  some of the estimates are routine, serious and interesting
  problems arise in this context.
}
\end{quotation}

\medskip Indeed, the main purpose of the present text is to give
bounds for the degree and the size of the coefficients of the
polynomials in the Nullstellensatz.

\medskip
Let $f_{1},\dots,f_{s}\in \Z[x_{1},\dots,x_{n}]$ be polynomials
without common zeros in the affine space $\A^n(\Qbarra)$. The Nullstellensatz says
then that there exist $\alpha\in \Z\setminus \{0\}$ and
$g_{1},\dots,g_{s}\in \Z[x_{1},\dots,x_{n}]$ satisfying a B\'ezout
identity
\begin{displaymath}
  \alpha=g_{1}f_{1}+\dots+g_{s}f_{s}.
\end{displaymath}
As for many central results in commutative algebra and in algebraic
geometry, it is a non-effective statement. By the end of the 1980s,
the estimation of the degree and the height of polynomials satisfying
such an identity became a widely considered question in connection
with problems in computer algebra and Diophantine approximation.  The
results in this direction are generically known as \emph{arithmetic
  Nullstellens\"atze} and they play an important role in number theory
and in theoretical computer science. In particular, they apply to problems in
complexity and computability~\cite{Koiran96, Asc04, DKS10}, to
counting problems over finite fields or over the
rationals~\cite{BoBoKo09, Rem08}, and to effectivity in existence
results in arithmetic geometry~\cite{KreTch08, BiSt09}.

The first non-trivial result on this problem was obtained by
Philippon, who got a bound on the minimal size of the denominator
$\alpha$ in a B\'ezout identity as above \cite{Philippon90}. Berenstein and Yger achieved the next
big progress, producing  height estimates for the
polynomials $g_{i}$'s with techniques from
complex analysis (integral formulae for residues of currents)
\cite{BeYg91}. Later on, Krick, Pardo and Sombra \cite{KrPaSo01} exhibited
sharp bounds by combining arithmetic intersection
theory with the algebraic approach in \cite{KrPa96} based on duality
theory for Gorenstein algebras. Recall that the height of a polynomial $f\in
\Z[x_1,\dots,x_n]$, denoted by $\h(f)$, is defined as the logarithm of the maximum of the absolute
value of its coefficients. Then, Theorem~1
in~\cite{KrPaSo01} reads as follows:
if $d=\max_j \deg(f_j)$ and
$h=\max_j \h_\aff(f_j)$, there is a B\'ezout identity as above
satisfying
$$ \deg (g_i) \leq 4\,n \,d^{n},  \quad \h(\alpha), \h_\aff(g_i) \leq
4\, n\,(n+1) \, d^n \, \Big(h + \log s + (n+7) \, \log (n+1) \, d
\Big).
$$
We refer the reader to the surveys~\cite{Tei90,Brownawell01} for
further information on the history of the effective
Nullstellensatz, main results and open questions.

\medskip
One of the main results of this text is the arithmetic
Nullstellensatz over a variety below, which is a particular case of
Theorem~\ref{arithparam}.
For an affine equidimensional variety $V\subset
\A^n(\Qbarra) $, we denote by $\deg(V)$ and by $\wh(V)$ the degree and
the canonical height of the closure of $V$ with respect to the
standard inclusion $\A^n\hookrightarrow \P^n$.  The degree and the
height of a variety are measures of its geometric and arithmetic
complexity, see \S\ref{The height in the number field case} and the
references therein for details.  We say that a polynomial relation
holds on a variety if it holds for every point in it.

\begin{theorem}\label{mt} Let $V\subset \A^n(\overline{\Q})$ be a variety
  defined over $\Q$  of pure dimension
$r$ and $f_1,\dots,f_s\in \Z[x_1,\dots,x_n]\setminus \Z$   a family
of $s\le r+1$ polynomials without common zeros in~$V$. Set
$d_j=\deg(f_j)$ and $h_j=\h(f_j)$
 for $1\le j\le s$.
 Then there exist $\alpha\in \Z\setminus \{0\}$ and $g_1,\dots, g_s\in
 \Z[x_1,\dots,x_n] $ such that
$$\alpha=g_1f_1+\cdots+g_sf_s \quad \mbox{on } V
$$ with
\vspace{-3mm}
$$\aligned
\bullet &\  \deg\Big(g_if_i\Big) \leq \bigg(\prod_{j=1}^{s}d_j\bigg)\deg(V),\\[-2mm]
\bullet & \ \h(\alpha), \h(g_i)+\h(f_i) \leq
\bigg(\prod_{j=1}^{s}d_j \bigg)\bigg(\wh(V)+\deg(V)
\bigg(\sum_{\ell=1}^{s} \frac{h_\ell }{d_\ell}+ (4r+8)\log(n+3)
 \bigg)\bigg).
\endaligned $$
\end{theorem}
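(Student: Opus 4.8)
The plan is to deduce this statement from the more general parametric Nullstellensatz, Theorem~\ref{arithparam}, but let me sketch the direct route via arithmetic intersection theory in a product of projective spaces, following the strategy that extends Jelonek's geometric construction to the arithmetic setting.

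First I would homogenize: pass from $V\subset\A^n$ to its closure $\ov V\subset\P^n$, and replace each $f_j$ by a bihomogeneous lift, working in the multiprojective space $\P^n\times(\P^1)^s$ or, more efficiently, encoding the linear combination $g_1f_1+\cdots+g_sf_s$ through an incidence variety. The key device is to introduce new variables and consider the graph of the map sending a point $x\in\ov V$ to $(f_1(x):\cdots)$ suitably projectivized; since the $f_j$ have no common zero on $V$, this extends to a morphism on $\ov V$ (after dealing with points at infinity), and the relevant resultant—an element of $\Z$—will play the role of $\alpha$. Concretely, one intersects $\ov V$ with the $s$ divisors $\{y_j f_j = y_j' \cdot(\text{generic linear form})\}$ inside $\P^n\times(\P^1)^s$ and reads off the Chow form / resultant of the resulting zero-dimensional (or appropriately positive-dimensional) cycle.

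The heart of the argument is the arithmetic B\'ezout inequality in multiprojective space together with the bounds on mixed heights under intersection with hypersurfaces, which I am assuming from the earlier sections (the study of the canonical mixed height, its behavior under intersections, projections and products). The chain is: (i) bound the mixed degrees of the intersection cycle by $\bigl(\prod_j d_j\bigr)\deg(V)$ via the multiprojective B\'ezout theorem, which gives the degree estimate for $g_if_i$ directly from the fact that $g_if_i$ appears as a coordinate of a Chow-form coefficient; (ii) bound the canonical mixed height of the intersection cycle by applying, $s$ times, the arithmetic intersection inequality with the divisor of $f_j$, whose contribution is controlled by $\deg(V)\cdot\h(f_j)$-type terms plus the $\log(n+3)$ normalization constants coming from comparing the sup-norm height with the canonical height and from the number of monomials; the $(4r+8)\log(n+3)$ summand is exactly the accumulated error from these $s\le r+1$ comparison steps (each contributing a bounded multiple of $\log(n+3)$, with the factor $4$ absorbing the passage between $\ell^1$, $\ell^2$ and $\ell^\infty$ norms and the Gauss--Mahler measure comparisons). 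Dividing through by the leading coefficient and clearing denominators produces $\alpha$ and the $g_i$, with $\h(\alpha)$ and $\h(g_i)+\h(f_i)$ bounded by the canonical height of the final $0$-cycle, which is the displayed quantity.

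The main obstacle, as usual in this circle of ideas, is the bookkeeping at infinity: $V$ need not be closed in $\P^n$, the $f_j$ need not be homogeneous, and the naive incidence variety may acquire spurious components supported on the hyperplane at infinity, which would both inflate the degree/height bounds and destroy the clean product $\prod_j d_j$. Handling this requires either a careful choice of the generic linear forms used in the construction (to ensure the intersection is proper and the excess components are avoided) or an explicit removal of the infinity contribution via a saturation/Perron-type argument, tracking that this removal does not worsen the height by more than the allowed $\log(n+3)$ factors. A secondary technical point is ensuring the hypothesis $s\le r+1$ is used correctly: with fewer equations than the codimension would require for a $0$-dimensional intersection, the output of the resultant construction is a cycle of positive dimension, so $\alpha$ is extracted not as an evaluation but as a suitable coefficient of the Chow form of that cycle—one must check this coefficient is a nonzero integer and that its height obeys the same bound, which follows from the height of the Chow form being the canonical height of the cycle up to the same normalization constants.
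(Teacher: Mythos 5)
Your proposal correctly identifies the overall strategy---arithmetic intersection theory in multiprojective spaces, Jelonek's construction, the reduction to Theorem~\ref{arithparam}---but the concrete mechanism you sketch does not actually produce the B\'ezout identity, and that is exactly the crux.

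The critical missing ingredient is the auxiliary variable $z$ and the map $\varphi\colon V\times\A^1\to V\times\A^s$, $(\bfx,z)\mapsto(\bfx,zf_1(\bfx),\dots,zf_s(\bfx))$. Because the $f_j$ have no common zero on $V$, $\varphi$ is an isomorphism onto its closed image, and composing with a generic linear projection gives a \emph{finite} map $\pi\circ\varphi\colon V\times\A^1\to\A^{r+1}$, $(\bfx,z)\mapsto(zf_1+\ell_1,\dots,zf_s+\ell_s,\ell_{s+1},\dots,\ell_{r+1})$. The B\'ezout identity is then extracted not as ``a resultant'' but as the coefficient of $z^\delta$ in the identity $E(zf_1+\ell_1,\dots,zf_s+\ell_s,\ell_{s+1},\dots,\ell_{r+1},z)=0$ on $V\times\A^1$, where $E\in\Z[\bfu][\bfy,z]$ is the minimal polynomial of $z$ for the integral extension $(\pi\circ\varphi)^*$; every monomial $\bfy^{\bfa}z^{\delta-j}$ with $j\ge1$, once $\bfy$ is specialized, contributes a polynomial divisible by some $f_i$, which is how the $g_i$ arise (via the explicit formula~\eqref{eq:19}), while the coefficient $\alpha_{\bfzero,0}$ of $z^\delta$ supplies $\alpha$. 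Your ``concretely''-described construction---the graph of $x\mapsto(f_1(x):\cdots)$ in $\P^n\times(\P^1)^s$ intersected with divisors, with a Chow form read off---does yield a cycle of controlled degree and height, but you never explain how that cycle produces polynomials $g_i$ and an integer $\alpha$ satisfying $\alpha=\sum g_if_i$, and it cannot: without the extra affine coordinate $z$ and the minimal-polynomial expansion trick there is no algebraic bridge from ``a Chow form of bounded height'' to a B\'ezout identity. This is precisely why the construction relies on the a priori truth of the Nullstellensatz (Remark~\ref{rem:6}): it is used to show $\varphi$ has a left inverse.

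Once $E$ is identified (Lemma~\ref{lemm:10}) with the primitive squarefree implicit equation of the closure of the image of $\bfx\mapsto(q_1(\bfx),\dots,q_{r+1}(\bfx))$ over $\Z[\bfu,z]$, the actual height bound is an invocation of the arithmetic Perron theorem~\ref{implicitmultiparZ}---which is where the intersection-theoretic machinery you invoke genuinely lives---followed by the elementary bookkeeping of Lemma~\ref{lemma:2-17} that tracks the $(4r+8)\log(n+3)$ through binomial expansions and support counts. Your intuition that this constant accumulates from repeated norm/height comparisons is correct in spirit, but the hypothesis $s\le r+1$ is used to ensure $\pi\circ\varphi$ is a finite map between $(r+1)$-dimensional affine varieties, not, as you suggest, to control the dimension of a resultant cycle or to make $\alpha$ a ``suitable coefficient of the Chow form''.
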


For $V=\A^{n}$,  this result gives the bounds
\begin{equation*}
\deg\Big(g_if_i\Big) \leq \prod_{j=1}^{s}d_j \quad, \quad
\h(\alpha), \h(g_i)+\h(f_i) \leq \sum_{\ell=1}^{s} \bigg(\prod_{j\ne
\ell}d_j \bigg) {h_\ell }+ (4n+8)\log(n+3)\prod_{j=1}^{s}d_j.
\end{equation*}
These bounds are substantially sharper than the previously
known. Moreover, they are close to optimal in many situations.  For
instance, let $d_1,\dots, d_{n+1}$, $H\ge 1$ and set
$$
f_1= x_1-H,\ f_2= x_2-x_1^{d_2},\ \dots, \ f_n=
x_{n}-x_{n-1}^{d_n},\ f_{n+1}=x_n^{d_{n+1}}.
$$
This is a system of polynomials without common zeros. Hence, the
above result implies that there is a B\'ezout identity
$\alpha=g_1f_1+\cdots+g_{n+1}f_{n+1}$ which satisfies $\h(\alpha)
\le d_2\cdots d_{n+1}(\log(H) + (4n+8)\log(n+3))$.  On the other
hand, specializing any such identity at the point
$(H,H^{d_2},\dots, H^{d_2\cdots d_{n}})$, we get
$$
\alpha=g_{n+1}(H,H^{d_2},\dots,  H^{d_2\cdots d_{n}})
 H^{ d_2\cdots d_{n+1}}.
$$
This implies the lower bound $\h( \alpha)\ge d_2\cdots d_{n+1}\log(H)$
and   shows
that the height bound in Theorem \ref{mt} is sharp in this case.
More examples can be found in \S\ref{NssArithZ}.

It is important to mention that all previous results in the
literature are limited to the case when $V$ is a complete intersection
and cannot properly distinguish the influence of each individual
$f_j$, due to the limitations of the methods applied. Hence,
Theorem~\ref{mt} is a big progress as it holds for an
arbitrary variety and gives bounds
depending on the degree and height of each $f_j$. This last point is
more important than it might seem at first. As it is well-known, by
using Rabinowicz' trick one can show that the weak Nullstellensatz
implies its strong version. However, this reduction yields good bounds
for the strong Nullstellensatz only if the corresponding weak version
can correctly differentiate the influence of each $f_j$, {see}
Remark~\ref{rabinowicz}. Using this observation, we obtain in
\S\ref{NssArithZ} the following arithmetic version of the strong
Nullstellensatz over a variety.

\begin{theorem} \label{mt_strong}
Let $V\subset \A^n(\overline{\Q})$ be a variety defined over  $\Q$
of pure dimension $r$  and $g,f_1,\dots, f_s\in
\Z[x_1,\dots,x_n]$ such that $g$ vanishes on the
common zeros of $f_{1},\dots, f_{s}$ in $V$. Set $d_j=\deg(f_j)$
and $h=\max_{j}\h_\aff (f_j)$ for $1\le j\le s$. Assume  that $d_1\ge
\cdots \ge d_{s}\ge 1$ and
set $D=\prod_{j=1}^{\min\{s,r+1\}}d_j$.
Set also $d_0=\max\{1,\deg(g)\}$ and
$h_0=\h_\aff(g)$. Then there exist $\mu\in \N$,  $\alpha\in
\Z\setminus \{0\}$ and $g_1,\dots,g_s\in \Z[x_1,\dots,x_n]$ such that
$$\alpha \,g^\mu=g_1f_1+\cdots+g_sf_s \quad \mbox{on } V$$
 with
\begin{align*} \bullet \ & \mu \le  2 D \deg(V),
\\[1mm]
\bullet \ & \deg(g_if_i)\le  4 d_{0}D\deg(V),\\[-3mm]
 \bullet \ & \h(\alpha),\h(g_i)+\h(f_i) \le
 2d_{0}D  \bigg(\wh(V) +\deg(V)\bigg(\frac{3h_0}{2d_0}+\sum_{\ell=1}^{\min\{s,r+1\}}
\frac{h}{d_\ell} + c(n, r,s)\bigg)\bigg),
\end{align*}
where $c(n,r,s)\le
 (6r+17)\log(n+4) + 3(r+1)\log(\max\{1,s-r\})$.
\end{theorem}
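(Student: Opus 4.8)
\medskip
\noindent\textbf{Proof plan.}
The plan is to deduce the statement from the weak arithmetic Nullstellensatz over a variety (Theorem~\ref{mt}, or its general form Theorem~\ref{arithparam}) by means of Rabinowicz' trick, after a preliminary reduction to the case $s\le r+1$. As anticipated in Remark~\ref{rabinowicz}, the essential feature that makes this work is that the bounds in Theorem~\ref{mt} distinguish the contribution of each individual $f_j$; the factors $2$ and $4$ and the function $c(n,r,s)$ will simply come out of tracking degrees and heights through these two reductions together with one clearing of denominators.

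First I would dispose of the trivial case where $g$ is constant, so assume $d_0=\deg(g)\ge 1$. \emph{Reduction to $s\le r+1$.} When $s>r+1$, I would exploit that $V\setminus V(g)$ is a quasi-affine variety of dimension $\le r$ on which $f_1,\dots,f_s$ have no common zero, and invoke a quantitative Bertini-type argument over $\Z$ to produce polynomials
\[
q_i=\sum_{j=1}^{s}\lambda_{ij}\,\ell_i^{\,d_i-d_j}\,f_j\qquad(1\le i\le r+1),
\]
with $\lambda_{ij}\in\Z$ of absolute value $O(s-r)$ (and $\lambda_{ij}=0$ when $d_j>d_i$) and $\ell_i\in\Z[\bfx]$ a linear form with $0/1$ coefficients, chosen so that $\deg(q_i)=d_i$ and $q_1,\dots,q_{r+1}$ have no common zero in $V\setminus V(g)$; equivalently, $g$ vanishes on the common zeros of $q_1,\dots,q_{r+1}$ in $V$. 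Then $\prod_{i=1}^{r+1}\deg(q_i)=D$ and $\h(q_i)\le h+(d_i-d_j)\log(n+1)+O(\log(s-r))$. Since any B\'ezout identity for the $q_i$ turns into one for the $f_j$ by re-expanding $q_i=\sum_j\lambda_{ij}\ell_i^{\,d_i-d_j}f_j$ (which changes degrees and heights by controlled amounts only), this reduces us to $s\le r+1$.

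Next comes \emph{Rabinowicz' trick}: introduce a fresh variable $x_{n+1}$ and work on $V\times\A^1\subset\A^{n+1}$, which has pure dimension $r+1$ and is defined over $\Q$, with the $s+1\le r+2$ polynomials $f_1,\dots,f_s,\,1-x_{n+1}g$. They have no common zero in $V\times\A^1$, because at a common zero of the $f_j$ in $V$ one has $g=0$ and hence $1-x_{n+1}g=1$. I would then apply Theorem~\ref{mt} with $n+1$ variables, dimension $r+1$, degrees $d_1,\dots,d_s,d_0+1$ and heights $h_1,\dots,h_s,h_0$, obtaining $\alpha\in\Z\setminus\{0\}$ and $\wt g_1,\dots,\wt g_s,\wt g_0$ with
\[
\alpha=\wt g_1 f_1+\dots+\wt g_s f_s+\wt g_0\,(1-x_{n+1}g)\quad\text{on }V\times\A^1
\]
and the corresponding degree and height estimates for the summands. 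Here I would use the identities $\deg(\ov{V\times\A^1})=\deg(\ov V)$ and $\wh(\ov{V\times\A^1})=\wh(\ov V)$ for the closure of a cylinder in projective space established earlier in the text, the elementary bounds $d_0+1\le 2d_0$ and $\h(1-x_{n+1}g)=h_0$, and — crucially for the clean bound on $\mu$ below — the sharper estimate for $\deg_{x_{n+1}}\wt g_i$ coming from the multiprojective refinement of the Nullstellensatz, using that $1-x_{n+1}g$ is the only input involving $x_{n+1}$ and is linear in it.

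Finally, \emph{clearing denominators}: with $\mu=\max_i\deg_{x_{n+1}}\wt g_i$, substituting $x_{n+1}=1/g$ annihilates the last summand, and multiplying by $g^\mu$ gives
\[
\alpha\,g^{\mu}=\sum_{i=1}^{s}\bigl(g^{\mu}\,\wt g_i(\bfx,1/g)\bigr)\,f_i\quad\text{on }V,\qquad g^{\mu}\,\wt g_i(\bfx,1/g)\in\Z[\bfx].
\]
Setting $g_i=g^{\mu}\wt g_i(\bfx,1/g)$ (post-composed, when $s>r+1$, with the re-expansion $q_i\mapsto$ the $f_j$ of the first step) yields the asserted identity. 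Reading off the estimates: the $\bfx$-degrees give $\deg(g_if_i)\le 4d_0D\deg(V)$ and $\mu\le 2D\deg(V)$; the height of $\alpha$ is already bounded by Theorem~\ref{mt}; and for $\h(g_i)$ one adds to that the term $\mu\,\h(g)\le 2D\deg(V)\,h_0$ coming from the factor $g^{\mu}$, which together with the $h_0/(d_0+1)$ contribution of Theorem~\ref{mt} accounts for the coefficient $\tfrac{3h_0}{2d_0}$; replacing $(n,r)$ by $(n+1,r+1)$ turns $(4r+8)\log(n+3)$ into $(4r+12)\log(n+4)$, and the leftover terms $(d_i-d_j)\log(n+1)$ and $O(\log(s-r))$ coming from $\h(q_i)$ are absorbed into $c(n,r,s)\le(6r+17)\log(n+4)+3(r+1)\log(\max\{1,s-r\})$.

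I expect the main obstacle to be the first reduction: replacing an arbitrarily long system by one of $r+1$ polynomials while \emph{simultaneously} keeping the product of their degrees equal to $D=\prod_{j\le r+1}d_j$ (rather than the naive $d_1^{\,r+1}$), keeping their heights within $h+O(\log(n)+\log(s-r))$, and preserving that $g$ vanishes on the common zeros in $V$. This is exactly what forces the weighted combinations $q_i=\sum_j\lambda_{ij}\ell_i^{\,d_i-d_j}f_j$ and a genericity argument carried out effectively over $\Z$. Once this is in place, the remainder is a careful but essentially routine propagation of estimates, the one further subtlety being the extraction of the sharp bound on $\mu$ from the multiprojective form of the weak Nullstellensatz.
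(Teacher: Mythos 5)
Your overall strategy — weak Nullstellensatz, Rabinowicz' trick, then clearing denominators — is the same as the paper's (Theorem~\ref{strongNSSpar} specialized to $k=\Q$, $p=0$, combined with Corollary~\ref{arithmeticS}). The first reduction to $s\le r+1$ by taking integer linear combinations is also in the same spirit as the paper's proof of Corollary~\ref{arithmeticS}, which uses generic combinations with fresh variables $\bfv$ and then specializes a coefficient; that step is fine.

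The genuine gap is in the bound on $\mu$. You introduce $1-x_{n+1}g$ (degree $d_0+1$), set $\mu=\max_i\deg_{x_{n+1}}\wt g_i$, and claim to get $\mu\le 2D\deg(V)$ from ``the sharper estimate for $\deg_{x_{n+1}}\wt g_i$ coming from the multiprojective refinement of the Nullstellensatz.'' No such refinement is available in the paper for a \emph{geometric} variable. Theorem~\ref{arithparam} refines the degrees in the parameter groups $\bft_l$, but $x_{n+1}$ is one of the $\bfx$-variables on $V\times\A^1$, and the theorem only controls $\deg_{\bfx}(g_if_i)$ as a whole. (Treating $x_{n+1}$ as a parameter instead would restrict you to $s+1\le r+1$ polynomials on the $r$-dimensional $V$, so it fails precisely in the boundary case $s=r+1$.) With only the total degree bound one gets $\deg_{x_{n+1}}\wt g_i\le\deg_{\bfx,y}(\wt g_i)=O(d_0D\deg(V))$, a factor $d_0$ too large, and this error propagates multiplicatively to $\deg(g_if_i)$ (giving $O(d_0^2D\deg(V))$ instead of $4d_0D\deg(V)$) and to the $h_0$-term in the height bound.

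The paper avoids this by replacing $1-x_{n+1}g$ with $1-y^{d_0}g$, of degree $2d_0$. The crucial point is not the degree but that the whole input system then lies in the subring $\Z[\bfx,y^{d_0}]$, so the B\'ezout identity may be taken there too. Writing $\ov g_i(\bfx,y)=\widehat g_i(\bfx,y^{d_0})$, one defines $\mu=\max_l\deg_y(\widehat g_l)=\max_l\deg_y(\ov g_l)/d_0$, and now the \emph{total} degree bound $\deg_{\bfx,y}(\ov g_l)\le 2d_0D\deg(V)$ from Corollary~\ref{arithmeticS} yields $\mu\le 2D\deg(V)$ with no refinement of partial degrees needed. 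This division by $d_0$ is exactly what your version is missing, and the bound $\mu\le 2D\deg(V)$ is needed both in the final degree estimate $\deg(g_if_i)\le\deg(\widehat g_if_i)+\mu d_0\le 4d_0D\deg(V)$ and in the height estimate through $\mu\,h_0$.
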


\medskip Our treatment of this problem is the arithmetic
counterpart of Jelonek's approach to produce bounds for the
degrees in the Nullstellensatz over a variety~\cite{Jelonek05}.  To
this end, we develop a number of tools in arithmetic intersection
and elimination theory in {products} of projective spaces. A key
role is played by the notion of {canonical mixed
heights} of multiprojective varieties, which we study from the point
of view of resultants. Our presentation of mixed resultants of
cycles in multiprojective spaces is mostly a reformulation of the
theory developed by R\'emond in~\cite{Remond01a,Remond01b} as an
extension of Philippon's theory of eliminants  of homogeneous ideals
\cite{Philippon86}. We also establish  new properties of them,
including their behavior under projections
(Proposition~\ref{formainicialproy})  and products
(Proposition~\ref{resultantesproductosciclos}).

Let $\bfn=(n_1,\dots,n_m)\in \N^{m}$ and set
$\P^\bfn=\P^{n_1}(\Qbarra)\times\ldots\times\P^{n_m}(\Qbarra)$ for
the corresponding multiprojective space. For a cycle $X$ of
$\P^{\bfn}$ of pure dimension $r$ and a multi-index
$\bfc=(c_{1},\dots, c_{m})\in \N^{m}$ of length $r+1$, the mixed
Fubini-Study height $\h_{\bfc}(X)$ is defined as an alternative
Mahler measure of the corresponding mixed resultant
(Definition~\ref{def:4}). The canonical mixed height is then defined
by a limit process as
\begin{displaymath}
  \wh_{\bfc}(X):=\lim_{\ell\to\infty} \ell^{-r-1} \h_{\bfc}([\ell]_{*}X),
\end{displaymath}
where $[\ell]$ denotes the $\ell$-power map of  $\P^{\bfn}$
(Proposition-Definition \ref{mixed height and sum}).

To  handle mixed degrees and heights, we introduce a notion of
{extended Chow ring of $\P^{\bfn}$}  (Definition~\ref{def:8}). It is
an arithmetic analogue of the Chow ring of $\P^{\bfn}$ and can be
identified with the quotient ring
\begin{math}
\R[\eta,\theta_1,\dots,\theta_m]/(\eta^2,\theta_1^{n_1+1},\dots,\theta_m^{n_m+1}).
\end{math}
We associate to the cycle~$X$  an element in this ring, denoted
$[X]_{_{\Z}}$, corresponding  under this identification to
\begin{displaymath} \sum_{\bfc}\wh_{\bfc}(X)\, \eta\,
\theta_{1}^{n_{1}-c_{1}} \cdots \theta_{m}^{n_{m}-c_{m}} +
\sum_{\bfb}\deg_{\bfb}(X) \, \theta_{1}^{n_{1}-b_{1}} \cdots
\theta_{m}^{n_{m}-b_{m}},
\end{displaymath}
the sums being indexed by all $\bfb, \bfc\in \N^{m}$ of
respective lengths $r$ and $r+1$ such that $\bfb,\bfc\le \bfn$.  Here,
$\deg_{\bfb}(X)$ denotes the mixed degree of $X$ of index $\bfb$.
This element contains the information of all non-trivial mixed degrees
and canonical mixed heights of $X$, since $\deg_{\bfb}(X)$ and
$\wh_{\bfc}(X)$ are zero for any other $\bfb$ and $\bfc$.

The extended Chow ring of $\P^{\bfn}$ turns out to be a quite useful
object which allows to translate geometric operations on
multiprojective cycles into  algebraic operations on rings and
classes. In particular, we obtain the following multiprojective
arithmetic B\'ezout's inequality, see also Theorem \ref{multihom}.
For a multihomogeneous polynomial $f\in
\Z[\bfx_{1},\dots,\bfx_{m}]$, where $\bfx_{i}$ is a group of $n_{i}+1$ variables, we denote by
 $||f||_\sup$ its sup-norm  (Definition
\ref{def:1}) and consider the element
$[f]_{\sup}$ in the extended Chow ring corresponding to the element $\sum_{i=1}^m
\deg_{\bfx_i}(f)\theta_i+ \log||f||_\sup \, \eta.$

\begin{theorem}\label{thm:1}
  Let $X$ be an effective equidimensional cycle of $\P^\bfn$ defined over $\Q$ and $f\in \Z[\bfx_{1},\dots,\bfx_{m}]$  a
multihomogeneous polynomial  such that $X$ and   $\div(f)$ intersect
properly. Then
$$
[ X\cdot\div(f)]_{_{\Z}} \le
[ X]_{_{\Z}}\cdot [ f]_{\sup}.
$$
\end{theorem}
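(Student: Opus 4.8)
\emph{Plan of proof.} The strategy is to reduce to an irreducible $X$, dispatch the ``degree part'' of the inequality by the classical multiprojective B\'ezout theorem, and obtain the ``height part'' from the theory of mixed resultants. So, first I would reduce to $X=V$ a variety: mixed degrees and canonical mixed heights being additive on cycles, if $X=\sum_V m_V\,V$ is the decomposition of $X$ into irreducible components then $[X]_{_{\Z}}=\sum_V m_V\,[V]_{_{\Z}}$; moreover the hypothesis that $X$ and $\div(f)$ intersect properly forces each $V$ to meet $\div(f)$ properly, so that $X\cdot\div(f)=\sum_V m_V\,(V\cdot\div(f))$. Since $[f]_{\sup}$ does not depend on $X$ and the partial order on the extended Chow ring respects sums of classes with nonnegative coefficients, it suffices to prove the inequality for a variety $V$, say of dimension $r$.

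Next I would separate the two homogeneous components of the inequality in $\R[\eta,\theta_1,\dots,\theta_m]/(\eta^2,\theta_1^{n_1+1},\dots,\theta_m^{n_m+1})$. Expanding $[V]_{_{\Z}}\cdot[f]_{\sup}$ and using $\eta^2=0$, the $\eta$-free part reads
$$\deg_{\bfb}(V\cdot\div(f))\ =\ \sum_{i=1}^{m}\deg_{\bfx_i}(f)\,\deg_{\bfb+\bfe_i}(V)$$
for every $\bfb$ of length $r-1$ with $\bfb\le\bfn$ (summands with $\bfb+\bfe_i\not\le\bfn$ being dropped, consistently with the truncation of the ring); this is the classical multiprojective B\'ezout theorem, i.e.\ the identity $[V\cdot\div(f)]=[V]\cdot[\div(f)]$ in $\Ch(\P^{\bfn})$ with $[\div(f)]=\sum_i\deg_{\bfx_i}(f)\,\theta_i$. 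The $\eta$-part is the new, arithmetic, statement: for every $\bfc$ of length $r$ with $\bfc\le\bfn$,
$$\wh_{\bfc}(V\cdot\div(f))\ \le\ \sum_{i=1}^{m}\deg_{\bfx_i}(f)\,\wh_{\bfc+\bfe_i}(V)\ +\ \deg_{\bfc}(V)\,\log\|f\|_{\sup}.$$

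For this height inequality I would argue through mixed resultants, transporting Jelonek's geometric argument to the arithmetic setting. Fixing $\bfc$ of length $r$, I would pick $r$ generic multihomogeneous linear forms whose types add up to $\bfc$ and eliminate $V$ against these forms together with $f$: this realizes the mixed resultant $\Res_{\bfc}(V\cdot\div(f))$ --- whose alternative Mahler measure is, by definition, $\h_{\bfc}(V\cdot\div(f))$ --- as the specialization at the coefficient vector of $f$ of a resultant $\mathcal R$ of $V$ against the $r$ chosen forms and one further generic form of multidegree $\bfdeg(f)$. As a polynomial in the coefficients of this last form, $\mathcal R$ is homogeneous of degree $\deg_{\bfc}(V)$; as a polynomial in the coefficients of the $r$ chosen forms it decomposes --- via the behaviour of mixed resultants under adjunction of generic linear forms --- into the factors $\Res_{\bfc+\bfe_i}(V)$ with multiplicities $\deg_{\bfx_i}(f)$. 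Comparing Mahler measures then yields the Fubini--Study version of the inequality: the measure is additive over products and over successive adjunctions of generic forms, while specializing the coefficients of a form that occur in $\mathcal R$ with degree $\deg_{\bfc}(V)$ to the concrete polynomial $f$ raises the measure by at most $\deg_{\bfc}(V)\,\log\|f\|_{\sup}$. Finally I would pass to the canonical mixed heights: applying the Fubini--Study inequality to the power-map images $[\ell]_{*}V$ (with $f$ carried through $[\ell]$, for which $\|f\circ[\ell]\|_{\sup}=\|f\|_{\sup}$ and $\bfdeg(f\circ[\ell])=\ell\,\bfdeg(f)$), using the functoriality of mixed resultants and degrees under the power maps, and letting $\ell\to\infty$ in the limit defining $\wh_{\bfc}$.

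The hard part will be the resultant identity: realizing $\Res_{\bfc}(V\cdot\div(f))$ as precisely the right specialization of a resultant of $V$, with the correct multiplicities $\deg_{\bfx_i}(f)$, which hinges on the multiplicative behaviour of R\'emond's mixed resultants of multiprojective cycles (under adjunction of generic forms, under intersection with a hypersurface, and under the power maps), and then carrying out the Mahler-measure bookkeeping cleanly enough that the only genuinely new term produced is $\deg_{\bfc}(V)\,\log\|f\|_{\sup}$. A secondary technical point is the passage from Fubini--Study to canonical heights: one must track the error terms arising in the Fubini--Study inequality, see how they scale under $V\mapsto[\ell]_{*}V$, and check that they do not survive the limit --- which is exactly what the definition of $\wh_{\bfc}$ through the power maps is meant to ensure.
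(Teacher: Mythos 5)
Your reduction to an irreducible $V$ and the splitting of the inequality into the $\eta$-free part (Theorem~\ref{bezgeom}) and the $\eta$-part are both correct and match the paper. For the $\eta$-part, however, the ``Mahler-measure bookkeeping'' you flag as the hard step conceals the real difficulty rather than resolving it. After the specialization formula of Proposition~\ref{especializacion} you still need two non-formal facts about $\mathcal R:=\Res_{\bfe(\bfc),\bfdeg(f)}(V)$: (i) its Philippon/Mahler measure is controlled by $\sum_i \deg_{\bfx_i}(f)\,\h_{\bfc+\bfe_i}(V)$ --- the archimedean analogue of Lemma~\ref{lemm:3}, whose function-field proof rests on a non-archimedean Gauss-lemma argument ($v(F_0(\bfxi))=\sum_j d_{0,j}\min_\ell v(\xi_{j,\ell})$) that does not carry over verbatim to $\m$ or $\ph$ --- and (ii) plugging the coefficients of $f$ into the last group of variables of $\mathcal R$ raises the measure by at most $\deg_\bfc(V)\log\|f\|_\sup$. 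Neither of these follows from the multiplicativity of the Mahler measure alone; together they amount to R\'emond's arithmetic B\'ezout for Fubini--Study mixed heights, which the paper cites as a black box (\cite[Thm.~3.4 and Cor.~3.6]{Remond01b}) rather than rederiving via resultants as in Theorem~\ref{bezaritff}\eqref{item:24}.

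The clearer-cut gap is the passage to canonical heights. You propose to apply the Fubini--Study inequality to $[\ell]_*V$ and $f\circ[\ell]$ and let $\ell\to\infty$, but the power map does not interact with $\div(f)$ in the way this limit requires: $\div(f\circ[\ell])=[\ell]^*\div(f)$ is in general not $\ell\,\div(f)$ (already $x_0^\ell-x_1^\ell$ is a sum of $\ell$ distinct lines, not $\ell$ times one), and the projection formula gives $[\ell]_*V\cdot\div(f)=[\ell]_*\big(V\cdot\div(f\circ[\ell])\big)$, which is \emph{not} $[\ell]_*\big(V\cdot\div(f)\big)$, the cycle whose rescaled heights define $\wh_\bfc(V\cdot\div(f))$. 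This is precisely why the paper works with the Veronese embedding $v_\bfell$ instead of $[\ell]$: $f^\ell$ is the pullback under $v_\bfell$ of a polynomial $F_\ell$ of \emph{fixed} multidegree $\bfdeg(f)$, so that $v_\bfell^*\div(F_\ell)=\ell\,\div(f)$ and hence $(v_\bfell)_*V\cdot\div(F_\ell)=\ell\,(v_\bfell)_*(V\cdot\div(f))$. Combined with the exact scaling of mixed degrees and canonical heights under $v_\bfell$ (Proposition~\ref{veronese}), the maximum-modulus bound $\|F_\ell\|_{v_\ell(V)}\le\|f\|_\sup^\ell$, and the Fubini--Study/canonical comparison of Proposition~\ref{prop:10}\eqref{item:36}, this produces an $O(\ell^r\log\ell)$ error that vanishes upon dividing by $\ell^{r+1}$ and letting $\ell\to\infty$. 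Without an identity of that kind linking the intersection with $\div(f)$ to its transform under the limiting maps, your last step does not close.
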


Statements on classes in the extended Chow ring can  easily be
translated into statements on mixed degrees and heights. In this
direction, the above result implies that, for any $\bfb\in \N^{m}$
of length equal to $\dim(X)$,
\begin{displaymath}
\wh_\bfb(X\cdot \div(f)) \le
\sum_{i=1}^m\deg_{\bfx_i}(f)  \wh_{\bfb+\bfe_i}(X) +\log||f||_{\sup}\deg_\bfb(X)
 \end{displaymath}
 where $\bfe_{i}$ denotes the $i$-th vector of the standard basis of
 $\R^{m}$. In a similar way, we also study the behavior of arithmetic
 classes (and {\it a fortiori}, of canonical mixed heights) under
 projections (Proposition~\ref{113alturasrat}) and products
 (Proposition \ref{altrodgenrat}), among other results.

\medskip

Jelonek's approach consists in
producing a B\'ezout identity from an implicit equation of a
specific regular map. In general, the implicitization problem
consists in computing equations for an algebraic variety~$W$ from a
given rational parameterization of it. The typical case is when~$W$
is a hypersurface: the variety is {then defined by a single}
equation and the problem consists in computing this ``implicit
equation''. We consider here  the problem of estimating the height
of the implicit equation of a hypersurface parameterized by a
regular map $V\to W$ whose domain is an affine variety $V$, in terms
of the degree and {the height of $V$
  and of the polynomials defining the map.}  To this end, we prove the
following arithmetic version of Perron's theorem over a
variety~\cite[Thm.~3.3]{Jelonek05}.  It is obtained as a consequence
of Theorem~\ref{implicitmultiparZ}.

\begin{theorem} \label{PerronIntro}
Let $V\subset \A^n(\overline{\Q})$ be a variety defined over $\Q$ of
pure dimension $r$. Let $q_1,\dots, q_{r+1}\in
\Z[x_1,\dots,x_n]\setminus \Z$  such that the   closure of
the image of the map
$$
V\longrightarrow \A^{r+1} (\overline{\Q})\quad, \quad
\bfx\longmapsto\left(q_1(\bfx),\dots,q_{r+1}(\bfx)\right)
$$
is a hypersurface. Let $E=\sum_{\bfa\in\N^{r+1}} \alpha_\bfa
\bfy^\bfa \in \Z[y_1,\dots,y_{r+1}]$ be a primitive and squarefree
polynomial defining this hypersurface. Set $d_j=\deg(q_j)$, $h_j=\h(q_j)$
for $1\le j\le r+1$. Then, for all $\bfa=(a_1,\dots,a_{r+1})$ such
  that $\alpha_{\bfa}\ne 0$,
$$\aligned
\bullet & \
\sum_{i=1}^{r+1}a_id_i \le\bigg(\prod_{j=1}^{r+1}d_j\bigg) \deg(V),\\[-1mm]
\bullet & \ \h(\alpha_\bfa) + \sum_{i=1}^{r+1} a_ih_i  \, \le
\bigg(\prod_{j=1}^{r+1}d_j\bigg)\bigg(\wh(V)+\deg(V)\bigg(
\sum_{\ell=1}^{r+1}\frac{h_\ell}{d_\ell} +
(r+2)\log(n+3)\bigg)\bigg).\endaligned$$
\end{theorem}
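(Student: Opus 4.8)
The plan is to realize the hypersurface $\ov{\Im(V)}\subset \A^{r+1}$ as a projection of a multiprojective cycle built from the graph of the parametrization, and then apply the arithmetic B\'ezout inequality (Theorem~\ref{thm:1}) together with the behavior of canonical mixed heights under projections (Proposition~\ref{formainicialproy}, and the arithmetic counterpart Proposition~\ref{113alturasrat}) to control the class of that cycle in the extended Chow ring. More precisely: let $\ov V\subset \P^n$ be the closure of $V$ with respect to the standard embedding, so that $\deg(\ov V)=\deg(V)$ and $\wh(\ov V)=\wh(V)$. Homogenize each $q_j$ to a form and consider the incidence variety $\Gamma\subset \P^n\times \P^1\times\cdots\times\P^1$ ($r+1$ copies of $\P^1$) defined as (the closure of) the set of points $(\bfx;(q_1(\bfx):1),\dots,(q_{r+1}(\bfx):1))$ for $\bfx\in V$. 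Cutting $\ov V\times (\P^1)^{r+1}$ successively by the $r+1$ multihomogeneous divisors encoding the relations $y_j=q_j(\bfx)$ (each of multidegree $d_j$ in $\bfx_0$ and $1$ in the $j$-th $\P^1$-block), and noting these intersections are proper since $V$ has pure dimension $r$ and the image is a hypersurface, Theorem~\ref{thm:1} applied $r+1$ times bounds $[\Gamma]_{_\Z}$ in the extended Chow ring of $\P^n\times(\P^1)^{r+1}$ by $[\ov V]_{_\Z}\cdot\prod_{j=1}^{r+1}[f_j]_{\sup}$, where the relevant divisor $f_j$ has $||f_j||_\sup$ comparable to $e^{h_j}$ up to the combinatorial factor coming from the number of monomials. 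Reading off the appropriate coordinate of this product gives, for the index that survives after projecting away the $\P^n$ factor, a degree bound $\le (\prod_j d_j)\deg(V)$ and a height bound of the shape $(\prod_j d_j)(\wh(V)+\deg(V)(\sum_\ell h_\ell/d_\ell + \text{(combinatorial)}))$.

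The second step is to push forward to $\P^1\times\cdots\times\P^1$ (the multiprojective closure of $\A^{r+1}$) via the projection forgetting the $\P^n$-factor, which is generically finite of degree one onto its image $\ov{\Im(V)}$ since $E$ is squarefree; here I would invoke Proposition~\ref{formainicialproy} for the mixed degrees and its arithmetic analogue for the canonical mixed heights, which say that the class of the image is dominated by the component of $[\Gamma]_{_\Z}$ of the appropriate (co)degree. The image is a hypersurface in $(\P^1)^{r+1}$, hence defined by a single multihomogeneous form $\wt E$, which is the multihomogenization of $E$; its $\bfy$-multidegree vector is $(a_1^{\max},\dots,a_{r+1}^{\max})$ where $a_j^{\max}=\max\{a_j:\alpha_\bfa\ne 0\}$ is controlled by $\deg_{y_j}$ of the image, and the height of $\wt E$ — which, since $E$ is primitive, essentially matches $\h(E)$ up to a comparison between sup-norm and length that contributes a $\log$(number of monomials) term — is controlled by the canonical mixed height of the appropriate index. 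Translating back, $\h(\alpha_\bfa)\le \h(\wt E)$ and $\sum_i a_ih_i\le (\sum_i a_i^{\max}h_i)$, so combining with the bounds from the first step yields the asserted inequalities after absorbing all combinatorial contributions into the claimed constant $(r+2)\log(n+3)$.

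The main obstacle I anticipate is the bookkeeping of the combinatorial constant: one must be careful that passing from $||\cdot||_\sup$ to $\h(\cdot)$ for each $f_j$ (a factor like $\log\binom{n+d_j}{n}\le d_j\log(n+1)+\cdots$), iterating the B\'ezout inequality $r+1$ times, and then comparing the sup-norm of the multihomogeneous implicit form with the height of the affine $E$ (another number-of-monomials factor, now in $r+1$ variables) all accumulate only to $(\prod_j d_j)\deg(V)\cdot(r+2)\log(n+3)$ and not something larger. This requires choosing the homogenizing divisors $f_j$ optimally (writing $y_j x_0^{d_j}-Q_j(\bfx)$ with $Q_j$ the homogenization of $q_j$, which has sup-norm exactly $||q_j||_\sup$ and only one extra monomial), and using the estimate $\sum_\ell (\prod_{j\ne\ell}d_j)h_\ell=(\prod_j d_j)\sum_\ell h_\ell/d_\ell$ to keep the leading term in the stated normalized form. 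A secondary technical point is to verify properness of all the successive intersections and the generic injectivity of the final projection; both follow from the hypothesis that the closure of the image is a hypersurface of dimension $r$ together with $\dim V=r$, so the generic fiber of $V\to\ov{\Im(V)}$ is finite, but this should be stated carefully, perhaps after a reduction to the case where $V$ is irreducible and the map is dominant onto its image.
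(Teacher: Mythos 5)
There is a genuine gap, and it is conceptual rather than a matter of bookkeeping.

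Your construction — cut $\ov V\times (\P^1)^{r+1}$ by the graph divisors, bound $[\Gamma]_\Z$ via Theorem~\ref{thm:1}, and push forward to $(\P^1)^{r+1}$ by Proposition~\ref{113alturasrat} — is essentially the proof of Theorem~\ref{racionalZ} in the paper, and what it delivers are the \emph{partial degree} bounds $\deg_{y_i}(E)\le \big(\prod_{j\ne i}d_j\big)\deg(V)$ together with a Mahler-measure bound $\m(E)\le \big(\prod_j d_j\big)\big(\wh(V)+\deg(V)\sum_\ell h_\ell/d_\ell\big)$. But the statement you must prove is a bound on the \emph{weighted Newton polytope}: for every $\bfa\in\Supp(E)$, $\sum_i a_i d_i\le\big(\prod_j d_j\big)\deg(V)$ and $\h(\alpha_\bfa)+\sum_i a_i h_i\le\cdots$. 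These are strictly stronger statements and cannot be derived from the partial degree and $\m(E)$ bounds. For the degree, the naive chain $\sum_i a_i d_i\le \sum_i d_i\deg_{y_i}(E)\le \sum_i d_i\big(\prod_{j\ne i}d_j\big)\deg(V)=(r+1)\big(\prod_j d_j\big)\deg(V)$ is off by a factor $r+1$; in fact the weighted bound is a genuinely sharper cut of the Newton polytope (it says the polytope lies under a single simplex, not just inside a box). For the height, your proposed step $\h(\alpha_\bfa)+\sum_i a_i h_i\le \h(E)+\sum_i \deg_{y_i}(E)\,h_i$ already doubles the $\sum_\ell h_\ell/d_\ell$ contribution, which the asserted bound does not permit.

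The ingredient you are missing is the Perron-type substitution, which is what the paper's proof of Theorem~\ref{implicitmultiparZ} (and its function-field counterpart Theorem~\ref{perronpar}) is built around. One introduces auxiliary variables $w_1,\dots,w_{r+1}$ and forms
\[
P \;=\; E\big(w_1 + H_1 y_1,\ \dots,\ w_{r+1}+H_{r+1}y_{r+1}\big)\quad\text{with }H_j=\e^{h_j},
\]
so that $\max_\bfa\{\h(\alpha_\bfa)+\langle\bfh,\bfa\rangle\}=\h(P(\bfzero,\bfy))$; the weighted degree bound comes analogously from the substitution $y_j\mapsto w_j+v^{h_j}y_j^{d_j}$ in the function-field layer of the argument (Theorem~\ref{perronpar}). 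This converts the quantities you need into the plain height and degree of a new hypersurface $V(P)\subset \A^{2r+2}$, which is then realized as the image of a deformed graph $G(\bfd,\bfh)=(V\times\A^{2r+2})\cap\bigcap_j V(w_j+H_jy_j-q_j)$ under a linear projection, inside $\P^{n+2r+2}$ (a single projective space in the $m=0$ case, not $\P^n\times(\P^1)^{r+1}$). From there, the extended-Chow-ring B\'ezout and projection machinery you describe does apply, and the stated constants come out. Without this substitution, your approach — which operates directly on $V(E)$ — can only see the multidegree vector and the Mahler measure of $E$, not the finer shape of its Newton polytope, so the claimed conclusion does not follow.
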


For  $V=\A^n$
we have $r=n$, $\deg(V)=1$ and
$\wh(V)=0$.
Hence, the above result extends the classical Perron's theorem~\cite[Satz~57]{Perron27}, which
amounts to the weighted
degree bound for the
implicit equation $\sum_{i}a_id_i \le
\prod_{j}d_j $, by adding the bound for the height
$$
\h(\alpha_\bfa) + \sum_{i=1}^{n+1} a_ih_i  \le \sum_{\ell=1}^{n+1}
\bigg(\prod_{j\ne \ell}d_j\bigg) h_\ell +
(n+2)\log(n+3)\prod_{j=1}^{n+1}d_j.
$$

Our results on the implicitization problem as well as
those on mixed resultants and multiprojective arithmetic intersection
theory should be of independent interest, besides of their applications
to the arithmetic Nullstellensatz.

\medskip The method is not exclusive of $\Z$ but can also be
  carried over to other rings equipped with a suitable height
function. In this direction, we apply it to
$k[t_{1},\dots,t_{p}]$, the ring of polynomials over an arbitrary
field $k$ in $p$ variables: if we set $\bft=\{t_1,\dots, t_p\}$, the
{height} of a polynomial with coefficients in $ k[\bft]$ is its degree
in the variables $\bft$.  For this case, we also develop the
corresponding arithmetic intersection theory, including the behavior
of classes in the extended Chow ring with respect to intersections
(Theorem~\ref{bezaritff}), projections
(Proposition~\ref{113alturasff}), products
(Proposition~\ref{altrodgenff}) and ruled joins
(Proposition~\ref{prop:12}).  As a consequence, we obtain a parametric
analogue of Perron's theorem (Theorem~\ref{perronpar}) and then the
parametric Nullstellensatz below, which is a particular case of
Theorem~\ref{multiparametric}. For an affine equidimensional variety
$V\subset \A^{n}(\ov{k(\bft)})$, we denote by $\h(V)$ the
$\bft$-degree of the Chow form of its closure in
$\P^{n}(\ov{k(\bft)})$, see \S\ref{The height in the function field
  case} for details.
\begin{theorem}\label{mtp}
  Let $V\subset \A^n(\overline{k(\bft)})$ be a variety defined over
  $k(\bft)$ of pure dimension~$r$ and $f_1,\dots,f_s\in
  k[\bft][x_1,\dots,x_n]\setminus k[\bft]$ a family of $s\le r+1$
  polynomials without common zeros in $V$. Set $d_j=\deg_\bfx(f_j)$
  and $h_j=\deg_\bft(f_j)$~for $1\le j\le s$.  Then there exist
  $\alpha\in k[\bft]\setminus \{ 0\} $ and $g_1,\dots, g_s\in
  k[\bft][x_1,\dots,x_n] $ such that
$$\alpha=g_1f_1+\cdots+g_sf_s \quad \mbox{on } V$$
with \vspace{-3mm}
$$\aligned
\bullet &\  \deg_\bfx\left(g_if_i\right) \leq \bigg(\prod_{j=1}^{s}d_j\bigg)\deg(V),\\[-2mm]
\bullet & \ \deg(\alpha), \deg_\bft(g_if_i) \leq
\bigg(\prod_{j=1}^{s}d_j \bigg)\bigg( \h(V )+\deg(V) \sum_{\ell=1}^{s}
\frac{h_\ell }{d_\ell}\bigg).
\endaligned $$
\end{theorem}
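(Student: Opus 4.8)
The plan is to deduce Theorem~\ref{mtp} from the parametric version of Perron's theorem (Theorem~\ref{perronpar}) by transferring to the function field setting Jelonek's reduction of the Nullstellensatz to an implicitization problem; the statement is in fact a particular case of Theorem~\ref{multiparametric}, so it is enough to carry out this reduction. After disposing of the trivial situations (empty $V$, or some $f_j$ vanishing identically on $V$, which may be dropped), one first normalizes the number of equations: as $k(\bft)$ is infinite, pick generic affine-linear forms $\ell_{s+1},\dots,\ell_{r+1}$ in $x_1,\dots,x_n$ with coefficients in $k$, and set $q_j=f_j$ for $j\le s$ and $q_j=\ell_j$ for $s<j\le r+1$; since $\deg_\bfx(\ell_j)=1$ and $\deg_\bft(\ell_j)=0$, this changes neither $\prod_j d_j$ nor $\sum_\ell h_\ell/d_\ell$, and for the rest of the argument we set $d_j=1$, $h_j=0$ for $j>s$.

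The geometric core, following Jelonek, is to choose the auxiliary data so that the morphism
\begin{displaymath}
  \phi=(q_1,\dots,q_{r+1})\colon V\longrightarrow \A^{r+1}(\overline{k(\bft)})
\end{displaymath}
has image whose closure $W=\overline{\phi(V)}$ is a hypersurface which is moreover disjoint from the coordinate subspace $L=\{y_1=\dots=y_s=0\}$. The image itself misses $L$, precisely because $f_1,\dots,f_s$ have no common zero on $V$; the role of the generic choice of the $\ell_j$ is to push the boundary $W\setminus\phi(V)$ off $L$ as well. Granting this, Theorem~\ref{perronpar} applied to $W$ produces a primitive squarefree implicit equation $E=\sum_\bfa\alpha_\bfa\bfy^\bfa\in k[\bft][y_1,\dots,y_{r+1}]$ of $W$ such that, for every $\bfa$ with $\alpha_\bfa\ne 0$,
\begin{displaymath}
  \sum_{j=1}^{r+1}a_jd_j\le\Big(\prod_{j=1}^{r+1}d_j\Big)\deg(V)
  \qquad\text{and}\qquad
  \deg_\bft(\alpha_\bfa)+\sum_{j=1}^{r+1}a_jh_j\le\Big(\prod_{j=1}^{r+1}d_j\Big)\Big(\h(V)+\deg(V)\sum_{\ell=1}^{r+1}\tfrac{h_\ell}{d_\ell}\Big),
\end{displaymath}
and, with the convention above, the right-hand sides coincide with those of Theorem~\ref{mtp}.

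The B\'ezout identity is then obtained by a combinatorial extraction. Since $\phi(V)\subset W=V(E)$ we have $E(q_1,\dots,q_{r+1})=0$ on $V$, and since $W\cap L=\emptyset$ the restriction $E|_L=E(0,\dots,0,y_{s+1},\dots,y_{r+1})$ is a polynomial with no zero over the algebraically closed field $\overline{k(\bft)}$, hence a nonzero constant; equivalently, the unique monomial of $E$ not involving any of $y_1,\dots,y_s$ is the constant term, so $\alpha:=\alpha_\bfzero\in k[\bft]\setminus\{0\}$. Separating in $E(q_1,\dots,q_{r+1})=0$ this constant term from the remaining monomials --- each of which involves some $y_i$ with $i\le s$ to a positive power and hence, after the substitution, is divisible by $f_i=q_i$ --- gives $-\alpha=\sum_{i=1}^s g_if_i$ on $V$ with $g_i\in k[\bft][x_1,\dots,x_n]$. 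Each $g_if_i$ is a sum of terms $\alpha_\bfa\prod_{j\le s}f_j^{a_j}\prod_{j>s}\ell_j^{a_j}$ with $\alpha_\bfa\ne 0$, so $\deg_\bfx(g_if_i)\le\max_\bfa\sum_j a_jd_j$ and $\deg_\bft(g_if_i)\le\max_\bfa(\deg_\bft(\alpha_\bfa)+\sum_j a_jh_j)$; combined with $\deg_\bft(\alpha)=\deg_\bft(\alpha_\bfzero)$ and the estimates on $E$, this yields exactly the stated bounds (after changing $\alpha$ to $-\alpha$ if one prefers a plus sign). If $k$ is finite, one runs this over a finite extension $k'\supset k$ and descends, the bounds being unaffected by the ground field.

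I expect the main obstacle to be the geometric step: producing a generic choice of the auxiliary forms for which $W$ is a hypersurface disjoint from $L$. This is the heart of Jelonek's construction, and the work is to check that his genericity and properness arguments survive the passage to the base field $\overline{k(\bft)}$, and to deal with the degenerate case in which $f_1,\dots,f_s$ are algebraically dependent on $V$ --- so that the naive $\phi$ has image of dimension $<r$ --- without spoiling the factor $\prod_j d_j$ in the degree bound; in that case one works instead with a suitable element of the (non-principal) ideal of $\overline{\phi(V)}$, whose degree is still controlled by the implicitization theorem. The remaining steps are a transcription of the number field case, now keeping track of $\bft$-degrees via Theorem~\ref{perronpar}.
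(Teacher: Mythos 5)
Your reduction to Theorem~\ref{perronpar} is the right overall strategy, but the geometric construction you propose is \emph{not} Jelonek's, and it breaks down at exactly the step you flag as ``the main obstacle.'' You take the map $\phi=(f_1,\dots,f_s,\ell_{s+1},\dots,\ell_{r+1})\colon V\to\A^{r+1}$ and claim that a generic choice of the affine-linear $\ell_j$ makes $W=\overline{\phi(V)}$ a hypersurface with $W\cap L=\emptyset$, where $L=\{y_1=\cdots=y_s=0\}$. This is false: $\phi$ is in general not proper, and the closure can add points on $L$ --- indeed $W$ can \emph{contain} $L$ for every choice of the $\ell_j$. Take $V=V(x_1x_2-1,\,x_3x_4-1)\subset\A^4$, $r=2$, $s=2$, $f_1=x_1$, $f_2=x_3$ (no common zero on $V$), and $\ell_3=ax_1+bx_2+cx_3+dx_4$. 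On $V$ one has $x_2=1/x_1$, $x_4=1/x_3$, and eliminating gives
\begin{displaymath}
E=y_1y_2y_3-ay_1^2y_2-by_2-cy_1y_2^2-dy_1,
\end{displaymath}
an irreducible polynomial (for generic $a,b,c,d$) with $E(0,0,y_3)\equiv 0$. So $L\subset W$, the constant term of $E$ vanishes, and the extraction that is supposed to produce $\alpha\in k[\bft]\setminus\{0\}$ yields $\alpha=0$. No genericity of $\ell_3$ fixes this: the vanishing of the constant term is forced because the map is not proper (sequences with $x_1\to 0$ and $x_3\sim -dx_1/b$ keep $\phi$ bounded).

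The auxiliary variable $z$ in the paper's construction (\S\ref{Jelonek}) is precisely what resolves this. One considers $\varphi\colon V\times\A^1\to V\times\A^s$, $(\bfx,z)\mapsto(\bfx,zf_1,\dots,zf_s)$, which is a \emph{closed immersion} because the $f_j$ have no common zero; a generic linear projection of the closed affine variety $\Im(\varphi)$ onto $\A^{r+1}$ is then finite by Noether normalization, so $(\bfx,z)\mapsto(zf_1+\ell_1,\dots,zf_s+\ell_s,\ell_{s+1},\dots,\ell_{r+1})$ is a \emph{finite} dominant map $V\times\A^1\to\A^{r+1}$. The minimal polynomial $E$ of $z$ is then monic up to scalar in $z$, and (via Lemma~\ref{lemm:10}) coincides with the implicit equation of $\psi\colon V_{K(\bfu,z)}\to\A^{r+1}$, $\bfx\mapsto(zf_j+\ell_j,\ell_j)$, to which Theorem~\ref{perronpar}/Corollary~\ref{implicitgroups} applies. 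The B\'ezout identity is then read off the coefficient of $z^\delta$ --- the \emph{leading} $z$-coefficient, not the constant term in $\bfy$ --- where each term with $j\ge 1$ acquires at least one factor $f_i$ from the expansion of $(zf_i+\ell_i)^{a_i}$. Your argument, which drops both $z$ and the twist $zf_j+\ell_j$, has no mechanism to guarantee properness, and the example above shows it cannot be repaired by genericity alone.
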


For $V=\A^n(\ov{{k(\bft)}})$  we have $r=n$, $\deg(V)=1$ and
$\h(V)=0$. Hence, this result gives the following bounds for the
partial degrees of the polynomials in a B\'ezout identity:
$$
\deg_\bfx\left(g_if_i\right) \leq \prod_{j=1}^{s}d_j \quad, \quad
\deg(\alpha), \deg_\bft(g_if_i) \leq
\sum_{\ell=1}^{s}{}\bigg(\prod_{j\ne \ell}d_j \bigg)h_\ell.
$$

In {Theorem~\ref{strongNSSpar}}, we give a strong version of the
parametric Nullstellensatz over a variety, which also contains the
case of an arbitrary number of input polynomials. Up to our knowledge,
the only previous results on the parametric Nullstellensatz are due to
Smietanski~\cite{Smietanski93}, who considers the case when the number
of parameters $p$ is at most two and $V=\A^n(\ov{{k(\bft)}})$, see
Remark~\ref{rem:5}.

\medskip
To prove both the arithmetic and parametric versions of the effective
Nullstellensatz, we need to consider a more general version of these statements
where the input polynomials  depend on
\emph{groups} of parameters, see Theorem \ref{arithparam}.
The latter has {further} interesting applications. For instance,
consider the family  $F_{1},\dots, F_{n+1}$  of general $n$-variate
polynomials of degree $d_1, \dots, d_{n+1}$, respectively. For each
$j$, write
$$
F_j=\sum_{\bfa}u_{j,\bfa}\bfx^\bfa
$$
{where each $u_{j,\bfa}$ is a variable. Let
$\bfu_{j}=\{u_{j,\bfa}\}_{\bfa}$ be the group of {variables}
corresponding to the coefficients of $F_{j}$ and set $\bfU=\{\bfU_{1},\dots,
\bfU_{n+1}\}$.  The corresponding Macaulay resultant $R\in \Z[\bfu]$
lies in the ideal $(F_{1},\dots, F_{n+1})\subset \Q[\bfu,\bfx]$ and
Theorem \ref{arithparam} gives bounds for a representation of $R$ in
this ideal.  Indeed, we obtain that there are $\lambda\in \Z\setminus
\{0\}$ and $g_j\in \Z[\bfU,\bfx]$ such that $\lambda\, R= g_1
F_1+\cdots + g_{n+1}F_{n+1}$ with
\begin{displaymath}
\deg_{\bfU_j}(g_iF_{i})\le \prod_{\ell\ne
j}d_\ell\quad, \quad \h(\lambda R), \h(g_i)\le
(6n+10)\log(n+3)\bigg(\prod_{\ell=1}^{n+1 }d_\ell\bigg),
\end{displaymath}
{see examples~\ref{ejemploresultante} and \ref{resultantez}.  The
obtained bound for the height of the $g_{i}$'s is of the same order as
the sharpest known bounds for the height of $R$ \cite{Sombra04}.}

\medskip This text is divided in four sections. In the first one, we
recall the basic properties of mixed  resultants and degrees   of
cycles in multiprojective spaces over an arbitrary field~$K$.
The second section focuses on  the
 mixed heights of cycles for the case when~$K$ is a function field and on the
 canonical mixed heights of cycles  for $K=\Q$.
In the third section, we apply this machinery to the study of the
height of the implicit equation, including generalizations and variants
of~Theorem~\ref{PerronIntro}. We conclude in  the  fourth section by deriving
the different arithmetic Nullstellens\"atze.

\medskip {\bf Acknowledgments.} \nred{We thank Jos\'e Ignacio Burgos
  for the many discussions we had and, in particular, for the
  statement and the proof of Lemma \ref{lemm:5}.  We  thank Teresa
  Cortadellas, Santiago Laplagne and Juan Carlos Naranjo for
  helpful discussions} and  Matilde Lal\'\i n for
pointing us some references {on Mahler measures}. We also thank the referee for
his/her comments and, especially, for a simplification of our argument
in the proof of Lemma \ref{field_extension}. D'Andrea thanks the
University of Bordeaux 1 for inviting him in February 2009, Krick
thanks the University of Buenos Aires for her sabbatical during 2009
and the Universities of Bordeaux 1, Caen, Barcelona and Nice-Sophia
Antipolis for hosting her during that time, and Sombra thanks the
University of Buenos Aires for inviting him during October-December
2007 and in November 2010. The three authors also thank the Fields
Institute, where they met during the Fall 2009 FoCM thematic program.

\section{Degrees and resultants of multiprojective cycles}
\label{Some resultant  theory}

Throughout this text, we denote by $\N=\Z_{\ge0}$ and by $\Z_{>0}$ the
sets of non-negative and positive integers, respectively.  Bold
letters denote finite sets or sequences of objects, where the type and
number should be clear from the context: for instance, $\bfx$ might
denote $\{x_1,\dots,x_n\}$ so that if $A$ is a ring,
$A[\bfx]=A[x_1,\dots,x_n]$.  For a polynomial $f\in A[\bfx]$ we adopt
the usual notation
$$
f=\sum_\bfa \alpha_\bfa\bfx^\bfa$$
where, for each index $\bfa=(a_1,\dots,a_n)\in \N^n$,
$\alpha_{\bfa}$ denotes an element of $ A$ and $\bfx^\bfa$  the
monomial $x_1^{a_1}\cdots x_n^{a_n}$.
For $\bfa\in \N^{n}$, we denote by $|\bfa|=a_1+\cdots +a_n$ its
{length} and by
$\coeff_{\bfa}(f)=\alpha_{\bfa}$
the coefficient of $\bfx^{\bfa}$.
We also set $\bfa!=a_{1}!\cdots a_{n}!$.
The {\em support}
of  $f$ is the set of exponents corresponding to its
non-zero terms, that is, $\Supp(f)=\{\bfa: \,
\coeff_\bfa(f)\ne 0\}\subset \N^n$.
For $\bfa, \bfb\in \R^{n}$, we set $\langle \bfa, \bfb \rangle =\sum_{i=1}^{n}a_{i}b_{i}$. We say that $\bfa\le \bfb$
whenever the inequality holds coefficient wise.

For a factorial ring $A$, we denote by $A^\times$ its group of units.
A polynomial with coefficients in $A$ is \emph{primitive} if its
coefficients have no common factor in $A\setminus A^{\times}$.

\subsection{Preliminaries on multiprojective geometry}
\label{sec:cycl-mult-spac}

Let  $A$ be a factorial ring with field of fractions $K$ and $\ov K$
 the algebraic closure of $K$. For $m\in
\Z_{>0}$ and $\bfn=(n_1,\dots,n_m)\in \N^{m}$ we consider the
multiprojective space over $\ov K$
$$
\P^\bfn(\overline{K})=\P^{n_1}(\ov K)\times\cdots \times \P^{n_m}(\overline K).
$$
We also write $\P^\bfn=\P^\bfn(\overline{K})$ for short. For $1\le i\le m$, let $\bfx_i= \{ x_{i, 0}, \dots, x_{i,n_i}\} $
be a group of $n_{i}+1$  variables and set
\begin{displaymath}
  \bfx=\{\bfx_{1},\dots, \bfx_{m}\}.
\end{displaymath}
The multihomogeneous coordinate ring of $\P^{\bfn}$ is $\overline
K[\bfx]=\overline K[\bfx_1, \dots, \bfx_m]$. It is multigraded by
declaring $\bfdeg(x_{i,j}) = \bfe_i\in \N^{m}$, the $i$-th vector
of the standard basis of $\R^{m}$. For  $\bfd=(d_1,\dots,d_m)\in
\N^{m}$, we denote by $\overline K[\bfx]_{\bfd}$  its part of
multidegree  $\bfd$. Set
\begin{displaymath}
\N^{n_{i}+1}_{d_{i}}=\{\bfa_{i}\in \N^{n_{i}+1}:
|\bfa_{i}|=d_{i}\}\quad , \quad
\N^{\bfn+\bfone}_{\bfd}= \prod_{1\le i\le m} \N^{n_i+1}_{d_{i}}.
\end{displaymath}
A multihomogeneous polynomial $f\in \ov K[\bfx]_{\bfd}$ can then be written down  as
\begin{displaymath}
f =  \sum_{\bfa\in \N^{\bfn+\bfone}_{\bfd}} \alpha_{\bfa} \,
\bfx^\bfa.
\end{displaymath}

Let $K\subset E$ be an extension of fields and   $f\in
E[\bfx]_{\bfd}$.
For a point $\bfxi\in\P^{\bfn}$, the value
$f(\bfxi)$ is only defined up to a non-zero scalar in $\ov
K^{\times}$ which depends on a choice of multihomogeneous
coordinates for $\bfxi$.

\medskip An ideal $I\subset \ov K[\bfx]$ is \emph{multihomogeneous} if
it is generated by a family of multihomogeneous polynomials. For any
such ideal, we denote by $V(I)$ the subvariety of~$\P^{\bfn}$ defined
as its set of zeros. {Along this text,} a variety is neither necessarily
irreducible nor equidimensional.  Reciprocally, given a variety
$V\subset\P^{\bfn}$, we denote by $I(V)$ the multihomogeneous ideal of
$\ov K[\bfx]$ of polynomials vanishing on $V$. A variety~$V$ is {\em
  defined over $K$} if its defining ideal $I(V)$ is generated by
polynomials in $K[\bfx]$.

Let $\mathfrak{M}_{\bfn}=\{x_{1,j_1}\cdots x_{m,j_m}: 0\le j_i\le
n_i\}$ be the set of monomials of multidegree
$(1,\dots,1)\in\N^{m}$. A multihomogeneous ideal $I\subset \ov
K[\bfx]$ defines the empty variety of~$\P^{\bfn}$ if and only if
$\mathfrak{M}_{\bfn} \subset \sqrt{I}$, {see} for
instance~\cite[Lem.~2.9]{Remond01a}. The assignment $V\mapsto I(V)$ is
a one-to-one correspondence between non-empty subvarieties of
$\P^{\bfn}$ and radical multihomogeneous ideals of $\overline K[\bfx]$
not containing $\mathfrak{M}_{\bfn}$.

More generally, we denote by $\P_K^\bfn$ the multiprojective space
over $K$ corresponding to~$\bfn$.  The reduced subschemes of
$\P_K^\bfn$ will be alternatively called \emph{subvarieties of
  $\P_K^\bfn$} or {\em $K$-varieties}.  There is a one-to-one
correspondence $V\mapsto I(V)$ between non-empty subvarieties of
$\P^{\bfn}_{K}$ and radical multihomogeneous ideals of $K[\bfx]$ not
containing~$\mathfrak{M}_{\bfn}$.  For a multihomogeneous ideal
$I\subset K[\bfx]$ not containing $\mathfrak{M}_{\bfn}$, we denote by
$V(I)$ its corresponding $K$-variety.  A $K$-variety $V$ is
\emph{irreducible} if it is an integral subscheme of $\P^\bfn_K$ or,
equivalently, if the ideal $I(V)$ is prime. The dimension of $V $
coincides with the Krull dimension of the algebra $ K[\bfx_1,\dots,
\bfx_m]/I(V)$ minus $m$.

\begin{remark} \label{rem:4} In the algebraically closed case, the
  scheme $\P^\bfn_{\ov K}$ can be identified with the set of points
  $\P^\bfn(\ov K)$ and a {subvariety $V\subset \P^\bfn_{\ov K}$}
  can be identified with its set of points {$V(\ov K) \subset
    \P^\bfn(\ov K)$}. Under this identification, a subvariety of
  $\P^\bfn(\ov K)$ defined over $K$ corresponds to a
  $K$-variety. However, a $K$-variety does not necessarily correspond
  to a subvariety of $\P^\bfn(\ov K)$ defined over $K$, as the
  following example shows.  Let $t$ be a {variable} and set
  $K=\F_{p}(t)$, {where $p$ is a prime number and $\F_{p}$ is the
    field with~$p$ elements}. The ideal $(x_{1}^{p}-tx_{0}^{p})\subset
  K[x_{0},x_{1}]$ is prime and hence gives a subvariety
  of~$\P^{1}_{K}$. Its set of zeros in $\P^{1}(\ov K)$ consists in the
  point $\{(1:t^{1/p})\}$, which is not a variety defined over
  $K$. When the field $K$ is perfect (for instance, if $\char(K)=0$),
  the notion of $K$-variety does coincide, under this identification,
  with the notion of subvariety of $\P^\bfn(\ov K)$ defined over~$K$.
\end{remark}

A {\em $K$-cycle} of $\P^\bfn_{K}$ is
a finite $\Z$-linear combination
\begin{equation}
\label{eq:8}
 X=\sum_{V}m_V V
\end{equation}
of irreducible subvarieties of $\P^\bfn_{K}$.  The subvarieties $V$
such that $m_{V}\ne 0$ are the \emph{irreducible components} of
$X$. A $K$-cycle is \emph{of pure dimension} or \emph{equidimensional}
if its components are all of the same dimension. It is {\em effective}
(respectively, \emph{reduced}) if it can be written as in~\eqref{eq:8}
with $m_V\ge0$ (respectively, $m_V=1$). Given two $K$-cycles $X_{1}$
and $X_{2}$, we say that $X_{1}\ge X_{2}$ whenever $X_{1}-X_{2}$ is
effective. The \emph{support} of $X$, denoted $|X|$, is the
$K$-variety defined as the union of its components. Reciprocally, a
$K$-variety is a union of irreducible $K$-varieties of $\P^{\bfn}$ and
we identify it with the reduced $K$-cycle given as the sum of these
irreducible $K$-varieties.

For $0\le r\le |\bfn|$, we denote by $Z_r(\P^\bfn_K)$ the group of
$K$-cycles of $\P^{\bfn}$ of pure dimension~$r$ and by
$Z_r^+(\P^\bfn_K)$ the semigroup of those which are effective. For
shorthand, a $\ov K$-cycle is called a \emph{cycle} and we
denote the sets of $\ov K$-cycles and of effective $\ov K$-cycles of
pure dimension $r$ as $Z_r(\P^\bfn)$  and as $Z_r^+(\P^\bfn)$,
respectively.

\medskip
Let $I\subset K[\bfx]$ be a multihomogeneous ideal. For each
minimal prime ideal $P$ of~$I$, we denote by $m_{P}$ the
\emph{multiplicity of $P$ in $I$}, defined as the length of the
$K[\bfx]_{P}$-module $(K[\bfx]/I)_{P}$.
We  associate to $I$ the {$K$-cycle}
\begin{displaymath}
  X(I):=\sum_{P} m_{P} \, V(P) .
\end{displaymath}
If $V(I)$ is of pure dimension $r$, then $X(I)\in Z_{r}^{+}(\P^{\bfn}_K)$.
Let $K\subset E$ be an extension of fields and $V$ an
irreducible $K$-variety. We define the \emph{scalar extension of $V$
by $E$}  as the $E$-cycle $V_{E}=X(I(V)\otimes_K E)$. This notion
extends to $K$-cycles by linearity and induces an inclusion of groups
$Z_{r}(\P^{\bfn}_{K})\hookrightarrow Z_{r}(\P^{\bfn}_{E})$.

\medskip
Each Weil  or Cartier divisor of $\P^{\bfn}_{K}$ is
globally defined by a single rational multihomogeneous function in $K(\bfx)$ because
the ring $K[\bfx]$ is factorial~\cite[Prop.~II.6.2
and~II.6.11]{Hartshorne77}. Hence, we will not make distinctions
between {them}. We write $\Div(\P^{\bfn}_{K})=
Z_{|\bfn|-1}(\P^{\bfn}_{K})$ for the group of divisors of
$\P^{\bfn}_{K}$ and by $\Div^{+}(\P^{\bfn}_{K})=
Z_{|\bfn|-1}^{+}(\P^{\bfn}_{K})$ for the semigroup of those which
are effective.

Each effective divisor $D$ of $\P^{\bfn}_K$ is defined by a
multihomogeneous primitive polynomial in $A[\bfx]\setminus \{0\}$,
unique up to a unit of $A$. We denote this polynomial by $f_{D}$. If
we write $D=\sum_{H}m_{H}H$ where {$H$ is a}
$K$-hypersurface of $\P^{\bfn}$ and  $m_{H}\in \N$, then there
exists~$\lambda\in A^\times$ such that
\begin{displaymath}
  f_{D}=\lambda \,\prod_{H}f_H^{m_{H}}.
\end{displaymath}
Conversely, given a multihomogeneous
polynomial $f\in  A[\bfx]\setminus\{0\}$, we denote by $\div(f)\in
\Div^{+}(\P^{\bfn}_K)$ the associated  divisor.

\medskip
We introduce some basic operations on cycles and divisors.
\begin{definition}
  \label{def:11}
Let $V$ be an irreducible subvariety of $\P^{\bfn}_{K}$
and $H$ an irreducible hypersurface not containing $V$.
Let $Y$ be an irreducible component of $V\cap H$.
The \emph{intersection multiplicity of $V$ and $H$ along $Y$}, denoted
 $\mult(Y |V , H )$, is
the length of
the $K[\bfx]_{I(Y)}$-module
$\Big(K[\bfx]/(I(V)+I(H))\Big)_{I(Y)}$, see~\cite[\S
I.1.7]{Hartshorne77}.
The \emph{intersection product
of $V$ and $H$} is defined as
\begin{equation}\label{eq:71}
V\cdot H=\sum_{Y}\mult(Y|V,H) \, Y,
\end{equation}
{the sum being over the irreducible components of $V\cap H$}.
It is a cycle of pure dimension~$\dim(V)-1$.

Let $X$ be an equidimensional cycle and $D$ a divisor. We say that
$X$ and $D$ \emph{intersect properly} if no irreducible component of $X$ is
contained in $|D|$.
By bilinearity, the intersection product {in \eqref{eq:71}} extends to a pairing
$$
Z_{r}(\P^{\bfn}_{K})\times
\Div(\P^{n}_{K}) \dashrightarrow  Z_{r-1}(\P^{\bfn}_{K})\quad , \quad
(X,D)\longmapsto X\cdot D,
$$
well-defined  whenever $X$ and $D$ intersect properly.
\end{definition}

Let $X\in Z_{r}(\P^{\bfn}_{K})$ and  $D_{1},\dots, D_{\ell}\in
\Div(\P^{\bfn}_{K})$.
Then $X\cdot \prod_{j=1}^{\ell}D_{j}$ does not depend on the order
of the divisors, provided that
all the intermediate  products are
well-defined~\cite[Cor.~2.4.2 and Example~7.1.10(a)]{Fulton84}.

\begin{definition}
  \label{def:9}
Let  $m_1,m_{2}\in \Z_{>0} $ and $\bfn_{i}\in \N^{m_{i}}$ for $i=1,2$. Let $\varphi:\P^{\bfn_{1}}_{K}\dashrightarrow \P^{\bfn_{2}}_{K}$ be a
rational map  and $V$ an
irreducible subvariety of $\P^{\bfn_{1}}_{K}$. The \emph{degree of
$\varphi$ on $V$} is
\begin{displaymath}
  \deg(\varphi|_{V})=\left\{
    \begin{array}{cl}
      [K(V):K(\ov{\varphi(V)})]& \text{ if }
      \dim(\ov{\varphi(V)})= \dim(V), \\[2mm]
      0 & \text{ if }
      \dim(\ov{\varphi(V)})< \dim(V).
    \end{array}\right.
\end{displaymath}
 The \emph{direct image of $V$ under $\varphi$} is defined as
the cycle $\varphi_{*}V= \deg(\varphi|_{V}) \,\ov{\varphi(V)}$. It
is a cycle of the same dimension as $V$. This notion extends by
linearity to equidimensional cycles and induces a  $\Z$-linear map
\begin{displaymath}
  \varphi_{*}: Z_{r}(\P^{\bfn_{1}}_{K}) \longrightarrow  Z_{r}(\P^{\bfn_{2}}_{K}) .
\end{displaymath}
If $\psi :\P^{\bfn_{2}}_{K}\dashrightarrow \P^{\bfn_{3}}_{K}$ is a further
rational map, then $(\psi\circ \varphi)_{*}=\psi_{*}\circ
\varphi_{*}$ because of the multiplicativity of the degree of field
extensions.

Let $H$ be a hypersuface of  $\P^{\bfn_{2}}_{K}$ not containing the
image of $\varphi$. The \emph{inverse image of~$H$ under $\varphi$}
is defined as the hypersurface $\varphi^{*}H=
\ov{\varphi^{-1}(H)}$. This notion extends  to a $\Z$-linear map
\begin{displaymath}
\varphi^{*}:\Div(\P^{\bfn_{2}}_{K})\dashrightarrow \Div(\P^{\bfn_{1}}_{K}),
\end{displaymath}
well-defined for divisors whose support does not contain the image of $\varphi$.
\end{definition}
Direct images of cycles, inverse images of divisors and intersection
products are related by the  \emph{projection formula} \cite[Prop.~2.3(c)]{Fulton84}: let
$\varphi:\P^{\bfn_{1}}_{K}\to\P^{\bfn_{2}}_{K}$ be a proper map, $X$ a cycle
of $\P^{\bfn_{1}}_{K}$ and $D$ a divisor of $\P^{\bfn_{2}}_{K}$
containing no component of $\varphi(|X|)$. Then
\begin{equation}
  \label{eq:38}
  \varphi_{*}(X\cdot \varphi^{*}D)= \varphi_{*}X\cdot D.
\end{equation}

\subsection{Mixed degrees}\label{Degrees}
We recall the basic properties of mixed degrees of multiprojective
cycles. {We also study the behavior of this notion under linear projections.}

\begin{definition}
  \label{def:13}
  Let $V \subset \P^{\bfn}_K$ be an irreducible $K$-variety. The
  \emph{Hilbert-Samuel function of $V$} is the numerical function
  defined as
\begin{displaymath}
H_{V}:  \N^{m}\longrightarrow  \N \quad, \quad \bfdelta\longmapsto
\dim_{K}\Big(\big(K[\bfx]/I(V)\big)_{\bfdelta}\Big).
\end{displaymath}
\end{definition}

\begin{proposition}
  \label{prop:2}
Let $V\subset \P^{\bfn}_K$ be an irreducible $K$-variety of
dimension $r$.
\begin{enumerate}
\item \label{item:8}
There is a unique polynomial \begin{math} P_{V}\in
\Q[z_{1},\dots,z_{m}]
\end{math}
such that $P_{V}(\bfdelta)=H_{V}(\bfdelta) $ for all $\bfdelta \ge
\bfdelta_0 $ for \nred{some} $\bfdelta_{0}\in \N^{m}$. In addition
$\deg(P_V)=r$.  \smallskip
\item \label{item:38}
Let $\bfb=(b_1,\dots,b_m)\in \N^{m}_{r}$.  Then
 $ \bfb! \, \coeff_{\bfb}(P_{V}) \in \N$. Moreover,
if $b_{i}>n_{i}$ for some $i$,  then $\coeff_{\bfb}(P_{V}) =0$.
\end{enumerate}
\end{proposition}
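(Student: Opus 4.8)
The plan is to reduce the multigraded statement to the classical theory of Hilbert polynomials of a single graded module by restricting to the diagonal, and then to extract the finer multigraded information directly. For part~\eqref{item:8}, the standard reference point is that $M=K[\bfx]/I(V)$ is a finitely generated $\N^m$-graded module over the polynomial ring $K[\bfx]=K[\bfx_1,\dots,\bfx_m]$ with the multigrading $\bfdeg(x_{i,j})=\bfe_i$. The existence of a polynomial $P_V\in\Q[z_1,\dots,z_m]$ agreeing with $H_V$ for all sufficiently large $\bfdelta$ is a theorem of multigraded commutative algebra; I would cite it (for instance from the treatment of multigraded Hilbert functions, or deduce it from the one-variable case by the standard finite-free-resolution argument, noting that for the polynomial ring itself $\dim_K K[\bfx]_{\bfdelta}=\prod_i\binom{n_i+\delta_i}{n_i}$ is already a polynomial in $\bfdelta$, and a finite graded free resolution of $M$ expresses $H_V$ as an alternating sum of shifts of these). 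For the degree statement, I would restrict to the main diagonal: set $\delta_1=\cdots=\delta_m=\ell$ and observe that $\ell\mapsto H_V(\ell,\dots,\ell)$ is, for $\ell\gg0$, the Hilbert polynomial of the homogeneous coordinate ring of $V$ under the Segre embedding $\P^\bfn\hookrightarrow\P^N$, which has degree equal to $\dim$ of the projective variety, namely $r$. This forces $\deg(P_V)\ge r$ along the diagonal, hence $\deg(P_V)\ge r$; the reverse inequality $\deg(P_V)\le r$ follows because any coordinate hyperplane section drops the dimension by one and (by the short exact sequence argument, after choosing a hyperplane not containing $V$) drops $\deg(P_V)$ in the corresponding variable by one, so after $r+1$ generic such steps one reaches the zero polynomial; iterating gives $\deg(P_V)=r$.

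For part~\eqref{item:38}, the integrality $\bfb!\,\coeff_\bfb(P_V)\in\N$ I would obtain from the theory of finite differences: writing $P_V$ in the basis of products of binomial coefficients $\prod_i\binom{z_i}{c_i}$, the coefficients in that basis are the iterated finite differences $(\Delta_1^{c_1}\cdots\Delta_m^{c_m}H_V)(\bfdelta_0)$ evaluated at a large argument, which are integers because $H_V$ takes values in $\N$; nonnegativity of the leading (top total degree $r$) coefficients is the Hilbert–Samuel multiplicity statement — each such iterated difference of order $\bfb$ with $|\bfb|=r$ stabilizes to the multiplicity of $V$ computed along the corresponding mixed flag of generic hyperplane sections, which is a nonnegative integer (indeed the length of a suitable Artinian quotient, hence in $\N$). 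Translating the binomial-basis coefficients back to the monomial basis introduces exactly the factor $\bfb!$ in the denominator for the leading terms, giving $\bfb!\,\coeff_\bfb(P_V)\in\N$ for $|\bfb|=r$; for lower-order $\bfb$ the same finite-difference computation still yields a value in $\Z$, and $\bfb!\,\coeff_\bfb(P_V)$ is an integer combination of these, hence in $\Z$ (the statement only claims membership in $\N$ for the relevant mixed-degree indices, which are those of length $r$). Finally, the vanishing $\coeff_\bfb(P_V)=0$ whenever $b_i>n_i$: fix the index $i$ and freeze all variables $z_j$, $j\ne i$, at a large integer $\delta_j$; then $\ell\mapsto H_V(\dots,\delta_{i-1},\ell,\delta_{i+1},\dots)$ is the Hilbert function in the single grading by $\bfx_i$ of a finitely generated graded module over $K[\bfx_i]$ (a polynomial ring in $n_i+1$ variables) — namely the degree-$(\ldots,\ell,\ldots)$ piece of $M$ localized appropriately — whose Hilbert polynomial therefore has degree at most $n_i$ in $\ell$; hence $P_V$ has degree at most $n_i$ in $z_i$, so no monomial with $b_i>n_i$ can occur.

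The main obstacle I anticipate is making the last two points (nonnegativity of the mixed leading coefficients, and the partial-degree bound $\deg_{z_i}P_V\le n_i$) genuinely rigorous rather than heuristic: both require care in handling the "freeze the other variables at large values" reductions, since one must ensure the relevant graded pieces really do assemble into a finitely generated graded module over the appropriate coordinate subring and that passing to the limit in the finite differences is legitimate. For nonnegativity, the cleanest route is probably to interpret $\bfb!\,\coeff_\bfb(P_V)$ as the multidegree $\deg_\bfb(V)$ in the sense used later in the paper — i.e. as an intersection number with generic multi-degree-$\bfe_i$ divisors — which is manifestly a nonnegative integer; this is essentially the bridge between this proposition and the resultant-theoretic definitions, so I would set it up with that in mind. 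Everything else is a routine application of one-variable Hilbert polynomial theory combined with the Segre embedding and exact sequences for hyperplane sections.
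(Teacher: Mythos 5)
Your approach is genuinely different from the paper's: the paper disposes of this proposition entirely by citation, invoking R\'emond \cite[Thm.~2.10]{Remond01a} for both parts, and gives no independent argument. You instead sketch a self-contained proof via finite free resolutions for existence, the Segre embedding for the degree, finite differences for integrality, and a ``freeze the other variables'' reduction for the partial-degree bound. As a reconstruction of what lies behind R\'emond's theorem this is the standard and correct route, so the plan is sound; what you buy is self-containment, at the cost of several pages that the paper avoids by reference. Two of your steps are, as you yourself flag, only heuristic as written and need repair rather than mere polishing. For the inequality $\deg(P_V)\le r$, the claim that a hyperplane section ``drops $\deg(P_V)$ in the corresponding variable by one'' is not automatic: the first-order difference $P_V(\bfdelta)-P_V(\bfdelta-\bfe_i)$ leaves the total degree unchanged whenever the top-degree form of $P_V$ does not involve $z_i$, and if $V$ projects to a point in the $i$-th factor there is no nonzerodivisor of multidegree $\bfe_i$ at all. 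The repair is to fix $\bfb$ with $\coeff_\bfb(P_V)\ne 0$ and $|\bfb|=\deg(P_V)$, cut by $b_i$ generic divisors of index $\bfe_i$ for each $i$, and observe that the iterated difference $\Delta_1^{b_1}\cdots\Delta_m^{b_m}P_V$ equals both the nonzero constant $\bfb!\,\coeff_\bfb(P_V)$ and, if $|\bfb|>r$, the eventually-zero Hilbert function of the empty quotient; this contradiction gives $\deg(P_V)\le r$ and simultaneously the nonnegativity claim for $|\bfb|=r$. For the partial-degree bound $\deg_{z_i}(P_V)\le n_i$, the freezing argument is correct but requires explicitly verifying that $\bigoplus_\ell M_{(\delta_1,\dots,\delta_{i-1},\ell,\delta_{i+1},\dots,\delta_m)}$ is finitely generated over $K[\bfx_i]$ (it is, since the factors in the remaining variable groups have bounded degree, so finitely many monomials in those variables times the $K[\bfx]$-generators of $M$ suffice). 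Neither gap is conceptual and both close with routine commutative algebra.
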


\begin{proof}
\eqref{item:8} and the second part of \eqref{item:38} follow from~\cite[Thm.~2.10(1)]{Remond01a}.
The first part of \eqref{item:38} follows from \cite[Thm.~2.10(2)]{Remond01a} and its proof.
\end{proof}

The polynomial  $P_V$ in Proposition~\ref{prop:2} is called the \emph{Hilbert-Samuel polynomial
of $V$}.

\begin{definition}
\label{chowgeom} Let $V\subset \P^{\bfn}_K$ be an irreducible
$K$-variety of dimension $r$ and
 $\bfb\in \N^{m}_{r}$.
The \emph{(mixed) degree of $V$ of index $\bfb$} is defined as
\begin{displaymath}
  \deg_{\bfb}(V)= \bfb! \, \coeff_{\bfb}(P_{V}).
\end{displaymath}
It is a non-negative integer, thanks to Proposition \ref{prop:2}\eqref{item:38}.
This  notion extends by linearity to equidimensional $K$-cycles
and induces a map
\begin{math}
\deg_{\bfb}:Z_{r}(\P^{\bfn}_{K})\to \Z.
\end{math}
\end{definition}

Recall that the {Chow ring  of $\P^{\bfn}_K$} is the graded ring
$$
A^{*}(\P^{\bfn}_K)=\Z[\theta_1,\dots,\theta_m]/(\theta_1^{n_1+1},\dots,\theta_m^{n_m+1}),
$$
where each $\theta_{i}$ denotes the class of the inverse image of a
hyperplane of $\P^{n_{i}}_K$ under the projection $\P^{\bfn}_K\to
\P^{n_{i}}_K$ \cite[Example~8.3.7]{Fulton84}.
Given a cycle $X\in  Z_r(\P^\bfn_K)$, its class in the Chow
ring is
\begin{displaymath}
[X]= \sum_{\bfb} \deg_{\bfb}(X)\, \theta_1^{n_1 -b_1}\cdots
 \theta_m^{n_m -b_m} \in A^{*}(\P^{\bfn}_K),
\end{displaymath}
{the sum being over all $\bfb\in \N_r^{m}$ such that $\bfb\le
\bfn$. }It is a homogeneous element of degree $|\bfn|-r$. By
Proposition~\ref{prop:2}\eqref{item:38},  $\deg_{\bfb}(X)=0$
whenever  $b_i>n_i$ for some $i$. Hence,  $[X]$ contains the
information of all the mixed degrees of $X$, since
$\{\bftheta^{\bfb}\}_{\bfb\le \bfn}$ is a $\Z$-basis of
$\A^*(\P^\bfn_K)$. For $X_1,X_2\in Z_r(\P^\bfn_K)$, we say that
$[X_1]\ge [X_2]$ whenever the inequality holds coefficient wise in
terms of this basis.

Given a $K$-cycle $X$, its class in the Chow ring is invariant under field extensions. In
particular,  $[X_{\ov K}]=[X]$ and
$\deg_\bfb(X)=\deg_\bfb(X_{\ov K})$ for all $\bfb\in \N^{m}_{r}$.
If $\dim(X)=0$, its degree is defined as
the number of points in $X_{\ov K}$, counted with multiplicity.

\begin{proposition} \
\label{prop:5}
  \begin{enumerate}
\item \label{item:48} Let $X\in Z_{r}^{+}(\P^{\bfn}_K)$. Then $[X] \ge 0$.
\smallskip
\item\label{item:11} We have $[\P^{\bfn}_K]=1$.
Equivalently, $\deg_{\bfn}(\P^{\bfn}_K)=1$  and
$\deg_{\bfb}(\P^{\bfn}_K)=0$ for all $\bfb\in \N_{|\bfn|}^{m}$ such
that $\bfb\neq\bfn$.
\smallskip
\item \label{item:12} Let $X\in Z_0(\P^\bfn_K)$. Then $[X]= \deg(X)
  \bftheta^{\bfn}$. Equivalently, $\deg_{\bf0}(X)=\deg(X)$.
  \smallskip
\item \label{gradosparciales} Let $D\in \Div^{+}(\P^{\bfn}_K) $ and
 $f_{D}$ its defining polynomial. Then
$$
[D]=\sum_{i=1}^{m}\deg_{\bfx_i}(f_{D}) \theta_{i}.
$$
Equivalently, $\deg_{\bfn-\bfe_i}(D)= \deg_{\bfx_i}(f_{D}) $ for
$1\le i\le m$ and $\deg_\bfb(D)=0$ for all $\bfb\in \N_{|\bfn|-1}^{m}$
such that $\bfb\ne \bfn-\bfe_i$ for all $i$.
\smallskip
\item \label{item:13} Let $n\in \N$ and $V\subset \P^n_K$ a
  $K$-variety  of pure  dimension $r$. Then
  \begin{displaymath}
[V]= \deg(V) \theta^{n-r},
  \end{displaymath}
where $\deg(V)$ denotes the  degree of the projective variety
$V$. Equivalently, $\deg_r(V)=\deg(V)$.
  \end{enumerate}
\end{proposition}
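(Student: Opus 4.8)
The plan is to reduce every item to a computation with (mixed) Hilbert--Samuel polynomials, using Definition~\ref{chowgeom}, Proposition~\ref{prop:2}, the $\Z$-linearity of $\deg_{\bfb}$ on equidimensional cycles, and the invariance of $\deg_{\bfb}$ under scalar extension. Item~\eqref{item:48} is then immediate: for irreducible $V$ one has $\deg_{\bfb}(V)=\bfb!\,\coeff_{\bfb}(P_V)\ge 0$ by Proposition~\ref{prop:2}\eqref{item:38}, and $\deg_{\bfb}(X)=\sum_V m_V\deg_{\bfb}(V)\ge 0$ for every effective $X=\sum_V m_VV$, so $[X]\ge 0$. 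For item~\eqref{item:11} I would compute $H_{\P^{\bfn}}(\bfdelta)=\dim_K K[\bfx]_{\bfdelta}=\prod_{i=1}^m\binom{n_i+\delta_i}{n_i}$, a polynomial of total degree $|\bfn|=\deg(P_{\P^{\bfn}})$ and of degree $\le n_i$ in each variable; the only monomial of total degree $|\bfn|$ compatible with these bounds is $\prod_iz_i^{n_i}/n_i!$, so $\deg_{\bfn}(\P^{\bfn}_K)=\bfn!\,\coeff_{\bfn}(P_{\P^{\bfn}})=1$ while $\deg_{\bfb}(\P^{\bfn}_K)=0$ for every other $\bfb$ of length $|\bfn|$; since $[\P^{\bfn}_K]$ is homogeneous of degree $0$, this gives $[\P^{\bfn}_K]=1$. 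For item~\eqref{item:12}, an irreducible $0$-dimensional $K$-variety has $P_V$ constant (its degree is $r=0$), and passing to $\ov K$ and using that the Hilbert function of a $0$-dimensional scheme stabilizes to its length identifies this constant with the number of points of $V_{\ov K}$ counted with multiplicity, i.e. $\coeff_{\bfzero}(P_V)=\deg(V)$; linearity yields $\deg_{\bfzero}(X)=\deg(X)$ and $[X]=\deg(X)\,\bftheta^{\bfn}$. Item~\eqref{item:13} is the case $m=1$: the mixed Hilbert--Samuel polynomial of $V\subset\P^n_K$ is the classical Hilbert polynomial, whose leading coefficient is $\deg(V)/r!$ by definition of the degree of a projective variety (see \cite[\S I.7]{Hartshorne77}), so $\deg_r(V)=r!\,\coeff_r(P_V)=\deg(V)$ and $[V]=\deg(V)\,\theta^{n-r}$.

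The one item that requires a genuine computation is~\eqref{gradosparciales}. By $\Z$-linearity of $\deg_{\bfb}$ and the identity $\deg_{\bfx_i}(f_D)=\sum_H m_H\deg_{\bfx_i}(f_H)$ coming from $f_D=\lambda\prod_Hf_H^{m_H}$, it suffices to treat $D=H$ an irreducible hypersurface. Then $f_H$ is irreducible, so $(f_H)\subset K[\bfx]$ is prime and $I(H)=(f_H)$; writing $\bfd=\bfdeg(f_H)$ with $d_i=\deg_{\bfx_i}(f_H)$, the multigraded short exact sequence
\[
0\longrightarrow K[\bfx](-\bfd)\stackrel{f_H}{\longrightarrow}K[\bfx]\longrightarrow K[\bfx]/(f_H)\longrightarrow 0
\]
gives $H_H(\bfdelta)=\prod_{i}\binom{n_i+\delta_i}{n_i}-\prod_{i}\binom{n_i+\delta_i-d_i}{n_i}$. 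The degree-$|\bfn|$ parts of the two products are both equal to $\prod_iz_i^{n_i}/n_i!$ and cancel, so $P_H$ has degree $|\bfn|-1=\dim(H)$, as it must. For each $i$, an elementary computation with the factorizations $\binom{n_i+z_i}{n_i}=\frac{(z_i+1)\cdots(z_i+n_i)}{n_i!}$ extracts the coefficient of $z^{\bfn-\bfe_i}$ in $P_H$, which turns out to be $\frac{d_i}{(n_i-1)!\,\prod_{j\ne i}n_j!}$; multiplying by $(\bfn-\bfe_i)!$ gives $\deg_{\bfn-\bfe_i}(H)=d_i$. Finally, any $\bfb\in\N^m_{|\bfn|-1}$ with $\bfb\ne\bfn-\bfe_i$ for all $i$ must satisfy $b_i>n_i$ for some $i$ (if $\bfb\le\bfn$ and $|\bfb|=|\bfn|-1$, then $\bfn-\bfb=\bfe_i$ for some $i$), so $\deg_{\bfb}(H)=0$ by Proposition~\ref{prop:2}\eqref{item:38}. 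Hence $[H]=\sum_i d_i\theta_i$, and summing over the components of $D$ gives $[D]=\sum_i\deg_{\bfx_i}(f_D)\,\theta_i$.

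The only place where care is needed is the subleading-coefficient extraction in~\eqref{gradosparciales}: one must correctly track the cancellation of the two degree-$|\bfn|$ terms and then the arithmetic of the degree-$(|\bfn|-1)$ coefficient (equivalently, that $\binom{n_i+z_i}{n_i}-\binom{n_i+z_i-d_i}{n_i}$ has degree $n_i-1$ with leading coefficient $d_i/(n_i-1)!$). Everything else is routine bookkeeping with Hilbert functions together with Proposition~\ref{prop:2}. As a consistency check, items~\eqref{item:12} and~\eqref{item:13} can also be quoted from standard references, and all the formulas match the multigraded intersection theory of~\cite{Remond01a}.
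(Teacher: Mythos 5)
Your proof is correct and follows essentially the same route as the paper's: each item is reduced to a computation with multigraded Hilbert--Samuel polynomials via Proposition~\ref{prop:2}, linearity of mixed degrees, and, for item~\eqref{gradosparciales}, the short exact sequence $0\to K[\bfx](-\bfdeg(f))\to K[\bfx]\to K[\bfx]/(f)\to 0$. The only cosmetic difference is that you first reduce item~\eqref{gradosparciales} to an irreducible hypersurface before invoking the exact sequence, whereas the paper applies it directly to $f_D$ and reads off the degree-$(|\bfn|-1)$ part of the resulting Hilbert polynomial; both are valid, and your reduction simply makes explicit the associativity-of-multiplicities point the paper leaves implicit.
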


\begin{proof}\

\eqref{item:48} This follows from the definition of $[X]$ and
Proposition \ref{prop:2}\eqref{item:38}.

\smallskip
\eqref{item:11} For
$\bfdelta=(\delta_1,\dots,\delta_m) \in \N^{m}$,
\begin{displaymath}
  H_{\P^{\bfn}_K}(\bfdelta)= \dim_{K}\Big(  K[\bfx]_{\bfdelta}\Big) =
\prod_{i=1}^{m}{n_{i}+\delta_{i}\choose n_{i}} =
\frac{1}{n_{1}!\cdots n_{m}!} \bfdelta^{\bfn} + O(||\bfdelta||^{|\bfn|-1}),
\end{displaymath}
where $||\cdot||$ denotes any fixed norm on $\R^{m}$. This implies
that $\deg_{\bfn}(\P^{\bfn}_K)=1$ and thus
$[\P^\bfn_K]=\deg_\bfn(\P^\bfn_K)=1$, as stated.

\smallskip
\eqref{item:12}
Let $\bfxi=(\bfxi_{1},\dots, \bfxi_{m})\in \P^{\bfn}$.
We have
$ \ov K[\bfx]/I(\bfxi)= \bigotimes_{i=1}^{m}\ov
K[\bfx_{i}]/I(\bfxi_{i})$. Hence, for $\bfdelta\in \N^{m}$,
$$
  H_{\bfxi}(\bfdelta)= \dim_{\ov K}\Big( \Big( \ov
  K[\bfx]/I(\bfxi)\Big)_{\bfdelta}\Big) = \prod_{i=1}^{m}
  \dim_{\ov K}\Big( \ov
  K[\bfx_{i}]/I(\bfxi_{i})\Big)_{\delta_{i}}=1.
$$
This implies that  $\deg_{\bf0}(\bfxi)=1$ and so
$[\bfxi]=\bftheta^\bfn$.
For a general zero-dimensional $K$-cycle $X$, write
$X_{\ov K}=\sum_\bfxi m_\bfxi \bfxi$ for some points $\bfxi\in
\P^{\bfn}$ and $m_{\bfxi}\in \Z$.
Hence, $\deg_{\bf0}(X)=\sum_\bfxi m_\bfxi \deg_{\bf0}(\bfxi)=
\sum_{\bfxi}m_{\bfxi}=\deg(X)$ and so $[X]= \deg(X)\bftheta^\bfn$.

\smallskip
\eqref{gradosparciales}
Write $\bfdeg(f)=(\deg_{\bfx_{1}}(f),  \dots,  \deg_{\bfx_{m}}(f))$. For  $\bfdelta \ge \bfdeg(f)$,
there is an exact sequence
$$
  0 \longrightarrow  K[\bfx]_{\bfdelta -\bfdeg(f)} \mathop{\longrightarrow}^{\times f}
  K[\bfx]_{\bfdelta} \longrightarrow  \big( K[\bfx]/(f)\big)_{\bfdelta} \longrightarrow  0.
$$
 Hence, $  H_{D}(\bfdelta)=H_{\P^{\bfn}_K}(\bfdelta)- H_{\P^{\bfn}_K}(\bfdelta-\bfdeg(f_{D}))$
and therefore
\begin{displaymath}
P_{D}(\bfd)=\sum_{i=1}^{m}
\frac{\deg_{\bfx_{i}}(f)}{(\bfn-\bfe_{i})!}\bfdelta^{\bfn-\bfe_{i}} +
O(||\bfdelta||^{|\bfn|-2}).
\end{displaymath}
This implies that $\deg_{\bfn-\bfe_i}(D)= \deg_{\bfx_i}(f) $ and so
$[D]=\sum_{i=1}^{m}\deg_{\bfx_i}(f_{D}) \theta_{i}$, as stated.

\smallskip
\eqref{item:13} This follows readily from the definition of $\deg(V)$
in terms of Hilbert functions.
\end{proof}

The following is the multiprojective version of B\'ezout's theorem.

\begin{theorem}\label{bezgeom}
  Let $X\in Z_{r}(\P^{\bfn}_K)$ and $f\in  K[\bfx_{1},\dots,\bfx_{m}]$ be a
multihomogeneous polynomial such
that $X$ and  $\div(f)$ intersect properly. Then
$$
[X\cdot\div(f)]=[X]\cdot [\div(f)].
$$
Equivalently,
$\deg_\bfb\Big(X\cdot \div(f)\Big) = \sum_{i=1}^m \deg_{\bfx_i}(f)
\deg_{\bfb+\bfe_i}(X)$
for all $\bfb\in \N_{r-1}^{m}$.
\end{theorem}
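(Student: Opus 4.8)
The plan is to reduce the statement to the purely geometric content of Proposition~\ref{prop:5}\eqref{gradosparciales} combined with the classical projective B\'ezout theorem, exploiting the product structure of $\P^{\bfn}$. First I would reduce to the case where $X=V$ is an irreducible $K$-variety of dimension $r$ and $f$ is irreducible, so that $\div(f)$ is an irreducible hypersurface $H$ not containing $V$; both sides are linear in $X$ and additive in $\div(f)$ (using that the intersection product and the class map are $\Z$-linear, and that $[\div(f_1f_2)]=[\div(f_1)]+[\div(f_2)]$ by multiplicativity of $\bfdeg$), so this reduction is harmless. After passing to $\ov K$, which changes none of the mixed degrees (the class in the Chow ring is invariant under field extension), it suffices to prove $[V\cdot H]=[V]\cdot[\div(f)]$ for $V$ an irreducible subvariety of $\P^{\bfn}(\ov K)$ and $H$ an irreducible hypersurface meeting $V$ properly.

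The heart of the argument is a Hilbert-function computation. For $\bfdelta$ large, the exact sequence
\[
0 \longrightarrow \big(\ov K[\bfx]/I(V)\big)_{\bfdelta-\bfdeg(f)} \xrightarrow{\ \times f\ } \big(\ov K[\bfx]/I(V)\big)_{\bfdelta} \longrightarrow \big(\ov K[\bfx]/(I(V)+(f))\big)_{\bfdelta} \longrightarrow 0
\]
is exact because $f$ is a nonzerodivisor modulo $I(V)$ (as $H\not\supset V$ and $V$ is irreducible), exactly as in the proof of Proposition~\ref{prop:5}\eqref{gradosparciales}. Taking dimensions gives, for the Hilbert--Samuel polynomials,
\[
P_{V\cap H}(\bfdelta) = P_V(\bfdelta) - P_V(\bfdelta-\bfdeg(f)),
\]
where $P_{V\cap H}$ is the Hilbert--Samuel polynomial of the algebra $\ov K[\bfx]/(I(V)+(f))$; note this algebra need not be reduced, but the length multiplicities $\mult(Y|V,H)$ appearing in the definition of $V\cdot H$ are precisely what make the leading-degree part of $P_{V\cap H}$ agree with $\sum_Y \mult(Y|V,H)\,P_Y$. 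Reading off the degree-$(r-1)$ homogeneous part of the identity above and extracting the coefficient of $\bfdelta^{\bfb}$ for $\bfb\in\N^m_{r-1}$, the right-hand side contributes $\sum_{i=1}^m \deg_{\bfx_i}(f)\,\deg_{\bfb+\bfe_i}(V)$ by the multinomial expansion of $P_V(\bfdelta)-P_V(\bfdelta-\bfdeg(f))$, which is precisely the coefficient of $\theta_1^{n_1-b_1}\cdots\theta_m^{n_m-b_m}$ in $[V]\cdot\sum_i\deg_{\bfx_i}(f)\theta_i = [V]\cdot[\div(f)]$.

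The one delicate point — and the step I expect to be the main obstacle — is the compatibility between the length multiplicities $\mult(Y|V,H)$ of Definition~\ref{def:11} and the coefficients of the Hilbert--Samuel polynomial of the non-reduced scheme $\ov K[\bfx]/(I(V)+(f))$; equivalently, the additivity of the top-dimensional part of the Hilbert--Samuel polynomial over the primary decomposition. This is the multiprojective analogue of the standard fact that in the classical single-graded case the degree of a scheme equals the sum, over its top-dimensional components, of (length of the local ring) times (degree of the component), and it can be established by localizing at each minimal prime $I(Y)$ and using the associativity/additivity of length in a composition series, or by invoking the corresponding statement in \cite[Example~8.3.7 and Prop.~8.3]{Fulton84} or \cite[\S2]{Remond01a}. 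Once this is in hand, the equality of classes follows by comparing coefficients in the $\Z$-basis $\{\bftheta^{\bfb}\}_{\bfb\le\bfn}$, and the ``equivalently'' reformulation in terms of $\deg_\bfb$ is just the translation of that coefficient comparison. Finally, the general (non-irreducible, non-prime) case follows by $\Z$-bilinearity, and the passage back from $\ov K$ to $K$ is immediate since all the quantities involved are insensitive to scalar extension.
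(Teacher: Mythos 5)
Your argument is correct. The paper does not actually give a proof here: it observes that the equivalence of the two formulations follows from Proposition~\ref{prop:5}\eqref{gradosparciales} and cites R\'emond's Th\'eor\`eme~3.4 for the mixed-degree identity. You have reconstructed the Hilbert-polynomial argument that lies behind that citation. The steps hold up: bilinearity reduces to $V$ irreducible and $f$ irreducible (the irreducible factors of a multihomogeneous polynomial are again multihomogeneous, so the decompositions $[\div(f)]=\sum_j e_j[\div(f_j)]$ and $X\cdot\div(f)=\sum_j e_j\,X\cdot H_j$ are legitimate); multiplication by $f$ on the domain $K[\bfx]/I(V)$ gives $P_{V\cap H}(\bfdelta)=P_V(\bfdelta)-P_V(\bfdelta-\bfdeg(f))$; and extracting the degree-$(r-1)$ part and the coefficient of $\bfdelta^{\bfb}$ on the right-hand side yields $\frac{1}{\bfb!}\sum_i\deg_{\bfx_i}(f)\deg_{\bfb+\bfe_i}(V)$ as you compute. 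The one non-formal ingredient, which you correctly flag as the crux, is that the degree-$(r-1)$ part of the Hilbert--Samuel polynomial of the scheme $K[\bfx]/(I(V)+(f))$ equals $\sum_Y\mult(Y|V,H)$ times the degree-$(r-1)$ part of $P_Y$: this is the standard additivity of the leading coefficient over a prime filtration, combined with Krull's principal ideal theorem (which shows $V\cap H$ is pure of dimension $r-1$, so that the primes contributing to the leading term are exactly the $I(Y)$, each with multiplicity $\mult(Y|V,H)$). Two minor points: the detour through $\ov{K}$ is unnecessary, since $K[\bfx]/I(V)$ is already a domain and the exact sequence lives over $K$; and, as you acknowledge, the relation between $P_{V\cap H}$ and $\sum_Y\mult(Y|V,H)\,P_Y$ holds only in leading degree, not as an equality of polynomials.
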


\begin{proof}
  The equivalence between the two statements follows from
  Proposition~\ref{prop:5}\eqref{gradosparciales}.  The second
  statement follows for instance from~\cite[Thm.~3.4]{Remond01b}.
\end{proof}

{Next} corollary follows readily from this result together with
Proposition~\ref{prop:5}\eqref{item:12}.

\begin{corollary}\label{cor:4}
  Let $f_1,\dots,f_{|\bfn|}\in  K[\bfx_{1},\dots,\bfx_{m}]$ be multihomogeneous
  polynomials such that $\dim\Big(V(f_{1},\dots, f_{i})\Big)= |\bfn|-i$ for all
  $i$.
Then
\begin{displaymath}
  \deg\left(\prod_{i=1}^{|\bfn|}\div(f_i)\right)= \coeff_{\bftheta^\bfn}\bigg( \prod_{i=1}^{|\bfn|}\Big(
  \deg_{\bfx_1}(f_i)\, \theta_1+\cdots +\deg_{\bfx_m}(f_i)\, \theta_m\Big)\bigg).
  \end{displaymath}
\end{corollary}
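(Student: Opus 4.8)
The plan is to deduce this corollary from the multiprojective B\'ezout theorem (Theorem~\ref{bezgeom}) by an iterated application, starting from $X=\P^\bfn_K$ and successively intersecting with the divisors $\div(f_1),\dots,\div(f_{|\bfn|})$. First I would set $X_0=\P^\bfn_K$ and, for $1\le i\le |\bfn|$, define inductively $X_i=X_{i-1}\cdot \div(f_i)$. The hypothesis $\dim\big(V(f_1,\dots,f_i)\big)=|\bfn|-i$ for all $i$ guarantees that at each stage $X_{i-1}$ is equidimensional of dimension $|\bfn|-i+1$ and that none of its components is contained in $|\div(f_i)|$ (otherwise $V(f_1,\dots,f_i)$ would have a component of dimension $>|\bfn|-i$), so that $X_{i-1}$ and $\div(f_i)$ intersect properly and the intersection product is well-defined; moreover $X_i$ is again equidimensional of dimension $|\bfn|-i$. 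In particular $X_{|\bfn|}=\prod_{i=1}^{|\bfn|}\div(f_i)$ is a well-defined effective $0$-cycle.

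Next I would compute its class in the Chow ring. Applying Theorem~\ref{bezgeom} at each step gives
\begin{displaymath}
[X_i]=[X_{i-1}]\cdot[\div(f_i)],
\end{displaymath}
and by Proposition~\ref{prop:5}\eqref{gradosparciales} we have $[\div(f_i)]=\sum_{j=1}^m \deg_{\bfx_j}(f_i)\,\theta_j$. Combining this with $[\P^\bfn_K]=1$ from Proposition~\ref{prop:5}\eqref{item:11} and iterating yields
\begin{displaymath}
\left[\prod_{i=1}^{|\bfn|}\div(f_i)\right]=\prod_{i=1}^{|\bfn|}\Big(\deg_{\bfx_1}(f_i)\,\theta_1+\cdots+\deg_{\bfx_m}(f_i)\,\theta_m\Big)
\end{displaymath}
as an identity in $A^*(\P^\bfn_K)=\Z[\theta_1,\dots,\theta_m]/(\theta_1^{n_1+1},\dots,\theta_m^{n_m+1})$. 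Here one uses associativity and commutativity of the intersection product on $\P^\bfn_K$ (as recalled after Definition~\ref{def:11}), so the order of the factors is irrelevant; the product on the right lands in the top graded piece, which is one-dimensional over $\Z$, spanned by $\bftheta^\bfn=\theta_1^{n_1}\cdots\theta_m^{n_m}$.

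Finally I would extract the degree. Since $\prod_{i=1}^{|\bfn|}\div(f_i)$ is a $0$-cycle, Proposition~\ref{prop:5}\eqref{item:12} gives $\big[\prod_i\div(f_i)\big]=\deg\big(\prod_i\div(f_i)\big)\,\bftheta^\bfn$, so $\deg\big(\prod_i\div(f_i)\big)$ is exactly the coefficient of $\bftheta^\bfn$ in the polynomial on the right-hand side above, which is the claimed formula. I do not anticipate a serious obstacle here: the only point requiring a little care is the verification that the dimension hypothesis really does force proper intersection at every intermediate step (so that all the invocations of Theorem~\ref{bezgeom} are legitimate and the iterated product is order-independent), and that the remaining computation is a purely formal manipulation in the Chow ring. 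This is essentially the observation already flagged in the text, so the corollary "follows readily" once these bookkeeping details are spelled out.
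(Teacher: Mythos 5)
Your proof is correct and takes essentially the same route the paper has in mind: iterate the multiprojective B\'ezout theorem (Theorem~\ref{bezgeom}) starting from $[\P^\bfn_K]=1$, using Proposition~\ref{prop:5}\eqref{gradosparciales} to identify $[\div(f_i)]$, and then read off the degree of the resulting $0$-cycle with Proposition~\ref{prop:5}\eqref{item:12}. You have also correctly filled in the one point the paper leaves implicit, namely that the hypothesis $\dim\big(V(f_1,\dots,f_i)\big)=|\bfn|-i$ forces proper intersection at every stage (a component of $X_{i-1}$ inside $V(f_i)$ would be a component of $V(f_1,\dots,f_i)$ of dimension $|\bfn|-i+1$), so all invocations of Theorem~\ref{bezgeom} are legitimate.
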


\begin{example} \label{exm:5}
How many pairs (eigenvalue, eigenvector) can a generic square
matrix have? Given $M=(m_{i,j})_{i,j}\in  K^{n\times n}$, the
problem of computing these pairs  consists in solving   $M \bfv
=\lambda\,\bfv$ for  $\lambda \in \ov K$ and $\bfv=(v_{1},\dots,
v_{n})\in \ov K^{n}\setminus \{0\}$. Set
\begin{equation*}
f_{i}= s_{1}v_{i}-  s_{0}\sum_{j=1}^{n}m_{i,j}v_{j}  \quad
 , \quad  1\le i \le n.
\end{equation*}
The matrix
equation $M  \bfv=\lambda\,\bfv$ translates into the system of $n$
bilinear scalar equations $f_i =0$, $1\le i\le n$, for
$((s_{0}:s_{1}),\bfv)\in \P^{1}\times \P^{n-1}$ such
that $s_{0}\ne0$.
If $M$ is generic, the hypersurfaces $V(f_{i})$
intersect properly.
By Corollary~\ref{cor:4}, the number of solutions in
$ \P^{1}\times \P^{n-1}$ of this system of equations is
$$
\coeff_{\theta_{1}\theta_{2}^{n-1}}\bigg( \prod_{i=1}^{n}
\deg_{\bfs}( f_{i}) \theta_{1}+\deg_{\bfv}( f_{i})
 \theta_{2}\bigg ) =
\coeff_{\theta_{1}\theta_{2}^{n-1}}\Big( ( \theta_{1}+\theta_{2})^{n}
\Big ) = n.
$$
We deduce that $M$ admits at most $n$ pairs (eigenvalue, eigenvector)
counted with multiplicities.  A straightforward application of the
usual B\'ezout's theorem would have given the much larger bound
$2^{n}$.
\end{example}

The following result shows that mixed degrees can also be defined geometrically.
\begin{corollary}
  \label{cor:5}
Let $X\in Z_r(\P^\bfn_K)$ and  $\bfb\in \N_{r}^{m}$. For $1\le i\le
m$ and $0\le j\le b_{i}$ we denote by $H_{i,j}\subset \P^{\bfn}_K$ the
inverse image with respect to the projection $ \P^\bfn_K\to
\P^{n_i}_K$
 of a generic hyperplane of $\P^{n_i}_K$. Then
$$
\deg_{\bfb}(X) = \deg\bigg(X\cdot \prod_{i=1}^{m}\prod_{j=1}^{b_{i}} H_{i,j}\bigg).
$$
\end{corollary}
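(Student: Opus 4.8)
The plan is to iterate the multiprojective B\'ezout theorem (Theorem~\ref{bezgeom}), keeping track of classes in the Chow ring $A^{*}(\P^{\bfn}_{K})$, and then to read off the degree of the resulting zero-dimensional cycle by means of Proposition~\ref{prop:5}\eqref{item:12}. One may assume $\bfb\le\bfn$: otherwise $\deg_{\bfb}(X)=0$ by Proposition~\ref{prop:2}\eqref{item:38}, and the right-hand side vanishes as well since more than $n_{i}$ generic hyperplanes are then imposed on the $i$-th factor. Write $\pi_{i}\colon\P^{\bfn}_{K}\to\P^{n_{i}}_{K}$ for the $i$-th projection, so that $H_{i,j}=\pi_{i}^{-1}(L_{i,j})$ for hyperplanes $L_{i,j}\subset\P^{n_{i}}_{K}$, and note that $n_{i}\ge1$ whenever $b_{i}\ge1$.

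The point requiring a separate argument, and the one I expect to be the main obstacle, is that for a generic choice of the $L_{i,j}$ all the intermediate intersection products occurring in $X\cdot\prod_{i,j}H_{i,j}$ are well defined. I would choose the hyperplanes successively in a fixed order: if $Z$ denotes the cycle obtained after intersecting $X$ with the divisors already chosen, then a component $W$ of $Z$ is contained in $H_{i,j}=\pi_{i}^{-1}(L_{i,j})$ if and only if the nonempty subvariety $\pi_{i}(W)\subset\P^{n_{i}}_{K}$ is contained in $L_{i,j}$; as the hyperplanes of $\P^{n_{i}}$ containing a fixed nonempty subvariety form a proper linear subspace of the dual projective space and $Z$ has only finitely many components, a generic $L_{i,j}$ meets $Z$ properly. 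A standard argument on families then shows that the set of tuples $(L_{i,j})_{i,j}$ for which all the intermediate products are simultaneously well defined contains a dense open subset of the product of dual spaces; by the commutativity of intersection with divisors recalled after Definition~\ref{def:11}, the order in which the $H_{i,j}$ are taken is irrelevant.

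Granting this, each $H_{i,j}$ is the divisor of a generic linear form in the variables $\bfx_{i}$, which is squarefree, so Proposition~\ref{prop:5}\eqref{gradosparciales} gives $[H_{i,j}]=\theta_{i}$; iterating Theorem~\ref{bezgeom} then yields
\[
\Big[\,X\cdot\prod_{i=1}^{m}\prod_{j=1}^{b_{i}}H_{i,j}\,\Big]=[X]\cdot\prod_{i=1}^{m}\theta_{i}^{b_{i}}\in A^{*}(\P^{\bfn}_{K}).
\]
Writing $[X]=\sum_{\bfb'}\deg_{\bfb'}(X)\,\theta_{1}^{n_{1}-b_{1}'}\cdots\theta_{m}^{n_{m}-b_{m}'}$ over $\bfb'\in\N^{m}_{r}$ with $\bfb'\le\bfn$ and multiplying by $\prod_{i}\theta_{i}^{b_{i}}$, the term indexed by $\bfb'$ is nonzero in $\Z[\bftheta]/(\theta_{1}^{n_{1}+1},\dots,\theta_{m}^{n_{m}+1})$ only when $b_{i}\le b_{i}'$ for all $i$, which together with $|\bfb|=|\bfb'|=r$ forces $\bfb'=\bfb$; hence the product equals $\deg_{\bfb}(X)\,\bftheta^{\bfn}$. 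Since the cycle $X\cdot\prod_{i,j}H_{i,j}$ has pure dimension $r-|\bfb|=0$, Proposition~\ref{prop:5}\eqref{item:12} identifies its degree with the coefficient of $\bftheta^{\bfn}$ in its class, namely $\deg_{\bfb}(X)$, which is the claimed equality.
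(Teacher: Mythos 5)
Your proof is correct and takes the same route as the paper: iterate the multiprojective B\'ezout theorem to get $[Z]=[X]\cdot\bftheta^{\bfb}$, observe that only the term $\bfb'=\bfb$ survives in the Chow ring, and read off the degree via Proposition~\ref{prop:5}\eqref{item:12}. The paper states the properness of the successive intersections without argument; your justification of it (genericity of the hyperplanes $L_{i,j}$, finitely many components at each stage, the standard constructibility argument for the tuple) is a welcome elaboration rather than a departure.
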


\begin{proof}
The variety $X$ and the divisors $ H_{i,j}$ intersect properly and
$[H_{i,j}]=\theta_i$. Theorem~\ref{bezgeom} implies that, for
$Z=X\cdot \prod_{i=1}^{m}\prod_{j=1}^{b_{i}}
  H_{i,j}\in Z_{0}(\P^{\bfn}_K)$,
\begin{displaymath}
\deg(Z)\bftheta^{\bfn}= [Z]= [X]\cdot
\prod_{i=1}^{m}\prod_{j=1}^{b_{i}} [H_{i,j}] =
[X] \bftheta^{\bfb}= \bigg(\sum_{\bfc\in \N_r^m}\deg_\bfc(X)\bftheta^{\bfn-\bfc}\bigg)\bftheta^\bfb
= \deg_{\bfb}(X) \bftheta^{\bfn},
\end{displaymath}
which proves the statement.
\end{proof}

Next we show that mixed degrees are monotonic with
respect to linear projections.
For $1\le i\le m$, let  $0\le l_i\le n_i $ and set $\bfl=(l_{1},\dots, l_{m})\in \N^m$.
Consider the linear projection which forgets the last $n_i-l_i$ coordinates in each factor of
$\P^\bfn_K$:
\begin{equation}\label{eq:58}
\pi: \P^\bfn_K\dashrightarrow\P^\bfl_K \quad, \quad (x_{i,j})_{1\le
  i\le m \atop 0\le j\le n_{i}} \longmapsto (x_{i,j})_{1\le
  i\le m \atop 0\le j\le l_{i}}
\end{equation}
This is a rational map, well-defined outside the union
of  linear subspaces $L:=\bigcup_{i=1}^m
V(x_{i,0},\ldots,x_{i,l_i})\subset\P^\bfn_K$.
It induces an injective
$\Z$-linear map
\begin{displaymath}
 \jmath: A^*(\P^\bfl_K)\hooklongrightarrow A^*(\P^\bfn_K) \quad, \quad
 P\longmapsto \bftheta^{\bfn-\bfl}P.
 \end{displaymath}

 \begin{proposition}\label{113}
 Let $\pi: \P^\bfn_K\dashrightarrow\P^\bfl_K$ be the linear projection
as  above and  $X\in Z_r^+(\P^\bfn_K)$. Then
     $$
     \jmath \Big([\pi_{*}X]\Big)\le [X].
     $$
Equivalently, $ \deg_\bfb\Big(\pi_*X\Big)\le \deg_\bfb(X) $ for all $\bfb\in \N^m_r$.
    \end{proposition}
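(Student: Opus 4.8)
The plan is to reduce the statement to the case of an irreducible variety $X=V$ and then to realize the mixed degrees of $V$ and of $\pi_*V$ via generic intersections, using Corollary~\ref{cor:5}. First I would note that both sides of the asserted inequality $\jmath([\pi_*X])\le [X]$ are additive on effective cycles (direct image is $\Z$-linear and preserves effectivity by Definition~\ref{def:9}, and $\jmath$ is $\Z$-linear), so it suffices to treat a single irreducible $V\in Z_r^+(\P^\bfn_K)$. By the equivalence stated in the proposition, the goal becomes the coefficientwise inequality $\deg_\bfb(\pi_*V)\le\deg_\bfb(V)$ for every $\bfb\in\N^m_r$.

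The main idea is to use Corollary~\ref{cor:5} on both spaces. Fix $\bfb\in\N^m_r$ with $\bfb\le\bfl$ (if $b_i>l_i$ for some $i$ then $\deg_\bfb(\pi_*V)=0$ by Proposition~\ref{prop:2}\eqref{item:38} applied in $\P^\bfl_K$, and the inequality is trivial). Choose, for $1\le i\le m$ and $1\le j\le b_i$, a generic hyperplane $\overline H_{i,j}\subset\P^{l_i}_K$ and let $H_{i,j}\subset\P^\bfn_K$ be the inverse image under $\P^\bfn_K\to\P^{n_i}_K$ of the hyperplane $\{x_{i,0}u_0+\dots+x_{i,l_i}u_{l_i}=0\}$ in $\P^{n_i}_K$ obtained by pulling $\overline H_{i,j}$ back along the coordinate projection $\P^{n_i}_K\dashrightarrow\P^{l_i}_K$ — i.e. a generic hyperplane of $\P^{n_i}_K$ whose defining linear form only involves the first $l_i+1$ variables. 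Then by construction $\pi^{*}(\overline H_{i,j})$ and $H_{i,j}$ agree away from $L$, and $[H_{i,j}]=\theta_i$. Set $Z=V\cdot\prod_{i,j}H_{i,j}$, a zero-dimensional cycle computing $\deg_\bfb(V)$ by Corollary~\ref{cor:5}. The plan is to compare $Z$ with $\pi_*V\cdot\prod_{i,j}\overline H_{i,j}$, which computes $\deg_\bfb(\pi_*V)$, by pushing $Z$ forward along $\pi$ and invoking the projection formula~\eqref{eq:38}.

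The key computation is: $\pi_*(Z)=\pi_*\bigl(V\cdot\prod_{i,j}\pi^*\overline H_{i,j}\bigr)=\pi_*V\cdot\prod_{i,j}\overline H_{i,j}$ by iterated use of~\eqref{eq:38}, hence $\deg(\pi_*Z)=\deg_\bfb(\pi_*V)$; on the other hand, since $\pi$ is a morphism on a neighbourhood of $\Supp(Z)$ (for generic choices the points of $Z$ avoid the indeterminacy locus $L$) and pushforward of a zero-cycle can only lose degree — a point of $Z$ either maps to a point, contributing its multiplicity, or lies in a positive-dimensional fiber and contributes $0$ to $\pi_*Z$ — we get $\deg(\pi_*Z)\le\deg(Z)=\deg_\bfb(V)$. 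Combining the two gives $\deg_\bfb(\pi_*V)\le\deg_\bfb(V)$, as desired.

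The hard part will be justifying the two delicate points in the geometric argument: first, that for generic hyperplanes the intersections $V\cdot\prod H_{i,j}$ and $\pi_*V\cdot\prod\overline H_{i,j}$ are proper and that $\Supp(Z)$ avoids $L$, so that all intermediate products are defined and $\pi$ restricts to a morphism near $Z$; and second, the precise bookkeeping that $\pi^*\overline H_{i,j}$, as a divisor on $\P^\bfn_K$, equals $H_{i,j}$ (not merely off $L$) or at least that the discrepancy is supported on $L$ and hence invisible to $Z$, so that $V\cdot\prod_{i,j}\pi^*\overline H_{i,j}=V\cdot\prod_{i,j}H_{i,j}$ as zero-cycles. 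Both are standard genericity/Bertini-type arguments — the hyperplane defining forms involving only $x_{i,0},\dots,x_{i,l_i}$ pull back under $\pi$ to themselves — but they need to be stated carefully. Once these are in place, the inequality $\deg(\pi_*Z)\le\deg(Z)$ is immediate from the definition of direct image of a zero-cycle, and linearity finishes the proof for arbitrary effective $X$.
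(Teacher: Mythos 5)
You pursue the same underlying idea as the paper — compute $\deg_\bfb$ as the degree of a zero-cycle obtained by slicing with hyperplanes, and compare these zero-cycles across $\pi$ — but there is a genuine gap at the heart of the argument. The ``key computation'' $\pi_*\big(V\cdot\prod_{i,j}\pi^*\overline H_{i,j}\big)=\pi_*V\cdot\prod_{i,j}\overline H_{i,j}$ is asserted ``by iterated use of~\eqref{eq:38}'', but \eqref{eq:38} is the projection formula for a \emph{proper morphism}, and $\pi$ is only a rational map, undefined on $L$. Having $\pi$ a morphism near $\Supp(Z)$ does not license the formula: $\pi_*V$ and its intersection with $\prod\overline H_{i,j}$ are global objects, and the behaviour of $V$ along $L$ is exactly where degree is lost. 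The paper's Lemma~\ref{lemm:5} (credited to Burgos) is precisely the result that replaces this illegitimate use of~\eqref{eq:38}: it passes to the graph closure $\Bl_L(\P^\bfn)\subset\P^\bfn\times\P^\bfl$, where the two projections $\pr_1,\pr_2$ are honest proper morphisms, applies the true projection formula there, and tracks the cycle $Z$ supported on the exceptional divisor $E$ to establish $\pi_*(V\cdot\div_{\P^\bfn}(f))=\pi_*V\cdot\div_{\P^\bfl}(f)$ under a genericity hypothesis on $f$ stated on the blow-up (no component of $W\cdot\pr_2^*\div_{\P^\bfl}(f)$ lies in $E$). The paper then chains this one linear form at a time by induction on $\dim V$, choosing each $\ell$ to satisfy that hypothesis at each step. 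Your proposal, which does all $r$ slices in one shot and then pushes down once, would need a comparable analysis on the blow-up; it cannot be dispatched as ``standard bookkeeping''.

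A secondary error: the parenthetical claim that ``for generic choices the points of $Z$ avoid the indeterminacy locus $L$'' is false. Every hyperplane $H_{i,j}$ in your constrained family has defining form in $x_{i,0},\dots,x_{i,l_i}$ and hence contains $L_i=V(x_{i,0},\dots,x_{i,l_i})$; so if $V$ meets $L$, the cycle $Z=V\cdot\prod H_{i,j}$ cannot avoid $L$, no matter how generically you choose within that family. (Already for a point-projection $\P^2\dashrightarrow\P^1$ with centre $L$ and $V$ a conic through $L$: every line of the pencil through $L$ meets $V$ at $L$.) This is not by itself fatal — as you note, $\pi_*$ simply drops such points, and those dropped points are exactly what produce the strict inequality — but it shows the genericity on which you lean (both here and implicitly for the projection-formula step) does not hold, and that the correct genericity condition must be imposed on the blow-up, not on $\P^\bfn$, as the paper does. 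Finally, a small misattribution: the identity $\deg(Z)=\deg_\bfb(V)$ for your constrained hyperplanes is a consequence of B\'ezout (Theorem~\ref{bezgeom}), valid for any proper intersection, rather than of Corollary~\ref{cor:5}, which is stated for generic hyperplanes of $\P^{n_i}_K$.
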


  The proof of this result relies on the {technical}  Lemma \ref{lemm:5}
  below, which was suggested to us by Jos\'e Ignacio Burgos.
Consider the blow up of
$\P^{\bfn}$ along the subvariety $L$, denoted $\Bl_{L}(\P^{\bfn})$ and
defined as the   closure in $\P^{\bfn}\times\P^{\bfl}$  of the graph of
$\pi$. It is an irreducible variety of dimension $|\bfn|$. Set
$\bfx$ and $\bfy$ for the  multihomogeneous coordinates of
$\P^{\bfn}$ and $\P^{\bfl}$, respectively. The ideal of this variety is
\begin{equation}\label{eq:1}
  I(\Bl_{L}(\P^{\bfn}))=\Big( \{x_{i,j_{1}}y_{i,j_{2}}- x_{i,j_{2}}y_{i,j_{1}} : {1 \le
    i\le m, 0 \le j_{1}<j_{2}\le l_{i}}\}\Big) \subset \ov K[\bfx,\bfy].
\end{equation}
Consider the projections
\begin{displaymath}
  \pr_{1}:\P^{\bfn}\times\P^{\bfl}\longrightarrow \P^{\bfn} \quad, \quad
  \pr_{2}:\P^{\bfn}\times\P^{\bfl}\longrightarrow \P^{\bfl}.
\end{displaymath}
The exceptional divisor of the blow up is supported in the hypersurface $E=\pr_{1}^{-1}(L)$.
Let $V\subset \P^{\bfn}$ be an irreducible variety such that
$V\not\subset L$ and $W$ its strict
transform, which is the closure of the set $ {\pr_{1}^{-1}(V\setminus
  L)\cap \Bl_{L}(\P^{\bfn})}$. Then
\begin{displaymath}
  \pr_{1*}  W = V\quad, \quad
  \pr_{2*} W = \pi_{*}V.
\end{displaymath}
For a multihomogeneous polynomial $f\in \ov K[\bfy]\setminus \{0\}$,
we write $\div_{\P^{\bfl}}(f)$ for the divisor of
 $\P^{\bfl}$  defined by $f(\bfy)$ and $\div_{\P^{\bfn}}(f)$ for the divisor of $\P^{\bfn}$ defined by $f(\bfx)$.

\begin{lemma}
  \label{lemm:5}
Let $V\subset \P^{\bfn}$ be an irreducible variety of dimension $r$
such that $V\not\subset L$ and $f\in \ov K[\bfy]\setminus\{0\}$
a multihomogeneous polynomial. Assume that  $\pr_{2}^{*}
\,\div_{\P^{\bfl}}(f)$ intersects  $W$     properly and that no component of
$W\cdot \pr_{2}^{*}\, \div_{\P^{\bfl}}(f)$  is contained in $E$.
Then  $\div_{\P^{\bfl}}(f)$ (respectively,  $\div_{\P^{\bfn}}(f)$)
intersects $\pi_{*}V$ (respectively, $V$) properly and
\begin{displaymath}
\pi_{*}(V\cdot\div_{\P^{\bfn}}(f))=\pi_{*}V\cdot \div_{\P^{\bfl}}(f).
\end{displaymath}
\end{lemma}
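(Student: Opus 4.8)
The idea is to transfer the intersection from $\P^{\bfn}$ (respectively $\P^{\bfl}$) up to the blow-up $\Bl_{L}(\P^{\bfn})$, where the projection formula~\eqref{eq:38} applies cleanly, and then push down via $\pr_{1*}$ and $\pr_{2*}$. First I would record the two key identities already noted before the statement, namely $\pr_{1*}W=V$ and $\pr_{2*}W=\pi_{*}V$, and observe that the divisors $\pr_{2}^{*}\div_{\P^{\bfl}}(f)$ and $\pr_{1}^{*}\div_{\P^{\bfn}}(f)$ coincide as divisors of $\P^{\bfn}\times\P^{\bfl}$ away from the exceptional locus $E$: on the complement of $E$ the map $\pi$ is a morphism and $\pr_{2}=\pi\circ\pr_{1}$, so the two pullbacks of the pullback of $f$ agree. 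More precisely, I expect an equality of the form $\pr_{1}^{*}\div_{\P^{\bfn}}(f)=\pr_{2}^{*}\div_{\P^{\bfl}}(f)+(\text{divisor supported on }E)$, since $f(\bfx)$ and $f(\bfy)$ differ by something vanishing on $E$ once we use the relations in~\eqref{eq:1}; the hypothesis that no component of $W\cdot\pr_{2}^{*}\div_{\P^{\bfl}}(f)$ lies in $E$ will ensure that the extra $E$-term contributes nothing to $W\cdot\pr_{2}^{*}\div_{\P^{\bfl}}(f)$, so that in fact $W\cdot\pr_{1}^{*}\div_{\P^{\bfn}}(f)=W\cdot\pr_{2}^{*}\div_{\P^{\bfl}}(f)$ as cycles.

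Granting that, the proof is short. Applying $\pr_{2*}$ to $W\cdot\pr_{2}^{*}\div_{\P^{\bfl}}(f)$ and using the projection formula~\eqref{eq:38} with $\varphi=\pr_{2}$ gives $\pr_{2*}\bigl(W\cdot\pr_{2}^{*}\div_{\P^{\bfl}}(f)\bigr)=\pr_{2*}W\cdot\div_{\P^{\bfl}}(f)=\pi_{*}V\cdot\div_{\P^{\bfl}}(f)$; in particular the right-hand side is a genuine proper intersection, so $\div_{\P^{\bfl}}(f)$ meets $\pi_{*}V$ properly. Applying $\pr_{1*}$ to the equal cycle $W\cdot\pr_{1}^{*}\div_{\P^{\bfn}}(f)$ and again invoking~\eqref{eq:38} with $\varphi=\pr_{1}$ gives $\pr_{1*}\bigl(W\cdot\pr_{1}^{*}\div_{\P^{\bfn}}(f)\bigr)=\pr_{1*}W\cdot\div_{\P^{\bfn}}(f)=V\cdot\div_{\P^{\bfn}}(f)$, which shows $\div_{\P^{\bfn}}(f)$ meets $V$ properly. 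Finally, since $\pr_{2}=\pi\circ\pr_{1}$ on $\Bl_{L}(\P^{\bfn})$ (that is what the graph description of the blow-up says), functoriality of direct images gives $\pi_{*}\bigl(V\cdot\div_{\P^{\bfn}}(f)\bigr)=\pi_{*}\pr_{1*}\bigl(W\cdot\pr_{1}^{*}\div_{\P^{\bfn}}(f)\bigr)=\pr_{2*}\bigl(W\cdot\pr_{1}^{*}\div_{\P^{\bfn}}(f)\bigr)=\pr_{2*}\bigl(W\cdot\pr_{2}^{*}\div_{\P^{\bfl}}(f)\bigr)=\pi_{*}V\cdot\div_{\P^{\bfl}}(f)$, which is the desired identity.

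**Main obstacle.** The delicate point is the cycle-level comparison $W\cdot\pr_{1}^{*}\div_{\P^{\bfn}}(f)=W\cdot\pr_{2}^{*}\div_{\P^{\bfn}}(f)$ (with multiplicities, not just supports). One has to argue that the difference of the two Cartier divisors $\pr_{1}^{*}\div_{\P^{\bfn}}(f)$ and $\pr_{2}^{*}\div_{\P^{\bfl}}(f)$ on $\Bl_{L}(\P^{\bfn})$ is supported on the exceptional divisor $E=\pr_{1}^{-1}(L)$, which one can see by comparing the local equations $f(\bfx)$ and $f(\bfy)$ modulo the ideal~\eqref{eq:1}: on the affine chart of $\Bl_{L}(\P^{\bfn})$ where some $x_{i,0}\neq 0$ one has $y_{i,j}=(x_{i,j}/x_{i,0})y_{i,0}$, so $f(\bfy)$ and $f(\bfx)$ are related by a unit up to a factor that is a power of the equation of $E$. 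Then one must use both hypotheses — that $\pr_{2}^{*}\div_{\P^{\bfl}}(f)$ meets $W$ properly, and that no component of $W\cdot\pr_{2}^{*}\div_{\P^{\bfl}}(f)$ lies in $E$ — to conclude that the $E$-component of the difference divisor does not contribute to the intersection product with $W$, so the two intersection cycles literally coincide. Once this local bookkeeping is done, everything else is a formal application of the projection formula and functoriality of proper pushforward, so I would keep that part brief.
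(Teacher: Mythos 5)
Your strategy — comparing $D_1 := \pr_1^*\div_{\P^\bfn}(f)$ and $D_2 := \pr_2^*\div_{\P^\bfl}(f)$ on the blow-up and pushing down with the projection formula — is exactly the paper's. The gap is the intermediate claim $W\cdot D_1 = W\cdot D_2$: it is false in general. The relations in \eqref{eq:1} only show that $W\cdot D_1 - W\cdot D_2$ is supported on $E$, and the hypothesis controls the $E$-components of $W\cdot D_2$ (there are none), not those of $W\cdot D_1$, which can well be nonempty. Concretely, take $\P^\bfn = \P^2$, $\P^\bfl = \P^1$, $L = V(x_0,x_1)$, $V = \P^2$ (so $W = B = \Bl_L(\P^2)$, cut out by $x_0 y_1 - x_1 y_0$), and $f = y_0$. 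On the chart $x_2 = 1$, $y_1 = 1$, where $B$ is $\{x_0 = x_1 y_0\}$, the divisor $D_2$ restricts to $V(y_0)$, irreducible and not contained in $E = V(x_1)$, so the hypothesis holds; but $D_1$ restricts to $V(x_0) = V(x_1 y_0) = V(x_1) + V(y_0)$, so $W\cdot D_1 = E + W\cdot D_2 \neq W\cdot D_2$. Your functoriality step $\pi_*\pr_{1*}(W\cdot D_1) = \pr_{2*}(W\cdot D_1)$ likewise fails: $\pr_1$ collapses $E$ onto the point $L$, so $\pr_{1*}E = 0$ and hence $\pi_*\pr_{1*}E = 0$, whereas $\pr_{2*}E = \P^1 \neq 0$.

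The paper avoids both pitfalls by never asserting $W\cdot D_1 = W\cdot D_2$. Instead it writes $W\cdot D_1 = W\cdot D_2 - Z$ for some cycle $Z$ supported on $E$ (possibly nonzero), applies $\pr_{1*}$ to get $V\cdot\div_{\P^\bfn}(f) = \pr_{1*}(W\cdot D_2) - \pr_{1*}Z$, and only then applies $\pi_*$. The term $\pi_*\pr_{1*}Z$ vanishes because $\pr_{1*}Z$ is supported on $L$; and the identity $\pi_*\pr_{1*}(W\cdot D_2) = (\pi\circ\pr_1)_*(W\cdot D_2) = \pr_{2*}(W\cdot D_2)$ is legitimate precisely because $W\cdot D_2$ has no component in $E$, so $\pr_{1*}(W\cdot D_2)$ has no component in $L$. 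Everything else in your proposal — the congruence $\bigl(\prod_i y_{i,j_i}^{d_i}\bigr)f(\bfx)\equiv\bigl(\prod_i x_{i,j_i}^{d_i}\bigr)f(\bfy)\pmod{I(B)}$, the identifications $\pr_{1*}W = V$ and $\pr_{2*}W = \pi_*V$, and the two invocations of the projection formula — is correct and matches the paper; the missing ingredient is the explicit error cycle $Z$ and the observation that it dies under $\pi_*\pr_{1*}$.
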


\begin{proof}
Consider the following divisors of $\P^{\bfn}\times \P^{\bfl}$:
\begin{displaymath}
D_{1}=\pr_{1}^{*} \,\div_{\P^{\bfn}}(f) \quad,\quad
D_{2}=\pr_{2}^{*} \,\div_{\P^{\bfl}}(f)  .
\end{displaymath}
Write for short $B=\Bl_{L}(\P^{\bfn})$. Since $W$ is irreducible, the hypothesis that  $D_{2}$ intersects  $W$ properly is equivalent to
the fact that $W$ is not contained in $|D_{2}|$. This implies
that neither $V$ is contained in $V(f(\bfx))$
nor $\pi(V)$ is contained in $V(f(\bfy))$. Hence all intersection
products are well-defined.
We claim that
  \begin{equation*}
B\cdot (D_{2}-D_{1})
  \end{equation*}
is a cycle of pure dimension $|\bfn|-1$ with support contained in the
hypersurface $E$.
To prove this, write
$d_{i}=\deg_{\bfx_{i}}(f)$ and, for each $1\le i\le m$, choose an index $0\le
j_{i}\le l_{i}$. Using~\eqref{eq:1} we verify that
\begin{displaymath}
 \bigg( \prod_{i=1}^{m}y_{i,j_{i}}^{d_{i}}\bigg)f(\bfx) \equiv  \bigg(
 \prod_{i=1}^{m}x_{i,j_{i}}^{d_{i}}\bigg)f(\bfy)
\pmod{I(B)}.
\end{displaymath}
Observe that the ideal of $E$ in $\ov K[\bfx,\bfy]/I(B)$ is
generated by the set of monomials
\begin{displaymath}
 \bigg( \prod_{i=1}^{m}x_{i,j_{i}}\bigg)_{1\le i\le m \atop 0\le
j_{i}\le l_{i}}
\end{displaymath}
Let $(\bfxi,\bfxi')\in B\setminus E$. We have that
$\prod_{i=1}^{m}\xi_{i,j_{i}}\ne 0$ for a choice of $j_{i}$'s.
From here, we can verify that $\prod_{i=1}^{m}\xi'_{i,j_{i}}\ne 0$.
This implies that $f(\bfx)$ and $f(\bfy)$ generate the same ideal in
the localization $(\ov K[\bfx,\bfy]/I(B))_{(\bfxi,\bfxi')}$ for all
such $(\bfxi,\bfxi')$ and proves the claim.
Therefore, there exists a cycle  $Z\in Z_{r-1}(\P^{\bfn}\times \P^{\bfl})$  supported on $ E$ such that
$$
  W\cdot D_{1} = W\cdot D_{2}-Z.
$$
Since the map $\pr_{1}$ is proper, the  projection formula \eqref{eq:38}
implies that
\begin{displaymath}
V\cdot \div_{\P^{\bfn}}(f)=
\pr_{1*}(  W\cdot D_{1})
= \pr_{1*}(  W\cdot D_{2}) -
\pr_{1*}   Z .
\end{displaymath}
By hypothesis, no component of
$W\cdot D_{2}$
is contained in $E$. Since  $\pr_1: B\setminus E \to \P^\bfn \setminus L$ is an isomorphism,
no component of $\pr_{1*}(W\cdot D_{2})$ is
contained in $L$.
Hence,
\begin{displaymath}
\pi_{*}(V\cdot \div_{\P^{\bfn}}(f)))
= \pi_{*}\pr_{1*}(  W\cdot D_{2}) -
\pi_{*}\pr_{1*}   Z =   (\pi \circ \pr_1)_*(W\cdot D_2)=
\pr_{2*}(  W\cdot D_{2}),
\end{displaymath}
because $\pi_{*}\pr_{1*}   Z = 0$ as this is a cycle supported on
$L$. Again by the projection formula,
\begin{displaymath}
\pr_{2*}(  W\cdot D_{2})=  \pr_{2*} W \cdot     \div_{\P^{\bfl}}(f) =
\pi_{*} V \cdot \div_{\P^{\bfl}}(f),
\end{displaymath}
which proves the statement.
\end{proof}

\begin{proof}[Proof of Proposition~\ref{113}.]
The equivalence between the two formulations is a direct consequence
of the definitions. We reduce without loss of generality to the case of an irreducible
variety $V\subset \P^{\bfn}$ such that $\dim(\ov{\pi(V)})=r$.

We proceed  by induction on the dimension.
For $r=0$, the statement is
obvious and so we assume  $r\ge1$.
Let $1\le i\le m$ such that
$b_{i}\ne0$ and $\ell\in \ov K[\bfy_{i}]$ a linear form.
For each component $C$ of $W\cap E$ we
pick a point $\bfxi_{C}\in C$ and we impose that
$\ell(\bfxi_{C})\ne0$,
which holds for a generic choice of $\ell$.
This implies that $\pr_{2}^{*}(\div_{\P^{\bfl}}(\ell))$
intersects  $W$  properly and that their intersection has no
component contained in $E$.
Hence
\begin{align*}
\deg_{\bfb}(\pi_{*} V )&= \deg_{\bfb-\bfe_{i}}(\pi_{*} V \cdot
\div_{\P^{\bfl}}(\ell))  & \mbox{by B\'ezout's theorem \ref{bezgeom}, }\\[1mm]
 &= \deg_{\bfb-\bfe_{i}}(\pi_{*}(V\cdot
\div_{\P^{\bfn}}(\ell))) & \mbox{by Lemma \ref{lemm:5},}
\\[1mm]
& \le \deg_{\bfb-\bfe_{i}}(V\cdot \div_{\P^{\bfn}}(\ell)))& \mbox{by the inductive hypothesis,}\\[1mm]
& = \deg_{\bfb}(V) & \mbox{by B\'ezout's theorem
\ref{bezgeom}},
\end{align*}
which completes the proof.
\end{proof}

Next result gives the behavior of Chow rings and classes with respect
to products.

\begin{proposition}
\label{prop:3}
Let $m_i\in \Z_{>0} $ and $\bfn_{i}\in \N^{m_{i}}$ for $i=1,2$. Then
\begin{enumerate}
\item \label{item:10} $A^{*}(\P^{\bfn_{1}}_K\times \P^{\bfn_{2}}_K) \simeq A^{*}(\P^{\bfn_{1}}_K)
 \otimes_{\Z} A^{*}(\P^{\bfn_{2}}_K)$.
\smallskip
\item \label{item:14} Let $X_{i}\in Z_{r_{i}}(\P^{\bfn_{i}}_K) $ for
  $i=1,2$. The above {isomorphism identifies $[X_{1}\times X_{2}]$
    with $[X_{1}]\otimes_{\Z} [X_{2}]$.}  Equivalently, for all
  $\bfb_{i}\in \N^{m_{i}}$ such that
  $|\bfb_{1}|+|\bfb_{2}|=r_{1}+r_{2}$,
  \begin{equation*}
\deg_{\bfb_1,\bfb_2}(X_{1}\times X_{2})= \left\{
\begin{array}{cl}
  \deg_{\bfb_1}(X_{1})\deg_{\bfb_2}(X_{2}) & \text{ if }
  |\bfb_{1}|=r_{1}, |\bfb_{2}|= r_{2}, \\[2mm]
0 & \text{ otherwise.}
\end{array}
\right.
  \end{equation*}
\end{enumerate}
\end{proposition}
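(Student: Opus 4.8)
The plan is to reduce both statements to computations with Hilbert–Samuel polynomials, exploiting the tensor-product structure of the multihomogeneous coordinate ring of a product of multiprojective spaces. First I would address \eqref{item:10}. The multiprojective space $\P^{\bfn_1}_K\times\P^{\bfn_2}_K$ is itself the multiprojective space $\P^{(\bfn_1,\bfn_2)}_K$ attached to the concatenated tuple $(\bfn_1,\bfn_2)\in\N^{m_1+m_2}$, so by the very definition of the Chow ring recalled before Proposition~\ref{prop:5} one has
\begin{displaymath}
A^{*}(\P^{\bfn_1}_K\times\P^{\bfn_2}_K)=\Z[\theta_1,\dots,\theta_{m_1},\theta'_1,\dots,\theta'_{m_2}]/(\theta_1^{n_{1,1}+1},\dots,(\theta'_{m_2})^{n_{2,m_2}+1}),
\end{displaymath}
and the obvious map sending $\theta_i\otimes 1\mapsto\theta_i$ and $1\otimes\theta'_j\mapsto\theta'_j$ is the desired ring isomorphism $A^{*}(\P^{\bfn_1}_K)\otimes_\Z A^{*}(\P^{\bfn_2}_K)\simeq A^{*}(\P^{\bfn_1}_K\times\P^{\bfn_2}_K)$; this is just the standard fact that a tensor product of two truncated polynomial rings over $\Z$ is the truncated polynomial ring on the union of the generators. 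No real obstacle here.

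For \eqref{item:14}, by scalar extension (which leaves all mixed degrees and classes unchanged) I reduce to $K$ algebraically closed, and by $\Z$-bilinearity of the product of cycles and of $\otimes_\Z$, I reduce to the case where $X_1=V_1$ and $X_2=V_2$ are irreducible $K$-varieties of dimensions $r_1,r_2$. The key point is the identity of ideals $I(V_1\times V_2)=I(V_1)\otimes_K K[\bfx_2]+K[\bfx_1]\otimes_K I(V_2)$ inside $K[\bfx_1,\bfx_2]=K[\bfx_1]\otimes_K K[\bfx_2]$, from which one gets, for each bidegree $(\bfdelta_1,\bfdelta_2)\in\N^{m_1}\times\N^{m_2}$, a vector space isomorphism
\begin{displaymath}
\big(K[\bfx_1,\bfx_2]/I(V_1\times V_2)\big)_{(\bfdelta_1,\bfdelta_2)}\simeq\big(K[\bfx_1]/I(V_1)\big)_{\bfdelta_1}\otimes_K\big(K[\bfx_2]/I(V_2)\big)_{\bfdelta_2},
\end{displaymath}
hence $H_{V_1\times V_2}(\bfdelta_1,\bfdelta_2)=H_{V_1}(\bfdelta_1)\,H_{V_2}(\bfdelta_2)$ for all large $\bfdelta_1,\bfdelta_2$. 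Therefore $P_{V_1\times V_2}=P_{V_1}\cdot P_{V_2}$ as polynomials in the disjoint sets of variables $z_1,\dots,z_{m_1}$ and $z'_1,\dots,z'_{m_2}$. Since $\dim(V_1\times V_2)=r_1+r_2$, the polynomial $P_{V_1\times V_2}$ has total degree $r_1+r_2$, and $P_{V_1}$, $P_{V_2}$ have degrees exactly $r_1$, $r_2$ by Proposition~\ref{prop:2}\eqref{item:8}; thus its only terms of total degree $r_1+r_2$ are products of a top-degree term of $P_{V_1}$ with a top-degree term of $P_{V_2}$. Extracting the coefficient of $z_1^{b_{1,1}}\cdots(z'_{m_2})^{b_{2,m_2}}$ and multiplying by the factorial, one reads off exactly
\begin{displaymath}
\deg_{\bfb_1,\bfb_2}(V_1\times V_2)=\begin{cases}\deg_{\bfb_1}(V_1)\deg_{\bfb_2}(V_2)&\text{if }|\bfb_1|=r_1,\ |\bfb_2|=r_2,\\0&\text{otherwise,}\end{cases}
\end{displaymath}
which translates into $[V_1\times V_2]=[V_1]\otimes_\Z[V_2]$ under the isomorphism of \eqref{item:10}.

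The only step requiring genuine care — and the one I would flag as the main obstacle — is the graded isomorphism of coordinate rings of the product, i.e.\ checking that $I(V_1\times V_2)$ really equals $I(V_1)\otimes K[\bfx_2]+K[\bfx_1]\otimes I(V_2)$ and not just its radical. Over an algebraically closed field this follows because the tensor product of two reduced finitely generated $K$-algebras over a perfect (here, algebraically closed) field is again reduced, so the right-hand side is already a radical ideal with the correct zero set $V_1\times V_2$; alternatively one invokes that $V_1\times V_2$ is reduced as a scheme and compares with the scheme-theoretic fiber product. Once this is in hand, everything else is the bookkeeping of multiplying two polynomials in disjoint variables and comparing top-degree parts, which is routine. (Should one wish to avoid the perfectness issue entirely, one may instead argue directly with Hilbert functions of the reduced scheme $V_1\times V_2$, whose bigraded coordinate ring is by construction the tensor product of the two coordinate rings, bypassing any ideal-theoretic subtlety.)
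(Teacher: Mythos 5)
Your proof follows essentially the same route as the paper: reduce to irreducible varieties, observe that the bigraded pieces of the coordinate ring of the product factor as a tensor product, conclude $H_{V_1\times V_2}=H_{V_1}H_{V_2}$, hence $P_{V_1\times V_2}=P_{V_1}P_{V_2}$, and read off the mixed degrees from the top-degree terms. The extra care you devote to whether $I(V_1\times V_2)$ equals the sum of extended ideals over a non-perfect field (and the reduction to $\ov K$, or alternatively working with the scheme-theoretic product ideal directly) is a reasonable point that the paper glosses over, but the mechanism is identical.
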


\begin{proof}\

\eqref{item:10} This is immediate from the definition of the Chow
ring.

\smallskip
\eqref{item:14} We reduce without loss of generality to the case of
irreducible $K$-varieties $V_i\subset \P^{\bfn_i}_K$, $i=1,2$.
Let $\bfx_{i}$ denote the
multihomogeneous coordinates of $\P^{\bfn_{i}}_K$.
For $\bfdelta_{i}\in\N^{m_{i}}$,
$$
\Big(  K[\bfx_{1},\bfx_{2}]/I(V_1\times
V_2)\Big)_{\bfdelta_{1},\bfdelta_{2}} \simeq \Big(
K[\bfx_{1}]/I(V_1)\Big) _{\bfdelta_{1}}\otimes\Big(
K[\bfx_{2}]/I(V_2)\Big)_{\bfdelta_{2}}.
$$
Hence $ H_{V_1\times V_2}(\bfdelta_{1},\bfdelta_{2})=
H_{V_1}(\bfdelta_{1})H_{V_2}(\bfdelta_{2}) $ and therefore
{$P_{V_1\times V_2}=P_{V_1}P_{V_2}$}. This implies the equality of
mixed degrees, which in turn implies that $[V_1\times V_2]=
[V_1]\otimes [V_2] $ under the identification in~\eqref{item:10}.
\end{proof}

We end  this section with the notion of ruled join of projective
varieties. Let  $n_i\in \N$  and consider an irreducible $K$-variety
$V_i \subset\P^{n_i}_K$ for $i=1,2$. Let $K[\bfx_{i}]$ denote the
homogeneous coordinate ring of $\P^{n_{i}}_{K}$ and $I(V_{i})\subset
K[\bfx_{i}]$ the ideal of~$V_{i}$. The  {\em ruled join of $V_1$ and
$V_2$}, denoted $V_1\# V_2$, is the irreducible subvariety of
$\P^{n_1+n_2+1}_K$ defined by the homogeneous ideal generated by
$I(V_{1})\cup I(V_{2})$ in $K[\bfx_{1},\bfx_{2}]$. In case~$K$ is
algebraically closed, identifying $\P^{n_{1}}$ and $\P^{n_{2}}$ with
the linear subspaces of $\P^{n_1+n_2+1}$ where the last $n_{2}+1$
(respectively, the first $n_{1}+1$) coordinates vanish, $V_{1}\#
V_{2}$ coincides with the union of the lines of $\P^{n_1+n_2+1}$
joining points of $V_{1}$ with points of $V_{2}$.

The notion of ruled join extends to equidimensional $K$-cycles by
linearity. Given  cycles $X_{i} \in Z_{r_{i}}(\P^{n_{i}}_{K})$, $i=1,2$, the ruled join $X_{1}\# X_{2}$ is a cycle of
$\P^{n_1+n_2+1}_K$ of pure dimension $r_{1}+r_{2}+1$ and degree
\begin{equation}\label{degjoin} \deg(X_1\# X_2)=\deg(X_{1})\, \deg(X_{2}),\end{equation}
 see for instance
\cite[Example 8.4.5]{Fulton84}. For $i=1,2$ consider the
injective  $\Z$-linear map $\jmath_{i}:A^{*}(\P^{n_{i}}_{K})\hookrightarrow
A^{*}(\P^{n_{1}+n_{2}+1}_{K})$ defined by $\theta^{l}\mapsto
\theta^{l}$ for $0\le l\le n_{i}$. Then \eqref{degjoin} is equivalent to
the equality of classes
\begin{displaymath}
[X_{1}\#X_{2}]=  \jmath_{1}([X_{1}])\cdot \jmath_{2}([X_{2}]).
\end{displaymath}

\subsection{Eliminants and resultants}
\label{sec:elim-result-mult}
In this section, we introduce the notions and basic properties of
eliminants of varieties and of resultants of cycles in multiprojective
spaces. This is mostly a reformulation of the  theory of eliminants
and resultants of
multihomogeneous ideals  developed by
R\'emond in~\cite{Remond01a,Remond01b} as an extension of Philippon's theory
of eliminants  of homogeneous ideals~\cite{Philippon86}.
We refer the reader to these articles for a complementary presentation of the
subject.

\medskip
We keep the notation of \S\ref{sec:cycl-mult-spac}. In particular,
we denote by $A$ a factorial ring with field of fractions $K$. Let
$V\subset \P^{\bfn}_K$ be an irreducible $K$-variety of dimension
$r$. Let $ \bfd_0, \dots, \bfd_r \in \N^{m}\setminus \{\bf0\}$ and
set $\bfd=(\bfd_0, \dots, \bfd_r )$. For each $0\le i\le r$, we
introduce a group of variables $\bfU_i= \big\{u_{i,\bfa} : \bfa\in
\N^{\bfn+\bfone}_{\bfd_i}\big\}$ and consider the {general
form} $F_i$ of multidegree $\bfd_i$ in the variables $\bfx$:
\begin{equation}
  \label{eq:15}
F_i = \sum_{\bfa\in \N^{\bfn+\bfone}_{\bfd_{i}}} u_{i,\bfa} \,
\bfx^\bfa \in K[\bfU_{i}][\bfx].
\end{equation}

Set $\bfU=\{\bfU_0, \dots, \bfU_r\}$ and consider the $K[\bfU]$-module
$$
\cM_{\bfd}(V)= K[\bfU][\bfx_1, \dots,
\bfx_m] / \Big( I(V)+(F_0, \dots, F_r) \Big).
$$
This module inherits a multigraded structure from
$K[\bfx]$.
For $\bfdelta \in \N^{m}$, we denote by $\cM_{\bfd}(V)_\bfdelta$  its part of
multidegree $\bfdelta$ in the variables $\bfx$.
It is a  $K[\bfU]$-module multigraded
by setting $\bfdeg(u_{i,\bfa})=\bfe_{i}\in \N^{r+1}$, the $(i+1)$-th vector of
the standard basis of $\R^{r+1}$.

For the sequel, we fix a set of representatives of the irreducible
elements of $K[\bfU]$ made out of primitive polynomials in $A[\bfU]$
and we denote it
 by $\irr(K[\bfU])$.
We recall that the {annihilator} of a $K[\bfU]$-module $M$ is the ideal of $K[\bfU]$ defined as
\begin{displaymath}
  \Ann(M)=\Ann_{K[\bfU]}(M)=\{ f\in K[\bfU]: \, fM=0\}.
\end{displaymath}

 \begin{definition}
   \label{def:10}     Let  $M$ be a
finitely generated $K[\bfU]$-module.
If  $\Ann(M)  \ne 0$, we~set
\begin{equation}
  \label{eq:9}
\chi(M)=
\chi_{K[\bfU]}(M)
 =\prod_{f\in \, \irr(K[\bfU])}  f^{\ell(M_{(f)})},
\end{equation}
where
$\ell(M_{(f)})$ denotes the length of the $K[\bfU]_{(f)}$-module
$M_{(f)}$.
In case $\Ann(M)  =0$, we set $\chi(M)=0$.
\end{definition}

We have that $\ell(M_{(f)})\ge 1$ if and only if $\Ann(M) \subset
(f)$, see \cite[\S 3.1]{Remond01a}. Hence, the product in \eqref{eq:9}
involves a finite number of factors and $\chi(M)$ is well-defined.

\begin{lemma}
  \label{lemm:6}
Let $V\subset \P^{\bfn}_K$ be an irreducible $K$-variety of
dimension $r$ and $ \bfd \in (\N^{m}\setminus \{{\bf0}\})^{r+1}$.
Then there exists  $\bfdelta_{0} \in \N^{m}$  such that
\begin{displaymath}
  \Ann(\cM_{\bfd}(V)_\bfdelta) =
\Ann(\cM_{\bfd}(V)_{\bfdelta_{0}}) \quad, \quad
\chi(\cM_{\bfd}(V)_{\bfdelta})=\chi(\cM_{\bfd}(V)_{\bfdelta_{0}})
\end{displaymath}
for all $\bfdelta\in \N^{m}$ such that $\bfdelta\ge \bfdelta_{0}$.
\end{lemma}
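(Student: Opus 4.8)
The plan is to show that the two numerical functions $\bfdelta \mapsto \Ann(\cM_{\bfd}(V)_\bfdelta)$ and $\bfdelta \mapsto \chi(\cM_{\bfd}(V)_\bfdelta)$ eventually stabilize as $\bfdelta$ grows, which is a multigraded analogue of the classical fact that a finitely generated module over a Noetherian ring has stable associated primes and stable multiplicities in high degree. The key point is that $\cM_{\bfd}(V) = \bigoplus_\bfdelta \cM_{\bfd}(V)_\bfdelta$ is a finitely generated multigraded module over the multigraded ring $K[\bfU][\bfx]$, where the variables $\bfx$ carry the $\N^m$-grading and the coefficient variables $\bfU$ sit in degree $\bf0$. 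First I would reduce to a statement purely about $K[\bfU]$-modules: for fixed $\bfdelta$, $\cM_{\bfd}(V)_\bfdelta$ is a finitely generated $K[\bfU]$-module (since the degree-$\bfdelta$ part of $K[\bfU][\bfx]$ is free of finite rank over $K[\bfU]$, being spanned by the finitely many monomials of $\bfx$-multidegree $\bfdelta$).

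\textbf{Stabilization of the annihilator.} I would argue that as $\bfdelta$ increases componentwise, the ideals $\Ann(\cM_{\bfd}(V)_\bfdelta)$ form an \emph{increasing} chain of ideals of $K[\bfU]$, at least along any coordinate direction, because multiplication by a suitable power of a coordinate $x_{i,j}$ gives, generically, an injection of $\cM_{\bfd}(V)_\bfdelta$ into $\cM_{\bfd}(V)_{\bfdelta + \delta \bfe_i}$ modulo lower-dimensional pieces — more carefully, one uses that $\cM_{\bfd}(V)$ is finitely generated, so there is a single $\bfdelta_1$ with all generators in degrees $\le \bfdelta_1$, and then $\cM_{\bfd}(V)_\bfdelta$ is generated over $K[\bfU]$ by the images of the degree-$\bfdelta_1$ monomial basis times $\bfx$-monomials of degree $\bfdelta - \bfdelta_1$; monotonicity of annihilators then follows. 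By Noetherianity of $K[\bfU]$, the chain stabilizes, giving a $\bfdelta_0'$ beyond which $\Ann(\cM_{\bfd}(V)_\bfdelta)$ is constant. This is essentially the argument in~\cite[\S 3]{Remond01a} and I would cite it for the technical details of the multigraded bookkeeping.

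\textbf{Stabilization of $\chi$.} Once the annihilator is constant, say equal to $\mathfrak a$, I note that $\chi(\cM_{\bfd}(V)_\bfdelta)$ is the product over $f \in \irr(K[\bfU])$ with $\mathfrak a \subseteq (f)$ of $f^{\ell((\cM_{\bfd}(V)_\bfdelta)_{(f)})}$; there are finitely many such $f$, namely the height-one primes containing $\mathfrak a$. For each such $f$, the localization $(\cM_{\bfd}(V)_\bfdelta)_{(f)}$ is a finite-length module over the DVR $K[\bfU]_{(f)}$, and its length is again monotone nondecreasing in $\bfdelta$ by the same generation argument, hence eventually constant since — and this is the one substantive point — the lengths are uniformly bounded: $\ell((\cM_{\bfd}(V)_\bfdelta)_{(f)})$ is bounded by the $K[\bfU]_{(f)}$-rank considerations coming from the fact that $\cM_{\bfd}(V)_{(f)}$ has dimension $\le r$ over the relevant ring, so its Hilbert-type function in $\bfdelta$ is bounded. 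Taking the maximum of the finitely many stabilization thresholds (for $\Ann$ and for each $\ell$ at each of the finitely many $f$) yields the required $\bfdelta_0$.

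\textbf{Main obstacle.} The delicate step is establishing the \emph{uniform boundedness} (and monotonicity) of the local lengths $\ell((\cM_{\bfd}(V)_\bfdelta)_{(f)})$ as $\bfdelta \to \infty$, rather than merely their finiteness for each fixed $\bfdelta$: one must rule out the lengths growing without bound, which is where one genuinely uses that $V$ has dimension $r$ and that $F_0,\dots,F_r$ cut it down — informally, the expected dimension of $V \cap V(F_0,\dots,F_r)$ is $-1$ over the generic point of each $(f)$, so the relevant module is Artinian with controlled length. Properly this is the content of R\'emond's analysis of $\chi_{K[\bfU]}$, and I would lean on~\cite[\S 3.1, Prop.~3.3]{Remond01a} for the precise finiteness and eventual-constancy statement, spelling out only the reduction of the multigraded situation to that framework.
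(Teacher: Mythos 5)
Your treatment of the annihilator part is the same as the paper's: pick $\bfdelta_{\max}$ bounding a generating set, note $\cM_{\bfd}(V)_{\bfdelta'}=K[\bfU][\bfx]_{\bfdelta'-\bfdelta}\,\cM_{\bfd}(V)_{\bfdelta}$ for $\bfdelta'\ge\bfdelta\ge\bfdelta_{\max}$, deduce that annihilators increase along $\le$, and invoke Noetherianity. The paper likewise reduces the $\chi$-stabilization to a citation of R\'emond, so at that level your proposal matches.

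One substantive flaw in your sketch of the $\chi$ step: the generation relation $\cM_{\bfd}(V)_{\bfdelta'}=K[\bfU][\bfx]_{\bfdelta'-\bfdelta}\,\cM_{\bfd}(V)_{\bfdelta}$ exhibits $\cM_{\bfd}(V)_{\bfdelta'}$ as a quotient of a finite direct sum of copies of $\cM_{\bfd}(V)_{\bfdelta}$, which yields an \emph{upper} bound on the localized length, not the claimed monotone nondecreasing behaviour. Indeed, the direction of inclusion that this gives for annihilators ($\Ann$ grows) is consistent with the module becoming ``smaller'', so if anything one would guess the lengths decrease; ``increasing and bounded'' is not the mechanism at work. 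The eventual constancy of $\ell((\cM_{\bfd}(V)_{\bfdelta})_{(f)})$ is a genuine theorem of R\'emond and not something one gets from an easy monotonicity argument. Since you do ultimately defer to R\'emond, the proposal lands in the right place, but the intermediate heuristic should be dropped or corrected. Also, your opening parenthetical about ``an injection of $\cM_{\bfd}(V)_\bfdelta$ into $\cM_{\bfd}(V)_{\bfdelta+\delta\bfe_i}$'' would give the \emph{wrong} direction for annihilators ($\Ann$ of a submodule contains $\Ann$ of the ambient module); you correctly switch to the generation argument afterwards, but the first sentence as written is misleading.
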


\begin{proof}
Let $\bfdelta_{\max}\in \N^{m}$ be the maximum of the multidegrees of
a set of generators of $\cM_{\bfd}(V)$ over $K[\bfU]$.
For $\bfdelta'\ge \bfdelta\ge\bfdelta_{\max}$ we have that
$\cM_{\bfd}(V)_{\bfdelta'}= K[\bfU][\bfx]_{\bfdelta'-\bfdelta}
\, \cM_{\bfd}(V)_\bfdelta$ and so
$\Ann(\cM_{\bfd}(V)_{\bfdelta'})\supset \Ann(\cM_{\bfd}(V)_\bfdelta)$.
Hence the annihilators of the parts of multidegree $\ge
\bfdelta_{\max}$  form an ascending chain of ideals with respect to
the  order $\le $ on $\N^{m}$.
Eventually, this chain stabilizes because $K[\bfU]$ is Noetherian,
which proves the first statement.
The second statement is~\cite[Lem.~3.2]{Remond01a}.
\end{proof}

We define eliminants and resultants following \cite[Def.~2.14 and \S 3.2]{Remond01a}.
 The  \emph{principal part}
$\ppr(\mathfrak{I})$ of an ideal $\mathfrak{I}\subset K[\bfU]$ is defined as
any primitive polynomial in $A[\bfU]$ which is a greatest common divisor of the elements
in $\mathfrak{I}\cap A[\bfU]$.
If $\mathfrak{I}$ is principal, this polynomial can be equivalently defined as any primitive polynomial in
$A[\bfU]$ which is a generator of $\mathfrak{I}$.
The principal part of an ideal is unique up to a unit of $A$ and we fix its choice by
supposing that it is a product of elements of  $\irr(K[\bfU])$.

\begin{definition}
\label{def:5} Let $V\subset \P^{\bfn}_K$ be an irreducible
$K$-variety of dimension $r\ge 0$ and
$ \bfd \in (\N^{m}\setminus \{{\bf0}\})^{r+1}$.
The \emph{eliminant ideal of $V$ of index $\bfd$} is defined as
\begin{displaymath}
\mathfrak{E}_{\bfd}(V)=\Ann(\cM_{\bfd}(V)_\bfdelta)
\end{displaymath}
 for any $\bfdelta \gg \bf0$.
The \emph{eliminant of $V$ of index $\bfd$} is defined as
\begin{displaymath}
\Elim_{
  \bfd}(V)=\ppr(\mathfrak{E}_{\bfd}(V)).
\end{displaymath}
\end{definition}

The eliminant ideal is a non-zero  multihomogeneous prime ideal in
$K[\bfU]$~\cite[Lem.~2.4(2) and Thm.~2.13(1)]{Remond01a}, see also
Proposition \ref{prop:7}\eqref{item:20} below.
In particular, the eliminant is a primitive irreducible
multihomogeneous polynomial in
$A[\bfU]\setminus \{0\}$.

\begin{definition}\label{def:2}
 Let $V\subset \P^{\bfn}_K$ be an irreducible
$K$-variety of dimension $r\ge 0$ and
$ \bfd \in (\N^{m}\setminus \{{\bf0}\})^{r+1}$.
The {\em resultant of $V$ of index $\bfd$} is defined as
$$
\Res_{\bfd} (V)=\chi\Big(\cM_{\bfd}(V)_\bfdelta
\Big)
$$
for any $\bfdelta\gg\bf0$. It is a non-zero primitive multihomogeneous
polynomial in $A[\bfU]$, because of Definition \ref{def:10} and the
fact that $\mathfrak{E}_{\bfd}(V)$ is non-zero.

Let  $X\in Z_r(\P^\bfn_K)$ and write $X=\sum_{V}m_{V}V$. The {\em
  resultant of $X$ of index $\bfd$}
is defined~as
\begin{displaymath}
\Res_{\bfd} (X)= \prod_V
\Res_{\bfd} (V)^{m_V} \in K(\bfU)^{\times}.
\end{displaymath}
When $X$ is effective, $\Res_{\bfd} (X)$ is a
primitive multihomogeneous polynomial in $A[\bfU]$.
\end{definition}

Eliminants and resultants are
invariant under index permutations.
Next result follows easily from the definitions:

\begin{proposition}\label{prop:6-item:27} Let $X\in
  Z_{r}(\P^{\bfn}_{K})$ and $V\subset \P^{\bfn}_K$ an irreducible
 $K$-variety of dimension~$r$. Let $ \bfd=(\bfd_{0},\dots, \bfd_{r}) \in (\N^{m}\setminus
 \{{\bf0}\})^{r+1}$ and $\bfU=(\bfU_{0},\dots,\bfU_{r})$ the group of variables
corresponding to $\bfd$. Let $\sigma$ be a permutation of
the set $\{0,\dots, r\}$ and write $\sigma \bfd=
(\bfd_{\sigma({0})},\dots, \bfd_{\sigma({r})})$, $\sigma
\bfU=(\bfU_{\sigma({0})},\dots, \bfU_{\sigma({r})})$.
Then
 $\Res_{\sigma \bfd}(X)(\sigma \bfU)
=\Res_{\bfd}(X)(\bfU)$ and
$\Elim_{\sigma \bfd}(V)(\sigma \bfU)
=\Elim_{\bfd}(V)(\bfU)$.
\end{proposition}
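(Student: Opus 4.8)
The plan is to observe that the module $\cM_{\bfd}(V)$ — and therefore the eliminant and the resultant built from it — depends on the sequence of multidegrees $\bfd=(\bfd_{0},\dots,\bfd_{r})$ only through the \emph{unordered} collection of general forms involved, so that permuting the indices is nothing but a relabeling of variable groups. First I would reduce to the case of an irreducible $V$: for a general cycle $X=\sum_{V}m_{V}V$ one has $\Res_{\bfd}(X)=\prod_{V}\Res_{\bfd}(V)^{m_{V}}$ by definition, so the statement for $X$ follows at once from the statement for each of its components.

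Now fix a permutation $\sigma$ of $\{0,\dots,r\}$. On one side the general forms $F_{0},\dots,F_{r}$ of multidegrees $\bfd_{0},\dots,\bfd_{r}$, with coefficient groups $\bfU_{0},\dots,\bfU_{r}$, build $\cM_{\bfd}(V)$; on the other side the general forms $\wt F_{0},\dots,\wt F_{r}$ of multidegrees $\bfd_{\sigma(0)},\dots,\bfd_{\sigma(r)}$, with coefficient groups $\wt\bfU_{0},\dots,\wt\bfU_{r}$, build $\cM_{\sigma\bfd}(V)$. I would then introduce the $K$-algebra isomorphism
$$
\Phi\colon K[\wt\bfU][\bfx]\longrightarrow K[\bfU][\bfx],\qquad \bfx\longmapsto\bfx,\quad \wt u_{i,\bfa}\longmapsto u_{\sigma(i),\bfa},
$$
which by construction sends $\wt F_{i}$ to $F_{\sigma(i)}$ and fixes $I(V)\subset K[\bfx]$ pointwise. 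Hence $\Phi$ maps the ideal $I(V)+(\wt F_{0},\dots,\wt F_{r})$ bijectively onto $I(V)+(F_{0},\dots,F_{r})$ (the generating set is merely reordered), so it descends to a ring isomorphism $\cM_{\sigma\bfd}(V)\cong\cM_{\bfd}(V)$ which, since $\Phi$ preserves the degree in the variables $\bfx$, restricts to isomorphisms on each multidegree-$\bfdelta$ component in $\bfx$.

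This induced map is semilinear over the ring isomorphism $\Phi|_{K[\wt\bfU]}\colon K[\wt\bfU]\to K[\bfU]$, which carries $A[\wt\bfU]$ onto $A[\bfU]$, preserves primitivity, and sends irreducible elements to irreducible elements. Consequently it transforms $\Ann(\cM_{\sigma\bfd}(V)_{\bfdelta})$ into $\Ann(\cM_{\bfd}(V)_{\bfdelta})$, preserves the lengths of localizations at irreducible elements, and is compatible with the normalizations defining $\ppr$ and $\chi$ (up to the usual unit of $A$). Choosing $\bfdelta$ large enough to serve simultaneously for both modules, which is possible by Lemma~\ref{lemm:6}, I would obtain $\Elim_{\bfd}(V)=\Phi(\Elim_{\sigma\bfd}(V))$ and $\Res_{\bfd}(V)=\Phi(\Res_{\sigma\bfd}(V))$. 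Finally, for a polynomial $p$ in the variable groups $\wt\bfU_{0},\dots,\wt\bfU_{r}$ the element $\Phi(p)$ is exactly $p$ with $\wt\bfU_{i}$ replaced by $\bfU_{\sigma(i)}$; under the standing identification of the $i$-th coefficient group on the two sides, this substitution is precisely the operation $p\mapsto p(\sigma\bfU)$ appearing in the statement. This yields $\Res_{\sigma\bfd}(V)(\sigma\bfU)=\Res_{\bfd}(V)(\bfU)$ and $\Elim_{\sigma\bfd}(V)(\sigma\bfU)=\Elim_{\bfd}(V)(\bfU)$, and the case of an arbitrary cycle follows by taking products. The argument is routine: the only point I would take care with is the bookkeeping matching the substitution notation $p(\sigma\bfU)$ with the variable-relabeling $\Phi$, together with the remark that this relabeling respects primitivity over $A$ and the chosen set of irreducible representatives.
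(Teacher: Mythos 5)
Your proof is correct and is precisely the routine unpacking that the paper has in mind: the paper states this result with the single comment that it ``follows easily from the definitions'' and gives no further argument, and your identification $\Phi$ of $\cM_{\sigma\bfd}(V)$ with $\cM_{\bfd}(V)$ by relabeling the coefficient variables is exactly the intended observation. The bookkeeping remark about $\irr(K[\bfU])$ and the unit of $A$ is the only place one needs any care, and you handle it appropriately.
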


Eliminants and resultants are also invariant under field extensions.

\begin{proposition}
 \label{prop:6-1} Let $X\in Z_{r}(\P^{\bfn}_{K})$, $V\subset \P^{\bfn}_K$ an irreducible
 $K$-variety of dimension~$r$ and $ \bfd=(\bfd_{0},\dots, \bfd_{r}) \in (\N^{m}\setminus
 \{{\bf0}\})^{r+1}$.
Let $K\subset E$ be a field extension. Then there exists
$\lambda_{1}\in E^{\times}$ such that $\Res_{\bfd}(X_{E})=  \lambda_{1}\,
\Res_{\bfd}(X)$. Furthermore, if $V_{E}$ is an
 irreducible $E$-variety, then there exists  $\lambda_{2}\in E^{\times}$ such
 that $ \Elim_{\bfd}(V_{E})=  \lambda_{2}\, \Elim_{\bfd}(V)$.
 \end{proposition}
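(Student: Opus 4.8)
The plan is to reduce the statement about resultants and eliminants under field extension to the underlying behavior of the modules $\cM_{\bfd}(V)$ and $\cM_{\bfd}(X)$ under base change, and then to track how the characteristic polynomial $\chi$ and the principal part $\ppr$ of an annihilator ideal transform. First I would fix $\bfdelta\gg\bf0$, large enough that both Lemma~\ref{lemm:6} applies over $K$ and (after enlarging) over $E$, so that $\Res_{\bfd}(X)=\chi_{K[\bfU]}(\cM_{\bfd}(X)_\bfdelta)$ and $\Res_{\bfd}(X_E)=\chi_{E[\bfU]}(\cM_{\bfd}(X_E)_\bfdelta)$, and similarly for the eliminant. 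The key observation is that $\cM_{\bfd}(X_E)_\bfdelta \simeq \cM_{\bfd}(X)_\bfdelta \otimes_{K[\bfU]} E[\bfU]$, i.e.\ base change commutes with forming the module: this is because $I(V)\otimes_K E$ defines $V_E$ by definition of scalar extension, the general forms $F_i$ have coefficients the variables $\bfU$ and hence are defined over the prime field, and taking the degree-$\bfdelta$ part in $\bfx$ is a flat operation. By linearity (both $\chi$ and the definition of $\Res$ for cycles are multiplicative over components with the multiplicities $m_V$), it suffices to treat an irreducible $V$ and its scalar extension $V_E = X(I(V)\otimes_K E)$.

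The main step is then a lemma comparing $\chi_{K[\bfU]}(M)$ with $\chi_{E[\bfU]}(M\otimes_{K[\bfU]}E[\bfU])$ for a finitely generated $K[\bfU]$-module $M$ with nonzero annihilator. I would argue that $\Ann_{E[\bfU]}(M\otimes E[\bfU]) = \Ann_{K[\bfU]}(M)\cdot E[\bfU]$ (finite generation plus flatness of $K[\bfU]\to E[\bfU]$), so the supports match up under the inclusion $\Spec E[\bfU]\to\Spec K[\bfU]$; and that for each irreducible $f\in\irr(K[\bfU])$ the length $\ell(M_{(f)})$ is preserved when we localize the base-changed module at the primes of $E[\bfU]$ lying over $(f)$, summed with the right multiplicities. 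Concretely, writing the factorization of $f$ in $E[\bfU]$ as $\prod_j g_j^{e_j}$ (with $e_j=1$ in the separable/perfect case but possibly $>1$ in general), one gets $\sum_j e_j\,\ell\big((M\otimes E[\bfU])_{(g_j)}\big) = \ell(M_{(f)})$ by comparing lengths along the flat local extension $K[\bfU]_{(f)}\to E[\bfU]_{(g_j)}$ and using that $f = (\text{unit})\prod_j g_j^{e_j}$. Feeding this into the product defining $\chi$ shows $\chi_{E[\bfU]}(M\otimes E[\bfU])$ equals $\chi_{K[\bfU]}(M)$ up to the unit coming from the comparison of the chosen primitive representatives in $A[\bfU]$ versus those in the analogue ring for $E$ — this unit is precisely the $\lambda_1\in E^\times$ in the statement. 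For the eliminant, one instead uses that $\Elim_{\bfd}(V)=\ppr(\mathfrak{E}_{\bfd}(V))$ is a generator of the (principal, since prime of the right codimension — see Proposition~\ref{prop:7}\eqref{item:20}) annihilator ideal; if $V_E$ is irreducible then $\mathfrak{E}_{\bfd}(V_E)=\mathfrak{E}_{\bfd}(V)\cdot E[\bfU]$ is again principal and prime, hence its generator differs from that of $\mathfrak{E}_{\bfd}(V)$ by a unit $\lambda_2\in E^\times$.

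I expect the main obstacle to be the inseparable case, where $f$ may acquire repeated factors in $E[\bfU]$ and where the module-theoretic length bookkeeping across a non-étale extension of local rings needs care; the clean statement is that $\ell_{E[\bfU]_{(g_j)}}\big((M\otimes E[\bfU])_{(g_j)}\big)=\ell_{K[\bfU]_{(f)}}(M_{(f)})$ because $K[\bfU]_{(f)}\to E[\bfU]_{(g_j)}$ is flat local with the uniformizer $f$ mapping to a unit times $g_j^{e_j}$, so one must verify the multiplicities $e_j$ cancel correctly against the exponents in the factorization of $f$ when it reappears inside $\chi$. An alternative, lower-tech route avoids this entirely: since both sides of the desired identity are, up to $K^\times$-resp.-$E^\times$-scalars, well-defined and multiplicative, and since any field extension factors through $\ov K$, it is enough to prove the statement for $E=\ov K$ and then descend; and for $E$ algebraically closed one can identify $\Res_{\bfd}(V_{\ov K})$ with a product of evaluations of the $F_i$ at the points of a generic fiber (the "Poisson-type" formula for multiprojective resultants in \cite{Remond01b}), making the scalar comparison transparent. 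I would present the module-theoretic proof as the main argument and remark that it also follows from R\'emond's formalism, citing \cite[\S3]{Remond01a} for the fact that $\chi$ commutes with the flat base change $K[\bfU]\to E[\bfU]$.
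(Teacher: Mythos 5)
Your overall strategy is the same as the paper's: reduce to a module-theoretic lemma stating that $\chi$ commutes with the flat base change $K[\bfU]\to E[\bfU]$ up to a unit of $E$ (this is exactly Lemma~\ref{field_extension} in the paper), then apply it to $\cM_{\bfd}(V)_{\bfdelta}$ via the isomorphism $\cM_{\bfd}(V_E)\simeq\cM_{\bfd}(V)\otimes_K E$. However, the length bookkeeping at the heart of that lemma is done incorrectly, and in two mutually inconsistent ways. You first assert $\sum_j e_j\,\ell\big((M\otimes E[\bfU])_{(g_j)}\big) = \ell(M_{(f)})$, and then, as the ``clean statement,'' $\ell\big((M\otimes E[\bfU])_{(g_j)}\big)=\ell(M_{(f)})$. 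The first is wrong whenever $f$ factors nontrivially; the second is wrong precisely in the inseparable case you flag as the danger spot, since it drops the ramification exponent. The formula that is actually needed, and that makes the $\chi$-comparison close, is
\[
\ell_{E[\bfU]_{(g_j)}}\big((M\otimes E[\bfU])_{(g_j)}\big)= e_j\cdot\ell_{K[\bfU]_{(f)}}\big(M_{(f)}\big),
\]
i.e.\ the length \emph{multiplies} by $e_j$, rather than being preserved or being distributed across the $g_j$'s. This follows from the general fact $\ell_B(N\otimes_A B)=\ell_A(N)\cdot\ell_B(B/\mathfrak{m}_A B)$ for a flat local homomorphism of one-dimensional local rings $A\to B$ and a finite-length $A$-module $N$ (here $\ell_B(B/\mathfrak{m}_A B)=e_j$ because $f$ becomes a unit times $g_j^{e_j}$ in $E[\bfU]_{(g_j)}$), or, as the paper does, by writing $M_{(f)}\simeq\bigoplus_i K[\bfU]_{(f)}/(f^{\nu_i})$ and tensoring. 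With the correct exponent the conclusion is immediate:
\[
\chi_E(M\otimes E) = \prod_{f}\prod_{g_j\mid f} g_j^{e_j\ell(M_{(f)})} = \prod_f\Big(\prod_j g_j^{e_j}\Big)^{\ell(M_{(f)})} = \prod_f\big(\lambda_f^{-1}f\big)^{\ell(M_{(f)})}=\lambda\,\chi_K(M),
\]
whereas with either of your formulas the exponent of $g_j$ does not reassemble into the exponent of $f$ in $\chi_K(M)$. You should also make explicit the other half of the lemma, namely that $\ell\big((M\otimes E)_{(g)}\big)=0$ for every $g\in\irr(E[\bfU])$ that does not divide any $f\in\irr(K[\bfU])$; this is exactly where the annihilator identity $\Ann_{E[\bfU]}(M\otimes E)=\Ann_{K[\bfU]}(M)\cdot E[\bfU]$ (which you correctly record) is used, and glossing it as ``supports match up'' leaves a step to the reader.
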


To prove this, we need the following lemma. 

\begin{lemma} \label{field_extension}
Let $M$ be a finitely generated $K[\bfu]$-module and $K\subset E$ a
field extension. Then $ \Ann_{E[\bfu]}(M\otimes_{K}E ) =
\Ann_{K[\bfu]}(M)\otimes_{K}E $ and $ \chi_E(M\otimes_{K}E)
=\lambda\, \chi_K(M)$ with $ \lambda\in E^{\times}$.
\end{lemma}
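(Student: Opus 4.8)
The plan is to handle the two assertions separately, beginning with the annihilator. The statement $\Ann_{E[\bfu]}(M\otimes_{K}E)=\Ann_{K[\bfu]}(M)\otimes_{K}E$ is a flatness statement: since $E$ is flat (indeed faithfully flat) over $K$, tensoring the defining exact sequence of the annihilator with $E$ is exact. Concretely, fix a finite generating set $m_{1},\dots,m_{t}$ of $M$ over $K[\bfu]$; then $f\in\Ann_{K[\bfu]}(M)$ iff $fm_{i}=0$ for all $i$, so $\Ann_{K[\bfu]}(M)=\bigcap_{i=1}^{t}\Ann_{K[\bfu]}(K[\bfu]m_{i})$ is the kernel of the $K[\bfu]$-linear map $K[\bfu]\to M^{t}$, $f\mapsto(fm_{1},\dots,fm_{t})$. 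Applying the exact functor $-\otimes_{K}E$ (which commutes with the finite product $M^{t}$ and sends $K[\bfu]$ to $E[\bfu]$ and $M$ to $M\otimes_{K}E$, whose chosen generators are the same $m_{i}$) yields that $\Ann_{K[\bfu]}(M)\otimes_{K}E$ is the kernel of the corresponding $E[\bfu]$-linear map, i.e.\ equals $\Ann_{E[\bfu]}(M\otimes_{K}E)$. Note also that this shows $\Ann_{K[\bfu]}(M)=0$ iff $\Ann_{E[\bfu]}(M\otimes_{K}E)=0$, so the degenerate case ($\chi=0$ on both sides) is covered and we may assume both annihilators are nonzero.

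For the second assertion, I would first reduce to a local computation. By Definition \ref{def:10}, $\chi_{K}(M)=\prod_{f}f^{\ell(M_{(f)})}$ over $f\in\irr(K[\bfu])$ with $\Ann_{K[\bfu]}(M)\subset(f)$, and similarly for $E$. So the claim that $\chi_{E}(M\otimes_{K}E)=\lambda\,\chi_{K}(M)$ for some $\lambda\in E^{\times}$ amounts to: for every irreducible $g\in E[\bfu]$ with $\ell_{E[\bfu]_{(g)}}((M\otimes_{K}E)_{(g)})\ge 1$, the multiplicity of $g$ in $\chi_{E}(M\otimes_{K}E)$ equals the multiplicity of $g$ in (the image of) $\chi_{K}(M)$. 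Since $\chi_{K}(M)$ is a nonzero element of $K[\bfu]$, its irreducible factorization in $E[\bfu]$ is obtained by factoring each $K$-irreducible factor $f$ over $E$. So it suffices to show: for each $f\in\irr(K[\bfu])$ dividing $\chi_{K}(M)$ and each irreducible $g\mid f$ in $E[\bfu]$, one has $\ell\big((M\otimes_{K}E)_{(g)}\big)=\ell\big(M_{(f)}\big)$ — and conversely that every $E$-irreducible $g$ contributing to the left-hand side divides some such $f$, which follows from the annihilator statement already proved, since $\Ann_{E[\bfu]}(M\otimes_{K}E)=\Ann_{K[\bfu]}(M)\otimes_{K}E\subset(f)E[\bfu]$ forces $g\mid f$.

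The remaining point — the length equality $\ell\big((M\otimes_{K}E)_{(g)}\big)=\ell\big(M_{(f)}\big)$ — is the main obstacle, and I expect this to be the technical heart. The strategy is: take a composition series $0=M_{0}\subsetneq M_{1}\subsetneq\cdots\subsetneq M_{\ell}=M_{(f)}$ of the $K[\bfu]_{(f)}$-module $M_{(f)}$, with each quotient isomorphic to the residue field $\kappa(f):=K[\bfu]_{(f)}/(f)$. Tensoring with $E$ over $K$ (still flat) gives a filtration of $(M\otimes_{K}E)_{(f)}$ whose graded pieces are $\kappa(f)\otimes_{K}E$. Then localize further at $g$: one needs that $(\kappa(f)\otimes_{K}E)_{(g)}$, as a module over the field $E[\bfu]_{(g)}/(g)$, has length $1$ — equivalently, dimension $1$ over that residue field. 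This is exactly the statement that $g$ appears with multiplicity one in $f$ over $E[\bfu]$, i.e.\ that the extension $E[\bfu]_{(g)}/(g)$ over $\kappa(f)$ is generically reduced along $g$. This can fail for inseparable extensions in positive characteristic, but it does \emph{not} affect the conclusion up to a unit $\lambda$: if $f=\lambda_{0}\prod_{j}g_{j}^{e_{j}}$ in $E[\bfu]$ with $\lambda_{0}\in E^{\times}$, then each $g_{j}$ contributes $e_{j}\cdot\ell(M_{(f)})$ to $\chi_{E}(M\otimes_{K}E)$ while $f$ contributes $\ell(M_{(f)})$ worth of $f=\lambda_{0}\prod g_{j}^{e_{j}}$ to $\chi_{K}(M)$; these match exactly up to the unit $\lambda_{0}^{\ell(M_{(f)})}$. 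So the argument actually should \emph{avoid} claiming multiplicity-one and instead track the exponents $e_{j}$ directly: show $\ell\big((M\otimes_{K}E)_{(g_{j})}\big)=e_{j}\,\ell\big(M_{(f)}\big)$ by the filtration argument plus the single fact that $\ell_{E[\bfu]_{(g_{j})}}\big((E[\bfu]_{(g_{j})}/fE[\bfu]_{(g_{j})})\big)=e_{j}$, which is just unique factorization in the DVR $E[\bfu]_{(g_{j})}$. Assembling, $\chi_{E}(M\otimes_{K}E)=\prod_{f\mid\chi_{K}(M)}\prod_{g_{j}\mid f}g_{j}^{e_{j}\ell(M_{(f)})}=\prod_{f}\big(\lambda_{0,f}^{-1}f\big)^{\ell(M_{(f)})}=\lambda\,\chi_{K}(M)$ with $\lambda=\prod_{f}\lambda_{0,f}^{-\ell(M_{(f)})}\in E^{\times}$, completing the proof.
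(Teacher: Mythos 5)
Your proof is correct and follows essentially the same architecture as the paper's: flatness of $E$ over $K$ for the annihilator identity, then reduction to a local length computation at each $f\in\irr(K[\bfu])$ together with the factorization $f=\lambda_{0,f}\prod_j g_j^{e_j}$ over $E[\bfu]$, so that the exponents $e_j$ account exactly for the unit $\lambda=\prod_f\lambda_{0,f}^{-\ell(M_{(f)})}$. The one minor divergence is in how $\ell\big((M\otimes_K E)_{(g_j)}\big)=e_j\,\ell(M_{(f)})$ is obtained — you tensor a composition series of $M_{(f)}$ with $E$ and localize, whereas the paper invokes the structure theorem $M_{(f)}\simeq\bigoplus_i K[\bfu]_{(f)}/(f^{\nu_i})$ for the DVR $K[\bfu]_{(f)}$ and then tensors with $E[\bfu]_{(g)}$ — but these two routes are equivalent in substance.
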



\begin{proof} The first statement is a consequence of the fact that $E$ is a flat
$K$-module: for each $m\in M$, the exact sequence
$$0 \to \Ann_{K[\bfu]}(m) \to K[\bfu] \to K[\bfu]m \to 0$$
yields the tensored exact sequence
$$0 \to \Ann_{K[\bfu]}(m)\otimes_KE \to E[\bfu]\to E[\bfu](m\otimes_{K}1) \to 0.$$
Hence, 
$\Ann_{E[\bfu]}(m\otimes_{K}1)=\Ann_{K[\bfu]}(m)\otimes_KE$.
Now, if $M=(m_1,\dots,m_\ell)$, then
\begin{multline*}\Ann_{E[\bfu]}(M\otimes_KE)= \bigcap_i
\Ann_{E[\bfu]}(m_{i}\otimes_{K}1)\\ = \bigcap_i\Ann_{K[\bfu]}(m_{i})\otimes_KE = 
\Ann_{K[\bfu]}(M)\otimes_KE.
\end{multline*}

For the second statement, we can reduce to the case when $\Ann(M)\ne
0$ because otherwise it is trivial. Let $f\in \irr(K[\bfU]) $. The
localization $K[\bfU]_{(f)}$ is a principal local  domain and so $
M_{(f)}\simeq \bigoplus_{i=1}^N K[\bfU]_{(f)}/(f^{\nu_i}) $ for some
$\nu_{i}\ge 1$. In particular, $\ell(M_{(f)} )=\sum_i \nu_i$. Let
$f=\lambda_{f}\, \prod_{g} g^{\mu_g}$ be  the factorization of $f$
into elements $g\in \irr(E[\bfU])$ and a non-zero constant
$\lambda_{f}\in E^{\times}$. On the one hand, for each  $g$ in this
factorization,
 $$ \Big(M\otimes _{K}E\Big)_{(g)}\simeq M_{(f)}\otimes_{K[\bfU]}
E[\bfU]_{(g)}\simeq \bigoplus_{i=1}^N E[\bfU]_{(g)}/(g^{\mu_g
\nu_i}).
$$
Hence $\ell((M\otimes _{K}E)_{(g)})=(\sum \nu_i)\mu_g = \ell(M_{(f)}
) \mu_{g}$. On the other hand, let $g\in \irr(E[\bfU])$ be an
irreducible polynomial which does not divide any $f\in
\irr(K[\bfU])$ and suppose that $\ell((M\otimes _{K}E)_{(g)})\ge 1$.
If this were the case, we would have $ \Ann_{E[\bfU]}(M\otimes_{K}E
)\subset (g)$. By the (already proved) first part of this
proposition,
$$ \Ann_{E[\bfU]}(M\otimes_{K}E) = \Ann_{K[\bfU]}(M)\otimes_{K}E .
$$
This implies $\Ann(M) \subset (g)\cap K[\bfU]=0$, which contradicts
the assumption that $\Ann(M)\ne 0$. Therefore, $\ell((M\otimes
_{K}E)_{(g)})=0$. We deduce
\begin{displaymath}
\prod_{g\in \,\irr(E[\bfU])} g^{\ell((M\otimes E)_{(g)} ) }=
\prod_{f\in \,\irr(K[\bfU])}\bigg(\prod_{g|f} g^{\ell(M_{(f)} )
\mu_g}\bigg) =  \lambda \prod_f f^{\ell(M_{(f)} ) }
\end{displaymath}
for $\lambda= \prod_f \lambda_{f}^{-\ell(M_{(f)} ) }\in E^{\times}$.
Hence $\chi_{_{E[\bfU]}}(M\otimes _{K}E) = \lambda\,
\chi_{_{K[\bfU]}}(M)$, as stated.

\end{proof}

 \begin{proof}[Proof of Proposition \ref{prop:6-1}]
To prove the  first part, it is enough to consider the case of an
irreducible $K$-variety $V$.
By definition,~$V_E$ is the effective $E$-cycle defined by
 the extended ideal $I(V)\otimes_{K}E$. Hence,
 $$\cM_{\bfd}(V_{E})=E[\bfU][\bfx_1, \dots, \bfx_m] / \Big(
 I(V)\otimes_{K}E +(F_0, \dots, F_r) \Big)=\cM_\bfd(V)\otimes_K E .$$
 By~\cite[Thm.~3.3]{Remond01a} and
 Lemma~\ref{field_extension},
 $$
 \Res_{\bfd} (V_{E}) =
 \chi_{_{E[\bfU]}}\Big(\cM_{\bfd}(V_{E})_{\bfdelta})=\lambda
 \,
 \chi_{_{K[\bfU]}}\Big(\cM_{\bfd}(V)_{\bfdelta})=\lambda_{1}\,\Res_{\bfd}
 (V)
 $$
 for any $\bfdelta\gg 0$ and a $\lambda_{1}\in E^{\times}$, which
 proves the first part of the statement.
The second part  follows similarly from the definition of
 eliminants and
 Lemma~\ref{field_extension}.
 \end{proof}

\begin{proposition}
  \label{prop:1}
Let $V\subset \P^{\bfn}_K$ be an irreducible $K$-variety of
dimension $r$ and $ \bfd \in (\N^{m}\setminus \{\bfzero\})^{r+1}$. Then
there exists  $\nu \ge 1$ such that
\begin{displaymath}
 \Res_{\bfd}(V) =  \Elim_{\bfd}(V)^{\nu}.
\end{displaymath}
\end{proposition}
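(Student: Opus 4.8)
The plan is to compare the two constructions factor by factor over the localizations $K[\bfU]_{(f)}$ for $f \in \irr(K[\bfU])$, using the fact that $\mathfrak{E}_{\bfd}(V) = \Ann(\cM_{\bfd}(V)_{\bfdelta})$ is a \emph{prime} ideal (cited right after Definition~\ref{def:5}). Fix $\bfdelta \gg \bfzero$ and write $M = \cM_{\bfd}(V)_{\bfdelta}$, so that $\Elim_{\bfd}(V) = \ppr(\Ann(M))$ and $\Res_{\bfd}(V) = \chi(M) = \prod_{f} f^{\ell(M_{(f)})}$. Since $\Ann(M)$ is a nonzero prime multihomogeneous ideal, its principal part $\ppr(\Ann(M))$ is, up to a unit of $A$, a single irreducible element $f_{0} \in \irr(K[\bfU])$, and $\Ann(M) \subset (f)$ holds for $f \in \irr(K[\bfU])$ if and only if $f$ is associate to $f_{0}$ (because $(f)$ is prime of height one and $\Ann(M) \subset (f)$ forces $f \in \Ann(M)$, while $\Ann(M)$, being prime and containing the irreducible $f_0$ up to units, cannot be contained in any other height-one prime). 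Recalling from the remark after Definition~\ref{def:10} that $\ell(M_{(f)}) \ge 1$ iff $\Ann(M) \subset (f)$, this shows that in the product defining $\chi(M)$ only the factor $f = f_{0}$ survives: $\Res_{\bfd}(V) = f_{0}^{\,\nu}$ with $\nu = \ell(M_{(f_{0})}) \ge 1$.

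It then remains to identify $f_{0}$ with $\Elim_{\bfd}(V)$ up to a unit of $A$, which is exactly the statement that $\ppr(\Ann(M))$ is (an associate of) $f_{0}$; this is immediate from the definition of the principal part of a principal ideal, since $\Ann(M)$ becomes principal generated by $f_0$ after clearing the non-unit content, i.e.\ $\Ann(M) \cap A[\bfU]$ has $f_{0}$ (normalized to be primitive and a product of elements of $\irr(K[\bfU])$, hence exactly $\pm f_0$) as a gcd. Both $\Res_{\bfd}(V)$ and $\Elim_{\bfd}(V)$ are by construction primitive polynomials in $A[\bfU]$ that are products of elements of $\irr(K[\bfU])$, so the identity $\Res_{\bfd}(V) = \Elim_{\bfd}(V)^{\nu}$ holds on the nose (with no spurious unit), where $\nu = \ell(\cM_{\bfd}(V)_{\bfdelta,(f_0)})$.

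The one point requiring a little care — and the main obstacle — is the claim that $\Ann(M)$ being prime forces its principal part to be a \emph{single} irreducible (rather than a product). This is where primality is essential: if $\ppr(\Ann(M)) = g \cdot h$ with $g, h$ non-units, then $g, h \in \Ann(M)$ (as $\Ann(M) \cap A[\bfU]$ is generated, up to content, by its gcd and both $g,h$ divide no element of $\Ann(M)\cap A[\bfU]$ to strictly smaller degree only if... ) — more cleanly: $\ppr(\mathfrak{I})$ divides every element of $\mathfrak{I} \cap A[\bfU]$, and conversely since $\mathfrak{E}_{\bfd}(V)$ is prime and of height one in the factorial ring $K[\bfU]$ (by \cite[Lem.~2.4(2), Thm.~2.13(1)]{Remond01a}), it is principal, generated by any irreducible element it contains; hence $\mathfrak{E}_{\bfd}(V) = (f_{0})$ for a unique-up-to-associates $f_{0} \in \irr(K[\bfU])$, and $\Elim_{\bfd}(V) = f_{0}$ up to a unit of $A$. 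With this, the factorization of $\chi(M)$ collapses to a single prime power as above and the proof is complete. Alternatively, one may simply quote \cite[Thm.~3.3]{Remond01a} (already used in the proof of Proposition~\ref{prop:6-1}), which gives $\Res_{\bfd}(V) = \chi_{K[\bfU]}(\cM_{\bfd}(V)_{\bfdelta})$, together with \cite[\S 3.2]{Remond01a}, where this relation between resultant and eliminant is essentially recorded.
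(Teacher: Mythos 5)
Your proof is correct and takes essentially the same route as the paper. Both arguments hinge on the same two facts: (i) $\ell(M_{(f)})\ge 1$ if and only if $\Ann(M)\subset (f)$, so an irreducible $f$ divides $\Res_{\bfd}(V)$ exactly when $\Ann(M)\subset(f)$, and (ii) $\mathfrak{E}_{\bfd}(V)$ is prime, so that its principal part $\Elim_{\bfd}(V)$ is the unique (up to associates) irreducible $f$ with $\Ann(M)\subset(f)$. The paper's own proof is a compressed version of exactly this: it records the equivalence $f\mid\Res_{\bfd}(V)\iff f\mid\Elim_{\bfd}(V)$ and concludes that $\Elim_{\bfd}(V)$ is the only irreducible factor of $\Res_{\bfd}(V)$.

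One small caution: you attribute ``height one'' for $\mathfrak{E}_{\bfd}(V)$ to the references that the paper cites only for ``nonzero prime''. In fact, Lemma~\ref{prop:8} shows that $\mathfrak{E}_{\bfd}(V)$ need not be principal; when it is not (i.e.\ when its height exceeds one), $\Elim_{\bfd}(V)=1$, and then $\Ann(M)$ is contained in no height-one prime, so $\Res_{\bfd}(V)=1$ as well and the claimed identity holds trivially with $\nu=1$. Your first paragraph implicitly assumes the nondegenerate case where $\ppr(\Ann(M))$ is a genuine irreducible; it is worth noting the degenerate case separately, as it is not covered by the ``single irreducible $f_0$'' language. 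This is a minor presentational point, however, and does not change the fact that your argument is the same as the paper's.
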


\begin{proof}
 Let $\bfdelta\in \N^{m}$ and $f\in \irr(K[\bfU])$. We have that
 $(\cM_{\bfd}(V)_{\bfdelta})_{(f)}\ne 0$ if and only if
$ \Ann(\cM_{\bfd}(V)_{\bfdelta})\subset (f)$. Therefore, $$f\mid
 \Res_{\bfd}(V)\iff f\mid \Elim_{\bfd}(V).$$
Thus  $ \Elim_{\bfd}(V)$ is the only irreducible factor of $
\Res_{\bfd}(V)$ and the statement follows.
\end{proof}

\begin{example}
  \label{exm:2} The resultant of an irreducible variety
    is not necessarily an irreducible polynomial: consider the curve $C=V(x_{1,0}^{2}x_{2,1}-x_{1,1}^{2}x_{2,0})
  \subset \P^{1}\times\P^{1}$ and the indexes $\bfd_{0}=\bfd_{1}=(0,1)$ with associated linear forms
  $F_i=u_{i,0}x_{2,0}+u_{i,1}x_{2,1}$ for $i=0,1$.
We can verify that the corresponding  resultant is
$$
\Res_{\bfd_{0},\bfd_{1}}(C)= (u_{0,0}u_{1,1}-u_{0,1}u_{1,0})^{2}.
$$
\end{example}

The partial degrees of resultants can be expressed in terms of mixed
degrees.

\begin{proposition}
  \label{prop:4}
Let $X\in Z_{r}^{+}(\P^{\bfn}_K)$ and $ \bfd \in (\N^{m}\setminus
\{{\bf0}\})^{r+1}$. Then, for $0\le i\le r$,
\begin{displaymath}
  \deg_{\bfU_{i}}(\Res_{\bfd}(X)) = \coeff_{\bftheta^{\bfn}}\bigg(
[X] \, \prod_{j\ne i} \Big(\sum_{\ell=1}^{m} d_{j,\ell}\, \theta_{\ell}\Big) \bigg) .
\end{displaymath}
\end{proposition}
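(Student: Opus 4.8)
The plan is to compute $\deg_{\bfU_i}(\Res_{\bfd}(X))$ by specializing all of the general forms except $F_i$ to generic forms with transcendental coefficients, which reduces the computation to the resultant of a zero-dimensional cycle, and then to conclude with the multiprojective B\'ezout theorem. \emph{Reductions.} By Proposition~\ref{prop:6-1} the resultant is invariant, up to a nonzero scalar, under the base change $K\subset\ov K$, and by the remarks preceding Proposition~\ref{prop:5} the classes and mixed degrees of a cycle are invariant as well; hence we may assume $K=\ov K$. Writing $X=\sum_V m_V V$, we have $\Res_{\bfd}(X)=\prod_V\Res_{\bfd}(V)^{m_V}$ and $[X]=\sum_V m_V[V]$, while $\deg_{\bfU_i}$ is additive over products and $\coeff_{\bftheta^{\bfn}}$ is $\Z$-linear, so it suffices to treat the case $X=V$ irreducible of dimension $r$. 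Finally, by the permutation invariance of Proposition~\ref{prop:6-item:27}, we may assume $i=0$.

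\emph{Specialization.} Put $L=K(\bfU_1,\dots,\bfU_r)$, so that $F_1,\dots,F_r$ become multihomogeneous forms $f_1,\dots,f_r$ over $L$ with algebraically independent coefficients, and let $S=K[\bfU_1,\dots,\bfU_r]\setminus\{0\}$. Localizing at $S$ the module $\cM_{\bfd}(V)_{\bfdelta}$ with $\bfdelta\gg\bfzero$ yields $S^{-1}\cM_{\bfd}(V)_{\bfdelta}=\bigl(L[\bfU_0][\bfx]/(I(V)L[\bfx]+(f_1,\dots,f_r,F_0))\bigr)_{\bfdelta}$. Arguing as in the proof of Lemma~\ref{field_extension}, with localization at $S$ in place of a field extension, $\chi_{L[\bfU_0]}(S^{-1}\cM_{\bfd}(V)_{\bfdelta})$ equals $\Res_{\bfd}(V)$ up to a unit of $L^{\times}$, which is a product of irreducible polynomials of $\bfU_0$-degree $0$; therefore $\deg_{\bfU_0}(\Res_{\bfd}(V))=\deg_{\bfU_0}\chi_{L[\bfU_0]}(S^{-1}\cM_{\bfd}(V)_{\bfdelta})$. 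Keeping the coefficients of the $f_j$ as transcendentals, rather than specializing them to scalars, is precisely what prevents a drop of degree at this step.

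\emph{Restriction to a zero-cycle.} Each linear system $|\cO(\bfd_j)|$ on $\P^{\bfn}$ is base-point-free, so for a generic choice the $f_j$ intersect $V_L$ properly; set $Z=V_L\cdot\div(f_1)\cdots\div(f_r)\in Z_0^{+}(\P^{\bfn}_L)$. The core claim is that
\[
\chi_{L[\bfU_0]}\bigl(S^{-1}\cM_{\bfd}(V)_{\bfdelta}\bigr)\doteq\Res_{\bfd_0}(Z)\quad\text{in }L[\bfU_0].
\]
I would prove this by a d\'evissage: filter the $L[\bfx]$-module $L[\bfx]/(I(V)L[\bfx]+(f_1,\dots,f_r))$ by homogeneous submodules whose successive quotients are the coordinate rings $L[\bfx]/I(W)$ of the components $W$ of $Z$, each appearing as many times as its intersection multiplicity in $Z$ (together with finitely many pieces annihilated by the irrelevant ideal, which vanish in $\bfx$-degree $\bfdelta\gg\bfzero$); tensor this filtration with $-[\bfU_0]/(F_0)$, which stays exact since $F_0$ is a nonzerodivisor modulo each $I(W)L[\bfU_0][\bfx]$ (after extension of scalars the quotient is a polynomial ring on which $F_0$ acts as multiplication by $F_0(\bfxi_W)\cdot(\text{monomial})$ with $F_0(\bfxi_W)\neq0$); then pass to $\bfx$-degree $\bfdelta$ and use additivity of $\ell((-)_{(f)})$ over short exact sequences together with Definition~\ref{def:2}. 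This d\'evissage — the multiprojective analogue of the classical Poisson product formula, which can alternatively be extracted from R\'emond's study of resultants in~\cite{Remond01b} — is the main technical obstacle.

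\emph{Conclusion.} For an irreducible component $W$ of $Z$, which over $\ov L$ is a point $\bfxi$, a direct computation gives $\cM_{\bfd_0}(\bfxi)_{\bfdelta}\cong \ov L[\bfU_0]/(F_0(\bfxi))$ for $\bfdelta\gg\bfzero$, where $F_0(\bfxi)=\sum_{\bfa}u_{0,\bfa}\bfxi^{\bfa}$ is a nonzero linear form in $\bfU_0$; hence $\Res_{\bfd_0}(\bfxi)\doteq F_0(\bfxi)$ has $\bfU_0$-degree $1$, and therefore $\deg_{\bfU_0}(\Res_{\bfd_0}(Z))=\deg(Z)$. On the other hand, iterating the B\'ezout theorem (Theorem~\ref{bezgeom}) and using $[\div(f_j)]=\sum_{\ell=1}^{m}d_{j,\ell}\,\theta_{\ell}$ (Proposition~\ref{prop:5}\eqref{gradosparciales}) we obtain $[Z]=[V]\cdot\prod_{j=1}^{r}\bigl(\sum_{\ell=1}^{m}d_{j,\ell}\,\theta_{\ell}\bigr)$, and since $\deg(Z)=\coeff_{\bftheta^{\bfn}}([Z])$ by Proposition~\ref{prop:5}\eqref{item:12}, chaining the equalities gives $\deg_{\bfU_0}(\Res_{\bfd}(V))=\coeff_{\bftheta^{\bfn}}\bigl([V]\cdot\prod_{j=1}^{r}(\sum_{\ell=1}^{m}d_{j,\ell}\,\theta_{\ell})\bigr)$, which is the asserted formula for $i=0$ and thus, by the reductions above, in general.
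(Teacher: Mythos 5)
The paper's own proof is a one--line citation of R\'emond's Proposition~3.4, whereas you reconstruct the statement from scratch, so the two proofs are genuinely different in spirit. Your outline is correct: after reducing to an irreducible variety and $i=0$, pass to $L=K(\bfU_1,\dots,\bfU_r)$, observe (by the Gauss-lemma argument you indicate, paralleling Lemma~\ref{field_extension}) that localizing at $S=K[\bfU_1,\dots,\bfU_r]\setminus\{0\}$ only alters $\Res_{\bfd}(V)$ by a factor in $L^\times$, and then compute $\deg_{\bfU_0}$ of the resulting polynomial in $L[\bfU_0]$ by reduction to a zero-dimensional cycle and the multiprojective B\'ezout theorem together with Corollary~\ref{cor:2} and Proposition~\ref{prop:5}\eqref{item:12}.

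However, you unnecessarily reinvent the hardest step. The ``core claim'' that $\chi_{L[\bfU_0]}\bigl(S^{-1}\cM_{\bfd}(V)_{\bfdelta}\bigr)\doteq\Res_{\bfd_0}(Z)$ is nothing but $r$ iterations of Proposition~\ref{especializacion} (specialization of the last group of variables to a polynomial that intersects the cycle properly), combined with Proposition~\ref{prop:6-1} to handle the base change to $L$. You instead flag this as ``the main technical obstacle'' and sketch a d\'evissage of $L[\bfx]/(I(V)+(f_1,\dots,f_r))$; that sketch, as written, is not quite right — a prime filtration of this module also produces quotients $L[\bfx]/P$ with $P$ an embedded or higher-codimensional prime (not just the minimal primes $I(W)$ for components $W$ of $Z$, plus irrelevant pieces), and one must argue separately that those extra pieces contribute nothing to $\chi$. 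Citing Proposition~\ref{especializacion} would sidestep this entirely and make your argument complete; as it stands the d\'evissage step is gappy, though easily repaired. The payoff of your approach, once that step is cited rather than improvised, is a clean explicit proof in place of an opaque external reference, which is a genuine improvement in readability.
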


\begin{proof}
This follows from~\cite[Prop.~3.4]{Remond01a}.
\end{proof}

For projective varieties, eliminants and resultants
coincide:

\begin{corollary}
\label{cor:3} For $n\in \N$ let $V\subset \P^{n}_K$  be
an irreducible $K$-variety of dimension $r$ and $\bfd\in (\Z_{>0})^{r+1}$. Then $ \Res_{\bfd}(V) =
\Elim_{\bfd}(V)$.
\end{corollary}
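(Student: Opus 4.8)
The plan is to use Proposition~\ref{prop:1} to reduce the statement to the computation of a single partial degree. By Proposition~\ref{prop:1} there is an integer $\nu\ge 1$ with $\Res_{\bfd}(V)=\Elim_{\bfd}(V)^{\nu}$, so it suffices to show $\nu=1$. Since $V\subset \P^{n}_K$ we are in the case $m=1$, $\bfn=(n)$, and $\bfd=(d_{0},\dots,d_{r})$ with $d_{i}\in \Z_{>0}$; write $\bfU=(\bfU_{0},\dots,\bfU_{r})$ for the corresponding groups of coefficient variables. Taking the partial degree in $\bfU_{0}$ on both sides of the identity gives $\deg_{\bfU_{0}}(\Res_{\bfd}(V))=\nu\cdot \deg_{\bfU_{0}}(\Elim_{\bfd}(V))$. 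First I would compute the left-hand side using Proposition~\ref{prop:4} together with Proposition~\ref{prop:5}\eqref{item:13}, which gives $[V]=\deg(V)\,\theta^{n-r}$, so
\begin{displaymath}
\deg_{\bfU_{0}}(\Res_{\bfd}(V))=\coeff_{\theta^{n}}\Big(\deg(V)\,\theta^{n-r}\prod_{j=1}^{r}d_{j}\theta\Big)=\deg(V)\prod_{j=1}^{r}d_{j}.
\end{displaymath}
Hence the proof reduces to proving the lower bound $\deg_{\bfU_{0}}(\Elim_{\bfd}(V))\ge \deg(V)\prod_{j=1}^{r}d_{j}$, which then forces $\nu\le 1$, i.e.\ $\nu=1$.

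For the lower bound I would argue geometrically. The key elementary fact is that $\Elim_{\bfd}(V)(\bfu)$ vanishes whenever the specialized forms $F_{0}(\bfu_{0},\cdot),\dots,F_{r}(\bfu_{r},\cdot)$ have a common zero on $V$: indeed $\Elim_{\bfd}(V)\in \mathfrak{E}_{\bfd}(V)=\Ann(\cM_{\bfd}(V)_{\bfdelta})$ for $\bfdelta\gg \bfzero$, so if $\Elim_{\bfd}(V)(\bfu)\ne 0$ then $\Elim_{\bfd}(V)$ would act both invertibly and as zero on the fibre $\cM_{\bfd}(V)_{\bfdelta}\otimes k(\bfu)$, forcing this fibre to vanish, which is impossible in high degree $\bfdelta$ when a common zero on $V$ exists. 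Now fix a \emph{generic} specialization of the blocks $\bfu_{1},\dots,\bfu_{r}$, so that $\div(F_{1}(\bfu_{1})),\dots,\div(F_{r}(\bfu_{r}))$ are generic hypersurfaces of degrees $d_{1},\dots,d_{r}$. By Theorem~\ref{bezgeom} and Proposition~\ref{prop:5}\eqref{gradosparciales}, $V\cdot\div(F_{1}(\bfu_{1}))\cdots\div(F_{r}(\bfu_{r}))$ is a $0$-cycle of degree $N:=\deg(V)\prod_{j=1}^{r}d_{j}$, and by a Bertini-type general position argument it is reduced: it consists of $N$ distinct points $P_{1},\dots,P_{N}\in V$. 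For each $k$ the condition $F_{0}(\bfu_{0})(P_{k})=0$ defines a hyperplane $H_{k}\subset \A(\bfU_{0})$, and these hyperplanes are pairwise distinct because the $P_{k}$ are distinct points of $\P^{n}$ and $d_{0}\ge 1$ (their images under the degree-$d_{0}$ Veronese map are distinct). By the vanishing criterion above, $\Elim_{\bfd}(V)(\cdot,\bfu_{1},\dots,\bfu_{r})$ vanishes on $H_{1}\cup\cdots\cup H_{N}$; it is not identically zero because $\deg_{\bfU_{0}}(\Elim_{\bfd}(V))=\deg_{\bfU_{0}}(\Res_{\bfd}(V))/\nu>0$ and a generic specialization preserves its $\bfU_{0}$-leading part. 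Therefore it is divisible by the product of the $N$ distinct linear forms cutting out the $H_{k}$, so $\deg_{\bfU_{0}}(\Elim_{\bfd}(V))\ge \deg_{\bfu_{0}}\big(\Elim_{\bfd}(V)(\cdot,\bfu_{1},\dots,\bfu_{r})\big)\ge N$, as required.

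The main obstacle is making the second paragraph fully rigorous within the module-theoretic formalism of \S\ref{sec:elim-result-mult}: precisely, the scheme-theoretic description of the vanishing locus of $\Elim_{\bfd}(V)$ (so that $\Elim_{\bfd}(V)$ really detects common zeros on $V$) and the genericity statement guaranteeing that the $r$-fold intersection of $V$ with generic hypersurfaces is a \emph{reduced} $0$-cycle of the Bézout-predicted length. Both are standard consequences of the theory developed in~\cite{Remond01a,Philippon86}, but they should be cited or checked carefully. Everything else is the degree bookkeeping carried out above, which closes the argument once the lower bound on $\deg_{\bfU_{0}}(\Elim_{\bfd}(V))$ is in hand.
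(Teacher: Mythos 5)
Your argument takes the same first two steps as the paper: invoking Proposition~\ref{prop:1} to write $\Res_{\bfd}(V)=\Elim_{\bfd}(V)^{\nu}$ and then computing $\deg_{\bfU_{i}}(\Res_{\bfd}(V))=\big(\prod_{j\ne i}d_{j}\big)\deg(V)$ via Proposition~\ref{prop:4} and Proposition~\ref{prop:5}\eqref{item:13}. Where you diverge is in handling the eliminant's degree. The paper simply cites the classical identity $\deg_{\bfU_{i}}(\Elim_{\bfd}(V))=\big(\prod_{j\ne i}d_{j}\big)\deg(V)$ from Philippon~\cite{Philippon86} (Remark to Lem.~1.8) and concludes immediately that $\nu=1$ because both polynomials have the same total degree. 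You instead give a self-contained geometric argument: after a generic specialization of $\bfU_{1},\dots,\bfU_{r}$, the intersection $V\cdot\div(F_{1})\cdots\div(F_{r})$ is a reduced $0$-cycle of $N=\deg(V)\prod_{j\ge1}d_{j}$ points, each of which contributes a distinct hyperplane to the vanishing locus of $\Elim_{\bfd}(V)(\cdot,\bfu_{1},\dots,\bfu_{r})$ via the $d_{0}$-th Veronese embedding, forcing $\deg_{\bfU_{0}}(\Elim_{\bfd}(V))\ge N$. This lower bound together with $\deg_{\bfU_{0}}(\Res_{\bfd}(V))=\nu\cdot\deg_{\bfU_{0}}(\Elim_{\bfd}(V))=N$ gives $\nu\le 1$, hence $\nu=1$. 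Both routes are valid; yours avoids the external citation at the cost of needing the scheme-theoretic identification $V(\Elim_{\bfd}(V))=\nabla_{\bfd}(V)$ (which is exactly Proposition~\ref{prop:7} of the paper, after passing to $\ov K$ and using Proposition~\ref{prop:6-1}) and a Bertini-type statement that generic degree-$d_{i}$ hypersurface sections of an irreducible $r$-dimensional variety yield a reduced $0$-cycle of the Bézout length; the latter holds over $\ov K$ by reducing to the hyperplane case via the Veronese embeddings, but it is genuinely extra work relative to the paper's one-line citation. As you correctly note, these are the two points that need to be spelled out to close the argument; once done, your degree bookkeeping is sound.
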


\begin{proof}
By Proposition~\ref{prop:1}, $\Res_{\bfd}(V) = \Elim_{\bfd}(V)^{\nu}$ for
some $\nu\ge1$.
On the one hand,
$\deg_{\bfu_{i}}(\Elim_{\bfd}(V))= \Big( \prod_{j\ne i}d_{j}\Big)
\deg(V) $ for all $i$~\cite[Remark to Lem.~1.8]{Philippon86} while on the
other hand,
Proposition~\ref{prop:4} implies that
$$\deg_{\bfu_{i}}(\Res_{\bfd}(V))=\coeff_{\theta^n}\bigg(
\deg(V)\theta^{n-r}\prod_{j\ne i}d_j\theta\bigg)
= \bigg( \prod_{j\ne i}d_{j}\bigg)
\deg(V) .$$
Thus $ \Res_{\bfd}(V)$ and $\Elim_{\bfd}(V)$ have the same total
degree. Hence,  $\nu=1$ and the statement follows.
\end{proof}

Given a subset $J\subset \{1,\dots,m\}$ we  set $\pi_{J}:\P^{\bfn}_K\to
 \prod_{j\in J}\P^{n_{j}}_K$ for the natural projection and
 $\bfx_{J}= (\bfx_{j})_{j\in J}$.

\begin{lemma}
\label{prop:8} Let  $\bfd\in (\N^{m}\setminus \{\bfzero\})^{r+1}$
and $F_{i}$ the associated general form of multidegree~$\bfd_{i}$ for $0\le i\le r$. Let $V\subset \P^{\bfn}_K$ be an
irreducible $K$-variety of dimension $r\ge0$. Then
$\mathfrak{E}_{\bfd}(V)$ is a principal ideal if and only if
\begin{equation}\label{eq:17}
  \dim(\pi_{J}(V)) \ge \#\{ i: 0\le i\le r, F_{i}\in
  K[\bfU_i][\bfx_{J}]\}-1 \quad \text{ for all }J\subset \{1,\dots,m\}.
\end{equation}
If this is the case,  $\Elim_{\bfd}(V)\in K[\bfU]\setminus K$.
Otherwise, $\mathfrak{E}_{\bfd}(V)$ is not
principal and $\Elim_{\bfd}(V)=1$.
\end{lemma}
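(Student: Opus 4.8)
The plan is to analyze when the eliminant ideal $\mathfrak{E}_{\bfd}(V) = \Ann(\cM_{\bfd}(V)_{\bfdelta})$ (for $\bfdelta \gg \bfzero$) is principal. Since this is a prime ideal in the factorial ring $K[\bfU]$ by the cited results, it is principal exactly when it has height $\le 1$, i.e. when it is either zero or of codimension $1$; as it is always nonzero, principality is equivalent to $\codim \mathfrak{E}_{\bfd}(V) = 1$, equivalently $\dim V(\mathfrak{E}_{\bfd}(V)) = \dim (\prod_i \P^{\bfn+\bfone}_{\bfd_i}) - 1$ inside the multiprojective space of coefficient vectors of the $F_i$. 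The first step is therefore to identify $V(\mathfrak{E}_{\bfd}(V))$ geometrically: by the standard interpretation of eliminants (R\'emond, Philippon), a point $\bfu = (\bfu_0, \dots, \bfu_r)$ lies in $V(\mathfrak{E}_{\bfd}(V))$ if and only if the specialized forms $F_0(\bfu_0, \cdot), \dots, F_r(\bfu_r, \cdot)$ have a common zero on $V$. So I want to compute the dimension of the incidence variety
\begin{displaymath}
\widetilde{W} = \{ (\bfu, \bfxi) \in \textstyle\prod_i \P^{\bfn+\bfone}_{\bfd_i} \times V \; : \; F_i(\bfu_i, \bfxi) = 0 \text{ for } 0 \le i \le r \}
\end{displaymath}
and project it to the $\bfu$-space.

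The second step is the fiber-dimension computation for the projection $\widetilde{W} \to V$. Over a point $\bfxi \in V$, the condition $F_i(\bfu_i, \bfxi) = 0$ is a single linear condition on $\bfu_i$, and it is a \emph{nontrivial} linear condition precisely when the monomial vector $(\bfxi^{\bfa})_{\bfa \in \N^{\bfn+\bfone}_{\bfd_i}}$ is nonzero — which always holds, since some coordinate of each $\bfxi_k$ is nonzero. Hence each fiber is a product of hyperplanes and has dimension $\sum_i (\dim \P^{\bfn+\bfone}_{\bfd_i} - 1)$; since $V$ is irreducible of dimension $r$, the relevant component of $\widetilde W$ has dimension $r + \sum_i(\dim \P^{\bfn+\bfone}_{\bfd_i} - 1)$. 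Now I push forward to the $\bfu$-space: $\dim \overline{\pi(\widetilde W)} = \dim \widetilde W - (\text{generic fiber dimension of } \widetilde W \to \text{$\bfu$-space})$. The eliminant ideal is principal iff $\overline{\pi(\widetilde W)}$ is a hypersurface, i.e. iff this generic fiber dimension equals $r$. So the crux is: \textbf{the generic fiber of $\widetilde W \to \overline{\pi(\widetilde W)}$ has dimension $\ge r$ always, and equals $r$ iff condition \eqref{eq:17} holds.}

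The third and main step is to translate the fiber-dimension condition into \eqref{eq:17}. A generic point $\bfu \in \overline{\pi(\widetilde W)}$ corresponds to $r+1$ generic forms (of the prescribed multidegrees) among those vanishing somewhere on $V$, and the fiber is $V \cap V(F_0) \cap \dots \cap V(F_r)$ for such generic choices. The key combinatorial input is the Grothendieck–Krull type bound for intersections of divisors on a multiprojective variety: cutting $V$ by a generic form $F_i$ of multidegree $\bfd_i$ drops the dimension by exactly $1$ \emph{unless} the form is forced to contain a whole component — and on a multiprojective variety this happens exactly when the "active" factors of $F_i$ (those groups $\bfx_j$ on which $\bfd_{i,j} > 0$, i.e. $F_i \in K[\bfU_i][\bfx_J]$ for $J = \{j : \bfd_{i,j} > 0\}$) project $V$ to something of too-small dimension. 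Running through the $r+1$ forms and bookkeeping which subsets $J$ of factors are involved, the failure of the expected dimension drop across the whole sequence is governed precisely by the inequalities in \eqref{eq:17}: for each $J$, the forms $F_i$ living in $K[\bfU_i][\bfx_J]$ can cut down $\pi_J(V)$ by at most $\dim \pi_J(V) + 1$ effective steps before they start being forced to vanish identically, and the total intersection achieves dimension exactly $r - (r+1) = -1$ (i.e. the generic fiber on the image is $0$-dimensional, giving a hypersurface after the right count — here I should be careful: the statement is about when $\mathfrak E_{\bfd}(V)$ is principal, and \eqref{eq:17} is the clean necessary-and-sufficient condition R\'emond isolates). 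I would cite \cite[Lem.~2.4(2), Thm.~2.13]{Remond01a} for the equivalence of principality with \eqref{eq:17}, and deduce the "if" clause $\Elim_{\bfd}(V) \notin K$ from the fact that a nonzero principal prime ideal in $K[\bfU]$ has a non-constant generator, while in the non-principal case the principal part of a height-$\ge 2$ prime ideal is a unit, normalized to $1$, giving $\Elim_{\bfd}(V) = 1$.

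\textbf{Main obstacle.} The delicate point is the third step: correctly matching the "expected dimension drop fails" analysis for a sequence of generic multihomogeneous forms against the family of inequalities indexed by \emph{all} subsets $J \subseteq \{1, \dots, m\}$ simultaneously. One must show that a single failing $J$ already breaks principality (the necessity direction — here one exhibits extra components forcing $\codim \mathfrak E_{\bfd}(V) \ge 2$ via the projection $\pi_J$), and conversely that if every $J$ satisfies \eqref{eq:17} then the generic complete intersection on $V$ behaves as expected (the sufficiency direction — an inductive argument on $r$, at each step choosing a form $F_i$ whose active factor set $J$ still allows a genuine dimension drop, which \eqref{eq:17} guarantees is always possible). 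Rather than reproving this from scratch, the cleanest route is to invoke R\'emond's analysis directly, so the "proof" here is mostly the identification of $V(\mathfrak{E}_{\bfd}(V))$ with the incidence projection plus the two bookkeeping directions, with the hard combinatorial lemma imported from \cite{Remond01a}.
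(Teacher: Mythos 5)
Your proposal sketches the right geometric picture --- the eliminant ideal corresponds to the image of the incidence variety $\widetilde W$, and principality is equivalent to that image being a hypersurface --- but there are several concrete problems.

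First, a numerical error in the dimension count. You have $\dim \widetilde W = r + \sum_i\bigl(\dim\P^{\bfN_i}-1\bigr)$ where $\P^{\bfN_i}$ is the coefficient space for $F_i$, so the image $\ov{\pi(\widetilde W)}$ is a hypersurface in $\prod_i\P^{\bfN_i}$ precisely when the generic fiber of $\widetilde W\to\ov{\pi(\widetilde W)}$ is \emph{zero-dimensional}, not $r$-dimensional as you write: hypersurface means $\dim\ov{\pi(\widetilde W)}=\sum_i\dim\P^{\bfN_i}-1$, and subtracting from $\dim\widetilde W$ gives fiber dimension $0$. The ``crux'' you then formulate (``generic fiber dimension $\ge r$ always, and $=r$ iff \eqref{eq:17} holds'') is therefore false as stated; the correct target is that the generic complete intersection $V\cap\bigcap_i V(F_i(\bfu_i,\cdot))$ over the image is finite.

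Second, your citation to R\'emond is to the wrong results. \cite[Lem.~2.4(2), Thm.~2.13]{Remond01a} establish that $\mathfrak{E}_{\bfd}(V)$ is a nonzero multihomogeneous prime ideal; they do not contain the characterization of principality in terms of \eqref{eq:17}. That equivalence is \cite[Cor.~2.15]{Remond01a}, which is what the paper's proof invokes (parts (1), (2), (3) give, respectively, the nonvanishing of $\Elim_{\bfd}(V)$ under \eqref{eq:17}, the principality under \eqref{eq:17}, and the degeneration $\Elim_{\bfd}(V)=1$ when \eqref{eq:17} fails). Since your plan ultimately ``imports'' the hard combinatorial lemma anyway, the citation matters.

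Third, R\'emond's Cor.~2.15 (and the dimension/genericity arguments underlying it) require the base field to be infinite. The paper explicitly assumes $K$ infinite at the start of its argument and then, at the end, reduces the finite-field case by passing to a transcendental extension $E\supset K$ and invoking Proposition~\ref{prop:6-1} (invariance of eliminants under field extensions, up to a scalar). Your proposal does not address the finite-field case at all, so it is incomplete even if the other points are repaired.

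Finally, your argument that $\Elim_{\bfd}(V)\ne 1$ in the principal case is not quite the one the paper uses: the paper notes that $\Elim_{\bfd}(V)=1$ is equivalent to $\cM_{\bfd}(V)_{\bfdelta}=0$ for $\bfdelta\gg 0$, which forces $I(V)\supset(\mathfrak{M}_{\bfn})_{\bfdelta}$ and hence $V=\emptyset$, a contradiction. Your statement that ``a nonzero principal prime ideal in $K[\bfU]$ has a non-constant generator'' is fine as far as it goes, but you also need to rule out the ideal being the whole ring --- which, in this setup, is precisely the nonemptiness of $V$.
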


\begin{proof}
Assume for the moment that the field $K$ is infinite.

\smallskip
($\Leftarrow$) If~\eqref{eq:17} holds,
\cite[Cor.~2.15(2)]{Remond01a} implies that $\Elim_{\bfd}(V)$
generates~$\mathfrak{E}_{\bfd}(V)$. Applying
\cite[Cor.~2.15(1)]{Remond01a}, it follows $\Elim_{\bfd}(V)\ne 0$
since, for $J=\{1,\dots,m\}$,
$$
\dim({\pi_{J}(V)}) = \dim(V)= r  = \#\{ i\in \{0,\dots,r\} : F_{i}\in K[\bfU_i][\bfx]\} -1.
$$
Now suppose that $\Elim_{\bfd}(V)=1$. This is equivalent to the
fact that $
  \cM_{\bfd}(V)_{\bfdelta}=0
$ for $\bfdelta\gg0$, which implies  $I(V)
\supset(\mathfrak{M}_{\bfn})_{\bfdelta}$ and so $V=\emptyset$, which
is a contradiction.
Therefore,  $\Elim_{\bfd}(V)\in K[\bfU]\setminus K$.

\smallskip
($\Rightarrow$) Suppose that~\eqref{eq:17} does not hold.
By~\cite[Cor.~2.15(3)]{Remond01a}, $\Elim_{\bfd}(V)=1$.
Hence $\mathfrak{E}_{\bfd}(V)$ is necessarily not principal, because
otherwise we would have that $V=\emptyset$.

\smallskip
 The case when $K$  is a finite
field reduces to the previous case, by considering any
transcendental extension $E$ of $K$ and applying
Proposition~\ref{prop:6-1}.
\end{proof}

Given $\bfd\in (\N^{m}\setminus \{0\})^{r+1}$, the space of coefficients of a family of multihomogeneous
polynomials in $\ov K[\bfx]\setminus \{0\}$ of
multidegrees $\bfd_0, \dots, \bfd_{r}$  can be identified with
\begin{displaymath}\P^\bfN:=
\prod_{i=0}^{r}\P^{\prod_{j=1}^{m}
  {d_{i,j}+n_{j}\choose n_{j}}-1}.
\end{displaymath}
For $V\subset \P^{\bfn}$ consider the following subset of $\P^\bfN$:
\begin{equation}\label{nabla}
\nabla_{\bfd}(V)=\{ (\bfu_{0},\dots, \bfu_{r}) \in \P^\bfN: \ V \cap
V\big(F_0(\bfu_0,\bfx),\dots, F_{r}(\bfu_r,\bfx)\big)\ne \emptyset\}.
\end{equation}

The following results gives a geometric interpretation of eliminant
ideals.

\begin{proposition}
\label{prop:7} Let $V\subset \P^{\bfn}$ be an irreducible variety of
dimension $r\ge 0$ and $ \bfd \in (\N^{m}\setminus
\{{\bf0}\})^{r+1}$. Then
\begin{enumerate}
\item \label{item:20}  $I(\nabla_{\bfd}(V))= \mathfrak{E}_{\bfd}(V)$;
\smallskip
\item \label{item:21} the variety $\nabla_{\bfd}(V)$ is a hypersurface
  if and only if~\eqref{eq:17} holds. If this is the case, $\nabla_{\bfd}(V)= V(\Elim_{\bfd}(V))= V(\Res_{\bfd}(V))$.
\end{enumerate}
\end{proposition}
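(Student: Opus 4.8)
The plan is to pin down $\nabla_{\bfd}(V)$ as the zero set of the eliminant ideal $\mathfrak{E}_{\bfd}(V)$; then \eqref{item:20} will be immediate from the quoted fact that $\mathfrak{E}_{\bfd}(V)$ is a non-zero prime (hence radical) ideal, and \eqref{item:21} will follow by feeding this into Lemma~\ref{prop:8} together with the factoriality of $\ov K[\bfU]$. Everything here takes place over the algebraically closed field $\ov K$, so the projective Nullstellensatz is available; fix $\bfdelta\gg\bfzero$ with $\mathfrak{E}_{\bfd}(V)=\Ann(\cM_{\bfd}(V)_{\bfdelta})$. For the inclusion $\nabla_{\bfd}(V)\subseteq V(\mathfrak{E}_{\bfd}(V))$ I would argue by evaluation: given $P\in\mathfrak{E}_{\bfd}(V)$ and $\bfu\in\nabla_{\bfd}(V)$ witnessed by a point $\bfxi\in V$ with $F_i(\bfu_i,\bfxi)=0$ for all $i$, for every monomial $\bfx^{\bfa}$ of multidegree $\bfdelta$ one has $P\,\bfx^{\bfa}\in I(V)+(F_0,\dots,F_r)$ in $\ov K[\bfU][\bfx]$; specializing this membership at $(\bfu,\bfxi)$ and using that the generators of $I(V)$ vanish on $V$ and that $F_i(\bfu_i,\bfxi)=0$, we get $P(\bfu)\,\bfxi^{\bfa}=0$. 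Choosing $\bfa$ with $\bfxi^{\bfa}\neq0$ (possible since every factor of $\bfxi$ has a non-zero coordinate) gives $P(\bfu)=0$, so $P$ vanishes on $\nabla_{\bfd}(V)$.

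The reverse inclusion $V(\mathfrak{E}_{\bfd}(V))\subseteq\nabla_{\bfd}(V)$ is where the real work lies. Since $\ov K[\bfU][\bfx]_{\bfdelta}$ is $\ov K[\bfU]$-free of finite rank, right-exactness of $-\otimes_{\ov K[\bfU]}\ov K$ applied to the presentation of $\cM_{\bfd}(V)_{\bfdelta}$ gives, for a point $\bfu$,
\[
\cM_{\bfd}(V)_{\bfdelta}\otimes_{\ov K[\bfU],\,\bfu}\ov K\;\simeq\;\bigl(\ov K[\bfx]/(I(V)+(F_0(\bfu),\dots,F_r(\bfu)))\bigr)_{\bfdelta},
\]
and by Nakayama (the module is finitely generated) this vanishes exactly when $\bfu\notin V(\Ann(\cM_{\bfd}(V)_{\bfdelta}))$. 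If $\bfu\notin\nabla_{\bfd}(V)$, then $V(I(V)+(F_0(\bfu),\dots,F_r(\bfu)))=\emptyset$, so $\mathfrak{M}_{\bfn}\subseteq\sqrt{I(V)+(F_0(\bfu),\dots,F_r(\bfu))}$ and the right-hand side above vanishes for all $\bfdelta$ beyond a bound \emph{depending on $\bfu$}. The subtle point is that I need a single $\bfdelta$ working for every $\bfu$ at once; the resolution is Lemma~\ref{lemm:6}, which guarantees that $\Ann(\cM_{\bfd}(V)_{\bfdelta})$ equals the fixed ideal $\mathfrak{E}_{\bfd}(V)$ for all $\bfdelta\ge\bfdelta_{0}$, so the vanishing locus of the annihilator does not move. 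Taking $\bfdelta$ coordinatewise $\ge\bfdelta_{0}$ and also large enough for this particular $\bfu$, the displayed module vanishes, so $\bfu\notin V(\mathfrak{E}_{\bfd}(V))$. This uniformity argument is the step I expect to require the most care.

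Combining the two inclusions gives $\nabla_{\bfd}(V)=V(\mathfrak{E}_{\bfd}(V))$ as subsets of $\P^{\bfN}$. This set is non-empty (for any $\bfxi\in V$, each linear form $\bfu_i\mapsto F_i(\bfu_i,\bfxi)$ is non-zero, hence has a non-trivial zero $\bfu_i$), and since $\mathfrak{E}_{\bfd}(V)$ is a non-zero radical ideal the Nullstellensatz yields $I(\nabla_{\bfd}(V))=\mathfrak{E}_{\bfd}(V)$, which is \eqref{item:20}.

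For \eqref{item:21}, note that $\nabla_{\bfd}(V)=V(\mathfrak{E}_{\bfd}(V))$ is irreducible ($\mathfrak{E}_{\bfd}(V)$ prime) and non-empty, so it is a hypersurface if and only if $\mathfrak{E}_{\bfd}(V)$ has height one; in the factorial ring $\ov K[\bfU]$ this is equivalent to $\mathfrak{E}_{\bfd}(V)$ being principal, hence by Lemma~\ref{prop:8} to condition~\eqref{eq:17}. When \eqref{eq:17} holds, Lemma~\ref{prop:8} further gives $\mathfrak{E}_{\bfd}(V)=(\Elim_{\bfd}(V))$ with $\Elim_{\bfd}(V)$ non-constant, so $\nabla_{\bfd}(V)=V(\mathfrak{E}_{\bfd}(V))=V(\Elim_{\bfd}(V))$; and $V(\Res_{\bfd}(V))=V(\Elim_{\bfd}(V))$ because $\Res_{\bfd}(V)=\Elim_{\bfd}(V)^{\nu}$ by Proposition~\ref{prop:1}. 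Apart from the uniformity point in \eqref{item:20}, the argument is just bookkeeping on top of the cited structural facts about eliminant ideals.
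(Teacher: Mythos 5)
For item \eqref{item:20}, the paper simply cites R\'emond's Theorem~2.2, whereas you prove the set-theoretic equality $\nabla_{\bfd}(V)=V(\mathfrak{E}_{\bfd}(V))$ from scratch: one inclusion by direct evaluation of a membership $P\,\bfx^{\bfa}\in I(V)\,\ov K[\bfU][\bfx]+(F_0,\dots,F_r)$ at a witness $(\bfu,\bfxi)$, the other via right-exactness of $-\otimes_{\ov K[\bfU],\,\bfu}\ov K$ together with Nakayama. Both steps are sound, and the uniformity worry you raise is resolved exactly as you say: since $\Ann(\cM_{\bfd}(V)_{\bfdelta})$ stabilizes at $\mathfrak{E}_{\bfd}(V)$ for all $\bfdelta\ge\bfdelta_0$ by Lemma~\ref{lemm:6}, you may freely enlarge $\bfdelta$ on a per-$\bfu$ basis without changing the vanishing locus of the annihilator, so no single uniform $\bfdelta$ is actually needed. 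Your self-contained argument in effect reproves the portion of R\'emond's theorem that the paper imports; that is more work but more transparent. For item \eqref{item:21}, your route coincides with the paper's: item \eqref{item:20} reduces the question to whether a nonzero prime in the factorial ring $\ov K[\bfU]$ has height one (equivalently, is principal), which Lemma~\ref{prop:8} ties to condition~\eqref{eq:17}, and Proposition~\ref{prop:1} then identifies $V(\Res_{\bfd}(V))$ with $V(\Elim_{\bfd}(V))$. One small caveat worth flagging: your non-emptiness argument (``a nonzero linear form in $\bfu_i$ has a nontrivial zero in $\P^{N_i-1}$'') silently assumes every $N_i\ge 2$; if some $N_i=1$ --- which happens precisely when $d_{i,j}=0$ for every $j$ with $n_j\ge 1$ --- then $F_i$ is a single monomial, $\nabla_{\bfd}(V)=\emptyset$, and item \eqref{item:20} itself fails as stated, so this degenerate configuration must be implicitly excluded in R\'emond's setup; it is a limitation of the statement, not an error you introduced.
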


\begin{proof}
\eqref{item:20} follows from~\cite[Thm.~2.2]{Remond01a}, while
\eqref{item:21} follows from \eqref{item:20} together with
Lemma~\ref{prop:8} and Proposition \ref{prop:1}.
\end{proof}

The following corollary gives a formula {\it \`a la} Poisson for the resultants of a cycle of dimension
0. Recall that the evaluation of a multihomogeneous polynomial at a
point of $\P^{\bfn}$ is only defined up to a non-zero constant in $\ov
K^{\times}$ which depends of a choice of a representative of the
given point.

\begin{corollary} \label{cor:2}
Let $X\in Z_{0}(\P^{\bfn}_{K})$ and $\bfd_{0}\in
\N^{m}\setminus\{\bfzero\}$.
Write $X_{\ov
  K}=\sum_{{\bfxi}}m_{\bfxi}\bfxi$ with $\bfxi\in
\P^{\bfn}$  and $m_{\xi}\in \Z$  and let $F_{0}$ be the general form
of multidegree $\bfd_{0}$. Then there exists~$\lambda \in \ov
K^\times$ such that
\begin{equation*}
\Res_{\bfd_{0}}(X)=\lambda \,\prod_{\bfxi}F_0(\bfxi)^{m_{\bfxi}}.
\end{equation*}
\end{corollary}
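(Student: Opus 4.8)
The plan is to reduce to the case of a single point $\bfxi\in\P^\bfn$ defined over $K$, where the claim becomes a statement about the eliminant of a point, and then extend by linearity and scalar extension. First I would use the definition of $\Res_{\bfd_0}$ of a cycle together with Proposition \ref{prop:6-1}: since scalar extension changes resultants only by a unit of $\ov K^\times$ and $X_{\ov K}=\sum_\bfxi m_\bfxi\bfxi$, we have $\Res_{\bfd_0}(X)=\lambda'\prod_\bfxi\Res_{\bfd_0}(\bfxi)^{m_\bfxi}$ for some $\lambda'\in\ov K^\times$; so it suffices to treat a single $\bfxi\in\P^\bfn$, which we may assume is a variety over $\ov K$. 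For such a point of dimension $r=0$, Lemma \ref{prop:8} applies with the single index $\bfd_0\in\N^m\setminus\{\bfzero\}$: condition \eqref{eq:17} holds trivially (for any $J$, either $F_0\notin K[\bfU_0][\bfx_J]$ and the right-hand side is $-1$, or $F_0\in K[\bfU_0][\bfx_J]$, the right-hand side is $0$, and $\dim(\pi_J(\bfxi))=0\ge 0$). Hence $\mathfrak E_{\bfd_0}(\bfxi)$ is principal and, by Corollary \ref{cor:3} (here $\bfn$ is arbitrary, but we only need Proposition \ref{prop:1} together with the degree computation) or directly by Proposition \ref{prop:1}, $\Res_{\bfd_0}(\bfxi)=\Elim_{\bfd_0}(\bfxi)^\nu$ for some $\nu\ge1$.

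Next I would identify $\Elim_{\bfd_0}(\bfxi)$ explicitly. By Proposition \ref{prop:7}\eqref{item:21}, $\nabla_{\bfd_0}(\bfxi)=V(\Elim_{\bfd_0}(\bfxi))=V(\Res_{\bfd_0}(\bfxi))$, and by definition \eqref{nabla}, $\nabla_{\bfd_0}(\bfxi)$ is exactly the set of $\bfu_0\in\P^\bfN$ with $F_0(\bfu_0,\bfxi)=0$, i.e.\ the hyperplane in the coefficient space $\P^\bfN$ cut out by the linear form $\bfu_0\mapsto F_0(\bfu_0,\bfxi)$ (which is a fixed linear form in $\bfu_0$ once a representative of $\bfxi$ is chosen, nonzero since $\bfxi\in\P^\bfn$). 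Therefore $\Elim_{\bfd_0}(\bfxi)$, being a defining equation of this hyperplane, equals $\mu\cdot F_0(\bfxi)$ for some $\mu\in\ov K^\times$ (the value $F_0(\bfxi)$ being understood as this linear form in $\bfU_0$, defined up to a scalar by the choice of representative). To pin down $\nu$, I would compare degrees: $\deg_{\bfU_0}(\Res_{\bfd_0}(\bfxi))$ is computed by Proposition \ref{prop:4} as $\coeff_{\bftheta^\bfn}([\bfxi])=\coeff_{\bftheta^\bfn}(\bftheta^\bfn)=1$ (using Proposition \ref{prop:5}\eqref{item:12}), while $\deg_{\bfU_0}(F_0(\bfxi))=1$; hence $\nu=1$ and $\Res_{\bfd_0}(\bfxi)=\mu\,F_0(\bfxi)$.

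Finally I would assemble the pieces: combining $\Res_{\bfd_0}(X)=\lambda'\prod_\bfxi\Res_{\bfd_0}(\bfxi)^{m_\bfxi}$ with $\Res_{\bfd_0}(\bfxi)=\mu_\bfxi F_0(\bfxi)$ gives $\Res_{\bfd_0}(X)=\lambda\prod_\bfxi F_0(\bfxi)^{m_\bfxi}$ with $\lambda=\lambda'\prod_\bfxi\mu_\bfxi^{m_\bfxi}\in\ov K^\times$, which is the asserted Poisson-type formula. I expect the main subtlety to be bookkeeping the ubiquitous nonzero scalars: the value $F_0(\bfxi)$ is only defined up to $\ov K^\times$ depending on the chosen representative of $\bfxi$, the resultant is only defined up to a unit of $A$ (and after scalar extension, up to a unit of $\ov K$), and one must be careful that these ambiguities are all absorbed into the single constant $\lambda$ and do not affect the structure of the formula. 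The degree comparison forcing $\nu=1$ is the one genuinely nontrivial input, and it is already provided by Propositions \ref{prop:4} and \ref{prop:5}.
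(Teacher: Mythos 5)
Your proof is correct and follows essentially the same route as the paper: work at a single point $\bfxi\in\P^{\bfn}$, identify $\Elim_{\bfd_0}(\bfxi)$ with $F_0(\bfxi)$ up to a scalar via Proposition~\ref{prop:7}, and compute $\deg_{\bfU_0}(\Res_{\bfd_0}(\bfxi))=1$ via Propositions~\ref{prop:4} and~\ref{prop:5}\eqref{item:12} to force the exponent in Proposition~\ref{prop:1} to be one. The only differences are cosmetic: you carry out the reduction to a single $\ov K$-point at the outset (the paper does it as a closing remark using Proposition~\ref{prop:6-1} and the definition), and you make explicit the verification of condition~\eqref{eq:17} that the paper leaves implicit in its invocation of Proposition~\ref{prop:7}.
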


\begin{proof}
Let $\bfxi\in \P^{\bfn}$. Observe
that $F_{0}(\bfxi)=0$ is the irreducible equation of the
hypersurface $\nabla_{\bfd}(\bfxi)$. By Proposition~\ref{prop:7},
there exists $\lambda\in \ov
 K^\times$ such that
 $\Elim_{\bfd_{0}}(\bfxi)=\lambda \,F_{0}(\bfxi)$.
Proposition~\ref{prop:4} together with Proposition
\ref{prop:5}\eqref{item:12} imply that
$\deg_{\bfU_{0}}(\Res_{\bfd}(\bfxi))= 1$. Applying
Proposition~\ref{prop:1}, we get
$\Res_{\bfd_{0}}(\bfxi)=\Elim_{\bfd_{0}}(\bfxi)$. The general case
follows  readily from the definition of the resultant and its
invariance  under field extensions.
\end{proof}

\begin{remark} \label{rem:3}
The notions of eliminant and resultant of multiprojective cycles
include several of the classical notions of resultant.
  \begin{enumerate}
\item {\it The Macaulay resultant \cite{Mac02}.} The classical resultant
of $n+1$ homogeneous polynomials
of degrees $d_0,\dots,d_n$ coincides both with
$\Elim_{(d_0,\dots,d_n )}(\P^{n}) $ and with $\Res_{(d_0,\dots,d_n
  )}(\P^{n}) $. This is a consequence of  Proposition
\ref{prop:7}\eqref{item:21} and Corollary \ref{cor:3}.
\smallskip
\item \label{segundo}
{\it Chow forms \cite{CB37}.} The Chow form of an irreducible variety
$V\subset \P^{n}$ of dimension $r$ coincides both with
$\Elim_{(1,\dots, 1)}(V) $ and with $\Res_{(1,\dots, 1)}(V) $.
This follows from Proposition \ref{prop:7} and Corollary \ref{cor:3}.

\smallskip
  \item \emph{The GKZ mixed resultant \cite[\S 3.3]{GeKaZe94}.}
Let $V$ be a proper irreducible variety over $\C$ of dimension
    $r$  equipped with a family of very ample line bundles
$L_{0},\dots,L_{r}$ and $R_{L_{0},\dots,L_{r}}$ the
{$(L_{0},\dots,L_{r})$-resultant of~$V$} in the sense of
I. Gelfand, M. Kapranov and A. Zelevinski.
Each $L_{i}$ defines an embedding
$\psi_{i}:V\hookrightarrow \P^{n_{i}}$.
We consider then the map
\begin{displaymath}
  \bfpsi: V\hooklongrightarrow  \P^{\bfn} \quad ,\quad  \bfxi \longmapsto
(\psi_{0}(\bfxi), \dots, \psi_{r}(\bfxi)).
\end{displaymath}
Using Proposition
\ref{prop:7}\eqref{item:21}, it can be shown that
$R_{L_{0},\dots,L_{r}}$ coincides with the eliminant form
$\Elim_{\bfe_{0},\dots,
  \bfe_{r}}(\bfpsi(V))$.
Using the  formula for the degree of the GKZ mixed resultant in
\cite[Thm. 3.3]{GeKaZe94}, we can show
that it also coincides with the resultant
$\Res_{\bfe_{0},\dots,  \bfe_{r}}(\bfpsi(V))$.
  \end{enumerate}
\end{remark}

\subsection{Operations on resultants}
\label{sec:oper-result-}
We will now study the behavior of resultants with respect to
basic geometric operations, including intersections, linear
projections and products of cycles.

\medskip An important feature of resultants is that they transform the
intersection product of a cycle with a divisor into an evaluation.

\begin{proposition}
\label{especializacion} Let $X\in Z^r(\P^\bfn_K)$  and $ \bfd_0,
\dots, \bfd_r \in \N^{m}\setminus \{\bf0\}$. Let $f\in
K[\bfx]_{\bfd_{r}}$ such that $\div(f)$ intersects  $X$ properly.
Then there exists $\lambda \in K^\times$ such that
$$
\Res_{\bfd_0,\bfd_1\dots,\bfd_r}(X)(\bfU_0,\dots,\bfU_{r-1},f)
=\lambda \, \Res_{\bfd_0,\dots,\bfd_{r-1}}(X\cdot
\div(f))(\bfU_0,\dots,\bfU_{r-1}),$$ {where the left-hand side
  denotes the specialization of the last of group of variables of
  $\Res_{\bfd_0,\bfd_1\dots,\bfd_r}(X)$ at the coefficients of~$f$}.
\end{proposition}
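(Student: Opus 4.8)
The goal is to show that specializing the last group of variables of the resultant at the coefficients of $f$ produces (up to a nonzero scalar) the resultant of the intersection cycle $X\cdot\div(f)$. By the definition of the resultant of a cycle as a product over irreducible components and by multiplicativity of both sides, I would first reduce to the case where $X=V$ is an irreducible $K$-variety of dimension $r$. I would also reduce, using Proposition~\ref{prop:6-1}, to the case where $K$ is infinite (or even algebraically closed) by passing to a suitable field extension, since both sides are preserved up to units under scalar extension; this is convenient for the geometric arguments below.

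\textbf{Main step: comparing zero sets and degrees.} The strategy is to identify both sides as defining polynomials of the same hypersurface and then to check that their degrees agree, so that they must be equal up to a $K^\times$-scalar. On the left, specializing $\Res_{\bfd_0,\dots,\bfd_r}(V)$ at $\bfU_r=f$ gives a polynomial in $K[\bfU_0,\dots,\bfU_{r-1}]$ whose zero set, by Proposition~\ref{prop:7}, is contained in (and should equal) the set of $(\bfu_0,\dots,\bfu_{r-1})$ for which $V\cap V(F_0(\bfu_0),\dots,F_{r-1}(\bfu_{r-1}),f)\ne\emptyset$. Since $\div(f)$ meets $V$ properly, the intersection cycle $V\cdot\div(f)$ has pure dimension $r-1$, and the condition $V\cap V(F_0,\dots,F_{r-1},f)\ne\emptyset$ is precisely the condition $|V\cdot\div(f)|\cap V(F_0,\dots,F_{r-1})\ne\emptyset$ at the level of supports — i.e.\ $(\bfu_0,\dots,\bfu_{r-1})\in\nabla_{\bfd_0,\dots,\bfd_{r-1}}(V\cdot\div(f))$. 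Hence the left-hand side and $\Res_{\bfd_0,\dots,\bfd_{r-1}}(V\cdot\div(f))$ have the same radical (invoking Proposition~\ref{prop:7}\eqref{item:21} on the right, and checking that the left-hand side is not identically zero — which follows because $\div(f)$ meets $V$ properly, so generic $F_0,\dots,F_{r-1}$ do not meet $V\cap V(f)$). To promote this to an actual equality up to scalar, I would compare partial degrees: by Proposition~\ref{prop:4}, $\deg_{\bfU_i}\Res_{\bfd_0,\dots,\bfd_{r-1}}(V\cdot\div(f))$ is a coefficient extracted from $[V\cdot\div(f)]\cdot\prod_{j\ne i}(\sum_\ell d_{j,\ell}\theta_\ell)$, and by the multiprojective B\'ezout theorem (Theorem~\ref{bezgeom}) we have $[V\cdot\div(f)]=[V]\cdot\sum_\ell d_{r,\ell}\theta_\ell$; comparing with $\deg_{\bfU_i}\Res_{\bfd_0,\dots,\bfd_r}(V)=\coeff_{\bftheta^\bfn}([V]\prod_{j\ne i,\ 0\le j\le r}(\sum_\ell d_{j,\ell}\theta_\ell))$ and noting that specializing $\bfU_r\mapsto f$ does not raise the degree in the remaining groups, one sees the degrees match exactly. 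Equal radical plus equal degree forces the two polynomials to be associates in $K[\bfU_0,\dots,\bfU_{r-1}]$, and since the resultant is primitive in $A[\bfU]$ while $f$ has coefficients in $A$, the scalar lies in $K^\times$ (indeed, up to rescaling $f$, in $A^\times$ on the level of primitive parts).

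\textbf{Alternative / the hard part.} The delicate point is justifying the degree bookkeeping through the specialization — a priori $\Res_{\bfd_0,\dots,\bfd_r}(V)(\bfU_0,\dots,\bfU_{r-1},f)$ could drop in degree or even vanish, and ruling this out is exactly where properness of $\div(f)\cap X$ is used. A cleaner route that avoids some of this, and which I would actually prefer, is the module-theoretic one following R\'emond's framework: interpret the specialization at the level of the graded modules $\cM_{\bfd}(V)_\bfdelta$ and $\cM_{\bfd_0,\dots,\bfd_{r-1}}(V\cdot\div(f))_\bfdelta$, and show that specializing $F_r\mapsto f$ in $K[\bfU][\bfx]/(I(V)+(F_0,\dots,F_r))$ yields, after inverting a generic element, the module $K[\bfU_0,\dots,\bfU_{r-1}][\bfx]/(I(V)+(f)+(F_0,\dots,F_{r-1}))$, whose $\chi$ computes $\Res_{\bfd_0,\dots,\bfd_{r-1}}(X\cdot\div(f))$ by the definition of the intersection cycle via multiplicities. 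Concretely: the ideal $I(V)+(f)$ defines the cycle $V\cdot\div(f)$ with the correct multiplicities by Definition~\ref{def:11}, so $\chi$ of the corresponding module is $\prod_Y \Res_{\dots}(Y)^{\mult(Y|V,\div(f))}=\Res_{\bfd_0,\dots,\bfd_{r-1}}(V\cdot\div(f))$ up to a unit; matching this with the specialization of $\chi(\cM_{\bfd}(V)_\bfdelta)$ requires a flatness/semicontinuity argument for the specialization of $\bfU_r$, and controlling this specialization — ensuring no length is lost and no spurious factors appear — is the main obstacle. I expect this to be addressable by the same mechanism used in \cite{Remond01a} to prove the multiplicativity and specialization properties of $\chi$, combined with the fact that $\div(f)$ meeting $X$ properly guarantees the relevant generic points remain in the support after specialization.
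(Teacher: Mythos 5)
The paper does not give a proof of this proposition at all: it is stated and the proof is a one-line citation to R\'emond, \cite[Prop.~3.6]{Remond01a}. So there is no in-paper argument to compare against; what matters is whether your reconstruction is sound on its own.

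Your first (geometric) route has a genuine gap at the step ``equal radical plus equal degree forces the two polynomials to be associates.'' Even after reducing to an irreducible $V$, the cycle $V\cdot\div(f)$ generally has several irreducible components with multiplicities, so $\Res_{\bfd_0,\dots,\bfd_{r-1}}(V\cdot\div(f))$ is a product of eliminants raised to various powers. Knowing that the specialized resultant and this product have the same set of irreducible factors and the same multidegree $(\deg_{\bfU_0},\dots,\deg_{\bfU_{r-1}})$ does \emph{not} determine the exponent of each factor: if two irreducible factors have proportional degree vectors in $(\bfU_0,\dots,\bfU_{r-1})$, different distributions of multiplicity give the same total multidegree. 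What you actually need is a divisibility statement with matched multiplicities, and the multidegree count only gives an inequality in one direction (degree does not increase under specialization), not the converse. In addition, you need to rule out the degenerate case where some component $Y$ of $V\cdot\div(f)$ violates condition~\eqref{eq:17} so that $\Elim(Y)=1$ but $\Res(Y)$ might still contribute; that case is also not addressed.

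Your second (module-theoretic) route is the right one, but as written it is a description of where the work is rather than the work itself: you correctly identify that one must relate $\chi$ of the specialized module $K[\bfU_0,\dots,\bfU_{r-1}][\bfx]/\big(I(V)+(f)+(F_0,\dots,F_{r-1})\big)$ to $\chi(\cM_{\bfd}(V)_\bfdelta)\big|_{\bfU_r\mapsto f}$, and that properness of $\div(f)\cap V$ is the hypothesis that keeps the relevant lengths under control, but you defer the actual semicontinuity/flatness argument to ``the same mechanism used in \cite{Remond01a}.'' That is precisely the content of \cite[Prop.~3.6]{Remond01a}, so the proposal reduces the statement to the source the paper cites rather than proving it. In short: the sketch correctly locates the difficulty (multiplicity bookkeeping under specialization of $\bfU_r$) but does not close it, and the degree-matching shortcut you offer as a substitute does not suffice.
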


\begin{proof}
This is \cite[Prop.~3.6]{Remond01a}.
\end{proof}

Next we consider the behavior of resultants with respect to standard
projections.
Consider the
linear projection $ \pi: \P^\bfn_K\dashrightarrow\P^\bfl_K $
in \eqref{eq:58} and let
 $\bfx$ and $\bfy$ denote the multihomogeneous coordinates of
 $\P^{\bfn}_K$ and of
$\P^{\bfl}_K$, respectively. Let  $\bfd\in (\N^m\setminus
\{{\bf0}\})^{r+1}$. The general forms of multidegree $\bfd_i$ in the variables
$\bfx$ and $\bfy$ are, respectively,
$$
F_i = \sum_{\bfa\in\N^{\bfn+\bfone}_{\bfd_{i}}}  u_{i,\bfa} \,
\bfx^\bfa  \in K[\bfU_i][ \bfx]  \quad, \quad
F'_i = \sum_{\bfa\in\N^{\bfl+\bfone}_{\bfd_{i}}}  u_{i,\bfa} \, \bfy^\bfa  \in K[\bfU_i][\bfy].
$$
Write$$
\bfU'_{i}=\{u_{i,\bfa}: \ \bfa\in\N^{\bfl+\bfone}_{\bfd_{i}}\}\quad, \quad
\bfU''_{i}=\{u_{i,\bfa}: \ \bfa\in\N^{\bfn+\bfone}_{\bfd_{i}}\setminus\N^{\bfl+\bfone}_{\bfd_{i}} \}.
$$
Let $\prec$ be the partial monomial order on $
K[\bfU_0,\ldots,\bfU_r]$ defined as
$$\{\bfU'_i\}_{0\leq i\leq r}\prec \bfU''_0\prec\cdots\prec \bfU''_r.
$$
By this, we mean that the variables in each set have the same weight
and that those in $\bfU''_r$ have
the maximal weight, then come those in $\bfU''_{r-1}$, etcetera.
Observe that this order can be alternatively defined as the lexicographic order
associated to the sequence of vectors $\bfw_{0}, \dots, \bfw_{r}$
defined as
$$
\bfw_{i}=\Big(\overbrace{(\bfzero,\bfzero), \dots,
  (\bfzero,\bfzero)}^{i-1},(\bfzero,\bfone), \overbrace{(\bfzero,\bfzero), \dots,
  (\bfzero,\bfzero)}^{r-i}\Big).
$$
Given a polynomial $F\in  K[\bfU] \setminus \{0\}$, we denote by
$\init_\prec(F)\in K[\bfU] \setminus \{0\}$ its \emph{initial part}
with respect to this
order. It consists in the sum of the terms in $F$ whose monomials are
minimal with respect to $\prec$.
This order is multiplicative, in the sense that
$\init_\prec(FG)=\init_\prec(F)\init_\prec(G)$ for all $F,G\in K[\bfU]$.

\begin{proposition}\label{formainicialproy}
Let $\pi: \P^\bfn_K\dashrightarrow\P^\bfl_K$ be the linear
projection and $\prec$ the partial monomial order on $K[\bfU]$
considered above. Let $X\in Z_r^+(\P^\bfn_K)$ and $\bfd\in
(\N^m\setminus \{{\bf0}\})^{r+1}$. Then
$$
\Res_{\bfd}(\pi_* X ) \,\Big|\,
\init_\prec(\Res_{\bfd}(X))\quad \mbox{in } A[\bfU].
$$
\end{proposition}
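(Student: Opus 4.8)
The strategy is to reduce the statement about cycles to the case of an irreducible variety $V \subset \P^\bfn_K$, and then to track how the eliminant/resultant behaves under the blow-up construction $\Bl_L(\P^\bfn)$ introduced before Lemma~\ref{lemm:5}. Concretely: by multiplicativity of resultants over the components of $X$ (Definition~\ref{def:2}) and the fact that $\init_\prec$ is multiplicative, it suffices to prove the divisibility for $X = V$ irreducible, so assume $\dim V = r$ and $V \not\subset L$ (if $V \subset L$ then $\pi_* V = 0$ and there is nothing to prove, interpreting $\Res_\bfd(\emptyset)=1$; alternatively handle the dimension drop case separately, where $\Res_\bfd(\pi_*V)=1$ by Lemma~\ref{prop:8} since \eqref{eq:17} fails). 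The partial order $\prec$ and the initial-part operator $\init_\prec$ are designed precisely so that setting the "extra" coefficient variables $\bfU''_r, \dots, \bfU''_0$ to zero (in that order) picks out $\init_\prec$: specializing $\bfU''_i \mapsto 0$ replaces the general form $F_i$ by $F'_i$, the general form on $\P^\bfl$ pulled back via $\pi$.

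The key computational input is the congruence established in the proof of Lemma~\ref{lemm:5}: modulo $I(\Bl_L(\P^\bfn))$, for each choice of indices $j_i$ one has $(\prod_i y_{i,j_i}^{d_i}) F_i(\bfx) \equiv (\prod_i x_{i,j_i}^{d_i}) F'_i(\bfy)$. I would use this, together with Proposition~\ref{prop:7}\eqref{item:20} identifying the eliminant ideal with $I(\nabla_\bfd(V))$, to relate $\Res_\bfd(V)$ evaluated along the degeneration $\bfU''_r \to 0, \dots$ to the resultant of the strict transform $W$ of $V$, and then via $\pr_{2*} W = \pi_* V$ and the projection formula \eqref{eq:38} to $\Res_\bfd(\pi_* V)$. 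More precisely, the cleanest route is probably via Proposition~\ref{especializacion}: iterating the intersection-product-to-evaluation principle, $\Res_\bfd(V)$ specialized at generic linear-type forms computes a degree-zero resultant, i.e. a product of values of the forms at the points of $V \cap (\text{generic sections})$; doing the analogous computation on $\P^\bfl$ for $\pi_* V$ and comparing via the congruence above shows that the initial form of $\Res_\bfd(V)$, as a polynomial in the $\bfU$, has $\Res_\bfd(\pi_* V)$ as a factor. One must be careful that $\init_\prec$ is taken with respect to the lexicographic refinement by the weight vectors $\bfw_0,\dots,\bfw_r$, so the degeneration must be performed one group $\bfU''_i$ at a time, from $i=r$ down to $i=0$; at each stage the intermediate resultant is (up to a unit in the appropriate coefficient field, by Proposition~\ref{prop:6-1}) the resultant of an intermediate cycle supported on the partial projection, and its vanishing or non-vanishing is governed by Lemma~\ref{prop:8}.

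The main obstacle I anticipate is bookkeeping the degeneration through the partial order carefully enough to conclude divisibility in $A[\bfU]$ and not merely in $K(\bfU'')[\bfU']$ or up to spurious factors: one needs that after setting $\bfU''_r \mapsto 0$, the polynomial $\Res_\bfd(X)|_{\bfU''_r = 0}$ is (a unit times) the resultant of the image cycle under the partial projection $\P^\bfn \dashrightarrow \P^{(n_1,\dots)}$ that drops only the last $\bfx$-coordinates needed, composed with the remaining forms still being "full" in the surviving variables. This is exactly the content of a relative version of Proposition~\ref{especializacion} combined with Lemma~\ref{lemm:5}, but making it rigorous requires either an inductive argument on the number of coordinates dropped, or a direct appeal to a result of R\'emond on specialization of resultants along coordinate subspaces (analogous to \cite[Prop.~3.6]{Remond01a}). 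I would expect the bulk of the write-up to consist of setting up this induction and verifying that no extraneous irreducible factors are introduced, using Proposition~\ref{prop:1} (resultants are powers of eliminants) to control multiplicities and Proposition~\ref{prop:4} to match up partial degrees as a consistency check.
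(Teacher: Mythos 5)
Your ``cleanest route'' coincides with the paper's actual proof: reduce to an irreducible $V$ with $\pi_*V\ne 0$, induct on $\dim V$ (base case $r=0$ via the Poisson formula of Corollary~\ref{cor:2} and the multiplicativity of $\prec$), and in the inductive step pass to the field $K(\bfU'_r)$ (Proposition~\ref{prop:6-1}), specialize $\bfU_r\mapsto F'_r$ --- that is, $\bfU''_r\mapsto 0$ --- via Proposition~\ref{especializacion}, use Lemma~\ref{lemm:5} to commute $\pi_*$ with $\cdot\,\div(F'_r)$, apply the inductive hypothesis to the $(r-1)$-dimensional cycle $V_{K(\bfU'_r)}\cdot\div_{\P^{\bfn}}(F'_r)$, and finally lift the divisibility from $K(\bfU'_r)[\bfU_0,\dots,\bfU_{r-1}]$ back to $K[\bfU]$ (and hence to $A[\bfU]$ by primitivity) using that $\Res_{\bfd}(\pi_*V)$ is a power of an irreducible polynomial of positive degree (Proposition~\ref{prop:1}). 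Two small corrections to your picture: (i) the specialization $\bfU''_r\mapsto 0$ does \emph{not} produce the resultant of an image of $V$ under a coordinate projection; it produces the resultant $\Res_{\bfd_0,\dots,\bfd_{r-1}}\big(V_{K(\bfU'_r)}\cdot\div_{\P^{\bfn}}(F'_r)\big)$ of an intersection cycle still living in $\P^{\bfn}$, with one fewer group of coefficient variables --- the projection $\pi$ enters only through Lemma~\ref{lemm:5}; (ii) the step $\init_\prec\big(\Res_{\bfd}(V)(\bfU_0,\dots,\bfU_{r-1},F'_r)\big)=\init_\prec(\Res_{\bfd}(V))$ is not automatic but holds because the specialization is nonzero, which in turn holds because $\div_{\P^{\bfn}}(F'_r)$ intersects $V_{K(\bfU'_r)}$ properly. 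The blow-up congruence, $\nabla_{\bfd}$, and Proposition~\ref{prop:4} you mention are not actually needed: Lemma~\ref{lemm:5} is invoked as a black box and no degree-matching consistency check is required.
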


\begin{proof}
It is enough to consider the case $\pi_*(X) \ne 0$, otherwise
$\Res_{\bfd}(\pi_* X ) =1$. In addition, we only need to  prove that
the division holds in $K[\bfU]$, since both polynomials belong to
$A[\bfU]$ and $\Res_{\bfd}(\pi_*  X)$ is primitive.

\medskip
We proceed by induction on the dimension $r$. For $r=0$, write
$X_{\ov K}=\sum_{\bfxi}m_{\bfxi}\bfxi$ with $\bfxi\in
\P^{\bfn}$ and $m_{\bfxi}\in \Z$. Hence, $$\pi_{*}X_{\ov K}= \sum_{\bfxi \notin L
}m_{\bfxi}\pi_{*}\bfxi$$ since $\pi_{*}\bfxi=0$ whenever $\bfxi\in
L$, with $L$ defined in  \eqref{eq:58}. Observe that
$F_{0}'(\pi(\bfxi))= \init_{\prec}(F_{0}(\bfxi))$ for each $\bfxi \notin L$. Using Corollary
\ref{cor:2} and the multiplicativity of the order~$\prec$, we deduce
that there exist $\lambda,\lambda'\in \ov K^\times$ such that
\begin{displaymath}
\Res_{\bfd_0}(\pi_* X )=
\lambda'\prod_{\bfxi\notin L}\init_{\prec}(F_{0}(\bfxi))^{m_{\bfxi}}
\, \bigg|\, \init_{\prec}\bigg(\prod_{\bfxi}F_{0}(\bfxi)^{m_{\bfxi}}\bigg)
= \lambda\,  \init_{\prec}(\Res_{\bfd_0}(\bfxi)).
\end{displaymath}
Now let $r\ge 1$ and suppose that we have proved the statement for all cycles of pure
dimension up to $r-1$ and any base field. Let $V$ be an irreducible
$K$-variety of dimension~$r$. We suppose that
$\Res_{\bfd}(\pi_{*}V)\ne1$ because otherwise the statement is
trivial. Thus, the degree of this resultant in some group of variables is
$\ge1$ and, up to a reordering, we can suppose that this holds for
the group $\bfU_{0}'$. Consider the scalar extension  $V_{{
K(\bfU_{r}')}}$ of the $K$-variety  $V$ by the field $K(\bfU'_r)$.
By Proposition~\ref{prop:6-1}, there exists
$\lambda_{1}\in  K(\bfU_{r}')^{\times}  $
such that
\begin{equation*}
  \label{eq:22}
\Res_{\bfd}(\pi_* V )=\lambda_{1}\,
\Res_{\bfd}(\pi_* V_{ K(\bfU_{r}') }).
\end{equation*}
In turn, Proposition~\ref{especializacion} implies that there exists $\lambda_{2}\in
K(\bfU_{r}')^{\times}$ such that
\begin{equation*}
  \label{eq:23}
\Res_{\bfd}(\pi_*(V_{ K(\bfU_{r}')}))= \lambda_{2}\,
\Res_{\bfd_0,\ldots,\bfd_{r-1}}\big(\pi_* (V_{ K(\bfU_{r}')})\cdot
{\div}_{\P^{\bfl}}(F'_r)\big)(\bfU_{0},\dots, \bfU_{r-1}).
\end{equation*}
We can verify that
the form $F_{r}'$ satisfies the hypothesis of Lemma~\ref{lemm:5}.
Hence, this lemma implies the  equality of cycles $ \pi_*\big(V_{
K(\bfU_{r}')}\cdot {\div}_{\P^{\bfn}}(F'_r)\big)= \pi_*(V_{
K(\bfU_{r}')})\cdot {\div}_{\P^{\bfl}}(F'_r) $. In particular,
\begin{equation*}\label{eq:24}
\Res_{\bfd_0,\ldots,\bfd_{r-1}}\big(\pi_* (V_{ K(\bfU_{r}')})\cdot
\div_{\P^{\bfl}}(F'_r)\big)
=
\Res_{\bfd_0,\ldots,\bfd_{r-1}}\big(\pi_*(V_{ K(\bfU_{r}')}\cdot
\div_{\P^{\bfn}}(F_{r}'))\big).
\end{equation*}
Applying the inductive hypothesis to the $(r-1)$-dimensional
cycle $V_{ K(\bfU_{r}')}\cdot
\div_{\P^{\bfn}}(F_{r}')$,
\begin{equation*}
  \label{eq:25}
  \Res_{\bfd_0,\ldots,\bfd_{r-1}}\big(\pi_*(V_{ K(\bfU_{r}')}\cdot
\div_{\P^{\bfn}}(F_{r}'))\big) \, \Big|\,
 \init_\prec\big(\Res_{\bfd_0,\ldots,\bfd_{r-1}}(V_{ K(\bfU_{r}')}\cdot
 \div_{\P^{\bfn}}(F'_r)\big)\big).
\end{equation*}
The divisor ${\div}_{\P^\bfn}(F'_r)$ intersects $V_{K(\bfU_{r}')}$
properly. Proposition~\ref{especializacion} then implies
\begin{equation*}\label{eq:26}
\Res_{\bfd_0,\dots,\bfd_{r-1}}(V_{K(\bfU_{r}')}\cdot
{\div}_{\P^\bfn}(F'_r))(\bfU_0,\dots,\bfU_{r-1}) =\lambda_{3} \,
\Res_{\bfd}(V)(\bfU_0,\dots,\bfU_{r-1},F'_r)
\end{equation*}
for some $\lambda_{3} \in K(\bfU'_r)^{\times}$.
This
last polynomial is not zero and it satisfies
$$
\init_\prec\big(\Res_{\bfd}(V)(\bfU_0,\dots,\bfU_{r-1},F'_r) \big)=
\init_\prec\big(\Res_{\bfd}(V)\big),
$$
due to the definition of $\prec$. We conclude
$$\Res_{\bfd}(\pi_* V )\, \Big| \,
\init_\prec\big(\Res_{\bfd}(V)\big)\quad \text{ in }
K(\bfU_{r}')[\bfU_{0},\dots, \bfU_{r-1}].$$
This readily implies that
$\Res_{\bfd}(\pi_* V )\, |\,
\init_\prec\big(\Res_{\bfd}(V)\big)$ in $K[\bfU]$,
because  $\Res_{\bfd}(\pi_* V )$ is a power of an irreducible
polynomial of positive degree in $\bfU_0$ and
$r>0$.

For a general $K$-cycle of pure dimension $r$, the statement follows
by applying this result to its irreducible components and using the
multiplicativity of the order $\prec$. This concludes the inductive
step.
\end{proof}

\begin{remark}
  \label{rem:9}
In the projective case ($m=1$),  this result can be alternatively
derived  from  \cite[Prop.~4.1]{PedStu93}, see also \cite[Lem.~2.6]{KrPaSo01}.
\end{remark}

Resultants corresponding to general linear forms play an important
role in the definition and study of mixed heights of cycles. We
introduce a convenient notation for handling this particular case.
Given $\bfc=(c_1,\dots,c_m)\in \N^{m}$ we set
\begin{equation}\label{d(a)}
\bfe({\bfc})=(\overbrace{\bfe_1,\dots,\bfe_1}^{c_1},\dots,\overbrace{\bfe_m,\dots,\bfe_m}^{c_m})\
\in  (\N^{m}\setminus \{0\})^{|\bfc|},
\end{equation}
where $\bfe_i$ denotes $i$-th vector of the standard basis of $\R^{m}$.
For $\bfc\in \N_{r+1}^m$, both $\Elim_{\bfe({\bfc})}(V)$ and
$\Res_{\bfe({\bfc})}(V)$ are polynomials in the coefficients of the
$r+1$ general linear forms $L_0,\dots,L_r$ corresponding to the
index $\bfe(\bfc)$.
In this case, Proposition~\ref{prop:4}
implies that, for $0\leq i\leq r$,
\begin{equation}\label{gradosparciales2}\deg_{\bfU_i}(\Res_{\bfe(\bfc)}(X))=\deg_{\bfc-\bfe_{j(i)}}(X),
\end{equation}
where $j(i)$ is the index $j$ such that
$c_1+\ldots+c_{j-1}<i+1\leq c_1+\ldots+c_{j}$.

\begin{proposition}\label{resultantesproductosciclos}
Let  $m_i\in \Z_{>0} $,  $\bfn_{i}\in \N^{m_{i}}$ and
$X_{i}\in Z_{r_{i}}(\P^{\bfn_i}_K) $  for $i=1,2$. Let
 $\bfc_{i}\in \N^{m_{i}}$ such that $|\bfc_{1}|+|\bfc_{2}|=
{r_{1}+r_{2}+1}$. Then {there exists $\lambda\in A^{\times}$
such that}
\begin{displaymath}
 \Res_{\bfe({\bfc_{1},\bfc_{2}})}(X_{1}\times X_{2})=
\left\{
\begin{array}{cl}
\lambda\, \Res_{\bfe({\bfc_1})}(X_{1})^{\deg_{\bfc_2}(X_{2})} & \mbox{ if }
    |\bfc_{1}|=r_{1}+1, |\bfc_{2}|=r_{2}, \\[2mm]
\lambda\, \Res_{\bfe({\bfc_2})}(X_{2})^{\deg_{\bfc_1}(X_{1})} & \mbox{ if } |\bfc_{1}|=r_{1}, |\bfc_{2}|=r_{2}+1, \\[2mm]
 1 & \mbox{ otherwise}.
  \end{array}
\right.
\end{displaymath}
\end{proposition}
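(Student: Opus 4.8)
The plan is to reduce to irreducible varieties, split into the three cases of the statement (the two non‑trivial ones being symmetric), and in the main case exploit the fact that, when $|\bfc_{1}|=r_{1}+1$, the resultant on the product does not actually involve the variables attached to the $\bfc_{2}$‑forms.

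First I would carry out the reductions. By Definition \ref{def:2} the resultant of a cycle factors over its components, and $X_{1}\times X_{2}=\sum_{V,W}m_{V}n_{W}\,(V\times W)$ over the irreducible components $V$ of $X_{1}$ and $W$ of $X_{2}$; since $\deg_{\bfc_{2}}$ is also additive, a short computation shows it suffices to prove the statement for irreducible $X_{i}=V_{i}$. Using Proposition~\ref{prop:6-1} I may further assume $K$ algebraically closed, so that $V_{1}\times V_{2}$ is irreducible. Finally, the factor‑swap isomorphism $\P^{\bfn_{1}}\times\P^{\bfn_{2}}\cong\P^{\bfn_{2}}\times\P^{\bfn_{1}}$ carries $V_{1}\times V_{2}$ to $V_{2}\times V_{1}$ and the index $\bfe(\bfc_{1},\bfc_{2})$ to $\bfe(\bfc_{2},\bfc_{1})$ up to a permutation of the groups of variables, so by the permutation invariance of resultants (Proposition~\ref{prop:6-item:27}) the case $|\bfc_{1}|=r_{1}$, $|\bfc_{2}|=r_{2}+1$ follows from the case $|\bfc_{1}|=r_{1}+1$, $|\bfc_{2}|=r_{2}$. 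Thus it remains to treat that case and the ``otherwise'' case.

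For the ``otherwise'' case, since $|\bfc_{1}|+|\bfc_{2}|=r_{1}+r_{2}+1$ we have either $|\bfc_{1}|\ge r_{1}+2$ or $|\bfc_{2}|\ge r_{2}+2$. Suppose the former, and apply Lemma~\ref{prop:8} to $V_{1}\times V_{2}$ with $J=\{1,\dots,m_{1}\}$: then $\pi_{J}(V_{1}\times V_{2})=V_{1}$ has dimension $r_{1}$, while exactly $|\bfc_{1}|$ of the general forms of the index $\bfe(\bfc_{1},\bfc_{2})$ lie in $K[\bfx_{J}]$, so condition \eqref{eq:17} fails. Hence $\mathfrak{E}_{\bfe(\bfc_{1},\bfc_{2})}(V_{1}\times V_{2})$ is not principal, $\Elim_{\bfe(\bfc_{1},\bfc_{2})}(V_{1}\times V_{2})=1$, and therefore $\Res_{\bfe(\bfc_{1},\bfc_{2})}(V_{1}\times V_{2})=1$ by Proposition~\ref{prop:1} (its only possible irreducible factor is the eliminant, which is a unit). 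The sub‑case $|\bfc_{2}|\ge r_{2}+2$ is identical with $J=\{m_{1}+1,\dots,m_{1}+m_{2}\}$.

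For the main case $|\bfc_{1}|=r_{1}+1$, $|\bfc_{2}|=r_{2}$, the key observation is that $\Res_{\bfe(\bfc_{1},\bfc_{2})}(V_{1}\times V_{2})$ does not involve any of the variable groups $\bfU_{i}$ attached to the $\bfc_{2}$‑forms: by \eqref{gradosparciales2} its degree in such a $\bfU_{i}$ equals $\deg_{(\bfc_{1},\bfc_{2})-\bfe_{j(i)}}(V_{1}\times V_{2})$ with $j(i)>m_{1}$, and this vanishes by Proposition~\ref{prop:3}\eqref{item:14} because the $\bfc_{1}$‑part of that index still has length $r_{1}+1\ne r_{1}$. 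Now I would choose generic linear forms $\ell_{1},\dots,\ell_{r_{2}}$ in the appropriate groups $\bfx_{2,k}$ and peel them off one at a time from the last slots using Proposition~\ref{especializacion}, together with the identity $(V_{1}\times Z)\cdot\div(\ell)=V_{1}\times(Z\cdot\div_{\P^{\bfn_{2}}}(\ell))$ for $\ell$ involving only $\bfx_{2}$‑variables (where $\div(\ell)$ denotes the vertical divisor $\P^{\bfn_{1}}\times\div_{\P^{\bfn_{2}}}(\ell)$); since by the previous observation the resultant does not depend on those slots, these specializations leave it unchanged, giving
\[
\Res_{\bfe(\bfc_{1},\bfc_{2})}(V_{1}\times V_{2})=\lambda\;\Res_{\bfe(\bfc_{1},\bfzero)}\Big(V_{1}\times\big(V_{2}\cdot{\textstyle\prod_{j}}\div_{\P^{\bfn_{2}}}(\ell_{j})\big)\Big).
\]
The zero‑dimensional cycle $Z_{0}=V_{2}\cdot\prod_{j}\div_{\P^{\bfn_{2}}}(\ell_{j})$ has degree $\deg_{\bfc_{2}}(V_{2})$ by Corollary~\ref{cor:5}; writing $Z_{0}=\sum_{\bfxi}m_{\bfxi}\bfxi$ and using multiplicativity of the resultant together with the $K[\bfU]$‑module isomorphism $\cM_{\bfe(\bfc_{1},\bfzero)}(V_{1}\times\{\bfxi\})_{\bfdelta}\cong\cM_{\bfe(\bfc_{1})}(V_{1})_{\bfdelta'}$ — valid because the coordinate ring of $\{\bfxi\}$ has one‑dimensional graded pieces and the general forms of $\bfe(\bfc_{1},\bfzero)$ involve only the variables $\bfx_{1}$ — one gets $\Res_{\bfe(\bfc_{1},\bfzero)}(V_{1}\times\{\bfxi\})=\Res_{\bfe(\bfc_{1})}(V_{1})$ and hence $\Res_{\bfe(\bfc_{1},\bfzero)}(V_{1}\times Z_{0})=\Res_{\bfe(\bfc_{1})}(V_{1})^{\sum_{\bfxi}m_{\bfxi}}=\Res_{\bfe(\bfc_{1})}(V_{1})^{\deg_{\bfc_{2}}(V_{2})}$. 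Comparing primitive parts in $A[\bfU]$ forces $\lambda\in A^{\times}$, and unwinding the reductions yields the statement for general cycles. I expect the main obstacle to be, on the conceptual side, establishing the vanishing of these ``cross'' partial degrees (which is exactly what makes the peeling lossless) and, on the bookkeeping side, the degenerate ``otherwise'' case together with the field‑of‑definition reductions; in contrast, the central computation via Proposition~\ref{especializacion}, Corollary~\ref{cor:5} and the module isomorphism is essentially mechanical.
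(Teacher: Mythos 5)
Your proof is correct and takes essentially the same route as the paper: reduce to irreducible varieties over an algebraically closed field, dispatch the ``otherwise'' case by showing the resultant is constant, observe that in the main case the resultant is independent of the $\bfU$-groups attached to $\bfc_2$ because the cross mixed degrees of $V_1\times V_2$ vanish, peel off those slots with generic linear forms via Proposition~\ref{especializacion}, and finish with the tensor-product description of the graded module when the second factor is a point. The one genuine, if minor, departure is in the ``otherwise'' case: you invoke Lemma~\ref{prop:8} and Proposition~\ref{prop:1} (condition~\eqref{eq:17} fails, so $\Elim=1$ and hence $\Res=1$), whereas the paper simply reads off from~\eqref{gradosparciales2} and Proposition~\ref{prop:3}\eqref{item:14} that all partial degrees of the resultant vanish, so it must be $1$; both work, and the paper's version is marginally more economical since it reuses the degree computation already in play. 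You also absorb the sub-case $\deg_{\bfc_2}(V_2)=0$ implicitly by allowing the zero cycle into the peeling and reading the empty product as $1$; the paper isolates this sub-case up front with the same partial-degree argument, which is a bit cleaner since it avoids having to check that Proposition~\ref{especializacion} behaves sensibly when an intermediate intersection becomes empty.
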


\begin{proof}
We first prove the statement for the case when $K$ is algebraically
closed and $A=K$. Let $\bfU=\{\bfU_{0},\dots, \bfU_{r_{1}+r_{2}}\}$ be the
group of variables associated to the index $\bfe(\bfc_1,\bfc_2)$.
By~\eqref{gradosparciales2}, for each  $0\le i\le r_1+r_2$ there
is $j$, $1\le j\le m_1+m_2$, such that
\begin{displaymath}
  \deg_{\bfU_{i}}(\Res_{\bfe(\bfc_1,\bfc_2)}(X_{1}\times X_{2}))  =
  \deg_{(\bfc_1,\bfc_2)-\bfe_{j}}(X_{1}\times X_{2}).
\end{displaymath}
If either $|\bfc_{1}|\ge r_{1}+2$ or $|\bfc_{2}|\ge r_{2}+2$, then $
\deg_{(\bfc_1,\bfc_2)-\bfe_{j}}(X_{1}\times X_{2})=0$ for all $j$,
thanks to Proposition~\ref{prop:3}. Hence all  partial degrees are 0
and  $\Res_{\bfe(\bfc_1,\bfc_2)}(X_{1}\times X_{2})=1$.

\medskip
Consider then the case when
$|\bfc_1|=r_{1}+1$, $|\bfc_2|=r_{2}$.
Again by~\eqref{gradosparciales2}, for $1\le i\le r_2$,
there exists    $j> m_1$ such that
\begin{displaymath}
  \deg_{\bfU_{r_1+i}}(\Res_{\bfe(\bfc_1,\bfc_2)}(X_{1}\times X_{2}))  =
  \deg_{(\bfc_1,\bfc_2)-\bfe_{j}}(X_{1}\times X_{2}).
\end{displaymath}
By Proposition~\ref{prop:3}, this mixed degree
also vanishes, because $|\bfc_1|>r_1$. Therefore
\begin{equation}\label{dependenciavariables}
\Res_{\bfe(\bfc_1,\bfc_2)}(X_{1}\times X_{2})\in
K[\bfU_0,\dots,\bfU_{r_1}].\end{equation} Furthermore,  suppose that
$\deg_{\bfc_2}(X_2)=0$. In this case, for each $0\le i\le r_1$ there
exists  $j$, $1\le j\le m_1$, such that
$$\deg_{\bfU_i}(\Res_{\bfe(\bfc_1,\bfc_2)}(X_{1}\times X_{2}))=
 \deg_{(\bfc_1,\bfc_2)-\bfe_j}(X_1\times X_2)= \deg_{\bfc_1-\bfe_j}(X_1)\deg_{\bfc_2}( X_2)=0.$$
Hence,
$\Res_{\bfe(\bfc_1,\bfc_2)}(X_{1}\times X_{2})=1= \Res_{\bfe(\bfc_1)}(X_1)^{\deg_{\bfc_2}(X_2)}$
and the statement holds in this case.
Therefore, we assume that $\deg_{\bfc_2}(X_2)\ne 0$. By linearity, it
suffices to prove the statement for two irreducible varieties
$V_i\subset \P^{\bfn_i}$ of dimension $r_i$, $i=1,2$. We consider
first the case when $r_{2}=0$, that is, when  $V_2= \{\bfxi\}$ is a
point. Then
\begin{displaymath}
  \cM_{\bfe(\bfc_{1})}(V_1\times V_2)
= \cM_{\bfe(\bfc_{1})}(V_1)\otimes_{K}  K[\bfx_{2}]/I(\bfxi).
\end{displaymath}
Hence, for $\bfdelta_{i}\in \N^{m_{i}}$,
\begin{displaymath}
  \cM_{\bfe(\bfc_{1})}(V_1\times V_2)_{\bfdelta_{1},\bfdelta_{2}}
= \cM_{\bfe(\bfc_{1})}(V_1)_{\bfdelta_{1}}\otimes_{ K} (
K[\bfx_{2}]/I(\bfxi))_{ \bfdelta_{2}} \simeq
\cM_{\bfe(\bfc_{1})}(V_1)_{\bfdelta_{1}}.
\end{displaymath}
By the  definition of the resultant, there exists $\lambda \in
K^\times$ such that
\begin{displaymath}
  \Res_{\bfe(\bfc_1)}(V_1\times V_2)=
  \lambda\,\Res_{\bfe(\bfc_1)}(V_1)=\lambda\,
  \Res_{\bfe(\bfc_1)}(V_1)^{\deg_{{\bf0}}(V_2)},
\end{displaymath}
which proves the statement in this case.

Now let $V_2\subset \P^{\bfn_2}$ be an irreducible variety of
dimension $r_{2}\ge 1$.
Write $\bfU'=(\bfU_0,\dots,\bfU_{r_1})$ and
let $\ell_{i}\in K[\bfx_{2}]$, $1\le
i \le r_{2}$,  be generic linear forms
associated to~$\bfe(\bfc_{2})$ and $\pi_2$ the projection
$\P^{\bfn_1}\times \P^{\bfn_2}\to \P^{\bfn_2}$.
By \eqref{dependenciavariables}, $\Res_{\bfe(\bfc_1,\bfc_2)}(V_1\times
V_2)$ does not depend on the groups of variables $\bfU_{r_{1}+i}$ for
$1\le i\le r_{2}$.
Hence,
 \begin{align*}
\Res_{\bfe(\bfc_1,\bfc_2)}(V_1\times V_2)(\bfu) &=
  \Res_{\bfe(\bfc_1,\bfc_2)}(V_1\times
  V_2)(\bfU',\ell_{1},\dots,\ell_{r_2})\\
&= \lambda\, \Res_{\bfe(\bfc_1)}\bigg((V_1\times V_2)\cdot
\prod_{i=1}^{r_2}\pi_{2}^{*}\,\div(\ell_{i})  \bigg)(\bfU')\\
& =\lambda\, \Res_{\bfe(\bfc_1)}\bigg(V_1\times(V_2\cdot
\prod_{i=1}^{r_2}\div(\ell_{i})) \bigg)(\bfU')
\end{align*}
with $\lambda\in K^{\times}$, thanks to Proposition~\ref{especializacion}.
The cycle   $V_2\cdot \prod_{i=1}^{r_2}\div(\ell_{i})$ is of
dimension 0 and so we are in the hypothesis of the previous case. By
Corollary~\ref{cor:5}, it is a cycle of degree
$\deg_{\bfc_2}(V_2)$. Therefore,
$\Res_{\bfe(\bfc_1)}\Big(V_1\times(V_2\cdot
\prod_{i=1}^{r_2}\div(\ell_{i})) \Big)=\lambda'\,
\Res_{\bfe(\bfc_1)}(V_1)^{\deg_{\bfc_2}(V_2)}$ with $\lambda'\in
K^\times$, which completes the proof for the case when $K$ is
algebraically closed and $A=K$.

The case of an arbitrary field $K$ which is the field of
fractions of a factorial ring $A$ follows from  Proposition
\ref{prop:6-1} and the fact that the resultants of $V_1$, $V_2$ and
$V_{1}\times V_{2}$ are primitive polynomials in $A[\bfu]$.
\end{proof}

\section{Heights of cycles of multiprojective spaces}
\label{Mixed degrees and heights in multiprojective geometry}

\subsection{Mixed heights of cycles over function fields}\label{The height in the function field case}
Throughout this section, we denote by $k$ a field and
$\bft=\{t_{1},\dots,t_{p}\}$ a group of variables.  The {\em height}
of a polynomial {$f= \sum_{\bfa} \alpha_\bfa \, x_{1}^{a_{1}}\cdots x_{n}^{a_{n}}\in
k[\bft][x_{1},\dots, x_{n}]\setminus \{0\}$} is defined as
\begin{displaymath}
\h(f) =\deg_\bft(f)=\max_\bfa \deg(\alpha_\bfa).
\end{displaymath}
For $f=0$, we set $ \h(f)=0$. The following lemma estimates the
behavior of the height of polynomials with respect to addition,
multiplication and composition. Its proof follows directly from the
definitions.

\begin{lemma}\label{lemma:2-1} Let $f_1,\dots,f_s\in k[\bft][x_{1},\dots, x_{n}]$
and $g\in k[\bft][y_1,\dots,y_s]$. Then
\begin{enumerate}
\item $\h(\sum_i f_i)\le \max_i\,\h(f_i)$;\smallskip
\item $\h(\prod_i f_i)=  \sum_i \h(f_i)$;\smallskip
\item $\h(g(f_1,\dots,f_s))\le
\h(g)+\deg_\bfy(g)\, \max_i\,\h(f_i)$.
\end{enumerate}
\end{lemma}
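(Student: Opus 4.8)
The plan is to work throughout with an element of $k[\bft][x_1,\dots,x_n]$ regarded as a polynomial in $\bfx=\{x_1,\dots,x_n\}$ with coefficients in $k[\bft]$, so that $\h(f)$ is the maximum, over the monomials $\bfx^\bfa$ appearing in $f$, of $\deg_\bft$ of the coefficient of $\bfx^\bfa$. Equivalently --- and this is the viewpoint I would use for part (2) --- grade the ring $k[\bft][\bfx]=k[\bfx][\bft]$ by total degree in the variables $\bft$ alone; then $\h(f)$ is exactly the top degree of $f$ with respect to this grading. Both descriptions are immediate from the definition. For part (1), write $f_i=\sum_\bfa\alpha_{i,\bfa}\bfx^\bfa$ with $\alpha_{i,\bfa}\in k[\bft]$; the coefficient of $\bfx^\bfa$ in $\sum_if_i$ is $\sum_i\alpha_{i,\bfa}$, and $\deg_\bft\big(\sum_i\alpha_{i,\bfa}\big)\le\max_i\deg_\bft(\alpha_{i,\bfa})\le\max_i\h(f_i)$; taking the maximum over $\bfa$ gives the inequality.

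For part (2), I would assume the $f_i$ are nonzero (with the convention $\h(0)=0$ the displayed equality genuinely requires this) and set $h_i=\h(f_i)$. Let $f_i'$ be the component of $f_i$ that is homogeneous of degree $h_i$ for the $\bft$-grading above, so that $f_i'\neq 0$. Since this grading is multiplicative, the degree-$\big(\sum_ih_i\big)$ component of $\prod_if_i$ is exactly $\prod_if_i'$, and this product is nonzero because $k[\bfx][\bft]$ is an integral domain. Hence the top $\bft$-degree of $\prod_if_i$ equals $\sum_ih_i$, which is the claim (the inequality $\h(\prod_if_i)\le\sum_i\h(f_i)$ being clear term by term in any case). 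I expect this to be the only step with any content at all: it is the single place where a ring-theoretic fact is used, namely that $k[\bft]$ has no zero divisors, and even that is a mild point.

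For part (3), I would expand $g=\sum_\bfc\gamma_\bfc\bfy^\bfc$ with $\gamma_\bfc\in k[\bft]$, noting $|\bfc|\le\deg_\bfy(g)$ whenever $\gamma_\bfc\neq 0$, so that $g(f_1,\dots,f_s)=\sum_\bfc\gamma_\bfc\prod_if_i^{c_i}$. Applying part (1) and then part (2) gives
\[
\h\big(g(f_1,\dots,f_s)\big)\le\max_{\bfc:\,\gamma_\bfc\neq 0}\Big(\deg_\bft(\gamma_\bfc)+\sum_ic_i\,\h(f_i)\Big)\le\h(g)+\deg_\bfy(g)\,\max_i\h(f_i),
\]
where the last step uses $\deg_\bft(\gamma_\bfc)\le\h(g)$ together with $\sum_ic_i\,\h(f_i)\le|\bfc|\,\max_i\h(f_i)\le\deg_\bfy(g)\,\max_i\h(f_i)$. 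This completes the plan; no step presents a real obstacle.
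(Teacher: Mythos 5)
Your proof is correct, and since the paper dismisses this lemma with ``Its proof follows directly from the definitions,'' the direct verification you give is exactly the intended argument. Your observation for part (2) that $k[\bfx][\bft]$ being a domain forces the top $\bft$-degree parts to multiply to something nonzero is the one real ingredient, and your caveat that the stated equality in (2) fails under the paper's convention $\h(0)=0$ when some $f_i$ vanishes is a genuine (if minor) imprecision in the lemma as written that the paper glosses over.
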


In the sequel,
we extend this notion to
cycles of $\P^{\bfn}_{_{k(\bft)}}$ and study its basic properties.
To this end, we  specialize the theory  in
\S\ref{Some resultant  theory} to the case when the factorial
ring $A$ is the polynomial ring $k[\bft]$ with field of fractions
$K=k(\bft)$. In particular, the resultant of an effective
equidimensional $k(\bft)$-cycle  is a primitive polynomial in
$k[\bft][\bfU]$.

\begin{definition}
  \label{def:15}
Let  $V\subset \P^\bfn_{_{k(\bft)}}$ be an irreducible
$k(\bft)$-variety of dimension $r$, $\bfc\in \N^m_{r+1}$ and
$\bfe(\bfc)$ as in~\eqref{d(a)}. The {\it (mixed) height of
$V$ of index $\bfc$} is defined as
$$
\h_\bfc(V)=\h(\Res_{\bfe(\bfc)}(V))
=\deg_\bft(\Res_{\bfe(\bfc)}(V)).
$$
This definition extends by linearity to cycles in $
Z_r(\P^\bfn_{_{k(\bft)}})$.

For $n\in \N$ and $X\in Z_{r}(\P^n_{_{k(\bft)}})$, the
\emph{height of $X$} is defined as  $\h(X)=\h_{r+1}(X)$.
\end{definition}

\begin{definition}\label{arith chow pol}
Let $\eta$ be an indeterminate.
The \emph{extended Chow ring of $\P^{\bfn}_{_{k(\bft)}}$}
is the graded ring
\begin{displaymath}
A^{*}\Big(\P^{\bfn}_{_{k(\bft)}};{{k[\bft]}}\Big)
= A^{*}\Big(\P^{\bfn}_{_{k(\bft)}}\Big)\otimes_\Z\Z[\eta]/(\eta^{2})\simeq
\Z[\eta,\theta_1,\dots,\theta_m]/(\eta^2,\theta_1^{n_1+1},\dots,\theta_m^{n_m+1}),
\end{displaymath}
where $\theta_{i}$ denotes the class in
$A^{*}\Big(\P^{\bfn}_{_{k(\bft)}}\Big)$ of the inverse image of a
hyperplane of $\P^{n_i}_{_{k(\bft)}}$ under the projection $\P^{\bfn}_{_{k(\bft)}}\to
\P^{n_i}_{_{k(\bft)}}$.
For short, we alternatively denote this ring as
$A^{*}(\P^{\bfn};{{k[\bft]}})$.
To a cycle  $X\in Z_r\Big(\P^\bfn_{_{k(\bft)}}\Big)$ we associate an
element of this ring, namely
\begin{displaymath}
[X]_{_{k[\bft]}}=\sum_{\bfc\in \N^{m}_{r+1}, \ \bfc\le \bfn}\h_{\bfc}(X)\, \eta\,
\theta_{1}^{n_{1}-c_{1}} \cdots \theta_{m}^{n_{m}-c_{m}} +
\sum_{\bfb\in \N^{m}_{r}, \ \bfb\le \bfn}\deg_{\bfb}(X) \, \theta_{1}^{n_{1}-b_{1}}
\cdots \theta_{m}^{n_{m}-b_{m}}.
\end{displaymath}
This is a homogeneous
element of degree $|\bfn|-r$.
\end{definition}
There is an inclusion of the
Chow ring into the extended Chow ring
$$\imath: A^{*}\Big(\P^{\bfn}_{_{k(\bft)}}\Big)\hookrightarrow
A^{*}\Big(\P^{\bfn}_{_{k(\bft)}};k[\bft]\Big)$$ satisfying
\begin{math}
[X]_{_{k[\bft]}} \equiv \imath([X]) \pmod{\eta}.
\end{math}
In particular,
the class of a cycle in the Chow ring
is determined by its class in the extended Chow ring.

For a cycle $X$ of pure dimension $r$,
{we will see in Theorem \ref{bezaritff}\eqref{item:25}} that
$\h_{\bfc}(X)=0$ for every $\bfc$ such that
$c_{i}>n_{i}$ for some $i$.
Hence $[X\ck$ contains the information about all mixed degrees and heights, since
$\{\bftheta^\bfa, \eta\, \bftheta^\bfa\}_{\bfa\le \bfn}$ is a $\Z$-basis of $A^{*}(\P^{\bfn};{{k[\bft]}})$.

\medskip
The mixed heights of a $k(\bft)$-cycle $X$ can be interpreted
as some mixed degrees of a model of $X$ over $\P^{p}_{k}$. For
simplicity, we will only consider the case of projective
$k(t)$-cycles, where $t$ is a single variable.

\begin{definition}\label{def:12}
  Let $n\in \N$ and $V\subset \P^{n}_{_{k(t)}}$ be an irreducible
  $k(t)$-variety.  Let $\bfs=\{s_{0},s_{1}\}$ and $\bfx=\{x_{0},\dots,
  x_{n}\}$ be groups of variables and $\cI\subset k[\bfs, \bfx]$ the
  bihomogeneous ideal generated by all the polynomials of the form
  $s_{0}^{\deg_{t}(f)} f({s_{1}}/{s_{0}}, \bfx)$ for $f\in I(V)\cap
  k[t,\bfx]$.  The \emph{standard model of $V$ over $\P^1_{k}$} is
  defined as the $k$-variety $V(\cI) \subset \P^1_{k}\times
  \P^n_{k}$. This notion extends by linearity to cycles of
  $\P^{n}_{_{k(t)}}$: the \emph{standard model} of a cycle $ X=\sum_{V}m_V V$
  is defined as $\cX=\sum_{V}m_V \cV$, where $\cV$ denotes the
  standard model of the irreducible $k(t)$-variety $V$.
\end{definition}

\begin{remark}\label{rem:1}
Set $\cV_{0}$ and $\cV_{\infty}$ for the restriction of $\cV$ to the open
subsets $(\P^1_{k}\setminus \{(0:1)\})\times \P^n_{k}$  and
$(\P^1_{k}\setminus \{(1:0)\})\times \P^n_{k}$, respectively. These
are irreducible $k$-varieties which correspond to the prime ideals $\cI_{0}=I(V)\cap
k[t][\bfx]$ and $\cI_{\infty}=I(V)\cap k[t^{-1}][\bfx]$,
respectively, and form a covering of $\cV$. In particular, $\cV$ is an irreducible $k$-variety.
\end{remark}

A variety $\cW\subset \P^{1}_k\times \P^{n}_{k}$ is \emph{vertical} if
its projection to $\P^{1}_{k}$ consists in a single point.
The following lemma shows that the standard model of a cycle of
$\P^{n}_{_{k(t)}}$ of pure dimension $r\ge 0$ is a cycle of
$\P^{1}_{k}\times \P^{n}_{k}$ without vertical components. Moreover, there is a bijection between
$Z_{r}(\P^{n}_{_{k(t)}})$ and the set of cycles in
$Z_{r+1}(\P^{1}_{k}\times \P^{n}_{k})$ without vertical components.

 \begin{lemma} \label{lemm:2} Let $n\in \N$ and $r\ge 0$.
   \begin{enumerate}
   \item \label{item:16} Let $X\in Z_{r}(\P^{n}_{_{k(t)}})$. Then its
     standard model $\cX$ is a cycle of $\P^{1}_k\times \P^{n}_{k}$ of pure
     dimension $r+1$, without vertical fibers, and the generic fiber
     of $\cX\to \P^{1}_{k}$ coincides with $X$ under the natural identification of the generic
     fiber of $\P^1_{k}\times \P^n_{k}\to \P^{1}_{k}$ with $
     \P^{n}_{_{k(t)}}$.
\smallskip
\item \label{item:17} Let $\cY \in Z_{r+1}(\P^1_{k}\times \P^n_{k})$
  be a cycle without vertical components and $Y$ the  generic
  fiber of $\cY\to \P^{1}_{k}$. Then~$Y \in Z_{r}(\P^{n}_{_{k(t)}})$ and $\cY$ is its
  standard model.
   \end{enumerate}
\end{lemma}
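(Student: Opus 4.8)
The plan is to prove both statements by reducing to the case of irreducible varieties and then combining the explicit ideal-theoretic description of the standard model (Definition~\ref{def:12} and Remark~\ref{rem:1}) with the generic-fiber / scheme-theoretic correspondence between cycles on $\P^1_k\times\P^n_k$ and cycles on $\P^n_{_{k(t)}}$. For \eqref{item:16}, by linearity it suffices to treat an irreducible $k(t)$-variety $V$ of dimension $r$; then $\cV$ is irreducible by Remark~\ref{rem:1}. The key point is the dimension count: the two open pieces $\cV_0$ and $\cV_\infty$ correspond to the prime ideals $\cI_0=I(V)\cap k[t][\bfx]$ and $\cI_\infty=I(V)\cap k[t^{-1}][\bfx]$, and $k[t][\bfx]/\cI_0$ is a finitely generated $k[t]$-algebra whose generic fiber (localizing at $k(t)$) is $k(t)[\bfx]/I(V)$, of Krull dimension $r+m=r+1$. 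Since $\cI_0$ is prime and $t$ is not a zero-divisor modulo $\cI_0$ (as $\cI_0\cap k[t]=0$, because $V\neq\emptyset$), the fibre-dimension theorem gives $\dim(\cV_0)=r+1$ and that $\cV_0$ (hence $\cV$) has no vertical components. That $\cV_0\cup\cV_\infty=\cV$ is exactly Remark~\ref{rem:1}, so $\cV$ has pure dimension $r+1$ with no vertical fibre. Finally, the generic fibre of $\cV\to\P^1_k$ is $\Spec$ of $(k[t][\bfx]/\cI_0)\otimes_{k[t]}k(t)=k(t)[\bfx]/I(V)$, which is $V$ under the stated identification; extending by linearity recovers $X$ from $\cX$.

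For \eqref{item:17}, again reduce by linearity to an irreducible $\cW\subset\P^1_k\times\P^n_k$ of dimension $r+1$ which is not vertical. Its image in $\P^1_k$ is then dense, so $\cW$ dominates $\P^1_k$; let $W$ be its generic fibre, i.e.\ $W=V(I(\cW)\otimes_{k[t]}k(t))\subset\P^n_{_{k(t)}}$, where I regard $\cW$ on the chart $(\P^1_k\setminus\{(0:1)\})\times\P^n_k$ with coordinate ring $k[t][\bfx]/\cI_0$, $\cI_0=I(\cW)\cap k[t][\bfx]$. Since $\cW$ is irreducible and dominates $\P^1_k$, the ideal $\cI_0$ is prime with $\cI_0\cap k[t]=0$, so localizing at $k(t)$ keeps it prime and strictly proper; hence $W$ is an irreducible $k(t)$-variety, and its dimension is $\dim(\cW)-1=r$ by the same fibre-dimension argument used in \eqref{item:16} (the generic fibre has dimension $\dim(\cW)-\dim(\P^1_k)=r$ as $m=1$). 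It then remains to identify the standard model of $W$ with $\cW$: by construction $I(W)=\cI_0\otimes_{k[t]}k(t)$, and one checks that the bihomogeneous ideal generated by the homogenizations $s_0^{\deg_t f}f(s_1/s_0,\bfx)$ for $f\in I(W)\cap k[t,\bfx]=\cI_0$ cuts out exactly $\cW$ on the standard chart, and symmetrically on the other chart using $\cI_\infty$; since $\cW$ is reduced and irreducible this determines it. Thus $\cW$ is the standard model of $W$, and linearity finishes the general case.

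The main obstacle I anticipate is bookkeeping around the two affine charts of $\P^1_k$ and checking that the (bi)homogenization procedure of Definition~\ref{def:12} is genuinely inverse to "take generic fibre'' — in particular that no spurious vertical component is introduced when passing from $\cI_0$ (or $\cI_\infty$) to the bihomogeneous ideal $\cI$, and that the two charts glue to give a single irreducible $k$-variety rather than something with an embedded component over $t=0$ or $t=\infty$. The cleanest way to handle this is to work throughout with the flat $k[t]$-algebra $k[t][\bfx]/\cI_0$ (flatness over the PID $k[t]$ being equivalent to $t$-torsion-freeness, which follows from primeness and $\cI_0\cap k[t]=0$), so that flatness guarantees $\cV_0\to\A^1_k$ is dominant with all fibres of the expected dimension and the closure in $\P^1_k\times\P^n_k$ is exactly what Remark~\ref{rem:1} describes; this bypasses any delicate primary-decomposition argument.
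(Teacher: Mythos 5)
Your proof follows essentially the same scheme-theoretic route as the paper: reduce to irreducible varieties, identify the coordinate ring of the open chart $\cV_0$ as $k[t][\bfx]/\cI_0$ with $\cI_0 = I(V)\cap k[t][\bfx]$, observe that the generic fiber of $\Spec(k[t][\bfx]/\cI_0)\to\A^1_k$ is $\Spec(k(t)[\bfx]/I(V))$, and invoke the fiber-dimension theorem to get $\dim\cV=r+1$; for the converse, the key point is $\cJ_0=J\cap k[t][\bfx]$, which you note (implicitly via saturation) and the paper states explicitly.

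The only genuine difference is cosmetic: the paper establishes "no vertical components" by a direct contradiction ($\pi(\cV_0)$ a point would force $k[t]\cap\cI_0\ne\{0\}$, hence $V=\emptyset$), whereas you package the same fact as flatness/torsion-freeness of $k[t][\bfx]/\cI_0$ over $k[t]$. Both reduce to $\cI_0\cap k[t]=\{0\}$ plus irreducibility of $\cV$. Be a little careful with your final claim that flatness "guarantees all fibres of the expected dimension"; what you actually need (and have) is dominance plus irreducibility of $\cV_0$, which already gives the desired dimension count by the fiber-dimension theorem without invoking flatness for equidimensionality. Your closing discussion of part~\eqref{item:17} — that the bihomogenization of $\cI_0$ "cuts out $\cW$ on both charts" — is slightly muddled: the bihomogenization uses only $\cI_0$; what you should say is that the bihomogenized ideal defines the closure of $V(\cI_0)$ in $\P^1_k\times\P^n_k$, and this closure is $\cW$ because $\cW$ is irreducible and $V(\cI_0)=\cW\cap\bigl((\P^1_k\setminus\{(0:1)\})\times\P^n_k\bigr)$ is a non-empty open. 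This is what the paper records by stating $\cJ_0=J\cap k[t][\bfx]$ and concluding directly.
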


 \begin{proof}\

   \eqref{item:16} It suffices to prove the statement for an
   irreducible variety $V\subset \P^{n}_{k(t)}$.  We keep the notation
   in Definition \ref{def:12} and Remark \ref{rem:1} and we denote by
   $\pi$ the projection $\cV\to \P^{1}_{k}$. Suppose that $\cV$ is
   vertical. This would imply that $\pi(\cV_{0})$ is a point. Hence,
   there exists $f\in k[t]\setminus \{0\}$ such that
   $\pi(\cV_{0})\subset V(f)$ or, equivalently, such that $f\in
   \cI_{0}$. But this would imply that $1\in I(V)$ and {\it a fortiori},
   $V=\emptyset$. This is contrary to our assumptions and so we deduce
   that $\cV$ is not vertical.

   Choose $0\le i\le n$ such that $V$ is not contained in $V(x_{i})$.
   For simplicity, we suppose that $i=0$. Set $V_{x_{0}}=V\setminus
   V(x_{0})\subset \P^{n}_{_{k(t)}}\setminus V(x_{0})\simeq
   \A^{n}_{_{k(t)}}$. Set $x_{i}'=x_{i}/x_{0}$, $1\le i\le n$, and let
   $J\subset k(t)[x'_{1},\dots, x_{n}']$ be the ideal of
   $V_{x_{0}}$. Then $J\cap k[t,x'_{1},\dots, x_{n}']$ is the ideal of
   the $k$-variety $(\cV_{0})_{x_{0}}:= \cV_{0}\setminus V(x_{0})
   \subset (\A_{k}^{1}\times \P^{n}_{k})\setminus V(x_{0})\simeq
   \A_{k}^{1}\times \A^{n}_{k}$. Hence,
\begin{displaymath}
 (\cV_{0})_{x_{0}}\times_{\A^{1}_{k}}\Spec(k(t))=
 \Spec(k[t,\bfx']/I((\cV_{0})_{x_{0}}) \otimes_{k} k(t))=
\Spec(k(t)[\bfx']/J)= V_{x_{0}}.
\end{displaymath}
The generic fiber of $\pi$ coincides with the closure in
$\P^{n}_{_{k(t)}}$ of the generic fiber of $(\cV_{0})_{x_{0}} \to
\A^{1}_{k}$, namely, with $\ov{V_{x_{0}}}=V$. The fact that $\pi$ is
surjective with generic fiber $V$ implies that
$\dim(\cV)=\dim(V)+\dim(\P^{1}_{k})=r+1$ by the theorem of
dimension of fibers.

\smallskip
\eqref{item:17} It is sufficient to consider the case of an
irreducible $k$-variety $\cW\subset \P^{1}_{k}\times \P_{k}^{n}$ without vertical components.
   Let $\cJ_{0}\subset k[t][\bfx]$ be the ideal of the
restriction of $\cW$ to the open
subset  $(\P^1_{k}\setminus \{(0:1)\})\times \P^n_{k}\simeq \A^1_{k}\times \P^n_{k}$. It is a prime
ideal of Krull dimension $r+2$ and $\cJ_{0}\cap k[t]= \{0\}$ since
$\pi$ is surjective. Hence,
$J:=k(t)\otimes_{k[t]} \cJ_{0}$ is a homogeneous prime ideal of Krull
dimension $r+1$ which defines the generic fiber of $\pi$. Moreover,
$\cJ_{0}= J\cap k[t][\bfx]$ and so $\cW$ is the
standard model  of $W$.
 \end{proof}

\begin{lemma} \label{lemm:16}
  Let $I\subset k(t)[\bfx]$ be an equidimensional ideal,  $X(I)\in Z(\P^{n}_{_{k(t)}})$ its
  associated cycle and $\cX$ the standard model of $X(I)$.
Let $\cI\subset k[\bfs,\bfx]$ be the ideal generated
by all the polynomials of the form
$s_{0}^{\deg_{t}(f)} f({s_{1}}/{s_{0}}, \bfx)$ for $f\in
I\cap k[t,\bfx]$. Then $\cX= X(\cI)$.
\end{lemma}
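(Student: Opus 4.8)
The strategy is to work on the chart $U_0:=(\P^1_k\setminus\{(0:1)\})\times\P^n_k\cong\A^1_k\times\P^n_k$ where $s_0\ne 0$: its (multihomogeneous) coordinate ring is $k[t,\bfx]$ with $t=s_1/s_0$, and there $V(\cI)$ is cut out by $J:=I\cap k[t,\bfx]$. Writing $X(I)=\sum_P m_P\,V(P)$ over the minimal primes $P$ of $I$ (all of dimension $r$, since $I$ is equidimensional), Definition~\ref{def:12} gives $\cX=\sum_P m_P\,\cV(P)$, and $\cV(P)=V(\cP_P)$ where $\cP_P\subset k[\bfs,\bfx]$ is the ideal generated by the $s_0^{\deg_t f}f(s_1/s_0,\bfx)$ with $f\in P\cap k[t,\bfx]$, since $I(V(P))=P$. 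So it is enough to prove that the minimal primes of $\cI$ are exactly the ideals $\cP_P$, $P$ ranging over the minimal primes of $I$, and that $\cP_P$ has multiplicity $m_P$ in $\cI$.

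First I would isolate two elementary facts. On the affine side, $J=I\cap k[t,\bfx]$ is the contraction of an ideal along the flat localization $k[t,\bfx]\hookrightarrow k(t)[\bfx]=S^{-1}k[t,\bfx]$ with $S=k[t]\setminus\{0\}$, so $J$ is $S$-saturated; hence its minimal primes are disjoint from $S$, and extension/contraction sets up a bijection between them and the minimal primes $P$ of $I$, the one over $P$ being $\mathfrak q_P:=P\cap k[t,\bfx]$, with $(k[t,\bfx]/J)_{\mathfrak q_P}\simeq(k(t)[\bfx]/I)_P$, so that the multiplicities agree. On the homogeneous side, $\cI$ is by construction the homogenization of $J$ with respect to the new variable $s_0$ (for the grading $\deg t=1$, $\deg x_i=0$), and such a homogenization satisfies $(\cI:s_0^\infty)=\cI$ — this follows by the usual dehomogenize--rehomogenize argument — so no associated prime of $\cI$ contains $s_0$; in particular no component of $V(\cI)$ lies in $\{s_0=0\}$.

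Combining the two: every component of $V(\cI)$ meets $U_0$, and restricting to $U_0$ (i.e.\ dehomogenizing with respect to $s_0$) carries $\cI$ to $J$; hence $Q\mapsto Q|_{s_0=1}$ is a bijection from the minimal primes of $\cI$ onto those of $J$, with inverse given by $s_0$-homogenization of ideals (a minimal prime $Q$ of $\cI$ being $s_0$-saturated, it equals the homogenization of its dehomogenization). By the previous paragraph this matches the minimal primes of $\cI$ with those of $I$, the one over $P$ being $\cP_P=\mathrm{hom}_{s_0}(\mathfrak q_P)$; and since $s_0\notin\cP_P$, the local ring $(k[\bfs,\bfx]/\cI)_{\cP_P}$ may be computed on the chart $U_0$, where it equals $(k[t,\bfx]/J)_{\mathfrak q_P}$, of length $m_P$. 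Therefore $X(\cI)=\sum_P m_P\,V(\cP_P)=\sum_P m_P\,\cV(P)=\cX$. (As a consistency check, $X(\cI)$ thus has pure dimension $r+1$ and no vertical components, in agreement with Lemma~\ref{lemm:2}.)

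The one genuine input beyond routine commutative algebra is the pair of saturation statements — the $S$-saturation of $I\cap k[t,\bfx]$, which matches its primary decomposition with that of $I$, and the $s_0$-saturation of the homogenization $\cI$, which excludes spurious vertical components of $V(\cI)$ over $(0:1)$; everything else is bookkeeping with the two (de)homogenization operations and the compatibility of associated cycles with restriction to the chart $U_0$.
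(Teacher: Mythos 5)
Your proof is correct and is exactly the ``going through the ideals'' that the paper's one-sentence proof alludes to: it establishes the bijection between minimal primes of $I$ and of $\cI$ (and the equality of multiplicities) via the two saturation facts — the $S$-saturation of $J=I\cap k[t,\bfx]$ and the $s_0$-saturation of the homogenized ideal $\cI$ — combined with restriction to the chart $s_0\ne 0$.
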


\begin{proof}
  This can be verified by going through the ideals: the minimal primes
  of $I$ are in bijection with the minimal primes of $\cI$, and this
  bijection preserves multiplicities.
\end{proof}

There is {an} isomorphism $\phi :A^{*}(\P^{n}_{_{k(t)}};k[t])\to
A^{*}(\P^{1}_{k}\times \P^{n}_{k})$ which sends the generators $\eta,
\theta_{1}\in A^{*}(\P^{n}_{k};k[t])$ to the generators $\theta_{1},
\theta_{2}\in A^{*}(\P^{1}_{k}\times \P^{n}_{k})$, respectively.  Next
result shows that, {\it via} this isomorphism, the class of a
projective $k(t)$-cycle $X$ identifies with the class of its standard
model $\cX$. In particular, the height of $X$ coincides with a mixed
degree of $\cX$.

\begin{proposition}\label{lemm:12}
Let  $X\in Z_{r}(\P^{n}_{_{k(t)}})$ and $\cX$ be the standard model of
$X$. Then
\begin{displaymath}
 \phi \Big([X]_{_{k(t)}}\Big)= [\cX] .
\end{displaymath}
Equivalently, $\deg(X)= \deg_{1,r}(\cX)$ and $\h(X)=
\deg_{0,r+1}(\cX).$
\end{proposition}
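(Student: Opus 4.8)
The plan is to reduce, by $\Z$-linearity, to the case of an irreducible $k(t)$-variety $V$ of dimension $r$; let $\cV\subset\P^1_k\times\P^n_k$ be its standard model, which by Lemma~\ref{lemm:2} is an irreducible variety of dimension $r+1$ mapping onto $\P^1_k$ with generic fiber $V$. Writing out the isomorphism $\phi$ and the definition of the classes, the identity $\phi([X]_{_{k(t)}})=[\cX]$ amounts exactly to the two numerical identities $\deg(V)=\deg_{1,r}(\cV)$ and $\h(V)=\deg_{0,r+1}(\cV)$, which I prove in turn; the general case then follows by linearity.

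For the degree, Corollary~\ref{cor:5} gives $\deg_{1,r}(\cV)=\deg\big(\cV\cdot\pr_1^{*}H_0\cdot\prod_{j=1}^{r}\pr_2^{*}H_j\big)$ for a generic hyperplane $H_0$ of $\P^1_k$ and generic hyperplanes $H_1,\dots,H_r$ of $\P^n_k$. Here $\cV\cdot\pr_1^{*}H_0$ is the fiber of $\cV$ over a generic point of $\P^1_k$; since the Hilbert polynomial is generically constant along $\cV\to\P^1_k$, this fiber is a subvariety of a projective space over an extension of $k$ of degree $\deg(V)$, and intersecting it with $r$ further generic hyperplanes yields $\deg(V)$ points, again by Corollary~\ref{cor:5}. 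Hence $\deg_{1,r}(\cV)=\deg(V)$.

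For the height, consider $R:=\Res_{\bfe(1,r+1)}(\cV)\in k[v_0,v_1,\bfU_1,\dots,\bfU_{r+1}]$, where $v_0,v_1$ are the coefficients of the general linear form $L_0=v_0s_0+v_1s_1$ on the $\P^1$-factor and $\bfU_1,\dots,\bfU_{r+1}$ those of general linear forms on the $\P^n$-factor. By \eqref{gradosparciales2}, $\deg_{v}R=\deg_{0,r+1}(\cV)$. One checks that condition \eqref{eq:17} holds for $\cV$ with this index (from $\cV\to\P^1_k$ surjective with $r$-dimensional generic fiber one gets $\dim\pi_{\{2\}}(\cV)\ge r$), so by Proposition~\ref{prop:7} the set $\nabla_{\bfe(1,r+1)}(\cV)$ is an irreducible hypersurface, cut out both by $\Elim_{\bfe(1,r+1)}(\cV)$ and by $R=\Elim_{\bfe(1,r+1)}(\cV)^{\nu}$. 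Moreover $R$ is divisible by neither $v_0$ nor $v_1$: specializing $v_0$ (respectively $v_1$) to $0$ turns $L_0$ into $s_1$ (respectively $s_0$), and $\cV$ intersected with $\pr_1^{-1}$ of the corresponding point of $\P^1_k$ and then with $r+1$ generic $\P^n$-hyperplanes is empty for dimension reasons, so the coordinate hyperplane $\{v_0=0\}$ (respectively $\{v_1=0\}$) is not contained in $\nabla_{\bfe(1,r+1)}(\cV)$.

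Now specialize $L_0$ to $s_1-ts_0$ for a fresh transcendental $t$. Extending scalars to $k(t)$ (Proposition~\ref{prop:6-1}) and using Proposition~\ref{especializacion} together with the description of the standard model (Lemma~\ref{lemm:16}), the cycle $\cV_{k(t)}\cdot\pr_1^{*}\div_{\P^1}(s_1-ts_0)$ is, scheme-theoretically, the variety $V$ sitting in the fiber $\pr_1^{-1}(1:t)\cong\P^n_{k(t)}$; combined with Corollary~\ref{cor:3} this gives $R(-t,1,\bfU_1,\dots,\bfU_{r+1})=\lambda(t)\,\Ch_V$ for some $\lambda\in k(t)^{\times}$, where $\Ch_V=\Res_{(1,\dots,1)}(V)$ is the Chow form of $V$ and $\h(V)=\deg_t(\Ch_V)$. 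Since $\Ch_V$ is irreducible, comparison with $R=\Elim^{\nu}$ forces $\nu=1$; since $\Ch_V$ is primitive over $k[t]$, it forces $\lambda\in k[t]\setminus\{0\}$; and since $R$ is divisible by neither $v_0$ nor $v_1$, the $k[t]$-content of $R(-t,1,\cdot)$ has no irreducible factor $t-\tau$ (such a factor would put a slice $\{v_0=-\tau v_1\}$ inside $\nabla_{\bfe(1,r+1)}(\cV)$, excluded by the same dimension count), whence $\lambda\in k^{\times}$. Therefore, using $v_1\nmid R$ for the middle equality, $\deg_{0,r+1}(\cV)=\deg_{v}R=\deg_t\big(R(-t,1,\cdot)\big)=\deg_t(\Ch_V)=\h(V)$. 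The main difficulty is precisely this last bookkeeping: Propositions~\ref{prop:6-1} and~\ref{especializacion} only control the intervening scalars up to units of $k(t)^{\times}$, and one must use the primitivity of the Chow form over $k[t]$ and the non-vanishing of $R$ along the two special fibers of $\cV\to\P^1_k$ to eliminate spurious powers of $t$.
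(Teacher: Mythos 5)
Your overall strategy matches the paper's, and the height argument in particular runs along essentially the same lines: both specialize the resultant at $L_0 = s_1-ts_0$, use Proposition~\ref{especializacion} together with the cycle identity $\cV_{k(t)}\cdot\div(s_1-ts_0)=\{(1:t)\}\times V$, and then pin down the extra scalar $\lambda$ by showing it has no zero at any $\tau\in\ov k$ (your formulation via $\nabla$ and the dimension count is just a rephrasing of the paper's argument that $\div(s_1-\tau s_0)$ intersects $\cV$ properly, so that the specialization at $(-\tau,1)$ is nonzero). The observation that $v_1\nmid R$, so that $\deg_{\bft}$ sees the whole $\deg_{\bfv}$, is exactly the paper's final step with $\Res_{\bfe,\wt\bfe}(\cV)((1,0),\bfu)\ne0$.

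Two points worth flagging. First, the cycle identity $\cV_{k(t)}\cdot\div(s_1-ts_0)=\{(1:t)\}\times V$ is where the paper does real work (comparing the ideals $\cJ_1\subset\cJ_2$ and showing they agree after localizing at $s_0$ and $s_1$). You cite Lemma~\ref{lemm:16}, but that lemma only translates the standard model into a cycle of a bihomogenized ideal; it does not by itself give the scheme-theoretic equality of the $(s_1-ts_0)$-slice with $V$ including multiplicities. This step deserves the explicit justification the paper gives, not just a citation. Second, your degree argument is genuinely different from the paper's: you compute $\deg_{1,r}(\cV)$ geometrically via Corollary~\ref{cor:5} and the fact that the Hilbert polynomial of the fibers of $\cV\to\P^1_k$ is generically constant. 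That is a correct approach, but it invokes generic flatness (or at least Hilbert-polynomial semicontinuity), a fact nowhere established in the paper; the paper instead derives $\deg(V)=\deg_{1,r}(\cV)$ for free from the very resultant identity $\Res_{\bfone}(V)=\lambda\Res_{\bfe,\wt\bfe}(\cV)((-t,1),\cdot)$ that also yields the height claim, which is both cleaner and self-contained. Your determination of $\nu=1$ is correct but unnecessary for the conclusion.
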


\begin{proof}\ It is enough to prove the statement for an irreducible
  $k(t)$-variety $V$ with standard model $\cV$.  Set
  $\bfu=\{\bfu_{0},\dots, \bfu_{r}\}$, $\bfone=(1,\dots,1)\in
  \N^{r+1}$, $\bfe=(1,0)$ and $\wt{\bfe}=((0,1),\dots, (0,1))\in
  (\N^{2})^{r+1}$. We first claim that there exists $\lambda \in
  k^{\times}$ such that
  \begin{equation}\label{item:27}
\Res_{{\bf1}}(V)(\bfu)= \lambda \, \Res_{\bfe,\wt{\bfe}}(\cV)
((-t,1),\bfu).
\end{equation}

Let $I\subset k(t)[\bfx]$ and $\cI\subset k[\bfs, \bfx]$ denote the
homogeneous ideal of $V$ and the bihomogeneous ideal of $\cV$,
respectively. Set
$$
\cJ_{1}=k(t)\otimes _{k}\cI + (s_{1}-ts_{0}), \quad \cJ_{2}= k(t)[\bfs]
\otimes_{_{k(t)}} I
 + (s_{1}-ts_{0}) \quad \subset  k(t)[\bfs, \bfx].
$$
These ideals define the subschemes $\cV_{_{k(t)}}\cap V(s_{1}-ts_{0})$
and $\{(1:t)\}\times V$ of $\P^{1}_{_{k(t)}}\times\P^{n}_{_{k(t)}}$, respectively.
For $f\in k[t,\bfx]$, set
\begin{displaymath}
f^{\hom}=s_{0}^{\deg_{t}(f)} f\bigg(\frac{s_{1}}{s_{0}}, \bfx\bigg)\in k[\bfs,\bfx].
\end{displaymath}
This  polynomial   is homogeneous of degree $\deg_{t}(f)$ with
respect to the variables $\bfs$. Observe that
\begin{equation}
  \label{eq:68}
  f^{\hom} \equiv s_{0}^{\deg_{t}(f)} f \equiv (t^{-1}s_{1})^{\deg_{t}(f)} f \quad \pmod{(s_{1}-ts_{0})}.
\end{equation}
The ideal $\cI $ is generated by $\{f^{\hom}: \, f\in I\cap
k[t,\bfx]\}$. By \eqref{eq:68}, these generators lie in~$\cJ_{2}$;
hence  $\cI \subset \cJ_{2}$ and  so $\cJ_{1}\subset \cJ_{2}$.
The equations \eqref{eq:68} also  imply that $ I $ is contained in the
localization $(\cJ_{2})_{s_{i}}\subset k(t)[\bfs,\bfx]_{s_{i}}$ for $i=0,1$.
Hence, $(\cJ_{1})_{s_{i}}= (\cJ_{2})_{s_{i}}$ and so
both ideals define the same subschemes of
$\P^{1}_{_{k(t)}}\times\P^{n}_{_{k(t)}}$. In particular,
$\div(s_{1}-ts_{0})$ intersects $\cV_{_{k(t)}}$ properly and we have
the equality of cycles
\begin{equation}
  \label{eq:69}
   \cV_{_{k(t)}}\cdot \div(s_{1}-ts_{0})=
\{(1:t)\}\times V \  \in \
Z_{r}(\P^{1}_{_{k(t)}}\times\P^{n}_{_{k(t)}}).
\end{equation}
Applying successively propositions \ref{prop:6-1} and~\ref{especializacion},
\eqref{eq:69} and Proposition \ref{resultantesproductosciclos}, we
obtain that there exists {$\mu_{0}, \mu \in k[t]\setminus \{0\}$}
such that
\begin{multline} \label{eq:37}
\Res_{\bfe,\wt{\bfe}}(\cV) ((-t,1),\bfU)= \mu_{0} \,
\Res_{\wt{\bfe}}(\cV_{_{k(t)}}\cdot \div(s_{1}-ts_{0}))(\bfU)\\=\mu_{0} \,
\Res_{\wt{\bfe}}(\{(1:t)\}\times V)(\bfU)=\mu \, \Res_{\bfone}(V)(\bfU).
\end{multline}

It remains to show that $\mu\in k^{\times}$. For $\tau\in \ov k$,
$\div(s_{1}-\tau s_{0})$ intersects $\cV$ properly since the projection $\pi:\cV\to \P^{1}_{k}$ is
surjective (Lemma~\ref{lemm:2}). Proposition~\ref{especializacion} then implies that
$\Res_{\bfe,\wt{\bfe}}(\cV) ((-\tau,1),\bfU)\ne 0$. Furthermore,
$\Res_{\bfone}(V)(\bfu)\Big|_{t=\tau}\ne 0$ as this resultant is a
primitive polynomial in $k[t][\bfu]$. Specializing \eqref{eq:37} at
$t=\tau$, we deduce that $\mu(\tau)\ne 0$ for all $\tau\in\ov k$.
Hence, $\mu\in k^{\times}$ and \eqref{item:27} follows for
$\lambda=\mu^{-1}$.

\medskip Let $\bfv=\{v_{0},v_{1}\}$ be a group of
variables. By \eqref{item:27}, for any $0\le i\le r$,
\begin{displaymath}
\deg_{\bfu_{i}}\Big( \Res_{\bfone}(V)(\bfu)\Big)=
\deg_{\bfu_{i}}\Big( \Res_{\bfe,\wt{\bfe}}(\cV)((-t,1),\bfU)\Big) =
\deg_{\bfu_{i}}\Big( \Res_{\bfe,\wt{\bfe}}(\cV)(\bfv,\bfu)\Big),
\end{displaymath}
since $ \Res_{\bfe,\wt{\bfe}}(\cV)$ is homogeneous in the
variables $\bfv$. Proposition \ref{prop:4} then implies that $\deg(V)= \deg_{1,r}(\cV)$.

Applying an argument similar to the one above, we verify that  $s_0$ intersects
$\cV$ properly. Hence, $\Res_{\bfe,\wt{\bfe}}(\cV)((1,0),\bfu)=
\lambda \, \Res_{\wt{\bfe}}(\cV\cdot \div(s_0))(\bfu)$ with $\lambda\in
k^{\times}$. In particular,  this specialization is not zero, and so
the degrees of $\Res_{\bfe,\wt{\bfe}}(\cV)$ in $v_0$ and in  $\bfv$
coincide. Therefore,
\begin{equation*}\h(V)=
\deg_{t}\Big( \Res_{\bfone}(V)(\bfu)\Big)= \deg_{t}\Big(
\Res_{\bfe,\wt{\bfe}}(\cV)((-t,1),\bfU)\Big) = \deg_{\bfv}\Big(
\Res_{\bfe,\wt{\bfe}}(\cV)(\bfv,\bfu)\Big).
\end{equation*}
By Proposition \ref{prop:4}, we conclude that $\h(V)=
\deg_{0,r+1}(\cV)$.
\end{proof}

Next proposition collects some basic properties of mixed heights and
classes in the extended Chow ring of multiprojective cycles over
$k(\bft)$ with $\bft= \{t_{1},\dots, t_{p}\}$.

\begin{proposition}\label{prop:9} \
\begin{enumerate}
\item \label{item:50}   Let $X\in
  Z_{r}^{+}\Big(\P_{_{k(\bft)}}^{\bfn}\Big)$. Then $[
  X]_{_{k[\bft]}} \ge0$. In particular,  $\h_{\bfc}(X)\ge
  0$ for all $\bfc\in \N_{r+1}^m$.
\smallskip
\item \label{item:18}
Let $X\in Z_{r}(\P^{\bfn}_k)$. Then $ [ X_{_{k(\bft)}}]_{_{k[\bft]}}
= \imath([X_{_{k(\bft)}}])$ or equivalently, $\h_{\bfc}(X)=0$ for all $\bfc\in \N_{r+1}^m$. In
particular, $[ \P^\bfn_{_{k(\bft)}}]_{_{k[\bft]}} =1$.
\smallskip
\item \label{item:49} Let $\bfxi=(\bfxi_{1},\dots, \bfxi_{m})\in \P^\bfn$ be a
  point with coordinates in $k(\bft)$  and  for each $1\le i\le m$
  write  $\bfxi_{i}=(\xi_{i,j})_{j}$ for
  coprime polynomials $\xi_{i,j}\in k[\bft]$. Then
$$
[\bfxi]_{_{k[\bft]}}  = \sum_{i=1}^{m}\h(\bfxi_{i}) \,
\eta\, \bftheta^{\bfn-\bfe_{i}}  +  \bftheta^{\bfn},
$$
with $\h(\bfxi_i):=\max_j\h(\xi_{i,j})=\max_j\deg(\xi_{i,j})$.
In particular,   \begin{math}    \h_{\bfe_{i}}(\bfxi) = \h(\bfxi_{i}).   \end{math}
\smallskip
\item \label{item:15}
Let $D \in \Div^{+}\Big(\P^{\bfn}_{_{k(\bft)}}\Big)$ and $f_{D}\in
k[\bft][\bfx]$ its primitive defining polynomial. Then
\begin{equation*}
   [D]_{_{k[\bft]}} =  \h(f_{D})\, \eta  + \sum_{i=1}^{m}\deg_{\bfx_i}(f_{D})\,
   \theta_i .\end{equation*}
   In particular, $\h_{\bfn}(D)=\h(f_{D})$.
\end{enumerate}
\end{proposition}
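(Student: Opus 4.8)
The plan is to treat \eqref{item:50}, \eqref{item:18} and \eqref{item:49} quickly and to spend the effort on \eqref{item:15}. For \eqref{item:50}: when $X$ is effective, each $\Res_{\bfe(\bfc)}(X)$ is a primitive polynomial of $k[\bft][\bfU]$ by Definition~\ref{def:2}, so $\h_\bfc(X)=\deg_\bft(\Res_{\bfe(\bfc)}(X))\ge 0$, which together with $\deg_\bfb(X)\ge 0$ from Proposition~\ref{prop:5}\eqref{item:48} gives $[X]_{_{k[\bft]}}\ge 0$. For \eqref{item:18}: I regard $X\in Z_r(\P^\bfn_k)$ as a cycle over the subfield $k\subset k(\bft)$, so that $\Res_{\bfe(\bfc)}(X)\in k[\bfU]$; by Proposition~\ref{prop:6-1} one has $\Res_{\bfe(\bfc)}(X_{_{k(\bft)}})=\lambda\,\Res_{\bfe(\bfc)}(X)$ for some $\lambda\in k(\bft)^\times$, and since both $\Res_{\bfe(\bfc)}(X)$ and $\Res_{\bfe(\bfc)}(X_{_{k(\bft)}})$ are primitive in $k[\bft][\bfU]$ (the first because its coefficients lie in $k$) we get $\lambda\in k^\times$, hence $\h_\bfc(X)=0$. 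As mixed degrees are invariant under scalar extension, this is exactly $[X_{_{k(\bft)}}]_{_{k[\bft]}}=\imath([X_{_{k(\bft)}}])$, and the case $X=\P^\bfn_k$ combined with Proposition~\ref{prop:5}\eqref{item:11} yields $[\P^\bfn_{_{k(\bft)}}]_{_{k[\bft]}}=1$.

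For \eqref{item:49}: $\bfxi$ is a reduced point, so $\deg(\bfxi)=1$ and Proposition~\ref{prop:5}\eqref{item:12} shows that the degree part of $[\bfxi]_{_{k[\bft]}}$ is $\bftheta^\bfn$. The only $\bfc\in\N^m_1$ with $\bfc\le\bfn$ are the $\bfe_i$ with $n_i\ge 1$; for such $i$ the general form of multidegree $\bfe_i$ is $F_0=\sum_{j=0}^{n_i}u_{0,j}x_{i,j}$, so $F_0(\bfxi)=\sum_{j=0}^{n_i}u_{0,j}\xi_{i,j}$ on the chosen representative. By Corollary~\ref{cor:2}, $\Res_{\bfe(\bfe_i)}(\bfxi)=\lambda\,F_0(\bfxi)$ for some $\lambda\in\ov{k(\bft)}^\times$; the right-hand side is primitive in $k[\bft][\bfU_0]$ because the $\xi_{i,j}$ have no common factor, and the left-hand side is primitive there as well, so $\lambda\in k^\times$ and $\h_{\bfe_i}(\bfxi)=\deg_\bft(F_0(\bfxi))=\max_j\deg(\xi_{i,j})=\h(\bfxi_i)$. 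Summing over $i$ gives the displayed expression.

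For \eqref{item:15}: by Proposition~\ref{prop:5}\eqref{gradosparciales} the degree part of $[D]_{_{k[\bft]}}$ is $\sum_i\deg_{\bfx_i}(f_D)\,\theta_i$, and since $\bfn$ is the unique element of $\N^m_{|\bfn|}$ bounded by $\bfn$ it only remains to prove $\h_\bfn(D)=\h(f_D)$. As $\P^\bfn\cdot\div(f_D)=D$ and $\div(f_D)$ meets $\P^\bfn$ properly, Proposition~\ref{especializacion} applied with $X=\P^\bfn$ and the index $(\bfe(\bfn),\bfdeg(f_D))$ (of length $|\bfn|+1=\dim(\P^\bfn)+1$) gives $\lambda\in k(\bft)^\times$ with $R(\bfU,f_D)=\lambda\,\Res_{\bfe(\bfn)}(D)(\bfU)$, where $R=\Res_{\bfe(\bfn),\bfdeg(f_D)}(\P^\bfn)$ and $R(\bfU,f_D)$ denotes the specialization of the last group of variables of $R$ at the coefficients of $f_D$. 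The polynomial $R$ is free of $\bft$ (the ideal of $\P^\bfn$ is zero), and by Proposition~\ref{prop:4} together with $[\P^\bfn]=1$ its degree in the last group of variables equals $\coeff_{\bftheta^\bfn}(\bftheta^\bfn)=1$; thus $R=\sum_\bfa R_\bfa(\bfU)\,v_\bfa$ with $\bft$-free $R_\bfa$, and $R(\bfU,f_D)=\sum_\bfa R_\bfa(\bfU)\,\coeff_\bfa(f_D)$, whence $\deg_\bft(R(\bfU,f_D))\le\h(f_D)$. Next I claim the $R_\bfa$ are $k$-linearly independent: a relation $\sum_\bfa\alpha_\bfa R_\bfa=0$ would mean, by Proposition~\ref{prop:7}, that the $|\bfn|$ general linear forms and the fixed form $\sum_\bfa\alpha_\bfa\bfx^\bfa$ always have a common zero in $\P^\bfn$, and specializing the linear forms to general position over $\ov k$ forces $\sum_\bfa\alpha_\bfa\bfx^\bfa=0$. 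Hence no cancellation occurs among the top $\bft$-degree terms of $R(\bfU,f_D)$, so $\deg_\bft(R(\bfU,f_D))=\h(f_D)$; moreover, applying to the identity $R(\bfU,f_D)=\sum_\bfa R_\bfa\,\coeff_\bfa(f_D)$ a $k$-linear combination of coefficient-extraction functionals dual to a chosen $R_{\bfa_0}$ shows that the $k[\bft]$-content of $R(\bfU,f_D)$ divides $\coeff_{\bfa_0}(f_D)$ for every $\bfa_0$, hence divides the unit content of the primitive polynomial $f_D$, so $R(\bfU,f_D)$ is primitive in $k[\bft][\bfU]$. Since $\Res_{\bfe(\bfn)}(D)$ is primitive too, the relation $R(\bfU,f_D)=\lambda\,\Res_{\bfe(\bfn)}(D)$ forces $\lambda\in k^\times$, and therefore $\h_\bfn(D)=\deg_\bft(\Res_{\bfe(\bfn)}(D))=\deg_\bft(R(\bfU,f_D))=\h(f_D)$.

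The main obstacle is \eqref{item:15}, specifically the passage from the easy inequality $\h_\bfn(D)\le\h(f_D)$ to equality: this requires both controlling the a priori nontrivial scalar $\lambda\in k(\bft)^\times$ and ruling out any drop of $\bft$-degree under the specialization $R\mapsto R(\bfU,f_D)$, and both come down to the linear independence of the coefficients $R_\bfa$ of the resultant $R$ (multilinear in the linear forms), itself a consequence of the fact that general linear sections sweep out $\P^\bfn$. Everything else is routine bookkeeping with primitivity of resultants and their invariance under scalar extension.
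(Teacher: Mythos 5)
Items~(1)--(3) are proved exactly as in the paper: (1) uses that a primitive polynomial over $k[\bft]$ has non-negative $\bft$-degree together with Proposition~\ref{prop:5}\eqref{item:48}; (2) compares contents on both sides of the scalar-extension identity of Proposition~\ref{prop:6-1}; (3) combines Corollary~\ref{cor:2} with the same primitivity comparison.

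For item~(4) the opening move coincides with the paper's: Proposition~\ref{especializacion} applied to $\P^\bfn_{_{k(\bft)}}$ and the index $(\bfe(\bfn),\bfdeg(f_D))$ gives $R(\bfU,f_D)=\lambda\,\Res_{\bfe(\bfn)}(D)$ for some $\lambda\in k(\bft)^\times$, where $R=\Res_{\bfe(\bfn),\bfdeg(f_D)}(\P^\bfn_k)$ is $\bft$-free. From there the two arguments diverge. The paper pins down $\lambda\in k^\times$ by a specialization-to-hypersurfaces argument: for any irreducible $k$-hypersurface $H\subset\A^p_k$, primitivity of $f_D$ gives a point $\bfxi\in H$ with $f_D(\bfxi,\bfx)\ne 0$, whence $R(\bfL,f_D(\bfxi,\bfx))\ne 0$ by another application of Proposition~\ref{especializacion} over $k$, and so $\lambda(\bfxi)\ne 0$; as $H$ is arbitrary, $V(\lambda)$ contains no hypersurface and $\lambda\in k^\times$. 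The concluding equality $\deg_\bft(R(\bfL,f_D))=\deg_\bft(f_D)\cdot\deg_{\bfU_{|\bfn|}}(R)$ is then asserted without further comment. You instead establish the $k$-linear independence of the coefficients $R_\bfa$ of $R$ with respect to its last group of variables (via the geometric description of $\nabla$ from Proposition~\ref{prop:7} together with genericity of linear sections of $\P^\bfn$), and use that single fact to derive both the absence of cancellation among the top $\bft$-degree terms of $R(\bfU,f_D)=\sum_\bfa R_\bfa\,\coeff_\bfa(f_D)$, giving $\deg_\bft(R(\bfU,f_D))=\h(f_D)$, and the primitivity of $R(\bfU,f_D)$ (its $k[\bft]$-content divides every $\coeff_{\bfa_0}(f_D)$ via dual functionals, hence divides $1$), giving $\lambda\in k^\times$. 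Both routes are correct. Yours is more self-contained in one respect: the no-cancellation step, which the paper's chain of equalities leaves implicit, is made explicit; the paper's specialization trick avoids having to unpack the coefficient structure of the resultant $R$.
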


\begin{proof}\

\eqref{item:50} This is immediate from the definition of mixed
heights and classes in the extended Chow ring.

\smallskip
\eqref{item:18} By Proposition~\ref{prop:6-1}, there exists
$\lambda\in k(\bft)^\times$ such that
$\Res_{\bfe(\bfc)}(X_{_{k(\bft)}})=\lambda\,\Res_{\bfe(\bfc)}(X)$.
Since the term in the left-hand side is primitive with respect to
$k[\bft]$, we deduce that $\lambda\in k^\times$. Hence, this
resultant does not depend on $\bft$ and $\h_{\bfc}(X)=0$ for all
$\bfc$. This readily implies that  $ [ X_{_{k(\bft)}}]_{_{k[\bft]}} =
\imath([X_{_{k(\bft)}}])$. The rest of the  statement follows from
Proposition~\ref{prop:5}\eqref{item:11}.

\smallskip
\eqref{item:49} By Proposition~\ref{prop:5}\eqref{item:12}, it is enough to show
that $\h_{\bfe_i}(\bfxi)=\h(\bfxi_i)$. By Corollary~\ref{cor:2},
there exists $\lambda_i\in k(\bft)^\times$ such that
$\Res_{\bfe_i}(\bfxi)=\lambda_i \, L_i(\bfxi_i)$, where
 $L_{i}$ is the general linear form of multidegree~$\bfe_{i}$.
Indeed, $\lambda_i\in k^\times$ since $\Res_{\bfe_i}(\bfxi)$ and $L_i(\bfxi_i)$ are primitive polynomials in $k[\bft]$.
Hence,
 $\h_{\bfe_i}(\bfxi)=\deg_\bft(L_i(\bfxi_i))=\h(\bfxi_i)$.

\smallskip
\eqref{item:15} By Proposition~\ref{prop:5}\eqref{gradosparciales}, it is enough to show that $\h_{\bfn}(D)=\h(f_D)$.
Set $\bfd=\bfdeg(f_{D})$.
Consider the
general form $F$  of multidegree $\bfd$ and the
general linear forms
$\bfL=(L_{0},\dots, L_{|\bfn|-1})$
corresponding to  $\bfe(\bfn)$.
Write for short $R=\Res_{\bfe(\bfn),\bfd}(\P^{\bfn}_{k})$.
Using propositions \ref{especializacion} and \ref{prop:6-1}, we
deduce that there exists
$\lambda\in k(\bft)^{\times}$ such that
\begin{equation*}
\lambda \,   \Res_{\bfe(\bfn)}(D)(\bfL)=  R(\bfL,f_{D}).
\end{equation*}
Observe that $\lambda\in
k[\bft]\setminus \{0\}$ because the resultant in the left-hand side
is a primitive polynomial.
On the other hand, let $H\subset \A^{p}_{k}$ be an irreducible  $k$-hypersurface. The
fact that $f_{D}$ is primitive implies that there exists $\bfxi\in H$
such that $f_{D}(\bfxi,\bfx)\ne 0$.
Applying Proposition \ref{especializacion} to the cycle $\P^{\bfn}_{k}$ and
$f_{D}(\bfxi,\bfx)\in k[\bfx]$ we deduce
\begin{equation*}
R(\bfL,f_{D}(\bfxi,\bfx))\ne0.
\end{equation*}
Hence, $\lambda(\bfxi)\ne 0$. This implies that $V(\lambda)$ contains
no hypersurface of $\A^{p}_{k}$ and so  $\lambda\in k^{\times}$. Hence,
\begin{displaymath}
\h_{\bfn}(D)=\deg_{\bft}\Big( \Res_{\bfe(\bfn)}(D) \Big) =
\deg_{\bft}\Big( R(\bfL,f_{D}) \Big)
=\deg_{\bft}(f_{D}) \deg_{\bfU_{n}}(R)= \h(f_{D}),
\end{displaymath}
since $ \deg_{\bft}(f_{D})=\h(f_{D})$ and, by propositions
\ref{prop:4} and \ref{prop:5}\eqref{item:11}, $ \deg_{\bfU_{n}}(R)= 1$.
\end{proof}

We recall some notions and properties of
valuations of fields. Let $T_{0},\dots, T_{p}$ denote the standard
homogeneous coordinates of $\P^{p}_{k}$. If we identify each
variable $t_{i}$ with the rational function $T_{i}/T_{0}$, we can
regard  $k(\bft)$ as the field of rational functions of this
projective space: given  $\alpha\in k(\bft)$, then
$\alpha(T_1/T_0,\dots,T_p/T_0)$ is homogeneous of degree 0 and
defines a rational function on $\P^{p}_{k}$.

Given an irreducible $k$-hypersurface $H$ of $\P^{p}$ and $\alpha\in
k(\bft)$, we set $\ord_{H}(\alpha)$ for the order of vanishing of
$\alpha$ along $H$. The map $\ord_{H}:k(\bft)\to \Z$ is a valuation
of $k(\bft)$. If $H_{\infty}:=V(T_{0})$ is the  hyperplane at
infinity, then $\ord_{H_\infty}(\alpha)=-\deg(\alpha)$ where
$\deg(\alpha)=\deg(\alpha_{1})-\deg(\alpha_{2})$ for any
$\alpha_{i}\in k[\bft]$ such that
$\alpha=\alpha_1/\alpha_{2}$. If~$H\ne H_{\infty}$, then
$\ord_{H}(\alpha)$  coincides with the order of the polynomial
$f_{H}(1,t_{1},\dots, t_{p})$ in the factorization of $\alpha$,
where $f_{H}\in k[T_{0},\dots, T_{p}]$ is the defining polynomial of
$H$.

Let $K$ be an arbitrary field and $v$ a valuation of $K$.
For a polynomial  $f$ over $K$, we set $v(f)$ for the
minimum of the valuation of its coefficients. Gauss Lemma
says that for any given polynomials $f,g$ over $K$,
\begin{displaymath}
  v(f g)= v(f)+v(g).
\end{displaymath}
Given a finite extension $E$ of $K$, there exists a
(non-necessarily unique) valuation $w$ of $E$ extending $v$.

\medskip
The height of an arbitrary resultant can be expressed in terms of mixed heights:

\begin{lemma}
  \label{lemm:3}
Let $X\in Z_{r}^{+}\Big(\P^\bfn_{_{k(\bft)}}\Big)$ and $\bfd\in
(\N^{m}\setminus \{{\bf0}\})^{r+1}$. Then
\begin{displaymath}
  \deg_{\bft}(\Res_{\bfd}(X))= \coeff_{\eta\bftheta^{\bfn}}\bigg(
  [X]_{_{k[\bft]}}\,
  \prod_{i=0}^{r} \sum_{j=1}^{m}d_{i,j} \theta_{j}\bigg).
\end{displaymath}
In particular, for $\bfb\in \N^{m}_{r}$ and $\bfd_{r}\in
\N^{m}\setminus \{\bf0\}$,
\begin{equation}\label{eq:2}
  \deg_{\bft}(\Res_{\bfe(\bfb),\bfd_{r}}(X))=
\sum_{j=1}^{m}d_{r,j} \h_{\bfb+\bfe_{j}}(X).
\end{equation}
\end{lemma}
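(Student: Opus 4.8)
The plan is to establish the two displayed identities successively, deducing the first ($\deg_\bft(\Res_{\bfd}(X))$ in terms of $[X]_{_{k[\bft]}}$) from the ``in particular'' formula \eqref{eq:2}, which is the technical core. By the multiplicativity of resultants over cycles (Definition~\ref{def:2}), the linearity of the class $[\cdot]_{_{k[\bft]}}$, and the additivity of $\deg_\bft$ under products of polynomials (Gauss's lemma for the valuation $\ord_{H_\infty}$, recalled above), both sides of both identities are $\Z$-linear in $X$, so I may assume $X=V$ is an irreducible $k(\bft)$-variety. I would also record the purely formal fact that, writing $N_{\bfc}(\bfd)=\coeff_{\bftheta^{\bfc}}\big(\prod_{i=0}^{r}\sum_{j=1}^{m}d_{i,j}\theta_j\big)$, the right-hand side of the first identity equals $\sum_{\bfc\in\N^{m}_{r+1}}N_{\bfc}(\bfd)\,\h_{\bfc}(V)$: upon multiplying $[V]_{_{k[\bft]}}$ by the degree-$(r+1)$ form $\prod_i\sum_j d_{i,j}\theta_j$ and extracting $\coeff_{\eta\bftheta^{\bfn}}$, the $\deg_{\bfb}$-part of $[V]_{_{k[\bft]}}$ contributes nothing (its product with a degree-$(r+1)$ class has $\theta$-degree $|\bfn|+1$ and carries no $\eta$), while the $\h_{\bfc}$-part contributes exactly $N_{\bfc}(\bfd)$ at $\eta\bftheta^{\bfn}$. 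Since for $\bfd=(\bfe(\bfb),\bfd_r)$ one has $N_{\bfb+\bfe_j}(\bfd)=d_{r,j}$ and $N_{\bfc}(\bfd)=0$ otherwise, \eqref{eq:2} is precisely this special case of the first identity.

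I would prove \eqref{eq:2} by induction on $r$. For the inductive step ($r\ge1$), since $|\bfb|=r\ge1$ choose $j_0$ with $b_{j_0}\ge1$, let $\bfw=\{w_0,\dots,w_{n_{j_0}}\}$ be fresh variables, set $K=k(\bfw)(\bft)$ (the fraction field of the polynomial ring $k(\bfw)[\bft]$, whose associated height is again $\deg_\bft$), and let $\ell=\sum_{l}w_l\,x_{j_0,l}\in K[\bfx]_{\bfe_{j_0}}$ be the generic linear form in $\bfx_{j_0}$, so that $\div(\ell)$ intersects $V_K$ properly. Reordering the groups of variables by Proposition~\ref{prop:6-item:27} and applying Proposition~\ref{especializacion} to specialize the last ($\bfe_{j_0}$) group of $\Res_{\bfd_r,\bfe(\bfb-\bfe_{j_0}),\bfe_{j_0}}(V_K)$ at the coefficients $\bfw$ of $\ell$ — which is a mere relabeling of that group of variables, $\bfw$ being exactly the set of indeterminates being substituted — together with the field-extension invariance of resultants (Proposition~\ref{prop:6-1}), one gets
\begin{displaymath}
\deg_\bft\!\big(\Res_{\bfe(\bfb),\bfd_r}(V)\big)=\deg_\bft\!\big(\Res_{\bfd_r,\bfe(\bfb-\bfe_{j_0})}(V_K\cdot\div(\ell))\big),
\end{displaymath}
all the auxiliary unit factors lying in $k(\bfw)^{\times}$ by Gauss's lemma and primitivity, hence invisible to $\deg_\bft$. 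Now $V_K\cdot\div(\ell)$ is an effective cycle of pure dimension $r-1$ over $K$, so the inductive hypothesis gives $\deg_\bft(\Res_{\bfd_r,\bfe(\bfb-\bfe_{j_0})}(V_K\cdot\div(\ell)))=\sum_{j}d_{r,j}\,\h_{\bfb-\bfe_{j_0}+\bfe_j}(V_K\cdot\div(\ell))$. Applying Proposition~\ref{especializacion} and Proposition~\ref{prop:6-1} once more — now to read $\Res_{\bfe(\bfc)}(V_K\cdot\div(\ell))$ as the specialization at $\ell$ of $\Res_{\bfe(\bfc),\bfe_{j_0}}(V_K)$, which up to relabeling of variables is $\Res_{\bfe(\bfc+\bfe_{j_0})}(V_K)$ — shows $\h_{\bfc}(V_K\cdot\div(\ell))=\h_{\bfc+\bfe_{j_0}}(V)$ for every $\bfc\in\N^{m}_{r}$. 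Substituting $\bfc=\bfb-\bfe_{j_0}+\bfe_j$ yields $\deg_\bft(\Res_{\bfe(\bfb),\bfd_r}(V))=\sum_j d_{r,j}\,\h_{\bfb+\bfe_j}(V)$, completing the step.

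The base case $r=0$ — namely $\deg_\bft(\Res_{\bfd_0}(X))=\sum_{j}d_{0,j}\,\h_{\bfe_j}(X)$ for $X\in Z_0^{+}(\P^{\bfn}_{_{k(\bft)}})$ — is where the real work lies and, I expect, the main obstacle. Reducing again to an irreducible $0$-dimensional $k(\bft)$-variety: if this is a $k(\bft)$-rational point $\bfxi$, write its coordinates in each block $\bfx_i$ as coprime polynomials $\xi_{i,l}\in k[\bft]$. Corollary~\ref{cor:2} gives $\Res_{\bfd_0}(\bfxi)=\nu\,F_0(\bfxi)$ for some $\nu\in k(\bft)^{\times}$, where $F_0(\bfxi)=\sum_{\bfa\in\N^{\bfn+\bfone}_{\bfd_0}}u_{0,\bfa}\,\bfxi^{\bfa}$; coprimality of the $\xi_{i,l}$ within each block makes $F_0(\bfxi)$ already primitive in $k[\bft][\bfU_0]$, so $\nu\in k^{\times}$ and hence, as distinct monomials in $\bfU_0$ carry distinct coefficients,
\begin{displaymath}
\deg_\bft(\Res_{\bfd_0}(\bfxi))=\deg_\bft(F_0(\bfxi))=\max_{\bfa}\deg_\bft(\bfxi^{\bfa})=\sum_i d_{0,i}\max_l\deg(\xi_{i,l})=\sum_i d_{0,i}\,\h_{\bfe_i}(\bfxi),
\end{displaymath}
the last step by Proposition~\ref{prop:9}\eqref{item:49}. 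The case of a $0$-dimensional $k(\bft)$-variety with nontrivial residue field is then handled by Galois descent: writing $X_{\overline{k(\bft)}}=\sum_{\bfxi}m_{\bfxi}\bfxi$, Corollary~\ref{cor:2} gives $\Res_{\bfd_0}(X)=\lambda\prod_{\bfxi}F_0(\bfxi)^{m_{\bfxi}}$ with $\lambda\in\overline{k(\bft)}^{\times}$, and the computation of $\deg_\bft$ of this product is reduced to the rational case by extending $\ord_{H_\infty}$ to the relevant finite extension of $k(\bft)$ (as recalled before the lemma) and invoking Gauss's lemma. Keeping careful track of primitivity, and of how $\deg_\bft$ behaves under the field extensions $k(\bft)\subset K$ and the relabelings used throughout, is the delicate point of the whole argument.

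It remains to deduce the first identity from \eqref{eq:2}, which I would do by induction on $r$; the case $r=0$ coincides with \eqref{eq:2}. For $r\ge1$, let $F_r$ be the generic form of multidegree $\bfd_r$ with fresh coefficient group $\bfw$ and $K=k(\bfw)(\bft)$; since $\bfd_r\ne\bfzero$, $\div(F_r)$ intersects $V_K$ properly, and exactly as in the second paragraph, Propositions~\ref{especializacion} and~\ref{prop:6-1} give $\deg_\bft(\Res_{\bfd_0,\dots,\bfd_r}(V))=\deg_\bft(\Res_{\bfd_0,\dots,\bfd_{r-1}}(V_K\cdot\div(F_r)))$. Applying the inductive hypothesis to the $(r-1)$-dimensional cycle $V_K\cdot\div(F_r)$ (using, as in the first paragraph, that only its $\h$-part contributes) rewrites the right-hand side as $\sum_{|\bfc'|=r}N_{\bfc'}(\bfd_0,\dots,\bfd_{r-1})\,\h_{\bfc'}(V_K\cdot\div(F_r))$; then \eqref{eq:2} combined with Proposition~\ref{especializacion} (as above, $\h_{\bfc'}(V_K\cdot\div(F_r))=\deg_\bft(\Res_{\bfe(\bfc'),\bfd_r}(V))=\sum_j d_{r,j}\,\h_{\bfc'+\bfe_j}(V)$) turns this into $\sum_{\bfc'}N_{\bfc'}(\bfd_0,\dots,\bfd_{r-1})\sum_j d_{r,j}\,\h_{\bfc'+\bfe_j}(V)$. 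The elementary identity $N_{\bfc}(\bfd_0,\dots,\bfd_r)=\sum_j d_{r,j}\,N_{\bfc-\bfe_j}(\bfd_0,\dots,\bfd_{r-1})$ — the recursion for $\coeff_{\bftheta^{\bfc}}$ of $\big(\prod_{i<r}\sum_j d_{i,j}\theta_j\big)\cdot\sum_j d_{r,j}\theta_j$ — then reorganizes the sum into $\sum_{\bfc}N_{\bfc}(\bfd)\,\h_{\bfc}(V)=\coeff_{\eta\bftheta^{\bfn}}\big([V]_{_{k[\bft]}}\prod_{i=0}^{r}\sum_j d_{i,j}\theta_j\big)$, as desired. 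Alternatively, one may take as intermediate step the full identity $[V_K\cdot\div(F_r)]_{_{k(\bfw)[\bft]}}=[V]_{_{k[\bft]}}\cdot\sum_j d_{r,j}\theta_j$, whose $\deg_{\bfb}$-part is the multiprojective B\'ezout theorem~\ref{bezgeom} and whose $\h_{\bfc}$-part is~\eqref{eq:2}.
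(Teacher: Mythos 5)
Your proof is correct, but it follows a genuinely different organizational path than the paper's, so a comparison is worth recording.

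The paper's proof first establishes linearity in the single variable $\bfd_0$, namely $\deg_{\bft}(\Res_{\bfd_0,\dots,\bfd_r}(X))=\sum_j d_{0,j}\deg_{\bft}(\Res_{\bfe_j,\bfd_1,\dots,\bfd_r}(X))$, uniformly for every $r$: the case $r=0$ is handled by the Gauss-lemma/valuation argument, and for $r\ge 1$ the paper specializes \emph{all} of the remaining groups $\bfU_1,\dots,\bfU_r$ at generic forms $F_1,\dots,F_r$ over an auxiliary field, reducing in one stroke to dimension $0$. Multilinearity of $\bfd\mapsto\deg_\bft(\Res_\bfd(X))$ then follows from this together with the permutation invariance of Proposition~\ref{prop:6-item:27}, and the lemma is concluded by observing that a multilinear form is determined by its values on the $\bfe(\bfc)$'s, where both sides agree by definition of $\h_{\bfc}$. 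Your argument instead proves \eqref{eq:2} directly by induction on $r$, reducing the dimension one step at a time by a single generic linear form $\ell$ in a well-chosen block $\bfx_{j_0}$, and exploiting the neat shift relation $\h_{\bfc}(V_K\cdot\div(\ell))=\h_{\bfc+\bfe_{j_0}}(V)$; then you run a second induction, cutting by a generic form $F_r$ of degree $\bfd_r$, and assemble the general identity via the explicit combinatorial recursion $N_{\bfc}(\bfd_0,\dots,\bfd_r)=\sum_j d_{r,j}N_{\bfc-\bfe_j}(\bfd_0,\dots,\bfd_{r-1})$. The base case $r=0$ is shared, although your split into the $k(\bft)$-rational case (where primitivity of $F_0(\bfxi)$ lets you read off $\deg_\bft$ directly) and the general case (valuations and Gauss's lemma) is slightly more baroque than the paper's uniform geometric decomposition $X_{\ov{k(\bft)}}=\sum_{\bfxi}m_{\bfxi}\bfxi$. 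What the paper's approach buys is economy — a single dimension-reduction and an appeal to an abstract multilinearity principle — while your approach is more self-contained, tracking explicitly how each extra index is peeled off.

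One small point to be aware of in both accounts: the ``purely formal fact'' that $\coeff_{\eta\bftheta^{\bfn}}\big([X]_{_{k[\bft]}}\prod_i\sum_j d_{i,j}\theta_j\big)$ equals $\sum_{\bfc\in\N^m_{r+1}}N_{\bfc}(\bfd)\h_{\bfc}(X)$ is literally true only when the sum is restricted to $\bfc\le\bfn$, since the class $[X]_{_{k[\bft]}}$ only records those heights. Your induction, like the paper's multilinearity argument, actually proves $\deg_\bft(\Res_\bfd(X))=\sum_{\bfc}N_{\bfc}(\bfd)\h_{\bfc}(X)$ with the unrestricted sum; identifying this with the class formula needs the vanishing $\h_{\bfc}(X)=0$ for $\bfc\not\le\bfn$, which in the paper is Theorem~\ref{bezaritff}\eqref{item:25} and is established afterward using \eqref{eq:2}. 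This is an organizational subtlety shared with the paper's own write-up, not a gap specific to your argument, but it is worth recording that what the inductions genuinely deliver is \eqref{eq:2} and the unrestricted-sum form of the first identity. Also, in the $r=0$ non-rational case, your phrase ``reduced to the rational case'' is not quite the right description: one does not reduce to rational points, but computes $\ord_{H_\infty}$ of the Poisson product directly via Gauss's lemma (and checks that the unit $\lambda$ contributes nothing by examining $\ord_H$ for $H\neq H_\infty$), which is precisely the computation the paper carries out.
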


\begin{proof}
Write $\bfd=(\bfd_{0},\dots, \bfd_{r})$ for
$\bfd_{i}\in \N^{m}\setminus \{\bf0\}$.
We claim that
\begin{equation}\label{eq:27}
  \deg_{\bft}(\Res_{\bfd_{0},\dots, \bfd_{r}}(X))=
\sum_{j=1}^{m}d_{0,j} \deg_{\bft}(\Res_{\bfe_{j},\bfd_{1},\dots, \bfd_{r}}(X)).
\end{equation}
 We first consider the case $r=0$.
Let $X_{_{\ov{k(\bft)}}}= \sum_{\bfxi} m_{\bfxi}\, \bfxi$ with
 $\bfxi\in \P^{\bfn}$ and  $m_{\bfxi}\ge1$.
For each of these points write $\bfxi=(\xi_{i,j})_{i,j}$ for some $\xi_{i,j}\in \ov{k(\bft)}$.
Let  $F_{0}$  and $L_{j}$ denote the general forms of degree
$\bfd_{0}$ and $\bfe_{j}$ as in \eqref{eq:15}, respectively.
By Proposition~\ref{prop:6-1} and
Corollary~\ref{cor:2},
there exist $\mu, \lambda_j\in \ov{k(\bft)}^\times$ such that
\begin{displaymath}
  \Res_{\bfd_{0}}(X)(F_{0})= \mu\,\prod_{\bfxi}F_{0}(\bfxi)^{m_{\bfxi}}
\quad , \quad
  \Res_{\bfe_{j}}(X)(L_{j})= \lambda_j\,
  \prod_{\bfxi}L_{j}(\bfxi_{j})^{m_{\bfxi}} .
\end{displaymath}
Picking a suitable choice of multihomogeneous coordinates for the
$\bfxi$'s, {it is possible to set} $\lambda_{j}=1$ for all $j$.
We have
\begin{displaymath}
  \Res_{\bfd_{0}}(X)\Bigg(\prod_{j=1}^{m}L_{j}^{d_{0,j}}\Bigg) =
  \mu\,\prod_{\bfxi}\prod_{j=1}^{m}L_{j}(\bfxi)^{d_{0,j}m_{\bfxi}}
=
  \mu\,\prod_{j=1}^{m}\Res_{\bfe_{j}}(X)(L_{j})^{d_{0,j}}.
\end{displaymath}
We deduce that $\mu\in k(\bft)^{\times}$ since all considered
resultants have coefficients in $k[\bft]$. Now let $E$ be a
sufficiently large finite extension of $k(\bft)$ containing all  the
chosen coordinates $\xi_{i,j}$. For any valuation $v$ of $E$,
\begin{equation} \label{eq:16}
  v(F_{0}(\bfxi))
=\min_{|\bfa|={\bfd_{0}}} v(\bfxi^\bfa)
= \sum_{j=1}^{m}d_{0,j}\, \min_{\ell}
  v(\xi_{j,\ell})
= \sum_{j=1}^{m}d_{0,j}v(L_{j}(\bfxi_j)).
\end{equation}
Let  $H$ be an irreducible $k$-hypersurface of $\P^{p}$ different
from $H_{\infty}$, and $v_{H}$ a valuation of $E$ extending $\ord_{H}$.  Since $\Res_{\bfd_{0}}(X)$ and
$\Res_{\bfe_{j}}(X)$ are primitive with respect to $k[\bft]$, we
have that $v_{H}(\Res_{\bfd_{0}}(X))=v_{H}(\Res_{\bfe_{j}}(X))=0$.
By Gauss Lemma,
\begin{align*}
\ord_{H}(\mu)&= -
\ord_{H}\bigg(\prod_{\bfxi}F_{0}(\bfxi)^{m_{\bfxi}}\bigg) =
-\sum_{\bfxi}m_{\bfxi}\, v_{H}(F_{0}(\bfxi)),
\\
0&=\ord_{H}\bigg(\prod_{\bfxi}L_{j}(\bfxi_j)^{m_{\bfxi}}\bigg) =
\sum_{\bfxi}m_{\bfxi}\, v_{H}(L_{j}(\bfxi_j)).
\end{align*}
Applying \eqref{eq:16},
 \begin{displaymath}
\ord_{H}(\mu)=-\sum_{\bfxi}m_{\bfxi}v_{H}(F_{0}(\bfxi))=
-\sum_{j=1}^m d_{0,j}\,\bigg( \sum_{\bfxi}m_{\bfxi}\,
v_{H}(L_{j}(\bfxi_j))\bigg)=0.
\end{displaymath}
Since this holds for every $H\ne
H_{\infty}$, we deduce that $\mu\in k^\times$.
Now let $v_{\infty}$ be a valuation of $E$ extending
$\ord_{H_{\infty}}$. Applying again Gauss Lemma and \eqref{eq:16},
\begin{multline*}
\ord_{H_{\infty}}(\Res_{\bfd_{0}}(X)) =
\sum_{\bfxi}  m_{\bfxi} \, v_{\infty}(F_{0}(\bfxi)) \\
=\sum_{j=1}^{m}d_{0,j} \bigg( \sum_{\bfxi}  m_{\bfxi} \,
v_{\infty}(L_{j}(\bfxi_{j})) \bigg) = \sum_{j=1}^{m} d_{0,j} \,
\ord_{H_{\infty}}(\Res_{\bfe_{j}}(X)).
\end{multline*}
Hence
$\deg_{\bft}(\Res_{\bfd_{0}}(X))=\sum_{j=1}^{m}d_{0,j} \,
\deg_{\bft}(\Res_{\bfe_{j}}(X))$, which concludes the case $r=0$.

\medskip The case $r\ge1$ follows by reduction to the zero-dimensional
case.  For $1\le i\le r$, let~$F_{i}$ be the generic polynomial of
multidegree $\bfd_i$ and $\bfU_{i}$ the {variables} corresponding to
its coefficients. Set $\wt k=k\big(\bfU_1, \dots, \bfU_{r}\big)$.
Observe that $\div(F_j)$ intersects properly the cycle $X_{\wt
  k(\bft)}\cdot \prod_{i=1}^{j-1}\div(F_i)$, $1\le j\le r$. Therefore,
$X_{\wt k(\bft)}\cdot \prod_{i=1}^{r}\div(F_i)$ is a cycle of
dimension~0 and, by Proposition \ref{especializacion}, there exists
$\lambda\in \wt k(\bft)^\times$ such that
\begin{displaymath}
 \Res_{\bfd}(X)=\lambda\,\Res_{\bfd_{0}}\bigg(X_{\wt k} \cdot
 \prod_{i=1}^{r}\div(F_{i})\bigg),
\end{displaymath}
Since both resultants are primitive  with respect to $\wt k[\bft]$,
 $\lambda\in \wt k^{\times}$ and so these resultants have the same $\bft$-degree.
Analogous  relations hold  for
 $\Res_{\bfe_{j}, \bfd_{1},\dots, \bfd_{r}}(X)$, $1\le j\le m$.
Hence, \eqref{eq:27} follows by applying the previously
considered zero-dimensional case.

\medskip From Proposition \ref{prop:6-item:27}, we deduce that the map
\begin{displaymath}
  (\N^{m}\setminus \{{\bf0}\})^{r+1}\longrightarrow \Z\quad, \quad \bfd\longmapsto \deg_{\bft}(\Res_{\bfd}(X))
\end{displaymath}
 is multilinear  with respect
to the variables $\bfd_{0},\dots, \bfd_{r}$.
The same holds for the map $\bfd\mapsto \coeff_{\eta\bftheta^{\bfn}}\Big(
  [X]_{_{k[\bft]}}\,
  \prod_{i=0}^{r} \sum_{j=1}^{m}d_{i,j} \theta_{j}\Big)$.
Both maps coincide when $\bfd= \bfe(\bfc)$ for $\bfc\in \N^{m}_{r+1}$ because
\begin{displaymath}
  \deg_{\bft}(\Res_{\bfe(\bfc)}(X))= \h_{\bfc}(X)= \coeff_{\eta\,
    \bftheta^{\bfn}}(\bftheta^{\bfc} \, [X\ck).
\end{displaymath}
Since the family $\{\bfe(\bfc)\}_{\bfc}$ is
a basis of the semigroup $(\N^{m}\setminus\{{\bf0}\})^{r+1}$,
both maps coincide for all $\bfd$.
This completes the proof of the statement.
\end{proof}

The following is an arithmetic version of B\'ezout's theorem for multiprojective cycles over $k(\bft)$.

\begin{theorem}\label{bezaritff}
  Let $X\in Z_{r}\Big(\P^{\bfn}_{_{k(\bft)}}\Big)$ and $f\in k[\bft][\bfx_{1},\dots,\bfx_{m}]$ {a
polynomial, multihomogeneous in  the variables $\bfx$, such that}
$X$ and   $\div(f)$   intersect properly.
\begin{enumerate} \item \label{item:24} If $X$ is effective,
then  for any $\bfb\in \N_{r}^{m}$
\begin{displaymath}\h_\bfb(X\cdot \div(f)) \le \h(f)\deg_\bfb(X)
 + \sum_{i=1}^m\deg_{\bfx_i}(f)  \h_{\bfb+\bfe_i}(X).
 \end{displaymath}
\smallskip
\item \label{item:25}   $\h_\bfc(X)=0$ for any $\bfc\in \N_{r+1}^m$ such that
 $c_i>n_i$ for some $i$.
 \smallskip
\item \label{item:23} If $X$ is effective, then  $[ X\cdot\div(f)]_{_{k[\bft]}} \le [ X]_{_{k[\bft]}}\cdot [ \div(f)]_{_{k[\bft]}}$.
\end{enumerate}
\end{theorem}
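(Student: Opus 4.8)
The plan is to prove the three items in the order \eqref{item:25}, \eqref{item:24}, \eqref{item:23}: the third will follow formally from the second together with the geometric Bézout theorem (Theorem~\ref{bezgeom}), and the second uses the first only to handle the multi-indices $\bfb\not\le\bfn$. By linearity of $\h_\bfc$ and $\deg_\bfb$ in $X$, and since the resultants occurring are non-zero, it suffices throughout to treat a fixed irreducible $K$-variety $V\subset\P^{\bfn}_{_{k(\bft)}}$, where $K=k(\bft)$.

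For \eqref{item:25}, fix $i$ with $c_i>n_i$ and apply Lemma~\ref{prop:8} with $J=\{i\}$: the forms of multidegree $\bfe_i$ occurring in $\bfe(\bfc)$ are exactly those lying in $K[\bfU_j][\bfx_i]$, so the count on the right of~\eqref{eq:17} at $J=\{i\}$ equals $c_i$ and the condition reads $\dim\pi_{\{i\}}(V)\ge c_i-1$. Since $\dim\pi_{\{i\}}(V)\le n_i$, this fails when $c_i\ge n_i+2$, and also when $c_i=n_i+1$ and $\pi_{\{i\}}(V)\subsetneq\P^{n_i}$; in those cases $\Elim_{\bfe(\bfc)}(V)=1$, hence $\Res_{\bfe(\bfc)}(V)=\Elim_{\bfe(\bfc)}(V)^{\nu}=1$ by Proposition~\ref{prop:1}, so $\h_\bfc(V)=0$. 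The remaining case, $c_i=n_i+1$ and $\pi_{\{i\}}(V)=\P^{n_i}$, is the main obstacle. Let $M$ be the $(n_i+1)\times(n_i+1)$ matrix of coefficients of the $n_i+1$ general linear forms of multidegree $\bfe_i$ appearing in $\bfe(\bfc)$. Whenever $\det M\ne 0$ those linear forms have empty common zero locus in $\P^{n_i}$, hence the corresponding forms cut out the empty set in $\P^{\bfn}$, so no point of $V$ can satisfy all the general forms; thus $\nabla_{\bfe(\bfc)}(V)\subset V(\det M)$, and $V(\det M)$ is an irreducible hypersurface since the generic determinant is irreducible and stays so after adjoining the remaining coefficient variables. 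By Proposition~\ref{prop:7}\eqref{item:21}, either $\nabla_{\bfe(\bfc)}(V)$ is not a hypersurface, in which case $\Res_{\bfe(\bfc)}(V)=1$ and $\h_\bfc(V)=0$; or it is a hypersurface, necessarily equal to $V(\det M)$, so by Proposition~\ref{prop:7}\eqref{item:20} the primitive polynomials $\Elim_{\bfe(\bfc)}(V)$ and $\det M$ generate the same prime ideal of $K[\bfU]$ and therefore agree up to a factor in $k^{\times}$, whence $\deg_\bft(\Elim_{\bfe(\bfc)}(V))=0$ and again $\h_\bfc(V)=\deg_\bft(\Res_{\bfe(\bfc)}(V))=0$.

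For \eqref{item:24}, with $X$ effective, we may assume $f$ is primitive and $\bfd:=\bfdeg(f)\ne\bfzero$ (else $\div(f)=0$ and the claim is trivial); fix $\bfb\in\N_r^m$. If $\bfb\not\le\bfn$ then $\h_\bfb(X\cdot\div(f))=0$ by \eqref{item:25} and the inequality holds. Assume $\bfb\le\bfn$ and set $R:=\Res_{\bfe(\bfb),\bfd}(X)$. By Proposition~\ref{especializacion}, specializing the last group of variables of $R$ at the coefficients of $f$ gives $\lambda\,\Res_{\bfe(\bfb)}(X\cdot\div(f))$ for some $\lambda\in K^{\times}$; since $X$ intersects $\div(f)$ properly, $X\cdot\div(f)$ is an effective cycle, so the factor $\Res_{\bfe(\bfb)}(X\cdot\div(f))$ is primitive in $k[\bft][\bfU]$, hence $\lambda$ is its $k[\bft]$-content up to a unit of $k$, in particular $\lambda\in k[\bft]\setminus\{0\}$; therefore $\h_\bfb(X\cdot\div(f))\le\deg_\bft(R(\bfU_0,\dots,\bfU_{r-1},f))$. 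Now $\deg_\bft(R)=\sum_i\deg_{\bfx_i}(f)\,\h_{\bfb+\bfe_i}(X)$ by \eqref{eq:2} of Lemma~\ref{lemm:3}, while $\deg_{\bfU_r}(R)=\coeff_{\bftheta^{\bfn}}([X]\,\bftheta^{\bfb})=\deg_\bfb(X)$ by Proposition~\ref{prop:4} (the product of the $\theta$'s attached to the index $\bfe(\bfb)$ is $\bftheta^{\bfb}$); since every coefficient of $f$ has $\bft$-degree at most $\h(f)$, the substitution increases the $\bft$-degree by at most $\deg_{\bfU_r}(R)\,\h(f)$, giving the asserted bound $\h(f)\deg_\bfb(X)+\sum_i\deg_{\bfx_i}(f)\,\h_{\bfb+\bfe_i}(X)$.

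Finally, for \eqref{item:23} we may again assume $f$ primitive, so $[\div(f)]_{_{k[\bft]}}=\h(f)\,\eta+\sum_i\deg_{\bfx_i}(f)\,\theta_i$ by Proposition~\ref{prop:9}\eqref{item:15}. Expanding $[X]_{_{k[\bft]}}\cdot[\div(f)]_{_{k[\bft]}}$ in $A^{*}(\P^{\bfn};k[\bft])$ and using $\eta^2=0$, the coefficient of $\bftheta^{\bfn-\bfb}$ is $\sum_i\deg_{\bfx_i}(f)\,\deg_{\bfb+\bfe_i}(X)$, which equals $\deg_\bfb(X\cdot\div(f))$ by Theorem~\ref{bezgeom}, while the coefficient of $\eta\,\bftheta^{\bfn-\bfb}$ is $\h(f)\deg_\bfb(X)+\sum_i\deg_{\bfx_i}(f)\,\h_{\bfb+\bfe_i}(X)$, which is at least $\h_\bfb(X\cdot\div(f))$ by \eqref{item:24}. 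Comparing with $[X\cdot\div(f)]_{_{k[\bft]}}$ coefficient-wise in the basis $\{\bftheta^{\bfa},\eta\,\bftheta^{\bfa}\}_{\bfa\le\bfn}$ gives the stated inequality.
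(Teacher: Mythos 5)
Your proof is correct, and the interesting point is the treatment of item \eqref{item:25}, where you take a genuinely different route. The paper proves \eqref{item:25} \emph{after} \eqref{item:24}, by induction on the codimension: it cuts $\P^{\bfn}$ down to a complete intersection $Y_r\supset V$ using a regular sequence, applies \eqref{item:24} and the non-negativity of heights at each step, and uses $\h_{\bfc}(\P^{\bfn})=0$ (Proposition~\ref{prop:9}\eqref{item:18}) as the base case, finally squeezing $0\le\h_\bfc(V)\le\h_\bfc(Y_r)=0$. You instead prove \eqref{item:25} directly from the resultant theory, independently of \eqref{item:24}, by analyzing the eliminant: splitting on $c_i\ge n_i+2$ versus $c_i=n_i+1$ via Lemma~\ref{prop:8} at $J=\{i\}$, and in the latter subcase identifying $\nabla_{\bfe(\bfc)}(V)$ with the hypersurface $V(\det M)$ of the generic $(n_i+1)\times(n_i+1)$ determinant (which is irreducible and contains $\nabla_{\bfe(\bfc)}(V)$), so that $\Elim_{\bfe(\bfc)}(V)$ agrees with $\det M$ up to a factor in $k^\times$ and hence has $\bft$-degree zero. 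This buys a self-contained, explicit description of what the ``overflow'' resultant actually is, at the cost of a slightly more delicate case analysis (and a small technical wrinkle: you invoke Proposition~\ref{prop:7}, which is phrased over $\ov K$, for a $K$-variety, so strictly one should pass through $V_{\ov K}$ and use Proposition~\ref{prop:6-1} to transfer the conclusion, though this is a routine fix). Your arguments for \eqref{item:24} and \eqref{item:23} are essentially the paper's; the case split at $\bfb\not\le\bfn$ in \eqref{item:24} is harmless but unnecessary, since the specialization argument via Proposition~\ref{especializacion}, Lemma~\ref{lemm:3}\eqref{eq:2} and Proposition~\ref{prop:4} works verbatim for every $\bfb\in\N^m_r$.
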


\begin{proof}\

\eqref{item:24}
Set $\bfd=\bfdeg(f)$ and let $\bfU=(\bfU_{0},\dots, \bfU_{r-1},\bfU_{r})$ be the groups of variables
corresponding to $(\bfe(\bfb),\bfd)$.
By Proposition \ref{especializacion},
there exists $\lambda\in k(\bft)^{\times}$ such that
\begin{displaymath}
  \Res_{\bfe(\bfb),\bfd}(X) (\bfU_{0},\dots,\bfU_{{r-1}},f)=
  \lambda \, \Res_{\bfe(\bfb)}(X\cdot \div(f))(\bfU_{0},\dots,\bfU_{{r-1}}).
\end{displaymath}
Indeed,
$\lambda\in k[\bft]\setminus\{0\}$ because the resultant in the
right-hand side is primitive. Hence,
\begin{align*} \deg_{\bft}\Big(\Res_{\bfe(\bfb)}(X\cdot \div(f))\Big)
&\le \deg_{\bft}\Big(  \Res_{\bfe(\bfb),\bfd}(X)
(\bfU_{0},\dots,\bfU_{{r-1}},f)\Big)\\[1mm]
&\le \deg_{\bft}(f)\,
\deg_{\bfU_{r}}(\Res_{\bfe(\bfb),\bfd}(X))+\deg_{\bft}(
\Res_{\bfe(\bfb),\bfd}(X)).
\end{align*}
From the definition of the height and Proposition~\ref{prop:4}, we
deduce that \begin{math}
  \h_\bfb(X\cdot \div(f))\le \h(f)\deg_\bfb(V)+\deg_{\bft}(
\Res_{\bfe(\bfb),\bfd}(X)).
\end{math}
The statement  follows then from \eqref{eq:2}.

\smallskip
\eqref{item:25} It is enough to prove the statement for a
$k(\bft)$-variety $V$. We first consider the case when $V$ is
determined by a regular sequence. We proceed by induction on its
codimension. Let $f_{j}$, $1\le j\le |\bfn|-r$, be a regular
sequence of multihomogeneous polynomials. For $r\le \ell\le |\bfn|$,
set
$$
Y_{\ell}=\prod_{j=1}^{|\bfn|-\ell}\div(f_{j}) \in
Z_{\ell}^{+}\Big(\P^{\bfn}_{_{k(\bft)}}\Big).
$$
For $r=|\bfn|$, we have $Y_{|\bfn|}=\P^\bfn$ and
Proposition~\ref{prop:9}\eqref{item:18} implies that
$\h_\bfc(Y_{|\bfn|})=0$  for all $\bfc\in \N_{|\bfn|+1}$.
Suppose now that $r<|\bfn|$ and that the statement holds for $Y_{r+1}$.
Let $\bfc\in
\N_{r+1}^m$ such that  $c_i>n_i$ for some $i$.
By Proposition~\ref{prop:9}\eqref{item:50} and item~\eqref{item:24} above,
$$
0\le \h_\bfc(Y_{r}) \le \h(f_{|\bfn|-r})\deg_{\bfc}(Y_{{r+1}})+
\sum_{j=1}^m \deg_{\bfx_j}(f_{|\bfn|-r}) \h_{\bfc+\bfe_j}(Y_{{r+1}}).
$$
The inductive
hypothesis together with the fact that $\deg_\bfc(Y_{r+1})=0$ imply that the right-hand side of this inequality
vanishes, and
hence $\h_\bfc(Y_{r})=0$.

For the general case, consider $|\bfn|-r$ generic
linear combinations of a system of generators of  $I(V)$. The obtained
polynomials form a regular sequence and define a variety $Y_{r}$
such that $Y_r-V$ is effective.  By the previous analysis,  $0\le
\h_\bfc(V)\le \h_\bfc(Y_{r})=0$, {hence} $ \h_\bfc(V)=0$ as stated.

\smallskip
\eqref{item:23} This is a direct consequence of \eqref{item:24} and
\eqref{item:25} together with Theorem~\ref{bezgeom} and
Proposition~\ref{prop:9}\eqref{item:15}.
\end{proof}

\begin{corollary} \label{cor:1}
  Let $V\subset \P^{\bfn}_{_{k(\bft)}}$ be a $k(\bft)$-variety of pure
  dimension $r$ and $f\in k[\bft][\bfx_{1},\dots,\bfx_{m}]$ a
multihomogeneous polynomial. Let $W$ denote the union of
the components of dimension $r-1$ of the intersection $V\cap V(f)$.
Then
\begin{displaymath}
[ W]_{_{k[\bft]}} \le [ V]_{_{k[\bft]}}\cdot [ V(f)]_{_{k[\bft]}}.
\end{displaymath}
In particular,
\begin{math}\h_\bfb(W) \le \h(f)\deg_\bfb(V)
 + \sum_{i=1}^m\deg_{\bfx_i}(f)  \h_{\bfb+\bfe_i}(V)
 \end{math} for all $\bfb\in \N_{r}^{m}$.
\end{corollary}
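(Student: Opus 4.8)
The plan is to reduce the statement to the proper-intersection case already established in Theorem~\ref{bezaritff}\eqref{item:23}, by isolating the components of $V$ that are absorbed by $V(f)$. We may assume $f\notin k[\bft]$, so that $V(f)$ is a divisor. Let $g\in k[\bft][\bfx]$ be the primitive squarefree polynomial, multihomogeneous in $\bfx$, obtained from $f$ by dropping its content (a factor in $k[\bft]$) and the repeated irreducible factors; thus $V(g)=V(f)$ and, \emph{as cycles}, $\div(g)=V(f)$, so that $[\div(g)]_{_{k[\bft]}}=[V(f)]_{_{k[\bft]}}$. Moreover $g$ divides $f$ in $k[\bft][\bfx]$, so by the multiplicativity of $\h$ under products (Lemma~\ref{lemma:2-1}) and the analogous additivity of $\deg_{\bfx_i}$ we get $\h(g)\le\h(f)$ and $\deg_{\bfx_i}(g)\le\deg_{\bfx_i}(f)$ for $1\le i\le m$. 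Now decompose $V=V'+V''$ into effective cycles of pure dimension $r$, where $V'$ (resp.\ $V''$) is the sum of the irreducible components of $V$ not contained in $V(f)$ (resp.\ contained in $V(f)$). By the $\Z$-linearity of $[\,\cdot\,]_{_{k[\bft]}}$ and Proposition~\ref{prop:9}\eqref{item:50}, $[V']_{_{k[\bft]}}\le[V]_{_{k[\bft]}}$, and $V'$ intersects $\div(g)$ properly by construction.

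Next I would show $W\le V'\cdot\div(g)$. Let $Y$ be an $(r-1)$-dimensional irreducible component of $V\cap V(f)$. It cannot be contained in $V''$, since otherwise $Y$ would be properly contained in the $r$-dimensional variety $V''\subseteq V\cap V(f)$ and hence would not be a component of $V\cap V(f)$. Therefore $Y$ is an irreducible component of $V'\cap V(f)=V'\cap V(g)$, so it is a component of $V'\cap H$ for some irreducible component $H$ of $V(g)$; since $g$ is squarefree, $\div(g)=\sum_H H$ and $V'\cdot\div(g)=\sum_H(V'\cdot H)$, and thus $Y$ occurs in $V'\cdot\div(g)$ with coefficient at least $\mult(Y|V',H)\ge1$. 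As $W$ is reduced, this yields $W\le V'\cdot\div(g)$. Combining this with the monotonicity of $[\,\cdot\,]_{_{k[\bft]}}$ on effective cycles and with the fact that multiplying by an element with nonnegative coordinates preserves the coefficientwise order in the extended Chow ring, we obtain
\begin{multline*}
[W]_{_{k[\bft]}}\ \le\ [V'\cdot\div(g)]_{_{k[\bft]}}\ \le\ [V']_{_{k[\bft]}}\cdot[\div(g)]_{_{k[\bft]}}\\ =\ [V']_{_{k[\bft]}}\cdot[V(f)]_{_{k[\bft]}}\ \le\ [V]_{_{k[\bft]}}\cdot[V(f)]_{_{k[\bft]}},
\end{multline*}
where the second inequality is Theorem~\ref{bezaritff}\eqref{item:23} applied to the effective cycle $V'$ and the polynomial $g$. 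This proves the displayed inequality.

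For the ``in particular'' assertion I would extract, for $\bfb\in\N^m_r$, the coefficient of $\eta\,\theta_1^{n_1-b_1}\cdots\theta_m^{n_m-b_m}$ on both sides of $[W]_{_{k[\bft]}}\le[V]_{_{k[\bft]}}\cdot[V(f)]_{_{k[\bft]}}$. By the definition of the class of a cycle in the extended Chow ring and Proposition~\ref{prop:9}\eqref{item:15}, this coefficient equals $\h_\bfb(W)$ on the left and $\h(g)\deg_\bfb(V)+\sum_{i=1}^m\deg_{\bfx_i}(g)\,\h_{\bfb+\bfe_i}(V)$ on the right; the stated bound then follows from $\h(g)\le\h(f)$ and $\deg_{\bfx_i}(g)\le\deg_{\bfx_i}(f)$. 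The step I expect to be the main obstacle is precisely the verification that $W\le V'\cdot\div(g)$ — that every $(r-1)$-dimensional component of the possibly improper intersection $V\cap V(f)$ already appears, with positive multiplicity, as a component of the proper intersection product $V'\cdot\div(g)$; the remainder is routine bookkeeping with classes in the extended Chow ring together with a direct appeal to Theorem~\ref{bezaritff}.
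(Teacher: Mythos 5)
Your argument follows the same route as the paper: decompose $V$ into the union $V'$ of its components not contained in $V(f)$ plus the rest, observe that $V'$ meets $V(f)=\div(g)$ properly, apply Theorem~\ref{bezaritff}\eqref{item:23} to $V'$, and then bound $[V']_{_{k[\bft]}}\le[V]_{_{k[\bft]}}$ by positivity. You supply considerably more detail than the paper does — in particular, you make explicit the primitive squarefree polynomial $g$ with $\div(g)=V(f)$ (justifying $\h(g)\le\h(f)$ and $\deg_{\bfx_i}(g)\le\deg_{\bfx_i}(f)$ via additivity of $\h$ and the $\bfx_i$-degrees over products), and you actually \emph{prove} the cycle inequality $W\le V'\cdot\div(g)$, which the paper's one-line chain $[W]\le[V'\cdot V(f)]\le\cdots$ leaves implicit. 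That verification, together with the observation that multiplying a coefficientwise nonnegative inequality by a nonnegative element of the extended Chow ring preserves the order, closes the logical gap in the paper's brevity; this is all sound.

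The one place your write-up has a genuine gap is the ``in particular'' assertion. You propose to extract, for every $\bfb\in\N^m_r$, the coefficient of $\eta\,\theta_1^{n_1-b_1}\cdots\theta_m^{n_m-b_m}$ in the inequality of classes. But when $b_i>n_i$ for some $i$ the exponent $n_i-b_i$ is negative and this monomial does not exist in $A^*(\P^{\bfn};k[\bft])$, so ``comparing coefficients'' makes no sense for such $\bfb$. In that case one must argue separately that both sides of the scalar inequality vanish: $\h_\bfb(W)=0$ by Theorem~\ref{bezaritff}\eqref{item:25} applied to $W$ (noting $|\bfb|=r=\dim W+1$), and the right-hand side is $0$ because $\deg_\bfb(V)=0$ by Proposition~\ref{prop:2}\eqref{item:38} and $\h_{\bfb+\bfe_i}(V)=0$ again by Theorem~\ref{bezaritff}\eqref{item:25}. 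The paper explicitly flags this case (``\ldots and from Theorem~\ref{bezaritff}\eqref{item:25} otherwise''); you should add the corresponding sentence. With that addendum the proof is complete.
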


\begin{proof}
  Let $V'\subset \P^{\bfn}_{_{k(\bft)}}$ be the union of the
  components of $V$ not contained in $|\div(f)|$. Then
  $W\subset V'\cap V(f)$ and $V(f)$ intersects $V'$
  properly. By  Theorem \ref{bezaritff}\eqref{item:23},
  \begin{displaymath}
    [W]_{_{k(\bft)}}\le [ V'\cdot V(f)]_{_{k(\bft)}}
\le  [ V']_{_{k(\bft}}\cdot[V(f)]_{_{k(\bft)}}\le [
V]_{_{k(\bft}}\cdot[V(f)]_{_{k(\bft)}}.
  \end{displaymath}
The last statement follows from this inequality  when $\bfb\le \bfn$
and from Theorem \ref{bezaritff}\eqref{item:25} otherwise.
\end{proof}

\begin{corollary}\label{desffcase}
Let $n\in\Z_{>0}$, $X\in Z_r^+\Big(\P^n_{_{k(\bft)}}\Big)$ and
$f_{j}\in k[\bft][x_{0},\dots, x_{n}]\setminus k[\bft]$ a family of
$s\le r$ polynomials homogeneous in the variables
$x_{0},\dots,x_{n}$ such that
 $ X\cdot \prod_{j=1}^{i-1}\div(f_{j})$ and $\div(f_{i})$ intersect
properly  for  $1\le i\le s$. Then
 \begin{equation*}
 \h\bigg(X\cdot \prod_{j=1}^{s}\div(f_{j})\bigg)\leq \bigg(\prod_{j=1}^s\deg_\bfx(f_{j})\bigg)\,\bigg(\h(X)+\deg(X)
\bigg(\sum_{\ell=1}^s\frac{\deg_{\bft}(f_{\ell})}{\deg_\bfx(f_{\ell})}\bigg)\bigg).
\end{equation*}
\end{corollary}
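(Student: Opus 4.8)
The plan is to iterate the arithmetic B\'ezout inequality of Theorem~\ref{bezaritff}\eqref{item:23} in the extended Chow ring $A^{*}(\P^{n};k[\bft])=\Z[\eta,\theta]/(\eta^{2},\theta^{n+1})$ (here $m=1$, so $\bfn=n$), and then to read off the coefficient encoding the height. First I would set $X_{0}=X$ and $X_{i}=X_{i-1}\cdot\div(f_{i})$ for $1\le i\le s$ (the case $s=0$ being trivial). Each $X_{i}$ is an effective cycle of pure dimension $r-i$, since intersection multiplicities are lengths, hence non-negative; and $X_{i-1}$ and $\div(f_{i})$ intersect properly by hypothesis, so all the intermediate products are defined. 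Writing $d_{j}=\deg_{\bfx}(f_{j})\ge1$ (recall $f_{j}\notin k[\bft]$) and $h_{j}=\deg_{\bft}(f_{j})=\h(f_{j})$, Proposition~\ref{prop:9}\eqref{item:15} gives $[\div(f_{j})]_{_{k[\bft]}}=h_{j}\,\eta+d_{j}\,\theta$, a class with non-negative coordinates. Since all structure constants of $A^{*}(\P^{n};k[\bft])$ on the basis $\{\theta^{a},\eta\theta^{a}\}_{0\le a\le n}$ are $0$ or $1$, multiplication by such a class is monotone for the coefficientwise order, so $s$ applications of Theorem~\ref{bezaritff}\eqref{item:23} yield
$$[X_{s}]_{_{k[\bft]}}\ \le\ [X]_{_{k[\bft]}}\cdot\prod_{j=1}^{s}\big(h_{j}\eta+d_{j}\theta\big).$$

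Next I would expand the right-hand side using $\eta^{2}=0$: the product is $\big(\prod_{j}d_{j}\big)\theta^{s}+\big(\sum_{\ell}h_{\ell}\prod_{j\ne\ell}d_{j}\big)\eta\theta^{s-1}$, and $[X]_{_{k[\bft]}}=\h(X)\,\eta\theta^{n-r-1}+\deg(X)\,\theta^{n-r}$ by the definition of the extended class together with Definition~\ref{def:15} and Proposition~\ref{prop:5}\eqref{item:13} (in the degenerate case $r=n$ the term $\h(X)\,\eta\theta^{n-r-1}$ is absent from the class, but then $\h(X)=\h_{n+1}(X)=0$ by Theorem~\ref{bezaritff}\eqref{item:25}, so the displayed formula remains correct if that term is read as $0$). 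Multiplying out and using $\eta^{2}=0$ once more, the coefficient of the monomial $\eta\,\theta^{\,n-r+s-1}$ on the right-hand side equals $\h(X)\prod_{j}d_{j}+\deg(X)\sum_{\ell}h_{\ell}\prod_{j\ne\ell}d_{j}$; note $0\le n-r+s-1\le n-1$ because $1\le s\le r\le n$, so this monomial is a genuine basis element of the quotient.

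Finally, since $X_{s}$ has pure dimension $r-s$, one has $\h(X_{s})=\h_{r-s+1}(X_{s})=\coeff_{\eta\theta^{\,n-r+s-1}}\big([X_{s}]_{_{k[\bft]}}\big)$ by Definition~\ref{def:15} and the definition of the extended class, so the class inequality gives $\h(X_{s})\le\h(X)\prod_{j}d_{j}+\deg(X)\sum_{\ell}h_{\ell}\prod_{j\ne\ell}d_{j}$. Rewriting $\prod_{j\ne\ell}d_{j}=\big(\prod_{j}d_{j}\big)/d_{\ell}$, which is legitimate since $d_{j}\ge1$, and factoring out $\prod_{j}d_{j}=\prod_{j}\deg_{\bfx}(f_{j})$ yields exactly the asserted inequality. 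I do not foresee any serious obstacle: this is a straightforward iteration, the only points requiring attention being the bookkeeping of coefficients in the extended Chow ring and the harmless boundary case $r=n$. An entirely equivalent, more hands-on alternative is to induct on $s$ directly from the height estimate of Theorem~\ref{bezaritff}\eqref{item:24}, in the form $\h(X_{i})\le h_{i}\deg(X_{i-1})+d_{i}\h(X_{i-1})$, together with the degree recursion $\deg(X_{i})=d_{i}\deg(X_{i-1})$ supplied by Theorem~\ref{bezgeom}.
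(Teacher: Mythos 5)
Your proof is correct and follows essentially the same route as the paper's: iterate the arithmetic B\'ezout inequality of Theorem~\ref{bezaritff}\eqref{item:23} in the extended Chow ring to obtain $[Y]_{_{k[\bft]}}\le [X]_{_{k[\bft]}}\cdot\prod_{j}[\div(f_{j})]_{_{k[\bft]}}$, then compare coefficients of $\eta\,\theta^{n-r+s-1}$. You merely make explicit the points the paper leaves tacit (monotonicity of multiplication by non-negative classes, the degenerate case $r=n$), and you note the equivalent hands-on induction via Theorem~\ref{bezaritff}\eqref{item:24}; none of this constitutes a different argument.
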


\begin{proof}
Set  $Y=X\cdot \prod_{j=1}^{s}\div(f_{j})$.
We have  $$[Y]_{_{k[\bft]}}= \h(Y)\,
\eta\, \theta^{n-r+s-1}+\deg(Y)\, \theta^{n-r+s} \ , \
[X]_{_{k[\bft]}}=\Big(\h(X)\, \eta\, \theta^{n-r-1} +  \deg(X)\,
\theta^{n-r}\Big).$$
Applying recursively
Theorem~\ref{bezaritff}\eqref{item:23},
$$
[Y]_{_{k[\bft]}} \le  [X]_{_{k[\bft]}}\cdot \prod_{j=1}^s
[\div(f_j)]_{_{k[\bft]}}.
$$
The statement follows by comparing the coefficients  corresponding to
the monomial $\eta \, \theta^{n-r+s-1}$ in the above inequality.
\end{proof}

Next  result shows that, for projective $k(t)$-cycles, the
inequality in Theorem~\ref{bezaritff}\eqref{item:24} is an equality
in the generic case.

\begin{proposition}\label{lemm:15}
Let $X\in Z_{r}(\P^{n}_{_{k(t)}})$ with $k$ an algebraically closed field,
  $t$ a single variable and  $n,r\ge 1$. Let $\cX \subset
  \P_{k}^{1}\times\P_{k}^{n}$ be the standard model of $X$ and $\ell\in k[x_{0},\dots,
  x_{n}]$ a generic linear form. Then
\begin{enumerate}
\item \label{item:44} $\cX\cdot \div(\ell)$ is the standard model of
  $X\cdot \div (\ell)$;
\smallskip
\item \label{item:52} $\deg(X\cdot \div(\ell))= \deg(X)$ and $\h(X\cdot \div(\ell))= \h(X)$.
\end{enumerate}
\end{proposition}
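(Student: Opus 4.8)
The plan is to reduce the statement for a general $k(t)$-cycle $X$ to the case of an irreducible $k(t)$-variety $V$ by linearity, and then to compare the standard model $\cV$ of $V$ (a variety in $\P^1_k\times\P^n_k$ without vertical components, by Lemma~\ref{lemm:2}) with the standard model of $V\cdot\div(\ell)$. The key geometric input is the description of the standard model in terms of ideals, via Lemma~\ref{lemm:16}: if $I\subset k(t)[\bfx]$ is the homogeneous ideal of $V$, then $\cI\subset k[\bfs,\bfx]$ generated by the homogenizations $s_0^{\deg_t(f)}f(s_1/s_0,\bfx)$ for $f\in I\cap k[t,\bfx]$ satisfies $\cV=X(\cI)$. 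Since $\ell$ is a generic linear form over $k$, it involves no $t$, so $\ell^{\hom}=\ell$ and the homogenization operation commutes in the obvious way with adding $\ell$ to the ideal. The first step is thus to check that for generic $\ell$, both $\div(\ell)$ intersects $V$ properly in $\P^n_{k(t)}$ (so that $V\cdot\div(\ell)$ makes sense) and $\div_{\P^1\times\P^n}(\ell)$ intersects $\cV$ properly in $\P^1_k\times\P^n_k$; genericity of $\ell$ makes both hold, since the bad locus is a proper closed condition on the coefficients of $\ell$.

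For part~\eqref{item:44}, once properness holds on both sides, I would show the equality of cycles $\cX\cdot\div(\ell) = \widetilde{X\cdot\div(\ell)}$ by going through the ideals, exactly in the spirit of the proof of Lemma~\ref{lemm:16}: the minimal primes of $I+(\ell)$ correspond bijectively to the minimal primes of $\cI+(\ell)$ that are not vertical, and this bijection preserves the intersection multiplicities $\mathrm{mult}(Y|V,\div(\ell))$, because localization and the length computation are insensitive to the homogenization variable $s_0$ (which becomes a unit after localizing away from the relevant prime, just as in Remark~\ref{rem:1}). One must also observe that no component of $\cX\cdot\div(\ell)$ is vertical: a vertical component would force $\ell$, after restricting to a fiber $\{t=\tau\}$, to contain a component of $\cX_\tau$, which for generic $\ell$ does not happen since $\dim(\cX_\tau)=r\geq 1$ and $\ell$ is a generic hyperplane. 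Thus $\cX\cdot\div(\ell)$ has no vertical components and its generic fiber is $X\cdot\div(\ell)$, so by Lemma~\ref{lemm:2}\eqref{item:17} it is the standard model of the latter.

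For part~\eqref{item:52}, I would invoke Proposition~\ref{lemm:12}, which identifies $\deg(X\cdot\div(\ell))$ with $\deg_{1,r-1}(\cX\cdot\div(\ell))$ and $\h(X\cdot\div(\ell))$ with $\deg_{0,r}(\cX\cdot\div(\ell))$. Using part~\eqref{item:44} to replace $\cX\cdot\div(\ell)$ by the standard model, and then B\'ezout's theorem~\ref{bezgeom} for the intersection of $\cX$ with the divisor $\div(\ell)$, whose class in $A^*(\P^1_k\times\P^n_k)$ is $\theta_2$ (a hyperplane pulled back from the $\P^n$-factor, so the $\P^1$-part of its class is trivial), one gets $[\cX\cdot\div(\ell)] = [\cX]\cdot\theta_2$. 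Writing $[\cX]=\deg_{1,r}(\cX)\,\theta_2^{n-r}+\deg_{0,r+1}(\cX)\,\theta_1\theta_2^{n-r-1}+(\text{lower }\theta_2\text{-degree terms irrelevant here})$ and multiplying by $\theta_2$, the coefficient of $\theta_2^{n-r+1}$ in $[\cX]\cdot\theta_2$ equals $\deg_{1,r}(\cX)$ and the coefficient of $\theta_1\theta_2^{n-r}$ equals $\deg_{0,r+1}(\cX)$; by Proposition~\ref{lemm:12} again these are $\deg(X)$ and $\h(X)$, respectively. This yields $\deg(X\cdot\div(\ell))=\deg(X)$ and $\h(X\cdot\div(\ell))=\h(X)$.

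The main obstacle I anticipate is the ideal-theoretic bookkeeping in part~\eqref{item:44}: carefully verifying that the homogenization-of-$t$ construction commutes with cutting by the generic $\ell$ at the level of primary decomposition and multiplicities, and that no vertical component is introduced. The rest is a formal manipulation in the (extended) Chow ring once the standard-model identification is in place; genericity of $\ell$ is what guarantees all the properness and non-verticality hypotheses, and it should be stated explicitly which finitely many closed conditions on the coefficients of $\ell$ are being avoided.
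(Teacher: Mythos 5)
Your proof is essentially correct and follows the same overall structure as the paper's (reduce to irreducible $V$, characterize the standard model via Lemma~\ref{lemm:2}\eqref{item:17} by checking two things: correct generic fiber and no vertical components; then derive part~\eqref{item:52} from Proposition~\ref{lemm:12} and B\'ezout). Part~\eqref{item:52} is in fact identical to the paper's argument. The one place you diverge is the non-verticality step in part~\eqref{item:44}. The paper splits into the cases $\dim(\varpi(\cV))=1$ (where $\cV = \P^1_k\times\varpi(\cV)$ and the claim is immediate) and $\dim(\varpi(\cV))\geq 2$ (where it invokes Jouanolou's irreducibility theorem for generic hyperplane sections and then shows that $\pi_\ell$ is surjective by a fiberwise dimension argument). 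You instead observe that a vertical component would be an $r$-dimensional component of some fiber $\cX_\tau$ contained in $V(\ell)$, and you rule this out by genericity. That route does work, and it avoids quoting Jouanolou, but as stated it glosses over a real point: the parameter $\tau$ ranges over all of $\P^1_k$, so this is \emph{not} finitely many closed conditions on $\ell$ as you say at the end. One needs the uniform dimension count: for each $\tau$ the fiber $\cX_\tau$ has pure dimension $r\geq 1$ (by the theorem on dimension of fibers plus irreducibility of $\cV$ with no vertical components), so the locus of $\ell$ containing some component of $\cX_\tau$ has codimension at least $2$ in $(\P^n)^\vee$; sweeping $\tau$ over the $1$-dimensional base still yields a locus of codimension $\geq 1$, hence a generic $\ell$ escapes it. Without this incidence-variety argument (and the $r\geq 1$ hypothesis that feeds it), your claim that genericity suffices is unjustified; with it, your proof is sound. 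The paper's Jouanolou route is slightly cleaner because it turns the non-verticality into an irreducibility plus surjectivity statement that doesn't require bookkeeping over the family of fibers.
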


\begin{proof} \

  \eqref{item:44} By Lemma \ref{lemm:16}, it is enough to prove the
  claim for an irreducible $k(t)$-variety $V$ of dimension $r\ge
  1$. Let $\cV\subset \P_{k}^{1}\times \P_{k}^{n}$ be the standard
  model of $V$. By Lemma \ref{lemm:2}\eqref{item:17}, it suffices to
  prove that $V\cdot \div(\ell)$ is the generic fiber of $\cV\cdot
  \div(\ell)$ and that the support of $\cV\cdot \div(\ell)$
  has no vertical components.
Consider the projection $\varpi:\cV\to \P_{k}^{n}$. We have
\begin{displaymath}
  1\le r=\dim(\cV)-\dim(\P_{k}^{1})\le \dim(\varpi(\cV))\le \dim(\cV)=r+1.
\end{displaymath}
If $\dim(\varpi(\cV))=1$, then $\dim(V)=1$ and
$\varpi^{-1}(\bfxi)= \P_{k}^{1}\times \{\bfxi\}$ for all $\bfxi\in
\varpi(\cV)$ because of the theorem of dimension of fibers. Hence,
in this case,
\begin{displaymath}
  \cV= \P^{1}_{k}\times \varpi(\cV).
\end{displaymath}
Therefore, \begin{math} \cV\cdot \div(\ell) = \P^{1}_{k}\times
(\varpi(\cV)\cdot \div(\ell))
\end{math} has no vertical components. Moreover, by considering the generic fiber of $\pi:\cV\to
\P_{k}^{1}$, it follows from Lemma \ref{lemm:2}\eqref{item:16} that
$V= \varpi(\cV)_{_{k(t)}}$. Hence, the generic fiber of
$\cV\cdot \div(\ell)$ over $\P^{1}_{k}$ coincides with $V\cdot
\div(\ell)$, which proves the claim for $\dim(\varpi(\cV))= 1$.

If $\dim(\varpi(\cV))\ge 2$, $\cV\cap V(\ell)$ is an irreducible
$k$-variety of dimension $r$ by \cite[Thm.~6.3(4)]{Jou83}. Moreover,
the induced projection $\pi_{\ell}:\cV\cdot \div(\ell)\to \P^{1}_{k}$
is surjective. Indeed, for $\bftau\in\P^{1}(k)$, we have that $\cV\cap
\pi^{-1}(\bftau)$ is a projective variety of dimension $r\ge 1$ and so
\begin{displaymath}
\Big(\cV\cap V(\ell) \Big)\cap  \pi^{-1}(\bftau)=
\Big(\cV\cap \pi^{-1}(\bftau)\Big) \cap V(\ell) \ne \emptyset.
\end{displaymath}

Hence, the projection $\pi_{\ell}$ has no vertical fibers, as it is
surjective and the support of~$\cV\cdot \div(\ell)$ is irreducible.
By going through the ideals of definition, we can verify that its
generic fiber coincides with $V\cdot \div(\ell)$, which completes the
proof of the claim.

\smallskip
\eqref{item:52}
The statement concerning the degree follows from Theorem
  \ref{bezgeom}.  For the height,
\begin{displaymath}
\h(X)= \deg_{0,r+1}(\cX)= \deg_{0,r}(\cX\cdot \div(\ell))= \h(X\cdot
\div(\ell)).
\end{displaymath}
The first and third equalities follow from Proposition \ref{lemm:12}
and the second one follows from Theorem \ref{bezgeom}.
\end{proof}

We next show that mixed heights are monotonic with
respect to linear projections. We keep the notation from
Proposition~\ref{113}. In particular, we consider the
linear projection $ \pi: \P^\bfn_K\dashrightarrow\P^\bfl_K $
in \eqref{eq:58} and the inclusion $\jmath: A^{*}(\P^\bfl;k[\bft])\hookrightarrow
A^{*}(\P^\bfn;k[\bft])$ defined by $\jmath(P)= \bftheta^{\bfn-\bfl}P$.

\begin{proposition}\label{113alturasff}
  Let $\pi:\P^{\bfn}_{_{k(\bft)}}\dashrightarrow \P^{\bfl}_{_{k(\bft)}}$ be the
  linear projection defined in \eqref{eq:58}  and
  $X\in Z^+_{r}\Big(  \P^\bfn_{_{k(\bft)}}\Big)$. Then
$$
\jmath \Big([\pi_{*}X]_{_{k[\bft]}}\Big)\le [X]_{_{k[\bft]}}.
$$
In particular, $\h_{\bfc}(\pi_{*}X) \le \h_{\bfc}(X)$ for all
$\bfc\in \N^{m}_{r+1}$.
\end{proposition}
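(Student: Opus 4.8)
The plan is to reduce the inequality of classes to a family of scalar inequalities between the coefficients of the two sides and to treat the ``degree part'' and the ``height part'' separately. The degree part is the congruence modulo $\eta$, which reads $\jmath([\pi_{*}X])\le[X]$ in $A^{*}(\P^{\bfn}_{_{k(\bft)}})$; this is precisely Proposition~\ref{113}, so nothing new is needed there. For an index $\bfc\in\N^{m}_{r+1}$ with $\bfc\le\bfn$ but $\bfc\not\le\bfl$, the coefficient of $\eta\,\bftheta^{\bfn-\bfc}$ on the left-hand side is $0$, and $0\le\h_{\bfc}(X)$ because $X$ is effective (Proposition~\ref{prop:9}\eqref{item:50}). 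Hence the whole statement, including the last assertion, comes down to proving that $\h_{\bfc}(\pi_{*}X)\le\h_{\bfc}(X)$ for every $\bfc\in\N^{m}_{r+1}$ with $\bfc\le\bfl$.

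To prove this, I would fix such a $\bfc$, set $\bfe(\bfc)$ as in~\eqref{d(a)}, and dispose of the trivial case $\pi_{*}X=0$ (where $\Res_{\bfe(\bfc)}(\pi_{*}X)=1$ and $\h_{\bfc}(\pi_{*}X)=0$). Assuming $\pi_{*}X\ne0$, Proposition~\ref{formainicialproy} applied with $\bfd=\bfe(\bfc)$ gives the divisibility $\Res_{\bfe(\bfc)}(\pi_{*}X)\mid\init_{\prec}(\Res_{\bfe(\bfc)}(X))$ in $k[\bft][\bfU]$, where $\prec$ is the partial monomial order attached to $\pi$. This is the only nontrivial input.

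From here the argument is purely about $\bft$-degrees. Since $-\deg_{\bft}$ is the valuation $\ord_{H_{\infty}}$ of $k(\bft)$, Gauss Lemma makes $\deg_{\bft}$ additive on products in $k[\bft][\bfU]$; combined with the divisibility above this gives $\h_{\bfc}(\pi_{*}X)=\deg_{\bft}(\Res_{\bfe(\bfc)}(\pi_{*}X))\le\deg_{\bft}(\init_{\prec}(\Res_{\bfe(\bfc)}(X)))$. Finally, $\init_{\prec}$ only keeps a subset of the terms of $\Res_{\bfe(\bfc)}(X)$, grouped by their $\bfU$-monomials, so the coefficient of each $\bfU$-monomial in $\init_{\prec}(\Res_{\bfe(\bfc)}(X))$ is either $0$ or the corresponding coefficient of $\Res_{\bfe(\bfc)}(X)$; hence $\deg_{\bft}(\init_{\prec}(\Res_{\bfe(\bfc)}(X)))\le\deg_{\bft}(\Res_{\bfe(\bfc)}(X))=\h_{\bfc}(X)$. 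Chaining the two inequalities concludes.

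I do not anticipate a real obstacle: the substance has already been extracted in Proposition~\ref{formainicialproy} (whose own proof, via Lemma~\ref{lemm:5} and Proposition~\ref{especializacion}, is where the work lies), and what remains is the bookkeeping between classes in the extended Chow ring and mixed-height inequalities together with the elementary fact that neither taking a $\prec$-initial part nor passing to a divisor can raise the $\bft$-degree. The only point needing a little care is to account for the indices $\bfc$ not supported in $\P^{\bfl}$, which is immediate from effectivity as noted above.
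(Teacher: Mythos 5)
Your proposal is correct and follows essentially the same route as the paper: congruence modulo $\eta$ reduces the degree part to Proposition~\ref{113}, and the height inequalities $\h_{\bfc}(\pi_{*}X)\le\h_{\bfc}(X)$ follow from the divisibility $\Res_{\bfe(\bfc)}(\pi_{*}X)\mid\init_{\prec}(\Res_{\bfe(\bfc)}(X))$ of Proposition~\ref{formainicialproy} together with the facts that dividing and taking $\prec$-initial parts cannot raise $\deg_{\bft}$. The only cosmetic difference is that you treat indices $\bfc\not\le\bfl$ separately via effectivity, while the paper handles all $\bfc$ uniformly (since $\Res_{\bfe(\bfc)}(\pi_{*}X)=1$ in that case, so the same divisibility argument applies); both are valid.
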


\begin{proof}
The statement   is equivalent to the inequalities
$\deg_{\bfb}(\pi_{*}X) \le \deg_{\bfb}(X)$ and  $\h_{\bfc}(\pi_{*}X) \le \h_{\bfc}(X)$ for all
$\bfb,\bfc$.
Because of  Proposition~\ref{113},
 we
only need to  prove the latter.
Let  $\bfc\in \N_{r+1}^m$. By Proposition~\ref{formainicialproy},
$\Res_{\bfe(\bfc)}(\pi_*X)$ divides $\init_\prec(\Res_{\bfe(\bfc)}(X))$ in
$k[\bft][\bfU]$.
We deduce that
\begin{displaymath}
\h_\bfc(\pi_*X)=\deg_\bft(\Res_{\bfe(\bfc)}(\pi_*X))
\le \deg_\bft(\Res_{\bfe(\bfc)}(X)) = \h_\bfc(X),
\end{displaymath}
which proves the statement.
\end{proof}

The following result gives the behavior of extended Chow rings and classes
with respect to products.

\begin{proposition}\label{altrodgenff}
Let $m_i\in \Z_{>0}$ and $\bfn_{i}\in \N^{m_{i}}$ for $i=1,2$. Then
\begin{enumerate}
\item \label{item:19} $A^{*}(\P^{\bfn_{1}}\times \P^{\bfn_{2}};k[\bft]) \simeq
  A^{*}(\P^{\bfn_{1}};k[\bft]) \otimes_{\Z[\eta]} A^{*}(\P^{\bfn_{2}};k[\bft])$.
  \smallskip
\item \label{item:22} Let $X_{i}\in
  Z_{r_{i}}^+\Big(\P^{\bfn_{i}}_{_{k(\bft)}}\Big) $ for
  $i=1,2$. {The above isomorphism identifies $[X_{1}\times
    X_{2}]_{_{k[\bft]}}$ with $[X_{1}]_{_{k[\bft]}}\otimes [X_{2}]_{_{k[\bft]}}.$}
In particular, for $\bfc_{i}\in \N^{m_{i}}$ such that
$|\bfc_{1}|+|\bfc_{2}|=r_{1}+r_{2}+1$,
\begin{displaymath}
   \h_{(\bfc_1,\bfc_2)}(X_{1}\times X_{2})=
\left\{\begin{array}{cl}
 \deg_{\bfc_2}(X_{2})\,\h_{\bfc_1}(X_{1})& \text{ if } |\bfc_{1}|=r_{1}+1,
|\bfc_{2}|=r_{2}, \\[1mm]
\deg_{\bfc_1}(X_{1})\,\h_{\bfc_2}(X_{2}) & \text{ if } |\bfc_{1}|=r_{1},
|\bfc_{2}|=r_{2}+1, \\[1mm]
0 & \text{ otherwise.}
\end{array}\right.
\end{displaymath}
\end{enumerate}
\end{proposition}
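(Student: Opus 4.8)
Here is how I would prove Proposition~\ref{altrodgenff}.

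The plan is to deduce part~\eqref{item:19} by base change from Proposition~\ref{prop:3}\eqref{item:10}, and part~\eqref{item:22} by combining Proposition~\ref{prop:3}\eqref{item:14} (for the mixed degrees) with Proposition~\ref{resultantesproductosciclos} (for the mixed heights). For part~\eqref{item:19}: by Definition~\ref{arith chow pol} one has $A^{*}(\P^{\bfn};k[\bft])=A^{*}(\P^{\bfn})\otimes_{\Z}\Z[\eta]/(\eta^{2})$ for every $\bfn$, and since both $A^{*}(\P^{\bfn_{i}};k[\bft])$ are $\Z[\eta]/(\eta^{2})$-algebras, the tensor product over $\Z[\eta]$ coincides with the one over $\Z[\eta]/(\eta^{2})$. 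Using the standard identity $(M\otimes_{\Z}R)\otimes_{R}(N\otimes_{\Z}R)\simeq(M\otimes_{\Z}N)\otimes_{\Z}R$ with $R=\Z[\eta]/(\eta^{2})$, $M=A^{*}(\P^{\bfn_{1}})$, $N=A^{*}(\P^{\bfn_{2}})$, this reduces part~\eqref{item:19} to the isomorphism $A^{*}(\P^{\bfn_{1}})\otimes_{\Z}A^{*}(\P^{\bfn_{2}})\simeq A^{*}(\P^{\bfn_{1}}\times\P^{\bfn_{2}})$, which is Proposition~\ref{prop:3}\eqref{item:10}.

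For part~\eqref{item:22}, I would write $[X_{i}]_{_{k[\bft]}}=\imath([X_{i}])+\eta\sum_{\bfc_{i}}\h_{\bfc_{i}}(X_{i})\,\bftheta_{i}^{\bfn_{i}-\bfc_{i}}$, where $\imath([X_{i}])=\sum_{\bfb_{i}}\deg_{\bfb_{i}}(X_{i})\,\bftheta_{i}^{\bfn_{i}-\bfb_{i}}$ is the image of the Chow class of $X_{i}$, and compute the two pieces of $[X_{1}\times X_{2}]_{_{k[\bft]}}$ separately. The $\eta$-free piece records the mixed degrees of $X_{1}\times X_{2}$, and by Proposition~\ref{prop:3}\eqref{item:14} it equals $\imath([X_{1}])\otimes\imath([X_{2}])$ under the identification of part~\eqref{item:19}. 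For the $\eta$-piece, I would compute $\h_{(\bfc_{1},\bfc_{2})}(X_{1}\times X_{2})$ for $|\bfc_{1}|+|\bfc_{2}|=r_{1}+r_{2}+1$ directly: since $X_{1}$, $X_{2}$ and $X_{1}\times X_{2}$ are effective, Definition~\ref{def:15} together with additivity of $\deg_{\bft}$ on products of polynomials gives $\h_{\bfc}(X)=\deg_{\bft}\big(\Res_{\bfe(\bfc)}(X)\big)$ for each of these cycles. Proposition~\ref{resultantesproductosciclos} expresses $\Res_{\bfe(\bfc_{1},\bfc_{2})}(X_{1}\times X_{2})$ as a unit of $k[\bft]$ times $\Res_{\bfe(\bfc_{1})}(X_{1})^{\deg_{\bfc_{2}}(X_{2})}$ or $\Res_{\bfe(\bfc_{2})}(X_{2})^{\deg_{\bfc_{1}}(X_{1})}$, or as $1$; since units of $k[\bft]$ lie in $k^{\times}$ and hence have zero $\bft$-degree, taking $\deg_{\bft}$ yields exactly $\deg_{\bfc_{2}}(X_{2})\,\h_{\bfc_{1}}(X_{1})$, resp. $\deg_{\bfc_{1}}(X_{1})\,\h_{\bfc_{2}}(X_{2})$, resp. $0$, in the three cases of the statement.

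Finally I would assemble the two computations: multiplying out $[X_{1}]_{_{k[\bft]}}\otimes[X_{2}]_{_{k[\bft]}}$ and using $\eta^{2}=0$ leaves $\imath([X_{1}])\otimes\imath([X_{2}])+\eta\big(\sum_{\bfc_{1}}\h_{\bfc_{1}}(X_{1})\bftheta_{1}^{\bfn_{1}-\bfc_{1}}\otimes\imath([X_{2}])+\imath([X_{1}])\otimes\sum_{\bfc_{2}}\h_{\bfc_{2}}(X_{2})\bftheta_{2}^{\bfn_{2}-\bfc_{2}}\big)$; comparing the coefficients of $\eta\,\bftheta_{1}^{\bfn_{1}-\bfc_{1}}\bftheta_{2}^{\bfn_{2}-\bfc_{2}}$ and of $\bftheta_{1}^{\bfn_{1}-\bfb_{1}}\bftheta_{2}^{\bfn_{2}-\bfb_{2}}$ with the expansion of $[X_{1}\times X_{2}]_{_{k[\bft]}}$ gives both the asserted identification of classes and the explicit height formula (note that for a fixed pair with $|\bfc_{1}|+|\bfc_{2}|=r_{1}+r_{2}+1$ at most one of the conditions $|\bfc_{1}|=r_{1}+1$, $|\bfc_{1}|=r_{1}$ can hold, so no cross terms appear). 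I do not expect a genuine obstacle here: the substantive input is entirely in Propositions~\ref{prop:3} and~\ref{resultantesproductosciclos}, and the only care needed is the bookkeeping of which index ranges $(\bfc_{1},\bfc_{2})$ contribute, together with the elementary fact that units of $k[\bft]$ do not affect $\bft$-degrees.
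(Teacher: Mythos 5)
Your proposal is correct and takes essentially the same approach as the paper, which simply says that part~(1) is immediate from the definition of the extended Chow ring and part~(2) follows directly from Proposition~\ref{resultantesproductosciclos}. You have filled in exactly the bookkeeping the authors leave implicit: the base-change reduction of~(1) to Proposition~\ref{prop:3}\eqref{item:10}, the use of Proposition~\ref{prop:3}\eqref{item:14} for the $\eta$-free part, the observation that $k[\bft]^{\times}=k^{\times}$ so the unit $\lambda$ in Proposition~\ref{resultantesproductosciclos} has zero $\bft$-degree, and the check that the two height cases in that proposition exactly populate the $\eta$-coefficients of the tensor product.
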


\begin{proof} \eqref{item:19} is immediate from the definition of the
  extended Chow ring while
  \eqref{item:22} follows directly
from Proposition~\ref{resultantesproductosciclos}.
\end{proof}

Finally, we compute the class in the extended Chow ring  of the ruled join of two projective varieties.
Let $n_{1},n_{2}\in \N$ and consider the $\Z$-linear map
$\jmath_{i}:A^{*}(\P^{n_{i}}; k[\bft])\hookrightarrow A^{*}(\P^{n_{1}+n_{2}+1};k[\bft])$ defined by
$\jmath_{i}(\theta^{l}\eta^{b})=\theta^{l}\eta^{b}$ for $0\le l\le
n_{i}$ and $b=0,1$.

 \begin{proposition} \label{prop:12}
Let $X_{i} \in Z_{r_{i}}\Big(\P^{n_{i}}_{_{k(\bft)}}\Big)$
for $i=1,2$. Then
\begin{displaymath}
  [X_{1}\#X_{2}]_{_{k[\bft]}} =   \jmath_{1}([X_{1}]_{_{k[\bft]}})  \cdot \jmath_{2}([X_{2}]_{_{k[\bft]}}).
\end{displaymath}
In particular,
$\h (X_{1}\#X_{2})= \deg(X_{1})\, \h (X_{2})+\deg(X_{2})\,\h (X_{1})$.
 \end{proposition}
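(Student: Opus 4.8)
The plan is to reduce the asserted identity of arithmetic classes to the single numerical identity
\[
\h(X_{1}\#X_{2})=\deg(X_{1})\,\h(X_{2})+\deg(X_{2})\,\h(X_{1}),
\]
and to prove this by induction on $r_{1}+r_{2}$. Since $\#$ is bilinear and $[\,\cdot\,]_{_{k[\bft]}}$, $\jmath_{1}$, $\jmath_{2}$ are additive, it suffices to treat irreducible $k(\bft)$-varieties $V_{1}$, $V_{2}$; put $d_{i}=\deg(V_{i})$, $e_{i}=\h(V_{i})$. The cycle $V_{1}\#V_{2}$ is irreducible of dimension $r_{1}+r_{2}+1$ in $\P^{n_{1}+n_{2}+1}$, so, with $D=n_{1}+n_{2}-r_{1}-r_{2}$,
\[
[V_{1}\#V_{2}]_{_{k[\bft]}}=\h(V_{1}\#V_{2})\,\eta\,\theta^{D-1}+\deg(V_{1}\#V_{2})\,\theta^{D},
\]
while expanding in $\Z[\eta,\theta]/(\eta^{2},\theta^{\,n_{1}+n_{2}+2})$ gives
\[
\jmath_{1}\big([V_{1}]_{_{k[\bft]}}\big)\cdot\jmath_{2}\big([V_{2}]_{_{k[\bft]}}\big)=d_{1}d_{2}\,\theta^{D}+\big(d_{1}e_{2}+d_{2}e_{1}\big)\,\eta\,\theta^{D-1}.
\]
Hence the stated equality of classes is equivalent to the conjunction of $\deg(V_{1}\#V_{2})=d_{1}d_{2}$, which is \eqref{degjoin}, and $\h(V_{1}\#V_{2})=d_{1}e_{2}+d_{2}e_{1}$, which is the ``in particular'' statement; everything reduces to proving the latter.

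When $r_{1}=r_{2}=0$, by Proposition~\ref{prop:6-1} the cycle $(V_{1}\#V_{2})_{\ov{k(\bft)}}$ is supported on the lines $\bfxi\#\bfzeta$, with $\bfxi$, $\bfzeta$ running over the points of $(V_{1})_{\ov{k(\bft)}}$ and $(V_{2})_{\ov{k(\bft)}}$. By Corollary~\ref{cor:3} and Proposition~\ref{prop:7} the Chow form $\Res_{\bfe(2)}$ of the line $\bfxi\#\bfzeta$ is, up to a unit, the $2\times2$ determinant of two general linear forms evaluated at two spanning points; decomposing these forms as $L_{j}=M_{j}(\bfx)+N_{j}(\bfy)$ this determinant equals $M_{0}(\bfxi)N_{1}(\bfzeta)-M_{1}(\bfxi)N_{0}(\bfzeta)$, and as the two products involve disjoint sets of coefficients no cancellation occurs. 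Thus $\Res_{\bfe(2)}(V_{1}\#V_{2})$ is a product of such determinants, and running the valuation-theoretic computation of the proof of Lemma~\ref{lemm:3} — together with the Poisson formula Corollary~\ref{cor:2} for the zero-dimensional cycles $V_{1}$, $V_{2}$ and $\h_{1}(V_{i})=e_{i}$ — yields $\h(V_{1}\#V_{2})=d_{2}e_{1}+d_{1}e_{2}$. For the inductive step assume $r_{2}\ge1$ (the case $r_{1}\ge1$ is symmetric), and let $\ell(\bfy)$ be a generic linear form in the variables $\bfy$ with indeterminate coefficients $\bfv$, so that we work over $k(\bfv)(\bft)$, which changes neither $\deg_{\bft}$ nor the primitivities used below. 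Passing through the ideal $I(V_{1})+I(V_{2})+(\ell(\bfy))$ one checks the identity of cycles
\[
(V_{1}\#V_{2})\cdot\div(\ell(\bfy))=V_{1}\#\big(V_{2}\cdot\div(\ell(\bfy))\big),
\]
the intersection being proper and acquiring no top-dimensional component (in particular $V_{1}$, which lies both in $V_{1}\#V_{2}$ and in $\div(\ell(\bfy))$, does not contribute one, as $\dim V_{1}<r_{1}+r_{2}$). Specializing the last group of variables of $\Res_{\bfe(r_{1}+r_{2}+2)}(V_{1}\#V_{2})$ at the coefficients of $\ell(\bfy)$ and using Proposition~\ref{especializacion} gives
\[
\Res_{\bfe(r_{1}+r_{2}+2)}(V_{1}\#V_{2})(\,\cdots,\ell(\bfy)\,)=\lambda\,\Res_{\bfe(r_{1}+r_{2}+1)}\big(V_{1}\#(V_{2}\cdot\div(\ell(\bfy)))\big)
\]
for some nonzero $\lambda$. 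Granting that $\lambda$ has $\bft$-degree $0$ and that this specialization does not lower the $\bft$-degree (see below), we deduce $\h(V_{1}\#V_{2})=\h\big(V_{1}\#(V_{2}\cdot\div(\ell(\bfy)))\big)$, which by the inductive hypothesis — applied to each component of $V_{2}\cdot\div(\ell(\bfy))$ — equals $d_{1}\,\h(V_{2}\cdot\div(\ell(\bfy)))+\deg(V_{2}\cdot\div(\ell(\bfy)))\,e_{1}$. Here $\deg(V_{2}\cdot\div(\ell(\bfy)))=d_{2}$ by Theorem~\ref{bezgeom}, and $\h(V_{2}\cdot\div(\ell(\bfy)))=e_{2}$: indeed Proposition~\ref{especializacion} gives $\Res_{\bfe(r_{2}+1)}(V_{2})(\,\cdots,\ell(\bfy)\,)=\lambda'\,\Res_{\bfe(r_{2})}(V_{2}\cdot\div(\ell(\bfy)))$, and since $\ell$ has indeterminate coefficients the left-hand side is merely $\Res_{\bfe(r_{2}+1)}(V_{2})$ with its last group of variables renamed, hence irreducible of $\bft$-degree $e_{2}$, so $\lambda'$ is a unit. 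Combining, $\h(V_{1}\#V_{2})=d_{1}e_{2}+d_{2}e_{1}$.

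The main obstacle is the claim used in the inductive step: that replacing the last general linear form of the Chow form $\Res_{\bfe(r_{1}+r_{2}+2)}(V_{1}\#V_{2})$ by a generic linear form in the variables $\bfy$ alone does not lower the $\bft$-degree, and that the factor $\lambda$ produced by Proposition~\ref{especializacion} is a unit of $k[\bft]$. This is a sharp companion of the B\'ezout inequality of Theorem~\ref{bezaritff}\eqref{item:24} — equivalently, it says that a generic linear section preserves the height of a cycle over $k(\bft)$. Proposition~\ref{lemm:15}\eqref{item:52} proves exactly such a statement, but only for $k(t)$ with $k$ algebraically closed, via standard models; the work is to carry that argument over (or to argue directly from the description of eliminant ideals in Proposition~\ref{prop:7} and primitivity) to an arbitrary number of parameters and an arbitrary base field. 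A secondary point is the cycle identity $(V_{1}\#V_{2})\cdot\div(\ell(\bfy))=V_{1}\#(V_{2}\cdot\div(\ell(\bfy)))$ with its multiplicities. A possible alternative, paralleling the proof of Proposition~\ref{resultantesproductosciclos}, would be to establish a closed ``join--resultant'' formula relating $\Res_{\bfe(r_{1}+r_{2}+2)}(V_{1}\#V_{2})$ to $\Res_{\bfe(r_{1}+1)}(V_{1})$ and $\Res_{\bfe(r_{2}+1)}(V_{2})$ — its shape is heavily constrained, since after the total specialization splitting the general linear forms into an $\bfx$-block and a $\bfy$-block one has $\Res_{\bfe(r_{1}+r_{2}+2)}(V_{1}\#V_{2})=(\text{unit})\cdot\Res_{\bfe(r_{1}+1)}(V_{1})^{d_{2}}\Res_{\bfe(r_{2}+1)}(V_{2})^{d_{1}}$ — and then to read off the $\bft$-degree via the multilinearity in Lemma~\ref{lemm:3}.
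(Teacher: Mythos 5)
Your proof follows the same overall architecture as the paper's: reduce the identity of classes to the single numerical identity $\h(X_1\#X_2)=\deg(X_1)\h(X_2)+\deg(X_2)\h(X_1)$, settle the zero-dimensional base case by a valuation-theoretic / Gauss-lemma computation (this is Lemma~\ref{lemm:13}), and induct on dimension by slicing $V_2$ with a generic linear form in the variables $\bfy$. The base case sketch is correct, the reduction to irreducible varieties is fine, and the cycle identity $(V_1\#V_2)\cdot\div(\ell(\bfy))=V_1\#(V_2\cdot\div(\ell(\bfy)))$ with no extraneous top-dimensional component is correct for $r_2\ge1$.

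But the inductive step, as written, has an unfilled gap which you honestly flag but do not close. The crucial claim is that substituting the last group of variables of $\Res_{\bfe(r_1+r_2+2)}(V_1\#V_2)$ by a linear form supported only in the $\bfy$-variables --- equivalently, setting the $\bfx$-part of that group of coefficients to zero and renaming the $\bfy$-part --- does not drop the $\bft$-degree, i.e.\ that the $\lambda$ from Proposition~\ref{especializacion} is a unit of $k(\bfv)[\bft]$. Proposition~\ref{especializacion} only gives $\lambda\in k(\bfv)[\bft]\setminus\{0\}$, and a priori the specialization could annihilate exactly the terms of maximal $\bft$-degree. That this does not happen is genuinely substantive: it is what the paper establishes through Proposition~\ref{lemm:12}, Lemma~\ref{lemm:14} and Proposition~\ref{lemm:15}\eqref{item:52}, by passing to the standard model $\cV\subset\P^1_k\times\P^{n_1+n_2+1}_k$ of $V_1\#V_2$, reading $\h$ as the mixed degree $\deg_{0,\,r_1+r_2+2}(\cV)$, and then using the purely geometric fact that a generic hyperplane section preserves mixed degrees (Theorem~\ref{bezgeom}). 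The price of that machinery is the restriction to a single parameter $t$ and $k$ algebraically closed, which the paper then removes by the substitution $t_i\mapsto\frac{t_i-\gamma_i}{w}w+\gamma_i$, the auxiliary variable $w$, and Proposition~\ref{prop:6-1}. Your proposal bypasses both the standard-model construction and the multi-parameter reduction, leaving the central height-preservation inequality asserted rather than proved. The alternative ``join--resultant formula'' you sketch is plausible in shape, but it too is only a conjecture as stated; establishing the exponents and the unit in the specialized factorization would require an argument of comparable depth.
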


 We need some lemmas for the proof of this result. The first of them
 deals with 0-dimensional cycles.  For $i=1,2$ and  $j=0,1$, let
 $\bfx_{i}=\{x_{i,0},\dots, x_{i,n_{i}}\}$ be the homogeneous
 coordinates of $\P^{n_{i}}_{_{k(\bft)}}$ and
 $\bfu_{j}^{(i)}=\{u_{j,l}^{(i)}\}_{0\le l\le n_{i}}$ a group of
 $n_{i}+1$ variables. Write~$L_{j}^{(i)}$ for the general linear
 form in the variables $\bfx_{i}$.

\begin{lemma}\label{lemm:13}
Let $X_{i}\in Z_{0}(\P^{n_{i}}_{_{k(\bft)}})$ for $i=1,2$. Then
$  \h(X_{1}\# X_{2})= \deg(X_{1})\, \h(X_{2})+ \deg(X_{2})\, \h(X_{1}).$
\end{lemma}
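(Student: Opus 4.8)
The plan is to reduce, by the bilinearity of the ruled join and the additivity of $\h$ and $\deg$ on cycles (definitions~\ref{chowgeom} and~\ref{def:15}), to the case where $X_1=V_1\subset\P^{n_1}_{_{k(\bft)}}$ and $X_2=V_2\subset\P^{n_2}_{_{k(\bft)}}$ are irreducible $k(\bft)$-varieties of dimension $0$; put $d_i=\deg(V_i)$. Write $\bfx_1,\bfx_2$ for the homogeneous coordinates of $\P^{n_1},\P^{n_2}$, so that $\P^{n_1+n_2+1}$ has coordinates $(\bfx_1:\bfx_2)$ and contains $V_1\#V_2$. Let $\bfu_0,\bfu_1$ be two groups of $n_1+n_2+2$ variables, each split as $\bfu_j=(\bfu_j^{(1)},\bfu_j^{(2)})$ according to $\bfx_1,\bfx_2$, with $L_j=L_j^{(1)}+L_j^{(2)}$ the associated general linear forms. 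Since $V_1\#V_2$ has dimension $1$ and degree $d_1d_2$ by~\eqref{degjoin}, we have $\h(V_1\#V_2)=\deg_\bft\big(\Res_{\bfe_1,\bfe_1}(V_1\#V_2)\big)$, and this resultant is homogeneous of degree $d_1d_2$ in each of $\bfu_0,\bfu_1$ by Proposition~\ref{prop:4}.

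First I would compute the Chow form of the line $\bfxi\#\bfzeta$ joining a point $\bfxi\in\P^{n_1}$ to a point $\bfzeta\in\P^{n_2}$, that is, the line spanned by $(\bfxi:\bfzero)$ and $(\bfzero:\bfzeta)$. A generic hyperplane $\{L_1=0\}$ meets it at the single point $\big(L_1^{(2)}(\bfzeta)\,\bfxi:-L_1^{(1)}(\bfxi)\,\bfzeta\big)$, so propositions~\ref{especializacion} and~\ref{cor:2} together with the bilinearity from Proposition~\ref{prop:4} identify, up to a scalar $\mu\in\ov{k(\bft)}^\times$,
$$\Res_{\bfe_1,\bfe_1}(\bfxi\#\bfzeta)=\mu\,P_{\bfxi,\bfzeta},\qquad P_{\bfxi,\bfzeta}:=\langle\bfu_0^{(1)},\bfxi\rangle\,\langle\bfu_1^{(2)},\bfzeta\rangle-\langle\bfu_0^{(2)},\bfzeta\rangle\,\langle\bfu_1^{(1)},\bfxi\rangle.$$
Passing to $\ov{k(\bft)}$ (scalar extension commutes with the ruled join), I would write $(V_1)_{\ov{k(\bft)}}=\sum_a m_a\bfxi_a$ and $(V_2)_{\ov{k(\bft)}}=\sum_b n_b\bfzeta_b$, so that $\sum_a m_a=d_1$, $\sum_b n_b=d_2$, choosing by Corollary~\ref{cor:2} the coordinates of the $\bfxi_a$ (resp.\ $\bfzeta_b$) so that $\Res_{\bfe_1}(V_1)=\prod_a\langle\bfw_0,\bfxi_a\rangle^{m_a}$ (resp.\ $\Res_{\bfe_1}(V_2)=\prod_b\langle\bfw_0,\bfzeta_b\rangle^{n_b}$), with $\bfw_0$ a group of $n_1+1$ (resp.\ $n_2+1$) variables. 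Multiplying the line computation over the geometric decomposition of $V_1\#V_2$ and applying Proposition~\ref{prop:6-1} gives $\Res_{\bfe_1,\bfe_1}(V_1\#V_2)=\gamma\prod_{a,b}P_{\bfxi_a,\bfzeta_b}^{m_an_b}$ for some $\gamma\in\ov{k(\bft)}^\times$.

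Next I would show $\gamma\in k(\bft)^\times$. The two normalizations force $\prod_a s_\sigma(a)^{m_a}=\prod_b t_\sigma(b)^{n_b}=1$, where $s_\sigma(a),t_\sigma(b)$ are the scalars by which an automorphism $\sigma$ of $\ov{k(\bft)}/k(\bft)$ moves the chosen representatives; since $P_{\bfxi,\bfzeta}$ is bilinear in $(\bfxi,\bfzeta)$ and $\sigma$ preserves the multiplicities $m_a,n_b$, the product $\prod_{a,b}P_{\bfxi_a,\bfzeta_b}^{m_an_b}$ is then $\sigma$-invariant, hence has coefficients in $k(\bft)$, so $\gamma$ is a quotient of two polynomials over $k(\bft)$. (In positive characteristic the imperfection of $k(\bft)$ is handled by first adjoining a transcendental, as at the end of the proof of Lemma~\ref{prop:8}.)

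The remaining step — and the real technical point — is the comparison of $\bft$-degrees through valuations, carried out exactly as in the proof of Lemma~\ref{lemm:3}. Fix a finite extension $E/k(\bft)$ containing the chosen coordinates. For an irreducible $k$-hypersurface $H\neq H_\infty$ of $\P^p_k$ and a valuation $w_H$ of $E$ extending $\ord_H$, Gauss' lemma and the primitivity of $\Res_{\bfe_1}(V_1)$, $\Res_{\bfe_1}(V_2)$ give $\sum_a m_a\min_i w_H(\xi_{a,i})=0=\sum_b n_b\min_j w_H(\zeta_{b,j})$, while $w_H(P_{\bfxi_a,\bfzeta_b})=\min_i w_H(\xi_{a,i})+\min_j w_H(\zeta_{b,j})$; summing over $(a,b)$ with weights $m_an_b$ and using the primitivity of $\Res_{\bfe_1,\bfe_1}(V_1\#V_2)$ forces $\ord_H(\gamma)=0$, whence $\gamma\in k^\times$. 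Applying instead a valuation extending $\ord_{H_\infty}$ and using that $\ord_{H_\infty}=-\deg_\bft$ on primitive polynomials of $k[\bft][\bfu]$, the same bookkeeping yields $\h(V_1\#V_2)=-d_2\,\ord_{H_\infty}(\Res_{\bfe_1}(V_1))-d_1\,\ord_{H_\infty}(\Res_{\bfe_1}(V_2))=\deg(V_2)\,\h(V_1)+\deg(V_1)\,\h(V_2)$, which is the assertion. The main obstacle is organizing this last step so that all the (non-canonical) extensions of the $\ord_H$ to $E$ are treated uniformly and $\gamma$ is known to lie in $k(\bft)$ before they are applied — precisely the issues already settled in Lemma~\ref{lemm:3}.
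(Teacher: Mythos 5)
Your proof follows the same overall route as the paper's: decompose the Chow form of the ruled join as a product of the bilinear factors $P_{\bfxi_a,\bfzeta_b}$ over pairs of geometric points (a Poisson-type formula for zero-cycles), then compare $\bft$-degrees via Gauss' lemma at each valuation $\ord_H$ of $k(\bft)$. The reduction to irreducible $V_1,V_2$, the direct computation of the Chow form of the line $\bfxi\#\bfzeta$, and the normalization absorbing the scalars $\lambda_i$ into the chosen coordinates are all valid, and differ only cosmetically from the paper's presentation (which keeps the $\lambda_i$ explicit and works with cycles rather than irreducible components).

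The one place where you diverge substantively, and where there is a genuine gap, is the claim that the normalizing constant $\gamma$ lies in $k(\bft)^\times$. You argue via Galois descent: the product $\prod_{a,b}P_{\bfxi_a,\bfzeta_b}^{m_an_b}$ is invariant under $\operatorname{Aut}(\ov{k(\bft)}/k(\bft))$. But when $\char(k)=p>0$ and there is at least one parameter, $k(\bft)$ is imperfect, and the fixed field of that automorphism group is the \emph{purely inseparable closure} of $k(\bft)$, not $k(\bft)$ itself; your argument only gives $\gamma^{p^n}\in k(\bft)^\times$ for some $n$. The fix you propose, adjoining a transcendental as at the end of the proof of Lemma~\ref{prop:8}, does not address this: that device deals with finiteness of the ground field, and $k(\bft,w)$ is still imperfect. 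The gap is repairable (run the valuation argument on $\gamma^{p^n}\in k(\bft)$, conclude $\gamma^{p^n}\in k^\times$, then deduce $v(\gamma)=0$ at every place of $E$), but you must say so. The paper avoids the issue altogether by a specialization argument: setting $\bfu_1^{(1)}=0$ in the factorization $\Res_{1,1}(X_1\#X_2)=\mu\prod_{a,b}P_{\bfxi_a,\bfzeta_b}^{m_an_b}$ makes the right side factor through $\Res_1(X_1)$ and $\Res_1(X_2)$, and comparing with the left side, which has coefficients in $k(\bft)$, forces $\mu=\nu\lambda_1^{\deg(X_2)}\lambda_2^{\deg(X_1)}$ with $\nu\in k(\bft)^\times$ directly and in any characteristic. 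Once $\gamma\in k(\bft)^\times$ (equivalently $\nu\in k(\bft)^\times$) is secured, your valuation bookkeeping coincides with the paper's and closes the proof.
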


\begin{proof}
  For $i=1,2$, write
\begin{math}
  \Res_{1}(X_{i})= \lambda_{i}\prod_{\bfxi_{i}} L_{0}^{(i)}(\bfxi_{i})^{m_{\bfxi_{i}}}
\end{math}
with $\lambda_{i}\in \ov{k(\bft)}^{\times}$, $\bfxi_{i}\in
\P^{n_{i}}(\ov{k(\bft)})$ and $m_{\bfxi_{i}} \in \Z$.  We claim that
there exists $\nu\in k(\bft)^{\times}$ such that
\begin{displaymath}
  \Res_{1,1}(X_1\#X_2)=
  \nu \, \lambda_{1}^{\deg(X_{2})}\, \lambda_{2}^{\deg(X_{1})}\, \prod_{\bfxi_{1},\bfxi_{2}} \Big(L_{0}^{(1)}(\bfxi_{1})L_{1}^{(2)}(\bfxi_{2})
  - L_{1}^{(1)}(\bfxi_{1})L_{0}^{(2)}(\bfxi_{2})\Big)^{m_{\bfxi_{1}}m_{\bfxi_{2}}}.
\end{displaymath}
Indeed, for each $\bfxi_{1}\in \P^{n_{1}}(\ov{k(\bft)})$ and $\bfxi_{2}\in \P^{n_{2}}(\ov{k(\bft)})$, Proposition
\ref{prop:7}\eqref{item:20} and Corollary~\ref{cor:3} imply that
\begin{math}
  \Res_{1,1}(\bfxi_1\#\bfxi_2)
\end{math} is equal to $L_{0}^{(1)}(\bfxi_{1})L_{1}^{(2)}(\bfxi_{2})
- L_{1}^{(1)}(\bfxi_{1})L_{0}^{(2)}(\bfxi_{2})$ up to a
constant in~$\ov{k(\bft)}^{\times}$.
We deduce that there {exists} $\mu\in \ov{k(\bft)}^{\times}$ such that
\begin{equation}\label{eq:36}
  \Res_{1,1}(X_1\#X_2)=
\mu\, \prod_{\bfxi_{1},\bfxi_{2}} \Big(L_{0}^{(1)}(\bfxi_{1})L_{1}^{(2)}(\bfxi_{2})
- L_{1}^{(1)}(\bfxi_{1})L_{0}^{(2)}(\bfxi_{2})\Big)^{m_{\bfxi_{1}}m_{\bfxi_{2}}}.
\end{equation}
By setting $\bfu^{(1)}_{1}=0$ in this equality and comparing its
right-hand side with the explicit expression for $\Res_{1}(X_{i})$
plus the fact that $ \Res_{1,1}(X_1\# X_2)$ has coefficients in
$k(\bft)$, we get that $\mu= \nu \, \lambda_{1}^{\deg(X_{2})}\,
\lambda_{2}^{\deg(X_{1})}$ with $\nu\in k(\bft)^{\times}$.

Let $E$ be a sufficiently large extension of $k(\bft)$, $H\subset
\P^{p}_{k}$ a hypersurface and  $v$ a valuation of $E$ extending
$\ord_{H}$. For $i=1,2$ and $\bfxi_i=(\xi_{i,j})_j \in E^{n_i+1}$ set
$v(\bfxi_i)=\min\{v(\xi_{i,j})\}$. Observe that
$$v\Big(L_{0}^{(1)}(\bfxi_{1})L_{1}^{(2)}(\bfxi_{2})
-
L_{1}^{(1)}(\bfxi_{1})L_{0}^{(2)}(\bfxi_{2})\Big)=v(\bfxi_1)+v(\bfxi_2)=v(L_{0}^{(1)}(\bfxi_{1}))+
v(L_{0}^{(2)}(\bfxi_{2})).$$
 Therefore, applying
Gauss' lemma, we obtain that $\ord_{H}(\Res_{1,1}(X_1\#X_2))$ is
equal to
\begin{align*}
&\ord_{H}(\nu) +\deg(X_{2}) v(  \lambda_{1}) +\deg(X_{1}) v(\lambda_{2})+\sum_{\bfxi_{1},\bfxi_{2}}
  m_{\bfxi_{1}}m_{\bfxi_{2}}\Big(v(\bfxi_{1})+v(\bfxi_{2})\Big)\\
=&\ord_{H}(\nu) +\deg(X_{2}) \bigg(v(  \lambda_{1})
+\sum_{\bfxi_{1}}
  m_{\bfxi_{1}}v(\bfxi_{1})\bigg)
+\deg(X_{1}) \bigg(v(  \lambda_{2})+\sum_{\bfxi_{2}}
  m_{\bfxi_{2}}v(\bfxi_{2})\bigg)\\
=&\ \ord_{H}(\nu) +\deg(X_{1})\, \ord_{H}(\Res_{1}
(X_{2}))+\deg(X_{2})\, \ord_{H}(\Res_{1} (X_{1})).
\end{align*}
Let $H_{\infty}$ be the hyperplane at infinity. For  $H\ne
H_{\infty}$, $\ord_{H}(\Res_{1,1}(X_1\#X_2))=\ord_{H}
(\Res_{1}(X_i))=0$ and the identity above implies that
$\ord_{H}(\nu)=0$ in this case. Since this holds for all $H\ne
H_{\infty}$, it follows that $\nu \in k^{\times}$. Hence,
$\ord_{H_{\infty}}(\nu)=0$. For
$H=H_{\infty}$ in the same identity, we get
\begin{displaymath}
\ord_{H_{\infty}}(\Res_{1,1}(X_1\#X_2))= \deg(X_{1})\,
\ord_{H_{\infty}}(\Res_{1} (X_{2}))+\deg(X_{2})
\ord_{H_{\infty}}(\Res_{1} (X_{1})).
\end{displaymath}
This implies the statement.
\end{proof}

\begin{lemma}\label{lemm:14}
  Let $k$ be an algebraically closed field and $t$ a single
  variable. Let $\cV\subset \P^{1}_{k}\times \P^{n_{1}+n_{2}+1}_{k}$
  be the standard model of $V_{1}\# V_{2}$, where each $V_{i}\subset
  \P^{n_{i}}_{_{k(t)}}$ is an irreducible $k(t)$-variety and
  $\dim(V_{2})\ge 1$. Let $\ell\in k[\bfx_{2}]$ be a generic linear
  form. Then $\cV\cdot \div(\ell)$ is the standard model of $V_{1}\#
  (V_{2}\cdot \div(\ell))$.
\end{lemma}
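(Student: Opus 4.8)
The plan is to mimic the proof of Proposition~\ref{lemm:15}\eqref{item:44}, working throughout with generic fibers over $\P^{1}_{k}$. Write $W=V_{1}\#V_{2}\subset\P^{N}_{_{k(t)}}$ with $N=n_{1}+n_{2}+1$ and $\rho=\dim(W)=r_{1}+r_{2}+1$; by Lemma~\ref{lemm:2}, $\cV$ is an irreducible $k$-variety of dimension $\rho+1$ with no vertical components whose generic fiber over $\P^{1}_{k}$ is $W$. Since $\ell$ is generic and $\dim(V_{2})\ge1$, the divisor $\div(\ell)$ intersects both $V_{2}$ and $W$ properly. I would first record the cycle identity
\[
W\cdot\div(\ell)=V_{1}\#(V_{2}\cdot\div(\ell))\qquad\text{in }\P^{N}_{_{k(t)}}.
\]
Set-theoretically this is immediate from the cone description of the ruled join together with $\dim(V_{2})\ge1$, which forces the copy of $V_{1}$ contained in $\{\bfx_{2}=0\}$ to be absorbed in the closure; and the equality of intersection multiplicities follows by a flat base change along $k(t)\subset k(t)(V_{1})$, since $V_{1}\#V_{2}$ is defined by the ideal generated by $I(V_{1})\cup I(V_{2})$ and $\ell$ involves only the variables $\bfx_{2}$. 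By Lemma~\ref{lemm:2}\eqref{item:17} it then suffices to show that $\cV\cdot\div(\ell)$ has no vertical components and that its generic fiber over $\P^{1}_{k}$ equals $W\cdot\div(\ell)$.

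For the absence of vertical components, let $\cV_{i}$ be the standard model of $V_{i}$ and fix $\bftau\in\P^{1}(k)$. Because $\cV$ is irreducible, surjects onto $\P^{1}_{k}$ and has no vertical components, the theorem of dimension of fibers gives $\dim(\cV|_{\bftau})=\rho$, and likewise $\dim(\cV_{i}|_{\bftau})=r_{i}$. The defining ideal of $\cV$ contains the homogenizations of $I(V_{1})\cup I(V_{2})$, hence contains the defining ideals of $\cV_{1}$ and of $\cV_{2}$, so $\cV|_{\bftau}\subseteq(\cV_{1}|_{\bftau})\#(\cV_{2}|_{\bftau})$; comparing dimensions, every component of $\cV|_{\bftau}$ is of the form $C_{1}\#C_{2}$ with $C_{i}$ a component of $\cV_{i}|_{\bftau}$. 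From the cone description, $C_{1}\#C_{2}\subseteq V(\ell)$ if and only if $C_{2}\subseteq V(\ell)$. Now Proposition~\ref{lemm:15}\eqref{item:44}, applied to $V_{2}$, shows that $\cV_{2}\cdot\div(\ell)$ is the standard model of the cycle $V_{2}\cdot\div(\ell)$, of dimension $r_{2}-1\ge0$, hence has no vertical components by Lemma~\ref{lemm:2}\eqref{item:16}; as $\cV_{2}\cdot\div(\ell)$ has pure dimension $\dim(\cV_{2})-1=\dim(\cV_{2}|_{\bftau})$, this says exactly that no component of $\cV_{2}|_{\bftau}$ lies in $V(\ell)$. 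Therefore no component of $\cV|_{\bftau}$ lies in $V(\ell)$; and since $\cV\cdot\div(\ell)$ has pure dimension $\rho=\dim(\cV|_{\bftau})$, any vertical component of $\cV\cdot\div(\ell)$ over $\bftau$ would have to be such a component. Hence $\cV\cdot\div(\ell)$ has no vertical components.

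Finally, passing to the generic fiber over $\P^{1}_{k}$ is a flat base change and so commutes with the intersection product; since $\cV\cdot\div(\ell)$ has no vertical components, this gives a multiplicity-preserving bijection between its components and those of $W\cdot\div(\ell)$ (the generic fiber of $\cV$ being $W$). Thus the generic fiber of $\cV\cdot\div(\ell)$ is $W\cdot\div(\ell)=V_{1}\#(V_{2}\cdot\div(\ell))$, and Lemma~\ref{lemm:2}\eqref{item:17} identifies $\cV\cdot\div(\ell)$ with the standard model of $V_{1}\#(V_{2}\cdot\div(\ell))$, as asserted. I expect the main difficulty to lie in the middle step --- the inclusion $\cV|_{\bftau}\subseteq(\cV_{1}|_{\bftau})\#(\cV_{2}|_{\bftau})$ and the constancy of the fiber dimensions --- together with the verification of the cycle identity $W\cdot\div(\ell)=V_{1}\#(V_{2}\cdot\div(\ell))$; none of these is deep, but each requires a careful passage through the defining ideals.
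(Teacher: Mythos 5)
Your proposal is correct, but it routes around the paper's argument rather than reproducing it. The paper's proof is entirely ideal-theoretic over $k[t]$: it first establishes the non-trivial equality $\cI_{0}=\cI_{1,0}+\cI_{2,0}$ (via a dimension count on prime ideals, using Remark~\ref{rem:1} and Lemma~\ref{lemm:2}), whence the tensor factorization
\[
k[t][\bfx_{1},\bfx_{2}]/(\cI_{0}+(\ell))\;\simeq\;k[t][\bfx_{1}]/\cI_{1,0}\otimes_{k[t]}k[t][\bfx_{2}]/(\cI_{2,0}+(\ell)),
\]
and then specializes this single isomorphism both at closed points $\bftau$ (to rule out vertical components) and at the generic point (to identify the generic fiber as $V_{1}\#(V_{2}\cdot\div(\ell))$). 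You instead split the work into two steps: a cycle identity $W\cdot\div(\ell)=V_{1}\#(V_{2}\cdot\div(\ell))$ over $k(t)$, and a geometric dimension count on the closed fibers of $\cV$ using the ruled-join structure of $\cV|_{\bftau}$. The dimension-count step is a genuine alternative and is correct: it only uses the easy inclusion $\cI_{0}\supseteq\cI_{1,0}+\cI_{2,0}$ (to get $\cV|_{\bftau}\subseteq(\cV_{1}|_{\bftau})\#(\cV_{2}|_{\bftau})$), plus Proposition~\ref{lemm:15}\eqref{item:44} applied to $V_{2}$, so you avoid proving the full equality of ideals. The price is that you then need the cycle identity $W\cdot\div(\ell)=V_{1}\#(V_{2}\cdot\div(\ell))$ separately, and your one-line justification (``flat base change along $k(t)\subset k(t)(V_{1})$'') is terse; when one unpacks it, one is again doing the tensor factorization $k(t)[\bfx]/(I(V_{1})+I(V_{2})+(\ell))\simeq\bigl(k(t)[\bfx_{1}]/I(V_{1})\bigr)\otimes_{k(t)}\bigl(k(t)[\bfx_{2}]/(I(V_{2})+(\ell))\bigr)$ and then localizing at the generic point of the first factor — morally the same computation as the paper's, just over $k(t)$ rather than $k[t]$. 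So the two proofs end up close, but yours distributes the algebra differently and substitutes a more geometric argument for one half of it; the paper's is a bit more economical because the single isomorphism does double duty. You correctly flag the cycle identity and the fiber inclusion as the points needing care, which is the right diagnosis.
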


\begin{proof} By Lemma
\ref{lemm:2}\eqref{item:17}, it suffices to prove that  $V_{1}\#
(V_{2}\cdot
  \div(\ell))$ is the generic fiber of $\cV\cdot \div(\ell)$ and that
the support of  the cycle $\cV\cdot \div(\ell)$  has no vertical
components.

 Let $\cI_{i,0}=
  I(V_{i})\cap k[t][\bfx_{i}]$ and $\cI_{0}= \Big( I(V_{1})
  + I(V_{2})\Big) \cap k[t][\bfx_{1},\bfx_{2}]$ be the prime ideals of $\cV_{i,{0}}$ and
  of $\cV_{{0}}$ respectively, following the  notation in Remark~\ref{rem:1}.  Then $\cI_{0}\supset
  \cI_{1,0}+\cI_{2,0}$. In addition, there is an isomorphism of
  $k[t]$-algebras
$$
k[t][\bfx_{1},\bfx_{2}]/ (\cI_{1,0}+\cI_{2,0})\simeq
k[t][\bfx_{1}]/\cI_{1,0}\otimes_{k[t]}k[t][\bfx_{2}]/\cI_{2,0}.
$$
Hence $ \cI_{1,0}+\cI_{2,0}$ is a prime ideal of Krull dimension
$r_{1}+r_{2}+2$. By Remark~\ref{rem:1} and Lemma \ref{lemm:2}\eqref{item:16}, $\cI_{0}$ is also a prime
ideal of Krull dimension $r_{1}+r_{2}+2$. Hence,
$\cI_{0}=\cI_{1,0}+\cI_{2,0}$.  This implies that
$\cI_{0}+(\ell)=\cI_{1,0}+\cI_{2,0}+(\ell)$ and so
$$
k[t][\bfx_{1},\bfx_{2}]/ (\cI_{0} +(\ell))\simeq
k[t][\bfx_{1}]/\cI_{1,0}\otimes_{k[t]}k[t][\bfx_{2}]/(\cI_{2,0}+(\ell)).
$$
Let $\bftau \in \P^{1}({k})\setminus \{(0:1)\}$ and write
$\bftau=(1:\tau)$ with $\tau\in k$.  The $k[t]$-algebra $
k[t][\bfx_{1},\bfx_{2}]/(\cI_{0} +(\ell))\otimes_{k[t]} k(\tau)$ is
isomorphic to
\begin{displaymath}
 \Big( k[t][\bfx_{1}]/\cI_{1,0} \otimes_{k[t]}
  k(\tau)\Big)\otimes_{k[t]}\Big(k[t][\bfx_{2}]/(\cI_{2,0} +(\ell)) \otimes_{k[t]} k(\tau)\Big)
\end{displaymath}
The cycle $X(\cI_{0} +(\ell)) $ coincides with $\cV_{0}\cdot
\div(\ell)$ because of the definition of the intersection product.
Since $\dim(V_{2})\ge 1$, Proposition \ref{lemm:15}\eqref{item:44}
and Lemma \ref{lemm:16} imply that the cycle $X(\cI_{2,0} +(\ell))$
coincides with the standard model of $V_{2}\cdot \div(\ell)$.

We also deduce that $\dim(\pi^{-1}(\bftau))= r_{1}+r_{2}= \dim(\cV_{0}
\cdot \div(\ell))-1$, and so there are no vertical components.  The
case when $\bftau=(0:1)$ can be treated in the same way by
considering~$\cI_\infty$ and $\cI_{i,\infty}$ instead of $\cI_{0}$ and
$\cI_{i,0}$, respectively.  \nred{Finally}, applying the isomorphism above to
the generic point of $\P^{1}_{k}$, we deduce that the generic fiber of
$\pi:\cV \cdot \div(\ell)\to \P^{1}_{k}$ coincides with
$V_{1}\#(V_{2}\cdot \div(\ell))$. This concludes the proof.
\end{proof}

\begin{proof}[Proof of Proposition \ref{prop:12}]
The statement is equivalent to the equalities
\begin{displaymath}
  \deg(X_{1}\#X_{2})=\deg(X_{1})\, \deg(X_{2}) \quad, \quad \h
(X_{1}\#X_{2})= \deg(X_{1})\, \h (X_{2})+\deg(X_{2})\, \h (X_{1}).
\end{displaymath}
The first one is \eqref{degjoin} and so we only need to prove the
second. It is enough to prove this equality for irreducible $k(\bft)$-varieties
$V_1$, $V_{2}$ over an algebraically closed field~$k$.

Suppose first that the group of parameters consists in a single
variable $t$. In this case, the proof will be done by induction on
the dimension of the $V_{i}$'s. Write $r_{i}=\dim(V_{i})$. The case
$r_{1}=r_{2}=0$ is covered by Lemma~\ref{lemm:13}, so we assume that
either $r_{1}$ or $r_{2}$ are not~0. By symmetry, we can suppose
that $r_{2}\ge 1$. Let $\cV_{i}$ and $\cV$ be the standard models of
$V_{i}$ and of $V_{1}\#V_{2}$, respectively, and $\ell\in
k[\bfx_{2}]$ a generic linear form. Then, by Proposition
\ref{lemm:12}, Theorem \ref{bezgeom},  Lemma~\ref{lemm:14} and again
Proposition \ref{lemm:12},
\begin{displaymath}
    \h(V_{1}\#V_{2})= \deg_{0,r_{1}+r_{2}+2}(\cV) =
    \deg_{0,r_{1}+r_{2}+1}(\cV\cdot \div(\ell))=\h(V_{1}\#(V_{2}\cdot
    \div(\ell))).
\end{displaymath}
Using the inductive hypothesis and Proposition \ref{lemm:15}\eqref{item:52},
we get
\begin{multline*}
  \h(V_{1}\#(V_{2}\cdot \div(\ell)))= \deg(V_{1})\, \h(V_{2}\cdot
  \div(\ell) ) +\deg(V_{2}\cdot \div(\ell) )\,
  \h(V_{1}) \\
  =\deg(V_{1})\, \h(V_{2}) +\deg(V_{2})\, \h(V_{1}),
\end{multline*}
which proves the statement for this case.

The case of an arbitrary number $p$ of parameters reduces
to the previous one as follows. Given an irreducible subvariety $W
\subset \P_{_{k(\bft)}}^{n}$, consider the field $\wt
k=k\Big(\frac{t_{1}-\gamma_{1}}{w}, \dots,
\frac{t_{p}-\gamma_{p}}{w}\Big)$ where $w$ is an additional variable
and $\gamma_{i}\in k$ is a generic element. Observe that~$\wt k (w)$
coincides with $k(\bft)(w)$ and so it is a transcendental extension
of $k(\bft)$. The scalar extension $W_{_{\wt k(w)}}\subset
\P^{n}_{_{\wt
    k(w)}}$ is an irreducible subvariety of the same dimension and
degree as $W$.  Let $ \Res_{\bfone}\Big(W_{_{\wt k(w)}}\Big)$ be the
Chow form of $W_{_{\wt k(w)}}$, primitive with respect to the base
ring~$\wt k[w]$.  By Proposition~\ref{prop:6-1}, there {exists}
$\lambda \in \wt k(w)^{\times}$ such that
\begin{displaymath}
\lambda\, \Res_{\bfone}\Big(W_{_{\wt k(w)}}\Big)(w)=\Res_{\bfone}(W)(\bft)=
\Res_{\bfone}(W)\bigg(\frac{t_{1}-\gamma_{1}}{w}w+\gamma_{1},
\dots, \frac{t_{p}-\gamma_{p}}{w}w+\gamma_{p}\bigg).
\end{displaymath}
Indeed, $\Res_{\bfone}(W)$ is a polynomial over $\wt k[w]$ and $
\Res_{\bfone}\Big(W_{_{\wt k(w)}}\Big)$ is primitive with respect to this
ring. Hence, $\lambda \in \wt k[w]\setminus\{0\}$.  Furthermore, let $\zeta$ be a
root of $\lambda$ in the algebraic closure of $\wt k$. If $\zeta\ne
0$, setting $w=\zeta$ in the equality above we get $
\Res_{\bfone}(W)=0$, which is impossible. If $\zeta=0$,
we get $0= \Res_{\bfone}(W)(\bfgamma)$, which is also impossible since
$\bfgamma$ is generic. We conclude that $\lambda\in \wt
k^\times$. In particular,
\begin{equation}\label{eq:70}
  \h(W)=\deg_{\bft}( \Res_{\bfone}(W))=\deg_{w}\Big(
  \Res_{\bfone}(W_{_{\wt
    k(w)}})\Big)=\h_{w}\Big(W_{_{\wt k(w)}}\Big),
\end{equation}
where $\h_{w}$ denotes the height with respect to the base ring $\wt
k[w]$.  Coming back to our problem, we observe that
$(V_{1}\#V_{2})_{\wt k(w)}= V_{1,\wt k(w)}\#V_{2,\wt k(w)}$, which
follows readily from the definition of the ruled join. Applying
\eqref{eq:70} and the previously considered case when $p=1$, we
conclude that
\begin{math}\h(V_1\#V_2)= \h_{w}\Big(V_{1,\wt k(w)}\#V_{2,\wt
  k(w)}\Big)= \deg\Big(V_{1,\wt k(w)} \Big)\h_{w}\Big(V_{2,\wt
  k(w)}\Big)+\deg\Big(V_{2,\wt k(w)} \Big)\h_{w}\Big(V_{1,\wt
  k(w)}\Big)=\deg(V_{1} )\h(V_{2})+\deg(V_{2} )\h(V_{1}) \end{math}.
\end{proof}

\subsection{Measures of complex polynomials}
\label{Heights of polynomials}

To study cycles defined {over $\Q$,} we will use different measures for
the size of a complex multivariate polynomial.  We introduce them in
this section and collect some of their properties.

\begin{definition}
  \label{def:1}
Let $f = \sum_{\bfa} \alpha_\bfa\, \bfx^\bfa \in
\C[x_1,\dots,x_n]$.
The {\em $\ell^\infty$-norm}, the
{\em $\ell^1$-norm} and the {\em sup-norm of $f$} are respectively
defined as
\begin{displaymath}
  ||f||_{\infty}=\max_{\bfa}|\alpha_a| \quad, \quad
||f||_1= \sum_\bfa |\alpha_\bfa| \quad ,\quad
||f||_\sup= \sup_{\bfx\in (S^{1})^{n}} |f(\bfx)|,
\end{displaymath}
where $S^{1}=\{x\in \C: |x|=1\}$ denotes the unit circle of $\C$.
The {\it Mahler measure of $f$} is defined as
$$
 \m(f)=\int_{0}^1\cdots \int_0^1  \log |f(\e^{2\pi i
 u_1},\dots,\e^{2\pi i u_n})| \, du_1\dots du_n
=\int_{(S^{1})^{n}} \log |f| d\mu^{n}
$$
where $\mu$ denotes the Haar
measure on $S^{1}$ of total mass 1.
\end{definition}

We list some inequalities comparing these measures.

\begin{lemma}
  \label{lemm:1}
Let $f\in
\C[x_{1},\dots,x_{n}]$. Then
\begin{enumerate}
\item\label{item:1}
$\log||f||_{\infty} \le \log|| f||_{\sup}\le \log||f||_1 \le
\log||f||_{\infty} + \log(n+1)\deg(f)$;
\smallskip
\item \label{item:3} $ \m(f)\le \log|| f||_{\sup}$;
\smallskip
\item \label{item:4} $ \log||f||_1 \le \m(f) + \log(n+1)\deg(f)$;
\smallskip
\item \label{item:2} $|\,\m(f)-\log||f||_{\infty}\,| \le
  \log(n+1)\deg(f)$.
\end{enumerate}
\end{lemma}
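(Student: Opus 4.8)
The plan is to prove the four inequalities of Lemma~\ref{lemm:1} in turn, treating \eqref{item:1} first since the others build on it. Throughout, write $f=\sum_\bfa \alpha_\bfa\bfx^\bfa$ with $N$ the number of monomials appearing, and note $N\le \prod_i(\deg_{x_i}(f)+1)$, which we only need to bound crudely by $(n+1)^{\deg(f)}$ in the form $\log N\le \deg(f)\log(n+1)$ — this last combinatorial bound (number of lattice points $\bfa\in\N^n$ with $|\bfa|\le d$ is $\binom{n+d}{n}\le (n+1)^d$) is the one routine estimate I will record explicitly.

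For \eqref{item:1}: the bound $\log\|f\|_\infty\le\log\|f\|_\sup$ follows by evaluating the Fourier/monomial coefficient $\alpha_\bfa$ as an integral $\alpha_\bfa=\int_{(S^1)^n} f(\bfx)\bfx^{-\bfa}\,d\mu^n$ and bounding $|\alpha_\bfa|\le\|f\|_\sup$. The bound $\log\|f\|_\sup\le\log\|f\|_1$ is the triangle inequality: for $\bfx\in(S^1)^n$ each $|\bfx^\bfa|=1$, so $|f(\bfx)|\le\sum_\bfa|\alpha_\bfa|=\|f\|_1$. The bound $\log\|f\|_1\le\log\|f\|_\infty+\log(n+1)\deg(f)$ follows from $\|f\|_1\le N\|f\|_\infty$ together with $\log N\le\deg(f)\log(n+1)$.

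For \eqref{item:3}, $\m(f)=\int_{(S^1)^n}\log|f|\,d\mu^n\le\log\sup_{(S^1)^n}|f|=\log\|f\|_\sup$ since $\log$ is monotone and the measure has total mass $1$. For \eqref{item:4}, the key input is the one-variable fact that Mahler measure does not decrease under the standard ``Jensen'' manipulations; concretely, I will invoke the classical inequality $\|g\|_1\le 2^{\deg g}e^{\m(g)}$ for univariate $g$ (a theorem of Mahler, following from Jensen's formula applied to the factorization of $g$), and then pass to the multivariate case by the standard induction on the number of variables, writing $f$ as a polynomial in $x_n$ with coefficients in $\C[x_1,\dots,x_{n-1}]$ and using that $\m$ of the leading coefficient plus contributions are controlled — at each of the $n$ steps one loses a factor bounded by $(\text{partial degree}+1)$, and multiplying these gives the factor $\prod_i(\deg_{x_i}(f)+1)\le(n+1)^{\deg f}$, i.e. an additive $\log(n+1)\deg(f)$. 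Finally \eqref{item:2} follows by combining the previous parts: $\m(f)\le\log\|f\|_\sup\le\log\|f\|_1$ (so $\m(f)-\log\|f\|_\infty\le\log\|f\|_1-\log\|f\|_\infty\le\log(n+1)\deg(f)$ by \eqref{item:1}), and in the other direction $\log\|f\|_\infty\le\log\|f\|_1\le\m(f)+\log(n+1)\deg(f)$ by \eqref{item:4}, giving $\log\|f\|_\infty-\m(f)\le\log(n+1)\deg(f)$.

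The main obstacle is \eqref{item:4}: the passage from the univariate Mahler inequality to the multivariate one requires care, since one must track how the constant accumulates under the inductive expansion in each variable, and one must be sure the ``generic coordinate'' or direct expansion argument is bookkept so that the final multiplicative constant is $\prod_i(\deg_{x_i}f+1)$ rather than something worse like $(n+1)^{n\deg f}$. I expect to cite a standard reference (e.g. the bounds collected in \cite{Sombra04} or in the surveys on Mahler measure) for both the univariate statement $\|g\|_1\le 2^{\deg g}e^{\m(g)}$ and its multivariate refinement, rather than reprove them, since these are classical; the remaining inequalities \eqref{item:1}, \eqref{item:3}, \eqref{item:2} are then immediate formal consequences as sketched above.
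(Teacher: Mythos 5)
Your treatment of \eqref{item:1}, \eqref{item:3} and \eqref{item:2} coincides with the paper's: Cauchy's integral formula and the triangle inequality give the first two inequalities in \eqref{item:1}, the bound $\#\Supp(f)\le\binom{n+\deg f}{n}\le(n+1)^{\deg f}$ gives the third, \eqref{item:3} is immediate from monotonicity and the normalization of the measure, and \eqref{item:2} is the formal combination of the other parts. One small correction in \eqref{item:1}: you write $N\le\prod_i(\deg_{x_i}f+1)$ and then say this is ``bounded crudely by $(n+1)^{\deg f}$''; that last inequality is false in general (take $f$ of total degree $1$ depending on all $n$ variables: $\prod_i(d_i+1)=2^n$ while $(n+1)^{\deg f}=n+1$). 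The correct chain, which you do record parenthetically, is $\#\Supp(f)\le\binom{n+d}{n}\le(n+1)^d$; you should drop the box bound from the argument.

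The genuine gap is in \eqref{item:4}. The paper cites Philippon's Lemma~1.13, which proves $\|f\|_1\le(n+1)^{\deg f}M(f)$ by an argument involving integration over spheres, closely tied to the Philippon measure introduced later in Definition~\ref{mahlerPh}. Your proposed alternative is an induction on the number of variables using the univariate Mahler inequality $\|g\|_1\le 2^{\deg g}M(g)$. There are two problems. First, the per-variable factor contributed by the univariate inequality is $2^{\deg_{x_i}f}$, not $\deg_{x_i}f+1$ as you claim, so the induction yields $\|f\|_1\le 2^{\sum_i\deg_{x_i}f}M(f)$, i.e.\ $\log\|f\|_1\le\m(f)+\log(2)\sum_i\deg_{x_i}(f)$. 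Second, even with your claimed per-step factor, $\prod_i(\deg_{x_i}f+1)$ is not bounded above by $(n+1)^{\deg f}$, by the same counterexample as above. The quantity $\log(2)\sum_i\deg_{x_i}(f)$ can be as large as $n\log(2)\deg(f)$, which for large $n$ is much worse than the stated $\log(n+1)\deg(f)$, so the inductive route does not reproduce the bound in the lemma; it proves a weaker statement. Since you ultimately plan to cite a reference rather than reprove, the fix is simply to cite Philippon's result (as the paper does) and not to offer the inductive sketch as a substitute: the sketch, as written, would mislead a reader into thinking the constant $(n+1)^{\deg f}$ is elementary when in fact it requires the sphere-integral argument.
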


\begin{proof}\

\eqref{item:1} Let $f = \sum_\bfa \alpha_\bfa\, \bfx^\bfa$. By
Cauchy's formula, for $\bfa\in \Supp(f)$,
\begin{displaymath}
  \alpha_{\bfa} = \int_{(S^{1})^{n}}\frac{f(\bfx)}{\bfx^{\bfa}} d\mu^{n}.
\end{displaymath}
Hence $||f||_{\infty} =\max_{\bfa}|\alpha_{\bfa}| \le \sup_{\bfx\in
  (S^{1})^{n}} |f(\bfx)|=|| f||_{\sup}$, which gives the first
inequality. The second follows easily from the definitions
while the third one follows from
\begin{displaymath}
||f||_1= \sum_\bfa |\alpha_\bfa| \le \# \Supp(f)
\,    ||f||_{\infty} \le {n+\deg(f)\choose n} \,    ||f||_{\infty}
\le (n+1)^{\deg(f)} \,   ||f||_{\infty}.
\end{displaymath}
\eqref{item:3} follows easily from the definitions,
\eqref{item:4} follows from~\cite[Lem.~1.13]{Philippon86} while~\eqref{item:2} follows from \eqref{item:1}, \eqref{item:3} and
\eqref{item:4}.
\end{proof}

We also list some well-known properties of the Mahler measure.  Recall
that a \emph{weight monomial order on $\C[\bfx]$} is a partial order
on the monomials of $\C[\bfx]$ defined by a vector $\bfv \in \R^{n}$
as $\bfx^{\bfa} \prec \bfx^{\bfb}$ if and only if $\langle \bfv,
\bfa\rangle < \langle \bfv, \bfb\rangle$.

\begin{lemma}
  \label{lemm:9} Let $f, g\in \C[\bfx]$. Then
\begin{enumerate}
\item \label{item:5}  $\m(fg)=\m(f)+\m(g)$.
\smallskip
\item \label{item:6}
$\m(f(\omega_{1}x_{1},\dots, \omega_{n}x_{n}))=\m(f)$ for any
$(\omega_{1},\dots,\omega_{n}) \in (S^{1})^{n}$.
\smallskip
\item \label{item:7}
$\m(f(x_1^{\ell},\dots,x_{n}^{\ell}))=\m(f)$ for any
$\ell\ge 1$.
\smallskip
\item \label{item:46} Let $\prec$ be a weight monomial  order on
  $\C[\bfx]$. Then  $\m(\init_{\prec}(f))\le \m(f)$.
\end{enumerate}
\end{lemma}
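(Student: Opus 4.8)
The plan is to dispatch \eqref{item:5}, \eqref{item:6} and \eqref{item:7} straight from the definition and to spend the real effort on \eqref{item:46}. For \eqref{item:5}, the identity $\log|fg| = \log|f| + \log|g|$ holds at every point of $(S^1)^n$ off the $\mu^n$-null set $\{fg = 0\}$; since $\m(h)>-\infty$ for every nonzero $h\in\C[\bfx]$ (the standard finiteness of the Mahler measure), all three functions lie in $L^1(\mu^n)$, integrals over null sets are irrelevant, and linearity of the integral gives $\m(fg) = \m(f) + \m(g)$. For \eqref{item:6} and \eqref{item:7} the point is that the substitution appearing there is a measure-preserving self-map of the torus: in \eqref{item:6}, $\bfx\mapsto(\omega_1x_1,\dots,\omega_nx_n)$ is a rotation of $(S^1)^n$ and $\mu^n$ is rotation invariant; in \eqref{item:7}, $\bfx\mapsto(x_1^\ell,\dots,x_n^\ell)$ reads, in the coordinates $x_j = \e^{2\pi i u_j}$, as $\bfu\mapsto\ell\bfu\bmod\Z^n$, which pushes $\mu^n$ forward to itself (equivalently, apply $\int_0^1 h(\ell u)\,du = \int_0^1 h(u)\,du$ for $1$-periodic $h$, coordinate by coordinate, via Fubini). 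In both cases the change of variables leaves $\int\log|f|\,d\mu^n$ unchanged.

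For \eqref{item:46}, let $\bfv\in\R^n$ define $\prec$, put $m = \min_{\bfa\in\Supp(f)}\langle\bfv,\bfa\rangle$, so $\init_\prec(f) = \sum_{\langle\bfv,\bfa\rangle = m}\alpha_\bfa\bfx^\bfa$; we may assume $\init_\prec(f)\neq f$. I would first reduce to the case $\bfv=\bfe_n$. Because $\Supp(f)$ is finite and its elements have integer coordinates, the set of weight vectors inducing the same initial form as $\bfv$ is a nonempty relatively open polyhedral cone with rational defining data, hence contains a rational vector, which after scaling we take to be a primitive vector of $\Z^n$. Complete $\bfv$ to the last column of a matrix $A\in GL_n(\Z)$ and let $\phi_A$ be the monomial automorphism of $(\C^{\times})^n$ for which $f\circ\phi_A = \sum_{\bfa}\alpha_\bfa\,\bfx^{A^{\top}\bfa}$; since $\phi_A$ acts on $\bfu\in(\R/\Z)^n$ by $\bfu\mapsto A\bfu$ and $\det A=\pm1$, it restricts to a measure-preserving automorphism of $(S^1)^n$, so $\m(g\circ\phi_A)=\m(g)$ for every (Laurent) polynomial $g$. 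As the $\bfe_n$-weight of $\bfx^{A^{\top}\bfa}$ equals $\langle\bfv,\bfa\rangle$, one checks $\init_{\bfe_n}(f\circ\phi_A) = (\init_\prec f)\circ\phi_A$, so the inequality for $(f,\prec)$ follows from the one for $(f\circ\phi_A,\bfe_n)$. Multiplying by a suitable power of $x_n$ (harmless for both $\m$ and $\init_{\bfe_n}$), we may finally assume $g:=f\circ\phi_A = \sum_{j\geq0}g_j(x_1,\dots,x_{n-1})\,x_n^j$ with $g_0\neq0$, so that $\init_{\bfe_n}(g)=g_0$.

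It then remains to prove $\m(g)\geq\m(g_0)$ in this normalized situation, which is where the genuine analytic input enters. For each $\bfx'\in(S^1)^{n-1}$ outside the null set on which $g(\bfx',\cdot)$ vanishes identically (a subset of $\{g_0=0\}$), Jensen's formula applied to the one-variable polynomial $g(\bfx',x_n)$ gives
\[
\int_{S^1}\log|g(\bfx',x_n)|\,d\mu(x_n) \;=\; \log|g_0(\bfx')| + \sum_i\log^{+}|\zeta_i(\bfx')| \;\geq\; \log|g_0(\bfx')|,
\]
where the $\zeta_i(\bfx')$ are the roots of $g(\bfx',\cdot)$ and we used $g_0(\bfx')=g(\bfx',0)$ together with the factorization of $g(\bfx',\cdot)$ into leading coefficient times linear factors. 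Integrating over $\bfx'\in(S^1)^{n-1}$ and applying Fubini (legitimate since $\log|g|\in L^1(\mu^n)$) yields $\m(g)\geq\m(g_0)$, completing \eqref{item:46}. The main obstacle, and essentially the only non-formal point in the lemma, is exactly this item: one must set up the reduction to a coordinate weight through a torus automorphism — verifying both its measure-preservation and its compatibility with $\init$ — and then feed in Jensen's formula slice by slice; parts \eqref{item:5}–\eqref{item:7} are routine once the measure-theoretic bookkeeping (finiteness of $\m$, discarding null sets, Fubini) is in place.
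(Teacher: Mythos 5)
Your proof is correct. For items (1)--(3) it follows essentially the same lines as the paper's: multiplicativity of $\log|\cdot|$ off a null set, and measure-preserving self-maps of $(S^1)^n$ (rotations for (2), the $\ell$-power map for (3)). For item (4), however, you take a genuinely different route. The paper observes that $\init_{\prec}(f)$ is the face polynomial of $\newton(f)$ in the direction $\bfv$ and then simply cites Smyth~\cite{Smyth81} for the fact that the Mahler measure of a polynomial is bounded below by that of each of its face polynomials. You instead reprove this inequality from scratch: replace $\bfv$ by a primitive integer vector inducing the same initial form (possible since the relevant cone in the normal fan of $\newton(f)$ is a rational relatively open cone), complete it to a unimodular matrix $A$, use the resulting measure-preserving monomial automorphism of $(S^1)^n$ together with its compatibility with $\init$ to reduce to $\bfv=\bfe_n$, and then apply Jensen's formula to the $x_n$-slices and integrate via Fubini. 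This is self-contained and constitutes an actual proof of the cited fact, at the cost of some bookkeeping with the unimodular change of coordinates and with clearing negative exponents. One small slip in the Jensen display: the middle expression should be $\log|a_d(\bfx')| + \sum_i \log^{+}|\zeta_i(\bfx')|$ (leading coefficient in $x_n$, not the constant term $g_0$), or equivalently $\log|g_0(\bfx')| + \sum_{|\zeta_i(\bfx')|<1}\log\bigl(1/|\zeta_i(\bfx')|\bigr)$; either form gives the inequality $\ge \log|g_0(\bfx')|$, so the conclusion is unaffected.
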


\begin{proof}\

  \eqref{item:5}, \eqref{item:6} These follow easily from the definition of
  the Mahler measure.

\smallskip
   \eqref{item:7} For $\bfu\in \R^{n}$ set $\bfw=(\ell u_1,\dots,\ell
   u_{n})$ and observe that $dw_{1}\dots dw_{n}=\ell^{n}du_{1}\dots du_{n}$. Hence,
  \begin{multline*}
\m(f(\bfx^\ell))=
    \int_{[0,1]^{n}}  \log |f(\e^{2\pi i
\ell \bfu})| \, du_1\dots du_n =
\frac{1}{\ell^{n}}\int_{[0,\ell]^{n}} \log |f(\e^{2\pi i
 \bfw})| \, dw_1\dots dw_n \\= \int_{[0,1]^{n}}
\log |f(\e^{2\pi i
 \bfw})| \, dw_1\dots dw_n =\m(f(\bfx)).
  \end{multline*}
\eqref{item:46}
Let $\bfv\in \R^{n}$ be a vector defining  $\prec$. The
exponents of the monomials of $f$ which are minimal with respect to $\prec$
are the vectors in $\Supp(f)$ lying in
the maximal face $F$ of the Newton polytope of $f$ which has $\bfv$ as
an inner exterior normal.
Hence, $\init_{\prec}(f)$ is the face polynomial corresponding to $F$, that is, the
sum of the terms in $f$ whose exponent lies in that face.
The result then follows from the fact that the Mahler measure of a
polynomial is bounded below by the Mahler measure of any of its face
polynomials, see for instance~\cite{Smyth81}.
\end{proof}

We collect in the following lemma some further inequalities comparing the measures
of polynomials depending on groups of variables.

\begin{lemma} \label{lemm:11}
Let $f\in \C[\bfy_{1},\dots, \bfy_{m}]$ be a polynomial in $m$
groups of $n_{i}$ variables each. Then
\begin{enumerate}
\item \label{item:47}
$ \log||f||_{\sup}\le \log||f||_1 \le \log||f||_{\infty} +
\sum_{i=1}^{m} \log(n_i+1)\deg_{\bfy_{i}}(f) ; $
\smallskip
\item  \label{krso01}
\begin{math} |\,\m(f)-\log||f||_{\infty}\,| \le
  \sum_{i=1}^m\log(n_i+1)\deg_{\bfy_i}(f).
\end{math}
\end{enumerate}
\end{lemma}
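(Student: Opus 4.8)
The plan is to reduce the multivariate-with-groups statement of Lemma \ref{lemm:11} to the single-group Mahler measure comparison already available in Lemma \ref{lemm:1}, by treating all the variables together as one block of $n=\sum_{i=1}^m n_i$ variables and then sharpening the degree term using the group structure. The two displayed inequalities are essentially the group-variable refinements of parts \eqref{item:1} and \eqref{item:2} of Lemma \ref{lemm:1}, so the work is to see where the total degree $\deg(f)$ in those estimates can be replaced by the sum $\sum_i \log(n_i+1)\deg_{\bfy_i}(f)$.

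For part \eqref{item:47}, the first inequality $\log\|f\|_\sup\le\log\|f\|_1$ is immediate from the definitions (it is contained in Lemma \ref{lemm:1}\eqref{item:1} applied to the whole variable block, since $|f|\le\sum_\bfa|\alpha_\bfa|$ pointwise on the polytorus). For the second inequality, I would write $f=\sum_\bfa\alpha_\bfa\bfy^\bfa$ where now $\bfa=(\bfa_1,\dots,\bfa_m)$ runs over exponents with $|\bfa_i|\le\deg_{\bfy_i}(f)$, and bound
\begin{displaymath}
\|f\|_1=\sum_\bfa|\alpha_\bfa|\le \#\Supp(f)\cdot\|f\|_\infty\le\Bigg(\prod_{i=1}^m\binom{n_i+\deg_{\bfy_i}(f)}{n_i}\Bigg)\|f\|_\infty\le\Bigg(\prod_{i=1}^m(n_i+1)^{\deg_{\bfy_i}(f)}\Bigg)\|f\|_\infty,
\end{displaymath}
using that the number of monomials in $n_i+1$ homogeneous variables (equivalently $n_i$ affine variables) of degree at most $\deg_{\bfy_i}(f)$ is $\binom{n_i+\deg_{\bfy_i}(f)}{n_i}\le(n_i+1)^{\deg_{\bfy_i}(f)}$, exactly as in the proof of Lemma \ref{lemm:1}\eqref{item:1}. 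Taking logarithms gives the claim. The point is that the support of a polynomial in several groups of variables factors as a product over the groups, so the single crude bound $(n+1)^{\deg(f)}$ gets refined to a product of per-group bounds.

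For part \eqref{krso01}, I would deduce it from \eqref{item:47} together with Lemma \ref{lemm:1}\eqref{item:3} and \eqref{item:4}, mirroring the proof of Lemma \ref{lemm:1}\eqref{item:2}. On one side, $\m(f)\le\log\|f\|_\sup\le\log\|f\|_1\le\log\|f\|_\infty+\sum_i\log(n_i+1)\deg_{\bfy_i}(f)$ by Lemma \ref{lemm:1}\eqref{item:3} and part \eqref{item:47}, giving $\m(f)-\log\|f\|_\infty\le\sum_i\log(n_i+1)\deg_{\bfy_i}(f)$. On the other side I need the reverse bound $\log\|f\|_\infty-\m(f)\le\sum_i\log(n_i+1)\deg_{\bfy_i}(f)$; combining $\log\|f\|_\infty\le\log\|f\|_1$ with the refinement of Lemma \ref{lemm:1}\eqref{item:4}, namely $\log\|f\|_1\le\m(f)+\sum_i\log(n_i+1)\deg_{\bfy_i}(f)$, does it. This last refinement is the only genuinely non-formal input: Lemma \ref{lemm:1}\eqref{item:4} is cited from \cite[Lem.~1.13]{Philippon86} with the single total-degree term, so I would need either a multigraded version of that statement or to derive it by induction on $m$ — peeling off one group of variables at a time, viewing $f$ as a polynomial in $\bfy_m$ with coefficients in $\C[\bfy_1,\dots,\bfy_{m-1}]$ and applying the one-variable-block estimate together with the inductive hypothesis on the coefficients, using Lemma \ref{lemm:9}\eqref{item:5} (additivity of $\m$) to control the interaction. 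That inductive bookkeeping, making sure the degree contributions add up group by group without losing constants, is the main obstacle; everything else is the routine combinatorics of counting monomials.
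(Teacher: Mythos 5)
Your proof of part \eqref{item:47} is exactly the paper's: the first inequality is inherited from Lemma~\ref{lemm:1}\eqref{item:1}, and for the second you bound $\#\Supp(f)$ by the product of per-group monomial counts, each of which is $\binom{n_i+\deg_{\bfy_i}(f)}{n_i}\le (n_i+1)^{\deg_{\bfy_i}(f)}$. That part is complete and correct.

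For part \eqref{krso01}, the paper does not derive it: it cites \cite[Lem.~1.1]{KrPaSo01} directly, which states precisely the multigroup inequality you need. You instead try to re-prove that result, and you correctly isolate the crux: everything reduces to the multigraded refinement of Lemma~\ref{lemm:1}\eqref{item:4}, namely $\log\|f\|_1\le\m(f)+\sum_i\log(n_i+1)\deg_{\bfy_i}(f)$, since the opposite direction $\m(f)-\log\|f\|_\infty\le\sum_i\log(n_i+1)\deg_{\bfy_i}(f)$ follows formally from your part \eqref{item:47} and Lemma~\ref{lemm:1}\eqref{item:3}. But the induction you sketch for that refinement has a genuine gap. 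Writing $f=\sum_\bfb c_\bfb(\bfy_1,\dots,\bfy_{m-1})\,\bfy_m^\bfb$ and "applying the inductive hypothesis on the coefficients" requires a bound on $\m(c_\bfb)$ in terms of $\m(f)$, and no such bound is available from what you have: Lemma~\ref{lemm:9}\eqref{item:5} is multiplicativity of $\m$ under \emph{products}, which says nothing about the Mahler measures of the coefficient polynomials of a \emph{sum}. Extracting $c_\bfb$ from $f$ is a Cauchy-integral operation on the $\bfy_m$-torus, and the logarithm does not commute with that integration; this is the whole difficulty, and it is why the one-block Philippon bound does not peel off group by group for free. So as written the reduction is not a proof. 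The honest route is the paper's: quote \cite[Lem.~1.1]{KrPaSo01} (or, if you want a self-contained argument, reproduce the proof therein, which does not proceed by the naive group-by-group induction).
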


\begin{proof} {\eqref{item:47} can be proved in the same way as
    the third inequality in Lemma~\ref{lemm:1}\eqref{item:1}, while
    \eqref{krso01} follows from~\cite[Lem.~1.1]{KrPaSo01}}.
\end{proof}

For multihomogeneous polynomials, we will also need the following
variant of
the Mahler measure,
introduced and studied by
Philippon in \cite{Philippon91}.

\begin{definition}\label{mahlerPh}
  \label{def:3}
Let $f \in \C[\bfx_1,\dots,\bfx_m]$ be a multihomogeneous polynomial
in $m$ groups of $n_i+1$ variables each.
The {\it Philippon measure of $f$} is defined as
\begin{equation*}
\ph( f) = \int_{S^{2n_1+1}
\times \cdots \times S^{2n_m+1}} \log |f| \, d\mu_{1}   \dots
d\mu_{m}
+\sum_{i=1}^m \bigg(\sum_{j=1}^{n_i} {1\over 2\,j}\bigg) \deg_{\bfx_i}(f) ,
\end{equation*}
where
$S^{2n_i+1}=\{\bfx\in \C^{n_i+1}:|x_0|^2+\cdots +|x_{n_i}|^2=1\}$ denotes the unit sphere
of $\C^{n_i+1}$ and $\mu_{i}$ the Borel measure on $S^{2n_{i}+1}$  of
total mass $1$, invariant under the action of the unitary group.
\end{definition}

The Philippon measure is related to the Mahler measure by {the inequalities}
\begin{equation}   \label{item:54}
0 \le  \ph(f) - \m(f)  \le  \sum_{i=1}^m\log(n_i+1)\deg_{\bfx_i}(f),
 \end{equation}
which  follow from \cite[Thm. 4]{Lelong94}.
In particular,
\begin{equation} \label{eq:4}
\ph(f)
\le \log ||f||_\sup
+  \sum_{i=1}^m\log(n_i+1)\deg_{\bfx_i}(f).
\end{equation}

\begin{definition} \label{def:6}
Let $f\in \Z[x_{1},\dots, x_{n}]$. The
{\em height of $f$} is defined as
\begin{math}
\h(f) =    \log\|f\|_{\infty}
\end{math} if $f\ne0$ and as 0 otherwise.
\end{definition}

The following  lemma estimates the
behavior of the height of polynomials with respect to the arithmetic operations and
 composition. Its proof follows directly from the definitions.

\begin{lemma}\label{lemma:2-17} Let $f_1,\dots,f_s\in \Z[x_{1},\dots, x_{n}]$. Then
\begin{enumerate}
\item \label{item:45} $\h(\sum_i f_i)\le \max_i\,\h(f_i)+ \log(s)$;
\smallskip
\item \label{item:9} $\h(\prod_i f_i)\le \h(f_{1})+  \sum_{i=2}^{s}
  \log||f_i||_{1}\le\sum_i \h(f_i)+ \log(n+1)\sum_{i=2}^{s}\deg(f_i).$
\smallskip
\item \label{item:55} Let $g\in \C[y_1,\dots,y_s]$ and write
  $d=\max_{i}\deg(f_i)$ and $h=\max_{i}\h(f_{i})$. Then
  \begin{math}
  \h(g(f_1,\dots,f_s))\le
\h(g)+\deg(g)(h+ \log(s+1)+{d\log(n+1)}).
  \end{math}
\end{enumerate}
\end{lemma}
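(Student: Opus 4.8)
The final statement is Lemma \ref{lemma:2-17}, three routine estimates on the height of integer polynomials under sum, product and composition. The plan is to prove each of the three items directly from Definition \ref{def:6} together with the comparison inequalities of Lemma \ref{lemm:1}, exactly as the remark ``Its proof follows directly from the definitions'' suggests. None of the steps presents a genuine obstacle; the only care needed is bookkeeping of the $\log$ terms coming from the passage between $\|\cdot\|_\infty$, $\|\cdot\|_1$ and $\|\cdot\|_{\sup}$.

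For \eqref{item:45}: if $f=\sum_{i=1}^s f_i$, then each coefficient of $f$ is a sum of $s$ coefficients of the $f_i$, so $\|f\|_\infty \le \sum_i \|f_i\|_\infty \le s\,\max_i\|f_i\|_\infty$; taking logarithms gives $\h(\sum_i f_i)\le \max_i \h(f_i)+\log s$ (the degenerate cases where some $f_i$ or the whole sum vanish are immediate).

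For \eqref{item:9}: work by induction on $s$, the case $s=1$ being trivial. For a product $fg$ of two nonzero polynomials in $n$ variables one has the submultiplicativity $\|fg\|_\infty \le \|f\|_\infty\,\|g\|_1$ (each coefficient of $fg$ is a sum of products $\alpha_\bfa\beta_\bfb$, of which at most $\|g\|_0 \le \|g\|_1/\|g\|_\infty$... more simply, $\|fg\|_\infty \le \|f\|_1\|g\|_\infty$ and symmetrically), hence $\h(fg)\le \h(f)+\log\|g\|_1$. Iterating, $\h(\prod_{i=1}^s f_i) \le \h(f_1)+\sum_{i=2}^s \log\|f_i\|_1$. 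Then apply the third inequality of Lemma \ref{lemm:1}\eqref{item:1}, namely $\log\|f_i\|_1 \le \h(f_i)+\log(n+1)\deg(f_i)$, to obtain the second bound. If some $f_i=0$ both sides are $0$, so we may assume all $f_i\neq 0$.

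For \eqref{item:55}: write $g=\sum_\bfb \gamma_\bfb\, \bfy^\bfb$ with $|\bfb|\le \deg(g)$, so that $g(f_1,\dots,f_s)=\sum_\bfb \gamma_\bfb\, f_1^{b_1}\cdots f_s^{b_s}$, a sum of at most $\binom{s+\deg(g)}{s}\le (s+1)^{\deg(g)}$ terms. By \eqref{item:45}, $\h(g(f_1,\dots,f_s)) \le \max_\bfb \h(\gamma_\bfb f_1^{b_1}\cdots f_s^{b_s}) + \deg(g)\log(s+1)$. For a fixed $\bfb$, applying \eqref{item:9} to the product (which has $|\bfb|\le\deg(g)$ factors among the $f_i$, plus the constant $\gamma_\bfb$) gives $\h(\gamma_\bfb f_1^{b_1}\cdots f_s^{b_s}) \le \h(g) + |\bfb|\,h + \log(n+1)\,|\bfb|\,d \le \h(g)+\deg(g)(h+d\log(n+1))$, using $\h(\gamma_\bfb)\le\h(g)$ and $\max_i\h(f_i)=h$, $\max_i\deg(f_i)=d$. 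Combining the two displays yields
$$
\h(g(f_1,\dots,f_s)) \le \h(g)+\deg(g)\bigl(h+\log(s+1)+d\log(n+1)\bigr),
$$
which is the claim. The only point worth double-checking is that the $\log(n+1)\deg$ contribution in \eqref{item:9} is counted once per factor and bounded by $\deg(g)\,d\log(n+1)$ in total, which it is since $|\bfb|\le\deg(g)$ and each $\deg(f_i)\le d$.
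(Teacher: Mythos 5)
Your proof is correct and is precisely the routine verification the paper elides by remarking that the lemma "follows directly from the definitions." For item \eqref{item:45} you use subadditivity of $\|\cdot\|_\infty$; for \eqref{item:9} the submultiplicative estimate $\|fg\|_\infty\le\|f\|_\infty\|g\|_1$ iterated, then Lemma \ref{lemm:1}\eqref{item:1}; and for \eqref{item:55} you expand $g$ monomial by monomial, apply \eqref{item:45} to the outer sum (bounding the number of terms by $\binom{s+\deg g}{s}\le(s+1)^{\deg g}$) and \eqref{item:9} to each inner product. This is exactly the natural argument and matches what the authors intended. The only cosmetic point is the half-finished parenthetical in \eqref{item:9} ("of which at most $\|g\|_0 \le \|g\|_1/\|g\|_\infty$...") — you drop it mid-thought and restart with the correct bound $\|fg\|_\infty\le\|f\|_\infty\|g\|_1$; that sentence should be cleaned up or removed, but the underlying estimate and the final chain of inequalities are all sound.
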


\subsection{Canonical mixed heights of cycles over $\Q$}\label{The height in the number field case}
The projective space $\P^n=\P^{n}(\Qbarra)$
has a standard structure of toric variety with torus
\begin{displaymath}
(\P^n)^\circ:=\P^{n}\setminus V(x_{0}\cdots x_{n}) \simeq
(\Qbarra^{\times})^{n}.
\end{displaymath}
The action of this torus on  $\P^{n}$ writes down as
$\bfu \cdot \bfx =(u_{0}x_{0}:\dots:
u_{n}x_{n})$ for $\bfu=(u_{0}:\dots:u_{n})\in   (\P^n)^\circ$ and
$\bfx=(x_{0}:\dots:x_{n})\in \P^{n}$.
This toric structure on $\P^{n}$ allows to define a notion of
canonical height for its
subvarieties~\cite{BPS11}.
Following David and Philippon,  this height can be defined
by a limit process {\it \`a la} Tate~\cite{DavPhi99}.
In precise terms, for $\ell\ge1$,
consider the \emph{$\ell$-power map}
\begin{displaymath}
[\ell]:\P^{n} \longrightarrow \P^{n} \quad, \quad (x_{0}:\cdots:x_{n})\longmapsto
(x_{0}^{\ell}:\cdots:x_{n}^{\ell}).
\end{displaymath}
Let $V\subset \P^{n}$ be an irreducible subvariety and
let $\h$ denote the Fubini-Study height of projective
varieties~\cite{Philippon95, KrPaSo01}.
The \emph{canonical} {(or \emph{normalized}) \emph{height of $V$} can
  be  defined as
\begin{equation}\label{defhnormalisee}
\wh(V) = \deg(V) \, \lim_{\ell\to \infty} \,\frac{\h({[\ell] \,
V})}{\ell\, \deg([\ell]\, V)}.
\end{equation}
Both $\h$ and $\wh$  extend to cycles by linearity. Alternatively,
the canonical height can be defined using Arakelov geometry, as the
height of $V$ with respect to canonical metric on the universal line bundle
$\mathcal{O}(1)$, see for
instance~\cite{Maillot00, BPS11}.

\medskip
We collect in the
proposition below some of the basic properties of this notion.
For a cycle $X$ of $\P^{n}$, we denote by $[\ell]_*X$ its direct
image under the map $[\ell]$. We denote by $\mu_{\infty}$ the subgroup
of $\Qbarra^{\times}$ of roots of unity and by   $\mu_{\ell}$  the
subgroup of $\ell$-roots.
A variety is
called a \emph{torsion subvariety of $\P^{n}$} if it is the closure of
the orbit of the action of a subtorus of $(\P^{n})^{\circ}$ over a
point with coordinates in~$\{0\}\cup\mu_\infty$.

\begin{proposition}\label{defn_hnorm} \
\begin{enumerate}
\item \label{item:26}
Let $X\in Z_{r}(\P^{n})$. Then $\wh(X) = \lim_{\ell\to \infty} \ell^{-r-1}\,
\h([\ell]_* X)$.
\item \label{item:26b}
Let $X\in Z_{r}(\P^{n})$ and $\ell\in \Z_{>0}$. Then
$\deg([\ell]_{*}X)=\ell^r \deg(X)$ and $\wh([\ell]_*X)=\ell^{r+1}\,
\wh(X)$.
\smallskip
\item \label{item:28} If $X$ is an effective cycle, then  $\wh(X)\ge 0$.
\smallskip
\item \label{item:32} If  $X$  is a sum
of torsion subvarieties, then  $\wh(X)=0$. In particular, $\wh(\P^n)=0$.
\item \label{item:29} Let  $X\in Z_{r}^{+}(\P^{n})$. Then $ \Big|\wh(X) - \h(X) \Big| \le \frac{7}{2} \, (r+1) \,\log(n+1)
\deg (X).$
\smallskip
\item \label{item:30} Let $\bfxi\in \P^{n}$ be a point with rational
  coordinates and write $\bfxi=(\xi_{0}:\cdots : \xi_{n})$ for some coprime
   $\xi_{i}\in \Z$. Then $\wh(\bfxi)=\log(\max_{i} |\xi_{i}|)$.
\smallskip
\item \label{item:31} Let $D\in \Div^{+}(\P^{n}_\Q)$ and $f_{D}\in \Z[\bfx]$
  its primitive defining polynomial. Then $\wh(D)=\m(f_{D})$.
\end{enumerate}
\end{proposition}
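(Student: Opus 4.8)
The plan is to dispatch the seven items more or less in the order listed, using as the single engine the power‑map arithmetic: the compatibility $[m]\circ[\ell]=[m\ell]$, hence $[m]_*[\ell]_*=[m\ell]_*$, and the identity $[\ell]^*\cO(1)=\cO(\ell)$. First I would record the degree half of \eqref{item:26b}: for generic hyperplanes $H_1,\dots,H_r$ the projection formula \eqref{eq:38} gives
\begin{displaymath}
\deg([\ell]_*X)=([\ell]_*X)\cdot H_1\cdots H_r=[\ell]_*\big(X\cdot[\ell]^*H_1\cdots[\ell]^*H_r\big)=\ell^{\,r}\deg(X).
\end{displaymath}
For \eqref{item:26}, writing $X=\sum_V m_V V$ and using $[\ell]_*V=\deg([\ell]|_V)\,[\ell](V)$ together with the linearity of the Fubini–Study height $\h$ on cycles, one has $\h([\ell]_*V)=\deg([\ell]|_V)\,\h([\ell](V))$ and $\deg([\ell](V))=\ell^{\,r}\deg(V)/\deg([\ell]|_V)$; substituting into the defining limit \eqref{defhnormalisee} the two factors $\deg([\ell]|_V)$ cancel, leaving $\wh(V)=\lim_{\ell}\ell^{-r-1}\h([\ell]_*V)$, and the statement follows by linearity (the convergence of the limit being \cite{DavPhi99}; see also \cite{BPS11}). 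The height half of \eqref{item:26b} is then formal: by \eqref{item:26} applied to $[\ell]_*X$, $\wh([\ell]_*X)=\lim_m m^{-r-1}\h([m\ell]_*X)$, and reindexing the limit along multiples of $\ell$ yields $\ell^{\,r+1}\wh(X)$.

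For \eqref{item:28} I would note that $[\ell]_*X$ is effective and invoke the standard lower bound $\h(Y)\ge -c(n)(\dim Y+1)\deg(Y)$ for effective cycles \cite{Philippon95,KrPaSo01}: then $\ell^{-r-1}\h([\ell]_*X)\ge -c(n)(r+1)\ell^{-1}\deg(X)\to 0$, so the limit is $\ge 0$; alternatively, $\wh$ is the height attached to the semipositive canonical metric on $\cO(1)$, for which effective cycles have non‑negative height \cite{BPS11,Maillot00}. For \eqref{item:32}, since $[\ell]$ is $\ell^{\,n}$‑to‑one on $\P^n$ we have $[\ell]_*\P^n=\ell^{\,n}\P^n$, so $\wh(\P^n)=\lim_\ell \ell^{-n-1}\ell^{\,n}\h(\P^n)=0$; for a general torsion subvariety $V$, the underlying subtorus is $[\ell]$‑invariant and the torsion point is sent to its $\ell$‑th power, so $\{[\ell](V):\ell\ge 1\}$ is a finite set, whence $\h([\ell](V))$ is bounded, and since $\deg([\ell]|_V)\le \ell^{\,r}\deg(V)$ the numbers $\ell^{-r-1}\h([\ell]_*V)=\ell^{-r-1}\deg([\ell]|_V)\h([\ell](V))$ tend to $0$; linearity concludes.

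Item \eqref{item:30} is a direct limit computation: for $\bfxi=(\xi_0:\cdots:\xi_n)$ with coprime $\xi_i\in\Z$ one has $[\ell]_*\bfxi=(\xi_0^\ell:\cdots:\xi_n^\ell)$, still with coprime integer coordinates, so no finite place contributes and $\h([\ell]_*\bfxi)=\tfrac12\log\sum_i\xi_i^{2\ell}$; dividing by $\ell$ and letting $\ell\to\infty$ gives $\log\max_i|\xi_i|$. Item \eqref{item:31} I would deduce from the identification of $\wh$ with the height of $\cO(1)$ relative to the canonical metric on $\P^n$, for which the height of the divisor of a primitive $f_D\in\Z[\bfx]$ equals $\m(f_D)$ \cite{BPS11,Maillot00}; one can also obtain it from \eqref{item:26}, tracking the defining polynomial of $[\ell]_*D$ and using $\m(gh)=\m(g)+\m(h)$ and $\m(f(x_0^\ell,\dots,x_n^\ell))=\m(f)$ of Lemma \ref{lemm:9}\eqref{item:5}--\eqref{item:7}.

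The hard part is \eqref{item:29}, the explicit comparison $|\wh(X)-\h(X)|\le\tfrac72(r+1)\log(n+1)\deg(X)$. The conceptual route is that $\wh$ and $\h$ are the arithmetic intersection numbers of $\cO(1)$ over $X$ for, respectively, the canonical and the Fubini–Study metrics, which differ by the bounded continuous function $\log\big(\max_i|x_i|/(\sum_i|x_i|^2)^{1/2}\big)$; the arithmetic change‑of‑metric formula bounds the difference of heights by $(r+1)$ times the sup‑norm of that function times $\deg(X)$ plus lower‑dimensional contributions, and reconciling the normalization of the Fubini–Study height used in \cite{Philippon95,KrPaSo01} with this computation produces the constant $\tfrac72$. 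I expect this bookkeeping of constants to be the only genuinely delicate point; everything else is formal manipulation of the Tate limit via \eqref{item:26} and \eqref{item:26b}.
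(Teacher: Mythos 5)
Your treatment of items \eqref{item:26}, \eqref{item:26b} and \eqref{item:28} follows the same Tate--limit mechanism as the paper: the cancellation of $\deg([\ell]|_{V})$ against $\deg([\ell]V)$, the re-indexing $[j]_*[\ell]_*=[j\ell]_*$, and non-negativity passed to the limit. The paper obtains $\deg([\ell]_*V)=\ell^r\deg(V)$ by counting $\Stab(V)\cap\ker[\ell]$ via \cite[Prop.~2.1(i)]{DavPhi99}, whereas you get it by the projection formula and $[\ell]^*H$ being a degree-$\ell$ hypersurface; both routes are correct, and yours is arguably more self-contained. For items \eqref{item:30}, \eqref{item:32} and \eqref{item:31} the paper simply cites \cite[Prop.~2.1 and display~(2)]{DavPhi99}, while you give direct Tate-limit computations (and for \eqref{item:31} a sketch via Lemma~\ref{lemm:9}\eqref{item:5}--\eqref{item:7}, which is in fact exactly what the paper does for the multiprojective generalization in Proposition~\ref{mahler}\eqref{item:37}). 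These direct arguments are sound and are a genuine, more elementary alternative to the citation; the small imprecision that $\deg([\ell]|_V)\le\ell^r\deg(V)$ needs to be justified (it follows from $\deg([\ell]_*V)=\deg([\ell]|_V)\deg([\ell]V)=\ell^r\deg(V)$ with $\deg([\ell]V)\ge 1$) but this is easily repaired.

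The real gap is item \eqref{item:29}. The paper treats it as a citation to \cite{DavPhi99}; you instead sketch an Arakelov change-of-metric argument and assert that ``reconciling the normalization \dots produces the constant $\tfrac72$.'' This last step is precisely the content of the statement and you do not carry it out. The naive change-of-metric estimate gives $(r+1)\|g\|_\infty\deg(X)$ with $g=\log\big(\max_i|x_i|/(\sum_i|x_i|^2)^{1/2}\big)$, whose sup-norm is $\tfrac12\log(n+1)$; this would yield $\tfrac12(r+1)\log(n+1)\deg(X)$, not $\tfrac72(r+1)\log(n+1)\deg(X)$. The extra factor comes from the normalization shift in Philippon's Fubini--Study height (the additive constants $\sum_{j=1}^{n}\tfrac1{2j}$ in Definition~\ref{mahlerPh}) and from the comparison between $\ph$, $\m$ and $\log\|\cdot\|_\infty$, none of which you track. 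So as written, \eqref{item:29} is not proved: either carry out the bookkeeping explicitly, or do what the paper does and quote the quantitative comparison of \cite{DavPhi99} directly.
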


\begin{proof}\

(\ref{item:26}-\ref{item:26b}) By linearity, we reduce to the case of an
irreducible variety $V$. We can assume that
$V\cap (\P^n)^\circ \ne \emptyset$, otherwise we restrict to a
sufficiently small standard subspace of $\P^n$, that is, a linear
subspace defined by a subset of the coordinates $x_{i}$.

For shorthand, let $[\ell]$ denote also the
restriction of the $\ell$-power map to the torus $(\P^n)^\circ$.
This is a group homomorphism with kernel
$\ker[\ell]\simeq \mu_{\ell}^{n}$.
Let $\Stab(V)=\{
\bfu\in (\P^n)^\circ : \, \bfu\cdot V =V\} $ be the stabilizer of
$V$.
On the one hand, by~\cite[Prop.~2.1(i)]{DavPhi99},
\begin{equation*}
\deg([\ell] \, V) = \frac{\ell^r}{\# (\Stab(V) \cap \ker[\ell])} \,
\deg(V).
\end{equation*}
On the other hand, for a generic point $\bfx\in  V$,
\begin{displaymath}
  \deg([\ell]\Big|_V)= \# \{ \bfy\in V : [\ell]\bfy = [\ell]\bfx \}
= \# \{ \bfomega\in \ker[\ell] :
\bfomega \,\bfx \in V\} =\# (\Stab(V) \cap \ker[\ell]).
\end{displaymath}
Therefore,  $\deg([\ell]_*V)=
\deg([\ell]\Big|_V)\, \deg([\ell]V)= \ell^r
\deg(V)$.
Furthermore,
$$\wh(V)=\lim_{\ell\to\infty} \frac{\deg(V)
\h([\ell]V)}{\ell\deg([\ell]V)}=\lim_{\ell\to\infty}\frac{\deg([\ell]\Big|_V)\h([\ell]V)}{\ell^{r+1}}=
\lim_{\ell\to\infty}\frac{\h([\ell]_*V)}{\ell^{r+1}}.$$

Finally
$$\wh([\ell]_*V)=\lim_{j\to\infty}\frac{\h([j]_*[\ell]_*V)}{j^{r+1}}=\ell^{r+1}
\lim_{j\to\infty}\frac{\h([j\,\ell]_*V)}{(\ell\,j)^{r+1}}=\ell^{r+1}\wh(V).$$

\eqref{item:28} This is  a direct consequence of the definition of
$\wh$ and the analogous property for
the Fubini-Study height.

\smallskip (\ref{item:32}--\ref{item:31}) These follow from
\cite[Prop.~2.1 and display~(2)]{DavPhi99}.
\end{proof}

In the sequel, we extend the notion of canonical height  to
the multiprojective setting and  study its
behavior under  geometric constructions. Our approach
relies on the analogous theory for the Fubini-Study mixed height
developed by R\'emond in~\cite{Remond01a,Remond01b}.
For simplicity, we will restrict to subvarieties of $\P^{\bfn}$ defined over $\Q$ or
equivalently, to $\Q$-varieties in
$\P^\bfn_{\Q}$, see Remark \ref{rem:4}.
We will apply the resultant theory
in~\S\ref{Some resultant theory} for the case when $A=\Z$.
In particular, the resultant of an irreducible $\Q$-variety is a primitive polynomial in $\Z[\bfU]$.

\begin{definition}[{\cite[\S 2.3]{Remond01b}}]
\label{def:4} Let  $V\subset \P^\bfn_\Q$ be an irreducible
$\Q$-variety of  dimension $r$, $\bfc\in \N_{r+1}^{m}$ and
$\bfe(\bfc)$ as defined in~\eqref{d(a)}. The \emph{Fubini-Study
(mixed) height of $V$ of index $\bfc$} is defined as
$$
\h_\bfc(V)  = \ph(\Res_{\bfe(\bfc)}(V)).
$$ This definition extends by
linearity to cycles in $Z_r(\P^\bfn_\Q)$.
\end{definition}

Next, we introduce morphisms relating different multiprojective spaces.

\begin{definition}
  \label{def:7}
Let  $\delta_{i}\in \Z_{>0}$ and set $N_i= {\delta_{i}+n_{i}\choose n_{i}}$.
The \emph{Veronese embedding} and the  \emph{modified
  Veronese embedding of index $\delta_{i}$} are the embeddings
$v_{\delta_i},\wt v_{\delta_i}:\P^{n_{i}} \hookrightarrow \P^{
  N_i-1}$ respectively defined for $ \bfx_i \in \P^{n_{i}}$ as
\begin{displaymath}
  v_{\bfdelta_i}(\bfx_i)=  (\bfx_i^{\bfa_i})_{\bfa_i\in \N^{n_{i}
+1}_{\delta_i}} \quad, \quad   \wt v_{\bfdelta_i}(\bfx_i)= \Bigg({\delta_i\choose
\bfa_i}^{1/2}\bfx_i^{\bfa_i}\Bigg)_{\bfa_i\in \N^{n_{i}
+1}_{\delta_i}}
\end{displaymath}
For $\bfdelta=(\delta_1,\dots,\delta_m)\in (\Z_{>0})^m$, we set
$v_{\bfdelta}=v_{\delta_{1}}\times \cdots\times v_{\delta_{m}}$ and $\wt v_{\bfdelta}=\wt v_{\delta_{1}}\times \cdots\times \wt
v_{\delta_{m}}$ for the \emph{Veronese embedding} and the  \emph{modified
  Veronese embedding of index $\bfdelta$}, respectively.
These are embeddings of $\P^{\bfn}$ into $\P^{\bfN-\bfone}$, where
$\bfN=(N_1,\dots,N_m)$  and $\bfone=(1,\dots, 1)$.

Consider the Segre embedding
\begin{math}
s:\P^{\bfN-\bfone}\hookrightarrow \P^{ N_1\cdots N_m -1}
\end{math}
defined as $s(\bfy_1,\dots,\bfy_m) = \Big(y_{1,j_1}\cdots
y_{m,j_m}\Big)_{1\le j_{i}\le N_i}$ for $\bfy_{i}\in \P^{N_{i}-1}$.
The composed maps
\begin{math}
 s\circ v_{\bfdelta}, s\circ \wt
v_{\bfdelta} : \P^{\bfn} \hooklongrightarrow \P^{ N_1\cdots N_m -1}
\end{math}
 are called  the \emph{Segre-Veronese embedding} and the \emph{modified  Segre-Veronese embedding of
   index $\bfdelta$},  respectively.
\end{definition}

Let $\bfdelta=(\delta_1,\dots,\delta_m)\in (\Z_{>0})^m$. For each $i$, consider the
diagonal endomorphism $\Delta_i:\P^{N_i-1} \stackrel{\sim}{\to} \P^{ N_i-1}$
defined as
$$\Delta_i((y_{i,\bfa_{i}})_{\bfa_{i}})=
\Bigg({\delta_i\choose \bfa_i}^{1/2} y_{i,\bfa_i}\Bigg)_{\bfa_i\in \N^{n_{i} +1}_{\delta_i}}.
$$
Consider also the  diagonal endomorphism $\Delta: \P^{N_1\cdots N_m-1}
\stackrel{\sim}{\to}\P^{N_1\cdots N_m-1} $ defined as
\begin{equation}
  \label{eq:29}
\Delta ((y_\bfa)_\bfa ) =  \Bigg({\delta_1 \choose
\bfa_1}^{1/2} \cdots {\delta_m \choose \bfa_m}^{1/2} \, y_\bfa
\Bigg)_{\bfa\in \N^{\bfn+\bfone}_{\bfdelta}}.
\end{equation}
These linear maps allow to write the modified Veronese and the modified Segre-Veronese embeddings
in terms of the  Veronese and Segre-Veronese embeddings
as
\begin{equation}\label{eq:30}
  \wt
v_{\delta_i}=\Delta_i\circ  v_{\delta_i}
\quad , \quad s\circ \wt
v_{\bfdelta}= \Delta \circ s\circ v_{\bfdelta}.
\end{equation}

The  degree and the Fubini-Study height of the  direct image  of a
cycle $X \in Z_{r}(\P^{\bfn}_\Q)$ under the Segre-Veronese and the
modified Segre-Veronese embeddings decompose in terms of mixed degrees and  Fubini-Study mixed heights
of $X$.

\begin{proposition} \label{prop:13}
Let  $X \in Z_{r}(\P^{\bfn}_\Q)$ and  $\bfdelta\in
(\Z_{>0})^m$. Then
\begin{displaymath}
  \deg \Big( (s\circ v_{\bfdelta})_{*}
  X\Big)= \sum_{\bfb \in \N_{r}^{m}} {r \choose \bfb} \,
  \deg_\bfb(X) \, \bfdelta^{\bfb} \ , \
 \h\Big( (s\circ \wt v_{\bfdelta})_{*}
  X\Big) =  \sum_{\bfc \in \N_{r+1}^{m}} {r+1
    \choose \bfc} \, \h_\bfc(X) \, \bfdelta^{\bfc}.
\end{displaymath}
\end{proposition}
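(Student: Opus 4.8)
The plan is to reduce the statement to the already-established properties of resultants (Proposition~\ref{prop:4} for the degree, and a suitable multilinearity-and-interpolation argument for the height), exploiting the fact that the Segre--Veronese embedding $s\circ v_{\bfdelta}$ sends the general hyperplane class on the target $\P^{N_1\cdots N_m-1}$ to the class $\sum_i \delta_i\theta_i$ on $\P^{\bfn}$. Concretely, for the degree formula I would observe that $\deg\big((s\circ v_{\bfdelta})_*X\big)$ equals the number of intersection points of $(s\circ v_{\bfdelta})_*X$ with $r$ generic hyperplanes of the target, which by the projection formula~\eqref{eq:38} equals $\deg\big(X\cdot\prod_{j=1}^r (s\circ v_{\bfdelta})^*H_j\big)$. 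Each $(s\circ v_{\bfdelta})^*H_j$ is a divisor of $\P^{\bfn}$ of multidegree $\bfdelta$, so by Theorem~\ref{bezgeom} and Proposition~\ref{prop:5}\eqref{item:12} this is $\coeff_{\bftheta^{\bfn}}\big([X]\cdot(\sum_i\delta_i\theta_i)^r\big)=\sum_{\bfb\in\N^m_r}\binom{r}{\bfb}\deg_\bfb(X)\bfdelta^\bfb$, using the multinomial expansion and the definition of $[X]$. This is the easy half.

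**The height formula.** For the second identity I would first note that $\h$ of a projective cycle $Y\subset\P^N$ of dimension $r$ is, by Definition~\ref{def:4} (with $m=1$, $\bfc=r+1$), equal to $\ph(\Res_{(1,\dots,1)}(Y))$, i.e. the Philippon measure of its Chow form. The key point is to relate the Chow form of $(s\circ\wt v_{\bfdelta})_*X$ to the resultant $\Res_{\bfe(\bfc)}(X)$ for the various indices $\bfc$. Pulling back $r+1$ general linear forms on $\P^{N_1\cdots N_m-1}$ under $s\circ\wt v_{\bfdelta}$ yields $r+1$ general multihomogeneous forms on $\P^{\bfn}$ of multidegree $\bfdelta=(\delta_1,\dots,\delta_m)$; each such form, written in the monomial basis, has coefficients that are (up to the $\binom{\delta_i}{\bfa_i}^{1/2}$ weights coming from the modified embedding, see~\eqref{eq:30}) the coordinates on the target. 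Thus the Chow form of $(s\circ\wt v_{\bfdelta})_*X$, as a polynomial in these $r+1$ groups of coordinates, is (up to a unit and up to the diagonal rescaling $\Delta$) the resultant $\Res_{(\bfdelta,\dots,\bfdelta)}(X)$. Then I would invoke the multilinearity of $\bfd\mapsto\deg_{\bft}(\Res_{\bfd}(X))$ — the analogue over $\Q$ being the multilinearity of $\bfd\mapsto\ph(\Res_{\bfd}(X))$, which is exactly the content of R\'emond's formalism and is proved in the same way as Lemma~\ref{lemm:3}: expanding $\Res_{(\bfdelta,\dots,\bfdelta)}(X)$ multilinearly in the indices gives $\sum_{\bfc\in\N^m_{r+1}}\binom{r+1}{\bfc}\h_\bfc(X)\bfdelta^\bfc$, where the multinomial coefficient records the number of ways of distributing the $r+1$ indices among the $m$ factors, and the invariance of $\ph$ under the diagonal unitary rescaling $\Delta$ (each $\Delta_i$ and $\Delta$ is a unitary-type diagonal map, hence does not change the Philippon measure, which is defined via the unitary-invariant measures on the spheres $S^{2N_i-1}$) ensures that using the \emph{modified} Segre-Veronese embedding is what makes the Philippon measures of the forms match exactly, with no correction term.

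**Main obstacle.** The delicate point is the passage from "the Chow form of the image is, up to a unit, the resultant of $X$ of index $(\bfdelta,\dots,\bfdelta)$" to an equality of Philippon measures with \emph{no} extra term. This is precisely where the modified Veronese embedding (with the $\binom{\delta_i}{\bfa_i}^{1/2}$ weights) is essential: the $\ell^2$-normalization on each sphere $S^{2N_i-1}$ must be compatible with the weights appearing in the monomial expansion of a degree-$\bfdelta$ form, and the diagonal maps $\Delta_i,\Delta$ in~\eqref{eq:30} are the exact change of variables realizing this compatibility. I would handle this by checking that $\Delta$ is an isometry for the relevant Fubini--Study/sphere metrics, so that $\ph\big((s\circ\wt v_{\bfdelta})_*X\big)=\ph\big(\Delta\circ(s\circ v_{\bfdelta})_*X\big)=\ph\big((s\circ v_{\bfdelta})_*X\big)$ modulo keeping careful track of the degree-correction term $\sum_i(\sum_{j\le n_i}\tfrac1{2j})\deg_{\bfx_i}$ built into Definition~\ref{mahlerPh}; the multinomial combinatorics of distributing indices is then routine bookkeeping. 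I would also remark that the degree formula is the reduction of the height formula $\pmod\eta$ in the extended Chow ring, which provides a useful consistency check.
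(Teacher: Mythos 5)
The degree half of your argument is correct: pushing forward, pulling back $r$ generic hyperplanes, applying the projection formula and multiprojective B\'ezout to the resulting multidegree-$\bfdelta$ divisors, and expanding multinomially is exactly what one wants. (The paper itself just cites [Rem01b, p.~103] for the whole proposition, so it gives no proof to compare against; but your degree computation is sound.)

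The height half, however, contains two false claims, and unfortunately they are load-bearing.

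First, $\bfd\mapsto\ph(\Res_{\bfd}(X))$ is \emph{not} multilinear in the indices. The analogue of Lemma~\ref{lemm:3} for $\Q$ fails at the archimedean place, precisely because the proof of Lemma~\ref{lemm:3} runs through Gauss's Lemma and discrete valuations, which have no archimedean counterpart. A minimal counterexample: take $m=1$, $r=0$, $X=\{\bfxi\}$ a rational point of $\P^n$ with coprime integer coordinates. Then $\ph(\Res_\delta(\bfxi))=\tfrac12\log\big(\sum_{|\bfa|=\delta}|\bfxi^{\bfa}|^2\big)$, whereas $\delta\cdot\ph(\Res_1(\bfxi))=\tfrac{\delta}{2}\log\big(\sum_i|\xi_i|^2\big)=\tfrac12\log\big(\sum_{|\bfa|=\delta}\binom{\delta}{\bfa}|\bfxi^{\bfa}|^2\big)$, and these differ by the binomial weights. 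This is exactly the discrepancy the modified Veronese embedding is designed to absorb.

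Second, and related, $\Delta$ is not unitary and is not an isometry for the $\ell^2$/sphere metric on the coefficient space: its diagonal entries are $\binom{\delta_i}{\bfa_i}^{1/2}$, which range from $1$ to values on the order of $\delta_i^{\delta_i/2}$. Hence $\ph$ is \emph{not} invariant under the substitution $\bfU_i\mapsto\Delta\bfU_i$, and the chain $\ph((s\circ\wt v_\bfdelta)_*X)=\ph((s\circ v_\bfdelta)_*X)=\ph(\Res_{(\bfdelta,\dots,\bfdelta)}(X))$ that you propose is false at its first link. What \emph{is} true is that $\wt v_\bfdelta$ maps the product of unit spheres in $\C^{n_i+1}$ into the unit sphere in the target, because $\sum_{\bfa}\binom{\delta}{\bfa}|\bfx^{\bfa}|^2=(\sum_i|x_i|^2)^\delta$ — but this is a statement about the \emph{spatial} variables $\bfx$, not about the coefficient variables $\bfU$ over which the Philippon measure integrates, so it does not transfer directly. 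The two errors (false multilinearity of $\ph\circ\Res$ and false $\Delta$-invariance of $\ph$) happen to cancel and produce the correct formula, but this is not a proof; the real content of R\'emond's argument is precisely to track how the Philippon measure changes under the $\Delta$-substitution and to show that the resulting correction is exactly what is needed to produce the multinomial expansion in the mixed heights $\h_{\bfc}(X)$. Your consistency check (the degree formula is the mod-$\eta$ shadow of the height formula) is a useful sanity check, but it does not repair the gap.
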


\begin{proof}
This follows from \cite[p.~103]{Remond01b}.
\end{proof}

Let $\ell\in\Z_{>0}$. The \emph{$\ell$-power map of $\P^{\bfn}$} is
defined as
\begin{displaymath}
  [\ell] : \P^{\bfn} \longrightarrow \P^{\bfn}
\quad , \quad \bfx=(x_{i,j})_{i,j}\longmapsto \bfx^{\ell}=(x_{i,j}^{\ell})_{i,j}.
\end{displaymath}

\begin{definitionproposition}\label{mixed height and sum}
Let  $X\in Z_r(\P^\bfn_\Q)$ and  $\bfc\in \N_{r+1}^{m}$. Then the
sequence $(\ell^{-r-1} \h_\bfc([\ell]_*X))_{\ell\ge 1}$ converges
for ${\ell\to \infty}$. The limit
\begin{displaymath}
  \wh_{\bfc}(X):= \lim_{\ell\to \infty} \ell^{-r-1}
\h_\bfc([\ell]_*X)
\end{displaymath}
is called the  \emph{canonical (mixed) height of $X$ of
index $\bfc$}.
For any  $\bfdelta\in (\Z_{>0})^{m}$ it holds
\begin{equation} \label{eq:31}
\wh\Big(  (s\circ v_{\bfdelta})_{*}X\Big) = \sum_{\bfc \in \N_{r+1}^{m}} {r+1
\choose \bfc} \, \wh_\bfc(X) \, \bfdelta^{\bfc}.
\end{equation}
In particular, for a projective cycle $X\in Z_{r}(\P^n_\Q)$ we have
that $\wh_{r+1}(X)=\wh(X)$.
\end{definitionproposition}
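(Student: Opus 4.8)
The plan is to reduce the whole statement to the scalar case on $\P^{n}$, which is already settled in Proposition~\ref{defn_hnorm}, by transporting everything through the Segre--Veronese embeddings; the one new input used here is that the power map commutes with these embeddings. Concretely, for $\bfdelta\in(\Z_{>0})^{m}$ the identity $(\bfx^{\bfa})^{\ell}=(\bfx^{\ell})^{\bfa}$ together with the multilinearity of the Segre map shows that $(s\circ v_{\bfdelta})\circ[\ell]=[\ell]\circ(s\circ v_{\bfdelta})$ as morphisms $\P^{\bfn}\to\P^{N_{1}\cdots N_{m}-1}$, where the $[\ell]$ on the right is the power map of the big projective space. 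Since both morphisms are finite, taking direct images and using functoriality yields, for every $\ell\ge 1$,
$$
(s\circ v_{\bfdelta})_{*}\big([\ell]_{*}X\big)=[\ell]_{*}Y_{\bfdelta},\qquad Y_{\bfdelta}:=(s\circ v_{\bfdelta})_{*}X\in Z_{r}\big(\P^{N_{1}\cdots N_{m}-1}_{\Q}\big).
$$

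Next I would pass to the modified Segre--Veronese embedding. By \eqref{eq:30} one has $s\circ\wt v_{\bfdelta}=\Delta\circ s\circ v_{\bfdelta}$, with $\Delta$ the diagonal automorphism of $\P^{N_{1}\cdots N_{m}-1}$ from \eqref{eq:29} (which depends only on $\bfdelta$), so the commutation above gives $(s\circ\wt v_{\bfdelta})_{*}[\ell]_{*}X=\Delta_{*}[\ell]_{*}Y_{\bfdelta}$. Using the standard fact that a fixed linear automorphism of projective space changes the Fubini--Study height of an effective cycle by at most a constant (depending only on the automorphism) times its degree, together with Proposition~\ref{defn_hnorm}\eqref{item:26b}, we obtain
$$
\Big|\h\big((s\circ\wt v_{\bfdelta})_{*}[\ell]_{*}X\big)-\h\big([\ell]_{*}Y_{\bfdelta}\big)\Big|\le c(\bfdelta)\,\deg\big([\ell]_{*}Y_{\bfdelta}\big)=c(\bfdelta)\,\ell^{\,r}\deg(Y_{\bfdelta}),
$$
after reducing to effective cycles by linearity. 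Dividing by $\ell^{r+1}$ the difference tends to $0$, so by Proposition~\ref{defn_hnorm}\eqref{item:26} applied to $Y_{\bfdelta}$,
$$
\lim_{\ell\to\infty}\ell^{-r-1}\h\big((s\circ\wt v_{\bfdelta})_{*}[\ell]_{*}X\big)=\lim_{\ell\to\infty}\ell^{-r-1}\h\big([\ell]_{*}Y_{\bfdelta}\big)=\wh(Y_{\bfdelta})=\wh\big((s\circ v_{\bfdelta})_{*}X\big).
$$
On the other hand, Proposition~\ref{prop:13} applied to the $\Q$-cycle $[\ell]_{*}X$ gives $\h\big((s\circ\wt v_{\bfdelta})_{*}[\ell]_{*}X\big)=\sum_{\bfc\in\N^{m}_{r+1}}{r+1\choose\bfc}\h_{\bfc}([\ell]_{*}X)\,\bfdelta^{\bfc}$, so that for every $\bfdelta\in(\Z_{>0})^{m}$ the quantity $\ell^{-r-1}\sum_{\bfc}{r+1\choose\bfc}\h_{\bfc}([\ell]_{*}X)\,\bfdelta^{\bfc}$ converges, with limit $\wh\big((s\circ v_{\bfdelta})_{*}X\big)$.

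It remains to extract the limit of each individual term. For fixed $\ell$ the function $P_{\ell}(\bfdelta):=\ell^{-r-1}\sum_{\bfc\in\N^{m}_{r+1}}{r+1\choose\bfc}\h_{\bfc}([\ell]_{*}X)\,\bfdelta^{\bfc}$ is a homogeneous polynomial of degree $r+1$ in $\bfdelta$, lying in the fixed finite-dimensional space $W$ of such polynomials, and we have just shown that $P_{\ell}(\bfdelta)$ converges for every $\bfdelta$ in the grid $S=\{1,\dots,r+2\}^{m}\subset(\Z_{>0})^{m}$. A nonzero element of $W$ cannot vanish on $S$, by iterated one-variable Lagrange interpolation applied coordinate by coordinate, so evaluation at $S$ is injective on $W$; since $W$ is finite-dimensional this evaluation map is a homeomorphism onto its (closed) image, and pointwise convergence of $P_{\ell}$ on $S$ forces convergence of $P_{\ell}$ in $W$, hence of every coefficient. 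As the multinomial ${r+1\choose\bfc}$ is nonzero, this means $\ell^{-r-1}\h_{\bfc}([\ell]_{*}X)$ converges for each $\bfc\in\N^{m}_{r+1}$; calling the limit $\wh_{\bfc}(X)$, the limit polynomial is $\sum_{\bfc}{r+1\choose\bfc}\wh_{\bfc}(X)\,\bfdelta^{\bfc}$, which by the previous paragraph equals $\wh\big((s\circ v_{\bfdelta})_{*}X\big)$ for all $\bfdelta\in(\Z_{>0})^{m}$; this is exactly \eqref{eq:31}.

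Finally, for a projective cycle $X\in Z_{r}(\P^{n}_{\Q})$ one has $m=1$ and the only index is $\bfc=r+1$ with $\bfe(r+1)=(1,\dots,1)$; by Remark~\ref{rem:3}\eqref{segundo}, $\Res_{(1,\dots,1)}(V)$ is the Chow form of $V$, so $\h_{r+1}(V)=\ph(\Res_{(1,\dots,1)}(V))$ is precisely the Fubini--Study height $\h(V)$, and therefore $\ell^{-r-1}\h_{r+1}([\ell]_{*}X)=\ell^{-r-1}\h([\ell]_{*}X)\to\wh(X)$ by Proposition~\ref{defn_hnorm}\eqref{item:26}, i.e. $\wh_{r+1}(X)=\wh(X)$. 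The one step that is not completely formal is the passage from the modified to the ordinary Segre--Veronese embedding: it relies on the elementary but slightly external fact that a linear automorphism perturbs the Fubini--Study height only by $O(\deg)$, so that this perturbation is killed in the Tate-type limit; granting this, the commutation of power maps with Segre--Veronese and the multivariate interpolation argument are routine.
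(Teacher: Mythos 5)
Your proof is correct and follows essentially the same strategy as the paper's: apply Proposition~\ref{prop:13} to $[\ell]_*X$, compare $s\circ\wt v_{\bfdelta}$ with $s\circ v_{\bfdelta}$ via the diagonal automorphism and the $O(\deg)$-perturbation of Fubini--Study height under linear maps, use the commutation of $[\ell]$ with $s\circ v_{\bfdelta}$ to pass to the canonical height of $(s\circ v_{\bfdelta})_*X$, and then invert the linear system in the coefficients $\h_{\bfc}$. Your multivariate-interpolation argument on the grid $\{1,\dots,r+2\}^{m}$ is a concrete realization of the paper's (unjustified) assertion that a suitable evaluation set $I$ with invertible matrix exists, and your treatment of the final identification $\wh_{r+1}=\wh$ via the Chow form is an acceptable variant of the paper's choice $\bfdelta=\bfone$ in \eqref{eq:31}.
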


\begin{proof}
Proposition \ref{prop:13} applied to the cycle $[\ell]_{*}X$ implies
that
\begin{equation}\label{eq:40}
 \h\Big( (s\circ \wt v_{\bfdelta})_{*}
  [\ell]_{*}X\Big) =  \sum_{\bfc \in \N_{r+1}^{m}} {r+1
    \choose \bfc} \, \h_\bfc([\ell]_{*}X) \,  \bfdelta^{\bfc}.
\end{equation}
Let $I\subset(\Z_{>0})^{m}$ be a subset of cardinality  $\#(\N_{r+1}^m)$ such that  the
square matrix $\Big({r+1
\choose \bfc} \, \bfdelta^{\bfc}\Big)_{\bfc \in \N_{r+1}^{m}, \,\bfdelta\in
I}$ is of maximal rank. Inverting this matrix, we can write the mixed
heights in the formula above in terms of heights of projective cycles as
\begin{equation}\label{eq:41}
  \h_\bfc([\ell]_{*}X) = \sum_{\bfdelta\in I } \nu_{\bfc,\bfdelta}\,
\h\Big( (s\circ \wt v_{\bfdelta})_{*}  [\ell]_{*}X\Big).
\end{equation}
with $\nu_{\bfc,\bfdelta}  \in \Q$ not depending on $\ell$.
Observe that
\begin{displaymath}
 (s\circ \wt v_{\bfdelta}\circ [\ell])_{*}X=\Delta_{*}(s\circ
v_{\bfdelta }\circ [\ell])_{*}X,
\end{displaymath}
where  $\Delta$ denotes the linear map in \eqref{eq:29}.
By \cite[Lem. 2.7]{KrPaSo01}  applied to the projective cycle $
 (s\circ v_{\bfdelta }\circ [\ell])_{*}X$, the map
 $\Delta$ and its inverse,  there exists
$\kappa(m,r,\bfdelta)\ge0$ such that \nred{the quantity $|  \h\Big( (s\circ \wt v_{\bfdelta})_{*}
  [\ell]_{*}X\Big)-\h\Big( (s\circ v_{\bfdelta})_{*}
  [\ell]_{*}X\Big)|$} is bounded above by
\begin{equation*}
\kappa(m,r,\bfdelta)\,\log\Bigg(\prod_{i=1}^m{\delta_i+n_i\choose n_i}\Bigg)
   \deg((s\circ v_{\bfdelta})_{*}
  [\ell]_{*}X).
\end{equation*}
We have that  $[\ell]$ commutes with $s\circ v_{\bfdelta}$. By
Proposition~\ref{defn_hnorm}\eqref{item:26b},
\begin{displaymath}
 \deg((s\circ v_{\bfdelta})_{*}
  [\ell]_{*}X)=\deg([\ell]_{*}(s\circ v_{\bfdelta})_{*} X)=
\ell^{r}\, \deg((s\circ v_{\bfdelta})_{*} X).
\end{displaymath}
We deduce
\begin{math}
\h\Big( (s\circ \wt v_{\bfdelta})_{*}
  [\ell]_{*}X\Big)=\h\Big( [\ell]_{*} (s\circ v_{\bfdelta})_{*}
  X\Big) +O(\ell^{r}).
\end{math}
Therefore, for each $\bfdelta\in I$,
 \begin{displaymath}
\lim_{\ell\to \infty}\ell^{-r-1}\, \h\Big( (s\circ \wt v_{\bfdelta})_{*}
  [\ell]_{*}X\Big) =
\lim_{\ell\to \infty}\ell^{-r-1}\, \h\Big([\ell]_{*} (s\circ
v_{\bfdelta})_{*}X\Big) = \wh\Big( (s\circ v_{\bfdelta})_{*}
  X\Big).
\end{displaymath}
This proves that the sequence $(\ell^{-r-1}
\h_\bfc([\ell]_*X))_{\ell\ge 1}$ converges for ${\ell\to \infty}$,
since it is a linear combination of convergent
sequences as shown in \eqref{eq:41}. The formula \eqref{eq:31} follows from \eqref{eq:40}
by passing to the limit for ${\ell\to \infty}$. The last statement is
this formula applied to  $X\in
Z_{r}(\P^n_\Q)$ and $\delta=1$.
\end{proof}

\begin{remark}
  \label{rem:8}
  The definition of canonical mixed heights in
  \cite[Formula~(1.3)]{PhiSom08} is different {from the one presented
  here.  Nevertheless, both notions coincide as they both
  satisfy~\eqref{eq:31}.} These mixed heights can be alternatively
  defined using Arakelov geometry, as explained at the end of
  \cite[\S{}I]{PhiSom08}, and they correspond to the canonical mixed
  heights induced by the toric structure of $\P^{\bfn}_{\Q}$, see \cite{BPS11}.
\end{remark}

\begin{definition}\label{def:8}
Let $\eta$ be an indeterminate. The \emph{extended Chow ring of $\P^{\bfn}_\Q$}
is the graded ring
\begin{displaymath}
  A^{*}(\P^{\bfn}_{\Q};{\Z})=A^{*}(\P^{\bfn}_\Q)\otimes _{\Z}\R[\eta]/(\eta^{2})\simeq
\R[\eta,\theta_1,\dots,\theta_m]/(\eta^2,\theta_1^{n_1+1},\dots,\theta_m^{n_m+1}),
\end{displaymath}
where $\theta_{i}$ denotes the class in
$A^{*}(\P^{\bfn}_{\Q})$ of the inverse image of a
hyperplane of $\P^{n_{i}}_{\Q}$ under the projection $\P^{\bfn}_{\Q}\to
\P^{n_{i}}_{\Q}$.
For short, we alternatively denote this ring as
$A^{*}(\P^{\bfn};\Z)$.
To a cycle   $X\in Z_r(\P^\bfn_\Q)$ we associate an element of this
ring, defined as
\begin{displaymath}
[X]_{_{\Z}}=\sum_{\bfc\in\N_{r+1}^m,\, \bfc\le \bfn}\wh_{\bfc}(X)\, \eta\,
\theta_{1}^{n_{1}-c_{1}} \cdots \theta_{m}^{n_{m}-c_{m}} +
\sum_{\bfb\in\N_{r}^{m}, \bfb\le \bfn}\deg_{\bfb}(X) \, \theta_{1}^{n_{1}-b_{1}}
\cdots \theta_{m}^{n_{m}-b_{m}}.
\end{displaymath}
It is a homogeneous element of degree $|\bfn|-r$.
\end{definition}
{There is an inclusion}
$\imath:A^{*}(\P^{\bfn}_{\Q})\hookrightarrow
A^{*}(\P^{\bfn}_{\Q};\Z)$ {which satisfies}
\begin{math} [X]_{_{\Z}} \equiv \imath([X]) \pmod{\eta}.
\end{math}
In particular,
the class of a cycle in the Chow ring
is determined by its class in the extended Chow ring.

For $X\in Z_{r}(\P^{\bfn}_{\Q})$, Theorem
\ref{multihom}\eqref{item:25} shows that $\wh_{\bfc}(X)=0$ for
every $\bfc$ such that $c_{i}>n_{i}$ for some $i$.  Hence
$[X]_{_{\Z}}$ contains the information of all mixed degrees and
heights, since $\{\bftheta^\bfa, \eta\, \bftheta^\bfa\}_{\bfa\le
  \bfn}$ is a basis of $A^{*}(\P^{\bfn};\Z)$.

\smallskip
Next proposition extends the first properties of  the canonical
height in Proposition~\ref{defn_hnorm} to the multiprojective setting.
The space $\P^{\bfn}$ is a toric variety with torus
$$
(\P^{\bfn})^{\circ}= \prod_{i=1}^{m}(\P^{n_{i}})^{\circ}
\simeq (\Qbarra^{\times})^{|\bfn|}.
$$
A variety $V$ is
called a \emph{torsion subvariety of $\P^{\bfn}$} if it is the closure of
the orbit of the action of a subtorus of $(\P^{\bfn})^{\circ}$ over a
point with coordinates in $\{0\}\cup\mu_\infty$.

\begin{proposition} \label{prop:10}
\
\begin{enumerate}
\item \label{item:33} Let $X\in Z_{r}(\P^{\bfn}_\Q)$ and  $\ell\ge 1$. Then
$\deg_{\bfb}([\ell]_*X)=\ell^{r}\deg_{\bfb}(X)$ and
 $
\wh_{\bfc}([\ell]_*X)=\ell^{r+1}\wh_{\bfc}(X)$ for all $\bfb\in \N_r^m$ and  $\bfc\in \N^{m}_{r+1}$.
\smallskip
\item \label{item:34}  Let $X\in Z_{r}^{+}(\P^{\bfn}_\Q)$. Then
  $[X]_{_{\Z}}\ge0$. In particular, $\wh_{\bfc}(X)\ge 0$ for all $\bfc\in \N^{m}_{r+1}$.
\smallskip
\item \label{item:35} If $X$ is a linear combination of torsion
  subvarieties, then  $ [ X]_{_{\Z}}
= \imath([X])$ or equivalently,
  $\wh_{\bfc}(X)=0$ for all $\bfc\in \N^{m}_{r+1}$.  In particular,
  $[\P^\bfn_\Q]_{_{\Z}}=1$.
  \smallskip
\item \label{item:36} Let $X\in Z_r^+(\P^\bfn_\Q)$. Then there exists $\kappa(r,m) \ge 0$
such that for all $\bfc\in \N^{m}_{r+1}$,
$$\Big|\wh_\bfc(X)-\h_\bfc(X)\Big|\le
\kappa(r,m)\log(|\bfn|+1)\sum_{\bfb\in \N^{m}_r} \deg_\bfb(X).
$$
\end{enumerate}
\end{proposition}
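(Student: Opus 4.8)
The plan is to prove Proposition~\ref{prop:10} by mimicking, part by part, the structure of Proposition~\ref{defn_hnorm}, which handles the projective case, while transferring the multiprojective information through the Segre--Veronese embeddings and the formula~\eqref{eq:31} of Proposition-Definition~\ref{mixed height and sum}.

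For part~\eqref{item:33}: the degree equality $\deg_{\bfb}([\ell]_{*}X)=\ell^{r}\deg_{\bfb}(X)$ follows by applying $(s\circ v_{\bfdelta})_{*}$ to $[\ell]_{*}X$ and comparing, via Proposition~\ref{prop:13}, the polynomial identity $\deg((s\circ v_{\bfdelta})_{*}[\ell]_{*}X)=\ell^{r}\deg((s\circ v_{\bfdelta})_{*}X)$ obtained from Proposition~\ref{defn_hnorm}\eqref{item:26b} (using that $[\ell]$ commutes with $s\circ v_{\bfdelta}$), and then equating coefficients of $\bfdelta^{\bfb}$, exactly as is done inside the proof of Proposition-Definition~\ref{mixed height and sum}. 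For the height equality $\wh_{\bfc}([\ell]_{*}X)=\ell^{r+1}\wh_{\bfc}(X)$, I would pass to the limit: by definition $\wh_{\bfc}([\ell]_{*}X)=\lim_{j\to\infty}j^{-r-1}\h_{\bfc}([j]_{*}[\ell]_{*}X)=\lim_{j\to\infty}j^{-r-1}\h_{\bfc}([j\ell]_{*}X)=\ell^{r+1}\lim_{j\to\infty}(j\ell)^{-r-1}\h_{\bfc}([j\ell]_{*}X)=\ell^{r+1}\wh_{\bfc}(X)$, where the middle step uses $[j]\circ[\ell]=[j\ell]$ and the corresponding functoriality $[j]_{*}[\ell]_{*}=(j\ell])_{*}$ of direct images (Definition~\ref{def:9}). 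Part~\eqref{item:34} is then immediate from the definition of $[X]_{_{\Z}}$ together with Proposition~\ref{defn_hnorm}\eqref{item:28} applied to the effective projective cycle $(s\circ\wt v_{\bfdelta})_{*}[\ell]_{*}X$ via~\eqref{eq:40}, which forces each $\h_{\bfc}([\ell]_{*}X)\ge 0$ (invert the maximal-rank matrix with care about signs — actually it is cleaner to note directly that $\h_{\bfc}([\ell]_{*}X)=\ph(\Res_{\bfe(\bfc)}([\ell]_{*}X))\ge 0$ is not obviously nonnegative, so I would instead argue from~\eqref{eq:31}: since $\wh((s\circ v_{\bfdelta})_{*}X)\ge 0$ for all $\bfdelta$ by Proposition~\ref{defn_hnorm}\eqref{item:28}, and the left side is a polynomial in $\bfdelta$ with coefficients ${r+1\choose\bfc}\wh_{\bfc}(X)$ that is nonnegative on all of $(\Z_{>0})^{m}$, each coefficient must be $\ge 0$); combined with Proposition~\ref{prop:5}\eqref{item:48} for the degree part, this gives $[X]_{_{\Z}}\ge 0$.

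For part~\eqref{item:35}: if $X$ is a torsion subvariety, its image $(s\circ v_{\bfdelta})_{*}X$ under the (monomial) Segre--Veronese map is again a torsion subvariety of a projective space — this is because $s\circ v_{\bfdelta}$ is equivariant for the torus actions and sends the distinguished point with coordinates in $\{0\}\cup\mu_{\infty}$ to such a point — so Proposition~\ref{defn_hnorm}\eqref{item:32} gives $\wh((s\circ v_{\bfdelta})_{*}X)=0$ for every $\bfdelta$, and~\eqref{eq:31} forces all $\wh_{\bfc}(X)=0$; extending by linearity yields $[X]_{_{\Z}}=\imath([X])$, and the case $X=\P^{\bfn}_{\Q}$ (a torsion subvariety, being the whole toric variety) gives $[\P^{\bfn}_{\Q}]_{_{\Z}}=\imath([\P^{\bfn}_{\Q}])=\imath(1)=1$ by Proposition~\ref{prop:5}\eqref{item:11}. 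For part~\eqref{item:36}: I would combine the comparison $|\wh(Y)-\h(Y)|\le\frac{7}{2}(r+1)\log(N+1)\deg(Y)$ from Proposition~\ref{defn_hnorm}\eqref{item:29}, applied to the projective cycles $Y=(s\circ v_{\bfdelta})_{*}X$ for $\bfdelta$ ranging over a fixed finite set $I$ making the matrix $({r+1\choose\bfc}\bfdelta^{\bfc})$ invertible, with the expressions~\eqref{eq:31} for $\wh$ and the analogous Proposition~\ref{prop:13} expression for $\h$ (together with the $\m$ vs.\ $\ph$ comparison~\eqref{item:54}, which accounts for the discrepancy between $\h$ of a projective cycle and $\ph$-defined mixed heights); inverting the finite linear system expresses $\wh_{\bfc}(X)-\h_{\bfc}(X)$ as a fixed $\Q$-linear combination of the $\wh((s\circ\wt v_{\bfdelta})_{*}X)-\h((s\circ v_{\bfdelta})_{*}X)$ plus controlled Veronese-distortion terms, and each is $O(\log(|\bfn|+1)\sum_{\bfb}\deg_{\bfb}(X))$ by the cited results and the degree formula of Proposition~\ref{prop:13}, with the combinatorial constant absorbed into $\kappa(r,m)$.

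The main obstacle I anticipate is part~\eqref{item:36}: one must track the constants through the inversion of the Vandermonde-type matrix $({r+1\choose\bfc}\bfdelta^{\bfc})_{\bfc,\bfdelta}$ and through the Veronese embedding, making sure that the bound depends on the ambient multidegree only through $\log(|\bfn|+1)$ and not, say, through $\log N_{1}\cdots N_{m}$ — this requires choosing $I$ depending only on $(r,m)$ (e.g.\ $\bfdelta$ with entries in $\{1,\dots,r+2\}$) so that the entries $N_{i}={\delta_{i}+n_{i}\choose n_{i}}\le (n_{i}+1)^{r+2}$ contribute only a factor bounded by $(r+2)\sum_{i}\log(n_{i}+1)\le (r+2)m\log(|\bfn|+1)$, and verifying that the constant $\kappa(m,r,\bfdelta)$ from \cite[Lem.~2.7]{KrPaSo01} used inside Proposition-Definition~\ref{mixed height and sum} can likewise be bounded uniformly. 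The other parts are essentially bookkeeping once one accepts the commutation of $[\ell]$ with Segre--Veronese maps and the functoriality of direct images already recorded in~\S\ref{sec:cycl-mult-spac}.
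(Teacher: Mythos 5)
Your treatment of parts~\eqref{item:33}, \eqref{item:35} and~\eqref{item:36} is essentially the same as the paper's and is sound (your functoriality/limit argument for the height scaling in~\eqref{item:33} is a small but valid variant of the paper's coefficient-comparison via~\eqref{eq:31}; for~\eqref{item:36} your choice of $I$ with entries in $\{1,\dots,r+2\}$ and the bound $N_i\le(n_i+1)^{r+2}$ handle the constant-tracking exactly in the spirit of the paper's displayed inequality $\log\big(\prod_i\binom{\delta_i+n_i}{n_i}\big)\le|\bfdelta|\log(|\bfn|+1)$).

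There is, however, a genuine gap in your argument for part~\eqref{item:34}. You reject the direct route (claiming that $\h_\bfc(X)=\ph(\Res_{\bfe(\bfc)}(X))$ ``is not obviously nonnegative'') and instead argue that since the polynomial $\sum_\bfc\binom{r+1}{\bfc}\wh_\bfc(X)\bfdelta^\bfc$ is nonnegative on all of $(\Z_{>0})^m$, its coefficients must be nonnegative. That inference is false: a homogeneous polynomial can be nonnegative on the positive orthant while having a negative coefficient, e.g.\ $\delta_1^2-\delta_1\delta_2+\delta_2^2\ge0$ everywhere but $\coeff_{\delta_1\delta_2}=-1$ (and one can realize such data inside the binomial-weighted form $\wh_{(2,0)}\delta_1^2+2\wh_{(1,1)}\delta_1\delta_2+\wh_{(0,2)}\delta_2^2$ by taking $\wh_{(2,0)}=\wh_{(0,2)}=1$, $\wh_{(1,1)}=-\tfrac12$). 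Moreover, the argument you dismissed is in fact the correct and easy one: for an effective $\Q$-cycle $X$, each $\Res_{\bfe(\bfc)}(X)$ is a nonzero \emph{primitive polynomial with integer coefficients} (Definition~\ref{def:2}), so its Mahler measure is $\ge0$, and the estimate~\eqref{item:54} gives $\ph\ge\m\ge0$; hence $\h_\bfc(X)\ge0$ and, passing to the limit that defines $\wh_\bfc$, also $\wh_\bfc(X)\ge0$. This is precisely the proof in the paper, and together with Proposition~\ref{prop:5}\eqref{item:48} it yields $[X]_{_\Z}\ge0$.
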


\begin{proof}\

\eqref{item:33} Recall that $[\ell]$ commutes with $s\circ
v_{\bfdelta}$. Hence, propositions~\ref{prop:13}
and~\ref{defn_hnorm}\eqref{item:26b} imply that
\begin{multline*}\ell^r\sum_{\bfb\in \N_r^m}{r \choose \bfb} \,\deg_\bfb(X)\bfdelta^\bfb  =
\ell^r \deg\Big( (s\circ v_{\bfdelta})_{*}X\Big)\\[-5mm] = \deg \Big( (s\circ v_{\bfdelta})_{*}[\ell]_*
  X\Big) = \sum_{\bfb \in \N_{r}^{m}} {r \choose \bfb} \,
  \deg_\bfb([\ell]_* X) \, \bfdelta^{\bfb}. \end{multline*}
  Since this holds for all $\bfdelta\in (\Z_{>0})^m$, we deduce that $\deg_\bfb([\ell]_*X)=\ell^r\deg_\bfb(X)$.
The statement for the height follows analogously by using
\eqref{eq:31} and Proposition~\ref{defn_hnorm}\eqref{item:26b}.

\smallskip

\eqref{item:34} The non-negativity of the canonical mixed heights is a
consequence of the non-negativity of the Fubini-Study mixed heights. The
latter follows from the estimates in \eqref{item:54} and the
non-negativity of the Mahler measure of a polynomial with integer
coefficients. Hence, $\wh_{\bfc}(X)\ge 0$ for all $\bfc\in
\N^{m}_{r+1}$. The rest of the statement follows from this together
with Proposition \ref{prop:5}\eqref{item:48}.

\smallskip
\eqref{item:35} It suffices to prove this statement for a torsion
subvariety $V$ of $\P^{\bfn}$.
Given $\bfdelta\in
(\Z_{>0})^{m}$, Proposition~\ref{defn_hnorm}\eqref{item:32} implies
that $\wh((s\circ v_\bfdelta)_*X)=0$ since the image of $V$
under a Segre-Veronese embedding is
a torsion subvariety of a projective space.
Therefore, the right-hand side of~\eqref{eq:31} is equal to 0 for all   $\bfdelta$,
which implies that all  its coefficients are 0.
Hence all canonical mixed heights of $V$ are 0, as stated.

\smallskip
\eqref{item:36}
By \eqref{eq:31} and Proposition \ref{prop:13},
for each $\bfdelta\in  (\Z_{>0})^{m}$,
\begin{equation*}
\wh\Big(  (s\circ v_{\bfdelta})_{*}X\Big) - \h\Big(  (s\circ \wt v_{\bfdelta})_{*}X\Big)= \sum_{\bfc \in \N_{r+1}^{m}} {r+1
\choose \bfc} \, \Big(\wh_\bfc(X)-\h_\bfc(X)\Big) \, \bfdelta^{\bfc}.
\end{equation*}
As in the proof of Proposition-Definition~\ref{mixed height and sum},
we pick a subset   $I\subset(\Z_{>0})^{m}$ of cardinality
$\#(\N_{r+1}^m)$ such that the square matrix  $M_{I}:=\Big({r+1
\choose \bfc} \, \bfdelta^{\bfc}\Big)_{\bfc  \in \N_{r+1}^{m}, \bfdelta\in
I}$ is of maximal rank. Inverting this matrix, we obtain for each $\bfc$
\begin{equation}\label{eq:41-}
  \wh_\bfc(X)-\h_\bfc(X) = \sum_{\bfdelta\in I } \nu_{\bfc,\bfdelta}\,
\Big(\wh\Big(  (s\circ v_{\bfdelta})_{*}X\Big) - \h\Big(  (s\circ \wt v_{\bfdelta})_{*}X\Big)\Big)
\end{equation}
with  $\nu_{\bfc,\bfdelta}  \in \Q$.
Using \eqref{eq:30}, \cite[Lem.~2.7]{KrPaSo01},
Proposition~\ref{prop:13} and the inequality
$\log\Big(\prod_{i=1}^m{\delta_i+n_i\choose n_i}\Big)\le
|\bfdelta|\,\log(|\bfn|+1)$,
we get  that there exists  $ \kappa_1(r,m,\bfdelta)\ge 0$ such that
 \begin{equation*}
  \Big|    \h\Big(  (s\circ \wt v_{\bfdelta})_{*}X\Big) -\h\Big(
(s\circ v_{\bfdelta})_{*}X\Big)|\le \kappa_1(r,m,\bfdelta)\,
\log(|\bfn|+1) \, \sum_{\bfb\in \N_r^m}\deg_\bfb(X).
 \end{equation*}
By propositions~\ref{defn_hnorm}\eqref{item:29} and~\ref{prop:13},
there exists  $\kappa_2(r,m,\bfdelta)\ge 0$ such that
\begin{equation*}
 \Big|\wh\Big((s\circ v_{\bfdelta})_{*}X\Big)-
\h\Big((s\circ v_{\bfdelta})_{*}X\Big)\Big| \le
\kappa_2(r,m,\bfdelta)\,
\log(|\bfn|+1) \, \sum_{\bfb\in \N_r^m}\deg_\bfb(X).
\end{equation*}
Therefore, setting
$\kappa{_{3}}(r,m,\bfdelta)=\kappa_{1}(r,m,\bfdelta)+\kappa_{2}(r,m,\bfdelta)$,
we get
\begin{equation*}
   \Big|\wh\Big((s\circ v_{\bfdelta})_{*}X\Big)-
  \h\Big((s\circ\wt  v_{\bfdelta})_{*}X\Big)\Big| \le
  \kappa{_{3}}(r,m,\bfdelta)\,
  \log(|\bfn|+1) \,  \sum_{\bfb\in \N_r^m}\deg_\bfb(X).
\end{equation*}
Observe that the matrix $M_{I}$ does not depend on  $\bfn$.
Using  \eqref{eq:41-}, we deduce that there exists $\kappa(r,m)$
such that
$$ |\wh_\bfc(X)-\h_\bfc(X)|  \le
\kappa(r,m) \, \log(|\bfn|+1)  \, \sum_{\bfb\in \N^{m}_r} \deg_\bfb(X)
$$
for all $\bfc\in \N^{m}_{r+1}$, as stated.
\end{proof}

The following proposition  describes the mixed heights and
classes  of points and
divisors, extending
Proposition \ref{defn_hnorm}(\ref{item:30}-\ref{item:31}) to the multiprojective setting.

\begin{proposition}\label{mahler} \
  \begin{enumerate}
\item \label{item:51} Let $\bfxi=(\bfxi_{1},\dots, \bfxi_{m})\in \P^{\bfn}$
be a point with coordinates in
  $\Q$ and  for each  $1\le i\le m$  write $\bfxi_{i}=(\xi_{i,j})_j$ for
  coprime  $\xi_{i,j}\in \Z$. Then
  \begin{displaymath}
    [\bfxi]_{_{\Z}}=   \sum_{i=1}^m \wh(\bfxi_i)\, \eta\,\bftheta^{\bfn-\bfe_i} +  \bftheta^\bfn.
  \end{displaymath}
In particular,
$\wh_{\bfe_{i}}(\bfxi)= \wh(\bfxi_i)$.
\smallskip
\item \label{item:37} Let $D\in \Div^{+}(\P^{\bfn}_\Q)$ and    $f_{D}\in
  \Z[\bfx]$    its primitive defining polynomial. Then
  \begin{displaymath}
 [D]_{_{\Z}} =   \m(f_D)\, \eta  + \sum_{i=1}^m\deg_{\bfx_i}(f_D)\, \theta_i .
  \end{displaymath}
In particular,  $\wh_{\bfn}(D)=\m(f_{D})$.
  \end{enumerate}
\end{proposition}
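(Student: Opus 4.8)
The plan is to imitate the proof of Proposition~\ref{defn_hnorm}(\ref{item:30}--\ref{item:31}) for projective space and of Proposition~\ref{prop:9}(\ref{item:49}--\ref{item:15}) for function fields, the new point being that one must reach an \emph{exact} identity through the limit defining $\wh_{\bfc}$.

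\emph{Part~(\ref{item:51}).} By Proposition~\ref{prop:5}(\ref{item:12}) the part of $[\bfxi]_{_\Z}$ not involving $\eta$ equals $\bftheta^{\bfn}$, so it is enough to prove $\wh_{\bfe_i}(\bfxi)=\wh(\bfxi_i)$ for each $i$ with $\bfe_i\le\bfn$. The $\ell$-power map of $\P^{\bfn}$ is the product of those of the factors, so $[\ell]_*\bfxi$ is a point whose $i$-th component is $(\xi_{i,0}^{\ell}:\dots:\xi_{i,n_i}^{\ell})$, with coprime integer coordinates. For \emph{any} point $\bfzeta\in\P^{\bfn}$ with coprime integer coordinates, Corollary~\ref{cor:2} gives that $\Res_{\bfe_i}(\bfzeta)$ equals the linear form $\sum_{j}u_{j}\zeta_{i,j}$ times a nonzero constant, which must be a unit of $\Z$ since both sides are primitive; the same computation in $\P^{n_i}_\Q$ identifies the projective resultant $\Res_{1}(\bfzeta_i)$ with the same linear form up to a unit. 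Hence $\h_{\bfe_i}(\bfzeta)=\ph(\Res_{\bfe_i}(\bfzeta))=\ph(\Res_{1}(\bfzeta_i))=\h(\bfzeta_i)$, the Fubini--Study height of $\bfzeta_i$ in $\P^{n_i}_\Q$. Taking $\bfzeta=[\ell]_*\bfxi$, dividing by $\ell$ and passing to the limit, Proposition-Definition~\ref{mixed height and sum} and Proposition~\ref{defn_hnorm}(\ref{item:26}) give $\wh_{\bfe_i}(\bfxi)=\wh(\bfxi_i)$, which equals $\log\max_j|\xi_{i,j}|$ by Proposition~\ref{defn_hnorm}(\ref{item:30}).

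\emph{Part~(\ref{item:37}).} By Proposition~\ref{prop:5}(\ref{gradosparciales}) the part of $[D]_{_\Z}$ not involving $\eta$ equals $\sum_i\deg_{\bfx_i}(f_D)\theta_i$, and $\bfc=\bfn$ is the only index with $\bfc\le\bfn$ and $|\bfc|=|\bfn|$, so everything reduces to $\wh_{\bfn}(D)=\m(f_D)$. Using $\Z$-linearity of $\wh_{\bfn}$ and, via Lemma~\ref{lemm:9}(\ref{item:5}), of $D\mapsto\m(f_D)$, we may assume $D$ irreducible; if $f_D$ is a single coordinate then $D$ is torsion, hence both sides vanish by Proposition~\ref{prop:10}(\ref{item:35}), and otherwise $D$ meets the torus of $\P^{\bfn}$ densely. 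In the latter case the first key step is the exact scaling
\begin{displaymath}
\m(f_{[\ell]_*D})=\ell^{|\bfn|}\,\m(f_D)\qquad(\ell\ge1).
\end{displaymath}
Indeed, the $\ell$-power map being flat and finite, $[\ell]^*\div(f_{[\ell]_*D})=\div\bigl(f_{[\ell]_*D}(\bfx^{\ell})\bigr)$; over the torus $[\ell]$ is Galois with group $\mu_{\ell}^{|\bfn|}$, whence $[\ell]^*[\ell]_*D=\sum_{\bfzeta\in\mu_{\ell}^{|\bfn|}}\bfzeta\cdot D$, and comparing defining polynomials gives $f_{[\ell]_*D}(\bfx^{\ell})=c\prod_{\bfzeta}f_D(\bfzeta\cdot\bfx)$ with $c=\pm1$, the product having integer coefficients by Galois invariance and being primitive because $\overline{f_D}\ne0$ over every $\F_p$. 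The scaling identity then follows by taking Mahler measures and using Lemma~\ref{lemm:9}(\ref{item:5})(\ref{item:6})(\ref{item:7}). The second key step identifies $\Res_{\bfe(\bfn)}([\ell]_*D)$, up to sign, with $f_{[\ell]_*D}$ evaluated at the common zero $\bfp(\bfL)$ of the $|\bfn|$ generic linear forms attached to $\bfe(\bfn)$: this uses Proposition~\ref{especializacion} (specialization of the resultant of $\P^{\bfn}_\Q$ at the coefficients of $f_{[\ell]_*D}$), a reduction modulo $p$ as in the proof of Proposition~\ref{prop:9}(\ref{item:15}) to pin the constant to $\pm1$, Proposition~\ref{prop:7}(\ref{item:21}), and the degree-$1$ dependence on the last group of coefficients coming from Proposition~\ref{prop:4} and Proposition~\ref{prop:5}(\ref{item:11}). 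Combining this with~\eqref{item:54}, Lemma~\ref{lemm:11}, and the bounds $\deg_{\bfU_j}(\Res_{\bfe(\bfn)}([\ell]_*D))=O(\ell^{|\bfn|-1})$ and $\deg(f_{[\ell]_*D})=O(\ell^{|\bfn|-1})$ (from Proposition~\ref{prop:10}(\ref{item:33}) and Proposition~\ref{prop:5}(\ref{gradosparciales})), we get
\begin{displaymath}
\h_{\bfn}([\ell]_*D)=\ph\bigl(\Res_{\bfe(\bfn)}([\ell]_*D)\bigr)=\m(f_{[\ell]_*D})+O(\ell^{|\bfn|-1})=\ell^{|\bfn|}\m(f_D)+O(\ell^{|\bfn|-1}),
\end{displaymath}
and dividing by $\ell^{|\bfn|}$ and letting $\ell\to\infty$ yields $\wh_{\bfn}(D)=\m(f_D)$.

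I expect the main obstacle to be the second key step of Part~(\ref{item:37}): controlling the Fubini--Study mixed height $\h_{\bfn}(D')$ of a divisor by $\m(f_{D'})$ up to an error $O(\deg f_{D'})$, which hinges on the $U$-resultant description of $\Res_{\bfe(\bfn),\,\bfdeg(f_{D'})}(\P^{\bfn}_\Q)$ and on the modulo-$p$ normalization of the relating constant. The scaling identity $\m(f_{[\ell]_*D})=\ell^{|\bfn|}\m(f_D)$, which makes the final limit exact, is by contrast quite robust once the torus action and the primitivity of $\prod_{\bfzeta}f_D(\bfzeta\cdot\bfx)$ are at hand.
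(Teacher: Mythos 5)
Part~(1) is correct and is essentially the paper's argument, stated a little more cleanly: you identify $\Res_{\bfe_i}(\bfzeta)$ with the Chow form $\Res_{1}(\bfzeta_i)$ of the $i$-th coordinate (both equal the linear form $L_i(\bfzeta_i)$ up to sign), so that $\h_{\bfe_i}(\bfzeta)=\h(\bfzeta_i)$ exactly, and then take the limit defining $\wh$; the paper instead estimates $\ph(L_i(\bfxi_i^\ell))$ directly via Lemma~\ref{lemm:1}\eqref{item:2}. Both routes work.

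For Part~(2) your first key step — the exact scaling $\m(f_{[\ell]_*D})=\ell^{|\bfn|}\m(f_D)$ via the pullback $[\ell]^*[\ell]_*D=\sum_{\bfomega}\bfomega\cdot D$ and the multiplicativity, rotation invariance, and dilation invariance of the Mahler measure — matches the paper's Lemma~\ref{lemm:8} and the subsequent computation. However, your second key step has a genuine gap. The paper obtains
\begin{displaymath}
\h_{\bfn}([\ell]_*H)=\ph(f_{[\ell]_*H})+\sum_{i=1}^m\Bigg(\sum_{j=1}^{n_i-1}\sum_{l=1}^{j}\frac{1}{2l}\Bigg)\deg_{\bfn-\bfe_i}([\ell]_*H)
\end{displaymath}
by invoking R\'emond's exact formula \cite[Thm.~3.4]{Remond01b} relating the Fubini--Study mixed height of a divisor to the Philippon measure of its defining polynomial; this is a deep input (going back to Philippon's comparison of heights and Mahler measures). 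You instead propose to re-derive the comparison $\ph(\Res_{\bfe(\bfn)}(D'))=\m(f_{D'})+O(\deg f_{D'})$ from the $U$-resultant specialization $\Res_{\bfe(\bfn)}(D')=\pm R(\bfL,f_{D'})$ with $R=\Res_{\bfe(\bfn),\bfdeg(f_{D'})}(\P^{\bfn}_{\Q})$, plus the estimates \eqref{item:54} and Lemma~\ref{lemm:11}. This does not suffice: the degree-$1$ linearity of $R$ in the last group of variables tells you $\Res_{\bfe(\bfn)}(D')=\pm\sum_{\bfa}R_{\bfa}(\bfU')\,\coeff_{\bfa}(f_{D'})$, but nothing in Proposition~\ref{especializacion}, Proposition~\ref{prop:4}, Proposition~\ref{prop:7}\eqref{item:21} or \eqref{item:54} controls the Philippon measure of such a linear combination in terms of $\m(f_{D'})$ uniformly in $\bfdeg(f_{D'})$ with error only $O(\deg f_{D'})$. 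In the function-field analogue (Proposition~\ref{prop:9}\eqref{item:15}) the argument works precisely because $R$ is free of the parameters $\bft$, so the $\bft$-degree of $R(\bfL,f_D)$ factors trivially; there is no such clean multiplicativity for $\ph$ or $\m$ over $\Z$. As you yourself flag, this is ``the main obstacle,'' and the proposal leaves it open — what it asks for is essentially a re-proof of R\'emond's theorem, which is exactly the nontrivial analytic input the paper cites.
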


We need the following lemma.

\begin{lemma}
  \label{lemm:8}
Let  $H$ be an irreducible $\Q$-hypersurface of $\P^{\bfn}_\Q$ which
is not a standard hyperplane and $\ell\ge 1$. Then
  \begin{displaymath}
f_{[\ell]^{*}[\ell]_{*}H} = \prod_{\bfomega\in \ker[\ell]}
f_{H}({\bfomega}\cdot \bfx).
  \end{displaymath}
\end{lemma}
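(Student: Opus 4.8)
The goal is to compute the defining polynomial of the cycle $[\ell]^{*}[\ell]_{*}H$ when $H$ is an irreducible $\Q$-hypersurface of $\P^{\bfn}_\Q$ which is not a standard hyperplane. My plan is to unwind both operations at the level of divisors and their defining polynomials. First I would note that since $H$ is a hypersurface not contained in any standard hyperplane $V(x_{i,j})$, the $\ell$-power map $[\ell]$ restricts to a dominant morphism on $H$, so $[\ell]_{*}H$ is again an irreducible hypersurface $H'$ (with multiplicity $\deg([\ell]|_H)$), and the inverse image $[\ell]^{*}H'=\ov{[\ell]^{-1}(H')}$ is a well-defined effective divisor because $H'$ is not a standard hyperplane either (the image of something avoiding coordinate hyperplanes still avoids them, as $[\ell]$ preserves the torus $(\P^{\bfn})^{\circ}$ and acts on it as the $\ell$-power group endomorphism). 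So both sides of the claimed identity are genuine divisors and it suffices to compare their defining primitive polynomials up to a unit of $\Z$, i.e. up to sign, and then fix the sign by a normalization (e.g. comparing a single coefficient or using primitivity on both sides).

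The core computation is the following. Work on the torus $(\P^{\bfn})^{\circ}\simeq(\Qbarra^{\times})^{|\bfn|}$, where $[\ell]$ is the surjective group homomorphism $\bfx\mapsto\bfx^{\ell}$ with kernel $\ker[\ell]=\prod_i\mu_{\ell}^{n_i}$. If $g(\bfx)$ denotes the dehomogenization of $f_H$ in suitable affine coordinates (say $g=f_H$ with each group $\bfx_i$ dehomogenized by $x_{i,0}$), then $[\ell]^{-1}(H)\cap(\P^{\bfn})^{\circ}$ is cut out by $g(\bfx^{\ell})$, and the irreducible factors of $g(\bfx^{\ell})$ are the Galois conjugates $g(\bfomega\cdot\bfx)$, $\bfomega\in\ker[\ell]$, each appearing once (this is the standard fact that $g(\bfx^{\ell})$ factors as $\prod_{\bfomega\in\ker[\ell]}g_{\bfomega}(\bfx)$ where the $g_{\bfomega}$ run over the $\ker[\ell]$-orbit of $g$ on the irreducible factors — I would invoke the normality/separability argument, or the orbit-counting in \cite[Prop.~2.1]{DavPhi99} which was already cited, to get that the number of distinct conjugates times their multiplicity equals $\ell^{|\bfn|}=\#\ker[\ell]$, forcing multiplicity one). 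Rehomogenizing, this says $f_{[\ell]^{*}H}=\lambda\prod_{\bfomega\in\ker[\ell]}f_H(\bfomega\cdot\bfx)$ for a unit $\lambda$, for any hypersurface $H$ that is not standard.

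To get the statement as written, I would apply this last identity not to $H$ but in a way compatible with the direct image: since $[\ell]^{*}[\ell]_{*}H=\deg([\ell]|_H)\cdot[\ell]^{*}H'$ where $H'=\ov{[\ell](H)}$, and $[\ell]^{-1}(H')$ is the full $\ker[\ell]$-orbit of $H$ (these $\ker[\ell]$-translates of $H$ are exactly the $\deg([\ell]|_H)$-many preimage components, each hit once by $[\ell]^{*}$ applied to the irreducible $H'$), one checks that $f_{[\ell]^{*}[\ell]_{*}H}$ and $\prod_{\bfomega\in\ker[\ell]}f_H(\bfomega\cdot\bfx)$ have the same support of irreducible factors with matching multiplicities; both are primitive in $\Z[\bfx]$ after clearing denominators, so they agree up to sign. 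I expect the main obstacle to be the bookkeeping of multiplicities: precisely matching the factor $\deg([\ell]|_H)=\#(\Stab(H)\cap\ker[\ell])$ coming from the direct image against the number of repetitions of each $f_H(\bfomega\cdot\bfx)$ among the $\bfomega$ running over all of $\ker[\ell]$ (translates by $\Stab(H)\cap\ker[\ell]$ give the same polynomial up to a constant), so that the product over the \emph{full} kernel — rather than over a set of coset representatives — comes out with exactly the right total multiplicity. This is a finite-group orbit--stabilizer computation that dovetails with the formulas already recorded in the proof of Proposition~\ref{defn_hnorm}, so I would lean on those. A minor additional point is verifying that the primitive integral polynomial $\prod_{\bfomega}f_H(\bfomega\cdot\bfx)$ is indeed $\Z$-primitive: its Galois-invariance over $\Q$ shows it lies in $\Q[\bfx]$, and since $f_H$ is primitive one argues via Gauss's lemma that the product is primitive up to sign, which is exactly the normalization needed to pin down the unit $\lambda=\pm1$ and conclude equality.
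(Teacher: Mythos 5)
Your plan matches the paper's proof in all essentials: you identify $[\ell]^{*}[\ell]_{*}H$ with $\sum_{\bfomega\in\ker[\ell]}\bfomega\cdot H$ via the orbit--stabilizer computation $\deg([\ell]|_{H})=\#(\Stab(H)\cap\ker[\ell])$, and then pin down the unit to $\pm1$ by applying Gauss's lemma over the $\ell$-th cyclotomic field, exactly as in the paper. One small slip in a parenthetical: the number of distinct components of $[\ell]^{-1}(H')$ with $H'=\ov{[\ell](H)}$ is $\#\ker[\ell]/\deg([\ell]|_{H})$, not $\deg([\ell]|_{H})$; your subsequent multiplicity bookkeeping paragraph nevertheless gets the counting right.
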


\begin{proof}
Let $
\Stab(H)=\{ \bfu\in (\P^{\bfn})^{\circ} : \, \bfu \cdot H=H\}
$
be the stabilizer of $V$. Then
$$
[\ell]^{-1}[\ell] H=\bigcup_{\bfomega}\bfomega \cdot H,
$$
{the union being over a set of representatives} of
${\ker[\ell]}/{(\Stab(H)\cap\ker[\ell])}$.
As in the proof of Proposition \ref{defn_hnorm}(\ref{item:26}-\ref{item:26b}), we can show
that $\deg([\ell]\Big|_{H})= \# (\Stab(H)\cap\ker[\ell])$.
Hence,
\begin{displaymath}
[\ell]^{*}[\ell]_{*} H= \deg([\ell]\Big|_{H})\, [\ell]^{-1}[\ell]\,H
= \sum_{\bfomega \in \ker[\ell]} \bfomega\cdot H.
\end{displaymath}
This implies that the primitive polynomial defining this divisor satisfies $$f_{[\ell]^{*}[\ell]_{*} H}= \lambda \,
\prod_{\bfomega\in \ker[\ell]} f_{H}({\bfomega}\cdot \bfx)$$ with
$\lambda\in \Q^{\times}$.
It only remains to prove that $\lambda=\pm1$.
Let $K$ be the $\ell$-th cyclotomic field, so
that $f_{H}({\bfomega}\cdot \bfx)\in K[\bfx]$ for all $\bfomega\in \ker[\ell]$.
Let $p\in \Z$ be a prime number and $v$ a valuation of $K$ extending
$\ord_{p}$.
By Gauss Lemma,
\begin{displaymath}
\ord_{p}\bigg(  \prod_{\bfomega} f_{H}({\bfomega}\cdot \bfx)\bigg)
= \sum_{\bfomega} v\Big(f_{H}({\bfomega}\cdot \bfx)\Big) = 0
\end{displaymath}
since $f_{H}$ is primitive and $v(\bfomega)= 0$
for all $\bfomega\in \ker[\ell]$. This implies that
$\ord_{p}(\lambda)=0$ for all $p$ and $\lambda=\pm1$ as desired.
\end{proof}

\begin{proof}[Proof of Proposition~\ref{mahler}.] {\ }\\
\eqref{item:51} It is enough to prove that $\wh_{\bfe_{i}}(\bfxi) = \wh(\bfxi_i)$.
We have that $[\ell]_{*}\bfxi= \bfxi^{\ell}$, hence
$\Res_{\bfe_{i}}([\ell]_{*}\bfxi)=  \lambda_i\,L_{i}(\bfxi_{i}^{\ell})$, where
 $L_{i}$ is the general linear form of multidegree~$\bfe_{i}$ and
 $\lambda_i\in \Q^\times$, thanks to Corollary~\ref{cor:2}. Since
 $\Res_{\bfe_i}(\bfxi)$ and $L_i(\bfxi_i)$ are primitive polynomials,
 we deduce that
  $\lambda_i=\pm 1$.
Applying Definition~\ref{def:4}, the estimates in \eqref{item:54} and
Lemma \ref{lemm:1}\eqref{item:2}, we obtain
   \begin{displaymath}
   \ell^{-1}\,  \h_{\bfe_{i}}([\ell]_{*}\bfxi) = \ell^{-1}\,  \ph\Big(
    L_{i}(\bfxi_{i}^{\ell})\Big)
= \log(\max_{j} |\xi_{i,j}|) + O(\ell^{-1}).
  \end{displaymath}
The statement follows by letting   $\ell\to \infty$.

\smallskip
\eqref{item:37} By Proposition~\ref{prop:5}\eqref{gradosparciales},
$$
[ D]_{_{\Z}} = \wh_{\bfn}(D) \, \eta +  [D]=
 \wh_{\bfn}(D) \, \eta + \sum_{i=1}^m\deg_{\bfx_i}(f_{D})\, \theta_i.
$$
Thus, we only have to prove that $\wh_{\bfn}(D)=\m(f_{D})$.
We reduce  without loss of generality to the case of an irreducible
hypersuface $H$.
If $H$ is a standard hyperplane of $\P^{\bfn}_\Q$, then
$f_{H}=x_{i,j}$ for some $i,j$. Hence
$\wh_{\bfn} (H)=0=\m(x_{i,j})$, and  the statement is true in this case.

We can then suppose that $H$ is an irreducible $\Q$-hypersurface which
is not a standard hyperplane.
Let $\ell \ge 1$. By definition,
$f_{[\ell]^{*}[\ell]_{*}H} =f_{[\ell]_{*}H}(\bfx^{\bfell})$.
Applying successively  Lemma \ref{lemm:9}\eqref{item:7},
Lemma~\ref{lemm:8} and Lemma \ref{lemm:9}\eqref{item:6}, we get
\begin{equation*}
\m(f_{[\ell]_{*}H} )=
  \m(f_{[\ell]_{*}H}(\bfx^{\bfell}) )= \sum_{\omega\in \ker[\ell]}
\m(f_{H}({\omega}\cdot  \bfx))=  \ell^{|\bfn|}
\m(f_H).
\end{equation*}
By \cite[Thm.~3.4]{Remond01b}, the estimates in \eqref{item:54} and Proposition~\ref{prop:10}\eqref{item:33},
\begin{equation*}
 \h_\bfn([\ell]_{*}H)=\ph(f_{[\ell]_{*}H})+
\sum_{i=1}^m  \bigg( \sum_{j=1}^{n_{i}-1} \sum_{l=1}^{j}
\frac{1}{2l}\bigg) \deg_{\bfn -\bfe_{i}}([\ell]_{*}H)
= \m(f_{[\ell]_{*}H})+ O(\ell^{|\bfn|-1}).
\end{equation*}
Therefore,
$
{\ell^{-|\bfn|}} \, \h_{\bfn}([\ell]_{*} H) =
\m(f_{H}) + O(\ell^{-1}) .
$
We conclude that
$\wh_{\bfn}(H)=\m(f_{H})$ by letting  $\ell\to \infty$.
\end{proof}

The following result gives the behavior of mixed degrees and heights with
respect to Veronese embeddings.

\begin{proposition}\label{veronese}
For  $\bfdelta\in (\Z_{>0})^{m}$ let
 $ v_\bfdelta$ be the Veronese embedding of index $\bfdelta$, $X\in Z_{r}(\P^\bfn_\Q)$, $\bfb\in \N^{m}_{r}$ and $\bfc\in
\N^{m}_{r+1}$. Then
\begin{displaymath}
\deg_\bfb({(v_{\bfdelta})_{*}}X) = \bfdelta^\bfb\,\deg_\bfb (X) \quad,
\quad  \wh_\bfc({(v_{\bfdelta})_{*}}X) = \bfdelta^\bfc\,\wh_\bfc (X).
\end{displaymath}
\end{proposition}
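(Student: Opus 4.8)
The statement concerns the behavior of mixed degrees and canonical mixed heights under a Veronese embedding $v_{\bfdelta}:\P^{\bfn}\hookrightarrow\P^{\bfN-\bfone}$. The key observation is that the composition $s\circ v_{\bfdelta}$ is a \emph{single} projective embedding $\P^{\bfn}\hookrightarrow\P^{N_{1}\cdots N_{m}-1}$, and that Proposition~\ref{prop:13} and Proposition-Definition~\ref{mixed height and sum} already express $\deg((s\circ v_{\bfdelta})_{*}X)$ and $\wh((s\circ v_{\bfdelta})_{*}X)$ as generating polynomials in $\bfdelta$ whose coefficients are (binomial multiples of) the mixed degrees, respectively canonical mixed heights, of $X$. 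The strategy is therefore to apply these two results \emph{twice}: once to $X\subset\P^{\bfn}$ and once to its image $(v_{\bfdelta})_{*}X\subset\P^{\bfN-\bfone}$, and then to compare the two resulting identities, exploiting that for the image one can rescale the index $\bfgamma\in(\Z_{>0})^{m}$ of a further Segre-Veronese embedding applied to $(v_{\bfdelta})_{*}X$.

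More precisely, first I would record that for any $\bfgamma\in(\Z_{>0})^{m}$ one has $s\circ v_{\bfgamma}\circ v_{\bfdelta}=(s\circ v_{\bfgamma'})\circ\psi$ for a suitable reindexing; the cleanest route is to note $v_{\bfgamma}\circ v_{\bfdelta}$ and $v_{\bfgamma\bfdelta}$ (coordinatewise product $\bfgamma\bfdelta=(\gamma_{1}\delta_{1},\dots,\gamma_{m}\delta_{m})$) differ only by a linear isomorphism of the ambient projective space onto the relevant Veronese subspace, so that $(s\circ v_{\bfgamma})_{*}(v_{\bfdelta})_{*}X$ and $(s\circ v_{\bfgamma\bfdelta})_{*}X$ have the same degree and, by \cite[Lem.~2.7]{KrPaSo01} together with the argument used in the proof of Proposition-Definition~\ref{mixed height and sum}, the same canonical height (linear isomorphisms commute with the $\ell$-power map up to the constructions already handled there, so the limit defining $\wh$ is unchanged). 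Granting this, Proposition~\ref{prop:13} gives
\begin{displaymath}
\sum_{\bfb\in\N_{r}^{m}}\binom{r}{\bfb}\deg_{\bfb}((v_{\bfdelta})_{*}X)\,\bfgamma^{\bfb}
=\deg\bigl((s\circ v_{\bfgamma})_{*}(v_{\bfdelta})_{*}X\bigr)
=\deg\bigl((s\circ v_{\bfgamma\bfdelta})_{*}X\bigr)
=\sum_{\bfb\in\N_{r}^{m}}\binom{r}{\bfb}\deg_{\bfb}(X)\,(\bfgamma\bfdelta)^{\bfb},
\end{displaymath}
and since $(\bfgamma\bfdelta)^{\bfb}=\bfdelta^{\bfb}\bfgamma^{\bfb}$, equating coefficients of $\bfgamma^{\bfb}$ yields $\deg_{\bfb}((v_{\bfdelta})_{*}X)=\bfdelta^{\bfb}\deg_{\bfb}(X)$ for every $\bfb\in\N_{r}^{m}$. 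The identity being polynomial in $\bfgamma$ over an infinite set $(\Z_{>0})^{m}$, comparison of coefficients is legitimate.

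For the heights one argues in exactly the same way, now using formula~\eqref{eq:31} of Proposition-Definition~\ref{mixed height and sum}:
\begin{displaymath}
\sum_{\bfc\in\N_{r+1}^{m}}\binom{r+1}{\bfc}\wh_{\bfc}((v_{\bfdelta})_{*}X)\,\bfgamma^{\bfc}
=\wh\bigl((s\circ v_{\bfgamma})_{*}(v_{\bfdelta})_{*}X\bigr)
=\wh\bigl((s\circ v_{\bfgamma\bfdelta})_{*}X\bigr)
=\sum_{\bfc\in\N_{r+1}^{m}}\binom{r+1}{\bfc}\wh_{\bfc}(X)\,\bfdelta^{\bfc}\bfgamma^{\bfc},
\end{displaymath}
and equating coefficients of $\bfgamma^{\bfc}$ gives $\wh_{\bfc}((v_{\bfdelta})_{*}X)=\bfdelta^{\bfc}\wh_{\bfc}(X)$. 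By linearity it suffices to treat an irreducible $V$, which also makes the reindexing argument transparent. The main obstacle I anticipate is the bookkeeping in the first step: verifying carefully that $(s\circ v_{\bfgamma})\circ v_{\bfdelta}$ and $s\circ v_{\bfgamma\bfdelta}$ differ by a linear embedding of projective spaces, and that this linear change neither affects the degree nor the canonical height of the pushed-forward cycle (the latter requires reusing the perturbation estimate and limit argument from the proof of Proposition-Definition~\ref{mixed height and sum}, since $\wh$ is defined by a limit rather than being a literal projective invariant under arbitrary linear maps). Once that compatibility is pinned down, the rest is a formal coefficient comparison.
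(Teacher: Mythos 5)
Your proof is correct and takes essentially the same route as the paper, so mainly a remark on where you diverge slightly. For the height identity, your argument is the paper's argument: $s\circ v_{\bfgamma}\circ v_{\bfdelta}$ and $s\circ v_{\bfgamma\bfdelta}$ are related by linear maps $A,B$ in both directions (note they cannot be an ``isomorphism of the ambient projective space'', since the ambient dimensions $M_1\cdots M_m-1$ and $N'_1\cdots N'_m-1$ differ; what matters is that $A\circ(s\circ v_{\bfgamma}\circ v_{\bfdelta})=s\circ v_{\bfgamma\bfdelta}$ and $B\circ(s\circ v_{\bfgamma\bfdelta})=s\circ v_{\bfgamma}\circ v_{\bfdelta}$, so $A,B$ restrict to mutually inverse maps on the Segre-Veronese image); one then applies \cite[Lem.~2.7]{KrPaSo01} to the images of $[\ell]_*V$, gets an $O(\ell^r)$ error term, and passes to the limit exactly as you say. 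Where you differ from the paper is the degree part: the paper proves $\deg_{\bfb}((v_{\bfdelta})_*V)=\bfdelta^{\bfb}\deg_{\bfb}(V)$ directly from the Hilbert--Samuel function, by observing that $v_{\bfdelta}$ induces $H_{v_{\bfdelta}(V)}(\bfd)=H_V(\delta_1 d_1,\dots,\delta_m d_m)$ and comparing leading coefficients, which is more self-contained since it uses the very definition of mixed degree and avoids the linear-map comparison. You instead deduce the degree claim from the same generating-function identity of Proposition~\ref{prop:13} that you use for heights, which has the modest virtue of treating the two halves uniformly but silently relies on the (standard) fact that the degree of a projective cycle is preserved under linear embeddings in both directions. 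Both are fine; your version is slightly less elementary for degrees but more uniform across the two statements.
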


\begin{proof}
  {It is enough to prove the statement for an irreducible
    $\Q$-variety~$V$.}  Set $N_{i}={\delta_i+n_i\choose n_i}$ for
  $1\le i\le m$, $\bfN=(N_1,\dots,N_m)$ and $\bfone=(1,\dots, 1)\in
  \N^{m}$.  The embedding $v_\bfdelta$ induces an isomorphism of
  multigraded algebras
\begin{equation*}
\Q[\P^{\bfN-\bfone}]/I(v_\bfdelta(V))=\bigoplus_{\bfd\in\N^m}\Big(\Q[\P^{\bfN-\bfone}]/I(v_\bfdelta(V))\Big)_\bfd\simeq \bigoplus_{\bfd\in
  \N^{m}}\Big(\Q[\P^{\bfn}]/I(V)\Big)_{(d_{1}\delta_{1} ,\dots, d_{m}\delta_{m})}.
\end{equation*}
Hence, the Hilbert-Samuel functions of~$V$ and of $v_\bfdelta(V)$
satisfy  $H_{v_{\bfdelta}(V)}
(d_{1},\dots, d_{m})=  H_{V}(\delta_{1} d_{1},\dots, \delta_{m} d_{m})$ for all $\bfd$.
Comparing the coefficient of the monomial $\bfd^{\bfb}$ in the
corresponding Hilbert polynomials and using that
$v_{\bfdelta}$ is a map of degree 1, it follows that
 $\deg_\bfb({(v_{\bfdelta})_{*}}V)=\deg_\bfb(v_{\bfdelta
 }(V))=\bfdelta^\bfb\deg_\bfb(V)$.

\medskip
Concerning the height, consider the embeddings
\begin{displaymath}
  s\circ v_{\bfd \bfdelta}: \P^{\bfn}\hooklongrightarrow
  \P^{N'_1\cdots N'_m -1}\quad, \quad s\circ v_\bfd\circ v_{\bfdelta}: \P^{\bfn}\hooklongrightarrow
\P^{M_1\cdots M_m-1},
\end{displaymath}
where $N'_i={d_i\delta_i+n_i\choose n_i}$ and  $M_i={d_i+N_i-1\choose
  N_i-1}$.
For $\bfx_{i}\in \P^{n_{i}}$,
$$
v_{d_i\delta_i}(\bfx_i)=(\bfx_i^{\bfa_i})_{|\bfa_i|=d_i\delta_i}
\quad, \quad
v_{d_i}\circ v_{\delta_i}(\bfx_i)=
\Big(\big((\bfx_i^{\bfb_i})_{|\bfb_i|=\delta_i}\big)^{\bfc_i}\Big)_{|\bfc_i|=d_i}
$$
{Observe that the monomials appearing in the image of both maps
  are the same.}  This implies that there are linear maps
$A_{i}:\P^{M_i-1}\to \P^{N'_i-1} $ and $B_{i}: \P^{N'_i-1} \to
\P^{M_i-1}$ such that $ A_{i}\circ v_{d_i}\circ v_{\delta_i} =
v_{d_i\delta_i}$ and $ v_{d_i}\circ v_{\delta_i} = B_{i}\circ
v_{d_i\delta_i}$.  In turn, this implies that there exist linear maps
$A:\P^{M_1\cdots M_m-1}\to \P^{N'_1\cdots N'_m -1}$ and
$B:\P^{N'_1\cdots N'_m -1}\to \P^{M_1\cdots M_m-1}$ such that
\begin{displaymath}
  A\circ s\circ v_{\bfd}\circ
  v_{\bfdelta} =  s\circ v_{\bfd\bfdelta} \quad ,\quad
s\circ v_{\bfd}\circ
  v_{\bfdelta} =  B\circ s\circ v_{\bfd\bfdelta}.
\end{displaymath}
Let $\ell\ge 1$.
We apply  \cite[Lem.~2.7]{KrPaSo01} to
compare the Fubini-Study height of the image of
the cycle $[\ell]_{*}V$ under the maps
$s\circ v_{\bfd\bfdelta}$ and $s\circ
v_{\bfd}\circ v_{\bfdelta}$.
Using also propositions \ref{prop:13} and
\ref{prop:10}\eqref{item:33}, we obtain
that there exists $\kappa(\bfn,\bfd,\bfdelta)\ge0$ such that
\begin{equation}\label{eq:66}
\Big|\h(s\circ v_{\bfd\bfdelta}  ([\ell]_{*}V))- \h(s\circ
v_{\bfd}\circ v_{\bfdelta} ([\ell]_{*}V))\Big| \le
\kappa(\bfn,\bfd,\bfdelta)\sum_\bfb \deg_\bfb([\ell]_{*}V)= O(\ell^{r}).
 \end{equation}
The map $[\ell]$ commutes with $v_{\bfdelta}$,
$v_{\bfd}$, $v_{\bfd\bfdelta}$ and $s$. Hence,
$s\circ v_{\bfd\bfdelta}  ( [\ell]_*V)=[\ell]_*s\circ v_{\bfd\bfdelta}
( V)$ and $s\circ
v_{\bfd}\circ v_{\bfdelta} ( [\ell]_*V)=[\ell]_*s\circ
v_{\bfd}\circ v_{\bfdelta} ( V)$.
From \eqref{eq:66}, we deduce that
 $\h([\ell]_*s\circ
v_{\bfd}\circ v_{\bfdelta} ( V))=\h([\ell]_*s\circ v_{\bfd\bfdelta}  ( V))+ O(\ell^r)$.
Therefore,
\begin{multline*}\wh( s\circ
v_{\bfd}\circ v_{\bfdelta} ( V))=\lim_{\ell \to \infty} \ell^{-r-1} h([\ell]_*s\circ
v_{\bfd}\circ v_{\bfdelta} ( V))\\= \lim_{\ell \to \infty} \ell^{-r-1}\h([\ell]_*s\circ v_{\bfd\bfdelta}  ( V))=
\wh(s\circ v_{\bfd\bfdelta}  ( V) ).
\end{multline*}
Hence, Proposition \ref{prop:13} implies that
\begin{multline*}
\sum_{\bfc} {r+1 \choose \bfc} \, \wh_\bfc(v_\bfdelta (V)) \,
\bfd^{\bfc}=  \wh( s\circ
v_{\bfd}\circ v_{\bfdelta} ( V))\\[-2mm]=
 \wh(s\circ v_{\bfd\bfdelta}  ( V) )= \sum_{\bfc} {r+1 \choose \bfc} \, \wh_\bfc(V) \,
(\bfd\,\bfdelta)^{\bfc}.
\end{multline*}
As this hold for all $\bfd\in \N^{m}$, it follows that $
\wh_\bfc(v_\bfdelta (V))=\bfdelta^{\bfc} \,\wh_\bfc(V) $, as stated.
\end{proof}

The following result is the arithmetic analogue of B\'ezout's theorem
for multiprojective $\Q$-cycles. It contains Theorem \ref{thm:1} in
the introduction.  Given a multihomogeneous polynomial $f\in
\Z[\bfx_1,\dots,\bfx_m]$, we consider the element in the extended Chow
ring
\begin{equation} \label{clasedef} [f]_{\sup}=\log||f||_\sup\, \eta +
  \sum_{i=1}^m \deg_{\bfx_i}(f)\theta_i \in A^{*}(\P^{\bfn};\Z).
\end{equation}
Observe that $ [\div(f)]_{_{\Z}} \le [f]_{\sup}$.

\begin{theorem}\label{multihom}
  Let $X\in Z_{r}(\P^\bfn_{\Q})$ and $f\in
  \Z[\bfx_{1},\dots,\bfx_{m}]$ a multihomogeneous polynomial such that
  $X$ and $\div(f)$ intersect properly.
\begin{enumerate}
 \item \label{item:41}
If $X$ is effective, then for any $\bfb\in \N_{r}^{m}$,
\begin{displaymath}
\wh_\bfb(X\cdot \div(f)) \le
\sum_{i=1}^m\deg_{\bfx_i}(f)  \wh_{\bfb+\bfe_i}(X)
 +\log||f||_{\sup}\deg_\bfb(X).
 \end{displaymath}
\item \label{item:43}   $\wh_\bfc(X)=0$ for any $\bfc\in \N_{r+1}^m$ such that
 $c_i>n_i$ for some $i$.
\smallskip
\item \label{item:42} If $X$ is effective, then $[ X\cdot\div(f)]_{_{\Z}} \le
[ X]_{_{\Z}}\cdot [ f]_{\sup}$.
\end{enumerate}
\end{theorem}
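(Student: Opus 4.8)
The plan is to prove the three assertions in the order \eqref{item:43}, then \eqref{item:41}, then \eqref{item:42}, exactly mirroring the structure already used in the proof of the function field analogue (Theorem~\ref{bezaritff}), but substituting the canonical mixed heights $\wh_\bfc$ for the function field heights $\h_\bfc$ and using the sup-norm in place of the $\bft$-degree. The point is that all three statements are stable under the $\ell$-power map up to the normalization factors recorded in Proposition~\ref{prop:10}\eqref{item:33}, so one strategy is to reduce each assertion to the corresponding Fubini-Study statement for $[\ell]_*X$, prove the latter with an error term $O(\ell^r)$, divide by $\ell^{r+1}$ and let $\ell\to\infty$.

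For \eqref{item:43}, I would first handle the case where $V$ is cut out by a regular sequence, arguing by induction on the codimension exactly as in the proof of Theorem~\ref{bezaritff}\eqref{item:25}: for $r=|\bfn|$ one has $Y_{|\bfn|}=\P^\bfn_\Q$, whose canonical mixed heights all vanish by Proposition~\ref{prop:10}\eqref{item:35}; the inductive step uses \eqref{item:41} together with the non-negativity of canonical mixed heights (Proposition~\ref{prop:10}\eqref{item:34}) and the vanishing of $\deg_\bfc(Y_{r+1})$ when $c_i>n_i$ for some $i$ (Proposition~\ref{prop:2}\eqref{item:38}). The general case follows by taking $|\bfn|-r$ generic linear combinations of a generating set of $I(V)$, which form a regular sequence defining a $Y_r$ with $Y_r-V$ effective, and invoking $0\le\wh_\bfc(V)\le\wh_\bfc(Y_r)=0$. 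Note there is a logical subtlety: \eqref{item:43} is used inside \eqref{item:41} (via Theorem~\ref{bezgeom} and the structure of classes), so I must be careful to prove the regular-sequence case of \eqref{item:43} using only \eqref{item:41} in strictly lower dimension, which is legitimate since the induction in \eqref{item:41} will itself be on dimension.

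For \eqref{item:41}, the main work is an arithmetic B\'ezout inequality for the Fubini-Study height, namely that for effective $X$ and $f$ intersecting $X$ properly,
\begin{displaymath}
\h_\bfb(X\cdot\div(f))\le \sum_{i=1}^m\deg_{\bfx_i}(f)\,\h_{\bfb+\bfe_i}(X)+\big(\log\|f\|_\sup + C(\bfn,r)\,\textstyle\sum_{\bfb'}\deg_{\bfb'}(X)\big),
\end{displaymath}
with an explicit constant $C(\bfn,r)$ coming from comparing $\ph$ with $\m$ and $\log\|\cdot\|_\sup$ via \eqref{item:54}, \eqref{eq:4} and Lemma~\ref{lemm:11}. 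I expect this to follow from Proposition~\ref{especializacion} applied with $\bfd_r=\bfdeg(f)$, which gives $\Res_{\bfe(\bfb),\bfd}(X)(\bfU_0,\dots,\bfU_{r-1},f)=\lambda\,\Res_{\bfe(\bfb)}(X\cdot\div(f))$, combined with an estimate for $\ph$ of a specialization of a resultant at the coefficients of $f$ (bounding $\ph$ of the specialization in terms of $\log\|f\|_\sup$, the relevant partial degree $\deg_{\bfU_r}\Res=\deg_\bfb(X)$ by Proposition~\ref{prop:4}, and $\ph(\Res)$ itself); here I would use the submultiplicativity/compatibility properties of $\ph$ established by Philippon in \cite{Philippon91} together with the measure comparisons of \S\ref{Heights of polynomials}. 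Applying this to $[\ell]_*X$, using $\deg_\bfb([\ell]_*X)=\ell^r\deg_\bfb(X)$ and $\h_\bfc([\ell]_*X)=\ell^{r+1}\wh_\bfc(X)+O(\ell^r)$ (the latter from Proposition~\ref{prop:10}\eqref{item:36}), then dividing by $\ell^{r+1}$ and letting $\ell\to\infty$, the additive constant $C(\bfn,r)\sum\deg_{\bfb'}(X)$ contributes only $O(\ell^{-1})$ and disappears, leaving precisely the stated clean inequality $\wh_\bfb(X\cdot\div(f))\le\sum_i\deg_{\bfx_i}(f)\wh_{\bfb+\bfe_i}(X)+\log\|f\|_\sup\deg_\bfb(X)$. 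Finally, \eqref{item:42} is a formal consequence of \eqref{item:41} and \eqref{item:43}: writing out $[X\cdot\div(f)]_{_\Z}$ and $[X]_{_\Z}\cdot[f]_\sup$ in the monomial basis $\{\bftheta^\bfa,\eta\,\bftheta^\bfa\}_{\bfa\le\bfn}$, the $\eta$-free part is the classical multiprojective B\'ezout of Theorem~\ref{bezgeom} (and $[\div(f)]\le \sum_i\deg_{\bfx_i}(f)\theta_i$ is an equality), while the $\eta$-part is exactly the collection of inequalities in \eqref{item:41}, with the terms having $c_i>n_i$ vanishing on the left by \eqref{item:43} and being non-negative on the right by Proposition~\ref{prop:10}\eqref{item:34}.

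The main obstacle I anticipate is the precise Fubini-Study (finite-level) estimate underlying \eqref{item:41}: bounding $\ph$ of the specialization $\Res_{\bfe(\bfb),\bfd}(X)(\bfU_0,\dots,\bfU_{r-1},f)$ in terms of $\ph(\Res_{\bfe(\bfb),\bfd}(X))$, $\log\|f\|_\sup$ and $\deg_{\bfU_r}(\Res)$ requires the arithmetic properties of Philippon's measure under specialization (height of a composition), which are available from \cite{Philippon91, Lelong94} but need to be invoked with the correct dependence on the multidegrees; getting the constant $C(\bfn,r)$ of the right order (to match the constants appearing in Theorem~\ref{mt} and Theorem~\ref{PerronIntro}) is the delicate bookkeeping. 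Once that finite-level inequality is in hand with an $O(\ell^r)$ error under the $\ell$-power map, the passage to the limit is routine and the constant evaporates, which is the pleasant feature that makes the canonical statement cleaner than its function-field counterpart.
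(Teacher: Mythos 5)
The central step in your proof of \eqref{item:41}, the passage to the limit, does not work as written. You propose to apply the finite-level Fubini--Study B\'ezout inequality to $[\ell]_*X$ and $\div(f)$, divide by $\ell^{r+1}$ and let $\ell\to\infty$. Two obstructions appear: first, $[\ell]_*X$ and $\div(f)$ need not intersect properly even when $X$ and $\div(f)$ do (take $X$ a point whose $\ell$-th power lies in $V(f)$); second, and more fundamentally, $[\ell]_*X\cdot\div(f)$ is \emph{not} $[\ell]_*\bigl(X\cdot\div(f)\bigr)$. The projection formula~\eqref{eq:38} gives $[\ell]_*X\cdot\div(f)=[\ell]_*\bigl(X\cdot\div(f(\bfx^{\bfell}))\bigr)$, and since $g(\bfx^{\bfell})=f$ has no polynomial solution $g$ unless all exponents of $f$ are divisible by $\ell$, there is no divisor on $\P^{\bfn}$ whose intersection with $[\ell]_*X$ produces $[\ell]_*\bigl(X\cdot\div(f)\bigr)$. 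Concretely, with $X=\P^1$ and $f$ a form of degree $d$ one has $[\ell]_*X=\ell\,\P^1$, so $\h\bigl([\ell]_*X\cdot\div(f)\bigr)=\ell\sum_{\xi}\h(\xi)=O(\ell)$, which divided by $\ell^{r+1}=\ell^2$ tends to $0$; on the right side the $\log\|f\|_\sup$ term is multiplied by $\deg_\bfb([\ell]_*X)=\ell^r$ and hence also vanishes after dividing by $\ell^{r+1}$. Your limit produces only $0\le 0$, not the desired inequality $\wh(X\cdot\div(f))\le\log\|f\|_\sup$.

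The paper avoids this by replacing the $\ell$-power map with the Veronese embedding $v_{\bfell}$, which is a closed embedding and therefore does preserve intersection products. Writing $\bfd=\bfdeg(f)$ and noting $f^\ell\in\Q[\P^\bfn]_{\ell\bfd}$, there is a unique $F_\ell\in\Q[\P^{\bfN-\bfone}]_{\bfd}$ (\emph{of fixed multidegree $\bfd$, not $\ell\bfd$}) with $v_{\bfell}^*\div(F_\ell)=\div(f^\ell)=\ell\div(f)$, whence $(v_{\bfell})_*V\cdot\div(F_\ell)=\ell\,(v_{\bfell})_*(V\cdot\div(f))$ --- exactly the identity your scheme lacks. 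R\'emond's finite-level B\'ezout (\cite[Thm.~3.4, Cor.~3.6]{Remond01b}) is then applied to $v_{\bfell}(V)$ and $\div(F_\ell)$, the canonical mixed heights are translated back to $V$ via Proposition~\ref{veronese} (they scale by $\ell^{r+1}$ under $v_{\bfell}$), one bounds $\log\|F_\ell\|_{v_\ell(V)}\le\ell\log\|f\|_\sup$ by the maximum modulus principle, and the error term is only $O(\log\ell\cdot\ell^r)$, which vanishes after division by $\ell^{r+1}$. Incidentally, your worry about circularity is unfounded: the paper's proof of \eqref{item:41} makes no use of \eqref{item:43}, so proving \eqref{item:41} first and deducing \eqref{item:43} by the regular-sequence induction, then \eqref{item:42} by comparing coefficients, works without any dimension-tracking caveat; those two parts of your plan are fine.
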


\begin{proof}\

  \eqref{item:41} We reduce without loss of generality to the case of
  an irreducible $\Q$-variety $V$. Let $\ell\ge1$ and set
  $\bfell=(\ell,\dots,\ell)\in (\Z_{>0})^{m}$. Consider the Veronese
  embedding $v_\bfell:\P^\bfn \rightarrow \P^{\bfN-\bfone}$ where
  $\bfN=\Big({\ell+n_i\choose n_i}, \dots, {\ell+n_i\choose n_i}\Big),
  \bfone=(1,\dots, 1)\in \N^{m}$.  Set
  $\bfd=(d_{1},\dots,d_{m})=\bfdeg(f)$. We have that
  $f^{\ell}\in\Q[\P^{\bfn}]_{\ell\, \bfd}$ and so there is a unique
  polynomial $F_{\ell}\in \Q[\P^{\bfN-\bfone}]_{\bfd}$ such that
\begin{math}
  v_{\bfell}^{*} \, \div(F_{\ell})= \div(f^{\ell})= \ell \, \div(f).
\end{math}
Then,  the projection formula~\eqref{eq:38}  implies that
\begin{equation}\label{eq:44}
  {(v_{\bfell})_{*}}V \cdot \div(F_{\ell})= \ell \, {(v_{\bfell})_{*}}(V\cdot
  \div(f)).
\end{equation}
Set
\begin{math}
||F||_{_{v_{\ell}(V)}}  =\sup_{\bfxi\in
    v_{\ell}(V)}\frac{|F_{\ell}(\bfxi)|}{||\bfxi_{1}||^{d_{1}}\cdots ||\bfxi_{m}||^{d_{m}}}
\end{math}, where $||\cdot||$ is the Euclidean norm.  We apply the
arithmetic B\'ezout's theorem for the Fubini-Study mixed height
(\cite[Thm.~3.4 and Cor.~3.6]{Remond01b}) to the variety
$Z_{\ell}:=v_{\bfell} (V)= {(v_{\bfell})_{*}}V $ and the divisor
$\div(F_{\ell})$ and we obtain
\begin{displaymath}
\h_\bfb(Z_{\ell}\cdot \div(F_\ell))\le  \sum_{i=1}^m d_{i}\,
\h_{\bfb+\bfe_i}(Z_{\ell}) +\deg_\bfb(Z_{\ell}) \,
\log\Big(||F_{\ell} ||_{_{v_{\ell}(V)}}\Big) .
\end{displaymath}
Applying Proposition~\ref{prop:10}\eqref{item:36} to $Z_{\ell}$ and to
$Z_{\ell}\cdot \div(F_\ell)$ together with the multiprojective
B\'ezout's theorem~\ref{bezgeom}, it follows that there exist
$\kappa(r,m,\bfd)\ge0$ such that
\begin{multline}\label{eq:67}
\wh_\bfb(Z_{\ell}\cdot \div(F_\ell))\le  \sum_{i=1}^m d_{i}\,
\wh_{\bfb+\bfe_i}(Z_{\ell}) +\deg_\bfb(Z_{\ell}) \,
\log(||F_{\ell} ||_{_{v_{\ell}(V)}}\Big) \\  + \kappa(r,m,\bfd)
\log(|\bfN-\bfone|+1) \, \sum_{\bfb} \deg_\bfb(Z_{\ell}).
\end{multline}
By   Proposition~\ref{veronese}, $ \deg_{\bfb}(Z_{\ell})=  \ell^{r}\,
\deg_{\bfb}(V)$ and $ \wh_{\bfb+\bfe_i}(Z_{\ell})=  \ell^{r+1}\,
\wh_{\bfb+\bfe_i}(V)$. The same result  together with
\eqref{eq:44} also implies  that
\begin{displaymath}
\wh_\bfb(Z_{\ell}\cdot \div(F_\ell))
= \wh_\bfb(\ell \, v_{\bfell,*}(V\cdot \div(f))) = \ell^{r+1}\,
\wh_\bfb(V\cdot \div(f)).
\end{displaymath}
We have that
\begin{multline*}
  ||F_\ell ||_{_{v_{\ell}(V)}} = \sup_{\bfx\in
    V}\frac{|f^{\ell}(\bfx)|}{||v_{\ell}(\bfx_{1})||^{d_{1}}\cdots
    ||v_{\ell}(\bfx_{m})||^{d_{m}}}\\
\le \sup_{\bfx\in
    V}\frac{|f^{\ell}(\bfx)|}{\prod_{i=1}^{m}\max_{j}
    |x_{i,j}|^{\ell d_{i}}}
\le \sup_{\bfx\in \D^{\bfn+\bfone}}{|f^{\ell}(\bfx)|},
\end{multline*}
where $\D=\{z\in \C: |z|\le 1\}$ is the unit disk.
Hence,   $\log(||F_{\ell} ||_{_{v_{\ell}(V)}}\Big)\le \ell \, \log||f
 ||_{\sup} $ because of the maximum modulus principle applied to $f$.
Besides, $\log(|\bfN-\bfone|+1)= O(\log(\ell))$ and we deduce from \eqref{eq:67} that
\begin{displaymath}
\wh_\bfb\Big(V\cdot \div(f)\Big) \le   \sum_{i=1}^m d_{i}\,
\wh_{\bfb+\bfe_i}(V)+\deg_\bfb(V) \log||f ||_{\sup} + O\bigg(\frac{
\log(\ell)}{\ell} \bigg).
\end{displaymath}
The result follows by letting $\ell\to \infty$.

\smallskip \eqref{item:43} This follows by adapting the proof of
Theorem~\ref{bezaritff}\eqref{item:25} to this setting without major
changes.

\smallskip \eqref{item:42} This is an immediate consequence of
\eqref{item:41} and \eqref{item:43}, together with
Theorem~\ref{bezgeom} and
Proposition~\ref{prop:5}\eqref{gradosparciales}.
\end{proof}

{The following results} can be proved by  adapting the arguments in
the  proofs of corollaries
\ref{cor:1} and \ref{desffcase} without major changes.

\begin{corollary} \label{cor:213} Let $V\subset \P^{\bfn}_{\Q}$ be a
  $\Q$-variety of pure dimension $r$ and $f\in
  \Z[\bfx_{1},\dots,\bfx_{m}]$ a multihomogeneous polynomial. Let $W$
  denote the union of the components of dimension $r-1$ of the
  intersection $V\cap V(f)$. Then
\begin{displaymath} [ W]_{_{\Z}} \le [ V]_{_{\Z}}\cdot [ V(f)]_{\sup}.
\end{displaymath}
In particular,
\begin{math}\wh_\bfb(W) \le
 \sum_{i=1}^m\deg_{\bfx_i}(f) \, \wh_{\bfb+\bfe_i}(V)+\log||f||_{\sup}\deg_\bfb(V)
 \end{math} for all $\bfb\in \N_{r}^{m}$.
\end{corollary}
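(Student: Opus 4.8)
The plan is to adapt the proof of Corollary~\ref{cor:1} to the arithmetic setting, using the arithmetic B\'ezout inequality of Theorem~\ref{multihom}\eqref{item:42} in place of Theorem~\ref{bezaritff}\eqref{item:23}. First I would introduce $V'\subset \P^{\bfn}_{\Q}$, the union of the irreducible components of $V$ \emph{not} contained in $|\div(f)|=V(f)$. Since $V$ is of pure dimension $r$, the $\Q$-cycle $V'$ is effective and of pure dimension $r$ (or the zero cycle), and the difference $V-V'$ is effective; hence $[V']_{_{\Z}}\le [V]_{_{\Z}}$ by linearity of the class map together with Proposition~\ref{prop:10}\eqref{item:34}. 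By construction no component of $V'$ lies in $|\div(f)|$, so $V'$ and $\div(f)$ intersect properly and $V'\cdot\div(f)$ is a well-defined effective cycle of pure dimension $r-1$.

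The key geometric observation is that $W\le V'\cdot\div(f)$ as effective cycles. Indeed, any irreducible component $Y$ of $V\cap V(f)$ of dimension $r-1$ is contained in some component $C$ of $V$; if $C\subset V(f)$ then $C\cap V(f)=C$ has dimension $r$, so $Y$ cannot arise from such a $C$, and therefore $Y$ is a component of $V'\cap V(f)=|V'\cdot\div(f)|$. Thus $W$ and $V'\cdot\div(f)$ have the same support, and since $W$ is reduced while $V'\cdot\div(f)$ is effective with all multiplicities $\ge 1$, we get $W\le V'\cdot\div(f)$. Applying Proposition~\ref{prop:10}\eqref{item:34} to the effective cycle $V'\cdot\div(f)-W$ yields $[W]_{_{\Z}}\le [V'\cdot\div(f)]_{_{\Z}}$.

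It then remains to chain the inequalities
\[
[W]_{_{\Z}}\ \le\ [V'\cdot\div(f)]_{_{\Z}}\ \le\ [V']_{_{\Z}}\cdot [f]_{\sup}\ \le\ [V]_{_{\Z}}\cdot [f]_{\sup},
\]
where the middle step is Theorem~\ref{multihom}\eqref{item:42} applied to the effective cycle $V'$, and the last step follows from $[V']_{_{\Z}}\le [V]_{_{\Z}}$ together with the fact that $[f]_{\sup}$ has non-negative coefficients, so that the product $([V]_{_{\Z}}-[V']_{_{\Z}})\cdot[f]_{\sup}$ again has non-negative coefficients. This proves the displayed inequality of classes. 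For the last assertion: when $\bfb\le\bfn$ it is obtained by comparing the coefficients of $\eta\,\theta_1^{n_1-b_1}\cdots\theta_m^{n_m-b_m}$ on both sides, using the explicit form of $[V]_{_{\Z}}$ and $[f]_{\sup}$ and Proposition~\ref{prop:5}\eqref{gradosparciales} (or Theorem~\ref{bezgeom}) for the degree contributions; when $b_i>n_i$ for some $i$ it is trivial since $\wh_\bfb(W)=0$ by Theorem~\ref{multihom}\eqref{item:43}.

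I do not expect any serious obstacle: the argument is formally parallel to those of corollaries~\ref{cor:1} and~\ref{desffcase}, and the only points needing a little care are the comparison $W\le V'\cdot\div(f)$ (which hinges on the dimension count showing every $(r-1)$-dimensional component of $V\cap V(f)$ already lies in $V'$) and the compatibility of the partial order on $A^{*}(\P^{\bfn};\Z)$ with multiplication by effective classes — both of which are routine.
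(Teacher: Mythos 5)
Your proposal is correct and is precisely what the paper has in mind: it indicates that Corollary~\ref{cor:213} follows by adapting the proof of Corollary~\ref{cor:1}, and your chain $[W]_{_{\Z}}\le [V'\cdot\div(f)]_{_{\Z}}\le [V']_{_{\Z}}\cdot [f]_{\sup}\le [V]_{_{\Z}}\cdot [f]_{\sup}$ is the faithful arithmetic translation, with Theorem~\ref{multihom} replacing Theorem~\ref{bezaritff} and Proposition~\ref{prop:10}\eqref{item:34} replacing Proposition~\ref{prop:9}\eqref{item:50}. One small imprecision: it is not true in general that $W$ and $V'\cdot\div(f)$ have the same support — $V'\cdot\div(f)$ may have $(r-1)$-dimensional components that sit inside an $r$-dimensional component of $V$ contained in $V(f)$ and hence are \emph{not} components of $V\cap V(f)$ (e.g.\ $V=V(x_0)\cup V(x_1)\subset\P^3$, $f=x_0$: then $W=\emptyset$ but $V'\cdot\div(f)=V(x_0,x_1)\ne\emptyset$). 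What your argument actually establishes, and what is all you need, is the one-sided inclusion $|W|\subset|V'\cdot\div(f)|$ together with $W$ being reduced, which already gives $W\le V'\cdot\div(f)$.
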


\begin{corollary}\label{desratcase}
  Let $n\in \Z_{>0}$, $X\in Z_r^+(\P^n_\Q)$ and $f_{j}\in
  \Z[x_{0},\dots, x_{n}]$ be $s\le r$ homogeneous polynomials such
  that $ X\cdot \prod_{j=1}^{i-1}\div(f_{j})$ and $\div(f_{i})$
  intersect properly for all $1\le i\le s$. Then
 \begin{displaymath}
 \wh\bigg(X\cdot \prod_{j=1}^{s}\div(f_{j})\bigg)\leq \bigg(\prod_{i=1}^s\deg(f_{i})\bigg)\bigg(\wh(X)+\deg(X)
\bigg(\sum_{\ell=1}^s\frac{\log||f_{\ell}||_{\sup}}{\deg(f_{\ell})}\bigg)\bigg).
\end{displaymath}
\end{corollary}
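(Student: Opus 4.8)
The plan is to mimic the proof of Corollary~\ref{desffcase}, replacing the arithmetic B\'ezout inequality over $k[\bft]$ by its analogue over $\Z$, namely Theorem~\ref{multihom}\eqref{item:42} (equivalently Corollary~\ref{cor:213}). First I would set $Y=X\cdot \prod_{j=1}^{s}\div(f_{j})$, which is a well-defined effective cycle of pure dimension $r-s$ by the properness hypotheses together with Proposition~\ref{prop:10}\eqref{item:34} (effectivity is preserved under intersection with a divisor). Since $X$ and each $Y$ are projective cycles (the case $m=1$, so $\theta=\theta_1$), their classes in the extended Chow ring take the simple form
\begin{displaymath}
[Y]_{_{\Z}}= \wh(Y)\, \eta\, \theta^{n-r+s-1}+\deg(Y)\, \theta^{n-r+s}
\quad,\quad
[X]_{_{\Z}}=\wh(X)\, \eta\, \theta^{n-r-1} +  \deg(X)\, \theta^{n-r},
\end{displaymath}
using Proposition~\ref{defn_hnorm}\eqref{item:30} conventions and the fact that for a projective cycle of dimension $\rho$ the only nonzero mixed degree is $\deg_\rho=\deg$ and the only nonzero canonical mixed height is $\wh_{\rho+1}=\wh$ (Proposition-Definition~\ref{mixed height and sum}).

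Next I would apply Theorem~\ref{multihom}\eqref{item:42} recursively: setting $X_0=X$ and $X_i=X_{i-1}\cdot\div(f_i)$ for $1\le i\le s$, each step is legitimate because $X_{i-1}=X\cdot\prod_{j=1}^{i-1}\div(f_j)$ and $\div(f_i)$ intersect properly by hypothesis, and each $X_{i-1}$ is effective; hence $[X_i]_{_{\Z}}\le [X_{i-1}]_{_{\Z}}\cdot[f_i]_{\sup}$ with $[f_i]_{\sup}=\log\|f_i\|_{\sup}\,\eta+\deg(f_i)\,\theta$. Chaining these inequalities (the product order on classes is compatible with multiplication by the effective classes $[f_i]_{\sup}$, since all the degree and height data are non-negative) yields
\begin{displaymath}
[Y]_{_{\Z}} \le  [X]_{_{\Z}}\cdot \prod_{j=1}^s [f_j]_{\sup}
\quad\text{in } A^{*}(\P^n;\Z).
\end{displaymath}

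Finally I would extract the desired scalar inequality by reading off the coefficient of the monomial $\eta\,\theta^{n-r+s-1}$ on both sides. On the left this coefficient is exactly $\wh(Y)$. On the right, expanding $\Big(\wh(X)\,\eta\,\theta^{n-r-1}+\deg(X)\,\theta^{n-r}\Big)\prod_{j=1}^{s}\big(\log\|f_j\|_{\sup}\,\eta+\deg(f_j)\,\theta\big)$ modulo $\eta^2$ and $\theta^{n+1}$, the terms contributing $\eta\,\theta^{n-r+s-1}$ are: the $\eta$-term of $[X]_{_{\Z}}$ times the product of all the $\theta$-terms $\prod_j\deg(f_j)$, and the $\theta$-term $\deg(X)$ of $[X]_{_{\Z}}$ times, for each $\ell$, the $\eta$-contribution $\log\|f_\ell\|_{\sup}$ from the $\ell$-th factor and the $\theta$-contributions $\deg(f_j)$ from the others. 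This gives
\begin{displaymath}
\wh(Y)\le \bigg(\prod_{j=1}^s\deg(f_j)\bigg)\wh(X)+\deg(X)\sum_{\ell=1}^s\log\|f_\ell\|_{\sup}\prod_{j\ne\ell}\deg(f_j),
\end{displaymath}
which is precisely the claimed bound after factoring out $\prod_{j=1}^s\deg(f_j)$. There is no real obstacle here; the only points requiring a line of care are checking that all the cycles involved are effective (needed to invoke Theorem~\ref{multihom}\eqref{item:42} at each step, and to keep the coefficient-wise order stable under the products) and that the higher $\theta$-powers and the $\eta^2$-term genuinely vanish in the quotient ring so that the coefficient extraction is exactly the stated formula — this is guaranteed by $\dim Y=r-s\ge 0$ and by the relations defining $A^{*}(\P^n;\Z)$.
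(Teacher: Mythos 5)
Your proposal is correct and is essentially the argument the paper intends: the paper states that Corollary~\ref{desratcase} is proved by adapting the proofs of Corollaries~\ref{cor:1} and~\ref{desffcase} "without major changes," and your proof does exactly that, replacing Theorem~\ref{bezaritff}\eqref{item:23} by Theorem~\ref{multihom}\eqref{item:42} and the divisor class by $[f_j]_{\sup}$. Your extra comments on effectivity and on why chaining the inequalities is legitimate (non-negativity of all coefficients, so the order is preserved under multiplication) are accurate and are the only points that genuinely need a line of justification.
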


\begin{remark} \label{rem:10} Theorem \ref{multihom} is one of the
  main reasons for considering canonical mixed heights instead of
  others. For instance, the fact that the canonical mixed heights of
  index $\bfc$ such that $c_{i}>n_{i}$ for some $i$ are zero is quite
  convenient in the applications.  Observe that the analogue of this
  statement for the Fubini-Study mixed heights does not hold: for
  instance, $\h_{n+1}(\P^{n})$ equals the $n$-th Stoll number, which
  is not zero.  This fact implies that the class in
  $A^{*}(\P^{\bfn}_{\Q};\Z)$ of a cycle $X$ contains the information
  about all canonical mixed heights of $X$. This information is
  necessary if one wants to express the arithmetic B\'ezout's
  inequality in Theorem~\ref{multihom}\eqref{item:41} in terms of
  elements in the extended Chow ring.
\end{remark}

We next show that canonical mixed heights are monotonic with respect
to linear projections. We keep the notation from Proposition~\ref{113}
and consider also the injective $\R$-linear map $\jmath:
A^{*}(\P^\bfl;\Z)\hookrightarrow A^{*}(\P^\bfn;\Z)$ defined by
$\jmath(P)= \bftheta^{\bfn-\bfl}P$.

\begin{proposition}\label{113alturasrat}
  Let $\pi:\P^{\bfn}_\Q\to \P^{\bfl}_\Q$ be the standard projection
  defined in \eqref{eq:58} and $X\in Z^+_{r}\Big(
  \P^\bfn_\Q\Big)$. Then
$$
\jmath \Big([\pi_{*}X]_{_{\Z}}\Big)\le [X]_{_{\Z}}.
$$
In particular, $\wh_{\bfc}(\pi_{*}X) \le \wh_{\bfc}(X)$ for all
$\bfc\in \N^{m}_{r+1}$.
\end{proposition}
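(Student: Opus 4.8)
The plan is to follow the argument of the proof of Proposition~\ref{113alturasff}, replacing the $\bft$-degree (which is insensitive to the number of variables and carries no correction terms) by the Mahler measure, and compensating the Philippon-measure correction terms that then appear by passing to the $\ell$-power maps, where they become negligible. As in the function field case, the asserted inequality of classes is, directly from the definitions, equivalent to the two families of inequalities $\deg_{\bfb}(\pi_{*}X)\le\deg_{\bfb}(X)$ and $\wh_{\bfc}(\pi_{*}X)\le\wh_{\bfc}(X)$ for all $\bfb\in\N^{m}_{r}$ and $\bfc\in\N^{m}_{r+1}$ (using that $\pi_{*}X$ is a cycle of $\P^{\bfl}_{\Q}$, so these coefficients vanish unless $\bfb,\bfc\le\bfl$, by Proposition~\ref{prop:5}\eqref{item:48} and Theorem~\ref{multihom}\eqref{item:43}, together with the effectivity of $X$). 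The degree inequalities are Proposition~\ref{113}, so only the canonical mixed heights remain.

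Fix $\bfc\in\N^{m}_{r+1}$ and $\ell\ge1$ and set $Y=[\ell]_{*}X$, an effective cycle with $\deg_{\bfb}(Y)=\ell^{r}\deg_{\bfb}(X)$ and $\h_{\bfc}(\,\cdot\,)$ eventually governed by $\wh_{\bfc}(Y)=\ell^{r+1}\wh_{\bfc}(X)$ (Proposition~\ref{prop:10}\eqref{item:33}). Since $[\ell]$ commutes with $\pi$ — both composites $[\ell]\circ\pi$ and $\pi\circ[\ell]$ are defined exactly on $\P^{\bfn}_{\Q}\setminus L$ and agree there — the functoriality of direct images (Definition~\ref{def:9}) gives $\pi_{*}Y=[\ell]_{*}(\pi_{*}X)$. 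By Proposition~\ref{formainicialproy}, $\Res_{\bfe(\bfc)}(\pi_{*}Y)$ divides $\init_{\prec}(\Res_{\bfe(\bfc)}(Y))$ in $\Z[\bfU]$; as $\Res_{\bfe(\bfc)}(\pi_{*}Y)$ is primitive, Gauss's lemma puts the cofactor $G$ in $\Z[\bfU]$, and then the additivity of the Mahler measure (Lemma~\ref{lemm:9}\eqref{item:5}) together with its non-negativity on nonzero integer polynomials gives $\m(\Res_{\bfe(\bfc)}(\pi_{*}Y))\le\m(\init_{\prec}(\Res_{\bfe(\bfc)}(Y)))$. Applying Lemma~\ref{lemm:9}\eqref{item:46} successively to the weight vectors defining the lexicographic order $\prec$ — so that $\init_{\prec}(\Res_{\bfe(\bfc)}(Y))$ is a face polynomial of $\Res_{\bfe(\bfc)}(Y)$ — yields $\m(\init_{\prec}(\Res_{\bfe(\bfc)}(Y)))\le\m(\Res_{\bfe(\bfc)}(Y))$.

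Now bring in the comparison between the Philippon and the Mahler measures. By Definition~\ref{def:4} and the estimates in~\eqref{item:54},
\[
\h_{\bfc}(\pi_{*}Y)=\ph\big(\Res_{\bfe(\bfc)}(\pi_{*}Y)\big)\le \m\big(\Res_{\bfe(\bfc)}(\pi_{*}Y)\big)+\sum_{i=0}^{r}\log(|\bfl|+1)\,\deg_{\bfU_{i}}\big(\Res_{\bfe(\bfc)}(\pi_{*}Y)\big),
\]
and likewise $\m(\Res_{\bfe(\bfc)}(Y))\le\ph(\Res_{\bfe(\bfc)}(Y))=\h_{\bfc}(Y)$. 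By~\eqref{gradosparciales2} and Proposition~\ref{113}, $\deg_{\bfU_{i}}(\Res_{\bfe(\bfc)}(\pi_{*}Y))=\deg_{\bfc-\bfe_{j(i)}}(\pi_{*}Y)\le\deg_{\bfc-\bfe_{j(i)}}(Y)=\ell^{r}\deg_{\bfc-\bfe_{j(i)}}(X)$, so the correction term above is $O(\ell^{r})$ with implied constant independent of $\ell$. Chaining the inequalities gives $\h_{\bfc}([\ell]_{*}(\pi_{*}X))=\h_{\bfc}(\pi_{*}Y)\le\h_{\bfc}(Y)+O(\ell^{r})=\h_{\bfc}([\ell]_{*}X)+O(\ell^{r})$; dividing by $\ell^{r+1}$ and letting $\ell\to\infty$, using Proposition-Definition~\ref{mixed height and sum}, we get $\wh_{\bfc}(\pi_{*}X)\le\wh_{\bfc}(X)$, which completes the proof.

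The main obstacle, and the reason the $\ell$-power maps are needed, is that the Fubini--Study mixed height $\h_{\bfc}$ is \emph{not} itself monotone under $\pi_{*}$: the Philippon-measure correction terms involve the ambient dimensions $n_{i}$ versus $l_{i}$ and do not cancel, so one cannot argue directly at the level of $\h_{\bfc}$ as in the function field case. One is forced to carry the estimate only up to an $O(\ell^{r})$ error and extract the clean inequality in the limit defining the canonical height; verifying that this error is genuinely $O(\ell^{r})$ uniformly in $\ell$ — which rests on the degree bound of Proposition~\ref{113} applied to $[\ell]_{*}X$ — is the one point that requires a little care.
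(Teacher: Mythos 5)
Your proof is correct and follows essentially the same route as the paper: reduce to the height inequalities via Proposition~\ref{113}, apply Proposition~\ref{formainicialproy} and Lemma~\ref{lemm:9}\eqref{item:46} to the resultants of $[\ell]_{*}X$ and $\pi_{*}[\ell]_{*}X$, convert between Philippon and Mahler measures with an $O(\ell^{r})$ error using the degree bounds, commute $\pi$ with $[\ell]$, and pass to the limit. Your write-up is somewhat more explicit than the paper's (in particular, in justifying that the correction term is $O(\ell^{r})$ via \eqref{gradosparciales2} and Proposition~\ref{113}, and in noting that $\init_{\prec}$ must be taken successively since $\prec$ is a lexicographic order built from several weight vectors), but the underlying argument is the same.
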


\begin{proof}
  The statement is equivalent to $\deg_{\bfb}(\pi_{*}X) \le
  \deg_{\bfb}(X)$ and $\wh_{\bfc}(\pi_{*}X) \le \wh_{\bfc}(X)$ for all
  $\bfb,\bfc$.  Because of Proposition~\ref{113}, we only need to
  prove the latter inequality.

  Let $\ell\ge1$. By Proposition~\ref{formainicialproy},
  $\Res_{\bfe(\bfc)}(\pi_*[\ell]_*X)$ divides
  $\init_\prec(\Res_{\bfe(\bfc)}([\ell]_*X))$ in $\Z[\bfU]$.  Using
  the fact that the Mahler measure of a polynomial with integers
  coefficients is $\ge0$ and Lemma~\ref{lemm:9}\eqref{item:46}, we
  deduce that
\begin{displaymath}
\m(\Res_{\bfe(\bfc)}(\pi_*[\ell]_*X))
\le \m\Big(\init_\prec(\Res_{\bfe(\bfc)}([\ell]_*X)) \Big) \le
\m\Big(\Res_{\bfe(\bfc)}([\ell]_*X) \Big).
\end{displaymath}
From the estimates in \eqref{item:54} and
Proposition~\ref{prop:10}\eqref{item:33}, it follows that
\begin{align*}
&\h_\bfc(\pi_*[\ell]_{*}X)) = \ph (\Res_{\bfe(\bfc)}(\pi_*[\ell]_{*}X))
=  \m(\Res_{\bfe(\bfc)}(\pi_*[\ell]_{*}X)) + O(\ell^{r}), \\[1mm]
&\h_\bfc([\ell]_{*}X)) = \ph (\Res_{\bfe(\bfc)}([\ell]_{*}X))
=  \m(\Res_{\bfe(\bfc)}([\ell]_{*}X)) + O(\ell^{r}).
\end{align*}
Since $\pi$ commutes with $[\ell]$,
\begin{multline*}
\wh_\bfc(\pi_*(X)) = \lim_{\ell\to\infty} \frac{ \h_\bfc([\ell]_* \pi_*(X))}{\ell^{r+1}} =
\lim_{\ell\to\infty}  \frac{ \h_{\bfc}(\pi_*[\ell]_*X) }{\ell^{r+1}}
\le  \lim_{\ell\to\infty}\frac{\h_{\bfc}([\ell]_*X)}{\ell^{r+1}} = \wh_{\bfc}(X),
\end{multline*}
which completes the proof.
\end{proof}

\begin{example} \label{exm:3} We revisit the problem in
  Example~\ref{exm:5} of computing the pairs (eigenvalue, eigenvector)
  for the case of a matrix $M$ with entries in $\Z$.  We use the
  notations therein and furthermore we set
  $Z=\prod_{i=1}^{n}\div(f_{i})\in
  Z_{0}^{+}(\P^{1}\times\P^{n-1};\Q)$.  By Corollary \ref{desratcase},
$$[Z]_{_\Z} \le  \prod_{i=1}^n[ \div(f_i)]_{\sup}.$$
We have that $[Z]_{_\Z}=\wh_{(1,0)}(Z)\, \eta\, \theta_2^{n-1} +
\wh_{(0,1)}(Z)\, \eta\, \theta_1\, \theta_2^{n-2} + \deg(Z)\,
\theta_1\, \theta_2^{n-1}$ and that $ [ \div(f_i)]_{\sup} =
\log||f_i||_\sup\, \eta + \theta_1+\theta_2$.  By comparing
coefficients, it follows that
$$
\wh_{(1,0)}(Z)\le \sum_i\log||f_i||_\sup \quad, \quad
\wh_{(0,1)}(Z)\le (n-1)\sum_i\log||f_i||_\sup \quad, \quad \deg(Z) \le
n.
$$
Let $\pi_{1}$ and $\pi_{2}$ denote the projections from
$\P^{1}\times\P^{n-1}$ to the first and the second factor,
respectively.  We have that
\begin{displaymath}
Z_{1}:={(\pi_{1})_{*}}Z \in Z_{0}^{+}(\P^{1}_\Q) \quad, \quad
Z_{2}:={(\pi_{2})_{*}}Z \in Z_{0}^{+}(\P^{n-1}_\Q)
\end{displaymath}
are the cycles of eigenvalues and of eigenvectors of $M$,
respectively.  Applying Proposition~\ref{113alturasrat}, we deduce
that the heights of these cycles satisfy
 $$\wh(Z_{1})\le  \sum_i\log||f_i||_\sup\quad, \quad
\wh(Z_{2})\le
(n-1)\, \sum_i\log||f_i||_\sup.$$
{A straightforward application of the arithmetic B\'ezout inequality
for projective cycles (Corollary \ref{desratcase}) would have given
the much worse bound $2^{n-1}\, \sum_i\log||f_i||_\sup.$}
\end{example}

Next result gives the behavior of extended Chow rings and
classes with respect to products.

\begin{proposition}\label{altrodgenrat}
Let $m_i\in \Z_{>0}$ and $\bfn_{i}\in \N^{m_{i}}$ for $i=1,2$. Then
\begin{enumerate}
\item \label{item:39} $A^{*}(\P^{\bfn_{1}}\times \P^{\bfn_{2}};\Z) \simeq
  A^{*}(\P^{\bfn_{1}};\Z) \otimes_{\R[\eta]} A^{*}(\P^{\bfn_{2}};\Z)$.
  \smallskip
\item \label{item:40}  Let $X_{i}\in Z_{r_{i}}(\P^{\bfn_{i}}_\Q) $ for
  $i=1,2$. {The above isomorphism identifies
$[X_{1}\times X_{2}]_{_{\Z}}$ with $[X_{1}]_{_{\Z}}\otimes [X_{2}]_{_{\Z}}.$}
In particular, for $\bfc_{i}\in \N^{m_{i}}$ such that
$|\bfc_{1}|+|\bfc_{2}|=r_{1}+r_{2}+1$,
\begin{displaymath}
   \wh_{(\bfc_1,\bfc_2)}(X_{1}\times X_{2})=
\left\{\begin{array}{cl}
\deg_{\bfc_1}(X_{1})\,\wh_{\bfc_2}(X_{2}) & \text{ if } |\bfc_{1}|=r_{1},
|\bfc_{2}|=r_{2}+1, \\[1mm]
 \deg_{\bfc_2}(X_{2})\,\wh_{\bfc_1}(X_{1}) & \text{ if } |\bfc_{1}|=r_{1}+1,
|\bfc_{2}|=r_{2}, \\[1mm]
0 & \text{ otherwise.}
\end{array}\right.
\end{displaymath}
\end{enumerate}
\end{proposition}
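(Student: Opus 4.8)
The plan is to reduce this proposition to the corresponding statement over function fields (Proposition~\ref{altrodgenff}) and to Proposition~\ref{resultantesproductosciclos} on resultants of products of cycles, exactly in the same spirit as the proof of Proposition~\ref{altrodgenff}. First I would dispose of part~\eqref{item:39}: this is immediate from Definition~\ref{def:8}, since $A^{*}(\P^{\bfn_1}\times\P^{\bfn_2};\Z) = A^{*}(\P^{\bfn_1}\times\P^{\bfn_2})\otimes_\Z\R[\eta]/(\eta^2)$ and $A^{*}(\P^{\bfn_1}\times\P^{\bfn_2})\simeq A^{*}(\P^{\bfn_1})\otimes_\Z A^{*}(\P^{\bfn_2})$ by Proposition~\ref{prop:3}\eqref{item:10}; tensoring with $\R[\eta]/(\eta^2)$ over $\Z$ and reorganizing the tensor factors gives the claimed isomorphism over $\R[\eta]$, with $\eta$ identified in both factors.

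For part~\eqref{item:40}, I would first reduce to the case of irreducible $\Q$-varieties $V_1$, $V_2$ by bilinearity of both sides (note that $[\,\cdot\,]_{_\Z}$, the mixed degrees and the canonical mixed heights are all additive on cycles, and the direct product is bilinear on cycles). The equality of classes in the extended Chow ring is, by comparing coefficients in the basis $\{\bftheta^{\bfa},\eta\,\bftheta^{\bfa}\}_{\bfa\le\bfn}$, equivalent to two families of scalar identities: the one on mixed degrees, $\deg_{(\bfb_1,\bfb_2)}(V_1\times V_2)=\deg_{\bfb_1}(V_1)\deg_{\bfb_2}(V_2)$ when $|\bfb_i|=r_i$ and $0$ otherwise, which is exactly Proposition~\ref{prop:3}\eqref{item:14}; and the one on canonical mixed heights, namely the displayed formula for $\wh_{(\bfc_1,\bfc_2)}(V_1\times V_2)$. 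So the whole content is the height identity, and this is where the work lies.

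To prove the height identity I would use the limit definition (Proposition-Definition~\ref{mixed height and sum}) together with the fact that the $\ell$-power map respects products: $[\ell]_{*}(V_1\times V_2) = [\ell]_{*}V_1 \times [\ell]_{*}V_2$, where on the left $[\ell]$ is the power map of $\P^{\bfn_1}\times\P^{\bfn_2}$ and on the right the power maps of the individual factors. Hence
\begin{displaymath}
\wh_{(\bfc_1,\bfc_2)}(V_1\times V_2)=\lim_{\ell\to\infty}\ell^{-r_1-r_2-2}\,\h_{(\bfc_1,\bfc_2)}\big([\ell]_{*}V_1\times[\ell]_{*}V_2\big),
\end{displaymath}
where $\h_{(\bfc_1,\bfc_2)}$ is the Fubini--Study mixed height, i.e.\ the Philippon measure $\ph$ of $\Res_{\bfe(\bfc_1,\bfc_2)}(\,\cdot\,)$ (Definition~\ref{def:4}). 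By Proposition~\ref{resultantesproductosciclos}, for effective cycles $X_1,X_2$ and with $|\bfc_1|+|\bfc_2|=r_1+r_2+1$, the resultant $\Res_{\bfe(\bfc_1,\bfc_2)}(X_1\times X_2)$ is, up to a sign, either $\Res_{\bfe(\bfc_1)}(X_1)^{\deg_{\bfc_2}(X_2)}$ (when $|\bfc_1|=r_1+1$, $|\bfc_2|=r_2$), or $\Res_{\bfe(\bfc_2)}(X_2)^{\deg_{\bfc_1}(X_1)}$ (when $|\bfc_1|=r_1$, $|\bfc_2|=r_2+1$), or $1$ otherwise. Applying this to $X_i=[\ell]_{*}V_i$ and taking $\ph$, in the third case $\ph(1)=0$ and the limit is $0$, matching the stated value; in the first case I get $\ph\big(\Res_{\bfe(\bfc_1)}([\ell]_{*}V_1)^{\deg_{\bfc_2}([\ell]_{*}V_2)}\big)=\deg_{\bfc_2}([\ell]_{*}V_2)\,\h_{\bfc_1}([\ell]_{*}V_1)$ by multiplicativity of the Mahler/Philippon measure (Lemma~\ref{lemm:9}\eqref{item:5} together with the fact that $\ph$ differs from $\m$ by a term linear in the partial degrees, which also behaves multiplicatively on powers; alternatively the analogous multiplicativity of $\ph$ is immediate from its integral definition). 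Using $\deg_{\bfc_2}([\ell]_{*}V_2)=\ell^{r_2}\deg_{\bfc_2}(V_2)$ (Proposition~\ref{prop:10}\eqref{item:33}) and dividing by $\ell^{r_1+r_2+2}$, the limit becomes $\deg_{\bfc_2}(V_2)\cdot\lim_{\ell}\ell^{-r_1-1}\h_{\bfc_1}([\ell]_{*}V_1)=\deg_{\bfc_2}(V_2)\,\wh_{\bfc_1}(V_1)$; the second case is symmetric.

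The main obstacle I anticipate is a bookkeeping subtlety rather than a conceptual one: Proposition~\ref{resultantesproductosciclos} is stated for \emph{effective} cycles (so the reduction to irreducible varieties is legitimate), and one must be careful that the constant $\lambda\in A^\times=\{\pm1\}$ there does not affect $\ph$ (it does not, since $\ph(\pm f)=\ph(f)$), and that the exponent $\deg_{\bfc_2}(X_2)$ can be $0$, in which case $\Res_{\bfe(\bfc_1,\bfc_2)}(X_1\times X_2)=1$ and both sides of the height identity vanish (consistent, since then $\wh_{\bfc_1}(V_1)\deg_{\bfc_2}(V_2)=0$). Once these points are checked, assembling the three cases and invoking Proposition~\ref{prop:3}\eqref{item:14} for the degree part yields $[X_1\times X_2]_{_\Z}=[X_1]_{_\Z}\otimes[X_2]_{_\Z}$ under the identification of~\eqref{item:39}, completing the proof.
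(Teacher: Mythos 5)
Your argument is essentially the paper's own proof: use $[\ell]_*(X_1\times X_2)=[\ell]_*X_1\times[\ell]_*X_2$, apply Proposition~\ref{resultantesproductosciclos} and Proposition~\ref{prop:10}\eqref{item:33}, take the limit, and then invoke Proposition~\ref{prop:3} for the degree part to assemble the class identity. One small slip: since $\dim(V_1\times V_2)=r_1+r_2$, the normalizing factor in Proposition-Definition~\ref{mixed height and sum} is $\ell^{-r_1-r_2-1}$, not $\ell^{-r_1-r_2-2}$; your final step silently corrects for this, so the conclusion is right, but the intermediate display is off by a power of $\ell$. Also note that Proposition~\ref{resultantesproductosciclos} is stated for arbitrary cycles $X_i\in Z_{r_i}(\P^{\bfn_i}_K)$, not just effective ones, so the extra care there (and the reduction to irreducibles) is safe but not strictly necessary — the paper applies it directly to the cycles $[\ell]_*X_i$.
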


\begin{proof}\

 \eqref{item:39} This is immediate from the definition of the
  extended Chow ring.

\smallskip
  \eqref{item:40} Consider the case
when  $|\bfc_1|=r_{1}+1,\, |\bfc_2|=r_{2}$.
By propositions~\ref{resultantesproductosciclos} and~\ref{prop:10}\eqref{item:33},
\begin{align*}
\h_\bfc([\ell]_*(X_{1}\times X_{2}))& = \ph(\Res_{\bfe(\bfc)}([\ell]_*X_{1}\times
 [\ell]_*X_{2}))\\ & =\ph(\Res_{\bfe(\bfc_1)}([\ell]_*X_{1})^{\deg_{\bfc_2}([\ell]_*X_{2})}) =
\ell^{r_{2}}\, \deg_{\bfc_2}(X_{2})\, \h_{\bfc_1}([\ell]_{*}X_{1}).
\end{align*}
Hence,
\begin{multline*}
\wh_\bfc(X_{1}\times X_{2}) = \lim_{\ell\to
  \infty}\ell^{-r_{1}-r_{2}-1}{\h_\bfc([\ell]_*(X_{1}\times
  X_{2}))}{}\\  = \deg_{\bfc_2}(X_{2}) \,
\lim_{\ell\to \infty}{\ell^{-r_{1}-1}}{\h_{\bfc_1}([\ell]_{*}X_{1})}=
\wh_{\bfc_1}(X_{1})\, \deg_{\bfc_2}(X_{2}).
\end{multline*}
The other cases can be proved  similarly. The equality of classes in
the extended Chow ring follows
from this  together with Proposition \ref{prop:3}.
\end{proof}

Finally, we compute the class in the extended Chow ring  of the ruled join of two projective varieties.
For $i=1,2$ consider the injective  $\R$-linear map
$\jmath_{i}:A^{*}(\P^{n_{i}}; \Z)\hookrightarrow A^{*}(\P^{n_{1}+n_{2}+1};\Z)$ defined by
$\theta^{l}\eta^{b}\mapsto \theta^{l}\eta^{b}$ for $0\le l\le  n_{i}$ and $b=0,1$.

 \begin{proposition} \label{prop:11}
Let $n_{i}\in\N$ and $X_{i} \in Z_{r_{i}}(\P^{n_{i}}_{\Q})$ for $i=1,2$. Then
\begin{displaymath}
  [X_{1}\#X_{2}]_{_{\Z}} =   \jmath_{1}([X_{1}]_{_{\Z}}) \cdot \jmath_{2}([X_{2}]_{_{\Z}}).
\end{displaymath}
In particular,
$\wh (X_{1}\#X_{2})= \deg(X_{1})\, \wh (X_{2})+\deg(X_{2})\,\wh (X_{1})$.
 \end{proposition}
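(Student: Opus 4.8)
The statement is equivalent to the two numerical equalities
\[
\deg(X_{1}\#X_{2})=\deg(X_{1})\deg(X_{2}),\qquad
\wh(X_{1}\#X_{2})=\deg(X_{1})\,\wh(X_{2})+\deg(X_{2})\,\wh(X_{1}).
\]
Indeed, $X_{1}\#X_{2}$ is a projective cycle of $\P^{n_{1}+n_{2}+1}_{\Q}$ of dimension $r_{1}+r_{2}+1$, so its class in the extended Chow ring is $\wh(X_{1}\#X_{2})\,\eta\,\theta^{e-1}+\deg(X_{1}\#X_{2})\,\theta^{e}$ with $e=(n_{1}-r_{1})+(n_{2}-r_{2})$; expanding $\jmath_{1}([X_{1}]_{_{\Z}})\cdot\jmath_{2}([X_{2}]_{_{\Z}})$ and using $\eta^{2}=0$, one finds that the coefficients of $\theta^{e}$ and $\eta\,\theta^{e-1}$ are precisely $\deg(X_{1})\deg(X_{2})$ and $\deg(X_{1})\wh(X_{2})+\deg(X_{2})\wh(X_{1})$. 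The first equality is \eqref{degjoin}, so only the second must be proved. Since both of its sides are bilinear in $(X_{1},X_{2})$, I reduce to the case of irreducible $\Q$-varieties $V_{i}\subset\P^{n_{i}}_{\Q}$ of dimension $r_{i}$.

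Next I would establish the cycle identity $[\ell]_{*}(V_{1}\#V_{2})=\ell\cdot\bigl(([\ell]_{*}V_{1})\#([\ell]_{*}V_{2})\bigr)$ for every $\ell\ge1$. The $\ell$-power map of $\P^{n_{1}+n_{2}+1}$ acts separately on the two groups of coordinates and preserves the coordinate subspaces $\{\bfx_{2}=0\}\supset V_{1}$ and $\{\bfx_{1}=0\}\supset V_{2}$; hence it carries the line joining $p\in V_{1}$ to $q\in V_{2}$ onto the line joining $[\ell]p$ to $[\ell]q$, which gives $[\ell](V_{1}\#V_{2})=([\ell]V_{1})\#([\ell]V_{2})$ as (irreducible) varieties. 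Counting the preimages in $V_{1}\#V_{2}$ of a generic point of this image — $\deg([\ell]|_{V_{1}})$ choices on the first factor, $\deg([\ell]|_{V_{2}})$ on the second, and $\ell$ more coming from the $\P^{1}$ of ruling directions — yields $\deg([\ell]|_{V_{1}\#V_{2}})=\ell\,\deg([\ell]|_{V_{1}})\deg([\ell]|_{V_{2}})$, and the identity follows (one may instead check it directly on the defining ideals, which involve disjoint groups of variables). Combining it with the linearity of the Fubini--Study height on cycles and with Proposition \ref{defn_hnorm}\eqref{item:26}, I get
\[
\wh(V_{1}\#V_{2})=\lim_{\ell\to\infty}\ell^{-(r_{1}+r_{2}+2)}\,h\bigl([\ell]_{*}(V_{1}\#V_{2})\bigr)=\lim_{\ell\to\infty}\ell^{-(r_{1}+r_{2}+1)}\,h\bigl(([\ell]_{*}V_{1})\#([\ell]_{*}V_{2})\bigr).
\]

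The remaining ingredient is an arithmetic B\'ezout inequality for ruled joins of effective projective cycles: there is $c=c(n_{1},n_{2},r_{1},r_{2})\ge0$ with $\bigl|\,h(Y_{1}\#Y_{2})-\deg(Y_{1})\,h(Y_{2})-\deg(Y_{2})\,h(Y_{1})\,\bigr|\le c\,\deg(Y_{1})\deg(Y_{2})$. I would prove this following the pattern of Lemma \ref{lemm:14}: the zero-dimensional case reduces, via Corollary \ref{cor:2}, Proposition \ref{prop:7}\eqref{item:21} and Corollary \ref{cor:3}, to the fact that the Chow form of the line $\xi_{1}\#\xi_{2}$ is, up to sign, the $2\times2$ determinant $M_{0}^{(1)}(\xi_{1})M_{1}^{(2)}(\xi_{2})-M_{1}^{(1)}(\xi_{1})M_{0}^{(2)}(\xi_{2})$, whose $\ell^{\infty}$-norm equals $\|\xi_{1}\|_{\infty}\|\xi_{2}\|_{\infty}$, so that the comparison estimates of Lemma \ref{lemm:1}, Lemma \ref{lemm:11} and \eqref{item:54} give the inequality with bounded error; the positive-dimensional case is then obtained by intersecting $Y_{1}\#Y_{2}$ with hyperplanes pulled back from a factor (and discarding the spurious lower-dimensional component) via Corollary \ref{cor:213}, using the compatibility of the ruled join with such sections. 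Granting this, I apply it to $Y_{i}=[\ell]_{*}V_{i}$: by Proposition \ref{defn_hnorm}\eqref{item:26b}, $\deg([\ell]_{*}V_{i})=\ell^{r_{i}}\deg(V_{i})$ and $\wh([\ell]_{*}V_{i})=\ell^{r_{i}+1}\wh(V_{i})$, while \eqref{item:29} of the same proposition gives $h([\ell]_{*}V_{i})=\wh([\ell]_{*}V_{i})+O(\ell^{r_{i}})=\ell^{r_{i}+1}\wh(V_{i})+O(\ell^{r_{i}})$. Substituting,
\[
h\bigl(([\ell]_{*}V_{1})\#([\ell]_{*}V_{2})\bigr)=\ell^{r_{1}+r_{2}+1}\bigl(\deg(V_{1})\wh(V_{2})+\deg(V_{2})\wh(V_{1})\bigr)+O(\ell^{r_{1}+r_{2}}),
\]
and dividing by $\ell^{r_{1}+r_{2}+1}$ and letting $\ell\to\infty$ gives the desired equality.

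The hard part will be the ruled-join arithmetic B\'ezout inequality of the previous paragraph. Its zero-dimensional instance is the elementary determinantal computation above, but propagating it to positive dimension requires the analogue over $\Q$ of the compatibility of ruled joins with generic hyperplane sections of a factor — Lemma \ref{lemm:14} was proved over a function field using standard models, which are not available here — together with careful bookkeeping of the Mahler/Philippon measure comparisons, so that the error stays bounded by a geometric constant times $\deg(Y_{1})\deg(Y_{2})$ in both directions and therefore survives the limit. Everything else — the reduction of the first paragraph, the cycle identity, and the passage to the limit — is formal.
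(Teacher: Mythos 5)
The paper disposes of the height identity by citing \cite[Prop.~4.9(a)]{PhiSom08}; after the (shared) reduction to the two numerical equalities and to irreducible varieties, its proof is essentially one line. Your proposal instead attempts a self-contained derivation \`a la Tate, and while the surrounding scaffolding is sound, it has an unfilled hole precisely where the content lies.

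What is correct: the reduction to the two scalar identities, the bilinearity reduction, the cycle identity $[\ell]_{*}(V_{1}\#V_{2})=\ell\cdot\bigl([\ell]_{*}V_{1}\#[\ell]_{*}V_{2}\bigr)$ (the preimage count $\deg([\ell]|_{V_{1}\#V_{2}})=\ell\,\deg([\ell]|_{V_{1}})\deg([\ell]|_{V_{2}})$ is right, and it is also consistent with Proposition~\ref{defn_hnorm}\eqref{item:26b} applied on both sides), and the formal passage to the limit once a two-sided error bound of order $O(\deg(Y_{1})\deg(Y_{2}))$ is in hand. That is the same kind of argument the paper uses for Proposition-Definition~\ref{mixed height and sum} and Theorem~\ref{multihom}\eqref{item:41}, so it is a reasonable strategy.

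The gap is the ``ruled-join arithmetic B\'ezout inequality'' $\bigl|\,h(Y_{1}\#Y_{2})-\deg(Y_{1})h(Y_{2})-\deg(Y_{2})h(Y_{1})\,\bigr|\le c\,\deg(Y_{1})\deg(Y_{2})$, which you explicitly flag as ``the hard part'' but do not prove. This is not a routine corollary of what is in the paper; it is essentially a Fubini--Study version of the very statement being proved, and proving it would require redoing the Philippon--Sombra computation. Two specific obstructions: (i) the zero-dimensional case over $\Q$ needs, on top of the $p$-adic Gauss-lemma bookkeeping (analogous to Lemma~\ref{lemm:13}), a genuine \emph{archimedean} estimate for the Philippon measure of the product of $2\times2$ determinants $L_{0}^{(1)}(\xi_{1})L_{1}^{(2)}(\xi_{2})-L_{1}^{(1)}(\xi_{1})L_{0}^{(2)}(\xi_{2})$ --- identifying its $\ell^{\infty}$-norm, as you do, is not enough, since $h$ is defined via $\ph$ and the error per factor must be controlled uniformly in the number of points so that the total stays $O(\deg Y_{1}\,\deg Y_{2})$; and (ii) the inductive step requires, as you yourself note, an analogue over $\Q$ of Lemma~\ref{lemm:14} (compatibility of ruled joins with generic hyperplane sections of a factor, at the level of Fubini--Study height), which cannot be lifted from the function-field proof because standard models over $\P^{1}_{k}$ have no counterpart here. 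One would instead have to argue directly via Corollary~\ref{cor:213} and the projection formula, keeping track of the error introduced at each section. None of this is done, so as written the proposal does not constitute a proof; the paper sidesteps all of it by quoting the result from the toric-heights literature.

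Minor point: the cycle identity, while true, is not actually used in your final limit computation --- once you have the approximate Fubini--Study identity you apply it to $Y_{i}=[\ell]_{*}V_{i}$ directly and invoke Proposition~\ref{defn_hnorm}(\ref{item:26b},\ref{item:29}), so the paragraph establishing it could be dropped.
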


 \begin{proof}
The equality of classes is equivalent to
$\deg(X_{1}\#X_{2})=\deg(X_{1})\, \deg(X_{2})$ and $\wh
(X_{1}\#X_{2})= \deg(X_{1})\, \wh (X_{2})+\deg(X_{2})\,\wh (X_{1})$.
The first one is \eqref{degjoin} while the second one  is
 \cite[Prop.~4.9(a)]{PhiSom08}.
\end{proof}

\section{The height of the implicit equation}\label{sec:height-impl-equat}

As an application of the results in the previous sections, we obtain
sharp bounds for the exponents and the coefficients of an equation
defining the closure of the image of an affine variety under a
rational map.  We consider separately {the cases $K$ a field of
functions and $K=\Q$.} Besides of their intrinsic interest, these
results play a central role in our treatment of the parametric and
arithmetic Nullstellens\"atze.

\medskip In the sequel, we will be {mostly} concerned with
affine varieties instead of multiprojective cycles.  Given $\bfn\in
\N^{m}$, we identify the affine space
\begin{displaymath}
\A^{\bfn}_{K}:=\A^{n_{1}}_{K}\times\cdots\times \A^{n_{m}}_{K}
\end{displaymath}
with the dense open subset $\P^{\bfn}_{K} \setminus
V(x_{1,0},\dots,x_{m,0})$.
This identification allows to transfer
notions and results from  $\P^{\bfn}_{K}$ to $\A^\bfn_{K}$. For instance,
subvarieties, cycles and divisors of~$\A^{\bfn}_{K}$ correspond to the
restriction of subvarieties, cycles and divisors of
$\P^{\bfn}_{K}$ to  this open subset. Thus, we can consider their mixed degrees
and heights, which will correspond to the analogous notions for their
closure  in the corresponding multiprojective space.
In particular, the degree and the
height of a subvariety of $\A^{n}_{K}$ are respectively defined as the
degree and the height of its closure in $\P^{n}_{K}$.

For $n\in \N$, the set of $K$-varieties of $\A^{n}_{K}$ is in
one-to-one correspondence with the set of radical ideals of
$K[x_{1},\dots, x_{n}]$.  For an affine $K$-variety $V \subset
\A^{n}_{K}$, we denote by $V(\ov K)$ the set of closed points of
$V_{\ov K}$.  It can be identified with the subset of $\ov K^{n}$
defined by $I(V)$.  A polynomial relation is said to \emph{hold on
  $V$} if it holds for every point of $V(\ov K)$.

Since $\Q$ is a perfect field, a  $\Q$-variety of $\A^{n}_{\Q}$ can be identified with
a subvariety of $\A^{n}(\Qbarra)$  defined over $\Q$, see Remark \ref{rem:4}.

\subsection{The function field case}\label{implicit}

Let $k$ be an arbitrary field and consider the groups of variables
$\bft=\{t_{1},\dots, t_{p}\}$ and $\bfx=\{x_{1},\dots, x_{n}\}$.
The following result is a parametric analogue of Perron's
theorem on the size of an equation of algebraic dependence for $r+1$
polynomials over a variety  of dimension $r$.

\begin{theorem}\label{perronpar}
  Let $V \subset \A^n_{_{k(\bft)}}$ be a $k(\bft)$-variety of pure
  dimension $r$ and $q_1,\dots,q_{r+1} $ $ \in k[\bft,\bfx]\setminus
  k[\bft]$.  Set $d_j=\deg_\bfx (q_j)$, $ h_j=\deg_\bft (q_j)$ for
  $1\le j\le r+1$ and write $\bfd=(d_1,\dots,d_{r+1})$, $\bfh=
  (h_1,\dots,h_{r+1})$. Then there exists
$$E=\sum_{\bfa\in \N^{r+1}}\,
\alpha_{\bfa }\bfy^\bfa \ \in k[\bft][y_1,\dots,y_{r+1}]\setminus
 \{0\}$$ satisfying $ E(q_1,\dots,q_{r+1})=0$ on $V$ and such
that,  for all $\bfa \in \Supp(E)$,
\begin{align*}
  \bullet & \ \langle \bfd,\bfa\rangle \le
  \bigg(\prod_{j=1}^{r+1}d_j\bigg)\deg(V),\\
  \bullet & \ \deg(\alpha_{\bfa}) + \langle \bfh,\bfa \rangle \le
  \bigg(\prod_{j=1}^{r+1}d_j\bigg) \bigg(\h(V) + \deg(V)
  \sum_{\ell=1}^{r+1}\frac{h_\ell}{d_\ell}\bigg).
\end{align*}
\end{theorem}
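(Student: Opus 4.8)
The plan is to mimic Jelonek's geometric construction of an algebraic dependence relation, but carried out inside the extended Chow ring machinery developed in Section~2. First I would consider the graph map
\[
\gamma\colon V \dashrightarrow \A^{r+1}_{_{k(\bft)}}\times V \hookrightarrow \P^{1}_{_{k(\bft)}}\times\cdots\times\P^{1}_{_{k(\bft)}}\times \P^{n}_{_{k(\bft)}}
\]
sending $\bfx$ to $((q_1(\bfx){:}1),\dots,(q_{r+1}(\bfx){:}1),\bfx)$, or rather work with the multiprojective closure $W$ of the image, a cycle of pure dimension $r$ in $\P^{\bfone}\times\P^{n}$ with $\bfone=(1,\dots,1)\in\N^{r+1}$. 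The desired $E$ is (a defining polynomial of) the image $\pi_{*}W$ under the projection $\pi\colon\P^{\bfone}\times\P^{n}\dashrightarrow\P^{\bfone}$ forgetting the last factor; since the image of $V$ in $\A^{r+1}$ has dimension $\le r$, $\pi_{*}W$ is either $0$ or a hypersurface of $\P^{\bfone}$, hence cut out by a single multihomogeneous $E$, whose dehomogenization in $y_1,\dots,y_{r+1}$ gives the sought relation, with $\deg_{y_j}(E)$ the $\theta_j$-degree of $[\pi_{*}W]$ and $\deg_\bft(E)$ its $\eta$-coefficient. The vanishing $E(q_1,\dots,q_{r+1})=0$ on $V$ is immediate from the construction.

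Next I would bound $[W]_{_{k[\bft]}}$ in $A^{*}(\P^{\bfone}\times\P^{n};k[\bft])$. The clean way: realize $W$ as an intersection. Let $\rho_j\colon\P^{\bfone}\times\P^n\to\P^1_j\times\P^n$ be the projection onto the $j$-th $\P^1$ and the last factor; the hypersurface $H_j=\rho_j^{*}\div(y_{j,1}q_j^{\hom}(\bfx)-y_{j,0})$ has class $[H_j]_{_{k[\bft]}}\le h_j\,\eta + d_j\,\theta_{r+2}+\theta_j$ (the $\theta_{r+2}$ slot being the $\P^n$ factor, using Proposition~\ref{prop:9}\eqref{item:15} and $\deg_{\bfx}(q_j)=d_j$, $\deg_\bft(q_j)=h_j$). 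One has $W = (\P^{\bfone}\times V)\cdot\prod_{j=1}^{r+1}\div(f_{H_j})$ after discarding lower-dimensional or improperly-intersecting pieces, so by Corollary~\ref{cor:1} (applied $r+1$ times, peeling off one divisor at a time, always keeping only the right-dimensional component)
\[
[W]_{_{k[\bft]}} \ \le\ [\P^{\bfone}\times V]_{_{k[\bft]}}\cdot\prod_{j=1}^{r+1}\bigl(h_j\,\eta + d_j\,\theta_{r+2}+\theta_j\bigr).
\]
By Proposition~\ref{altrodgenff}\eqref{item:22} and Proposition~\ref{prop:9}\eqref{item:18}, $[\P^{\bfone}\times V]_{_{k[\bft]}} = \imath([\P^{\bfone}])\cdot[V]_{_{k[\bft]}} = [V]_{_{k[\bft]}}$ (viewing $\P^1_j$-factors as contributing $1$), and $[V]_{_{k[\bft]}} = \h(V)\,\eta\,\theta_{r+2}^{n-r-1}+\deg(V)\,\theta_{r+2}^{n-r}$ since $V$ lives in the single projective factor.

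Then I would apply Proposition~\ref{113alturasff}: with $\jmath$ the inclusion $A^{*}(\P^{\bfone};k[\bft])\hookrightarrow A^{*}(\P^{\bfone}\times\P^n;k[\bft])$, $\jmath([\pi_{*}W]_{_{k[\bft]}})\le[W]_{_{k[\bft]}}$, so each $\theta_1^{a_1}\cdots\theta_{r+1}^{a_{r+1}}$-coefficient (with or without $\eta$) of $[\pi_{*}W]_{_{k[\bft]}}$ is bounded by the corresponding coefficient of the product above with $\theta_{r+2}$ set to its top power $n-r-1$ or $n-r$. Extracting the coefficient of $\eta\,\theta_{r+2}^{n-r-1}\prod_j\theta_j^{1-a_j}$ (resp. $\theta_{r+2}^{n-r}\prod_j\theta_j^{1-a_j}$) and using that $\pi_{*}W$ is a hypersurface of $\P^{\bfone}$ so its mixed degree of index $\bfone-\bfe_j$ is exactly $\deg_{y_j}(E)=a_j$ for the monomial $\bfy^\bfa$ in $\Supp(E)$: the $\theta$-part gives $\sum_j a_j\prod_{i\ne j}d_i \le (\prod_i d_i)\deg(V)$, i.e. $\langle\bfd,\bfa\rangle\le(\prod d_j)\deg(V)$ after multiplying the defining polynomial's degrees correctly; the $\eta$-part gives $\deg(\alpha_\bfa)+\langle\bfh,\bfa\rangle \le (\prod_j d_j)(\h(V)+\deg(V)\sum_\ell h_\ell/d_\ell)$, which is precisely the stated bound since $\coeff_{\eta\bftheta}$ of $[V]\cdot\prod(h_j\eta+d_j\theta+\theta_j)$ works out to $\h(V)\prod d_j + \deg(V)\sum_j h_j\prod_{i\ne j}d_i$.

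The main obstacle I expect is not the ring arithmetic but the geometric bookkeeping: justifying that the successive intersections $(\P^{\bfone}\times V)\cdot\div(f_{H_1})\cdots\div(f_{H_j})$ are proper (or that the improper components only make $[W]$ smaller), and that after projecting, $\pi_{*}W$ really is the irreducible hypersurface defined by a primitive squarefree $E$ whose $y$-degrees and $\bft$-degree are read off from the class — in particular handling the possibility that the image is all of $\A^{r+1}$ ($E=0$, vacuous) versus a genuine hypersurface, and the passage between affine and multiprojective pictures (dehomogenizing, controlling the contribution of the hyperplane at infinity in each $\P^1$). This is the kind of argument the authors run in full generality in Theorem~\ref{implicitmultiparZ}; here I would either invoke that theorem directly or reproduce its specialization, keeping track only of the indices $\bfd,\bfh$ as above.
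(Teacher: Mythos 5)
Your approach via the class of the graph closure $W$ in $\P^{\bfone}\times\P^{n}$ cannot yield the theorem as stated, because $[W]_{_{k[\bft]}}$ and $[\pi_{*}W]_{_{k[\bft]}}$ record only the partial degrees $\deg_{y_{j}}(E)$ and the $\bft$-degree of $E$ — these are exactly the bounds of Theorem~\ref{racionalpar}, which is a weaker statement.  Theorem~\ref{perronpar} instead bounds the weighted degree $\langle\bfd,\bfa\rangle=\sum_{j}d_{j}a_{j}$ and the quantity $\deg(\alpha_{\bfa})+\langle\bfh,\bfa\rangle$ \emph{for every individual exponent} $\bfa\in\Supp(E)$, i.e.\ it cuts out a Newton polytope.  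That information is not contained in the extended Chow class of the hypersurface.  Your final extraction step reflects this: the mixed degree of $\pi_{*}W$ of index $\bfone-\bfe_{j}$ is $\deg_{y_{j}}(E)=\max_{\bfa\in\Supp(E)}a_{j}$, not the $j$-th entry of a single $\bfa$; the monomials $\theta_{r+2}^{n-r}\prod_{j}\theta_{j}^{1-a_{j}}$ you propose to read off only exist when $\bfa\in\{0,1\}^{r+1}$; and the quantity $\sum_{j}a_{j}\prod_{i\ne j}d_{i}$ you obtain is not equal to $\langle\bfd,\bfa\rangle=\sum_{j}a_{j}d_{j}$ (try $d_{1}=1$, $d_{2}=10$). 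The phrase ``after multiplying the defining polynomial's degrees correctly'' is covering a real gap, not a bookkeeping detail.

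The missing idea is the substitution that converts weighted degrees into total degrees.  The paper first reduces to the case where $\psi$ is generically finite (a deformation adding $z\,x_{i_{j}}$, which you also need and do not mention), so that $\ov{\Im(\psi)}$ is a hypersurface defined by a primitive squarefree $E\in k[\bft,\bfy]$.  It then introduces auxiliary variables $v,w_{1},\dots,w_{r+1}$ and sets
\[
P=E\bigl(w_{1}+v^{h_{1}}y_{1}^{d_{1}},\dots,w_{r+1}+v^{h_{r+1}}y_{r+1}^{d_{r+1}}\bigr)\in k[\bft,v][\bfw,\bfy].
\]
Since $y_{j}\mapsto w_{j}+v^{h_{j}}y_{j}^{d_{j}}$ (with the $w_{j}$ making this an automorphism of $k[\bft,v][\bfw,\bfy]$), $P$ is again primitive and squarefree, and one checks that $\deg_{\bfw,\bfy}(P)=\max_{\bfa}\langle\bfd,\bfa\rangle$ and $\deg_{\bft,v}(P)=\max_{\bfa}\{\deg(\alpha_{\bfa})+\langle\bfh,\bfa\rangle\}$.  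Thus the two target bounds become $\deg(V(P))$ and $\h(V(P))$ for the hypersurface $V(P)\subset\A^{2r+2}_{_{k(\bft,v)}}$.  This hypersurface is exhibited as the image, under a generically finite linear projection, of the complete intersection $G(\bfd,\bfh)=(V\times\A^{2r+2})\cap\bigcap_{j}V\bigl(w_{j}+v^{h_{j}}y_{j}^{d_{j}}-q_{j}\bigr)$ of pure dimension $2r+1$ inside $\A^{n+2r+2}$, and then Proposition~\ref{113alturasff}, Corollary~\ref{cor:1} and Lemma~\ref{relalturas} give $\deg(G(\bfd,\bfh))\le(\prod d_{j})\deg(V)$ and $\h(G(\bfd,\bfh))\le(\prod d_{j})(\h(V)+\deg(V)\sum h_{\ell}/d_{\ell})$.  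Your intersection-theoretic machinery is fine, but it has to be applied to $V(P)$ and $G(\bfd,\bfh)$, not directly to $\ov{\Im(\psi)}$ and its graph.
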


{Our proof below follows Jelonek's approach
in~\cite[Thm.~3.3]{Jelonek05} and heavily relies on the arithmetic
intersection theory developed in the previous sections.}



\begin{lemma} \label{relalturas} Let $V\subset \A_{_{k(\bft)}}^{n}$
   be a $k(\bft)$-variety of pure  dimension $r$ and $l\in \N$. Then
   \begin{displaymath}
\deg\Big(V\times  \A_{_{k(\bft)}}^{l}\Big)=\deg(V) \quad, \quad \h\Big(V\times  \A_{_{k(\bft)}}^{l}\Big)=      \h(V).
   \end{displaymath}
\end{lemma}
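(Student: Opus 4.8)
The plan is to reduce the statement to the computation of the degree and the height of a ruled join. We may assume $l\ge 1$, the case $l=0$ being trivial. We may also assume that $V$ is irreducible: a reduced equidimensional $k(\bft)$-variety decomposes into irreducible components $V_{1},\dots,V_{s}$, the irreducible components of $V\times\A^{l}_{_{k(\bft)}}$ are exactly the $V_{i}\times\A^{l}_{_{k(\bft)}}$ (each being irreducible, since $I(V_{i})\,k(\bft)[\bfx]$ stays prime under adjunction of free variables), and both $\deg$ and $\h=\h_{r+1}$ are additive over components by linearity of the class in the extended Chow ring. So from now on let $V\subset\A^{n}_{_{k(\bft)}}$ be irreducible of dimension $r$ and let $\ov V\subset\P^{n}_{_{k(\bft)}}$ be its closure.

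First I would show that the closure $\ov{V\times\A^{l}_{_{k(\bft)}}}$ in $\P^{n+l}_{_{k(\bft)}}$ coincides, as a cycle, with the ruled join $\ov V\,\#\,\P^{l-1}_{_{k(\bft)}}$. Split the homogeneous coordinates of $\P^{n+l}_{_{k(\bft)}}$ into those of $\P^{n}$ and those of $\P^{l-1}$, so that $\A^{n+l}_{_{k(\bft)}}=\A^{n}_{_{k(\bft)}}\times\A^{l}_{_{k(\bft)}}$ after setting the first coordinate to $1$; then $\ov V\,\#\,\P^{l-1}_{_{k(\bft)}}$ is the irreducible closed subvariety cut out by $I(\ov V)$, a set of polynomials involving only the first block of coordinates. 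Since $I(\ov V)$ vanishes on $\ov V\supseteq V$, each such polynomial vanishes at every point of $V\times\A^{l}_{_{k(\bft)}}$, so $V\times\A^{l}_{_{k(\bft)}}\subseteq\ov V\,\#\,\P^{l-1}_{_{k(\bft)}}$ and hence $\ov{V\times\A^{l}_{_{k(\bft)}}}\subseteq\ov V\,\#\,\P^{l-1}_{_{k(\bft)}}$. Both are irreducible of dimension $r+l$: for the ruled join this is $\dim\ov V+\dim\P^{l-1}_{_{k(\bft)}}+1$, recalled in the paragraph containing~\eqref{degjoin}; for the closure it is $\dim\bigl(V\times\A^{l}_{_{k(\bft)}}\bigr)=\dim V+l$. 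An irreducible closed subset of an irreducible variety of the same dimension equals the whole variety, so the two coincide (and as cycles both carry multiplicity one).

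Then I would apply~\eqref{degjoin} and Proposition~\ref{prop:12} with $X_{1}=\ov V\in Z_{r}(\P^{n}_{_{k(\bft)}})$ and $X_{2}=\P^{l-1}_{_{k(\bft)}}\in Z_{l-1}(\P^{l-1}_{_{k(\bft)}})$. We have $\deg(\P^{l-1}_{_{k(\bft)}})=1$ by Proposition~\ref{prop:5}\eqref{item:11} and $\h(\P^{l-1}_{_{k(\bft)}})=\h_{l}(\P^{l-1}_{_{k(\bft)}})=0$ by Proposition~\ref{prop:9}\eqref{item:18}, so these results give
$$\deg\bigl(\ov V\,\#\,\P^{l-1}_{_{k(\bft)}}\bigr)=\deg(\ov V)\cdot 1=\deg(\ov V)\quad,\quad \h\bigl(\ov V\,\#\,\P^{l-1}_{_{k(\bft)}}\bigr)=\deg(\ov V)\cdot 0+1\cdot\h(\ov V)=\h(\ov V).$$
Combining this with the identification of the previous step and with the definition of the degree and the height of an affine variety as those of its closure, we obtain $\deg\bigl(V\times\A^{l}_{_{k(\bft)}}\bigr)=\deg(\ov V)=\deg(V)$ and $\h\bigl(V\times\A^{l}_{_{k(\bft)}}\bigr)=\h(\ov V)=\h(V)$, which is the claim.

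The only genuinely non-formal point is the identification $\ov{V\times\A^{l}_{_{k(\bft)}}}=\ov V\,\#\,\P^{l-1}_{_{k(\bft)}}$; I expect the dimension count plus irreducibility argument above to be the cleanest way to establish it, sidestepping any subtlety about homogenizing ideals. Everything else is a direct bookkeeping combination of~\eqref{degjoin}, Proposition~\ref{prop:12}, Proposition~\ref{prop:5}\eqref{item:11} and Proposition~\ref{prop:9}\eqref{item:18}.
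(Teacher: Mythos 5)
Your proof is correct and follows essentially the same route as the paper's: the paper also reduces to the identity $\ov{V\times\A^{l}}=\ov V\,\#\,\P^{l-1}$ (written there as $\iota_{3}(V\times\A^{l})=\iota_{1}(V)\#\iota_{2}(\A^{l-1})$) and then invokes Proposition~\ref{prop:12} together with Proposition~\ref{prop:9}\eqref{item:18}. The only difference is cosmetic: the paper asserts the ruled-join identification with ``we can easily verify'' and then compares classes in the extended Chow ring in one line, whereas you spell out the verification (via containment plus a dimension/irreducibility count) and extract the degree and height coefficients separately; both amount to the same argument.
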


\begin{proof}
Consider the standard inclusions $\iota_{1}$, $\iota_{2}$ and
$\iota_{3}$ of $ \A^{n}$, $\A^{l-1}$ and $\A^{n+l}$ into $ \P^{n}$,
$\P^{l-1}$ and $\P^{n+l}$, respectively.
We can easily verify that $\iota_{3}(V\times \A^{l})=\iota_{1}(V)\# \iota_{2}(\A^{l-1})$.
Applying propositions \ref{prop:12} and \ref{prop:9}\eqref{item:18},
\begin{displaymath}
  [V\times  \A^{l}]_{_{k(\bft)}}=[V]_{_{k(\bft)}}\cdot
  [\A^{l-1}]_{_{k(\bft)}}= [V]_{_{k(\bft)}},
\end{displaymath}
which implies the  statement.
\end{proof}

\begin{proof}[Proof of Theorem~\ref{perronpar}.]
  Consider first the case when the map $$\psi: V \longrightarrow
  \A_{_{k(\bft)}}^{r+1}\quad, \quad \bfx\longmapsto (q_1(\bfx), \dots,
  q_{r+1}(\bfx))$$ is generically finite onto its image, that is, when
  the fiber of a generic point in $\Im(\psi)$ is finite.  In our
  setting, this implies {that $\ov{\Im(\psi)}$ is} a hypersurface.  Let
  $E\in k[\bft][\bfy]\setminus \{0\}$ be a primitive and squarefree
  polynomial defining this hypersurface.  Let $v, w_1,\dots, w_{r+1}$
  be a set of auxiliary variables and write
$$
P=E(w_{1}+v^{h_1}y_1^{d_1}, \dots,
w_{r+1}+v^{h_{r+1}}y_{r+1}^{d_{r+1}}) \in k[\bft, v][\bfw, \bfy].
$$
Hence,
\begin{equation}\label{eq10}
  \max_{\bfa} \{ \langle \bfd,\bfa\rangle\} =\deg_{\bfw,\bfy}(P)
  \quad, \quad
  \max_{\bfa} \{
  \deg(\alpha_\bfa ) + \langle \bfh,\bfa\rangle\} = \deg_{\bft,v}(P).
\end{equation}
We have that $E$ is primitive and squarefree as a polynomial in
$k[\bft,v][\bfw,\bfy]$. {Hence the same holds for $P$, since the map
defined by $y_{j}\mapsto w_{j}+v^{h_j}y_j^{d_j}$, $w_{j}\mapsto
y_{j}$ is the identity on $k[\bft,v]$ and an automorphism of
$k[\bft,v][\bfw,\bfy]$ which sends $P$ to $E$.}

\medskip
Write  $V(P)$ for the ${k(\bft,v)}$-hypersurface of
$\A^{2r+2}_{_{k(\bft,v)}}$ defined by $P$. By Proposition
\ref{prop:9}\eqref{item:15},
 \begin{equation}
   \label{eq:11}
   \deg_{\bfw,\bfy}(P)=\deg(V(P)) \quad, \quad
   \deg_{\bft,v}(P)= \h(V(P)).
 \end{equation}
We will bound both the degree and the height
 of this hypersurface.
Consider the following subvarieties of  $\A^{n+2r+2}_{_{k(\bft,v)}}$:
$$
G = (V\times \A^{2r+2}) \cap \bigcap_{j=1}^{r+1} V(y_j-q_j) \quad,
\quad G(\bfd, \bfh) = (V\times \A^{2r+2}) \cap \bigcap_{j=1}^{r+1}
V(w_j+v^{h_j}y_j^{d_j}-q_j).
$$
{Both varieties are of pure dimension~$2r+1$: indeed, each of them
  is defined by a set of $r+1$ polynomials which form a complete
  intersection over $V\times \A^{2r+2}$ as they depend on different
  variables $y_j$.}  Let $\rho:\A^{r+1}\to \A^{r+1}$ be the map
defined by $y_j\mapsto w_{j}+v^{h_j}y_j^{d_j}$ and $\pi$ the
projection $\A^{n+2r+2}\simeq \A^{n}\times \A^{2r+2}\to \A^{2r+2}$. We
have a commutative diagram
\begin{displaymath}
  \xymatrix{
    G(\bfd, \bfh)  \ar[rr]^{\Id_{\A^n\times \A^{r+1}}\times \rho} \ar[d]_\pi
    & \hspace{10mm}&G \ar[d]^\pi \\
    V(P)\ar[rr]_{\Id_{\A^{r+1}}\times \rho} &&
    V(E)}
\end{displaymath}
Both $G(\bfd,\bfh)\to G$ and $ G\to V(E)$ are generically finite since $\psi$ is generically finite, {and
  so this is also the case for} $\pi:G(\bfd,\bfh)\to V(P)$.
Proposition~\ref{113alturasff} then implies that
\begin{equation}
  \label{eq:12}
  \deg (V(P))\le  \deg(G(\bfd,\bfh))  \quad , \quad
  \h(V(P)) \le  \h(G(\bfd,\bfh)).
\end{equation}

Observe that $\deg_{\bfx,\bfw,\bfy}(v^{h_j}y_j^{d_j}+w_j-q_j)=d_j$
and $\deg_{\bft,v}(v^{h_j}y_j^{d_j}+w_j-q_j)=h_j$. Hence, since
$G(\bfd,\bfh)$ is an open set of a component  defined by the
homogenization of  these equations in $ \P^{n+2r+2}$,
\begin{align*}
  \deg(G(\bfd,\bfh)) &\le  \Bigg(\prod_{j=1}^{r+1} d_j\Bigg)  \deg(V\times\A^{2r+2})= \Bigg(\prod_{j=1}^{r+1} d_j\Bigg)  \deg(V),
  \\
  \h(G(\bfd,\bfh))
  &\le \Bigg(\prod_{j=1}^{r+1}d_j\Bigg) \Bigg( \h(V\times \A^{2r+2})+\deg(V\times \A^{2r+2})
  \sum_{\ell=1}^{r+1} \frac{h_\ell }{d_\ell }\Bigg) \\[0mm]
  &\le \Bigg(\prod_{j=1}^{r+1}d_j\Bigg) \Bigg( \h(V )+\deg(V)
  \sum_{\ell=1}^{r+1} \frac{h_\ell }{d_\ell }\Bigg),
\end{align*}
thanks to Corollary \ref{cor:1} and Lemma \ref{relalturas}. The
statements follows from these bounds, together
with~(\ref{eq10}),~(\ref{eq:11}) and~(\ref{eq:12}).

\medskip The general case reduces to the generically finite one by
a deformation argument. Choose  variables $x_{i_1}, \dots,
x_{i_{r+1}}$ among those in the group $\bfx$ in such a way that the
projection $V\to \A_{_{k(\bft)}}^{r+1}, \bfx\mapsto (x_{i_1}, \dots,
x_{i_{r+1}})$ is generically finite onto its image.  Adding a
further variable $z$, we consider the map
$$
V_{{k(z,\bft)}}\longrightarrow \A^{r+1}_{_{k(z,\bft)}} \quad,
\quad \bfx\longmapsto\left(q_1(\bfx)+z \, x_{i_1}
  ,\dots,q_{r+1}(\bfx)+z \, x_{i_{r+1}}\right).
$$
It is also generically finite onto its image.
Thus, we are in the hypothesis of the previous case with respect to
the base ring $k(z)[\bft]$. We deduce that there is a polynomial $\wt E=\sum_{\bfa}\, \wt \alpha_{\bfa
}\bfy^\bfa \in k(z)[\bft][\bfy]\setminus \{0\}$ defining the closure
of the image of this map and satisfying, for all $\bfa\in \Supp( \wt E)$,
$$\langle \bfd,\bfa\rangle \le
\Bigg(\prod_{j=1}^{r+1}d_j\Bigg)\deg(V) \quad, \quad \deg_\bft(\wt
\alpha_{\bfa})+ \langle \bfh,\bfa \rangle \le
\Bigg(\prod_{j=1}^{r+1}d_j\Bigg) \Bigg(\h(V) + \deg(V)
\sum_{\ell=1}^{r+1}\frac{h_\ell}{d_\ell}\Bigg).
$$
After multiplying by a suitable non-zero polynomial in $z$, we can
assume without loss of generality that $\wt E$ lies in $k[z][\bft,
\bfy]$ and that it is primitive as a polynomial in the variables
$\bft,\bfy$. Set $E=\wt E(0, \bft,\bfy)\in k[\bft, \bfy]
\setminus\{0\}$.  We have that $\wt E(q_1+z \, x_{i_1}
,\dots,q_{r+1}+z \, x_{i_{r+1}}) \in I(V)\otimes k(\bft)[z]$ and so
$E(\bfq)\in I(V)$ or, equivalently, $E(\bfq)=0$ on $V$. We deduce
that $E=0$ is an equation of algebraic dependence for $q_1,\dots,
q_{r+1}$ which satisfies the same bounds as $\wt E$.
\end{proof}

For $V=\A_{_{k(\bft)}}^n$ we have $r=n$, $\deg(V)=1$ and $\h(V)=0$.
The above result gives the bounds
\begin{equation} \label{eqn:1} \langle \bfd,\bfa\rangle  \le
  \prod_{j=1}^{n+1}d_j \quad, \quad \deg(\alpha_{\bfa})+ \langle
  \bfh,\bfa \rangle \le  \sum_{\ell=1}^{n+1} \bigg( \prod_{j\ne
    \ell} d_j\bigg) h_\ell
\end{equation}
for the $\bfy$-degree and the $\bft$-degree of an equation of
algebraic dependence for $n+1$ polynomials
in $k[\bft,\bfx]$ of $\bfx$-degree $d_j\ge 1$ and $\bft$-degree~$h_j$.

\begin{example}\label{ejemplopar}
  Let $d_1,d_2\in \N$ be coprime integers and set $q_j=g_j x^{d_j}-1
  \in k[t,x]$ for a generic univariate polynomial $g_j\in k[t]$ of
  degree $h_j$, $j=1,2$.  The implicit equation of the closure of the image of
$$\A^1_{_{k(t)}}\longrightarrow \A^2_{_{k(t)}} \quad,\quad
  x\longmapsto(q_1(x), q_2(x)) $$ is  $ g_2^{d_1}
  (y_1+1)^{d_2} - g_1^{d_2}(y_2+1)^{d_1} =0$.
  The bounds~(\ref{eqn:1}) are optimal in this case.
\end{example}

Theorem~\ref{perronpar} can be regarded as an estimate for the Newton
polytope of the equation~$E$: if we write $E=\sum_{\bfa, \bfc}
\gamma_{\bfa, \bfc} \bft^\bfc\bfy^\bfa\in k[\bft,\bfy]$ with
$\gamma_{\bfa,\bfc} \in k$, the corresponding Newton polytope is the
convex hull
$$
\newton(E) =\Conv\Big\{ (\bfa,\bfc): \gamma_{\bfa,\bfc} \ne 0 \Big\}
\subset\R^{r+1}\times\R^{p}.
$$
Theorem~\ref{perronpar} is equivalent to the statement that
$\newton(E)$ is contained in the intersection of the non-negative
orthant with the two
half-spaces defined by the inequalities
$$
\langle \bfd,\bfa\rangle \le \Bigg(\prod_{j=1}^{r+1}d_j\Bigg)\deg(V)
\quad, \quad \langle \bfone, \bfc\rangle + \langle \bfh,\bfa \rangle
\le \Bigg(\prod_{j=1}^{r+1}d_j\Bigg) \Bigg(\h(V) + \deg(V)
\sum_{\ell=1}^{r+1}\frac{h_\ell}{d_\ell}\Bigg),
$$
where $\bfone=(1,\dots, 1)\in \N^{p}$.
Indeed, it can be shown that, for $V=\A^n_{_{k(\bft)}}$ and generic polynomials $q_j$ of
$\bfx$-degree $d_j\ge 1$ and $\bft$-degree $h_j$, the Newton polytope
of $E$ coincides with the set cut out by these inequalities.

\medskip
For instance, consider in Example~\ref{ejemplopar} the case when
$d_2h_1 \ge d_1h_2$.
Then, the Newton polytope of $E$ is the convex hull of the
points $(0,0,0), (d_2,0,0), (d_2, 0, d_1h_2)$, $(0,0,d_2h_1), (0,d_1,
d_2h_1), (0,d_1,0)$, as shown in the  figure:
\begin{figure}[htbp]
 \input politopo.pstex_t \vspace{-11mm}
\end{figure}

It coincides with the subset of $\R^3$ cut out by the
inequalities~(\ref{eqn:1}), namely
$$
\Big\{(a_1,a_2,c): \quad a_1,a_2,c\ge0, \quad d_1a_1+d_2a_2 \le
d_1d_2, \quad c+h_1a_1+h_2a_2 \le d_1h_2+d_2h_1\Big\}.
$$

\begin{example}\label{ejemploeliptico}
  Consider the elliptic curve $C=V\Big((t+1)x_1^3+x_1^2-x_2^2\Big)
  \subset \A^2_{_{k(t)}}$ and the polynomials $q_1=x_1+(t+1)x_2-1,
  q_2=x_1x_2+(t-1)x_2^2+t\in k[t,x_1,x_2]$.  The implicit equation of
  {the closure of the image of}
$$
C\longrightarrow\A^2_{_{k(\bft)}}\quad, \quad (x_1,x_2)\longmapsto
\Big(q_1(x_1,x_2),q_2(x_1,x_2)\Big)
$$
is defined by a polynomial $E\in k[t,y_1,y_2]$ with 138 terms. Its
Newton polytope is the polytope $ \Conv\Big( (0,0,0), (6,0,0),
(6,0,5), (0,0,11), (0,3,8), (0,3,0) \Big) \subset \R^3: $
\vspace{-1mm}
\begin{figure}[htbp]
 \input eliptica.pstex_t
\vspace{-12mm}
\end{figure}

{It also} coincides with the polytope cut out by the inequalities in
the parametric Perron's theorem: we have that $\deg(C)=3$ and $\h(C)=1$
while $\deg_{\bfx}(q_1)=1$, $\deg_{\bfx}(q_2)=2$ and $\deg_t(q_i)=1$, $i=1,2$. Theorem~\ref{perronpar} implies the
inclusion of polytopes
$$
\newton(E) \subset \Big\{(a_1,a_2,c): \quad a_1,a_2,c\ge0, \quad
a_1+2a_2 \le 6, \quad c+a_1+a_2 \le 11\Big\},
$$
which turns out to be an equality.
\end{example}

The following result is an upper bound for the degree and
the height of the implicit equation of a hypersurface defined as the
closure of the image of a general {rational} map.  These estimates
are not used in the proof of the arithmetic Nullstellens\"atze.
Nevertheless, we include them because they may have some
independent interest.

Recall that a rational map $\rho: V\dashrightarrow
\A_{_{k(\bft)}}^{r+1}$ is defined by quotients of polynomials whose denominators do
not vanish identically on any of the components of $V$. It is
{\it generically finite onto its image} if the fiber of a generic
point in $\Im(\rho)$ is finite.  In our setting, this implies
that the closure of its image is a hypersurface.

\begin{theorem}\label{racionalpar}
  Let $V \subset \A^n_{_{k(\bft)}}$ be a $k(\bft)$-variety of
  pure dimension $r$ and
$$
\psi: V \dashrightarrow \A^{r+1}_{_{k(\bft)}}\quad, \quad
\bfx\longmapsto \Bigg( \frac{q_1}{p_1}(\bfx)
,\dots,\frac{q_{r+1}}{p_{r+1}}(\bfx) \Bigg)
$$
a rational map, generically finite onto its image, defined by
polynomials $q_j, p_j\in k[\bft,\bfx]$ such that $q_j/p_j \notin
k(\bft)$.
Let $E \in k[\bft,\bfy]$ be a primitive and squarefree polynomial
defining ${\ov{\Im(\psi)}}$.
Set $d_j=\max\{\deg_\bfx (q_j),
\deg_\bfx(p_j)\}$ and $h_j=\max\{\deg_\bft (q_j),
\deg_\bft(p_j)\}$ for  $1\le j\le r+1$. Then
\begin{align*}
  \bullet & \ \deg_{y_i} (E) \le
  \bigg(\prod_{j\ne i}d_j\bigg)\deg(V) \quad \mbox{for } 1\le i\le r+1,\\[-1mm]
  \bullet & \ \deg_\bft (E) \le  \bigg(\prod_{j=1}^{r+1}d_j\bigg)
  \bigg(\h(V) + \deg(V)
  \sum_{\ell=1}^{r+1}\frac{h_\ell}{d_\ell}\bigg).
\end{align*}
\end{theorem}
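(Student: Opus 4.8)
The plan is to reduce the rational case to the polynomial case already handled in Theorem~\ref{perronpar} by the classical trick of clearing denominators and introducing an auxiliary variety whose coordinate functions are polynomial. First I would set $p=\prod_{j=1}^{r+1}p_j$ and consider the open subvariety $V'\subset V$ where $p$ does not vanish, together with the graph
\begin{displaymath}
  \Gamma = \{(\bfx,\bfz)\in V'\times \A^{r+1}_{_{k(\bft)}} : \ z_j\, p_j(\bfx) = q_j(\bfx),\ 1\le j\le r+1\}.
\end{displaymath}
Projection to the $\bfx$-coordinates is birational onto $V'$, so $\Gamma$ is a $k(\bft)$-variety of pure dimension $r$ with $\deg(\Gamma)$ and $\h(\Gamma)$ controlled by $\deg(V)$, $\h(V)$ and the $d_j,h_j$ via the arithmetic B\'ezout inequality of Corollary~\ref{cor:1} applied successively to the divisors $\div(z_jp_j-q_j)$ over $V'\times\A^{r+1}$ (using Lemma~\ref{relalturas} to pass from $V$ to $V\times\A^{r+1}$, and noting each such divisor has $\bfx$-degree $\le d_j$ and $\bft$-degree $\le h_j$ in the relevant variables, while having $\bfz$-degree~$1$). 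The map $\psi$ then factors as the projection $\Gamma\to\A^{r+1}$, $(\bfx,\bfz)\mapsto\bfz$, which is precisely the polynomial map $(\bfx,\bfz)\mapsto(z_1,\dots,z_{r+1})$ on $\Gamma$; since $\psi$ is generically finite onto its image, so is this coordinate projection.

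Next I would apply Theorem~\ref{perronpar} (or rather its underlying mechanism, since here the $q_j$ are literally the coordinate functions $z_j$ of $\bfx$-degree $1$) to the variety $\Gamma$ and the $r+1$ polynomials $z_1,\dots,z_{r+1}$. This produces a nonzero $E\in k[\bft,\bfy]$ with $E(\bfz)=0$ on $\Gamma$, i.e. $E$ defines $\ov{\Im(\psi)}$, and whose Newton polytope is controlled by $\deg(\Gamma)$ and $\h(\Gamma)$. Tracking the bounds: with all $\bfx$-degrees equal to $1$ the degree estimate of Theorem~\ref{perronpar} degenerates, so instead I would read off the partial degrees $\deg_{y_i}(E)$ directly from $\deg(\Gamma)$ by the argument in Corollary~\ref{cor:5} / Proposition~\ref{prop:4} (the partial degree of the implicit equation in $y_i$ equals a mixed degree of the graph, projected), and the $\bft$-degree from $\h(\Gamma)$. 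The numerology should be: $\deg(\Gamma)\le\bigl(\prod_j d_j\bigr)\deg(V)$ and $\deg_{y_i}(\Gamma\text{-projection fiber over }y_i) \le\bigl(\prod_{j\ne i}d_j\bigr)\deg(V)$, matching the claimed $\deg_{y_i}(E)\le(\prod_{j\ne i}d_j)\deg(V)$; and $\deg_\bft(E)\le\h(\Gamma)\le\bigl(\prod_j d_j\bigr)\bigl(\h(V)+\deg(V)\sum_\ell h_\ell/d_\ell\bigr)$.

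The main obstacle I anticipate is bookkeeping rather than conceptual: one must be careful that clearing denominators does not destroy the primitivity/squarefreeness normalization of $E$ and does not introduce spurious components (hence the need to restrict to $V'=V\setminus V(p)$ and to check that $\ov{\Im(\psi|_{V'})}=\ov{\Im(\psi)}$), and one must verify that the mixed degrees of $\Gamma$ in each group of variables are bounded as claimed — in particular that the divisor $\div(z_jp_j-q_j)$ really contributes $d_j$ to the $\bfx$-degree and $h_j$ to the $\bft$-degree when intersected with $V\times\A^{r+1}$, which is where the hypothesis $d_j=\max\{\deg_\bfx q_j,\deg_\bfx p_j\}$, $h_j=\max\{\deg_\bft q_j,\deg_\bft p_j\}$ is used. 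A secondary subtlety is extracting the \emph{partial} degree bounds $\deg_{y_i}(E)\le(\prod_{j\ne i}d_j)\deg(V)$ rather than just a total-degree bound; this requires applying the mixed-degree version of the argument (tracking each $y_i$ separately via the corresponding mixed degree $\deg_{\bfe_i}$ of a suitable projection of $\Gamma$), exactly as in the passage from B\'ezout to Corollary~\ref{cor:5}, and then invoking Proposition~\ref{113alturasff} for the monotonicity under the generically finite projection onto $V(E)$.
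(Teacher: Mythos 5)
Your plan is essentially the paper's own argument: form the graph $G$ of $\psi$ inside $V\times\A^{r+1}$ as a proper intersection with the divisors $\div(y_jp_j-q_j)$, bound its class $[G]_{_{k[\bft]}}$ in the extended Chow ring of $\P^n\times(\P^1)^{r+1}$ recursively via Corollary~\ref{cor:1} starting from $[V\times\A^{r+1}]_{_{k[\bft]}}=\h(V)\eta\theta_0^{n-r-1}+\deg(V)\theta_0^{n-r}$ (propositions~\ref{altrodgenff} and~\ref{prop:9}), transfer these bounds to $V(E)$ via Proposition~\ref{113alturasff} applied to the generically finite coordinate projection, and then read off $\deg_{y_i}(E)$ and $\deg_\bft(E)$ as the coefficients of $\theta_0^n\theta_i$ and $\eta\theta_0^n$ by Proposition~\ref{prop:9}\eqref{item:15}.

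One caution worth noting: the detour through Theorem~\ref{perronpar} applied to $\Gamma$ and the linear coordinates $z_1,\dots,z_{r+1}$ is a genuine dead end, not merely a degenerate special case. The single (projective) degree and height of the closure of $\Gamma$ in $\P^{n+r+1}$ are governed by the total degrees $d_j+1$ of the defining equations and give bounds of the form $\prod_j(d_j+1)\deg(V)$, which are strictly worse than the claimed partial bounds $\prod_{j\ne i}d_j\deg(V)$; it is only by keeping the multiprojective structure $\P^n\times(\P^1)^{r+1}$ alive from the start — so that each equation $y_jp_j-q_j$ contributes $d_j\theta_0+\theta_j+h_j\eta$ rather than $(d_j+1)\theta$ — that the correct numerology emerges. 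Your fallback paragraph recognizes this, so the proposal as a whole is sound; I flag it only because the Perron detour you propose would not recover even the height bound without first redoing the graph computation multiprojectively, at which point Theorem~\ref{perronpar} is no longer being used at all.
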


\begin{proof} Let $U=V\setminus \bigcup_{j=1}^{r+1}V(p_j)$ be the
  dense open subset of $V$ where $\psi$ is defined. The graph of
  $\psi$ is
$$
\{(\bfx, \bfy): \ \bfx \in U, \ y_jp_j(\bfx)=q_j(\bfx) \mbox{ for }
1\le j\le r+1\} \subset U\times \A^{r+1}(\ov{k(\bft)}).
$$
Let $G$ be the closure of this set in $V\times \A^{r+1}_{_{k(\bft)}}$.
The equations $ y_jp_j=q_j$ intersect properly on $ U\times
\A^{r+1}_{_{k(\bft)}}$ because they depend on different variables
$y_j$.  Hence, $G$ is an equidimensional variety of dimension $r$, and
the projection $V\times \A^{r+1} \to \A^{r+1}$ induces a generically
finite map between $G$ and $V(E)$.

We consider mixed degrees and heights of $G$ and $V(E)$ with respect
to the inclusions
$$V\times \A^{r+1}\hookrightarrow \P^n\times (\P^1)^{r+1} \quad ,\quad
\A^{r+1}\hookrightarrow (\P^1)^{r+1}.
$$
Set $\bfone=(1,\dots, 1)\in \N^{r+1}$ and let $\bfe_i$ denote the
$i$-th vector of the standard basis of~$\R^{r+1}$.  By Proposition
\ref{prop:9}\eqref{item:15},
\begin{equation}
  \label{eq:42}
  \deg_{y_i} (E) = \deg_{\bfone-\bfe_i}(V(E)) \quad, \quad
\deg_\bft(E)= \h_\bfone(V(E)).
\end{equation}
Proposition~\ref{113alturasff} applied to the projection $
\P^n\times (\P^1)^{r+1} \to (\P^1)^{r+1}$ implies that
\begin{equation}
  \label{eq:43}
\deg_{\bfone-\bfe_i}(V(E))
\le \deg_{0,\bfone-\bfe_i}(G) \quad, \quad \h_\bfone(V(E))
\le \h_{0,\bfone}(G).
\end{equation}

Let $[ G\ck\in
\Z[\eta,\theta_0,\bftheta]/(\eta^2,\theta_0^{n+1},\theta_1^2,\dots,\theta_{r+1}^2)$
be the class of  $G$ in the extended Chow ring of $\P^n\times
(\P^1)^{r+1}$.
Observe that $G$ is contained in the part of dimension $r$ of the
intersection
$$(V \times \A^{r+1})\cap \bigcap_{j=1}^{r+1} V(y_j p_j-q_j).
$$
We will give an upper bound for the class of $G$ by applying
Corollary~\ref{cor:1} recursively to the variety $V\times \A^{r+1}$
and the polynomials $g_j=y_jp_j-q_j$ for $j=1,\ldots, r+1$. By
propositions~\ref{altrodgenff} and \ref{prop:9}\eqref{item:18},
\begin{displaymath}
  [V\times \A^{r+1}\ck =[V\ck \otimes [\A^{1}]_{_{k[\bft]}}^{\otimes (r+1)}=
  [V]_{_{k[\bft]}}= \h(V)\eta\theta_0^{n-r-1} +\deg(V)\theta_0^{n-r}.
\end{displaymath}
Hence,
\begin{align*}
\nonumber  [ G\ck &\le  [V\times \A^{r+1}\ck
  \prod_{j=1}^{r+1}\bigg(\deg_\bft(g_j )\eta + \deg_\bfx(g_j)\theta_0+
\sum_{\ell=1}^{r+1}\deg_{y_\ell}(g_j )\theta_\ell\bigg) \\
  &=  \Big(\h(V)\eta\theta_0^{n-r-1} +\deg(V)\theta_0^{n-r}\Big)
  \,\prod_{j=1}^{r+1}(h_j\eta + d_j\theta_0+\theta_j).
\end{align*}
Observe that $\h_{0,\bfone}(G)$ and $\deg_{0,\bfone-\bfe_i}(G)$ are the
coefficients in  $[ G\ck$ of the monomials~$\eta\,\theta_0^n$ and $\theta_0^n\,\theta_i$, respectively.
Therefore
$$
\deg_{0,\bfone-\bfe_i}(G) \le \bigg(\prod_{j\ne i}d_j\bigg)\deg(V)\quad
,\quad  \h_{0,\bfone}(G) \le\bigg(\prod_{j=1}^{r+1}d_j\bigg) \bigg( \h(V) +
\deg(V)\sum_{\ell=1}^{r+1}\frac{h_\ell}{d_{\ell}}\bigg).
$$
The statement follows from these estimates, together with~\eqref{eq:42}
and~\eqref{eq:43}.
\end{proof}

For $V=\A^n_{_{k(\bft)}}$, the above result gives the bounds
\begin{equation} \label{eqn:2} \deg_{y_i} (E) \le \prod_{j\ne i}d_j
  \quad , \quad \deg_\bft (E) \le \sum_{\ell=1}^{n+1} \bigg(
  \prod_{j\ne \ell} d_j\bigg) h_\ell.
\end{equation}
It can be shown that, for generic polynomials
of $\bfx$-degree $d_j$ and $\bft$-degree $h_j$, the Newton
polytope of $E$ coincides with the subset of
$\R^{r+1}\times \R^{p}$ cut out by these inequalities.

\begin{example}\label{ejemploracionalpar}
  Let $d_1,d_2\in \N$ be coprime integers and $g_i\in k[t]$
  univariate polynomials of degree $h_i$, $i=1,2$.  Set
$$
\frac{q_1}{p_1}=\frac{x^{d_1}}{(x+1)^{d_1}g_1}-1 \quad, \quad
\frac{q_2}{p_2}= \frac{(x+1)^{d_2}}{x^{d_2}g_2}-1.
$$
The implicit equation of the {closure of the image of the map}
$$\A^1_{_{k(t)}}\dashrightarrow \A^2_{_{k(t)}} \quad , \quad
x\longmapsto\bigg(\frac{q_1}{p_1}(x), \frac{q_2}{p_2}(x)\bigg) $$ is
given by the polynomial $ E= {g_1}^{d_2}{g_2}^{d_1}(y_1+1)^{d_2}(y_2+1)^{d_1}-1\in k[t,
y_1,y_2] $. Theorem~\ref{racionalpar} is optimal in this case, since
$ \deg_{y_1}(E)=d_2$, $\deg_{y_2}(E)=d_1$ and $
\deg_t(E)=d_1h_2+d_2h_1$. Moreover, we can check that the Newton
polytope of $E$ coincides with the set cut out by the inequalities
in (\ref{eqn:2}).
\end{example}

The previous results can be extended to polynomials depending on
\emph{groups} of parameters.
For the sequel, we will need the  multiparametric version of
Theorem~\ref{perronpar} that we state below.
Let $\bft_l=\{t_{l,1},\dots,t_{l,p_l}\}$ be a group of {variables} for
$1\le l\le m$ and set $\bft=\{\bft_{1},\dots, \bft_{m}\}$.
For each $1\le l\le m$, write
  $k_l=k(\bft_1,\dots,\bft_{l-1},\bft_{l+1},\dots,\bft_m)$ and observe
  that $ k_{l}(\bft_{l})=k(\bft)$.
Hence, for a given projective
$k(\bft)$-variety $V$, we can consider its height with respect to the
base ring  $k_{l}[\bft_{l}]$. We denote this height $\h_{\bft_{l}}(V)$.

\begin{corollary}\label{implicitgroups}
  Let $V \subset \A^n_{k(\bft)}$ be a $k(\bft)$-variety  of pure dimension
  $r$ and  $q_1,\dots,q_{r+1}\in
  k[\bft][\bfx]\setminus k[\bft]$ such that the map
$$
\psi: V\longrightarrow \A^{r+1} \quad, \quad \bfx\longmapsto
(q_1(\bfx), \dots, q_{r+1}(\bfx))
$$
is generically finite onto its image. Let $E=\sum_{\bfa\in
\N^{r+1}}\, \alpha_{\bfa }\bfy^\bfa \ \in
k[\bft][y_1,\dots,y_{r+1}]$ be a primitive and squarefree polynomial
defining  ${\ov{\Im(\psi)}}$.
Set $d_j=\deg_\bfx (q_j)$, $h_{l,j}=\deg_{\bft_l} (q_j)$ for
$1\le j\le r+1$, $1\le l\le m$ and write
$\bfd=(d_1,\dots,d_{r+1})$, $ \bfh_l=
(h_{l,1},\dots,h_{l,r+1})$. Then, for all $\bfa\in \Supp(E)$,
\begin{align*}
  \bullet & \ \langle \bfd,\bfa\rangle  \le
  \Bigg(\prod_{j=1}^{r+1}d_j\Bigg)\deg(V),\\[-3mm]
  \bullet & \ \deg_{\bft_l}(\alpha_{\bfa}) + \langle \bfh_l,\bfa
  \rangle \le  \Bigg(\prod_{j=1}^{r+1}d_j\Bigg)\Bigg( \h_{\bft_l}(V)+ \deg(V)
  \sum_{\ell=1}^{r+1}\frac{h_{l,\ell}}{d_\ell}\Bigg) \quad \mbox{ for }  1\le l\le m.
\end{align*}
\end{corollary}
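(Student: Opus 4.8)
The plan is to reduce Corollary \ref{implicitgroups} to the single-parameter-group case treated in Theorem \ref{perronpar}, by fixing one group $\bft_l$ at a time and viewing all the others as part of the constant field. Fix $l\in\{1,\dots,m\}$ and set $k_l=k(\bft_1,\dots,\bft_{l-1},\bft_{l+1},\dots,\bft_m)$, so that $k_l(\bft_l)=k(\bft)$. Then $V$ is a $k_l(\bft_l)$-variety of pure dimension $r$, the $q_j$ lie in $k_l[\bft_l][\bfx]\setminus k_l[\bft_l]$ with $\deg_\bfx(q_j)=d_j$ and $\deg_{\bft_l}(q_j)=h_{l,j}$, and the map $\psi$ is generically finite onto its image over the base field $k_l(\bft_l)$ as well. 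The degree of $V$ does not change under this reinterpretation (it is the degree of the projective closure, computed via Hilbert functions, which is insensitive to the base field by the remarks following Proposition \ref{prop:5}), while its height with respect to the base ring $k_l[\bft_l]$ is exactly $\h_{\bft_l}(V)$ by definition.

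Next I would apply Theorem \ref{perronpar} with this choice of base ring. That theorem produces a nonzero polynomial $E^{(l)}=\sum_\bfa\alpha^{(l)}_\bfa\bfy^\bfa\in k_l[\bft_l][\bfy]$ with $E^{(l)}(\bfq)=0$ on $V$, such that for every $\bfa\in\Supp(E^{(l)})$ one has $\langle\bfd,\bfa\rangle\le(\prod_j d_j)\deg(V)$ and $\deg_{\bft_l}(\alpha^{(l)}_\bfa)+\langle\bfh_l,\bfa\rangle\le(\prod_j d_j)(\h_{\bft_l}(V)+\deg(V)\sum_\ell h_{l,\ell}/d_\ell)$. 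The point now is that the polynomial $E$ in the statement — the primitive squarefree defining polynomial of $\ov{\Im(\psi)}$ over $k(\bft)$ — divides $E^{(l)}$ in $k(\bft)[\bfy]$, because $\ov{\Im(\psi)}$ is an irreducible (or at least reduced, by the squarefree hypothesis) hypersurface and $E^{(l)}(\bfy)$ vanishes on it. Since $E$ is primitive over $k[\bft]$ and a fortiori over $k_l[\bft_l]$, Gauss's lemma gives that $E\mid E^{(l)}$ in $k_l[\bft_l][\bfy]$. Hence $\newton(E)\subset\newton(E^{(l)})$: every monomial $\bft_l^\bfc\bfy^\bfa$ appearing in $E$ satisfies a linear inequality from the factor it divides. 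More precisely, writing $E^{(l)}=E\cdot G$ with $G\in k_l[\bft_l][\bfy]$, the bound on $\Supp(E^{(l)})$ transfers to $\Supp(E)$: for $\bfa\in\Supp(E)$ there is $\bfa'\in\Supp(G)$ with $\bfa+\bfa'\in\Supp(E^{(l)})$ and the corresponding $\bft_l$-degrees add up, so $\langle\bfd,\bfa\rangle\le\langle\bfd,\bfa+\bfa'\rangle\le(\prod_j d_j)\deg(V)$ and likewise $\deg_{\bft_l}(\alpha_\bfa)+\langle\bfh_l,\bfa\rangle\le\deg_{\bft_l}((E^{(l)})_{\bfa+\bfa'})+\langle\bfh_l,\bfa+\bfa'\rangle$, which is bounded by the right-hand side of Theorem \ref{perronpar}.

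Since $l$ was arbitrary, carrying out this argument for each $l=1,\dots,m$ yields all the asserted inequalities simultaneously for the single polynomial $E$. The main obstacle, and the only genuinely delicate point, is the divisibility/primitivity step: one must be careful that $E$ — fixed once and for all as the primitive squarefree defining polynomial over $k(\bft)$, i.e.\ over $k[\bft]$ — is recognized as primitive over each intermediate ring $k_l[\bft_l]$, and that the factor $G$ in $E^{(l)}=EG$ has nonnegative $\bft_l$-degrees so that the Newton-polytope inclusion really gives a \emph{coordinatewise} comparison rather than merely a comparison of the supporting linear functionals. Both are handled by Gauss's lemma together with the observation that $\Supp(G)\subset\N^{r+1}$ and $\deg_{\bft_l}(G)\ge0$ automatically. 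One should also note, exactly as in the proof of Theorem \ref{perronpar}, that $E$ is nonzero and that $E(\bfq)=0$ on $V$ is already part of the hypothesis, so no additional deformation argument is needed here beyond invoking the generically finite case of Theorem \ref{perronpar}.
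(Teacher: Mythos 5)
Your overall strategy is exactly the paper's: fix $l$, regard $k_l[\bft_l]$ as the base ring (so that the height becomes $\h_{\bft_l}(V)$) and invoke Theorem~\ref{perronpar}. But the detour through divisibility is unnecessary, and the bound-transfer step as you wrote it contains a false claim.

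The detour is avoidable because, in the generically finite case, the polynomial produced in the proof of Theorem~\ref{perronpar} is precisely a primitive squarefree defining polynomial of $\ov{\Im(\psi)}$ over the chosen base ring. Since such a polynomial is unique up to a unit of $k_l^{\times}$, and the bounds of Theorem~\ref{perronpar} are invariant under scaling by such units, once one observes (as the paper does) that the given $E$ is primitive and squarefree as an element of $k_l[\bft_l][\bfy]$, the conclusion is immediate: the $E$ of the corollary \emph{is} the $E^{(l)}$ you constructed, and no factorization $E^{(l)}=E\cdot G$ need be considered.

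The bound-transfer step, as you state it, does not hold. First, $\newton(E)\subset\newton(E^{(l)})$ is false in general when $E\mid E^{(l)}$ (for instance $E=y+1$, $G=y$ gives $\newton(E)=[0,1]\not\subset[1,2]=\newton(EG)$); the correct relation is the Minkowski sum $\newton(E^{(l)})=\newton(E)+\newton(G)$. Second, the assertion ``for $\bfa\in\Supp(E)$ there is $\bfa'\in\Supp(G)$ with $\bfa+\bfa'\in\Supp(E^{(l)})$'' is not justified: cancellation in the product $EG$ can remove exponents of the form $\bfa+\bfa'$ from the support. Third, ``the corresponding $\bft_l$-degrees add up'' is not literally true coefficient by coefficient, again because of cancellation. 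What actually rescues the argument is the observation that both of the quantities you want to bound are \emph{weighted degrees} of $E$ (as an element of $k_l[\bft_l,\bfy]$): the functional $\bft_l^{\bfc}\bfy^{\bfa}\mapsto\langle\bfd,\bfa\rangle$ in the first case, and $\bft_l^{\bfc}\bfy^{\bfa}\mapsto|\bfc|+\langle\bfh_l,\bfa\rangle$ in the second. Such weighted degrees are additive under multiplication of nonzero polynomials over a domain, and both weight vectors are nonnegative, so $\deg_w(G)\ge 0$ and hence $\deg_w(E)\le\deg_w(E)+\deg_w(G)=\deg_w(E^{(l)})$. That is the precise reason the bound for $\Supp(E^{(l)})$ transfers to $\Supp(E)$, and you should say so rather than rely on the support inclusions above. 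With that repair (or, more simply, by using the uniqueness of $E$ as in the paper), your proof goes through.
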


\begin{proof} Observe that $E$ is primitive and squarefree as an element of
  $k_{l}[\bft_{l}][\bfy]$.
The result then follows from Theorem~\ref{perronpar}
applied to the field $k_l$ and the group of variables  $\bft_l$.
\end{proof}

\subsection{The rational case}\label{sec:rational-case}

We now turn our attention to the problem of estimating the size of the implicit
equation {for a rational map} defined over~$\Q$.
It will be convenient to consider a more general situation where the
input polynomials depend on groups of parameters.
As in the end of the previous section, we set  $\bft_l=\{t_{l,1},\dots,t_{l,p_l}\}$  for
$1\le l\le m$ and  $\bft=\{\bft_{1},\dots, \bft_{m}\}$.

\begin{theorem}\label{implicitmultiparZ}
  Let $V \subset \A^n_{\Q}$ be a $\Q$-variety  of pure dimension
  $r$ and $\ q_1,\dots,q_{r+1}\in
  \Z[\bft][\bfx]\setminus \Z[\bft]$ such that the map
$$
\psi: V_{_{\Q(\bft)}}\longrightarrow \A^{r+1}_{_{\Q(\bft)}}\quad, \quad \bfx\longmapsto
(q_1(\bfx), \dots, q_{r+1}(\bfx))
$$
is generically finite onto its image. Let   $E=\sum_{\bfa\in
\N^{r+1}}\, \alpha_{\bfa }\bfy^\bfa \ \in
\Z[\bft][y_1,\dots,y_{r+1}]$ be a primitive and squarefree polynomial
defining {$\ov{\Im(\psi)}$}
Set $d_j=\deg_\bfx (q_j)$, $\delta_{l,j}=\deg_{\bft_l} (q_j),\, h_{j}=\h (q_j)$ for $1\le j\le r+1$, $1\le l\le m$ and write
$\bfd=(d_1,\dots,d_{r+1})$,
$\bfdelta_l=(\delta_{l,1},\dots,\delta_{l,r+1})$, $\bfh=
(h_{1},\dots,h_{r+1})$. Then, for all $\bfa\in \Supp(E)$,
\begin{align*}
  \bullet & \ \langle \bfd,\bfa\rangle  \le
  \bigg(\prod_{j=1}^{r+1}d_j\bigg)\deg(V),\\[-3mm]
  \bullet & \ \deg_{\bft_l}(\alpha_{\bfa}) + \langle \bfdelta_l,\bfa
  \rangle \le  \bigg(\prod_{j=1}^{r+1}d_j\bigg) \deg(V)
  \sum_{\ell=1}^{r+1}\frac{\delta_{l,\ell}}{d_\ell} \quad \mbox{ for }  1\le l\le m, \\[-1mm]
  \bullet & \ \h(\alpha_{\bfa}) + \langle \bfh,\bfa \rangle \le
  \bigg(\prod_{j=1}^{r+1}d_j\bigg) \Bigg(\wh(V) + \deg(V) \Bigg( \log(r+2)\\[-2mm]
&  \hspace{37mm}+\sum_{\ell=1}^{r+1}\frac{1}{d_\ell}\bigg( h_\ell
  +\log\Big(\#\Supp(q_\ell)+2\Big)+\sum_{l=1}^m
  \delta_{l,\ell}\log(p_l+1) \bigg) \Bigg)\Bigg).
\end{align*}
\end{theorem}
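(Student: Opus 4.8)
The plan is to follow the deformation strategy of the proof of Theorem~\ref{perronpar}, but now carrying along the full arithmetic data, that is, the class in the extended Chow ring $A^*(\P^{\bfn};\Z)$ rather than just degrees and heights over a function field. First I would reduce to the case where the implicit equation is presented as coming from a graph variety: introduce auxiliary variables $v$ (or rather groups of variables, one for each group $\bft_l$, together with a single extra variable governing the height) and $\bfw=\{w_1,\dots,w_{r+1}\}$, set
$$
P=E(w_1+v^{h_1}\bft^{\bfdelta_1 e_1}y_1^{d_1},\dots)\in\Z[\bft,v][\bfw,\bfy],
$$
so that $\deg_{\bfw,\bfy}(P)$, $\deg_{\bft_l}(P)$ and $\h(P)$ read off exactly the three quantities $\langle\bfd,\bfa\rangle$, $\deg_{\bft_l}(\alpha_\bfa)+\langle\bfdelta_l,\bfa\rangle$ and $\h(\alpha_\bfa)+\langle\bfh,\bfa\rangle$ we want to bound. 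As in the proof of Theorem~\ref{perronpar}, the monomial substitution $y_j\mapsto w_j+v^{h_j}\cdots y_j^{d_j}$, $w_j\mapsto y_j$ is an automorphism fixing the base ring, so $P$ stays primitive and squarefree; hence by Proposition~\ref{mahler}\eqref{item:37} its mixed degrees and canonical height equal the corresponding invariants of the hypersurface $V(P)$.

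Next I would bound the invariants of $V(P)$ by those of the graph variety $G(\bfd,\bfdelta,\bfh)$, the part of dimension $2r+1$ of
$$
(V_{_{\Q(\bft,v)}}\times\A^{2r+2})\cap\bigcap_{j=1}^{r+1}V\big(w_j+v^{h_j}\bft^{\bfdelta_j e_j}y_j^{d_j}-q_j\big),
$$
using that the projection $G(\bfd,\bfdelta,\bfh)\dashrightarrow V(P)$ is generically finite (because $\psi$ is) together with the projection-monotonicity of degrees and canonical heights, Propositions~\ref{113} and~\ref{113alturasrat}. Then I would estimate the class $[G(\bfd,\bfdelta,\bfh)]_{_\Z}$ by applying the arithmetic B\'ezout inequality, Theorem~\ref{multihom} (and its Corollary~\ref{cor:213}), recursively to $V\times\A^{2r+2}$ and the $r+1$ polynomials $g_j=w_j+v^{h_j}\bft^{\bfdelta_j e_j}y_j^{d_j}-q_j$, which form a complete intersection over $V\times\A^{2r+2}$ since they involve different blocks of variables. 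For this I need: the class of $V\times\A^{2r+2}$, which by Propositions~\ref{altrodgenrat} and~\ref{prop:10}\eqref{item:35} equals $[V]_{_\Z}$ (the affine-space factors contribute trivially, exactly as in Lemma~\ref{relalturas}); and the class $[g_j]_{\sup}$, for which the key numerical input is the bound on $\log\|g_j\|_{\sup}$. Comparing coefficients of the appropriate monomials $\theta_0^{N}$, $\theta_0^{N-1}\theta_{(l)}$, $\eta\,\theta_0^{N}$ in the resulting inequality of classes yields the three displayed bounds.

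The main obstacle will be controlling the sup-norm term $\log\|g_j\|_{\sup}$ sharply enough to produce the precise constant $\log(\#\Supp(q_\ell)+2)+\sum_l\delta_{l,\ell}\log(p_l+1)$ and the $\log(r+2)$ in the statement, rather than a cruder bound. Here I would write $g_j$ as a polynomial in the groups of variables $\bfx$, $\bft_1,\dots,\bft_m$, $v$, $\bfw$, $\bfy$, use $\|g_j\|_{\sup}\le\|g_j\|_1\le\#\Supp(g_j)\cdot\|g_j\|_\infty$ and the multigraded comparison in Lemma~\ref{lemm:11}\eqref{item:47}, noting that $\#\Supp(g_j)\le\#\Supp(q_j)+1$ and that the $\bft_l$-degree of $g_j$ is $\delta_{l,j}$, so the $\bft_l$-blocks contribute $\sum_l\delta_{l,j}\log(p_l+1)$. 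The variables $\bfw,\bfy,v$ each occur to degree at most $d_j$ but the relevant terms are monomials of $\ell^\infty$-norm $1$, so they contribute nothing beyond the $\#\Supp$ count; a careful accounting (together with the $\log(r+2)$ coming from the single B\'ezout step dividing by $d_\ell$ in the summands and the innermost addition of $r+1$ terms, via Lemma~\ref{lemm:1}\eqref{item:1}) gives the asserted constant. Finally, the general (non-generically-finite) case reduces to the generically finite one by the same generic-perturbation argument as at the end of the proof of Theorem~\ref{perronpar}: add a variable $z$, replace $q_j$ by $q_j+z\,x_{i_j}$ for a suitable coordinate projection, apply the finite case over $\Q(z)$, clear denominators in $z$, and specialize $z=0$, checking that primitivity and the bounds are preserved.
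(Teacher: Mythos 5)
Your plan hews closely to the right circle of ideas (graph variety, projection monotonicity, arithmetic B\'ezout in the extended Chow ring), but there is a real gap in the mechanism you propose for reading off the third displayed quantity, and it is exactly the step where the arithmetic case must deviate from the function-field case.

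You set $P=E(w_1+v^{h_1}\bft^{\bfdelta_1}y_1^{d_1},\dots)$ with $v$ a \emph{variable} and assert that $\h(P)$ reads off $\max_\bfa\{\h(\alpha_\bfa)+\langle\bfh,\bfa\rangle\}$. That is false. Over $\Z$, the height $\h$ is the logarithm of the maximal absolute value of the integer coefficients, and a monomial factor $v^{\langle\bfh,\bfa\rangle}$ does not change that: after setting $\bfw=\bfzero$, the $\Z$-coefficients of $P$ are exactly the coefficients of the $\alpha_\bfa$'s, so $\h(P(\bfzero,\bfy))=\max_\bfa\h(\alpha_\bfa)$ with no $\langle\bfh,\bfa\rangle$ term. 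The construction that works in the function-field case (where the ``height'' is a degree and therefore sensitive to $v$-exponents, cf.\ the proof of Theorem~\ref{perronpar}) does not transfer verbatim. The paper's fix is to replace the formal $v^{h_j}$ by the actual integer constant $H_j=\e^{h_j}\in\N$: then the coefficient of $\bfy^\bfa$ in $E(H_1y_1,\dots,H_{r+1}y_{r+1})$ is $\alpha_\bfa\,\e^{\langle\bfh,\bfa\rangle}$, which does have height $\h(\alpha_\bfa)+\langle\bfh,\bfa\rangle$. Once this constant is in place, the chain $\h(P(\bfzero,\bfy))\le\m(P)+\deg_\bfy(E)\log(r+2)+\sum_l\deg_{\bft_l}(E)\log(p_l+1)$ (Lemmas~\ref{lemm:11} and~\ref{lemm:9}), $\m(P)=\wh_{\bfp,2r+2}(V(P))$ (Proposition~\ref{mahler}\eqref{item:37}), projection to the graph variety, and the recursive arithmetic B\'ezout of Corollary~\ref{cor:213} proceed as you describe.

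Two smaller remarks. First, the $y_j^{d_j}$ and $\bft^{\bfdelta_j}$ parts of your substitution are not needed here: the paper obtains the first two degree bounds directly from the function-field Perron theorem with groups of parameters, Corollary~\ref{implicitgroups}, since $\h_{\bft_l}(V_{\Q(\bft)})=0$ by Proposition~\ref{prop:9}\eqref{item:18}; the auxiliary polynomial $P$ only needs the linear substitution $y_j\mapsto w_j+H_jy_j$. Second, the reduction to the generically-finite case is not needed for this theorem, whose hypothesis already assumes $\psi$ is generically finite onto its image; the deformation in $z$ is what Corollary~\ref{corperronmultiparZ} does.
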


We need the following lemma for the proof of this result. It is the analogue for $\Q$ of Lemma
\ref{relalturas} and can be proved using  the same arguments and Proposition \ref{prop:11} instead of
 \ref{prop:12}.

\begin{lemma} \label{lemm:7}
Let $V\subset \A^{n}_{\Q}$
   be a $\Q$-variety of pure  dimension $r$ and $l\in \N$. Then
   \begin{displaymath}
\deg(V\times  \A^{l}_{\Q})=\deg(V) \quad, \quad \wh(V\times  \A^{l}_{\Q})=
\wh(V).
   \end{displaymath}
\end{lemma}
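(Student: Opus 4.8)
The plan is to mirror the proof of Lemma~\ref{relalturas}, replacing each function field ingredient by its counterpart over~$\Q$. First I would dispose of the trivial case $l=0$, in which $V\times\A^{0}_{\Q}=V$ and there is nothing to prove, and thereafter assume $l\ge 1$. Then I would introduce the standard inclusions $\iota_{1}:\A^{n}\hookrightarrow\P^{n}$, $\iota_{2}:\A^{l-1}\hookrightarrow\P^{l-1}$ and $\iota_{3}:\A^{n+l}\hookrightarrow\P^{n+l}$, and record the geometric identity $\iota_{3}(V\times\A^{l})=\iota_{1}(V)\#\iota_{2}(\A^{l-1})$, whose verification through the homogeneous ideals is identical to the one in Lemma~\ref{relalturas} (using that $\A^{l}$ is identified with the complement of a coordinate hyperplane in $\P^{l}$ and that $I(\P^{l-1})=(0)$).

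Next I would apply Proposition~\ref{prop:11} to $X_{1}=\iota_{1}(V)$ and $X_{2}=\iota_{2}(\A^{l-1})$, together with the fact that the closure of $\A^{l-1}$ in $\P^{l-1}_{\Q}$ is all of $\P^{l-1}_{\Q}$, so that $[\P^{l-1}_{\Q}]_{_{\Z}}=1$ by Proposition~\ref{prop:10}\eqref{item:35} (equivalently $\deg(\A^{l-1})=1$ and $\wh(\A^{l-1})=0$). This gives
$$
[V\times\A^{l}]_{_{\Z}}=\jmath_{1}\big([V]_{_{\Z}}\big)\cdot\jmath_{2}\big([\P^{l-1}_{\Q}]_{_{\Z}}\big)=\jmath_{1}\big([V]_{_{\Z}}\big),
$$
and comparing the coefficients of $\eta\,\theta^{n-r-1}$ and $\theta^{n-r}$ on both sides yields $\wh(V\times\A^{l}_{\Q})=\wh(V)$ and $\deg(V\times\A^{l}_{\Q})=\deg(V)$. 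Equivalently, the two scalar equalities can be read off directly from the ``in particular'' clause of Proposition~\ref{prop:11}, namely $\deg(X_{1}\#X_{2})=\deg(X_{1})\deg(X_{2})$ and $\wh(X_{1}\#X_{2})=\deg(X_{1})\wh(X_{2})+\deg(X_{2})\wh(X_{1})$, applied with $\deg(X_{2})=1$ and $\wh(X_{2})=0$.

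Since every step is a direct appeal to a result established earlier, no real obstacle arises. The only point deserving a line of care is the geometric identification $\iota_{3}(V\times\A^{l})=\iota_{1}(V)\#\iota_{2}(\A^{l-1})$ and the compatibility of the affine charts with the embedding of the ruled join into $\P^{n+l}$; this is handled exactly as in Lemma~\ref{relalturas}. In short, the proof is: the same argument, with Proposition~\ref{prop:11} in place of Proposition~\ref{prop:12} and Proposition~\ref{prop:10}\eqref{item:35} in place of Proposition~\ref{prop:9}\eqref{item:18}.
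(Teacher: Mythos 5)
Your proof is correct and follows exactly the approach the paper indicates: the paper states that Lemma~\ref{lemm:7} ``can be proved using the same arguments and Proposition~\ref{prop:11} instead of~\ref{prop:12},'' which is precisely the substitution you carry out, together with the correct replacement of Proposition~\ref{prop:9}\eqref{item:18} by Proposition~\ref{prop:10}\eqref{item:35} for the identity $[\P^{l-1}_\Q]_{_\Z}=1$. Your explicit handling of the case $l=0$ is a minor added care point not present in the original.
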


\begin{proof}[Proof of Theorem \ref{implicitmultiparZ}]
By
 Proposition \ref{prop:9}\eqref{item:18}, the height $h_{\bft_{l}}$ of $V_{_{\Q(\bft)}}$ with respect to the base ring
$\Q(\bft_1,\dots,\bft_{l-1},\bft_{l+1},\dots,\bft_m)[\bft_{l}]$ is
zero.
Then, the bounds for the degrees are a direct
  consequence of Corollary~\ref{implicitgroups}.
  For the height, the proof follows closely the lines of that  of
  Theorem~\ref{perronpar} with~$\Z$ instead of $k[\bft]$. We will
  avoid repeating the same verifications when they follow {\it mutatis
    mutandis} the parametric case.

\medskip
Let $w_1,\dots,w_{r+1}$ be a group of variables and consider the polynomial
$$
P=E\big(w_{1}+{H_{1}}y_1,\ldots,w_{r+1}+{H_{r+1}}y_{r+1}\big)\in
\Z[\bft,\bfw,\bfy],
$$
{with  $H_{j}=\e^{h_{j}}\in \N$}. It verifies
\begin{equation}
  \label{eq:21}
  \max_{\bfa} \{ \h(\alpha_\bfa) + \langle \bfh,\bfa\rangle\} = \h(P(\bfzero,\bfy)).
\end{equation}
Applying successively lemmas \ref{lemm:11}(\ref{krso01}) and
\ref{lemm:9}\eqref{item:46},
\begin{align}  \label{eq:3}
\nonumber   \h(P(\bfzero,\bfy)) &\le \m(P(\bfzero,\bfy)) +
  \deg_{\bfy}(P(\bfzero,\bfy))\log(r+2)+
  \sum_{l=1}^m\deg_{\bft_l}(P(\bfzero,\bfy))\log(p_l+1)\\[-2mm]
&\le \m(P) +
  \deg_{\bfy}(E)\log(r+2)+
  \sum_{l=1}^m\deg_{\bft_l}(E)\log(p_l+1).
\end{align}
The polynomial $P$ is primitive and squarefree. {Hence, it} gives a
defining equation for the $\Q$-hypersurface $ V(P)\subset \A^\bfp_{\Q} \times
\A^{2r+2}_{\Q}$, where $\bfp:=(p_{1},\dots, p_{m})$. Considering
the standard inclusion of this affine space into $\P^\bfp_{\Q}\times
\P^{2r+2}_{\Q}$,  Proposition~\ref{mahler}\eqref{item:37}
implies
\begin{equation}\label{b2.9}
  \m(P)=\wh_{\bfp, 2r+2}(V(P)).
\end{equation}
Consider the variety $$ G(\bfd, \bfh) = (\A^{\bfp}\times V\times
\A^{2r+2}) \cap \bigcap_{j=1}^{r+1} V(w_{j}+{H_{j}}y_j-q_j)
\subset \A^\bfp\times \A^{n+2r+2}.$$ As in the proof of
Theorem~\ref{perronpar}, we can verify that {it is} of
pure dimension $|\bfp|+2r+1$ and that the projection
$\A^\bfp\times \A^{n+2r+2}\to \A^\bfp\times \A^{2r+2}$ induces a
generically finite map $\pi: G(\bfd, \bfh)\to V(P)$. Considering
mixed heights with respect to the standard inclusion $ \A^\bfp\times
\A^{n+2r+2}\hookrightarrow \P^\bfp\times \P^{n+2r+2}$,
Proposition~\ref{113alturasrat} implies
\begin{equation}\label{b2.5}
  \wh_{\bfp, 2r+2}(V(P))\leq \wh_{\bfp,2r+2}\big(G(\bfd,\bfh)\big).
\end{equation}

{The closure of $\A^{\bfp}\times V\times \A^{2r+2}$ is
$\P^{\bfp}\times \iota (V\times \A^{2r+2})$, where $\iota$ denotes
the standard inclusion $\A^{n+2r+2}\hookrightarrow \P^{n+2r+2}$.
We will consider classes in the extended Chow ring
\begin{displaymath}
{A^{*}}(\P^\bfp\times \P^{n+2r+2};\Z)=
\R[\eta,\bftheta,\zeta]/(\eta^2,\theta_1^{p_1+1},\dots,\theta_l^{p_l+1},\zeta^{n+2r+3}).
\end{displaymath}
With this notation, }propositions \ref{altrodgenrat}\eqref{item:40} and
\ref{prop:10}\eqref{item:35} together with Lemma~\ref{lemm:7} imply
\begin{displaymath}
  [\A^{\bfp}\times V\times
\A^{2r+2}\cz =   [\A^{\bfp}\cz \otimes [V\times \A^{2r+2}\cz = [V\cz =
\wh(V)\eta\zeta^{n-r-1}+\deg(V)\zeta^{n-r}.
\end{displaymath}
Write $g_{j}=w_{j}+{H_{j}}y_j-q_j\in\Z[\bft,\bfx,\bfw,\bfy]$ and
consider the class  associated to its sup-norm as defined in
\eqref{clasedef}:
$$
 [g_j]_\sup  =  \log||g_j||_{\sup} \eta + \sum_{l=1}^m\deg_{\bft_l}(g_j)\theta_l
  +\deg_{\bfx,\bfw,\bfy}(g_j)
  \zeta   = \log||g_j||_{\sup} \eta + \sum_{l=1}^m\delta_{l,j}\theta_l +d_{j}\zeta.
$$
The divisors defined by these polynomials intersect $\A^{\bfp}\times
V\times \A^{2r+2}$ properly. Applying recursively
Corollary~\ref{cor:213},
\begin{align*}
\nonumber  [G(\bfd,\bfh)]_{_{\Z}}&\le [\A^{\bfp}\times V\times
\A^{2r+2}\cz  \prod_{j=1}^{r+1}[g_j]_\sup \\
&=  \Big( \wh(V)\eta\zeta^{n-r-1}+\deg(V)\zeta^{n-r}\Big)
 \prod_{j=1}^{r+1}\Big(\log||g_j||_{\sup} \eta +
 \sum_{l=1}^m\delta_{l,j}\theta_l +d_j\zeta\Big).
\end{align*}
The mixed height $\wh_{\bfp,2r+2}\big(G(\bfd,\bfh)\big)$ is the
coefficient of  $\eta\,\zeta^n$ in $[ G(\bfd,\bfh)\cz$. Furthermore,
 $\log||g_{\ell}||_{\sup}\le
h_\ell +\log\Big(\#\Supp(q_\ell)+2\Big)$ by
Lemma~\ref{lemm:11}\eqref{item:47}. The above
{inequality} implies
 \begin{equation*}
   \wh_{\bfp,2r+2}\big(G(\bfd,\bfh)\big)
   \le
   \Bigg(\prod_{j=1}^{r+1}d_j\Bigg)\Bigg(\wh(V)+\deg(V)\sum_{\ell=1}^{r+1}\frac{h_\ell +\log\Big(\#\Supp(q_\ell)+2\Big)}{d_\ell}\Bigg).
 \end{equation*}
 The statement follows from this inequality together with
 (\ref{eq:21}), \eqref{eq:3}, \eqref{b2.9}, (\ref{b2.5}), and the
 already considered bounds for the partial degrees of $E$.
\end{proof}

Theorem~\ref{PerronIntro} in the introduction follows from the case
$m=0$ of this result and the inequality $\#\Supp(q_\ell)+2\le (n+3)^{d_\ell}$.

\begin{remark} \label{rem:2}
Theorem~\ref{PerronIntro} can be regarded as a first bound for the \emph{extended Newton
polytope} of the implicit equation, defined as the convex envelope of
the subset
$$\{(\bfa, \lambda) : \bfa\in \Supp(E),\,0\leq \lambda\leq
\h(\alpha_\bfa) \}\subset \R^{r+1}\times\R.$$

Indeed, it would be interesting to have a better understanding of this
``arithmetic'' polytope  in terms of finer invariants of the input
polynomials $q_j$ like for instance their extended Newton polytope,
instead of just their degree and height.
\end{remark}

\begin{example}\label{ejemploarith} Let $d_1,d_2\in \N$ and
  $H_1,H_2\in \N$ be two pairs of coprime integers and set $q_j=H_j
  x^{d_j} \in \Z[x]$ for $j=1,2$.  The implicit equation of the image
  of the map $$\A^1_{\Q} \longrightarrow \A^2_{\Q} \quad , \quad
  x\longmapsto (q_1(x), q_2(x)) $$ is  given by the polynomial $ E= H_2^{d_1}
  y_1^{d_2} - H_1^{d_2}y_2^{d_1} \in \Z[y_1,y_2].  $
Then, $\max_{(a_1,a_2)} \{d_1a_1+d_2a_2\} = d_1d_2$ and
$$
\max_{(a_1,a_2) }
\{\log|\coeff_{(a_1,a_2)}(E)| + h_1a_1+h_2a_2 \}= d_1h_2+d_2h_1,
$$
with $h_j=\log(H_j)$.
Hence, the bounds in Theorem~\ref{PerronIntro}  are optimal in this
example, up to a term of size~$O(d_1d_2)$.
\end{example}

Using a deformation argument,
we can extend Theorem~\ref{implicitmultiparZ} to the  case
when the map is not generically finite.
For simplicity, we will only state this result for polynomials not depending on
parameters.

\begin{corollary}\label{corperronmultiparZ}
  Let $V \subset \A^n_{\Q}$ be a $\Q$-variety of pure dimension
  $r$ and $q_1,\dots,q_{r+1}\in
  \Z[\bfx]\setminus \Z$.
Set $d_j=\deg (q_j)$ and $h_{j}=\h (q_j)$ for $1\le j\le r+1$ and
write $\bfd=(d_1,\dots,d_{r+1})$, $\bfh= (h_{1},\dots,h_{r+1})$.
Then there exists
$$E=\sum_{\bfa\in \N^{r+1}}\,
\alpha_{\bfa }\bfy^\bfa \ \in \Z[y_1,\dots,y_{r+1}]\setminus \{0\}$$
satisfying $ E(q_1,\dots,q_{r+1})=0$ on $V$ and such that,  for
all $\bfa \in \Supp(E)$,
\begin{align*}
  \bullet & \ \langle \bfd,\bfa\rangle \le
  \bigg(\prod_{j=1}^{r+1}d_j\bigg)\deg(V),\\[-1mm]
  \bullet & \ \h(\alpha_{\bfa}) + \langle \bfh,\bfa \rangle \le
  \bigg(\prod_{j=1}^{r+1}d_j\bigg) \bigg(\wh(V) + \deg(V) \bigg(
  \sum_{\ell=1}^{r+1}\frac{h_\ell}{d_\ell}+
  (r+2)\log(2n+8)\bigg)\bigg).
\end{align*}
\end{corollary}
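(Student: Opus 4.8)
\textbf{Proof plan for Corollary~\ref{corperronmultiparZ}.} The plan is to reduce the general (not necessarily generically finite) case to Theorem~\ref{implicitmultiparZ} by the same deformation trick used at the end of the proof of Theorem~\ref{perronpar}, but now taking care of the arithmetic contribution introduced by the deformation parameter. First I would choose coordinates $x_{i_1},\dots,x_{i_{r+1}}$ among $x_1,\dots,x_n$ so that the linear projection $V\to \A^{r+1}_\Q$, $\bfx\mapsto (x_{i_1},\dots,x_{i_{r+1}})$, is generically finite onto its image; this is possible since $\dim V=r+1$ is impossible (it is $r$), wait---$V$ has pure dimension $r$, so I pick $r+1$ coordinates whose projection is dominant onto an $r$-dimensional image, hence generically finite. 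Then, introducing a single new variable $z$ (viewed as a parameter, i.e. working over $\Z[z]$ with $m=1$, $\bft_1=\{z\}$, $p_1=1$), consider the map
$$
V_{\Q(z)}\longrightarrow \A^{r+1}_{\Q(z)}\quad,\quad \bfx\longmapsto\big(q_1(\bfx)+z\,x_{i_1},\dots,q_{r+1}(\bfx)+z\,x_{i_{r+1}}\big),
$$
which is generically finite onto its image because adding the generic linear deformation makes the differential generically surjective.

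Next I would apply Theorem~\ref{implicitmultiparZ} to this deformed map over $\Z[z]$. The deformed polynomials $\widetilde q_j=q_j+z\,x_{i_j}$ satisfy $\deg_\bfx(\widetilde q_j)=d_j$ (since $d_j\ge 1$), $\deg_z(\widetilde q_j)=1$, $\h(\widetilde q_j)\le h_j$ (the new coefficient is $1$), and $\#\Supp(\widetilde q_j)\le \#\Supp(q_j)+1\le (n+1)^{d_j}+1$. Since $V$ is defined over $\Q$ and viewed in $\A^n_{\Q(z)}$ by scalar extension, its canonical height with respect to the base ring $\Q[z]$ vanishes (as in Proposition~\ref{prop:9}\eqref{item:18} for the function-field height, but here it is the $z$-degree component that is zero), while its $\wh$ and $\deg$ are unchanged by scalar extension. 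Theorem~\ref{implicitmultiparZ} then produces a primitive squarefree $\widetilde E=\sum_\bfa\widetilde\alpha_\bfa\bfy^\bfa\in\Z[z][\bfy]$ defining the closure of the image, with, for all $\bfa\in\Supp(\widetilde E)$,
\begin{align*}
\langle\bfd,\bfa\rangle &\le \Big(\textstyle\prod_j d_j\Big)\deg(V),\\
\deg_z(\widetilde\alpha_\bfa)+\langle\bfone,\bfa\rangle &\le \Big(\textstyle\prod_j d_j\Big)\deg(V)\textstyle\sum_\ell \frac{1}{d_\ell},\\
\h(\widetilde\alpha_\bfa)+\langle\bfh,\bfa\rangle &\le \Big(\textstyle\prod_j d_j\Big)\Big(\wh(V)+\deg(V)\Big(\log(r+2)+\textstyle\sum_\ell\frac{1}{d_\ell}\big(h_\ell+\log(\#\Supp(q_\ell)+3)+\log 2\big)\Big)\Big),
\end{align*}
where I have used that the single parameter contributes $\delta_{1,\ell}\log(p_1+1)=\log 2$ inside each summand.

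Then I would specialize $z=0$: after clearing denominators in $z$ one may assume $\widetilde E$ is primitive in $\Z[z,\bfy]$, and set $E=\widetilde E(0,\bfy)\in\Z[\bfy]$. One checks $E\ne 0$ (the $z=0$ slice of the support is nonempty because $\widetilde E$ is primitive, so not divisible by $z$) and $E(q_1,\dots,q_{r+1})=0$ on $V$, since $\widetilde E(\widetilde q_1,\dots,\widetilde q_{r+1})\in I(V)\otimes\Q[z]$ implies, evaluating at $z=0$, that $E(\bfq)\in I(V)$. The degree bound and the height bound for $E$ are inherited from those of $\widetilde E$ (specialization at $z=0$ only discards monomials and does not increase coefficient sizes). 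Finally I would absorb the combinatorial terms into the clean constant: using $\#\Supp(q_\ell)+3\le (n+1)^{d_\ell}+3\le (n+3)^{d_\ell}$ (valid for $n\ge 1$, $d_\ell\ge 1$) and $\log(r+2)\le(r+2)\log(2n+8)$, together with $\log 2+\log(n+3)^{d_\ell}\le d_\ell\log(2n+8)$, so that $\frac{1}{d_\ell}\big(h_\ell+\log(\#\Supp(q_\ell)+3)+\log 2\big)\le \frac{h_\ell}{d_\ell}+\log(2n+8)$; summing over the $r+1$ indices and adding $\log(r+2)$ yields the stated bound $\sum_\ell\frac{h_\ell}{d_\ell}+(r+2)\log(2n+8)$. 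The main obstacle, and the only place requiring genuine care rather than bookkeeping, is verifying that the deformed map is indeed generically finite onto its image and that the specialization $z=0$ neither kills $E$ nor spoils the ``vanishes on $V$'' property; the rest is a routine transfer of the estimates from Theorem~\ref{implicitmultiparZ} plus elementary inequalities to clean up the constant.
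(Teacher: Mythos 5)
Your plan follows essentially the same route as the paper's proof of this corollary: pick $r+1$ coordinates giving a generically finite projection, deform by $z\,x_{i_j}$ over $\Z[z]$ (one group of one parameter, so $p_1=1$), apply Theorem~\ref{implicitmultiparZ}, and specialize $z=0$. The structure is correct; the only issues are in the final arithmetic. Your inequality $\#\Supp(q_\ell)+3\le (n+1)^{d_\ell}+3\le (n+3)^{d_\ell}$ fails at $d_\ell=1$, where it reads $n+4\le n+3$; the paper uses $(n+4)^{d_\ell}$ instead, which gives $\log(\#\Supp(q_\ell)+3)\le d_\ell\log(n+4)$ and hence, using $d_\ell\ge1$, $\frac{1}{d_\ell}\big(h_\ell+\log(\#\Supp(q_\ell)+3)+\log 2\big)\le \frac{h_\ell}{d_\ell}+\log(n+4)+\log 2=\frac{h_\ell}{d_\ell}+\log(2n+8)$. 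Also, the aggregation step needs $\log(r+2)\le\log(2n+8)$ (true since $r\le n$), not the inequality $\log(r+2)\le(r+2)\log(2n+8)$ that you wrote; only with $\log(r+2)\le\log(2n+8)$ can one combine $\log(r+2)+(r+1)\log(2n+8)\le(r+2)\log(2n+8)$. With these two corrections the argument matches the paper's.
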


\begin{proof}
  We will follow the arguments in the proof of Theorem~\ref{perronpar}
  adapted to {this} situation.  Choose $r+1$ variables $x_{i_1},
  \dots, x_{i_{r+1}}$ in the group $\bfx$ so that the linear
  projection $V\to \A^{r+1}_{\Q}, \bfx\mapsto (x_{i_1}, \dots,
  x_{i_{r+1}})$ is generically finite onto its image.  Adding a
  further variable $z$, consider the map
$$
V_{{\Q(z)}}\longrightarrow \A^{r+1}_{\Q(z)} \quad, \quad
\bfx\longmapsto\left(q_1(\bfx)+z \, x_{i_1} ,\dots,q_{r+1}(\bfx)+z \,
  x_{i_{r+1}}\right),
$$
which is generically finite onto its image. Let $\wt E= \sum_{\bfa}\alpha_{\bfa}\bfy^{\bfa}\in
\Z[z][\bfy]$ be a primitive and squarefree polynomial defining the
closure of the image of this map. The
polynomials $q_j(\bfx)+z \, x_{i_j} $
have $\bfx$-degree $d_{j}$, $z$-degree 1 and height $h_{j}$.
Theorem~\ref{implicitmultiparZ} applied to this case gives
$ \langle \bfd,\bfa\rangle \le   \Big(\prod_{j=1}^{r+1}d_j\Big)\deg(V)$ and
 \begin{multline*}
\h(\alpha_{\bfa}) + \langle \bfh,\bfa \rangle\le
   \bigg(\prod_{j=1}^{r+1}d_j\bigg)
   \Bigg(\wh(V) + \deg(V) \Bigg( \log(r+2)\\
+   \sum_{\ell=1}^{r+1}\frac{1}{d_\ell}\bigg( h_\ell
   +\log\Big(\#\Supp(q_\ell)+3\Big)+ \log(2) \bigg) \Bigg)\Bigg).
 \end{multline*}
The polynomial $E:=\wt E(0,\bfy)\in \Z[\bfy]$ gives a {non-trivial} relation of algebraic dependence for the
$q_{j}$'s and it satisfies the same  degree and height bounds as
$\wt E$. The
statement then follows from the inequality $\# \Supp(q_\ell)+3\le (n+1)^{d_\ell}+3\le (n+4)^{d_\ell}$.
\end{proof}

{Next} result gives an upper bound for the Mahler
measure and {\it a fortiori}, for  the height of the implicit equation
for a rational map.

\begin{theorem}\label{racionalZ}
  Let $V \subset \A^n_{\Q}$ be a $\Q$-variety of
 pure dimension $r$ and
$$
\psi: V \dashrightarrow \A^{r+1}_{\Q} \quad, \quad \bfx\mapsto
\Bigg( \frac{q_1}{p_1}(\bfx) ,\dots,\frac{q_{r+1}}{p_{r+1}}(\bfx)
\Bigg)
$$
a rational map, generically finite onto its image, defined by
polynomials $q_j, p_j\in \Z[\bfx]$ such that $q_j/p_j \notin \Q$.  Let
$E \in \Z[\bfy]$ be a primitive and squarefree polynomial
defining~${\ov{\Im(\psi)}}$.  Set $d_j=\max\{\deg_\bfx (q_j),
\deg_\bfx(p_j)\}$ and $h_j=\log(||q_j||_1+||p_j||_1)$ for $1\le j\le
r+1$. Then
\begin{align*}
  \bullet & \ \deg_{y_i} (E) \le
  \bigg(\prod_{j\ne i}d_j\bigg)\deg(V) \quad \mbox{for } 1\le i\le r+1,\\[-1mm]
  \bullet & \ \m(E) \le \bigg(\prod_{j=1}^{r+1}d_j\bigg)
  \bigg(\wh(V) + \deg(V)
  \sum_{\ell=1}^{r+1}\frac{h_\ell}{d_\ell}\bigg).
\end{align*}
\end{theorem}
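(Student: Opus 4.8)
The plan is to follow closely the proof of Theorem~\ref{racionalpar}, replacing the function field $k(\bft)$ by $\Q$, the height $\h(\cdot)$ by the canonical height $\wh(\cdot)$ and the Mahler measure, and the tools of \S\ref{The height in the function field case} by their arithmetic analogues in \S\ref{The height in the number field case}. Concretely, set $U=V\setminus\bigcup_{j}V(p_j)$ and let $G$ be the closure in $V\times\A^{r+1}_{\Q}$ of the graph $\{(\bfx,\bfy):\bfx\in U,\ y_jp_j(\bfx)=q_j(\bfx)\}$. Since $q_j/p_j\notin\Q$, the $p_j$ do not vanish on any component of $V$, and the equations $y_jp_j=q_j$ involve distinct variables $y_j$, the variety $G$ is equidimensional of dimension $r$, it is contained in the pure dimension-$r$ part of $(V\times\A^{r+1})\cap\bigcap_{j=1}^{r+1}V(y_jp_j-q_j)$, and the projection $V\times\A^{r+1}\to\A^{r+1}$ induces a generically finite dominant map $G\to V(E)$. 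I will work with mixed degrees and heights with respect to the standard inclusions $V\times\A^{r+1}\hookrightarrow\P^n\times(\P^1)^{r+1}$ and $\A^{r+1}\hookrightarrow(\P^1)^{r+1}$.

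First I would translate the two quantities to be bounded into invariants of $V(E)$ in $(\P^1)^{r+1}$: by Proposition~\ref{mahler}\eqref{item:37} (applied to the divisor $V(E)$ of $(\P^1)^{r+1}$, whose ambient ``top index'' is $\bfone=(1,\dots,1)\in\N^{r+1}$) one has $\deg_{y_i}(E)=\deg_{\bfone-\bfe_i}(V(E))$ and $\m(E)=\wh_{\bfone}(V(E))$. Then Proposition~\ref{113alturasrat} applied to the projection $\P^n\times(\P^1)^{r+1}\to(\P^1)^{r+1}$, together with $\pi_*G=\deg(\pi|_G)\,V(E)\ge V(E)$ and the monotonicity of $[\cdot]_{_{\Z}}$ on effective cycles (Proposition~\ref{prop:10}\eqref{item:34}), gives $\theta_0^{\,n}\,[V(E)]_{_{\Z}}\le[G]_{_{\Z}}$ in $A^{*}(\P^n\times(\P^1)^{r+1};\Z)$, where $\theta_0$ is the class of the $\P^n$ factor.

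Next I would bound $[G]_{_{\Z}}$ from above by applying Corollary~\ref{cor:213} recursively to $V\times\A^{r+1}$ and the polynomials $g_j=y_jp_j-q_j$ for $j=1,\dots,r+1$. Propositions~\ref{altrodgenrat}\eqref{item:40} and~\ref{prop:10}\eqref{item:35}, together with Lemma~\ref{lemm:7}, give $[V\times\A^{r+1}]_{_{\Z}}=[V]_{_{\Z}}=\wh(V)\,\eta\,\theta_0^{n-r-1}+\deg(V)\,\theta_0^{n-r}$, while $g_j$ is linear in $y_j$ and independent of the other $y_\ell$, so $[g_j]_{\sup}=\log||g_j||_\sup\,\eta+d_j\theta_0+\theta_j$. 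Hence
$$[G]_{_{\Z}}\ \le\ \Big(\wh(V)\,\eta\,\theta_0^{n-r-1}+\deg(V)\,\theta_0^{n-r}\Big)\,\prod_{j=1}^{r+1}\Big(\log||g_j||_\sup\,\eta+d_j\theta_0+\theta_j\Big).$$
Extracting in this inequality the coefficient of $\theta_0^{\,n}\theta_i$ yields $\deg_{y_i}(E)\le\big(\prod_{j\ne i}d_j\big)\deg(V)$, and extracting the coefficient of $\eta\,\theta_0^{\,n}$ yields $\m(E)\le\big(\prod_{j=1}^{r+1}d_j\big)\big(\wh(V)+\deg(V)\sum_{\ell=1}^{r+1}\log||g_\ell||_\sup/d_\ell\big)$. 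Finally, since the monomials of $y_\ell p_\ell$ and $q_\ell$ are disjoint, $\log||g_\ell||_\sup\le\log||g_\ell||_1=\log(||p_\ell||_1+||q_\ell||_1)=h_\ell$ by Lemma~\ref{lemm:1}\eqref{item:1}, which gives the stated height bound.

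The bulk of the work is the routine bookkeeping in the extended Chow ring; the only point needing genuine care is the same as in the function field case (Theorem~\ref{racionalpar}): verifying that $G$ is equidimensional of dimension $r$ and that it sits inside the pure dimension-$r$ part of the complete intersection defined by the $g_j$, so that the recursive use of Corollary~\ref{cor:213} really controls $[G]_{_{\Z}}$. This is where the hypotheses that $q_j/p_j\notin\Q$, that no $p_j$ vanishes identically on a component of $V$, and that the $g_j$ depend on different variables $y_j$, are used.
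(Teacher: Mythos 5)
Your proof is correct and follows essentially the same approach as the paper's: translate $\m(E)$ into the mixed height $\wh_{0,\bfone}(G)$ via Propositions~\ref{mahler} and~\ref{113alturasrat}, bound $[G]_{_{\Z}}$ above by a recursive application of Corollary~\ref{cor:213} with the polynomials $g_j = y_jp_j - q_j$, and read off the coefficient of $\eta\,\theta_0^n$. The only (harmless) departure is that you also re-derive the partial-degree bounds directly from the same class inequality $\theta_0^n[V(E)]_{_{\Z}}\le[G]_{_{\Z}}$ by extracting the coefficient of $\theta_0^n\theta_i$, whereas the paper simply cites Theorem~\ref{racionalpar} for them.
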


\begin{proof} The bounds for the partial degrees of $E$ follow from
  Theorem~\ref{racionalpar}. Thus, we only have to prove the upper bound for
  the Mahler measure. Let $V(E)\subset \A^{r+1}_{\Q}$ and $G\subset
  V\times \A^{r+1}_{\Q}$ denote the closure of the image of $\psi$ and
  of the graph of $\psi$, respectively. We consider mixed heights of
  $G$ and $V(E)$ with respect to the inclusions
$$V\times \A^{r+1}\hookrightarrow \P^n\times (\P^1)^{r+1} \quad ,\quad
\A^{r+1}\hookrightarrow (\P^1)^{r+1}.
$$
Propositions~\ref{mahler} and \ref{113alturasrat} imply that
\begin{equation}\label{eq:18}
  \m(E)= \wh _\bfone(V(E))
\le \wh _{0,\bfone}(G).
\end{equation}
Let
\begin{math}
[ G\cz\in
\R[\eta,\theta_0,\bftheta]/(\eta^2,\theta_0^{n+1},\theta_1^2,\dots,\theta_{r+1}^2)
\end{math} be the class of the closure of $G$ in the extended Chow
ring of $\P^n\times (\P^1)^{r+1}$. We will  bound it by applying
Corollary \ref{cor:213}  recursively to $V\times \A^{r+1}$ and
$g_j:=y_jp_j-q_j$ for $j=1,\ldots, r+1$. Propositions
\ref{altrodgenrat}\eqref{item:40} and~\ref{prop:10}\eqref{item:35}
imply that
\begin{math}
  [ V\times \A^{r+1}\cz=\wh(V)\eta\theta_0^{n-r-1} +\deg(V)\theta_0^{n-r}.
\end{math}
Besides,  $||p_jy_j-q_j||_{\sup}\le ||p_{j}||_{1}+||q_j||_{1}$ and so  $[p_jy_j-q_j]_\sup\le h_j\eta + d_j\theta_0+\theta_j$. Hence,
\begin{displaymath}
  [ G\cz\le [ V\times \A^{r+1}\cz
  \prod_{j=1}^{r+1}[p_jy_j-q_j]_\sup\leq
 \Big(\wh(V)\eta\theta_0^{n-r-1} +\deg(V)\theta_0^{n-r}\Big)  \prod_{j=1}^{r+1}\Big(h_j\eta + d_j\theta_0+\theta_j).
\end{displaymath}
The mixed height $ \wh _{0,\bfone}(G)$
is the coefficient of  the monomial $\eta\,
\theta_0^n$ in $[ G\cz$.
The above inequality then implies
$$
\wh _{0,\bfone}(G) \le\bigg(\prod_{j=1}^{r+1}d_j\bigg) \wh(V) +
\deg(V)\sum_{\ell=1}^{r+1}\bigg(\prod_{j\ne \ell}d_j\bigg)h_\ell.
$$
The statement follows from this inequality and \eqref{eq:18}.
\end{proof}

For $V=\A^n_{\Q}$, the above result gives the bounds
\begin{equation*} \label{eqn:102} \deg_{y_i} (E) \le \prod_{j\ne i}d_j
  \quad , \quad \m(E) \le \sum_{\ell=1}^{n+1} \bigg( \prod_{j\ne \ell}
  d_j\bigg) h_\ell
\end{equation*}
for the degree and the Mahler measure of ${\ov{\Im(\psi)}}$.
Using Lemma \ref{lemm:11}\eqref{krso01}, we can bound the height of
this polynomial by
\begin{displaymath}
 \h(E) \le
\sum_{\ell=1}^{n+1} \bigg( \prod_{j\ne \ell} d_j\bigg)
(h_\ell+\log(2)) .
\end{displaymath}

\begin{example}\label{ejemploracionalZ}
  Let $d_1,d_2, H_1,H_2\ge 1$ such that $d_1,d_2$ are coprime and
  set
$$
\frac{q_1}{p_1}=\frac{x^{d_1}}{H_1(x+1)^{d_1}} \quad, \quad
\frac{q_2}{p_2}= \frac{(x+1)^{d_2}}{H_2\, x^{d_2}}.
$$
The implicit equation of the {closure of the image of}
$$\A^1_{\Q}\dashrightarrow \A^2_{\Q}, \quad
x\longmapsto\Bigg(\frac{q_1}{p_1}(x), \frac{q_2}{p_2}(x)\Bigg)$$ is
defined by $ E={H_1}^{d_2}{H_2}^{d_1}y_1^{d_2}y_2^{d_1}-1\in
\Z[y_1,y_2] $. We have  $\max\{\deg(p_j), \deg(q_j)\}=d_j$ while
$\log(||q_1||_1+||p_1||_1)= \log(1+H_12^{d_1})$ and $\log(||q_2||_1+||p_2||_1)=
\log(2^{d_2}+H_2)$.  Hence, Theorem~\ref{racionalZ} gives
{$$\m(E)\le d_2 \log(1+H_12^{d_1})+d_1 \log(2^{d_2}+H_2).$$} Indeed,
$\m(E)= d_1\log(H_2)+d_2\log(H_1)$ and so the obtained bound is optimal
up a term of size $O(d_1d_2)$.
\end{example}

\section{Arithmetic Nullstellens\"atze}

\subsection{An effective approach to the Nullstellensatz}\label{Jelonek}

Recall the statement of the weak Nullstellensatz over a variety:

\medskip
\begin{quote}{\it Let $V\subset \A^n_{K}$ be a $K$-variety and $f_1,\dots,f_s\in K[x_1,\dots,
    x_n]$ without common zeros in $V({\overline K})$.  Then there
    exist $g_1,\dots, g_s\in K[x_1,\dots, x_n]$ such that $
    1=g_1f_1+\cdots+g_sf_s$ on $V$.  }
\end{quote}

\medskip

We recall Jelonek's approach in \cite{Jelonek05} to
produce bounds for the degree of the $g_i$'s and explain {how it can be
adapted} to obtain bounds for their height. Set $r=\dim(V)$.
We assume without loss of
generality that $s\le r+1$, otherwise we reduce to this case by
taking generic linear combinations of the input polynomials.
Consider the regular map
\begin{equation}\label{phi}
  \varphi: V\times\A^1_K{\longrightarrow}V\times\A^s_K \quad, \quad
  (\bfx,z)\longmapsto\left(\bfx,zf_1(\bfx),\dots,z f_s(\bfx)\right).
\end{equation}
Since the $f_j$'s have no common zeros in $V({\ov{K}})$,  the
subsets $\{(\bfx,z_1,\dots,z_s): f_j(\bfx)\ne 0\}$ form an open
covering of $ V\times\A^s_K$. This implies that
$$
\Im(\varphi)=\Big\{(\bfx,z_1,\dots,z_s)\in
V\times\A^s_K:\,f_i(\bfx)z_j=f_j(\bfx)z_i \mbox{ for all }
i,j\Big\}.
$$
In particular, $\Im(\varphi)$ is a subvariety of $V\times\A^s_K$.
Furthermore, let $g_1,\dots,g_s$ be polynomials such that
$1=g_1f_1+\cdots + g_sf_s$. The map $\Im(\varphi)\to V\times \A^1_K$
defined as $(\bfx,z_1,\dots,z_s)\mapsto(\bfx,g_1(\bfx)z_1+\cdots
+g_s(\bfx)z_s)$ is a left inverse of  $\varphi$. Hence, $\varphi$
induces an isomorphism $ V\times \A^1_K\to \Im(\varphi)$.

Assume there exists a finite linear projection  $\pi:
\Im(\varphi)\to \A^{r+1}_K$. Such a map exists if the field $K$ is
sufficiently big (for instance, if it is infinite). In this case, we
can assume without loss of generality that $\pi$ is given by a
$(r+1)\times (n+s)$-matrix in reduced triangular form. Hence, there
are linear forms $\ell_i=\gamma_{i,1}x_1+\cdots+\gamma_{i,n}x_n
\in K[\bfx]$, $1\le i\le r+1$, such that this projection writes down
as
$$
\pi: \Im(\varphi){\to}\A^{r+1}_K \ , \ (\bfx,z_1,\dots,z_s)\mapsto
\Big(z_1+\ell_1(\bfx),\dots,
z_s+\ell_s(\bfx),\ell_{s+1}(\bfx),\dots,\ell_{r+1}(\bfx)\Big).
$$
Thus, the composition $\pi\circ \varphi:V\times
\A^1_K{\rightarrow}\A^{r+1}_K$ is a finite map or equivalently,  the
inclusion of algebras $(\pi\circ \varphi)^*: K[y_1,\dots, y_{r+1}]
\hookrightarrow
  K[\bfx,z]/I(V\times \A^1_K)$ is an integral extension. This
map writes down, for  $(\bfx,z) \in V\times \A^1_K$, as
$$
\pi\circ \varphi(\bfx,z)=
(zf_1(\bfx)+\ell_1(\bfx),\dots,zf_s(\bfx)+\ell_s(\bfx),\ell_{s+1}(\bfx),\dots,\ell_{r+1}(\bfx)).
$$
Up to a non-zero scalar in $K$, the minimal polynomial of $z$ over
$K[\bfy]$ is of the form
$$E=z^\delta +\sum_{j=1}^\delta \sum_{\bfa \in \N^{r+1}}
\alpha_{\bfa,j}\bfy^\bfa z^{\delta-j} \in K[\bfy,z].
$$
Therefore,
\begin{equation}\label{jelonek}
  z^\delta +\sum_{j=1}^\delta \sum_{\bfa}
  \alpha_{\bfa,j}(zf_1+
  \ell_1)^{a_1}\cdots
  (zf_{s}+\ell_s)^{a_s}\ell_{s+1}^{a_{s+1}}\cdots
  \ell_{r+1}^
  {\alpha_{r+1}} z^{\delta-j}=0 \quad \mbox{on } V\times \A^1
\end{equation}
and so all  the coefficients in the expansion of~(\ref{jelonek})
with respect to the variable $z$
 vanish identically on $V$.  We
derive a B\'ezout identity from the coefficient of $z^\delta$ as
follows: for each $1\le j \le \delta$ and $\bfa\in \N^{r+1}$, the
coefficient of $z^j$ in the expansion of $ (zf_1+
\ell_1)^{a_1}\cdots (zf_{s}+\ell_s)^{a_s}\ell_{s+1}^{a_{s+1}}\cdots
\ell_{r+1}^ {\alpha_{r+1}} $ writes down as
$$
\sum_{\bfb} g_\bfb f_1^{b_1}\cdots f_s^{b_s}
$$
with $g_\bfb\in K[\bfx]$ and $\bfb\in \N^s$ such that $|\bfb| = j\ge
1$.  Hence each term in this sum is a multiple of some $f_j$ and,
regrouping terms, we can write the coefficient of $z^{\delta}$ in
the expansion of~(\ref{jelonek}) as $1- {g_1}f_1-\cdots- {g_s}f_s$
with $ {g_i}\in K[\bfx]$.

\begin{remark}\label{rem:6}
  The  argument above is not a proof
of the Nullstellensatz since it relies on the {\it a priori}
existence of a B\'ezout identity: this is used to prove that the map
$\varphi$ in~(\ref{phi}) is an isomorphism onto its image.
\end{remark}

{In our treatment of the arithmetic Nullstellensatz, we will use the
previous construction for the general linear forms}
  \begin{equation}\label{linear}
    \ell_i:=u_{i,1}x_1+\cdots+u_{i,n}x_n \in K[\bfu_{i}][\bfx] \quad \mbox{ for } 1\le i\le r+1,
  \end{equation}
  where $\bfu_{i}=\{u_{i,j}\}_{1\le j\le n}$ is a group of
  auxiliary variables.  This is a valid choice, since   the associated
linear projection $\pi:
  \Im(\varphi)_{{K(\bfu)}}\to \A^{r+1}_{K(\bfu)}$ is a finite map,
  see for instance~\cite[Ch.~7, Thm.~2.2]{Lang93}.

Assume that $K$ is the field of fractions of a factorial ring  $A${.
Let} $E\in A[\bfu][\bfy,z]$ be the minimal polynomial of $z$ with
respect to the map $\pi \circ \varphi$, primitive with respect to
the ring $A[\bfu]$. We expand it as
\begin{equation}
  \label{eq:35}
E=\alpha_{{\bf0},0} z^\delta
  +\sum_{j=1}^{\delta}\sum_{\bfa\in \N^{r+1} }
  \alpha_{\bfa,j}\bfy^\bfa z^{\delta-j}
\end{equation}
with  $ \alpha_{\bfa,j}\in  A[\bfu]$ such that $\alpha_{{\bf0},0}
\ne 0$. For $1\le i \le s$, set
\begin{equation}
  \label{eq:19}
\wt g_i= -\sum_{j=1}^{\delta} \sum_{\bfa} \sum_{\bfb} \alpha_{\bfa,
  j} \Bigg( \Bigg(\prod_{k=1}^{r+1} {a_k\choose b_k}
\ell_k^{a_k-b_k}\Bigg)f_1^{b_1}\cdots f_{i-1}^{b_{i-1}}
f_{i}^{b_i-1}\Bigg) \in A[\bfu][\bfx],
\end{equation}
{the sums being indexed by all $\bfa, \bfb\in \N^{r+1}$  such that}
$(\bfa,j)\in \Supp(E)$ and $|\bfb|=j$, $b_k\le a_k$ for $1\le k\le
i$, $b_k=0$ for $i+1\le k\le r+1$ and $b_i\ge 1$.
Using~(\ref{jelonek}), we can verify that $  \alpha_{{\bf0},0}-\wt
g_1f_1-\cdots - \wt g_sf_s$ is the coefficient of $z^\delta$ in the
expansion of
$$E(zf_1+\ell_1,\dots,zf_s+\ell_s,\ell_{s+1},\dots,\ell_{r+1},z).$$ Hence,
\begin{equation}\label{eq:51}
  \alpha_{{\bf0},0}=\wt g_1f_1+\cdots + \wt g_sf_s \quad \mbox{ on }  V_{{K(\bfu)}}.
\end{equation}
We then extract a B\'ezout identity on $V$ {by considering the
coefficient of any monomial in $\bfu$ in \eqref{eq:51} appearing in the
monomial expansion of $\alpha_{{\bf0},0}$.}

\medskip We need the following lemma relating {minimal polynomials with
the implicitization problem.}

\begin{lemma} \label{lemm:10}
Let $V\subset \A_{K}^{n}$ be a $K$-variety of pure dimension $r$ and
$q_{1},\dots, q_{r+1}\in K[\bfx,z]$ such that the map
\begin{displaymath}
V\times \A^{1}_{K}\longrightarrow \A^{r+1}_{K} \quad ,\quad
(\bfx,z)\longmapsto (q_{1}(\bfx,z),\dots,
 q_{r+1}(\bfx,z))
\end{displaymath}
 is finite. Let $E\in  K[\bfy][z]\setminus \{0\}$ be the
  minimal polynomial of $z$ with respect to this map.
Then the map
\begin{displaymath}
\psi: V_{K(z)}\longrightarrow \A_{K(z)}^{r+1} \quad, \quad
\bfx\longmapsto (q_{1}(\bfx),\dots,
 q_{r+1}(\bfx))
\end{displaymath}
is generically
 finite onto its image. Furthermore, $E\in K[z][\bfy]$ is a squarefree
 polynomial, primitive with respect to the ring $K[z]$,
defining ${\ov{\Im(\Psi)}}$.
\end{lemma}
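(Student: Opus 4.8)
The plan is to reduce everything to commutative algebra via a base-change picture. Set $B=K[\bfx]/I(V)$, so the hypothesis says that $B[z]$ is a finite module over $K[\bfy]$ through $y_i\mapsto q_i(\bfx,z)$, and introduce the auxiliary morphism $\widehat\Phi\colon V\times\A^1_K\to\A^{r+1}_K\times\A^1_K$, $(\bfx,z)\mapsto\big((q_i(\bfx,z))_i,z\big)$, which on coordinate rings is $\alpha\colon K[\bfy,z]\to B[z]$, $y_i\mapsto q_i$, $z\mapsto z$. First I would observe that $\widehat\Phi$ is finite: $B[z]$ is already finite over $K[\bfy]\subseteq\alpha(K[\bfy,z])$, hence over $K[\bfy,z]$. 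Viewing $\widehat\Phi$ as a morphism over $\Spec K[z]$ (via second projections), its generic fibre is exactly $\psi$, because $B\otimes_K K(z)=(B[z])_{S_0}$ for $S_0=K[z]\setminus\{0\}$ and the ring map of $\psi$ is $\alpha$ localized at $S_0$. Since finiteness is stable under base change, $\psi$ is finite, hence closed, so $\Im(\psi)=\overline{\Im(\psi)}$ and $\psi$ is a finite surjection onto it; comparing dimensions, $\dim\overline{\Im(\psi)}=\dim V_{K(z)}=r$ and $\psi$ is generically finite onto its image. This is the first assertion.

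Next I would identify $E$ with the implicit equation of $\widehat\Phi$. Because $\dim B[z]=r+1=\dim K[\bfy]$ and $K[\bfy]\to B[z]$ is finite, it is injective; in particular $\mathfrak b:=\ker\alpha$ satisfies $\mathfrak b\cap K[\bfy]=0$. Moreover $\mathfrak b$ is radical ($B[z]$ is reduced) and $K[\bfy,z]/\mathfrak b$ carries $B[z]$ as a finite module, hence is equidimensional of dimension $r+1$, so $\mathfrak b$ has pure height $1$ in the UFD $K[\bfy,z]$ and $\mathfrak b=(G)$ for a squarefree $G$ that is primitive with respect to $K[\bfy]$ (again since $\mathfrak b\cap K[\bfy]=0$). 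On the other hand, being the minimal polynomial of $z$, $E$ is a primitive-over-$K[\bfy]$ polynomial of $K[\bfy][z]$ generating $\ker\big(K(\bfy)[z]\to B[z]\otimes_{K[\bfy]}K(\bfy)\big)=\mathfrak b\cdot K(\bfy)[z]=(G)\cdot K(\bfy)[z]$; since $E$ and $G$ generate the same ideal of $K(\bfy)[z]$ and both are primitive with respect to $K[\bfy]$, they differ by a nonzero scalar in $K$. Thus $E$ is, up to a scalar, the squarefree polynomial $G$, which gives squarefreeness, and $\overline{\Im(\psi)}=V(\mathfrak a)$ with $\mathfrak a=\ker\big(K(z)[\bfy]\to B\otimes_K K(z)\big)=\mathfrak b_{S_0}=(E)\cdot K(z)[\bfy]$.

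It remains to prove that $E$ is primitive with respect to $K[z]$, which is the step I expect to be the real point. If some irreducible $p\in K[z]$ divided $G$ in $K[\bfy,z]$, then $V(p)\subseteq V(G)=V(\mathfrak b)=\Im(\widehat\Phi)$ (the image is closed since $\widehat\Phi$ is finite); choosing a root $z_0\in\ov K$ of $p$, we would get $\{z=z_0\}\subseteq\Im(\widehat\Phi)$, i.e. the specialized map $\bfx\mapsto(q_i(\bfx,z_0))_i$ would send $V_{\ov K}$ onto $\A^{r+1}_{\ov K}$, contradicting $\dim V=r<r+1$. Granting this, $E$ (a scalar multiple of $G$) is squarefree in $K[z][\bfy]$ and primitive with respect to $K[z]$, so by Gauss' lemma it stays squarefree in $K(z)[\bfy]$; hence $(E)$ is radical in $K(z)[\bfy]$, and the identity $\mathfrak a=(E)\cdot K(z)[\bfy]$ (a proper nonzero ideal, since $\dim\overline{\Im(\psi)}=r$ and $\Im(\psi)\neq\emptyset$) shows that $E$ is a nontrivial reduced defining polynomial of $\overline{\Im(\psi)}$. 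The only delicate points are the identification of $E$ with $G$ and this last dimension argument; the remaining manipulations are routine bookkeeping with localizations and Gauss' lemma.
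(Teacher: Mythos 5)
Your proof is correct, and the overall route is the same as the paper's: introduce the auxiliary graph map $\widehat\Phi=\rho\colon V\times\A^1\to\A^{r+1}\times\A^1$, identify $E$ with a generator of $\ker\rho^*$, and observe that localizing at $K[z]\setminus\{0\}$ carries this kernel to $\ker\psi^*$, giving the implicit equation over $K(z)$.

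There are, however, two places where your execution diverges in an interesting way. First, the paper reduces to the case of irreducible $V$, where $\ker\rho^*$ is a prime of height one, hence $(E)$ with $E$ irreducible; you instead handle the equidimensional case directly, proving $\ker\rho^*$ is radical of pure height one, so $(G)$ with $G$ squarefree, and then matching $G$ with $E$ by Gauss' lemma. Your route makes the "squarefree" conclusion of the lemma explicit rather than leaving it to an unstated glueing over irreducible components. Second, for primitivity with respect to $K[z]$ the paper remarks that an irreducible $E$ with $E\notin K[z]$ is automatically $K[z]$-primitive; your argument is a geometric dimension count: if an irreducible $p\in K[z]$ divided $G$, then some fiber $\A^{r+1}_{\ov K}\times\{z_0\}$ would sit inside $\Im(\widehat\Phi)$, impossible since $\dim V=r<r+1$. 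This is arguably cleaner and, unlike the paper's phrasing, applies verbatim in the reducible case. You also note (via stability of finiteness under base change to $K(z)$) that $\psi$ is actually finite, which is a bit stronger than the "generically finite" needed. One small point you glossed over: the claim that $G$ is $K[\bfy]$-primitive does not follow from $\mathfrak b\cap K[\bfy]=0$ alone; it also uses that all minimal primes of $\mathfrak b$ contract to $0$ in $K[\bfy]$ (or, more simply, that $E$ is monic in $z$, being the minimal polynomial of an integral element over the integrally closed ring $K[\bfy]$).
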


\begin{proof}
  We reduce without loss of generality to the case {of an
    irreducible $K$-variety $V$}. Consider the map
\begin{displaymath}
\rho:   V\times \A_{K}^{1}\longrightarrow  \A_{K}^{r+1}\times
\A_{K}^{1} \quad, \quad
 (\bfx,z)\longmapsto (q_{1}(\bfx,z),\dots,
 q_{r+1}(\bfx,z),z).
\end{displaymath}
Its image is the irreducible $K$-hypersurface defined by $E$.  In
algebraic terms, the kernel of $\rho^*:K[\bfy,z]\to
K[\bfx,z]/I(V\times \A^{1}_{K})$ is the principal ideal generated by
$E$. Furthermore, $E\notin K[z]$ as, otherwise, this would imply
that the $q_{j}$'s are constant on $V\times\A^{1}_{K}$, which is not
possible because of the finiteness assumption.

We have $K[\bfx,z]/I(V\times \A^{1}_{K})= K[z] \otimes_{K}
K[\bfx]/I(V) $. Hence, $E$ is also a generator of the kernel of
$\psi^{*}:K(z)[\bfy]\to K(z)[\bfx]/I(V_{K(z)})$.
 The image of $\psi$ is a hypersurface of~$\A^{r+1}_{K(z)}$, since $E$ does not
lie in $ K[z]$. Hence, this map  is generically finite onto its
image. Furthermore,  $E$ is an irreducible polynomial defining this
hypersurface, primitive with respect to the ring $K[z]$.
\end{proof}

Consider the polynomials $q_j \in  A[\bfu][\bfx,z]$ defined by
\begin{equation}
  \label{eq:20}
  q_j=
  \begin{cases}
    zf_j + \ell_j, & \mbox{for } 1\le j\le
    s,\\[1mm]
\ell_j, & \mbox{for } s+1\le j\le r+1.
  \end{cases}
\end{equation}
In our setting, $ \pi\circ \varphi:V_{K(\bfu)}\times
\A^1_{K(\bfu)}{\longrightarrow}\A^{r+1}_{K(\bfu)}$ is finite. Hence,
Lemma~\ref{lemm:10} implies that the polynomial $E$ in \eqref{eq:35}
is a squarefree polynomial, primitive with respect to the ring
$A[\bfu,z]$,
 defining the closure of the image of the generically finite map
\begin{equation}\label{eq:46}
\psi:  V_{K(\bfu,z)}\longrightarrow \A^{r+1}_{K(\bfu,z)}\quad, \quad
\bfx\longmapsto (q_{1}(\bfx),\dots, q_{r+1}(\bfx)).
\end{equation}
Thus, we can produce bounds for  its size by using a suitable
version of Perron's theorem. In turn, this will allow us to bound the
size of the polynomials in the associated B\'ezout identity. We will
see the details in the next sections.

\subsection{Parametric Nullstellens\"atze}

We now apply the {previous} construction together with the
parametric Perron's theorem to produce different Nullstellens\"atze
for polynomials with coefficients depending on groups of parameters.
Let $k$ be a field. {Consider groups of variables
  $\bfx=\{x_{1},\dots, x_{n}\}$ and
  $\bft_l=\{t_{l,1},\dots,t_{l,p_l}\}$, $1\le l\le m$.} Set
$\bft=\{\bft_{1},\dots, \bft_{m}\}$ and, for each $1\le l\le m$, write
$k_l=k(\bft_1,\dots,\bft_{l-1},\bft_{l+1},\dots,\bft_m)$.  Recall that
$ k_{l}(\bft_{l})=k(\bft)$ and that for a projective $k(\bft)$-variety
$V$, we denote by $\h_{\bft_{l}}(V)$ its height with respect to the
base ring~$k_{l}[\bft_{l}]$.

\begin{theorem} \label{multiparametric} Let $V\subset
  \A^n_{_{k(\bft)}}$ be a $k(\bft)$-variety of pure dimension~$r$ and
  $f_1,\dots, f_s\in k[\bft,\bfx] \setminus k[\bft]$ a family of $s\le
  r+1$ polynomials without common zeros in $V(\ov{k(\bft)})$. Set
  $d_j=\deg_\bfx (f_j)$ and $h_{l,j}=\deg_{\bft_l} (f_j)$ for $1\le
  j\le s$ and $1\le l\le m$.  Then there exist $\alpha\in
  k[\bft]\setminus \{ 0\} $ and $g_{1},\dots,g_s\in k[\bft,\bfx] $
  such that
  \begin{equation}
    \label{eq:50}
\alpha=g_1f_1+\cdots+g_sf_s\quad \mbox{on } V
  \end{equation}
with
\begin{align*}
  \bullet &\  \deg_\bfx(g_if_i) \leq \bigg(\prod_{j=1}^{s}d_j\bigg)\deg(V),\\
  \bullet & \ \deg_{\bft_l}(\alpha), \deg_{\bft_l}(g_if_i) \leq
  \bigg(\prod_{j=1}^{s}d_j \bigg)\bigg( \h_{\bft_{l}}(V)+\deg(V)
  \sum_{\ell=1}^{s} \frac{h_{l,\ell} }{d_\ell}\bigg) \quad \mbox{for }
  1\le l\le m.
\end{align*}
\end{theorem}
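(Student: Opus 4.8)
The plan is to follow Jelonek's construction as described in \S\ref{Jelonek}, applied to the family $f_{1},\dots, f_{s}$ over the $k(\bft)$-variety $V$, and then to invoke the multiparametric Perron's theorem (Corollary~\ref{implicitgroups}) to control the size of the minimal polynomial $E$, which in turn controls the size of the resulting B\'ezout coefficients. First I would reduce to the case $s\le r+1$ (already assumed), and introduce the general linear forms $\ell_{i}=u_{i,1}x_{1}+\cdots+u_{i,n}x_{n}$ as in \eqref{linear} and the polynomials $q_{j}\in k[\bft][\bfu][\bfx,z]$ of \eqref{eq:20}. Since the $f_{j}$ have no common zeros on $V(\ov{k(\bft)})$, the map $\varphi$ of \eqref{phi} is an isomorphism onto its image (here one uses the a priori existence of \emph{some} B\'ezout identity, see Remark~\ref{rem:6}), and the associated linear projection $\pi$ becomes finite after the generic base change to $k(\bft)(\bfu)$. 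By Lemma~\ref{lemm:10}, the primitive minimal polynomial $E\in k[\bft][\bfu,z][\bfy]$ of $z$ with respect to $\pi\circ\varphi$ is a primitive squarefree polynomial defining the closure of the image of the generically finite map $\psi$ in \eqref{eq:46}.

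Next I would apply Corollary~\ref{implicitgroups} to the variety $V_{k(\bft)(\bfu,z)}$ — whose dimension is still $r$, whose degree is $\deg(V)$, and whose heights $\h_{\bft_{l}}$ are unchanged under the transcendental extension by $\bfu,z$ (by Proposition~\ref{prop:9}\eqref{item:18} applied with respect to the remaining parameter group, together with invariance of resultants under field extensions, Proposition~\ref{prop:6-1}) — and to the polynomials $q_{1},\dots, q_{r+1}$. Here $\deg_{\bfx}(q_{j})=d_{j}$ for $1\le j\le s$ and $\deg_{\bfx}(q_{j})=1$ for $s<j\le r+1$; the parameter degrees are $\deg_{\bft_{l}}(q_{j})=h_{l,j}$ for $j\le s$ and $0$ for $j>s$ (the variables $\bfu,z$ are treated as part of the base field, not as parameters $\bft_{l}$). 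So Corollary~\ref{implicitgroups} gives, writing $E=\sum_{\bfa,j}\alpha_{\bfa,j}\bfy^{\bfa}z^{\delta-j}$ as in \eqref{eq:35}, the bounds
\begin{displaymath}
\langle \bfd',\bfa\rangle + (\delta-j) \le \Big(\prod_{k=1}^{s}d_{k}\Big)\deg(V)\quad,\quad
\deg_{\bft_{l}}(\alpha_{\bfa,j}) + \langle \bfh'_{l},\bfa\rangle \le \Big(\prod_{k=1}^{s}d_{k}\Big)\Big(\h_{\bft_{l}}(V) + \deg(V)\sum_{\ell=1}^{s}\frac{h_{l,\ell}}{d_{\ell}}\Big),
\end{displaymath}
where $\bfd'=(d_{1},\dots,d_{s},1,\dots,1)$ and $\bfh'_{l}$ has $l$-th parameter degrees $(h_{l,1},\dots,h_{l,s},0,\dots,0)$. (Strictly one must also homogenize $z$ into the picture; the cleanest bookkeeping is to treat $z$ as an $(r+2)$-th coordinate of weighted degree $1$, as is implicitly done by tracking the $z$-degree separately, or to note that $E$ as a polynomial in $\bfy$ and $z$ is an equation of algebraic dependence among $q_{1},\dots,q_{r+1}$ and $z$ over $V$.)

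Then I would carry out the descent from $E$ to the B\'ezout identity following \eqref{eq:19}--\eqref{eq:51}: from \eqref{jelonek} one extracts polynomials $\wt g_{i}\in k[\bft][\bfu][\bfx]$ with $\alpha_{{\bf0},0}=\wt g_{1}f_{1}+\cdots+\wt g_{s}f_{s}$ on $V_{k(\bft)(\bfu)}$, and I would specialize the $\bfu$-variables: picking a monomial $\bfu^{\bfgamma}$ occurring in the monomial expansion of $\alpha_{{\bf0},0}\in k[\bft][\bfu]$ and extracting the corresponding $\bfu$-coefficient from both sides yields $\alpha=g_{1}f_{1}+\cdots+g_{s}f_{s}$ on $V$ with $\alpha\in k[\bft]\setminus\{0\}$ and $g_{i}\in k[\bft][\bfx]$. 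The degree estimates then follow: $\deg_{\bfx}(g_{i}f_{i})$ is bounded by $\deg_{\bfx,\bfu}$ of the relevant summand of $E$ composed with the $q_{j}$'s, which the $\bfx$-degree part of the Perron bound controls as $\prod_{j=1}^{s}d_{j}\cdot\deg(V)$; similarly for $\deg_{\bft_{l}}$, noting that $\ell_{1},\dots,\ell_{r+1}$ contribute no $\bft_{l}$-degree and that extracting a $\bfu$-coefficient does not increase $\bft_{l}$-degree. The quantity $\deg_{\bft_{l}}(\alpha)$ is bounded by $\deg_{\bft_{l}}(\alpha_{{\bf0},0})$, which the height part of the Perron bound controls by exactly the claimed quantity. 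The main obstacle I expect is the careful bookkeeping in the specialization step: one must verify that a suitable monomial $\bfu^{\bfgamma}$ can be chosen so that the extracted $\alpha$ is nonzero while simultaneously all the degree bounds survive the extraction — this is routine but requires matching the multidegree structure of \eqref{eq:19} against the Perron bounds term by term, keeping track of which variables ($\bfx$ versus $\bfu$ versus $\bft_{l}$ versus $z$) carry which part of the total degree.
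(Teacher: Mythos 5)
Your proposal is correct and follows essentially the same route as the paper's proof: apply Jelonek's construction from \S\ref{Jelonek} over $A=k[\bft]$, invoke Lemma~\ref{lemm:10} to identify the minimal polynomial $E$ with the implicit equation of the generically finite map $\psi$ in \eqref{eq:46}, bound its partial degrees via Corollary~\ref{implicitgroups}, and push these bounds through the explicit formula \eqref{eq:19} for $\wt g_i$ before extracting a $\bfu$-coefficient. The one small remark on bookkeeping: you do not actually need to "homogenize $z$ into the picture" or add a $(\delta-j)$ term to the degree bound — since $z$ lives in the base field $k(\bft)(\bfu,z)$ for the Perron application, the $\bfx$-degree estimate of $\wt g_i f_i$ in \eqref{eq:53} already comes out bounded by $\max_{\bfa}\langle\bfd,\bfa\rangle$ using only $d_\ell\ge1$ and $b_\ell\le a_\ell$.
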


\begin{proof}
  We apply the construction explained in \S\ref{Jelonek} to the ring
  $A:=k[\bft]$, the variety~$V$ and the polynomials $f_{j}$. We will
  freely use the notations introduced in that section. As a result of
  that construction, we obtain $\alpha \in A\setminus \{0\}$ and
  $g_{i}\in A[\bfx]$ satisfying the B\'ezout identity \eqref{eq:50}.
  These scalar and polynomials are obtained as the coefficients of a
  monomial in the variables $\bfu$ in the B\'ezout identity on
  $V_{K(\bfu)}$ in~\eqref{eq:51}. Hence,
\begin{equation}
  \label{eq:52}
  \deg_\bfx\left(g_if_i\right) \leq  \deg_\bfx\left(\wt g_if_i\right)
  \ , \
  \deg_{\bft_l}(\alpha)\leq
  \deg_{\bft_l}(\alpha_{\bfzero,0}) \ , \ \deg_{\bft_l}(g_if_i) \leq\deg_{\bft_l}(\wt g_if_i).
\end{equation}
Let $E\in A[\bfu][\bfy,z]$ be the polynomial in \eqref{eq:35} and set
$\bfd=(d_1,\dots,d_s,1,\dots,1)\in \N^{r+1}$. From the definition of
$\wt g_{i}$ in \eqref{eq:19} and using that, by hypothesis, $d_i\ge1$
for all $i$,
 \begin{multline}\label{eq:53}
   \deg_\bfx(\wt g_i f_i) \le \max_{j,\bfa, \bfb} \Bigg\{
   \deg_\bfx\Bigg(\alpha_{\bfa, j} \Bigg( \prod_{e=1}^{r+1}
   {a_e\choose b_e}
   \ell_e^{a_e-b_e}\Bigg)f_1^{b_1}\cdots f_{i-1}^{b_{i-1}}f_{i}^{b_i}\Bigg)\Bigg\}\\[-1mm]
   \le \max_{\bfa, \bfb}\bigg\{\sum_{e=1}^{r+1} (a_e -b_e) +
   \sum_{\ell=1}^i d_\ell b_\ell \bigg\} \le \max_{\bfa} \langle
   \bfd,\bfa\rangle,
\end{multline}
with $1\le j\le \deg_{z}(E)$, $\bfa\in \N^{r+1}$ in the support of $E$
with respect to the variables~$\bfy$ and $\bfb\in \N^{r+1}$ satisfying
$|\bfb|=j$, $b_e\le a_e$ for $1\le e\le i$, $b_e=0$ for $i+1\le e\le
r+1$ and $b_i\ge 1$. For $1\le l\le m$ we set
$\bfh_{l}=(h_{l,1},\dots,h_{l,s},0,\dots,0)\in \N^{r+1}$. Using Lemma
\ref{lemma:2-1}, we obtain similarly
\begin{align}\label{eq:54}
  \deg_{\bft_{l}}( \wt g_i f_i) &\le \max_{j,\bfa, \bfb} \Bigg\{
  \deg_{\bft_{l}}\Bigg(\alpha_{\bfa, j} \Bigg(\prod_{e=1}^{r+1}
  {a_e\choose b_e}
  \ell_e^{a_e-b_e}\Bigg)f_1^{b_1}\cdots f_{i-1}^{b_{i-1}}f_{i}^{b_i}\Bigg)\Bigg\}\\[-1mm]
  & \nonumber \le \max_{j,\bfa,\bfb}\bigg\{
  \deg_{\bft_l}(\alpha_{\bfa,j
  })+ \sum_{\ell=1}^i h_{l,\ell} b_\ell \bigg\} \\
  &\nonumber \le \max_{\bfa} \Big\{ \deg_{\bft_{l}}(\alpha_{\bfa})+
  \langle \bfh_l,\bfa\rangle \Big\}.
\end{align}
By Lemma \ref{lemm:10}, $E$ is a primitive and squarefree polynomial
in $k[\bft,\bfu,z][\bfy]$ defining the closure of the image of the map
$\psi$ in \eqref{eq:46}. Hence, we can apply
Corollary~\ref{implicitgroups} to bound the partial degrees of this
polynomial.  We have $\deg_\bfx(q_j)= d_j, \, \deg_{\bft_l}(q_j)=
h_{l,j}$ for $1\le j\le s$ while $\deg_\bfx(q_j)= 1,\,
\deg_{\bft_l}(q_j)= 0$ for $s+1\le j\le r+1$. By
Proposition~\ref{prop:9}\eqref{item:9}, $\deg(V _{{k(\bft, \bfu,z)}})=
\deg(V)$ and $\h_{\bft_{l}}(V_{{k(\bft,\bfu,z)}} )= \h_{\bft_{l}}(V)$.
Therefore, for all $\bfa \in \Supp(E)$,
$$
\langle \bfd,\bfa \rangle\le \Bigg(\prod_{j=1}^{s}d_j\Bigg)\deg(V) \ ,
\ \deg_{\bft_{l}}(\alpha_{\bfa})+\langle \bfh_l,\bfa\rangle \le
\Bigg(\prod_{j=1}^{s}d_j\Bigg)\Bigg(\h_{\bft_{l}}(V)+ \deg(V)
\sum_{\ell=1}^{s}\frac{h_{l,\ell}}{d_\ell}\Bigg).
$$
The statement follows from this inequality together with
\eqref{eq:52}, \eqref{eq:53} and \eqref{eq:54}.
\end{proof}

Theorem \ref{mtp} in the introduction corresponds to the case of
polynomials depending on \emph{one} group of parameters. It follows
readily from the above result and the fact that a variety defined over
$k(\bft)$ can be identified with a $k(\bft)$-variety, see
Remark~\ref{rem:4}.

\begin{example}\label{masserfuncional}
Consider the following variant of a classical
  example due to Masser and Philippon: let $t$ be a variable,
$d_1,\dots, d_{n}\ge 1$, $h\ge 0$ and set
  \begin{multline*}
    f_1= x_1^{d_1}, f_2= x_1
    \, x_n^{d_2-1} - x_2^{d_2},
    \dots, f_{n-1}= x_{n-2} \, x_n^{d_{n-1}-1} - x_{n-1}^{d_{n-1}},\\[1mm]
    f_n= x_{n-1} \, x_n^{d_n-1} - t^{h} \in k[t,x_{1},\dots, x_{n}].
  \end{multline*}
  It is a system of $n$
  polynomials without common zeros in~$\ov{k(t)}^{n}$.  We have
  $\deg_{\bfx}(f_j)=d_j$ for all $j$ while $\deg_t(f_j)=0$ for $1\le j\le
  n-1$ and $\deg_t(f_n)=h$. Theorem \ref{multiparametric} implies that
  there exists a B\'ezout identity $ \alpha= g_1 \,f_1 + \cdots +
  g_n\,f_n$ with
  \begin{equation}
    \label{eq:7}
    \deg_\bfx\left(g_if_i\right) \leq d_1\cdots d_n \quad, \quad
    \deg(\alpha), \deg_t(g_if_i) \leq
    d_1\cdots d_{n-1}h .
  \end{equation}
To obtain a lower bound, consider a  further variable $u$  and
specialize any such B\'ezout identity {at $x_{i}=\gamma_{i}$ with
$$
\gamma_1= t^{d_2\cdots d_{n-1}h} \, u^{d_2\cdots d_n-1}, \dots,
\gamma_{n-2}=t^{d_{n-1}h}\, u^{d_{n-1}d_n-1}, \gamma_{n-1}= t^{h} \, u^{d_n-1},
\gamma_n = 1/u.
$$
We obtain that $\alpha = g_1(\gamma_1,\dots, \gamma_{n-1}, 1/u )
\, t^{d_1\cdots d_{n-1} h} u^{d_1\cdots d_n-d_1}$.} From this, we deduce
the lower bounds
$$  \deg_{\bfx}( g_1f_1) \ge d_1\cdots d_n
\quad, \quad \deg(\alpha) \ge d_1\cdots d_{n-1}h .
$$
Hence,  both bounds in~(\ref{eq:7}) are optimal in this case.
\end{example}

\begin{example} \label{ejemploresultante}  Let $V\subset   \A^n_{k}$
  be a $k$-variety of pure dimension~$r$.
  For $1\le j\le r+1$, let $d_{j}\ge 1$ and consider the general $n$-variate polynomial of degree $d_j$
$$
F_j=\sum_{|\bfa|\le d_j}u_{j,\bfa}\bfx^\bfa \in
k[\bfU_j][x_{1},\dots, x_{n}],
$$
where $\bfU_j=\{u_{j,\bfa}\}_{ |\bfa|\le d_j}$. Set
$\bfU=\{\bfU_1,\dots, \bfU_{r+1}\}$. It is not difficult to verify
that the $F_j$'s have no common zeros in
$V_{_{k(\bfU)}}(\ov{k(\bfU)})$. Hence, we can apply
Theorem~\ref{multiparametric} to the $F_{j}$'s as a system of
polynomials with coefficients depending on the groups of parameters
$\bfU_{1},\dots, \bfU_{r+1}$. We have that
$\deg(V_{_{k(\bfU)}})=\deg(V)$ and, by Proposition~\ref{prop:9}\eqref{item:18}, $\h_{\bfU_{l}}(V_{_{k(\bfU)}})=0$.
Besides, $\deg_\bfx(F_j)=d_j$ and $\deg_{\bfU_l}(F_j)$ is equal to  1 if
$l=j$ and to~$0$ otherwise. We deduce that there exist $\alpha\in
k[\bfU]\setminus \{0\}$ and $g_i\in k[\bfU][x_{1},\dots, x_{n}]$  such that $\alpha= g_1F_1+\cdots+g_{r+1}F_{r+1}$ on
$V_{_{k(\bfU)}}$ with
\begin{equation}\label{resultanteX}
  \deg_\bfx(g_iF_i) \le \bigg(\prod_{j=1}^{r+1} d_j \bigg) \deg(V)
\quad, \quad
  \deg_{\bfU_l}(\alpha), \deg_{\bfU_l} (g_iF_i) \le
  \bigg(\prod_{j\ne l} d_j\bigg) \deg(V).
\end{equation}

Using Lemma \ref{prop:8} and Corollary \ref{cor:3}, we can verify
that the elimination ideal
$$
\Big(I(V)\,k[\bfU][x_{1},\dots, x_{n}] +(F_1,\dots, F_{r+1}) \Big)
\cap k[\bfU] $$ is generated by the resultant $\Res_{d_1,\dots,
d_{r+1} }(\ov V)$ of the closure of $V$ in $\P^n_{k}$. This is a
multihomogeneous polynomial of partial degrees
 $$
 \deg_{\bfU_l}(\Res_{d_1,\dots, d_{r+1} }(\ov V))
 =\bigg(\prod_{j\ne l} d_j\bigg) \deg(V).
$$
Hence, $\alpha$ is  a multiple of this resultant and, comparing
degrees, we see that  $\alpha$ and $\Res_{d_1,\dots, d_{r+1} }(\ov
V)$ coincide up to a factor in $k^\times$. This implies that the
bound for the $\bfU_l$-degrees in~(\ref{resultanteX})  is optimal.
Observe that the bound for the $\bfx$-degree is not
 optimal, at least when $V=\A^n_{k}$. In this case, it can be shown
 that there exist
$g_i$'s satisfying the  same bound for the $\bfU_{l}$-degree and
such that $ \deg_\bfx(g_if_i) \le \Big(\sum_{j=1}^{n+1} d_j \Big)-
n$.
\end{example}

{The following result is a partial extension of Theorem
\ref{multiparametric} to an arbitrary number of polynomials. For
simplicity, we only state it for polynomials depending on one group of
parameters $\bft=\{t_{1},\dots,t_{p}\}$.  In this setting, we loose
track of the contribution of the $\bft$-degrees of most of the
individual input polynomials.  However, it is possible to
differentiate the contribution of {\it one} of them, which will be
important in the proof of the strong parametric Nullstellensatz.}

\begin{corollary} \label{parametricS}
Let $V\subset   \A^n_{_{k(\bft)}}$  be a $k(\bft)$-variety of pure
  dimension~$r$ and $f_1,\dots, f_s\in k[\bft,\bfx] \setminus
  k[\bft]$  without common
  zeros in $V(\ov{k(\bft)})$ such that $V(f_s)$ intersects~$V$
  properly.
Set $d_j=\deg_\bfx(f_j)$ for $1\le j\le s$.
  Assume that $d_1\ge \cdots \ge d_{s-1}$ and that $d_s$ is arbitrary. Set also $h=\max_{1\le
    j\le s-1}\deg_\bft (f_j)$ and $h_s=\deg_\bft(f_s)$.
  Then there exist $\alpha\in k[\bft]\setminus \{ 0\} $ and $g_1,\dots,g_s\in
  k[\bft,\bfx] $ such that $\alpha=g_1f_1+\cdots+g_sf_s$ on $V$ with
  \begin{align*}
    \bullet &\  \deg_\bfx\left(g_if_i\right) \leq \bigg(d_s\prod_{j=1}^{\min\{s-1,r\}}d_j\bigg)\deg(V),\\
    \bullet & \ \deg(\alpha), \deg_\bft(g_if_i) \leq
    \bigg(d_s\prod_{j=1}^{\min\{s-1,r\}}d_j \bigg)\bigg( \h(V
    )+\deg(V) \bigg( \frac{h_s}{d_s} + \sum_{\ell=1}^{\min\{s-1,r\}}
    \frac{h}{d_\ell}\bigg)\bigg).
  \end{align*}
\end{corollary}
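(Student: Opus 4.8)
The strategy is to reduce the case of $s$ polynomials to the case of at most $r+1$ polynomials, to which Theorem~\ref{multiparametric} applies, while keeping separate track of the distinguished polynomial $f_s$. First I would use the hypothesis that $V(f_s)$ intersects $V$ properly: set $V'$ to be the union of the components of dimension $r-1$ of $V\cap V(f_s)$, so that $\dim(V')=r-1$ (or $V'=\emptyset$, in which case $f_s$ is already a unit on $V$ and the identity is trivial). By Corollary~\ref{cor:1} applied to $V$ and $f_s$, we have the bound on classes $[W]_{_{k[\bft]}}\le [V]_{_{k[\bft]}}\cdot[V(f_s)]_{_{k[\bft]}}$ for $W$ the codimension-one part of $V\cap V(f_s)$; comparing coefficients gives $\deg(V')\le d_s\deg(V)$ and $\h(V')\le h_s\deg(V)+d_s\h(V)$. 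Here I should be a little careful that $V'$ as defined (components of $V\cap V(f_s)$) might be smaller than $W$ in Corollary~\ref{cor:1}, but since classes of effective cycles are $\ge 0$ and $W\ge V'$, the same bounds hold for $V'$.

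Next, since $f_1,\dots,f_s$ have no common zeros on $V$, the polynomials $f_1,\dots,f_{s-1}$ have no common zeros on $V'=V\cap V(f_s)$ (restricted to its $(r-1)$-dimensional part); if some lower-dimensional components of $V\cap V(f_s)$ survive, one checks that the $f_j$'s still have no common zero there as well, or one simply works with the equidimensional variety $V'$ of dimension $r-1$. Now $f_1,\dots,f_{s-1}$ is a family without common zeros on the pure-dimensional variety $V'$ of dimension $r-1$, so I apply Theorem~\ref{multiparametric} (with $m=1$, i.e. the single group $\bft$) to $V'$ and these $s-1$ polynomials. Since at most the first $\min\{s-1,r\}$ of them are needed (the theorem reduces to $\le (r-1)+1=r$ polynomials via generic linear combinations), and since $d_1\ge\cdots\ge d_{s-1}$, the product $\prod_{j=1}^{s-1}d_j$ occurring there is bounded by $\prod_{j=1}^{\min\{s-1,r\}}d_j$ after taking the largest factors. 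This yields $\beta\in k[\bft]\setminus\{0\}$ and $h_1,\dots,h_{s-1}\in k[\bft,\bfx]$ with $\beta=h_1f_1+\cdots+h_{s-1}f_{s-1}$ on $V'$, with
$$
\deg_\bfx(h_if_i)\le\Big(\prod_{j=1}^{\min\{s-1,r\}}d_j\Big)\deg(V'),\qquad
\deg(\beta),\deg_\bft(h_if_i)\le\Big(\prod_{j=1}^{\min\{s-1,r\}}d_j\Big)\Big(\h(V')+\deg(V')\sum_{\ell=1}^{\min\{s-1,r\}}\tfrac{h}{d_\ell}\Big).
$$

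Finally I would lift this identity from $V'$ to $V$. The identity $\beta-h_1f_1-\cdots-h_{s-1}f_{s-1}$ vanishes on the codimension-one part of $V\cap V(f_s)$, so by the (effective, parametric) Nullstellensatz in the one-polynomial case — equivalently, because $f_s$ generates the ideal of $V(f_s)$ and $V$ is reduced — there are $\mu\in k[\bft]\setminus\{0\}$, $g_s\in k[\bft,\bfx]$ and polynomials $c_i$ with $\mu(\beta-\sum_i h_if_i)=g_sf_s-\sum_i c_i f_i \cdot(\text{something})$ on $V$; more cleanly: applying Theorem~\ref{multiparametric} once more to the single polynomial $f_s$ on $V$ relative to the divisor it cuts (or invoking that any polynomial vanishing on $V\cap V(f_s)$ admits a representation $g f_s$ modulo $I(V)$ up to a denominator in $k[\bft]$, with controlled degrees via Corollary~\ref{cor:1}), gives the remaining term. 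Substituting, $\alpha:=\mu\beta = g_1f_1+\cdots+g_sf_s$ on $V$ with $g_i:=\mu h_i$ for $i<s$. The degree bounds then combine: $\deg_\bfx(g_if_i)\le\deg_\bfx(h_if_i)+\deg_\bfx(\mu f_s)\le(\prod_{j=1}^{\min\{s-1,r\}}d_j)\deg(V')+d_s\deg(V)\le(d_s\prod_{j=1}^{\min\{s-1,r\}}d_j)\deg(V)$ using $\deg(V')\le d_s\deg(V)$ (and absorbing the additive $d_s\deg(V)$ into the product since $\prod_{j=1}^{\min\{s-1,r\}}d_j\ge 1$), and similarly for the $\bft$-degrees, where the bound for $\h(V')\le h_s\deg(V)+d_s\h(V)$ divided through by $d_s$ produces exactly the $\frac{h_s}{d_s}+\sum\frac{h}{d_\ell}$ pattern after multiplying by $d_s\prod d_j$.

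\textbf{Main obstacle.} The delicate point is the last lifting step: passing from an identity valid on the proper intersection $V\cap V(f_s)$ to one on $V$, with the contribution of $f_s$ correctly isolated and with degree/height bounds that assemble cleanly into the stated product $d_s\prod_{j=1}^{\min\{s-1,r\}}d_j$. One must be careful that the denominator introduced when clearing the ideal $I(V)$ has $\bft$-degree controlled by $d_s\h(V)+h_s\deg(V)$ — this is exactly what Corollary~\ref{cor:1} / Theorem~\ref{bezaritff} provides — and that no spurious factor of $\prod d_j$ (rather than just $d_s$) creeps in. A cleaner route, which I would prefer to write out, is to apply the full construction of \S\ref{Jelonek} directly with the $s$ input polynomials, but feeding the implicitization step (Corollary~\ref{implicitgroups}) the \emph{refined} degree bounds coming from intersecting with $f_s$ first; this avoids an explicit lift and keeps all estimates inside a single application of the parametric Perron theorem, at the cost of bookkeeping the mixed class $[V\cdot\div(f_s)]_{_{k[\bft]}}$ through the argument.
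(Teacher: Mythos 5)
Your plan --- intersect $V$ with $V(f_s)$ to form $V':=V\cap V(f_s)$ of pure dimension $r-1$, apply Theorem~\ref{multiparametric} to $f_1,\dots,f_{s-1}$ on $V'$, and then lift the resulting identity back to $V$ --- takes a genuinely different route from the paper, but it has a real gap at the lifting step. The identity $\beta = h_1f_1+\cdots+h_{s-1}f_{s-1}$ produced by Theorem~\ref{multiparametric} holds pointwise on $V'(\ov{k(\bft)})$, so it only places $P:=\beta-\sum_{i<s}h_if_i$ in $\sqrt{I(V)+(f_s)}$, not in $I(V)+(f_s)$: the scheme-theoretic intersection $V\cap V(f_s)$, while of the right dimension, need not be reduced. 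Thus the claim that $\mu P = g_sf_s$ modulo $I(V)$ for some $\mu\in k[\bft]\setminus\{0\}$ does not follow from ``$f_s$ generates $I(V(f_s))$ and $V$ is reduced.'' Concretely, with $V=\A^1_{_{k(t)}}$, $f_1=x+t$, $f_2=x-t$, $f_3=x^2$, one finds $V'=\{0\}$, $\beta=t$, $h_1=1$, $h_2=0$, $P=-x$, and $-x\notin(x^2)$. Repairing this by raising $P$ to the Noether exponent destroys the degree and height bounds, and allowing the extra terms $\sum_{i<s}c_if_i$ you sketch amounts to invoking a strong parametric Nullstellensatz --- but Theorem~\ref{strongNSSpar} is deduced later in the paper \emph{from} Corollary~\ref{parametricS}, so that route is circular. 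There is also a secondary issue: Theorem~\ref{multiparametric} only accepts at most $\dim+1$ input polynomials, so on the $(r-1)$-dimensional $V'$ you cannot feed in $s-1>r$ polynomials without first doing your own generic-linear-combination reduction (and the inequality you invoke, $\prod_{j=1}^{s-1}d_j\le\prod_{j=1}^{\min\{s-1,r\}}d_j$, is also backwards).

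The paper avoids the intermediate variety $V'$ and the lift entirely. For $s\ge r+2$ it forms $r$ generic linear combinations $\overline{f}_j=f_j+v_{j,1}f_{r+1}+\cdots+v_{j,s-r-1}f_{s-1}$ over the extension field $K=k(\bfv)$ and keeps $\overline{f}_{r+1}=f_s$. The proper-intersection hypothesis guarantees that $\overline{f}_1,\dots,\overline{f}_{r+1}$ have no common zeros on $V_{K(\bft)}$: the $r$ generic combinations already have none on the $(r-1)$-dimensional $V\cap V(f_s)$, and any common zero of the full family would lie there. So Theorem~\ref{multiparametric} applies directly on $V$ with $r+1$ polynomials, and a B\'ezout identity over $k[\bft]$ with the stated bounds is extracted by unfolding the $\overline{f}_j$ and taking the coefficient of a suitable monomial in $\bfv$. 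Your preliminary bookkeeping (estimating $\deg(V')$ and $\h(V')$ via Corollary~\ref{cor:1} and seeing the product $d_s\prod_{j\le\min\{s-1,r\}}d_j$ emerge) is correct as far as it goes, but without a sound lift it cannot be assembled into the conclusion.
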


\begin{proof}
  If $s\leq r+1$, the result follows from the case $m=1$ in
  Theorem~\ref{multiparametric}. In the case $s\ge r+2$, we can reduce
  to $r+1$ polynomials by taking linear combinations as follows: let
  $\bfv=\{v_{j,i}\}_{ 1\le j\le r, 1\le i\le s-r-1}$ be a group of
  variables and set $K=k(\bfv)$. Write
$$
\overline{f}_j= f_j+v_{j,1}f_{r+1}+\cdots + v_{j,s-r-1}f_{s-1} \quad
\mbox{for } 1\le j\le r,  \quad \overline{f}_{r+1}=f_s.
$$
The hypothesis that $V(f_s)$ and $V$ intersect properly implies that
the polynomials above do not have common zeros in
$V_{{K(\bft)}}(\overline{K(\bft)})$.  We have that
$\deg_\bfx(\overline{f}_j)=d_{j}$ and $\deg_\bft(\overline{f}_j)= h$
for $1\le j\le r$ while $\deg_\bfx(\overline{f}_{r+1})=d_s$ and
$\deg_\bft(\overline{f}_{r+1})=h_s$. By
Theorem~\ref{multiparametric}, there exist $\overline{\alpha}\in
k[\bfv,\bft]\setminus \{0\}$ and $\overline{g}_i\in
k[\bfv,\bft][\bfx]$ {such that  $
\overline{\alpha}=\overline{g}_1\overline{f}_1+\cdots
+\overline{g}_{r+1}\overline{f}_{r+1}$ on $V_{{K(\bft)}}$ with}
\begin{align*}& \deg_\bfx(\overline{g}_i\overline{f}_i) \le
  \bigg(d_s\prod_{j=1}^{r} d_j\bigg)\deg(V),\\
  & \deg_\bft(\overline{\alpha}),\deg_\bft(\overline{g}_i\overline{f}_i) \le \bigg(d_s\prod_{j=1}^{r}
  d_j\bigg) \bigg(\h(V)+\deg(V)
  \bigg(\frac{h_s}{d_s}+\sum_{\ell=1}^r\frac{h}{
    d_\ell}\bigg)\bigg).\end{align*} Unfolding the linear
combinations in the above identity and taking a non-zero coefficient
with respect to the variables $\bfv$, we extract a B\'ezout identity
$\alpha=g_1f_1+\dots+g_sf_s$ on $V$ satisfying the same degree
bounds.
\end{proof}

\begin{remark} \label{rem:5} The previous results by Smietanski are
  {for polynomials depending on at most two parameters without common
  zeros in the affine space. For instance, let $f_1,\dots, f_s$ be
  polynomials in $ k[t_1,t_2][x_1,\dots, x_n]\setminus k[t_{1},t_{2}]$
  without common zeros in~$\A^n(\ov{k(t_{1},t_{2})})$. Set
  $d_j=\deg(f_j)$, $h=\max_{j}\deg_{t_1,t_2} (f_j)$, and suppose that
  $d_2\ge \cdots \ge d_{s}\ge d_1$. Set also
  $\nu=\min\{n+1,s\}$. In~\cite{Smietanski93}, it is shown that} there
  exist $\alpha\in k[t_1,t_2]\setminus\{0\}$ and $g_i\in
  k[t_1,t_2][x_1,\dots, x_n]$ such that $\alpha=g_1f_1+\cdots+g_sf_s$
  with
\begin{align*}
\bullet \ &
\deg_\bfx\big(g_if_i\big) \le 3  \prod_{j=1}^{\nu}d_j,\\[-2mm]
\bullet \ & \deg_{t_1,t_2}(\alpha),\,\deg_{t_1,t_2}(g_if_i)\le \prod_{j=1}^\nu
(d_j+h)+3^{\nu-1}\bigg(\prod_{j=1}^{\nu}d_j\bigg)\bigg(\sum_{\ell=1}^{\nu}\frac{1}{d_\ell}\bigg)\,h.
 \end{align*}
\end{remark}

We deduce from Corollary \ref{parametricS} the following parametric version of the
strong effective Nullstellensatz.

\begin{theorem}\label{strongNSSpar} Let $V\subset
  \A^n_{_{k(\bft)}}$  be a $k(\bft)$-variety of pure
  dimension~$r$ and $g,f_1,\dots, f_s\in
  k[\bft,\bfx]$  such that $g$ vanishes on the set of
  common zeros of $f_{1},\dots, f_{s}$ in~$V(\ov{k(\bft)})$ and
  $\deg_\bfx(f_1)\ge \cdots \ge \deg_\bfx(f_s)\ge 1$.
  Set $d_j=\deg_\bfx(f_j)$ for $1\le
  j\le s$, $h=\max_{1\le j\le s}\deg_\bft (f_j)$,   $d_0=\max\{ 1,
  \deg_\bfx(g)\}$ and $h_0=\deg_\bft(g)$.  Then there exist $\mu\in
  \N$, $\alpha\in k[\bft]\setminus \{0\}$ and $g_1,\dots,g_s\in
  k[\bft,\bfx]$ such that
$$\alpha \,g^\mu=g_1f_1+\cdots+g_sf_s \quad \mbox{on } V$$  with
\vspace{-3mm}$$\aligned \bullet \ & \mu \le
2\bigg(\prod_{j=1}^{\min\{s,r+1\}}
d_j\bigg)\deg(V),\\[-2mm]
\bullet \ & \deg_\bfx(g_if_i)\le
4\bigg(\prod_{j=0}^{\min\{s,r+1\}}d_j\bigg)\deg(V),\\[-2mm]
\bullet \ & \deg(\alpha),\deg_\bft(g_if_i) \le
2\bigg(\prod_{j=0}^{\min\{s,r+1\}}d_j\bigg)\bigg(\h(V) +\deg(V)
\bigg(\frac{3h_0}{2d_0}+\sum_{\ell=1}^{\min\{s,r+1\}} \frac{h}{d_\ell}
\bigg)\bigg).
\endaligned$$
\end{theorem}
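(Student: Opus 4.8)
The plan is to obtain the strong Nullstellensatz from the weak one by Rabinowicz' trick, using Corollary~\ref{parametricS} as the weak input precisely because it separates off the contribution of one distinguished polynomial, which is what makes the reduction lossless (cf. Remark~\ref{rabinowicz}). Introduce an auxiliary variable $x_{n+1}$ and work on $V\times\A^1_{_{k(\bft)}}$, of pure dimension $r+1$, with the $s+1$ polynomials $f_1,\dots,f_s$ and $f_{s+1}:=1-x_{n+1}g$. They have no common zero in $(V\times\A^1)(\ov{k(\bft)})$: at a common zero of $f_1,\dots,f_s$ the polynomial $g$ vanishes by hypothesis, so $f_{s+1}=1$ there; and $V(f_{s+1})$ meets $V\times\A^1$ properly, since on each irreducible component $1-x_{n+1}g$ is either a unit (when $g$ vanishes on the corresponding component of $V$) or a non-zero-divisor of $x_{n+1}$-degree one. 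Also $\deg_\bfx(f_{s+1})=1+\deg_\bfx(g)\le 2d_0$ and $\deg_\bft(f_{s+1})=h_0$, while $\deg(V\times\A^1)=\deg(V)$ and $\h(V\times\A^1)=\h(V)$ by Lemma~\ref{relalturas}.

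Applying Corollary~\ref{parametricS} with $f_{s+1}$ as the distinguished last polynomial (the ordering hypothesis on the $\deg_\bfx(f_j)$, $j\le s$, being given, and $\deg_\bfx(f_{s+1})$ being allowed arbitrary) yields $\widehat\alpha\in k[\bft]\setminus\{0\}$ and $\widehat g_1,\dots,\widehat g_{s+1}\in k[\bft,\bfx,x_{n+1}]$ with $\widehat\alpha=\sum_{i=1}^{s+1}\widehat g_if_i$ on $V\times\A^1$; writing $D=\prod_{j=1}^{\min\{s,r+1\}}d_j$ and using $h_0/\deg_\bfx(f_{s+1})\le 3h_0/(2d_0)$ (elementary from $2d_0\le 3(1+\deg_\bfx g)=3\deg_\bfx(f_{s+1})$), the bounds of that corollary read $\deg_{\bfx,x_{n+1}}(\widehat g_if_i)\le 2d_0D\deg(V)$ and $\deg(\widehat\alpha),\deg_\bft(\widehat g_if_i)\le \deg_\bfx(f_{s+1})\,D\,\h(V)+Dh_0\deg(V)+\deg_\bfx(f_{s+1})\,D\deg(V)\sum_\ell h/d_\ell$. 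Next I would specialize $x_{n+1}\mapsto 1/g$: the term $\widehat g_{s+1}f_{s+1}$ vanishes identically, and with $\mu_i:=\deg_{x_{n+1}}(\widehat g_i)$ and $\mu:=\max_{1\le i\le s}\mu_i$, multiplication by $g^\mu$ clears denominators and produces $\alpha:=\widehat\alpha$ and $g_i:=g^{\mu-\mu_i}\bigl(g^{\mu_i}\widehat g_i(\bfx,1/g)\bigr)\in k[\bft,\bfx]$ with $\alpha g^\mu=g_1f_1+\cdots+g_sf_s$ on $V$. Since $g_if_i=g^\mu\cdot(\widehat g_if_i)(\bfx,1/g)$ one gets $\deg_\bfx(g_if_i)\le \mu\,\deg_\bfx(g)+\deg_{\bfx,x_{n+1}}(\widehat g_if_i)\le \mu d_0+2d_0D\deg(V)$ and $\deg_\bft(g_if_i)\le \mu h_0+\deg_\bft(\widehat g_if_i)$; feeding in $\mu\le 2D\deg(V)$ and $\deg_\bfx(f_{s+1})\le 2d_0$, these collapse to exactly $4d_0D\deg(V)$ and $2d_0D\bigl(\h(V)+\deg(V)(3h_0/(2d_0)+\sum_\ell h/d_\ell)\bigr)$ (the summand $\mu h_0$ recombining with the term $Dh_0\deg(V)$ to yield the coefficient $3$), and similarly for $\deg(\alpha)$. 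The case $s\ge r+2$ is absorbed automatically by the internal reduction to $r+1$ polynomials inside Corollary~\ref{parametricS}.

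The one step needing genuine work, and the main obstacle I foresee, is the estimate $\mu\le 2D\deg(V)$. Used as a black box, Corollary~\ref{parametricS} only controls the total degree of $\widehat g_i$ in the variables $(\bfx,x_{n+1})$, and that bound carries an unavoidable factor $d_0$, which would be fatal here. I would instead open up the construction of \S\ref{Jelonek}: for $i\le s$ the explicit formula~\eqref{eq:19} for the B\'ezout quotients shows that $\widehat g_i$ depends on $x_{n+1}$ only through powers of the auxiliary linear forms, since $f_1,\dots,f_s$ are free of $x_{n+1}$; hence $\mu\le\deg_\bfy(E)$ for the implicit equation $E$ furnished by Lemma~\ref{lemm:10}, and $\deg_\bfy(E)$ is bounded by the weighted Perron estimate of Corollary~\ref{implicitgroups}. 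The delicate accounting is to arrange that the degree $1+\deg_\bfx(g)$ of $f_{s+1}$ enters this particular estimate only as the weight attached to the single variable $y_{s+1}$ — so that it controls $\deg_{y_{s+1}}(E)$ but not $\mu$ — rather than as a global multiplicative factor; this is what forces one to keep $1-x_{n+1}g$ out of the generic linear combinations performed when $s\ge r+2$ and to follow the $z$-degree carefully through the minimal-polynomial computation of Lemma~\ref{lemm:10}. Once $\mu\le 2D\deg(V)$ is in hand, the four displayed bounds follow by the constant-chasing sketched above.
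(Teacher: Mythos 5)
You are right that the crucial step is the estimate $\mu\le 2D\deg(V)$ with $D=\prod_{j=1}^{\min\{s,r+1\}}d_j$, and your diagnosis that the black-box use of Corollary~\ref{parametricS} with the classical Rabinowicz polynomial $1-x_{n+1}g$ yields only $\mu\lesssim d_0D\deg(V)$ is correct. But the remedy you sketch does not close the gap. Your observation that for $i\le s$ the quotient $\wt g_i$ in~\eqref{eq:19} acquires $x_{n+1}$-degree only through the factors $\ell_k^{a_k-b_k}$ is correct and gives $\deg_{x_{n+1}}(\wt g_i)\le\max_{\bfa\in\Supp(E)}|\bfa|$; however, Corollary~\ref{implicitgroups} controls only the weighted quantity $\langle\bfd,\bfa\rangle$, whose upper bound $\bigl(\prod_j d_j\bigr)\deg(W)$ already carries the factor $\deg_{\bfx,x_{n+1}}(f_{s+1})=1+\deg_\bfx(g)\approx d_0$, and since the $r+1-s$ auxiliary linear forms have weight $1$ the best one extracts is $|\bfa|\le\langle\bfd,\bfa\rangle\le(1+\deg_\bfx g)D\deg(V)$. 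Your hope of confining the weight $\deg_\bfx(f_{s+1})$ to the single coordinate $a_{s+1}$ cannot be realized: the weighted bound constrains $\sum_k d_k a_k$ but gives no small bound on $\sum_{k\ne s+1}a_k$, and moreover $\ell_{s+1}^{a_{s+1}}$ itself contributes $a_{s+1}$ to $\deg_{x_{n+1}}(\wt g_i)$ (since $b_{s+1}=0$ when $i\le s$), so $a_{s+1}$ does not drop out of the estimate for $\mu$ either. The resulting $\mu\approx d_0 D\deg(V)$ then also spoils the height bound through the term $\mu h_0$, which would be $\approx d_0 D h_0\deg(V)$ rather than the stated $3h_0D\deg(V)$.

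The paper circumvents this with a \emph{modified} Rabinowicz polynomial, $1-y^{d_0}g$ instead of $1-yg$. Its total $(\bfx,y)$-degree is $2d_0$, so Corollary~\ref{parametricS} (applied to $W=V\times\A^1$ with $1-y^{d_0}g$ as the distinguished input) gives $\deg_{\bfx,y}(\ov g_i)\le 2d_0 D\deg(V)$; the crucial extra observation is that the whole input system lies in the subring $k[\bft,\bfx,y^{d_0}]$ of $k[\bft,\bfx,y]$, so the B\'ezout identity may be taken there as well. Writing $\ov g_i(\bft,\bfx,y)=\widehat g_i(\bft,\bfx,y^{d_0})$ turns the bound into $\deg_y(\widehat g_i)=\deg_y(\ov g_i)/d_0\le 2D\deg(V)$, which is exactly the desired bound on $\mu$ after specializing $y\mapsto 1/g$ and clearing denominators. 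It is this substitution $y^{d_0}\mapsto y$, not any finer reading of the Perron estimate, that removes the spurious factor $d_0$ your construction leaves behind.
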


\begin{proof}
  Set $W=V\times \A^1_{_{k(\bft)}}$, let $y$ be an additional variable
  and consider
  \begin{equation}\label{eq:5}
    1-y^{d_0}g,f_1,\dots,f_s\ \in  k[\bft,\bfx,y].
  \end{equation}
  These polynomials have no common zeros on $W(\ov{k(\bft)})$ and
  $V(1-y^{d_0}g)$ intersects $W$ properly. We have that $\dim(W)=r+1$, $\deg(W)=\deg(V)$ and
  $\h(W)=\h(V)$,  thanks to  Lemma \ref{relalturas}. Besides, $\deg_{\bfx,y}(f_j)=d_j$ and
  $\deg_\bft(f_j)\le h$ for $1\le j\le s$, while
  $\deg_{\bfx,y}(1-y^{d_0}g)=2d_0$ and $\deg_\bft(1-y^{d_0}g)=h_0$.
  By Corollary~\ref{parametricS}, there exists $\alpha\in
  k[\bft]\setminus \{0\}$ and $\ov g_i\in k[\bft][\bfx,y]$ such that
  \begin{equation}
    \label{eq:45}
\alpha=\overline g_0 (1-y^{d_0}g)+ \overline g_1f_1+\cdots
+\overline g_sf_s \quad \mbox{on } W
  \end{equation}
with
\begin{align*} & \label{eq:62}  \deg_{\bfx,y}(\overline g_if_i)\le 2
  \bigg(\prod_{j=0}^{\min\{s,r+1\}} d_j\bigg)\deg(V),\\
&\deg(\alpha),
  \deg_\bft(\overline g_if_i)\le 2\bigg(\prod_{j=0}^{\min\{s,r+1\}}
  d_j\bigg)
  \bigg(\h(V)+\deg(V)\bigg(\frac{h_0}{2d_0}+\sum_{\ell=1}^{\min\{s,r+1\}}
  \frac{h}{d_\ell}\bigg)\bigg).
\end{align*}

The input system~(\ref{eq:5}) lies in the subring
$k[\bft,\bfx,y^{d_0}]$ of $k[\bft, \bfx,y]$ and we can suppose
without loss of generality that the B\'ezout identity \eqref{eq:45}
lies in  this subring. Let $\widehat g_i\in k[\bft,\bfx,y]$ such
that $\ov g_i(\bft,\bfx,y)=\widehat g_i(\bft,\bfx,y^{d_0})$. Then
\begin{equation}\label{gsombrero} \alpha=   \widehat
  g_0\Big(1-y\,g \Big)+\widehat
  g_1f_1+\cdots +\widehat
  g_sf_s.
\end{equation}

Specializing $y$ at $ 1/g(\bft,\bfx)$ in the above identity and
multiplying by a suitable denominator, we obtain an identity of the
form
 $\alpha\,g^\mu=  g_1 f_1+\cdots +   g_sf_s$
with $\mu = \max_{l}\deg_y(\widehat g_l)$ and $g_i= g^{\max_l\deg_y(\widehat
  g_l)}\, \widehat g_i(\bfx,1/g)$.
Therefore,
\begin{equation}\label{gradogsombrero}
\mu = \max_{l}\deg_y(\widehat g_l) \le
\max_{l}\bigg\{\frac{\deg_{\bfx,y}(\ov g_l)}{d_0} \bigg\}\le
2\bigg(\prod_{j=1}^{\min\{s,r+1\}}
  d_j\bigg)\deg(V).\end{equation}
Besides, $\deg_{\bfx,y}(\widehat g_if_i)\le \deg_{\bfx,y}(\overline
g_if_i)$ and $\deg_{\bft}(\widehat g_if_i)\le \deg_{\bft}(\overline
g_if_i)$. Therefore
\begin{align*}
&\deg_\bfx(g_if_i)\le \deg_\bfx(\widehat g_if_i) +
  \deg_\bfx(g)\max_l\deg_y(\widehat g_l) \le 4
  \bigg(\prod_{j=0}^{\min\{s,r+1\}}d_j\bigg)\deg(V),
  \\
  & \deg_\bft(g_if_i) \le \deg_\bft(\widehat g_if_i) +
  \deg_\bft(g)\max_l\deg_y(\widehat g_l) \\
  &\hspace{35mm}\le 2\bigg(\prod_{j=0}^{\min\{s,r+1\}} d_j\bigg)
  \bigg(\h(V)+\deg(V)\bigg(\frac{3h_0}{2d_0}+\sum_{\ell=1}^{\min\{s,r+1\}}
  \frac{h}{d_\ell}\bigg)\bigg),
\end{align*}
as stated.\end{proof}

\begin{remark} \label{rabinowicz} In the previous argument, the use of
  a differentiated version of the effective weak Nullstellensatz was
  crucial. Otherwise, the obtained bounds for the Noether exponent
  $\mu$ would have depended on the degree of $g$, as for instance in
  \cite{Brownawell87}, and the bounds for the height of the $g_{i}$'s
  would have been considerably worse.
\end{remark}

\subsection{Nullstellens\"atze over $\bf\Z$}\label{NssArithZ}
In this section we {present} different arithmetic Nullstellens\"atze
over $\Z$ for polynomials depending on groups of parameters.  As
before, let $\bft_l=\{t_{l,1},\dots,t_{l,p_l}\}$ be a group of
{variables} for $1\le l\le m$ and set $\bft=\{\bft_{1},\dots,
\bft_{m}\}$.  Write also $\bfx=\{x_{1},\dots, x_{n}\}$.

\begin{theorem}\label{arithparam}
   Let $V\subset \A^n_{\Q}$ be a $\Q$-variety
  of pure dimension $r$ and  $f_1,\dots, f_s\in
  \Z[\bft,\bfx]\setminus  \Z[\bft]$ a family of $s\le r+1$ polynomials without
  common zeros in~$V_{\Q(\bft)}(\ov{\Q(\bft)})$. Set
  $d_j=\deg_\bfx(f_j)$, $\delta_{l,j}=\deg_{\bft_l}(f_j)$ and $h_j=\h(f_j)$
  for $1\le j\le s$ and $1\le l\le m$.
  Then there exist $\alpha\in \Z[\bft]\setminus \{0\}$ and
  $g_1,\dots, g_s\in \Z[\bft,\bfx] $ such that
  \begin{equation}
    \label{eq:55}
  \alpha=g_1f_1+\cdots+g_sf_s \quad \mbox{on } V_{\Q(\bft)}
  \end{equation}
with
\vspace{-3mm} $$\aligned
\bullet& \ \deg_\bfx\left(g_if_i\right) \leq \bigg(\prod_{j=1}^{s}d_j\bigg)\deg(V) ,\\[-2mm]
\bullet & \ \deg_{\bft_l}(\alpha), \deg_{\bft_l}(g_if_i) \leq
\bigg(\prod_{j=1}^{s}d_j \bigg) \deg(V)
\sum_{\ell=1}^{s} \frac{\delta_{l,\ell} }{d_\ell} \quad \mbox{for } 1\le l\le m,\\[-1mm]
\bullet& \ \h(\alpha), \h(g_i)+\h(f_i) \leq
\bigg(\prod_{j=1}^{s}d_j\bigg) \Bigg(\wh(V)+ \deg(V) \Bigg((3r+7)\log(n+3) \\
& \hspace{3.4cm} +\sum_{\ell=1}^{s}\frac{1}{d_\ell}\bigg( h_\ell
+\log\big(\#\Supp(f_\ell)\big)+2\sum_{l=1}^m\delta_{l,\ell}\log(p_l+1)\bigg)
 \Bigg)\Bigg).
\endaligned
$$
\end{theorem}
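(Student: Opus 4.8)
The plan is to follow the effective approach to the Nullstellensatz laid out in \S\ref{Jelonek}, applied to the base ring $A=\Z[\bft]$, combined with the arithmetic Perron-type bounds from Theorem~\ref{implicitmultiparZ}. First I would invoke the construction of \S\ref{Jelonek}: since $s\le r+1$, no reduction is needed, and I introduce the groups of auxiliary variables $\bfu_i=\{u_{i,j}\}_{1\le j\le n}$, $1\le i\le r+1$, and the general linear forms $\ell_i$ as in \eqref{linear}. Setting $q_j=zf_j+\ell_j$ for $1\le j\le s$ and $q_j=\ell_j$ for $s+1\le j\le r+1$ as in \eqref{eq:20}, the projection $\pi\circ\varphi\colon V_{K(\bfu)}\times\A^1_{K(\bfu)}\to\A^{r+1}_{K(\bfu)}$ is finite (by \cite[Ch.~7, Thm.~2.2]{Lang93}), so Lemma~\ref{lemm:10} tells me the minimal polynomial $E\in\Z[\bft,\bfu,z][\bfy]$ of $z$, normalized to be primitive with respect to $\Z[\bft,\bfu,z]$, is squarefree and defines $\ov{\Im(\psi)}$ for the generically finite map $\psi$ of \eqref{eq:46}. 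Then \eqref{eq:51} gives a B\'ezout identity $\alpha_{\bfzero,0}=\wt g_1 f_1+\cdots+\wt g_s f_s$ on $V_{K(\bfu)}$, and extracting the coefficient of a suitable monomial in $\bfu$ (appearing in the monomial expansion of $\alpha_{\bfzero,0}$) yields $\alpha\in\Z[\bft]\setminus\{0\}$ and $g_i\in\Z[\bft,\bfx]$ satisfying \eqref{eq:55}, with the degree and height estimates \eqref{eq:52} dominated by those for $\alpha_{\bfzero,0}$ and the $\wt g_i$.

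Next I would apply Theorem~\ref{implicitmultiparZ} to bound the degrees and height of the defining polynomial $E$. The relevant data are: the base field is $\Q$ enriched with the extra parameters $\bfu$ (treated as part of the parameter group alongside $\bft$), $V_{\Q(\bft,\bfu,z)}$ has $\deg=\deg(V)$ and, by Proposition~\ref{prop:9}\eqref{item:18}/\eqref{item:9} (together with the fact that canonical/FS heights vanish after base change), $\wh$ and the $\bft_l$-heights behave as expected, so $\wh(V_{\Q(\bft,\bfu,z)})=\wh(V)$. The $q_j$ have $\bfx$-degree $d_j$ (or $1$), $\bft_l$-degree $\delta_{l,j}$ (or $0$), and height $h_j$ (or $0$, being linear forms with coefficients among the $u_{i,j}$); note $\#\Supp(q_j)\le\#\Supp(f_j)+n$, and the presence of the $\bfu$-parameters contributes a term controlled by $\log(n+1)$ per unit of $\bfx$-degree. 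I expect this to yield, for all $\bfa\in\Supp(E)$, the bound $\langle\bfd,\bfa\rangle\le(\prod_j d_j)\deg(V)$, the $\bft_l$-degree bound, and a height bound of the shape $(\prod_j d_j)(\wh(V)+\deg(V)(\log(r+2)+\sum_\ell\frac1{d_\ell}(h_\ell+\log(\#\Supp(f_\ell)+\textrm{const})+\sum_l\delta_{l,\ell}\log(p_l+1)+\textrm{const}\cdot\log(n+1))))$.

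The third step is bookkeeping: translate the bounds on $E$ into bounds on $\alpha_{\bfzero,0}$ and $\wt g_i$ via the explicit formulas \eqref{eq:19}, \eqref{eq:35}. For the degrees this is done exactly as in the proof of Theorem~\ref{multiparametric}, estimates \eqref{eq:53} and \eqref{eq:54}, which carry over verbatim (replacing $k[\bft]$ by $\Z[\bft]$). For the heights I would use Lemma~\ref{lemma:2-17}: each $\wt g_i$ is a sum of at most $\#\Supp(E)$ terms, each a product of a coefficient $\alpha_{\bfa,j}$ of $E$ with binomial coefficients $\binom{a_k}{b_k}$, powers of the linear forms $\ell_k$, and powers of the $f_j$'s. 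The height of such a product is bounded by $\h(\alpha_{\bfa,j})$ plus $\sum_\ell b_\ell h_\ell$ plus error terms of the form $\log(\#\Supp)\cdot(\textrm{degree in }\bfx)$ coming from Lemma~\ref{lemma:2-17}\eqref{item:9} and the binomial coefficients (which are $\le 2^{|\bfa|}$, contributing $\langle\bfone,\bfa\rangle\log2$). Summing over the at most $\#\Supp(E)$ terms adds a further $\log(\#\Supp(E))$, which is itself $O(\deg(V)\prod_j d_j\cdot\log(\ldots))$ and is absorbed into the $(3r+7)\log(n+3)$ term. The key identity is that $\h(g_i)+\h(f_i)\le\h(\wt g_i f_i)$, so that the combined quantity $\h(g_i)+\h(f_i)$ — rather than $\h(g_i)$ alone — is what gets bounded, matching the statement.

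The main obstacle I anticipate is the careful accounting of all the error terms so that they collapse into the clean constant $(3r+7)\log(n+3)$ and the explicit logarithmic corrections in the statement. Several sources contribute: the $\log(r+2)$ from the Mahler-measure-to-height passage in Theorem~\ref{implicitmultiparZ}, the $\log(\#\Supp(q_\ell)+2)$ which must be compared with $\log(\#\Supp(f_\ell))$ after noting $\#\Supp(q_\ell)\le\#\Supp(f_\ell)+n+1$, the $\log(n+1)$-type contributions from the extra $\bfu$-parameters (there are $r+1$ groups of $n$ variables each, so $p_l$ gets effectively replaced by something of size $n$), the $\log2$ from binomial coefficients, and the $\log(\#\Supp(E))$ from summing the terms of $\wt g_i$. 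Each of these is individually routine but getting the final numerical constant exactly right — and verifying that $\log(\#\Supp(f_\ell))$ (rather than a power of it) suffices, using $\#\Supp(E)\le$ a polynomial in the $d_j$ and $\deg(V)$ whose logarithm is $O(\log(n+3)\prod d_j\deg(V))$ — requires a patient, uniform estimate. The degree parts, by contrast, are immediate from Theorem~\ref{implicitmultiparZ} together with \eqref{eq:53}–\eqref{eq:54}.
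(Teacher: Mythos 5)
Your proposal reproduces the paper's own strategy: apply the \S\ref{Jelonek} construction over $A=\Z[\bft]$, treat the auxiliary coefficients $\bfu$ together with $z$ as an extra group of parameters, bound the minimal polynomial $E$ via Theorem~\ref{implicitmultiparZ}, and propagate the estimates to $\alpha_{\bfzero,0}$ and the $\wt g_i$ using the explicit formulas \eqref{eq:19}, \eqref{eq:35} and Lemma~\ref{lemma:2-17}. The only imprecision is at the level of constants (the extra parameter group has $(r+1)n+1$ variables, contributing $\log((r+1)n+2)$ rather than $\log(n+1)$ per unit of degree, and the bound on the number of terms of $\wt g_i$ is $\#\Supp(E)\cdot 2^{\deg_\bfy(E)}$), but you flag this yourself and the structure of the argument is correct.
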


\begin{proof}
We apply again the construction explained in \S\ref{Jelonek} to the
ring $A:=\Z[\bft_1,\ldots,\bft_m]$, the variety $V$ and the
polynomials $f_{j}$. As a result, we obtain $\alpha\in A^\times$ and
polynomials $g_{i}$ satisfying the B\'ezout identity \eqref{eq:55}.
The bounds for their $\bfx$-degree and $\bft_{l}$-degree follow from
Theorem~\ref{multiparametric}, since $\deg(V_{\Q(\bft)})=\deg(V)$
and $\h_{\bft_l}(V_{\Q(\bft)})=0$ by
Proposition~\ref{prop:9}\eqref{item:18}. So, we only have to bound
the height of these polynomials.

\medskip
We will  use the  notations introduced in  \S\ref{Jelonek}.
Let $\bfu_{i}$ denote the group of coefficients of
the general linear forms $
\ell_i=u_{i,1}x_1+\cdots+u_{i,n}x_n $ in \eqref{linear} and set
$\bfu=\{\bfu_{1},\dots, \bfu_{r+1}\}$.
Consider the minimal
polynomial $E\in \Z[\bft,\bfu][\bfy,z]$ of $z$ with respect to
the map $\pi\circ \varphi$, primitive with respect to $
\Z[\bft,\bfu]$. We expand it in two different ways as follows:
\begin{equation*}
  E=\alpha_{{\bf0},0} z^\delta
  +\sum_{j=1}^{\delta}\sum_{\bfa\in \N^{r+1} }
  \alpha_{\bfa,j}\bfy^\bfa z^{\delta-j}  =\sum_{\bfa\in \N^{r+1} } \alpha_{\bfa} \bfy^\bfa
\end{equation*}
with  $ \alpha_{\bfa,j}\in  \Z[\bft,\bfu]$ such that
$\alpha_{{\bf0},0} \ne 0$ and   $\alpha_\bfa=\sum_{k=0}^\delta
\alpha_{\bfa,k} z^{\delta-k}\in  \Z[\bft,\bfu,z]$. Set
$\Supp(E)=\{\bfa \,:\, \alpha_{\bfa}\ne 0\}$
and recall that, for $1\le i \le s$,
\begin{equation*}
\wt g_i= -\sum_{j=1}^{\delta} \sum_{\bfa\in \Supp(E)} \sum_{\bfb} \alpha_{\bfa,
  j} \Bigg( \Bigg(\prod_{k=1}^{r+1} {a_k\choose b_k}
\ell_k^{a_k-b_k}\Bigg)f_1^{b_1}\cdots f_{i-1}^{b_{i-1}}
f_{i}^{b_i-1}\Bigg) \in  \Z[\bft,\bfu][\bfx],
\end{equation*}
{the last sum being indexed by all $\bfb\in \N^{r+1}$  such that}
 $|\bfb|=j$, $b_k\le a_k$ for $1\le k\le i$, $b_k=0$ for $i+1\le
k\le r+1$ and $b_i\ge 1$. The constant $\alpha$ and polynomials
$g_i$ are obtained as the coefficient of some monomial in  $\bfu$ in
$\alpha_{\bfzero,0}$ and $\wt g_{i}$, respectively. Hence,
\begin{equation}\label{eq:55b}
 \h(\alpha)\leq
 \h(\alpha_{\bfzero,0}) \quad , \quad \h(g_i) +\h(f_{i}) \leq\h(\wt g_i) +\h(f_{i}).
\end{equation}
We have
$$
\sum_{j,\bfa,\bfb}\prod_{k=1}^{r+1}{a_k\choose
  b_k}\le \sum_{\bfa\in \Supp(E)} \sum_{\bfb\le \bfa }\prod_{k=1}^{r+1}{a_k\choose
  b_k} = \sum_{\bfa\in \Supp(E)} 2^{|\bfa|} \le
\#\Supp(E) \, 2^{\deg_{\bfy}(E)}.
$$
Hence, by Lemma~\ref{lemma:2-17}\eqref{item:45},
\begin{multline}
  \label{eq:56}
\h(\wt g_{i}) +\h(f_{i}) \le \max_{\bfa,j}
\Bigg\{\h\Bigg(\alpha_{\bfa,j}\Bigg( \prod_{k=1}^{r+1}
  \ell_k^{a_k-b_k}\Bigg)f_1^{b_1}\cdots
  f_{i-1}^{b_{i-1}}f_{i}^{b_i-1}\Bigg) \Bigg\} \\ +\h(f_{i}) + \log
  \Big(\#\Supp(E) \, 2^{\deg_{\bfy}(E)} \Big).
\end{multline}
Set $\bfd=(d_1,\dots,d_s,1,\dots,1)$, $\bfdelta_l=
(\delta_{l,1},\dots,\delta_{l,s},0,\dots,0)$ for $1\le l\le m$ and
$\bfh=(h_1,\dots,h_s,0,\dots,0)$.
Using Lemma \ref{lemma:2-17}\eqref{item:9},
\begin{multline}
  \label{eq:57}
  \h\Bigg(\alpha_{\bfa,j}\Bigg( \prod_{k=1}^{r+1}
  \ell_k^{a_k-b_k}\Bigg)f_1^{b_1}\cdots
  f_{i-1}^{b_{i-1}}f_{i}^{b_i-1}\Bigg)
\le \h(\alpha_{\bfa,j})+ |\bfa| \log (n) \\ + \langle \bfh-\bfe_{i},
\bfa\rangle + \langle \bfd,\bfa\rangle \log (n+1) +\sum_{l=1}^m
\langle \bfdelta_l,\bfa\rangle \log(p_l+1),
\end{multline}
since $\log||\ell_{k}||_{1} =\log(n)$, $\log||f_{j}||_{1}\le \h(f_{j})+
d_{j}\log (n+1)+ \sum_{l=1}^m \delta_{l,j}\log(p_l+1)$ and $\bfb\le
\bfa$.

\medskip
By Lemma \ref{lemm:10},   $E\in \Z[\bft,\bfu,z][\bfy]$ is  a
primitive and squarefree polynomial defining the closure of the image of the map
$\psi$ in \eqref{eq:46}. Hence, we can apply
Theorem~\ref{implicitmultiparZ} to bound its partial degrees and
height. We have $\deg_\bfx(q_j)= d_j$, $\deg_{\bft_{l}}(q_j)=
\delta_{l,j}$  and $\h(q_{j})= h_{j}$ for $1\le j\le s$ while
$\deg_\bfx(q_j)= 1$, $\deg_{\bft_{l}}(q_j)= \h(q_{j})= 0$ for
$s+1\le j\le r+1$. The partial degree of $q_{j}$ in the group of
$(r+1)n+1$ variables $\bfu\cup \{z\}$ is equal to 1. Set
$D=\prod_{j=1}^{s}d_j$. Then, a direct application of
Theorem~\ref{implicitmultiparZ} gives, for all $\bfa \in \Supp(E)$,
\begin{equation} \label{eq:60}
\langle \bfd,\bfa\rangle \le D\deg(V) \quad, \quad  \deg_{\bft_l}(\alpha_{\bfa}) + \langle \bfdelta_l,\bfa \rangle
  \le D \deg(V)  \sum_{\ell=1}^{s}\frac{\delta_{l,\ell}}{d_\ell} \quad
  \mbox{for }  1\le l\le m
\end{equation}
and
\begin{multline*}
  \h(\alpha_{\bfa}) + \langle \bfh,\bfa \rangle \le D \bigg(\wh(V) + \deg(V)
  \bigg(\log(r+2)
  + \sum_{\ell=1}^{s}\frac{1}{d_\ell}\bigg(
  h_\ell +\log\big(\#\Supp(f_\ell)+n+2\big) \\ +\sum_{l=1}^m \delta_{l,\ell}\log(p_l+1)\bigg)
  +(r+1-s)\log(n+2)+ (r+1)\log((r+1)n+2)\bigg)\bigg).
\end{multline*}
Using the inequalities $\log\big(\#\Supp(f_\ell)+n+2\big)\le \log\big(\#\Supp(f_\ell)\big) +
  \log(n+3)$ and
$  \log((r+1)n+2)\le 2\log(n+1)$ together with
\begin{displaymath}\log(r+2)+ s \log(n+3) + (r+1-s)\log(n+2) +
2(r+1)\log(n+1) \le
  (3r+4)\log(n+3),\end{displaymath}
we deduce
\begin{multline} \label{eq:59}
  \h(\alpha_{\bfa}) + \langle \bfh,\bfa \rangle \le
D \bigg(\wh(V)
  + \deg(V) \bigg((3r+4)\log(n+3) \\
+\sum_{\ell=1}^{s}\frac{1}{d_\ell}\bigg(h_\ell
+\log\big(\#\Supp(f_\ell)\big)+\sum_{l=1}^m
\delta_{l,\ell}\log(p_l+1)\bigg)\bigg)\bigg).
\end{multline}
Moreover, we get from \eqref{eq:60} that
\begin{displaymath}\log
  \Big(\#\Supp(E) \, 2^{\deg_{\bfy}(E)} \Big) + |\bfa| \log (n )+
  \langle \bfd,\bfa\rangle \log (n+1)
\le 3D\deg(V)\log(n+2).\end{displaymath}
The statement follows  from \eqref{eq:55b},
\eqref{eq:56},  \eqref{eq:57},  \eqref{eq:59}
and this inequality.
\end{proof}

Theorem~\ref{mt} in the introduction follows from the case $m=0$
in the previous result, noticing that for a polynomial
$f\in\Z[\bfx]$ of degree $d$ it holds
$\#(\Supp(f))\le (n+1)^{d}$.

\begin{example}\label{masser} Let $d_1,\dots, d_{n},H\ge
1$ and set
\begin{multline*}
f_1= x_1^{d_1}, f_2= x_1
\, x_n^{d_2-1} - x_2^{d_2}, \\[1mm]
\ldots, f_{n-1}= x_{n-2} \, x_n^{d_{n-1}-1} -
x_{n-1}^{d_{n-1}},
f_n= x_{n-1} \, x_n^{d_n-1} - H
\in  \Z[x_{1},\dots, x_{n}].
\end{multline*}
It is a system of polynomials
without common zeros in~$\Qbarra^{n}$.
Theorem~\ref{arithparam} implies that there is a B\'ezout identity
$ \alpha= g_1 \,f_1 + \cdots +
g_n\,f_n$ with $\deg( g_if_i) \le d_1\cdots d_n$ and
\begin{equation}
  \label{eq:63}
\log(\alpha), \h(g_i)+\h(f_i) \le d_1\cdots d_{n-1}\log(H) +
(4n+7)\log(n+2) d_1\cdots d_n.
\end{equation}
On the other hand, let $u$ be an additional variable and consider the
specialization of any such identity {at $x_{i}=\gamma_{i}$ with
$$ \gamma_1=
H^{d_2\cdots d_{n-1}} \, u^{d_2\cdots d_n-1}, \dots, \gamma_{n-1}= H
\, u^{d_n-1}, \gamma_n = 1/u .$$
We obtain   $\alpha = g_1( \gamma_{1}, \dots, \gamma_{n-1},1/u) \,  H^{d_1\cdots d_{n-1}} \, u^{d_1\cdots
  d_n-d_1}$.} From this, we deduce the lower bounds
  $$ \deg_{x_n}( g_1f_1) \ge d_1\cdots d_n
\quad, \quad \log(\alpha) \ge d_{1}\cdots
d_{n-1}\log (H).$$
Hence, the height
bound in \eqref{eq:63} is optimal up to a term of
size $O(n\log(n)\, d_1\cdots d_n)$.
\end{example}

We next analyze Example~\ref{ejemploresultante} from the point of view
of heights.

\begin{example}\label{resultantez}
  Let $V \subset \A^n_{\Q}$ be a $\Q$-variety of
 pure  dimension $r$. For $1\le j\le r+1$
and $d_{j}\ge 1$, consider again the general $n$-variate polynomial
of degree $d_j$
$$
F_j=\sum_{|\bfa|\le d_j}u_{j,\bfa}\bfx^\bfa \in
\Z[\bfU_j][x_{1},\dots, x_{n}].
$$
It follows from Example~\ref{ejemploresultante} that
there exist $\lambda\in
\Z\setminus \{0\}$ and $g_i\in \Z[\bfU][x_{1},\dots, x_{n}]$ such
that
$$\lambda\,\Res_{d_1,\dots, d_{r+1} }(\ov V)=g_1 F_1+\cdots +
g_{r+1}F_{r+1} \quad \mbox{on } V
$$
satisfying the degree bounds in \eqref{resultanteX}. For the height,
we have that $\deg_\bfx(F_j)=d_j$,
 $\h(F_{j}) =0$ and $\log(\# \Supp(F_{j})) \le
d_{j}\log(n+1)$. Furthermore, $\delta_{l,j}=\deg_{\bfU_l}(F_j)$ equals~1 if $l=j$ and $0$ otherwise. Theorem~\ref{arithparam} then implies
 \begin{displaymath}
   \h(\Res_{d_1,\dots, d_{r+1} }(\ov V)), \h( g_i) \le \bigg(\prod_{\ell=1}^{r+1
   }d_\ell\bigg)\bigg(\wh(V)+ (6r+10)\log(n+3)\deg(V)\bigg).
 \end{displaymath}
\end{example}

The following result is a partial extension of
Theorem~\ref{arithparam}  to an arbitrary number of
polynomials.
For simplicity, we state it for polynomials not depending on
parameters.

\begin{corollary} \label{arithmeticS} Let $V\subset \A^n_{\Q}$ be
  a $\Q$-variety of pure  dimension $r$ and
  $f_1,\dots, f_s\in \Z[\bfx]\setminus \Z$
  without common zeros in $V(\Qbarra)$ such that $V(f_s)$
  intersects $V$ properly.
Set $d_j=\deg(f_j)$ for $1\le
  j\le s$. Assume that $d_1\ge \cdots \ge d_{s-1}$ and that $d_s$ is arbitrary. Set also
  $h=\max_{1\le j\le s-1}\h (f_j)$ and $h_s=\h(f_s)$.
  Then there exist $\alpha\in \Z\setminus \{ 0\} $ and $g_1,\dots ,
  g_s\in \Z[\bfx] $ such that $\alpha=g_1f_1+\cdots+g_sf_s$ on $V$
  with
  \begin{align*}
    \bullet &\  \deg\left(g_if_i\right) \leq \bigg(d_s\prod_{j=1}^{\min\{s-1,r\}}d_j\bigg)\deg(V),\\
    \bullet & \ \h(\alpha), \h(g_i)+\h(f_i) \leq
    \bigg(d_s\prod_{j=1}^{\min\{s-1,r\}}d_j \bigg)\bigg( \wh(V
    )+\deg(V)
    \Bigg( \frac{h_s}{d_s} +  \sum_{\ell=1}^{\min\{s-1,r\}} \frac{h}{d_\ell}\\
    & \hspace{5.8cm} + (6r+9)\log(n+3) + 3r\log(\max\{1,s-r\}) \bigg)\Bigg).
  \end{align*}
\end{corollary}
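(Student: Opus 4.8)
The plan is to adapt the proof of Corollary~\ref{parametricS} to the ring $\Z$, using the arithmetic parametric Nullstellensatz (Theorem~\ref{arithparam}) in place of its function field analogue. If $s\le r+1$, then $\min\{s-1,r\}=s-1$, the term $3r\log(\max\{1,s-r\})$ vanishes, and the statement is exactly the case $m=0$ of Theorem~\ref{arithparam} applied to $f_1,\dots,f_s$: the support contributions $\sum_{\ell}d_\ell^{-1}\log(\#\Supp(f_\ell))$ are absorbed into the constant via $\#\Supp(f_\ell)\le(n+1)^{d_\ell}$, which replaces $(3r+7)\log(n+3)$ by at most $(4r+8)\log(n+3)\le(6r+9)\log(n+3)$. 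So from now on I assume $s\ge r+2$.

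In that case I would introduce a group of auxiliary variables $\bfv=\{v_{j,i}\}_{1\le j\le r,\ 1\le i\le s-r-1}$, of cardinality $p=r(s-r-1)$, and set $\bar f_j=f_j+\sum_{i=1}^{s-r-1}v_{j,i}f_{r+i}$ for $1\le j\le r$ and $\bar f_{r+1}=f_s$. As in Corollary~\ref{parametricS}, the hypothesis that $V(f_s)$ meets $V$ properly ensures that $\bar f_1,\dots,\bar f_{r+1}\in\Z[\bfv,\bfx]\setminus\Z[\bfv]$ have no common zeros in $V_{\Q(\bfv)}(\ov{\Q(\bfv)})$. One records: $\deg_\bfx(\bar f_j)=d_j$, $\deg_\bfv(\bar f_j)=1$, $\h(\bar f_j)=\max\{\h(f_j),\h(f_{r+1}),\dots,\h(f_{s-1})\}\le h$ and $\#\Supp(\bar f_j)=\#\Supp(f_j)+\sum_{i=1}^{s-r-1}\#\Supp(f_{r+i})\le(s-r)(n+1)^{d_j}$ for $1\le j\le r$, and $\deg_\bfx(\bar f_{r+1})=d_s$, $\deg_\bfv(\bar f_{r+1})=0$, $\h(\bar f_{r+1})=h_s$, $\#\Supp(\bar f_{r+1})\le(n+1)^{d_s}$. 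Then I would apply Theorem~\ref{arithparam} with the single parameter group $\bfv$ (so $p_1=p$), the $\Q$-variety $V$ (hence $\deg(V_{\Q(\bfv)})=\deg(V)$ and $\h_\bfv(V_{\Q(\bfv)})=0$ by Proposition~\ref{prop:9}\eqref{item:18}), and the polynomials $\bar f_1,\dots,\bar f_{r+1}$. Writing $D=d_s\prod_{j=1}^{r}d_j=\prod_{j=1}^{r+1}\deg_\bfx(\bar f_j)$, this produces $\bar\alpha\in\Z[\bfv]\setminus\{0\}$ and $\bar g_1,\dots,\bar g_{r+1}\in\Z[\bfv,\bfx]$ with $\bar\alpha=\sum_j\bar g_j\bar f_j$ on $V_{\Q(\bfv)}$, with $\deg_\bfx(\bar g_i\bar f_i)\le D\deg(V)$, and with $\h(\bar\alpha)$ and each $\h(\bar g_i)+\h(\bar f_i)$ bounded by $B_0:=D\bigl(\wh(V)+\deg(V)B\bigr)$, where
\[
B=(3r+7)\log(n+3)+\frac{h_s}{d_s}+\frac{\log\big(\#\Supp(\bar f_{r+1})\big)}{d_s}+\sum_{\ell=1}^{r}\frac{1}{d_\ell}\Big(h+\log\big(\#\Supp(\bar f_\ell)\big)+2\log(p+1)\Big).
\]

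Next I would unfold the linear combinations: substituting the definitions of the $\bar f_j$ and regrouping gives
\[
\bar\alpha=\sum_{j=1}^{r}\bar g_j f_j+\sum_{i=1}^{s-r-1}\Big(\sum_{j=1}^{r}v_{j,i}\,\bar g_j\Big)f_{r+i}+\bar g_{r+1}f_s\qquad\text{on }V_{\Q(\bfv)}.
\]
Choosing a monomial $\bfv^\bfc$ with $\coeff_{\bfv^\bfc}(\bar\alpha)\ne0$ and extracting the $\bfv^\bfc$-coefficient of this identity yields $\alpha:=\coeff_{\bfv^\bfc}(\bar\alpha)\in\Z\setminus\{0\}$, $g_j:=\coeff_{\bfv^\bfc}(\bar g_j)$ for $1\le j\le r$, $g_{r+i}:=\sum_{j=1}^{r}\coeff_{\bfv^{\bfc-\bfe_{j,i}}}(\bar g_j)$ for $1\le i\le s-r-1$ (coefficients of exponents with a negative entry being $0$), and $g_s:=\coeff_{\bfv^\bfc}(\bar g_{r+1})$, with $\alpha=g_1f_1+\cdots+g_sf_s$ on $V$. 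The bound $\deg(g_if_i)\le D\deg(V)$ follows from $\deg_\bfx(g_i)\le\max_j\deg_\bfx(\bar g_j)$, the inequalities $\deg_\bfx(\bar g_j)+d_j\le D\deg(V)$, and the monotonicity $d_j\ge d_{r+i}$ for $j\le r<r+i\le s-1$. For the height, $\h(\alpha)\le\h(\bar\alpha)\le B_0$; for $1\le i\le r$, $\h(g_i)+\h(f_i)\le\h(\bar g_i)+\h(\bar f_i)\le B_0$, and likewise $\h(g_s)+\h(f_s)\le\h(\bar g_{r+1})+\h(\bar f_{r+1})\le B_0$; and for $i=r+k$ one uses $\h(g_{r+k})\le\max_{j\le r}\h(\bar g_j)+\log(\max\{1,r\})$ (Lemma~\ref{lemma:2-17}\eqref{item:45}) together with the fact that $\h(f_{r+k})\le\h(\bar f_j)$ for \emph{every} $j\le r$, so, taking $j$ to maximize $\h(\bar g_j)$, $\h(g_{r+k})+\h(f_{r+k})\le B_0+\log(\max\{1,r\})$.

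It remains to simplify $B$ and the extra $\log(\max\{1,r\})$ into the stated constant. Using $\log(\#\Supp(\bar f_\ell))\le\log(\max\{1,s-r\})+d_\ell\log(n+1)$ for $\ell\le r$, $\log(\#\Supp(\bar f_{r+1}))\le d_s\log(n+1)$, $\log(p+1)=\log(r(s-r-1)+1)\le\log r+\log(\max\{1,s-r\})\le\log(n+3)+\log(\max\{1,s-r\})$, $\sum_{\ell=1}^{r}d_\ell^{-1}\le r$ and $\log(\max\{1,r\})\le\log(n+3)$ (valid for $1\le r\le n$; when $r=0$ the construction degenerates and the claim is just Theorem~\ref{arithparam} for $f_s$), a direct computation shows
\[
B_0+\log(\max\{1,r\})\ \le\ D\bigg(\wh(V)+\deg(V)\Big(\frac{h_s}{d_s}+\sum_{\ell=1}^{r}\frac{h}{d_\ell}+(6r+9)\log(n+3)+3r\log(\max\{1,s-r\})\Big)\bigg),
\]
which is the asserted bound (note $\min\{s-1,r\}=r$ here). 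I expect the main obstacle to be exactly this constant bookkeeping: checking that the loss from the support counts $(s-r)(n+1)^{d_j}$, from the $p+1=r(s-r-1)+1$ auxiliary parameters, and from regrouping the linear combinations together stays within $(6r+9)\log(n+3)+3r\log(\max\{1,s-r\})$. The only other delicate point, that $\bar f_1,\dots,\bar f_{r+1}$ inherit the empty common zero locus over $\ov{\Q(\bfv)}$, is handled word for word as in Corollary~\ref{parametricS} using the proper intersection hypothesis on $V(f_s)$.
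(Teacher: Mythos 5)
Your proposal is correct and follows essentially the same route as the paper: for $s\le r+1$ it invokes the case $m=0$ of Theorem~\ref{arithparam}, and for $s\ge r+2$ it forms the generic linear combinations $\ov f_j=f_j+\sum_i v_{j,i}f_{r+i}$, applies Theorem~\ref{arithparam} with the single parameter group $\bfv$ (using $\h_\bfv(V_{\Q(\bfv)})=0$), regroups and extracts a coefficient in $\bfv$, with the same constant bookkeeping ($\#\Supp(\ov f_\ell)\le(s-r)(n+1)^{d_\ell}$, $\log(p+1)$, the extra $\log r$ from regrouping, and the degree monotonicity $d_{r+k}\le d_j$). Your treatment of the $r=0$ degeneration and the explicit degree check are slightly more detailed than the paper's, but the argument is the same.
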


\begin{proof} The proof follows closely the lines of that of
  Corollary~\ref{parametricS}.
If $s\leq r+1$, the result follows from the case $m=0$ in
Theorem~\ref{multiparametric}.
Hence, we only have to consider the case  $s\ge r+2$.
Set
$$
\overline{f}_j= f_j+v_{j,1}f_{r+1}+\cdots + v_{j,s-r-1}f_{s-1}
\quad \mbox{for } 1\le j\le r,  \quad  \overline{f}_{r+1}=f_s
$$
for {a} group  $\bfv=\{v_{j,i}\}_{j,i}$ of $p:=r(s-r-1)$ variables.
It is  a system of polynomials without common zeros in
$V_{{\Q(\bfv)}}(\overline{\Q(\bfv)})$.  We have that
$\deg_\bfx(\overline{f}_j)=d_{j}$, $\deg_\bfv(\overline{f}_j)=1$ and
$\h(\overline{f}_{j})\le h$ for $1\le j\le r$ while
$\deg_\bfx(\overline{f}_{r+1})=d_s$,
$\deg_\bfv(\overline{f}_{r+1})=0$ and $\h(\overline{f}_{s})=h_{s}$.
By Theorem~\ref{arithparam}, there are $\overline{\alpha}\in
\Z[\bfv]\setminus \{0\}$ and $\overline{g}_i\in \Z[\bfv,\bfx]$ such
that $ \overline{\alpha}=\overline{g}_1\overline{f}_1+\cdots
+\overline{g}_{r+1}\overline{f}_{r+1}$ on $V_{\Q(\overline{\bfv)}}$
with $\deg_\bfx(\overline{g}_i\overline{f}_i) \le
(d_s\prod_{j=1}^{r} d_j)\deg(V)$ and
\begin{multline*}
\h(\overline{\alpha}),\h(\overline{g}_i)+\h(\overline{f}_i)\le
\bigg(d_s\prod_{j=1}^{r} d_j\bigg)\bigg(\wh(V)+\deg(V)\bigg(
\frac{1}{d_s}\Big(h_s+\log(\#\Supp(f_s))\Big)\\
  +\sum_{\ell=1}^r\frac{1}{d_\ell}\Big(h+\log(\#\Supp(\overline{f}_\ell))+
  2\log(p+1)\Big) +(3r+7)\log(n+3) \bigg)\bigg).
\end{multline*}
We have that $\log(p+1)\le \log(n+2)+\log(s-r)$ and
$\#\Supp(\overline{f}_\ell)\le
  (s-r)(n+1)^{d_\ell}$ for $  1\le \ell\le r$ while $\#\Supp(\overline{f}_{r+1})\le
  (n+1)^{d_s}$. Therefore,
\begin{multline} \label{eq:64}
\h(\overline{\alpha}),\h(\overline{g}_i)+\h(\overline{f}_i) \le
\bigg(d_s\prod_{j=1}^{r}
  d_j\bigg)\bigg(\wh(V)+\deg(V)\bigg(\frac{h_s}{d_s} +
  \sum_{\ell=1}^r\frac{h}{d_\ell}  + 3r\log(s-r)\\ +(6r+8)\log(n+3) \bigg)\bigg).
\end{multline}
Set $\wt g_i= \overline{g}_i $ for $ 1\le i\le r$, $\wt g_i=
\sum_{k=1}^{r}v_{k, i-r} \overline{g}_{k}$  for $r+1\le i\le s-1$ and
$\wt g_{s}= \overline{g}_{r+1}$, then
\begin{equation}
  \label{eq:61}
\wt  g_1f_1+\cdots+\wt
  g_{s}f_s = \overline{g}_1\ov f_1+\cdots +
  \overline{g}_{r+1}\ov f_{r+1} =  \overline{\alpha}
\end{equation}
with $\h(\wt g_i)+\h(f_i)\le \max_{k} \{\h( \overline{g}_{k})+\h(\ov
f_k)\}+\log(r)$ for all $i$.  {By taking the coefficients of a
suitable monomial in $\bfv$, we extract from \eqref{eq:61} a
B\'ezout identity $\alpha=g_1f_1+\dots+g_sf_s$ on $V$. By
\eqref{eq:64}, these polynomials satisfy the stated degree and
height bounds.}
\end{proof}

We finally prove  the arithmetic
strong Nullstellensatz {presented} in the introduction.

\begin{proof}[Proof of Theorem \ref{mt_strong}]
  We use the same notations of the proof of {Theorem~\ref{strongNSSpar}} for~$k=\Q$ and {$p=0$}.  The proof
  of this corollary already gives the stated bounds for the degree of
  the $g_{i}$'s and the exponent $\mu$. Hence, it only remains to
  bound the height in the identity $\alpha g^\mu =g_1f_1+\cdots
  +g_sf_s$ on $V$.

  \medskip Corollary~\ref{arithmeticS} applied to $W=V\times \A^1$ and
  $1-y^{d_0}g,f_1,\dots,f_s$ and Lemma~\ref{lemm:7} imply
  \begin{multline} \label{eq:65}
    \h(\alpha), \h (\ov g_i)+\h (f_i) \le 2\,
   d_{0} \, D\bigg( \wh(V )+\deg(V)
    \bigg( \frac{h_0}{2d_0} + \hspace{-3mm} \sum_{\ell=1}^{\min\{s,r+1\}} \frac{h}{d_\ell}\\
     + (6r+15)\log(n+4) + 3(r+1)\log(\max\{1,s-r\}) \bigg)\bigg),
  \end{multline}
with $D:= \prod_{j=1}^{\min\{s,r+1\}} d_j$. {From this, we deduce the}
bound for the height of $\alpha$.
Let $\widehat g_{i}$ be as in
(\ref{gsombrero}). If we write
$\widehat
g_i=\sum_{\bfa,j}\alpha_{\bfa,j} \bfx^\bfa y^j$, then
  $g_i=\sum_{\bfa,j}\alpha_{\bfa,j}\bfx^\bfa g^{\max_l\{\deg_y(\widehat g_l)\}-j}$.
Using Lemma~\ref{lemma:2-17} we deduce
that \begin{align*}
\h(g_i)&\le \max_{\bfa,j}\{\h(\alpha_{\bfa,j} \bfx^\bfa
g^{\max_l\{\deg_y(\widehat g_l)\}-j}) \} + \log(\#\Supp(\widehat
g_i))\\[1mm]
&\le  \h(\widehat g_i)+ \max_l\{\deg_y(\widehat
g_l)\}(h_0+\log(n+1)d_0) + \log(\#\Supp(\widehat
g_i)).
\end{align*}
Observe that $\h(\widehat g_{i})\le \h(\ov g_{i})$ and
$\#\Supp(\widehat g_{i})\le (n+2)^{{\deg_{\bfx,y} (\widehat g_i)}}$.
Moreover,  $\deg_{y} (\widehat g_i)\le 2D\deg(V)$ and $\deg_{\bfx,y}
(\widehat g_i)\le 2d_{0}D\deg(V)$, as shown in the proof of
{Theorem \ref{strongNSSpar}}. Therefore, \nred{$\h(g_i)+\h(f_i)$}
is bounded above by
\begin{displaymath}
\h(\ov g_i)+ \h(f_i) + 2D\deg(V)
(h_0+\log(n+1)d_0) + 2d_{0}D\deg(V)\log(n+2)).
\end{displaymath}
The stated bound for $\h(g_i)+\h(f_i)$ follows from this and \eqref{eq:65}.
\end{proof}

\end{document}